\documentclass[10pt]{amsart}

\usepackage{amssymb}
\usepackage{amsmath}
\usepackage{pstricks}
\usepackage[all]{xy}
\usepackage{latexsym}
\usepackage{graphicx}
\usepackage{hyperref}

\newtheorem{theorem}{Theorem}[section]
\newtheorem{lem}[theorem]{Lemma}
\newtheorem{prop}[theorem]{Proposition}
\newtheorem{cor}[theorem]{Corollary}
\newtheorem*{theoremstar}{Theorem}
\theoremstyle{definition}
\newtheorem{definition}[theorem]{Definition}
\newtheorem{ex}[theorem]{Example}
\newtheorem{rem}[theorem]{Remark}
\newtheorem*{ack*}{Acknowledgments}

\newcommand{\AM}{{\mathcal{AM}}}

\newcommand{\N}{\mathbb N}
\newcommand{\R}{\mathbb R}
\newcommand{\C}{\mathbb C}
\newcommand{\hsset}{sSet_\infty}
\newcommand{\hssetp}{{sSet_*}_{\infty}}
\newcommand{\Top}{Top}
\newcommand{\hTop}{Top_\infty}
\newcommand{\hTopp}{{Top_*}_\infty}
\newcommand{\hcdga}{CDGA_\infty}
\newcommand{\sset}{sSet}
\newcommand{\cdga}{CDGA}
\newcommand{\hkmod}{k\text{-Mod}_\infty}

\newcommand{\Der}{\mathrm{Der}}
\newcommand{\Sym}{\mathrm{Sym}}
\newcommand{\com}{\bullet}
\newcommand{\colim}{\mathop{\lim\limits_{\textstyle\longrightarrow}}\limits}

\definecolor{M}{rgb}{1,0,0}
\definecolor{G}{rgb}{0,1,0}
\definecolor{T}{rgb}{0,0,1}

\newcommand{\CSS}{\mathcal{CS}e\mathcal{S}p}
\newcommand{\SeSp}{\mathcal{S}e\mathcal{S}p}

\newcommand{\noprint}[1]{ }   

\title[Higher Hochschild cohomology, Brane Topology and  centralizers]{Higher  Hochschild cohomology, Brane topology and centralizers of $E_n$-algebra maps}
\author[G.~Ginot]{Gr\'egory Ginot}
\address{Gr\'egory Ginot, UPMC - Sorbonne Universit\'e, Universit\'e Pierre et Marie Curie, Institut  Math\'ematiques de Jussieu Paris Rive Gauche, CNRS,
Case 247\\  4, place Jussieu, 75252 Paris Cedex 05, France}
\email{gregory.ginot@imj-prg.fr}

\author[T.~Tradler]{Thomas~Tradler}
\address{Thomas Tradler, Department of Mathematics, New York City College of Technology, City University of New York, 300 Jay Street, Brooklyn, NY 11201, USA}
\email{ttradler@citytech.cuny.edu}

\author[M.~Zeinalian]{Mahmoud~Zeinalian}
\address{Mahmoud Zeinalian, Department of Mathematics, Long Island University, LIU Post, 720 Northern Boulevard, Brookville, NY
11548, USA}
\email{mzeinalian@liu.edu}

\subjclass[2000]{16E40, 55P50, 55P35, 18D50, 18G55 (Primary), 55P99, 16E45, 16S80, 57P10 (Secondary)}

\keywords{$E_n$-algebras, factorization algebras, higher Hochschild homology, string topology, Poincar\'e duality, centralizers, Deligne conjecture, Bar construction, iterated loop spaces}

\begin{document}

\begin{abstract}
We use factorization homology and higher Hochschild (co)chains to study various problems in algebraic topology and homotopical algebra, notably brane topology, centralizers of $E_n$-algebras maps and iterated bar constructions. In particular, we obtain an $E_{n+1}$-algebra model on  the shifted integral chains $C_{\bullet+m}(Map(S^n, M))$ of the mapping space of the $n$-sphere into an $m$-dimensional orientable closed manifold $M$. 
We construct and use $E_\infty$-Poincar\'e duality to identify the higher Hochschild cochains, modeled over the $n$-sphere, with the chains on the above mapping space, and then relate the Hochschild cochains to the deformation complex of the $E_\infty$-algebra $C^*(M)$, thought of as an $E_n$-algebra. We  invoke (and prove) the higher Deligne conjecture to furnish $E_n$-Hochschild cohomology, and all that is naturally equivalent to it, with an $E_{n+1}$-algebra structure and further prove that this construction recovers the sphere product. In fact, our approach to the Deligne conjecture is based on an explicit description of the $E_n$-centralizers of a map of $E_\infty$-algebras $f:A\to B$ by relating it to the algebraic structure on Hochschild cochains modeled over spheres, which is of independent interest and explicit. More generally, we give a factorization algebra model/description of the centralizer of any $E_n$-algebra map and a solution of Deligne conjecture. We  also apply similar ideas to the iterated bar construction. We obtain factorization algebra models for (iterated) bar construction of augmented $E_m$-algebras together with their $E_n$-coalgebras and $E_{m-n}$-algebra structures, and discuss some of its features.  
For $E_\infty$-algebras we obtain a higher Hochschild chain model, which is an $E_n$-coalgebra.  In particular, considering the $E_\infty$-algebra structure of an $n$-connected topological space $Y$, we obtain a higher Hochschild cochain model of the natural $E_n$-algebra structure of the chains of the iterated loop space $C_*(\Omega^{n}Y)$. 
\end{abstract}
\maketitle

\tableofcontents

\section{Introduction}
The main goal of this paper is to apply the recent tools given by \emph{factorization algebras} and \emph{factorization homology} (or higher Hochschild (co)homology) to study various problems in algebraic topology and homological algebra, including the study of string and \emph{brane topology}, existence and explicit description of \emph{centralizers} of maps, which gives rise to a solution of higher Deligne conjecture, and the study of \emph{iterated bar constructions} for (homotopically commutative) algebras and \emph{iterated loop spaces}.  These applications are the core of the sections \ref{S:centralizers}, \ref{S:Brane} and \ref{S:Barmain}.

We start demonstrating these ideas by first explaining the starting point of our work. Our original motivation  was  the study of brane topology, as emphasized by D. Sullivan: the algebraic structure of the  chains on the mapping space of
the $n$-sphere into an orientable $m$-dimensional manifold $M$;  the coefficient of the chains being over a field of arbitrary characteristic, or over the integers.  
The algebraic structure of the chains on the mapping spaces of spheres into a manifold
has drawn considerable interest, following the work of Chas and Sullivan~\cite{CS} on the free loop space. It is now standard that the homology of the free loop space $LM = Map(S^1,M)$, shifted by the dimension of $M$,
has an intriguing structure of a BV-algebra, and in particular of a Gerstenhaber algebra\footnote{also called $1$-Poisson algebra},
 that is of a (graded) commutative algebra endowed with a degree $1$ Lie bracket satisfying the Leibniz rule.
This structure is in fact part of a $2$-dimensional homological conformal field theory (for instance see~\cite{Go, BGNX, L-TFT}));
the BV-algebra structure comes from the genus $0$ part of this topological conformal field theory.

\emph{Higher string topology}, also referred to as \emph{brane topology}, is a generalization of string topology
 in which the circle is replaced by the $n$-dimensional sphere.
Sullivan and Voronov (see~\cite{CV})
have stated\footnote{also see~\cite{CV, BGNX, Chataur} for rigorous explicit construction of the underlying
graded commutative multiplication, called the sphere product} that the (shifted) homology of the mapping sphere $Map(S^n, M)$ has the structure of a
 $\text{BV}_n$-algebra and in particular of an $n$-Poisson algebra
(or $n$-braid algebra in the terminology of~\cite{KM}).
The latter structure is the analogue of a Gerstenhaber algebra in which the Lie bracket is of degree $n$.
 A   $\text{BV}_n$-algebra is an algebra over the homology of the operad of framed $n$-dimensional little disks, while an $n$-Poisson algebra is an algebra
 over the homology of the \emph{little $n$-dimensional disks operad} (for instance see~\cite{CV, SW}); algebras over (the chains on) the little $n$-dimensional disks operad are usually called $E_n$-algebras.

 The \emph{$E_n$-algebras} form a hierarchy of homotopy commutative structures, whose commutativity increase with $n$, with $E_1$-algebras being essentially equivalent to dg-associative algebras. In particular, an $E_2$-algebra  is a dg-associative algebra, with product $\cup_0$, together with an homotopy operator $\cup_1$ for the commutativity of the product $\cup_0$. Similarly, in an $E_n$-algebra, $\cup_1$ is homotopy commutative, the homotopy being given by an operator $\cup_2$ which is homotopy  commutative and so on until an homotopy operator $\cup_{n-1}$ which is not  (required to be) homotopy commutative. It is well known that the homology of an $E_n$-algebra is an  $n$-Poisson algebra. These algebras are nowadays of fundamental importance in quantization (for instance see \cite{KS, PTVV}). 
 
 \smallskip
 
Sullivan-Voronov's work leads to the following:

\smallskip

\noindent \textbf{Question:} \emph{
Is it possible to lift the $n$-Poisson algebra structure on the homology of
$Map(S^n, M)$ to a structure of (framed) $E_n$-algebras  on the (suitably shifted) chains  of $Map(S^n, M)$ with coefficient in an arbitrary ring $k$?}

\smallskip

 For an $n$-connected closed and oriented manifold $M$, we give a positive answer
to this conjecture.
\setcounter{section}{7} \setcounter{theorem}{0} \begin{theorem}
Let $M$ be an $n$-connected Poincar\'e duality space whose homology groups are projective $k$-modules.
 The shifted chain complex $C_{\ast+\dim(M)} (Map(S^n, M))$
has a natural $E_{n+1}$-algebra structure which induces the Sullivan-Voronov sphere product in homology
$$H_{p}\big(Map(S^n, M)\big)\otimes H_{q}\big(Map(S^n, M)\big) \to H_{p+q-\dim(M)}\big(Map(S^n, M)\big),$$
when $M$ is an oriented closed manifold.
\end{theorem}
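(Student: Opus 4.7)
The plan is to assemble three ingredients already developed earlier in the paper: the higher Hochschild chain model of mapping spaces (a Chen/Pirashvili-type identification $CH_{S^n}(C^*(M))\simeq C^*(Map(S^n,M))$ valid for $n$-connected $M$ with projective homology), $E_\infty$-Poincaré duality for $C^*(M)$, and the higher Deligne conjecture. The first step is geometric: identify the desired chain complex with a higher Hochschild cochain complex. Combining the mapping-space model above with $E_\infty$-Poincaré duality — which provides an equivalence of $C^*(M)$-modules between $C^*(M)$ and a shift of its dual by $\dim(M)$ — one obtains a quasi-isomorphism
\[
CH^{S^n}\bigl(C^*(M),C^*(M)\bigr)\;\simeq\; C_{*+\dim(M)}\bigl(Map(S^n,M)\bigr),
\]
where the left-hand side denotes the higher Hochschild cochains modeled on $S^n$.

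The second step is algebraic: identify these higher Hochschild cochains with an $E_n$-centralizer. By the description of centralizers of $E_\infty$-algebra maps in terms of higher Hochschild cochains over spheres (established in Section~\ref{S:centralizers}), $CH^{S^n}(A,A)$ is naturally equivalent to the $E_n$-centralizer $\mathfrak{z}_{E_n}(\mathrm{id}_A)$ of $A=C^*(M)$ viewed as an $E_n$-algebra through the forgetful functor from $E_\infty$-algebras. In particular, it is the deformation complex of $C^*(M)$ as an $E_n$-algebra.

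The third step is to apply the higher Deligne conjecture, proved earlier in the paper, which equips the $E_n$-centralizer of a map of $E_n$-algebras with a natural $E_{n+1}$-algebra structure. Transporting this $E_{n+1}$-structure along the zigzag of Step~1 endows $C_{*+\dim(M)}(Map(S^n,M))$ with its claimed $E_{n+1}$-algebra structure. Naturality is automatic from the construction, since every arrow in the zigzag is natural in the $E_\infty$-algebra $C^*(M)$ and in $M$ among maps of Poincaré duality spaces.

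The final step, which I expect to be the main obstacle, is to verify that the resulting product on $H_{*+\dim(M)}(Map(S^n,M))$ coincides with the Sullivan–Voronov sphere product. The idea is to pass through the factorization algebra picture: the underlying multiplication on the centralizer corresponds, at the chain level, to the pinch map $S^n\to S^n\vee S^n$ used to define the sphere product, while the $\dim(M)$-shift is precisely the one produced by $E_\infty$-Poincaré duality (matching the Pontryagin–Thom umkehr map that defines the geometric product). Comparing the two constructions operadically — rather than cycle by cycle — reduces the check to a commutative diagram of higher Hochschild functors and the duality isomorphism, which is where care with signs and shifts is genuinely required.
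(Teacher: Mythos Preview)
Your proposal is correct and follows essentially the same route as the paper: identify $C_{*+\dim(M)}(Map(S^n,M))$ with $CH^{S^n}(C^*(M),C^*(M))$ via the Chen model (Corollary~\ref{C:Itdual}) and $E_\infty$-Poincar\'e duality (Corollary~\ref{C:PDmap}), then invoke the higher Deligne conjecture (Theorem~\ref{T:Deligne}) for the $E_{n+1}$-structure, and defer the sphere product identification to a separate argument (Proposition~\ref{P:brane=cupsphere}). The paper packages your Steps~2--3 into a single statement (Theorem~\ref{T:Deligne}.(2)) and handles naturality more carefully via the auxiliary category $\AM$ of Definition~\ref{DEF:category-AM} and Theorem~\ref{THM:Ed-on-HC(AM)}---your ``naturality is automatic'' glosses over the fact that one must check compatibility with the extra $E_1$-direction coming from composition in Dunn's theorem---but the architecture is the same.
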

This $E_{n+1}$-algebra structure can be seen as a higher dimensional analogue of the genus $0$ part of a topological conformal field theory. 

\smallskip

Our approach is based on an \emph{algebraic model} of the chains on the mapping spaces generalizing
\emph{Hochschild cochains}, a fruitful model for string topology operations. This algebraic model is an instance of factorization homology for commutative or $E_\infty$-algebras which we develop in sections~\ref{S:HHforEinftyAlg} and~\ref{S:Operation}.

 Hochschild cohomology groups of an associative algebra $A$ with value in a bimodule $N$ are defined as
$$HH^{n}(A,N) \cong H^n\big(\mathbb{R}Hom_{A\otimes A^{op}}(A,N)\big)\cong Ext^n_{A\otimes A^{op}}(A,N),$$
where $\mathbb{R}Hom$ denotes the derived mapping space,
while the Hochschild homology groups $HH_\bullet(A,N)\cong Tor_n^{A\otimes A^{op}}(A,N)$
are defined similarly by derived tensor products. These (co)homology groups are given by standard (co)chain complexes~\cite{Ge, L}. 
Hochschild cohomology of any (dg-)associative  algebra has a natural Gerstenhaber algebra structure and further,  by the (solutions to the) Deligne conjecture, the latter is induced by an $E_2$-algebra structure on the  Hochschild cochains.
 
 Hochschild (co)chains have been used as models for free loop spaces since at least the 1980s.  
 Indeed,   there is an isomorphism (see \cite{CV, FTV}, for example)
\begin{equation}\label{eq:Hochschild=string} H_\bullet(LM)\cong HH^\bullet(C^\ast(M),C_\ast(M)) \cong HH^\bullet(C^\ast(M),C^\ast(M))[d]\end{equation}
if $M$ is  an oriented and simply connected manifold of dimension $d$ which, in characteristic zero is an isomorphism of Gerstenhaber algebras~\cite{FT}.  Further,  Hochschild chains of a Calabi-Yau algebra carries a topological conformal
field theory structure~\cite{L-TFT}. The above isomorphisms~\eqref{eq:Hochschild=string} make use of two ingredients. First, it uses the (dual of) an isomorphism $HH_\bullet(C^\ast(M), C_\ast(M)) \cong H_\bullet(LM)$ for any simply connected space $M$ (which can be described in geometric terms by Chen interated integrals when $M$ is a manifold) and, second, it uses a lift of the Poincar\'e duality quasi-isomorphism $C^{\ast}(M)\to C_{\ast}(M)[\dim(M)]$ to a bimodule map, when $M$ is further a closed manifold.

In this paper, we  generalize  these two facts from circles to $n$-dimensional spheres as well as the $E_2$-algebra structure on Hochschild cochains as we explain below. Combining these three ingredients will give us the desired $E_{n+1}$-algebra structure on $C_\ast(Map(S^n,M))$. Our technique should be related to those of Hu~\cite{Hu} and Hu-Kriz-Voronov~\cite{HKV}.

Bimodules over an associative algebra correspond to the \emph{operadic} notion of
$E_1$-modules. There is a notion of $E_n$-Hochschild cohomology where maps of $A$-bimodules are replaced by maps of $A$-$E_n$-modules for an $E_n$-algebra $A$ (\cite{L-HA, F, Fre-Mod}). The Kontsevich-Soibelman generalization of the Deligne conjecture, \emph{i.e.}, the higher Deligne conjecture, is that the  $E_n$-Hochschild cohomology of $A$, denoted $HH_{\mathcal{E}_n}(A,A)$ is an $E_{n+1}$-algebra. For $X$ a topological space, the cochains $C^\ast(X)$ are more than simply an associative algebra but are \emph{homotopy commutative}, that is, it carries a functorial structure of an $E_\infty$-algebra; in particular of an $E_n$-algebra for all $n$. In characteristic zero, one can use CDGAs models for the cochains, but this is not possible when working over the integers or a finite field.  Nevertheless, for $E_\infty$-algebras, $E_n$-Hochschild cohomology have extra functoriality (not shared by all $E_n$-algebras)  and actually identifies with higher Hochschild cohomology over the n-spheres.

The latter theories are the subject of Section~\ref{S:HHforEinftyAlg} and can be expressed in terms of \emph{factorization homology}, also referred to as \emph{topological chiral homology}~\cite{L-HA, F, CG, GTZ2}. Factorization homology is an invariant of  \emph{both} (framed) manifolds (and framed embeddings) and $E_n$-algebras based on (extended) topological field theories.
In fact, the factorization homology of  $E_\infty$-algebras becomes a homotopy invariant and can be applied to any space (and continuous maps) and not just to framed manifolds. This generalization is precisely computed by \emph{higher Hochschild homology},  introduced by Pirashvili  in~\cite{P}, which can be seen as a kind of \emph{limit} of these ideas when the dimension of the TFT goes to infinity~\cite{GTZ2}.  Indeed, by Theorem~\ref{T:CH=TCH} below (and~\cite{GTZ2, F, L-HA}) if $X$ is a  manifold  and $A$ is an $E_\infty$-algebra, then, the factorization homology $\int_X A$ of $X$ with coefficients in $A$   is naturally equivalent  to the Hochschild chains $CH_X(A)$ of $A$ over $X$.

 The restriction to $E_\infty$-algebras is not an issue in our case of interest since the cochain complex $C^\ast(X)$   is indeed an $E_\infty$-algebra. We  study the \emph{higher Hochschild (co)chains} for $E_\infty$-algebras and modules in Section \ref{SS:FactandHHforEinfty}, which are modeled over spaces in the same way the usual Hochschild (co)chains are modeled on circles. More precisely, this is  a rule that assigns to any space  $X$,  $E_\infty$-algebra $A$, and $A$-module $M$, a chain complex $CH_X(A,M)$, \emph{functorial in every argument}, such that for $X=S^1$, one recovers the usual Hochschild chains.  The functoriality with respect to spaces is a key feature which allows us to derive algebraic operations on the higher Hochschild (co)chain complexes from maps of topological spaces. 
 
 \smallskip
 
 Higher Hochschild chains have a good axiomatic characterization (similar to Eilenberg-Steenrod axioms) which formally follows from the fact that $E_\infty$-algebras are tensored over spaces, see Corollary \ref{C:properties} in Section \ref{SS:AxiomHH}. 
This allows to generalize the aforementioned relationship between free loop spaces and Hochschild chains  to every space. 
In fact, we prove (Theorem~\ref{T:Einftymapping}) that there is a natural map of $E_\infty$-algebras $CH_Y(C^\ast(X))\to C^\ast(Map(Y,X))$ which is a quasi-isomorphism when $X$ is $\dim(Y)$-connected. This is an $E_\infty$-analogue of our previous result~\cite{GTZ} for CDGA's in characteristic zero (using generalizations of Chen {\em iterated integrals}).

In Section~\ref{S:Operation}, we study the algebraic structure of higher Hochschild cochains. We first define, for any $E_\infty$-$A$-algebra $B,$ the (associative) \emph{wedge product}
$$CH^{X}(A,B) \otimes CH^{Y}(A,B)\to CH^{X\vee Y}(A,B) $$
and then we prove that when $X$ is a sphere $S^d$, the wedge product induces a structure of \emph{$E_d$-algebra on $CH^{S^d}(A,B)$}, 
generalizing the usual cup-product in Hochschild cohomology.
\setcounter{section}{4} \setcounter{theorem}{11} \begin{theorem}
Let  $A$ be an $E_\infty$-algebra and $B$ an $E_\infty$-$A$-algebra. The collection of maps $(pinch_{S^d,k}: \mathcal{C}_d(k) \times S^d \longrightarrow
 \bigvee_{i=1\dots k}\, S^d)_{k\geq 1}$ makes $CH^{S^d}(A,B)$ into an $E_d$-algebra, such that the underlying $E_1$-structure of $CH^{S^d}(A,B)$ agrees with the one given by the \emph{cup-product},
$$ \cup_{S^d}:\quad CH^{S^d}(A,B) \otimes CH^{S^d}(A,B)  \longrightarrow CH^{S^d\vee S^d}(A,B) \longrightarrow CH^{S^d}(A,B).$$
\end{theorem}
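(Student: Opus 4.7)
The plan is to exploit the contravariant functoriality of $CH^{-}(A,B)$ on pointed spaces, together with the fact that the pinch maps equip $S^d$ with the structure of an $E_d$-coalgebra in pointed spaces. Combined with the iterated wedge product on higher Hochschild cochains (constructed just before this theorem), this produces the desired $E_d$-algebra structure.

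First, I would verify that the maps $pinch_{S^d,k}:\mathcal{C}_d(k)\times S^d\to\bigvee_{i=1}^{k}S^d$ assemble into an $E_d$-coalgebra structure on $S^d$ in the pointed homotopy category. Concretely, writing $\rho_c:S^d\to\bigvee_{i=1}^{k}S^d$ for the pinch induced by $c\in\mathcal{C}_d(k)$, the operadic composition $c\circ(c_1,\dots,c_k)\in\mathcal{C}_d(n_1+\cdots+n_k)$ gives a commutative diagram
\[
\xymatrix{
S^d \ar[r]^-{\rho_c} \ar[dr]_-{\rho_{c\circ(c_\bullet)}} & \bigvee_{i=1}^{k} S^d \ar[d]^-{\bigvee_i \rho_{c_i}} \\
& \bigvee_{j=1}^{n_1+\cdots+n_k} S^d
}
\]
and the whole family is $\Sigma_k$-equivariant. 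This is the standard coaction underlying the May recognition principle for $\Omega^d\Sigma^d$, and one checks it geometrically by collapsing complements of nested disks in two steps versus one.

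Second, for each $c\in\mathcal{C}_d(k)$, functoriality of higher Hochschild cochains (Section \ref{SS:coHHEinfty}) gives $\rho_c^{*}:CH^{\bigvee_{i=1}^{k}S^d}(A,B)\to CH^{S^d}(A,B)$. Precomposing with the $k$-fold wedge product $\mu_k:CH^{S^d}(A,B)^{\otimes k}\to CH^{\bigvee_{i=1}^{k}S^d}(A,B)$ and letting $c$ vary produces, after applying singular chains on $\mathcal{C}_d(k)$, a morphism
\[
\theta_k: C_*(\mathcal{C}_d(k)) \otimes CH^{S^d}(A,B)^{\otimes k} \longrightarrow CH^{S^d}(A,B).
\]
To see that $\{\theta_k\}$ defines an $E_d$-algebra structure, I would combine (a) compatibility of pinch maps with operadic composition in $\mathcal{C}_d$, (b) contravariant functoriality of $CH^{-}(A,B)$, and (c) associativity of the wedge product $\mu_k$, which ultimately reduces to associativity of the pushout equivalence $CH_{X\vee Y\vee Z}(A)\simeq CH_X(A)\otimes_A CH_Y(A)\otimes_A CH_Z(A)$ for $E_\infty$-algebras. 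Unit and $\Sigma_k$-equivariance are formal.

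Finally, to identify the underlying $E_1$-structure with $\cup_{S^d}$, I would restrict along the canonical inclusion $\mathcal{C}_1\hookrightarrow\mathcal{C}_d$ given by configurations along an affine axis. The pinch map induced by a standard pair of disks in $\mathcal{C}_1(2)$ agrees, up to homotopy, with the pinch $S^d\to S^d\vee S^d$ used to define the wedge cup product in the statement, so $\theta_2$ restricted to the $E_1$-suboperad is exactly $\cup_{S^d}$. The main obstacle is the operadic coherence in Step 3 at the chain level: the pinch construction is only strictly compatible with operadic composition for one chosen normalization, and the wedge product is only strictly associative in a cofibrant/semi-free model. This is handled by working with a singular simplicial model of $\mathcal{C}_d(k)$, or, more cleanly, by reformulating the construction within the factorization homology framework of Section \ref{SS:FactandHHforEinfty}, where the tensoring of $E_\infty$-algebras over spaces is homotopy-coherent by design and the required coherences follow automatically from the $E_d$-coalgebra structure on $S^d$ combined with the axiomatic characterization of higher Hochschild chains (Corollary \ref{C:properties}).
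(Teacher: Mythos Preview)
Your proposal is correct and follows essentially the same route as the paper: both reduce the operadic compatibility of the $pinch^*_{S^d,r}$ maps to the fact that $S^d$ is a $\mathcal{C}_d$-coalgebra in $(\hTopp,\vee)$, combined with associativity of the wedge product $\mu_\vee$ and contravariant functoriality of $CH^{-}(A,B)$; the identification of the underlying $E_1$-structure via a fixed element of $\mathcal{C}_d(2)$ is also the paper's argument. One point worth noting: the paper's actual definition of $pinch^*_{S^d,r}$ (equation~\eqref{eq:pinchSr}) routes through the diagonal $C_*(\mathcal{C}_d(r))\to C_*(\mathcal{C}_d(r))^{\otimes 2}$, using one copy to parametrize the \emph{higher} wedge product $\mu_\vee(c)$ (built from the $E_d$-multiplication $m_B(c)$ on $B$) and the other for the pinch, whereas you use only the degree-zero $\mu_k$; since $B$ is $E_\infty$ these yield equivalent structures, but this diagonal device is exactly what the paper invokes (``the diagonal is a map of $\infty$-operads'') to handle the chain-level coherence you flag as the main obstacle, so you need not appeal to the heavier factorization-homology reformulation.
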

The CDGA version of this result goes back to the first author's note~\cite{G}. This result is in fact a relative version of the higher Deligne conjecture. 

\smallskip

In Section~\ref{S:centralizers}, we reinterpret and generalize the above results, to the case of all $E_n$-algebras in terms of \emph{centralizers} of $E_n$-algebra maps.
The latter are $E_n$-algebras satisfying a universal property  whose existence was established by Lurie~\cite{L-HA}. Their importance lies in the fact that their structure controls relative deformations of categories of $E_n$-modules. 
We prove 
\begin{theoremstar}[Theorem~\ref{T:EnAlgHoch} and Proposition~\ref{P:HHEn=z}]
Let $f:A\to B$ be an $E_n$-algebra map. The $\mathcal{E}_n$-Hochschild cohomology $HH_{\mathcal{E}_n}(A,B)\cong \mathbb{R}Hom^{\mathcal{E}_n}_{A} \big({A}, {B}  \big)$ has a natural $E_n$-algebra structure exhibiting it as the centralizer $\mathfrak{z}(f)$ of $f$. 
\end{theoremstar}
Our result gives another proof of existence of centralizers and also gives an \emph{explicit description} in terms of factorization algebras.  
Applying the universal property of centralizers when $f= id_A$, and  using an approach due to Lurie~\cite{L-HA}, we obtain that   $\mathfrak{z}(id_A)\cong HH_{\mathcal{E}_n}(A,A)$ inherits a canonical  $E_{n+1}$-algebra structure, giving a \emph{solution to the higher Deligne conjecture}, see Corollary~\ref{T:Deligne}.
We also prove that the Hochschild cochains $CH^{S^d}(A,A)$ of a commutative algebra $A$ 
are equivalent to  its $E_n$-Hochschild cohomology (Proposition~\ref{P:E1Pn}). 

\smallskip

As already mentioned, our approach is based on the relationship 
 between $E_n$-algebras and factorization algebras which we briefly explain, among other preliminaries, in Section~\ref{S:operad}. 
 
 \emph{Factorization algebras} originated from   quantum field theories  and the pioneering work of Beilinson-Drinfeld~\cite{BD} on chiral and vertex algebras. We follow an approach due to Lurie~\cite{L-HA} and Costello-Gwilliam~\cite{CG}.
They are algebraic structures which share many similarities with (co)sheaves and were introduced to describe quantum field theories, much in the same way the sheaf of functions describes the structure of a manifold or scheme~\cite{BD, CG}. 
Roughly speaking a factorization algebra $\mathcal{F}$  associate (covariantly) cochain complexes to open subsets   of   a (stratified) manifold $X$  together with multiplications $\mathcal{F}(U_1)\otimes \cdots \otimes \mathcal{F}(U_n)\to \mathcal{F}(V)$ for any family of pairwise disjoint open subsets of an open set $V$ in $X$. It is required to satisfy a \lq\lq{}cosheaf-like\rq\rq{} condition, meaning that $\mathcal{F}(V)$ can be computed by analogues of \v{C}ech complexes indexed on nice enough covers.  

\emph{Factorization homology} is a catchword to describe homology theories \emph{specific} to, say, oriented\footnote{there are also variants specific to many other classes of structured manifold of fixed dimension; for instance framed, spin or unoriented ones} manifolds of a fixed dimension $n$. It can be seen as the (derived) global section of (locally constant) factorization algebras much in the same way as singular cohomology can be seen as sheaf cohomology with value in a constant sheaf.

$E_n$-algebras can be identified with  factorization algebras on  $\mathbb{R}^n$ which are locally constant, that is for which the structure map $\mathcal{F}(U) \to \mathcal{F}(V)$ is an equivalence when $U$ is a subset of $V$ and both are homeomorphic to a disk. This provides a nice model for the category of $E_n$-algebras, which, in some sense can be thougt as a kind of mild \lq\lq{}strictification\rq\rq{} of $E_n$-algebras. This is the model we use in our approach to centralizers.  
In Section~\ref{S:FactandMod}, we recall the relationship between $E_n$-modules over an $E_n$-algebra $A$ and factorization homology over $S^{n-1}\times \mathbb{R}$. 
 Namely  that the category of $E_n$-$A$-modules is equivalent to the category of left modules over the (associative) algebra $\int_{S^{n-1}\times \mathbb{R}} A$. For $n=\infty$, one recovers that $E_\infty$-$A$-modules are the same as left modules over $A$ (\cite{L-HA, L-III, KM}).

\setcounter{section}{5} \setcounter{theorem}{12}\begin{theorem}
Let $A$ be an $E_\infty$-algebra. There is an equivalence of symmetric monoidal $\infty$-categories between the category $A\text{-}Mod^{E_\infty}$ of $E_\infty$ $A$-Modules and the category of left $A$-modules (where $A$ is viewed as an $E_1$-algebra).
\end{theorem}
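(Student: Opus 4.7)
The plan is to derive this from the $E_n$ analogue recalled just before the statement, namely the equivalence $A\text{-}Mod^{E_n}\simeq \text{LMod}_{\int_{S^{n-1}}A}$, by passing to a limit as $n\to\infty$. I would first exploit that any $E_\infty$-algebra is an $E_n$-algebra for every $n$, and that the forgetful maps $A\text{-}Mod^{E_{n+1}}\to A\text{-}Mod^{E_n}$ organize $A\text{-}Mod^{E_\infty}$ as the homotopy limit of the tower $\{A\text{-}Mod^{E_n}\}_{n\geq 1}$; this is a formal consequence of the description $E_\infty\simeq \colim_n E_n$ at the level of $\infty$-operads.

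Next I would transport this tower through the $E_n$ statement and analyze it in factorization-homology terms. The equatorial embeddings $S^{n-1}\hookrightarrow S^n$ induce $E_1$-algebra maps $\int_{S^{n-1}} A \to \int_{S^n} A$ and hence symmetric monoidal base-change functors between the corresponding categories of left modules. The essential computation is to identify the colimit $\colim_n \int_{S^{n-1}} A$ with $A$ itself: using $\int_X A\simeq CH_X(A)$ for $E_\infty$-algebras (the $CH=TCH$ identification recalled in the excerpt), the homotopy invariance of $CH$ together with its preservation of filtered colimits in the space variable reduces this to the contractibility of $S^\infty=\colim_n S^{n-1}$, whence $\colim_n CH_{S^{n-1}}(A)\simeq CH_{pt}(A)\simeq A$.

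Finally I would invoke the general principle that $\text{LMod}$ turns colimits of $E_1$-algebras into limits of module categories, so that $\lim_n \text{LMod}_{\int_{S^{n-1}}A}\simeq \text{LMod}_A$. Composing the chain of equivalences yields $A\text{-}Mod^{E_\infty}\simeq \text{LMod}_A$ as $\infty$-categories, where on the right $A$ is regarded as an $E_1$-algebra through the operad map $E_1\to E_\infty$.

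The main obstacle is the coherent compatibility of the symmetric monoidal structures along the tower: one must check that each relative tensor product $-\otimes_{\int_{S^{n-1}}A}-$ intertwines with the base-change functors and that the induced monoidal structure on the inverse limit is precisely the one given by $-\otimes_A-$ on left $A$-modules. This compatibility ultimately rests on Lurie's framework of $\infty$-operads and module categories; the cleanest route to circumvent it is to appeal directly to the results of \cite{L-HA, L-III} showing that, for a commutative algebra object $A$, the forgetful functor $A\text{-}Mod^{E_\infty}\to \text{LMod}_A$ is itself a symmetric monoidal equivalence, which is the conceptual content of the theorem.
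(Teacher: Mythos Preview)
Your proposal is correct and follows essentially the same route as the paper: both identify $A\text{-}Mod^{E_\infty}$ with the limit of the tower $\{A\text{-}Mod^{E_n}\}$, transport through the equivalences $A\text{-}Mod^{E_n}\simeq \text{LMod}_{\int_{S^{n-1}}A}$, and use contractibility of $S^\infty$ to conclude $\colim_n CH_{S^{n-1}}(A)\simeq CH_{S^\infty}(A)\simeq A$. The paper supplies two details you gloss over: it proves the compatibility of the equivalences $A\text{-}Mod^{E_n}\simeq \text{LMod}_{\int_{S^{n-1}}A}$ with the transition maps of the two towers by an explicit analysis of the enveloping algebra maps $U_A^{(n)}\to U_A^{(n+1)}$ (Lemma~\ref{L:EnModseq}), and it establishes $A\text{-}Mod^{E_\infty}\simeq \lim_n A\text{-}Mod^{E_n}$ via trivial extensions and the equivalence with augmented $E_n$-algebras (Lemma~\ref{L:EinftyModLim}) rather than appealing to it as purely formal.
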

We give a proof of this result using factorization homology in Section \ref{SS:lifttoEinfty}.
From this, we deduce in Section \ref{SS:E-inf-PD}, that,  
the Poincar\'e duality isomorphism can be \emph{uniquely lifted} into an $E_\infty$-quasi-isomorphism 
\setcounter{section}{5} \setcounter{theorem}{25}\begin{cor}
Let $(X, [X])$ be a Poincar\'e duality space.
The cap-product by $[X]$ induces a quasi-isomorphism of $E_\infty$-$C^{\ast}(X)$-modules
\begin{equation*}
C^{\ast}(X) \stackrel{\simeq}\longrightarrow  C_{\ast}(X)[\dim(X)]
\end{equation*}
realizing the (unique) $E_\infty$-lift of the Poincar\'e duality isomorphism.
\end{cor}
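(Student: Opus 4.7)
The plan is to combine classical Poincar\'e duality with the equivalence of $\infty$-categories given by the preceding theorem, which identifies $E_\infty$-modules over an $E_\infty$-algebra $A$ with left modules over the $E_1$-algebra underlying $A$.

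First, I would recall that cap-product with the fundamental class $[X]$ defines a quasi-isomorphism $\cap [X]\colon C^{\ast}(X) \to C_{\ast}(X)[\dim X]$. The projection formula
\[
\alpha \cap (\beta \cap [X]) \;=\; (\alpha \cup \beta) \cap [X]
\]
shows on the level of (co)chains that this is a map of left differential graded modules over the associative algebra $C^{\ast}(X)$, where $C^{\ast}(X)$ acts on itself by cup-product and on $C_{\ast}(X)$ by cap-product. In particular, it is a morphism of left modules over $C^{\ast}(X)$ regarded as an $E_1$-algebra.

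Next, I invoke the preceding theorem with $A = C^{\ast}(X)$, producing an equivalence of symmetric monoidal $\infty$-categories between $E_\infty$-$C^{\ast}(X)$-modules and left modules over the $E_1$-algebra underlying $C^{\ast}(X)$. Applying the inverse of this equivalence to the cap-product quasi-isomorphism constructed above yields an $E_\infty$-$C^{\ast}(X)$-module map $C^{\ast}(X) \to C_{\ast}(X)[\dim X]$, and because equivalences of $\infty$-categories preserve and reflect equivalences, this lift is automatically a quasi-isomorphism. Uniqueness of the lift is encoded in the same equivalence: the mapping space of $E_\infty$-module maps from $C^{\ast}(X)$ to $C_{\ast}(X)[\dim X]$ is weakly equivalent to the corresponding mapping space of left $E_1$-modules, so the homotopy class of any $E_\infty$-lift is determined by its underlying left $E_1$-module map, namely the classical cap-product.

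The principal obstacle is to verify that under the equivalence of the preceding theorem, the $E_\infty$-$C^{\ast}(X)$-module structure on $C_{\ast}(X)$ (arising from the tensoring of $E_\infty$-algebras over spaces, or equivalently from the factorization homology developed in Section 6) corresponds to the classical left $C^{\ast}(X)$-module structure on $C_{\ast}(X)$ given by cap-product. This should reduce to observing that both structures restrict to the same $E_1$-action, obtained by evaluating cochains on chains, after which the symmetric monoidal equivalence forces them to agree up to contractible choice.
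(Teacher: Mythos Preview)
Your proposal is correct and follows essentially the same route as the paper. The paper packages the argument via the immediately preceding Corollary (for a general $E_\infty$-coalgebra $C$): the cap-product by a cycle $c$ is a map of left $C^\vee$-modules because the coproduct on $C$ is an $E_1$-coalgebra map, and then Theorem~6.6 supplies the unique $E_\infty$-lift, which is an equivalence when $\cap c$ is a quasi-isomorphism.

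One small clarification on your ``principal obstacle'': the $E_\infty$-$C^\ast(X)$-module structure on $C_\ast(X)$ in the paper is not obtained from tensoring $E_\infty$-algebras over spaces (indeed $C_\ast(X)$ is a coalgebra, not an algebra), but rather from the inclusion $C_\ast(X)\subset (C^\ast(X))^\vee$, where the bidual carries its canonical $E_\infty$-$C^\ast(X)$-module structure. With this definition the underlying $E_1$-action is the cap-product essentially by construction (the $E_1$-structure on $C^\ast(X)$ is the cup-product, and the dual action on $(C^\ast(X))^\vee$ restricts to cap-product on $C_\ast(X)$), so the compatibility you flag is immediate rather than an obstacle.
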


Putting together the above results on the Deligne conjecture, Poincar\'e duality and interpretation of higher Hochschild chains in terms of mapping spaces,
 we obtain in Section \ref{S:Brane} that the chains $C_\ast(Map(S^n,M))$ for an $n$-connected manifold $M$ inherits a natural $E_{n+1}$-algebra structure (Theorem~\ref{T:BraneChain}) lifting Sullivan-Voronov sphere product. To identify the sphere product, we use the fact that we have an \emph{explicit} description of the centralizers of $E_n$-algebra maps. 
 The above results  yield chain level constructions over any field or the ring of integers. 
 Results similar to Theorem~\ref{T:BraneChain} can be obtained using only bimodules maps (not necessarily quasi-isomorphisms) $C^\ast(M) \to C_\ast(M)[d]$. 
 This yield a \emph{functorial} construction of $E_{n+1}$-algebra structures on $C_\ast(Map(S^n,M))$, see  Theorem~\ref{THM:Ed-on-HC(AM)}.

Furthermore, ideas similar (in some sense dual) to the concept of the centralizer for $E_n$-algebra maps, lead to a description and construction of bar and iterated bar constructions for $E_n$-algebras. The bar construction of a (dg-)associative \emph{augmented} algebra is a standard functor in homological algebra and algebraic topology. The bar construction of a CDGA is itself an augmented algebra and thus can be iterated; this result was extended to $E_\infty$-algebras by Fresse~\cite{Fre2}. 

In topology, \emph{iterated bar constructions} arise as models for  \emph{iterated loop spaces} $\Omega^n (X)$, the space of pointed maps from the sphere $S^n$ to a pointed space $X$. The latter is an $E_n$-algebra in the category of spaces so that its singular cochains  becomes an $E_n$-coalgebra in $E_\infty$-algebras.
In Section \ref{S:Bar}, for an augmented $E_\infty$-algebra $A$ we apply the $E_n$-algebra structure on higher Hochschild chains $CH^{S^n}(A,k)$ (identified with the centralizer construction for the augmentation $A\to k$) to describe the \emph{iterated Bar construction} of an augmented $E_\infty$-algebra. We obtain that  $Bar(A)$ 
is naturally an $E_1$-coalgebra in augmented $E_\infty$-algebras so that we can iterate the construction. With this, we prove that \emph{the $n$-iterated bar construction $Bar^{(n)}$ is an  $E_n$-coalgebra inside the ($(\infty,1)$)-category of $E_\infty$-algebras}, see Theorem~\ref{P:EncoAlgBar}. We then relate this construction to iterated loop spaces by showing that there is a natural
map of $E_n$-coalgebras (and $E_\infty$-algebras) $Bar^{(n)}(C^\ast(X))\to C^\ast(\Omega^n(X))$ which is a quasi-isomorphism if $X$ is $n$-connected. 
\setcounter{section}{8} \setcounter{theorem}{9} \begin{cor}
Let $Y$ be a topological space.
The map $\mathcal{I}t^{\Omega^n}: Bar^{(n)}(C^{\ast}(Y)) \to C^{\ast}\big(\Omega^n(Y)\big) $ is an $E_n$-coalgebra morphism in the category of $E_\infty$-algebras, which is an equivalence if $Y$ is $n$-connected.
\end{cor}
We also give similar dual statements for \emph{chains on iterated loop spaces}  using that the dual of the Bar construction is precisely the centralizer $\mathfrak{z}(A\to k)\cong CH^{S^n}(A,k)$ of the augmentation $A\to k$.

In Section~\ref{SS:BarforEn}, we consider the \emph{bar construction of an $E_m$-algebra} $A$. Using its factorization homology interpretation due to Francis~\cite{F}, we prove that the bar construction $Bar^{(1)}(A)$  is naturally an $E_{m-1}$-algebra which allows us to iterate this construction up to $m$-times. Then, using the technique of Section~\ref{S:centralizers}, we prove that 
\setcounter{section}{8} \setcounter{theorem}{36} 
\begin{theorem}The $n$-iterated bar construction of an augmented $E_m$-algebra $(m\geq 1)$ has a natural structure of an $E_n$-coalgebra inside the $((\infty,1)-)$category $E_{m-n}$-algebras.
\end{theorem}
Similar result are stated in~\cite{F2}.
The existence of the iterated bar construction for $E_m$-algebras  as a chain complex was proved in~\cite{FresseBarEn}.
The idea behind the theorem is again to prove a similar statement for locally constant factorization algebras over $\mathbb{R}^m$. More precisely, we prove that an augmented locally constant factorization algebra over  $\mathbb{R}^m$ naturally gives rise to a locally constant \emph{stratified} factorization algebra on the pointed sphere $S^m$ whose global sections are precisely the iterated bar construction $Bar^{(m)}(A)$. This construction extends into a locally constant factorization \emph{coalgebra} over $\mathbb{R}^m$ which associates to any disk (the global sections of) a stratified factorization algebra on the one-point compactification of the disk.   

\smallskip

In this paper, we work in Lurie's framework of stable $\infty$-categories~\cite{Lu11, L-HA}, which is very well suited for doing homological algebra in the symmetric monoidal context. In particular, we will work over the (derived) $(\infty,1)$-category $\hkmod$ of chain complexes over a commutative unital ring $k$.
(In section \ref{S:operad}, we briefly recall notions of $(\infty, 1)$-categories, $\infty$-operads and in particular the $E_n$-operad and its algebras and their modules.)
It should be noted that in characteristic zero, one can use CDGA's instead of $E_\infty$-algebras which allows us to have (model) categories interpretation of all our results in the spirit of~\cite{G, GTZ, GTZ2}.

\medskip

\begin{ack*}
The authors\footnote{the first author was partially supported by the ANR Grant HOGT} would like to thank the Max-Planck-Institut
f\"ur Mathematik in Bonn, which hosted them while part of this project was being done.  The first author would also like to thank the Einstein Chair at CUNY for their invitation. The third author is partially supported by the NSF grant DMS-1309099. We also would like to thank Damien Calaque, Kevin Costello, John Francis and Hiro-Lee Tanaka for many helpful discussions, and Jim Stasheff for useful comments on an earlier draft of this paper.
\end{ack*}

\medskip

\noindent\textbf{Conventions and notations:}
\begin{enumerate}
\item We use \emph{homological grading}, emphasizing the geometric dimension of the chains on mapping spaces. In particular, unless otherwise stated, differential will lower the degree by one. We will write $\hkmod$ for the $(\infty,1)$-category of chain complexes of $k$-modules and $\otimes$ for tensor products over the ground ring $k$.

\item We will denote the \emph{Hochschild chain complex} of $A$, modeled over a space,
$X$ with values in an $A$-module $M$, by $CH_{X}(A, M)$ as an object in the  stable $(\infty,1)$-category
of chain complexes. This is a \emph{covariant} functor in $X$.
Similarly, we will also denote the \emph{Hochschild cochain complex} of $A$, modeled  over a space  $X$,
with values in an $A$-module $M$, by $CH^{X}(A, M)$, as an object in the stable $(\infty,1)$-category
of chain complexes.  This is a \emph{contravariant} functor of $X$, see \S~\ref{SS:coHHEinfty}. This is compatible with
the notation introduced in~\cite{GTZ2} but not with those in ~\cite{G, GTZ}.
We choose this notation in order to emphasize the variance of the functor with respect to $X$.

\item We will respectively denote $HH^{X, n}(A, M)$ and $HH_{X, n}(A,M)$ the degree $n$ homology groups of
$CH^{X}(A, M)$ and  $CH_{X}(A, M)$.
\item For $n\in \mathbb{N}\cup \{\infty\}$, we will write $E_n\text{-Alg}$ for the $(\infty,1)$-category of
 $E_n$-algebras in $\hkmod$ as studied in~\cite{L-VI, L-HA, F}. 
We will also denote by $\mathbb{E}^{\otimes}_n$ the $\infty$-operad governing $E_n$-algebras,
$HH_{\mathcal{E}_n}(A,M)$ for the $E_n$-Hochschild cohomology of an $E_n$-algebra
 with value in an $E_n$-$A$-module $M$ (Definition~\ref{D:EnHoch}) and $\int_{X} A$
for the factorization homology of $A$ on a framed manifold $X$ (see \S~\ref{SS:FactAlgebraFactHomology}).
 Also $\hcdga$ will be the $(\infty,1)$-category of commutative differential graded $k$-algebras (CDGA for short).
\item Given an $E_n$-algebra $A$, we will write $A\text{-}Mod^{E_n}$ for the  $(\infty,1)$-category of $E_n$-modules over $A$. Similarly, if $B$ is an $E_m$-algebra (with $m\geq n$), we will write  $B\text{-}Mod^{E_n}$ for the  $(\infty,1)$-category of $E_n$-modules over $B$ viewed as an $E_n$-algebra.
\item If $A$ is an $E_n$-algebra ($n\geq 1$) by a left or right module over $A$, we mean a left or right module over $A$ viewed as an $E_1$-algebra. We will denote $A-LMod$ and $A-RMod$ the respective $(\infty,1)$-categories of left and right modules over $A$.
\item If $A$ is a CDGA, and $V$ an $A$-module, we will write $\text{Sym}_A(V)$ for its differential graded symmetric algebra. When $A=k$, we will often simply write $S(V):=\text{Sym}_k(V)$.
\item Unless otherwise stated, we will work in the context of unital algebras.
\end{enumerate}

\setcounter{section}{1}
\section{Preliminaries on $E_n$-algebras and factorization homology}\label{S:operad}
In this section, we briefly recall notions of $(\infty, 1)$-categories, $\infty$-operads and in particular the $E_n$-operad, its algebras, and their modules. There are several equivalent notions of (symmetric monoidal) $(\infty,1)$-categories and the reader should feel free to use its favorite ones. Below, we recall very briefly the model given by the complete Segal spaces and give some examples.

\subsection{$\infty$-categories}\label{S:DKL}
Following~\cite{Re, L-TFT}, an $(\infty,1)$-category is a \textit{complete Segal space}. There is a simplicial closed model category structure, denoted $\SeSp$ on the category of simplicial spaces such that a fibrant object in the $\SeSp$ is precisely a Segal space. Rezk has shown that the category of simplicial spaces has another simplicial closed model structure, denoted $\CSS$, whose fibrant objects are precisely complete Segal spaces ~\cite[Theorem 7.2]{Re}.   Let $\mathbb{R}:\SeSp \to \SeSp$ be a fibrant replacement functor. Let $\widehat{\cdot}: SeSp \to \CSS $,  $X_\bullet \to \widehat{X_\bullet}$, be the completion functor that assigns to a Segal space an equivalent complete Segal space. The composition $X_\bullet \mapsto \widehat{\mathbb{R}(X_\bullet)}$ gives  a fibrant replacement functor $L_{\CSS}$ from simplicial spaces to complete Segal spaces.

Let us explain how to go from a model category to a simplicial space. The standard key idea is to use Dwyer-Kan localization. Let $\mathcal{M}$ be a model category and $\mathcal{W}$ be its subcategory of weak-equivalences. We denote $L^H(\mathcal{M},\mathcal{W})$ its \emph{hammock localization}, see \cite{DK}. One of the main properties of $L^H(\mathcal{M},\mathcal{W})$ is that it is a simplicial category and that the (usual) category $\pi_0(L^H(\mathcal{M},\mathcal{W}))$ is the homotopy category of $\mathcal{M}$. Furthermore,   every weak equivalence has a (weak) inverse in $L^H(\mathcal{M},\mathcal{W})$. When $\mathcal{M}$ is  further a simplicial model category, then for every pair $(x,y)$ of objects $\mathop{Hom}_{L^H(\mathcal{M},\mathcal{W})}(x,y)$ is naturally homotopy equivalent to the derived mapping space $ \mathbb{R}Hom (x,y)$.

It follows that any model category $\mathcal{M}$ gives rise functorially to the simplicial category
$L^H(\mathcal{M},\mathcal{W})$. Taking the nerve $N_\bullet(L^H(\mathcal{M},\mathcal{W}))$ we obtain a simplicial space. Composing with the complete Segal Space replacement functor we get a functor $\mathcal{M}\to L_\infty(\mathcal{M}):= L_{\CSS}(N_\bullet(L^H(\mathcal{M},\mathcal{W})))$ from model categories to $(\infty,1)$-categories (that is complete Segal spaces).

\begin{ex} \label{E:hsset}Applying the above procedure to the model category of simplicial sets $\sset$, we obtain the $(\infty,1)$-category $\hsset$. Similarly from the model category $\cdga$ of commutative differential graded algebras, which we refered to as  CDGAs for short, we obtain the $(\infty,1)$-category $\hcdga$.
 Note that a simplicial space is determined by its $(\infty, 0)$ path groupoid and therefore the category of simplicial sets should be thought of as the $(\infty, 1)$ category of all $(\infty, 0)$ groupoids. Further, the disjoint union of simplicial sets and the tensor products (over $k$) of algebras are monoidal functors which gives $\sset$ and $\cdga$ a structure of monoidal model category (see~\cite{Ho} for example). Thus $\hsset$ and $\hcdga$ also inherit the structure of symmetric monoidal $(\infty,1)$-categories in the sense of~\cite{Re, L-TFT}.

The model category of topological spaces yields the $(\infty,1)$-category $Top_\infty$. Since $\sset$ and $\Top$ are Quillen equivalent~\cite{GoJa, Ho}, their associated $(\infty,1)$-categories are equivalent (as $(\infty,1)$-categories): $\hsset \stackrel{\sim}{\underset{\sim}{\rightleftarrows}} Top_\infty$, where the left and right equivalences are respectively induced by the singular set and geometric realization functors.

One can also consider the pointed versions ${\hsset}_*$ and ${Top_\infty}_*$ of the above $(\infty,1)$-categories (using the model categories of these pointed versions~\cite{Ho}).
\end{ex}

\begin{ex} \label{E:CDGAMod} There are model categories
$A\text{-}Mod$ and $A\text{-}\cdga$ of modules and commutative algebras over a CDGA $A$, thus the above procedure
gives us $(\infty,1)$-categories $A\text{-}Mod_\infty$ and $A\text{-}\cdga_\infty$ and the base changed functor
lifts to an $(\infty,1)$-functor. Further, if $f: A\to B$ is a weak equivalence, the natural
functor $f_*:B\text{-}Mod \to A\text{-}Mod$ induces an equivalence $B\text{-}Mod_\infty \stackrel{\sim}\to A\text{-}Mod_\infty$
of $(\infty,1)$-categories since it is a Quillen equivalence.

Moreover, if $f:A\to B$ is a morphism of CDGAs, it induces a natural functor
$f^*:A\text{-}Mod \to B\text{-}Mod, M\mapsto M\otimes_A B$, which is an equivalence of $(\infty,1)$-categories
when $f$ is a quasi-isomorphism, and is a (weak) inverse of $f_*$ (see~\cite{ToVe} or~\cite{KM}).
Here, we also denote $f^*:A\text{-}Mod_\infty \to B\text{-}Mod_\infty$ and $f_*:B\text{-}Mod_\infty \to A\text{-}Mod_\infty$ the
(derived) functors of $(\infty,1)$-categories induced by $f$.
 Since we are working over a field of characteristic zero, the same results applies to monoids in
 $A\text{-}Mod$ and $B\text{-}Mod$, that is to the categories $A$-$\hcdga$ and $B$-$\hcdga$.

Also, note that if $A,B,C$ are CDGAs and $f:A\to B$, $g:A\to C$ are CDGAs morphisms, we can form
the (homotopy) pushout $D\cong B\otimes^{\mathbb{L}}_{A} C$; let us denote $p: B\to D$ and $q:C\to D$
the natural CDGAs maps. Then, we get the two natural based change $(\infty,1)$-functors
$C\text{-}Mod_\infty \underset{p_*\circ q^*}{\stackrel{f^*\circ g_*}\rightrightarrows} B\text{-}Mod_\infty$. Given any $M\in C\text{-}Mod$,
the natural map $ f^*\circ g_*(M) \to p_*\circ q^*(M)$ is an
equivalence~\cite[Proposition 1.1.0.8]{ToVe}.
\end{ex}

\subsection{$\infty$-operads, $E_n$-algebras} \label{S:EnAlg}

An operad is a special case of a colored operad which itself is a special case of an $\infty$-operad. An \emph{infinity operad} $\mathcal{O}$ is an $\infty$-category together with a functor $\mathcal{O}^\otimes \to N(Fin_\ast)$  satisfying a list of axioms, see \cite{L-HA}. An other (equivalent) approach to $\infty$-operads is given by the dendroidal sets~\cite{CiMo-Dendroidalasinftyoperad}. 

The simplest example of an $\infty$-operad is $N(Fin_\ast) \to N(Fin_\ast)$. This example is the $\infty$-operad associated to the operad $Comm$. In other words $Comm^\otimes =N(Fin_\ast)$. 
\begin{definition}\label{D:EnOperad} The configuration spaces of small $n$-dimensional cubes embedded in a bigger $n$-cube form an operad, $\mathbb E_n$, whose associated $\infty$-operad is denoted by $\mathbb E_n^\otimes$ see \cite{L-HA}.
This example has the same objects as $Fin_\ast$, and we will denote $\mathbb{E}_n^{\otimes}(I,J)$ its spaces of morphisms from $I$ to $J$.  

There is a standard model for this operad given by $\big(\mathcal{C}_n(r)\big)_{r\geq 1}$, the \emph{operad of little $n$-cubes}, see~\cite{Ma, L-HA}, where $\mathcal{C}_n(r)$ is the  configuration space of rectilinear embeddings of $r$-disjoint cubes inside an unit cube. Its singular chain $C_\ast(\mathcal{C}_n(r))$ is a model for the operad governing $E_n$-algebras in $\hkmod$.
\end{definition}
\smallskip

Recall that, for any integer $n\geq 0$,  $\mathbb E_n^\otimes$ denotes the $\infty$-operad of little $n$-cubes.
 By an $\mathbb E_n$-algebra we mean an algebra over the $\infty$-operad $\mathbb E_n^\otimes$.
We will denote $ E_n\text{-Alg}$  the symmetric monoidal
$\infty$-category of $\mathbb E_k$-algebras in the symmetric
monoidal $\infty$-category of differential graded $k$-modules (which is equivalent to the one given in Definition~\ref{D:EnasDisk} below). 

For
any $\mathbb E_n$-algebra $A$, let $A\text{-}Mod^{E_n}$ denote the
symmetric monoidal $\infty$-category of modules over the
 $\mathbb E_n$-algebra $A$. If $\mathcal{C}$ is a symmetric monoidal $(\infty,1)$-category (different from $\hkmod$), \emph{we denote $E_n\text{-Alg}(\mathcal{C})$ for the $(\infty,1)$-category of $E_n$-algebras in $\mathcal{C}$} and similarly $E_n\text{-coAlg}(\mathcal{C})$ for the category of \emph{$E_n$-coalgebras in $\mathcal{C}$}.

\smallskip

 There  are natural maps (sometimes called the stabilization functors)
\begin{equation}\label{eq:towerofEnoperad}
 \mathbb E_0^\otimes \longrightarrow \mathbb E_1^\otimes \longrightarrow \mathbb E_2^\otimes \longrightarrow \cdots
\end{equation} (induced by taking products of cubes with the interval $(-1,1)$).
It is a fact that the colimit of this diagram, denoted by $\mathbb E_\infty^\otimes$
can be identified with $Comm^\otimes$~\cite[Section 5.1]{L-HA}. In particular,
for any $n\in \mathbb{N}-\{0\} \cup \{+\infty\}$,
 the map $\mathbb{E}_1^\otimes \to \mathbb{E}_n^\otimes$ induces a functor $E_n\text{-Alg} \to E_1\text{-Alg}$
which associates to  an $E_n$-algebra its underlying $E_1$-algebra structure.

According to Lurie~\cite{L-HA} (also see~\cite{F,AFT}), we also have an alternative definition for $E_n$-algebras.
\begin{definition}\label{D:EnasDisk}
 The $(\infty,1)$-category of \emph{$E_n$-algebras},
is defined as the ($(\infty,1)$-)category of symmetric monoidal functors
$$
\text{\emph{Fun}}^\otimes(\text{\emph{Disk}}^{fr}_n,\,\hkmod)
$$
where $\text{\emph{Disk}}^{fr}_n$ is the category with objects the integers and morphism the spaces $\text{\emph{Disk}}^{fr}_n(k,\ell):= \text{\emph{Emb}}^{fr}(\coprod_{k} \R^n, \coprod_{\ell} \R^n)$  of framed embeddings of $k$ disjoint copies of a disk $\R^n$ into $\ell$ such copies; the monoidal structure is induced by disjoint union of copies of disks.

We will denote by $Map_{E_{n}\text{-Alg}}(A,B)$ the mapping space
of  $E_n$-algebras maps from $A$ to $B$, \emph{i.e.}, the space of
maps between the associated symmetric monoidal functors.
\end{definition}
In other words,  $E_n\text{-Alg}$ is equivalent to the
$(\infty,1)$-category of $\text{\emph{Disk}}^{fr}_n$-algebras (where
$\text{\emph{Disk}}^{fr}_n$ is equipped with its obvious
$\infty$-operad structure). The tensor products in $\hkmod$ induces
a symmetric monoidal structure on $E_n\text{-Alg}$ as well (which,
for instance,  can be represented by usual Hopf operads such as
those arising from the filtration of the Barratt-Eccles
operad~\cite{BF}).

\begin{ex}[Opposite of an $E_n$-algebra] \label{R:OppositeAlgebra}
 There is a canonical $\mathbb{Z}/2\mathbb{Z}$-action on $E_n\text{-Alg}$ induced by the antipodal map $\tau: \R^n \to \R^n$, $x\mapsto -x$ acting on the source of  $\text{\emph{Fun}}^\otimes(\text{\emph{Disk}}^{fr}_n,\,\hkmod)$. If $A$ is an $E_n$-algebra, then the result of this action $A^{op}:=\tau^*(A)$ is its opposite algebra. If $n=\infty$, the antipodal map is homotopical to the identity so that $A^{op}$ is equivalent to $A$ as an $E_\infty$-algebra.
\end{ex}

\begin{ex}[Singular (co)chains]\label{E:singularchainasEinfty} Let $X$ be a topological space.
Then its \emph{singular cochain complex $C^\ast(X)$ has a natural
structure of an $E_\infty$-algebra}, whose underlying
$E_1$-structure is given by the usual (strictly associative)
cup-product (for instance see~\cite{M2}). Similarly, the
\emph{singular chains $C_\ast(X)$ have a natural structure of
$E_\infty$-coalgebra} which is the predual of $(C^\ast(X),\cup)$.
There are similar explicit constructions for simplicial sets
$X_\com$ instead of spaces, see~\cite{BF}. 

We recall that
$C^\ast(X)$ is the linear dual of the singular chain complex
$C_\ast(X)$ which is the geometric realization (in $\hkmod$) of the
simplicial $k$-module $k[ \Delta_\bullet(X)]$ spanned by the
singular set $\Delta_\bullet(X):=Map(\Delta^\bullet,X)$. Here,
$\Delta^n$ is the standard $n$-dimensional simplex. 

Also note that, for
$E_\infty$-algebras $A$, $B$, the mapping space
$Map_{E_{\infty}\text{-Alg}}(A,B)$ is the (geometric realization of
the) simplicial set $[n]\mapsto Hom_{E_\infty\text{-Alg}}\big(A,
B\otimes C^\ast(\Delta^n)\big)$.
\end{ex}
\begin{rem}\label{R:Einftyistensored}
 The $(\infty,1)$-category $E_\infty\text{-Alg}$ is enriched over $\hsset\cong \hTop$ and has all ($\infty$-)colimits. In particular, it is \emph{tensored} over $\hsset$, see~\cite{Lu11, L-HA} for details on tensored $\infty$-categories (and~\cite{Ke} for the classical theory) or, for instance,~\cite{EKMM, MCSV} in the  $E_\infty$-case (in the context of topologically enriched model categories). We recall that it means that there is a functor $E_\infty\text{-Alg} \times \hsset \to E_\infty\text{-Alg}$, denoted $(A, X_\bullet)\mapsto A\boxtimes X_\bullet$, together with natural equivalences
 $$Map_{E_\infty\text{-Alg}}\big(A\boxtimes X_\bullet, B\big) \; \cong \; Map_{\hsset}\big(X_\com, Map_{E_\infty\text{-Alg}}\big(A, B\big)\big). $$
 Note that the tensor $A\boxtimes X_\bullet$ can be computed as the
colimit $\colim p_{X_\bullet}^A$ where $p_{X_\bullet}^A$ is the
constant map $X_\bullet \to E_\infty\text{-Alg}$ taking value $A$,
for instance see~\cite[Corollary 4.4.4.9]{Lu11}.  Similarly,
$\hcdga$ is tensored over $\hsset$ (and thus $\hTop$ as well).
\end{rem}

 We will use the following fact, which identifies the coproduct in $E_\infty\text{-Alg}$ with the tensor product, to show the Hochschild complex of an $E_\infty$-algebra model over a space $X$ has a natural $E_\infty$ structure.

 \begin{prop}\label{P:tensor=coprod} In the symmetric monoidal $(\infty, 1)$-category ${E_\infty}\text{-Alg}$, the tensor product is a coproduct.
 \end{prop}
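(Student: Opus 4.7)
The plan is to show that the natural maps $\iota_A\colon A\to A\otimes B$ and $\iota_B\colon B\to A\otimes B$ (coming from the units $k\to B$ and $k\to A$ tensored with the identity) exhibit $A\otimes B$ as a coproduct in $E_\infty\text{-}Alg$. Recall that the $(\infty,1)$-category $E_\infty\text{-}Alg$ is equivalent, by Dunn's additivity and the identification $\mathbb{E}_\infty^\otimes \simeq \mathrm{Comm}^\otimes=N(\mathrm{Fin}_\ast)$, to the $(\infty,1)$-category of commutative algebra objects in $\hkmod$; that is, to symmetric monoidal functors $N(\mathrm{Fin}_\ast)\to \hkmod$. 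The unit maps $\iota_A,\iota_B$ are genuine maps of $E_\infty$-algebras because the unit $k$ is initial in $E_\infty\text{-}Alg$.

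The key construction is the following. Given any $E_\infty$-algebra $C$ and $E_\infty$-maps $f\colon A\to C$ and $g\colon B\to C$, form the composite
\[
A\otimes B \xrightarrow{f\otimes g} C\otimes C \xrightarrow{\mu_C} C,
\]
where $\mu_C$ is the multiplication. The crucial point, and what distinguishes the $E_\infty$ setting from the $E_n$ setting for finite $n$, is that for a commutative (i.e.\ $E_\infty$) algebra $C$ the multiplication $\mu_C\colon C\otimes C\to C$ is itself a map of $E_\infty$-algebras; this follows from the Eckmann--Hilton principle applied to the two compatible $E_\infty$-structures on $C\otimes C$, or more formally from the fact that $\mu_C$ is a map of commutative monoids in $\hkmod$ for any commutative monoid $C$. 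Consequently, the composite above is a map of $E_\infty$-algebras and is compatible with $\iota_A,\iota_B$ by unitality.

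To conclude, I would verify that the induced map on mapping spaces
\[
\mathrm{Map}_{E_\infty\text{-}Alg}(A\otimes B,\,C) \longrightarrow \mathrm{Map}_{E_\infty\text{-}Alg}(A,C)\times \mathrm{Map}_{E_\infty\text{-}Alg}(B,C)
\]
obtained by precomposition with $(\iota_A,\iota_B)$ is an equivalence, with the construction of the previous paragraph providing a homotopy inverse. Both composites are checked to be the identity up to natural homotopy using the unitality of $\mu_C$ and the fact that $\mu_C\circ(\iota_A\otimes\iota_B)$ is the identity of $A\otimes B$ (since this identity holds already at the level of the underlying objects, and the $E_\infty$-structure is compatible). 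This matches the general statement, in the framework of Lurie's \emph{Higher Algebra}, that the forgetful functor $\mathrm{CAlg}(\mathcal{C})\to \mathcal{C}$ from commutative algebras in a symmetric monoidal $(\infty,1)$-category $\mathcal{C}$ creates coproducts out of tensor products.

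The main obstacle is coherence: one must ensure the identifications above are not merely homotopies between maps of underlying chain complexes but equivalences of $E_\infty$-algebra maps, i.e.\ they lift through the full tower of higher operations. The cleanest way to handle this is to work directly with the symmetric monoidal functor description of Definition~\ref{D:EnasDisk} (specialized to $n=\infty$, so with $\mathrm{Fin}_\ast$ in place of $\mathrm{Disk}^{fr}_n$): the two natural transformations $\iota_A,\iota_B$ are then transparently symmetric monoidal, and the induced map $A\otimes B\to C$ is read off from the symmetric monoidal structure of $C$ itself. This reduces everything to the statement that in the $(\infty,1)$-category of symmetric monoidal functors into $\hkmod$, the Day convolution (which on $\mathrm{Fin}_\ast$ gives pointwise tensor product) computes the coproduct, which is standard.
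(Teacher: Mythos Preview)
Your proposal is correct and in fact provides more detail than the paper itself: the paper does not give an argument but simply cites Proposition~3.2.4.7 of Lurie's \emph{Higher Algebra} (and Corollary~3.4 of Kriz--May), noting only that the statement follows from the identification of $E_\infty$-algebras with commutative monoids in $(\hkmod,\otimes)$. Your sketch is precisely the standard argument underlying those references, so there is nothing to compare.
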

 For a proof see Proposition 3.2.4.7 of \cite{L-HA} (or \cite[Part V, Corollary 3.4]{KM}); this essentially follows from the observation that an $E_\infty$-algebra is a commutative monoid in $(\hkmod,\otimes)$, see~\cite{L-HA} or \cite[Section 5.3]{KM}. In particular, Proposition~\ref{P:tensor=coprod} implies that,   for any finite set $I$,  $A^{\otimes I}$ has a natural structure of $E_\infty$-algebras which can be rephrased as
\begin{prop}\label{P:tensorEinftyisEinfty} A symmetric monoidal functor $N(Fin) \to \hkmod$ has a natural lift to an  $\infty$-functor $N(Fin) \to {E_\infty}\text{-Alg}$.
\end{prop}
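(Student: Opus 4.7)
My plan is to unwind the data of a symmetric monoidal functor $F:N(Fin)\to \hkmod$ as a commutative monoid, and then to upgrade the tensor powers $F(I)\simeq A^{\otimes I}$ to $E_\infty$-algebras and their maps by invoking Proposition~\ref{P:tensor=coprod}, which identifies these tensor powers with coproducts in $E_\infty\text{-}Alg$.

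First I would set $A:=F(\underline 1)$. Since $N(Fin)$ serves as a PROP/$\infty$-operad governing commutative monoids, the symmetric monoidal structure on $F$ equips $A$ with the structure of an $E_\infty$-algebra in $\hkmod$, and the coherent equivalences of $F$ identify $F(I)$ with the tensor power $A^{\otimes I}$ for every finite set $I$. By Proposition~\ref{P:tensor=coprod}, the object $A^{\otimes I}$ is canonically a coproduct $\coprod_{I} A$ inside $E_\infty\text{-}Alg$, so $F(I)$ enhances naturally to an $E_\infty$-algebra $\tilde F(I):=\coprod_I A$, giving the value of the desired lift on objects.

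For morphisms, I would use the universal property of the coproduct: a map $f\colon I\to J$ in $Fin$ determines, for each $i\in I$, the canonical $E_\infty$-algebra inclusion $A\to \coprod_J A$ into the $f(i)$-th factor, and the coproduct universal property packages these into a unique $E_\infty$-algebra morphism $\tilde F(f)\colon \coprod_I A\to \coprod_J A$. Composition and identities are then read off directly from the universal property (the composite of coproduct inclusions indexed by $f$ and $g$ is the coproduct inclusion indexed by $g\circ f$), so this assignment yields a functor $\tilde F\colon N(Fin)\to E_\infty\text{-}Alg$. Post-composing with the forgetful functor $E_\infty\text{-}Alg\to \hkmod$ recovers $F$, since tensor products in $\hkmod$ underlie coproducts in $E_\infty\text{-}Alg$, again by Proposition~\ref{P:tensor=coprod}.

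The main obstacle is not the content but the $(\infty,1)$-categorical coherence: one must produce a genuine $(\infty,1)$-functor rather than a mere collection of vertices and edges. The cleanest route is to invoke the universal property of coproducts \emph{within the $(\infty,1)$-setting}, so that the assignment $I\mapsto \coprod_I A$ together with the maps above is automatically packaged into a functor (cf.~\cite[\S\S 3.2, 4.4]{L-HA}). Alternatively, one may realize $\tilde F$ as the left Kan extension in $E_\infty\text{-}Alg$ of the constant functor $\ast\mapsto A$ along $\ast\hookrightarrow N(Fin)$; all the higher coherence we need is then already encoded in Proposition~\ref{P:tensor=coprod} itself.
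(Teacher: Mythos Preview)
Your proposal is correct and follows the same approach as the paper. The paper gives no detailed proof: it simply remarks that Proposition~\ref{P:tensor=coprod} implies $A^{\otimes I}$ has a natural $E_\infty$-algebra structure, and states that this ``can be rephrased as'' the present proposition; your argument is precisely the unpacking of that rephrasing via the coproduct universal property.
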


It follows that, for a finite set $I$, we have  natural
multiplication maps $$A^{\otimes I}
\stackrel{m_A^{(I)}}\longrightarrow A $$ which is a map in
$E_\infty\text{-Alg}$ and is compatible with compositions. We will
simply write $m_A: A\otimes A\to A$ for the $E_\infty$-algebra map
obtained by taking $I=\{0,1\}$. Any $A$-module can be pulled back
along $m_A^{(I)}$ to inherit a canonical $A^{\otimes I}$-module
structure:
\begin{prop}\label{P:tensorEinftyisEinftyMod}
 The maps $m_A^{(I)}$ induced by the functor of Proposition~\ref{P:tensorEinftyisEinfty} yields a  natural (in $A$) ($\infty$-)functors $A\text{-Mod}^{E_\infty} \stackrel{(m_A^{(I)})^*}\to  {A^{\otimes I}}\text{-Mod}^{E_\infty}$ lifting the usual base-change functor for commutative algebras.
\end{prop}

Let $Fin$ and $Fin_\ast$ denote the categories of finite sets and pointed finite sets respectively. There is a forgetful functor $Fin_\ast \to Fin$ forgetting which point is the base point. There is also a functor $Fin \to Fin_\ast$ which adds an extra point called the base point.

 Further, since the $\infty$-operads $\mathbb{E}^{\otimes}_n$ are
coherent (see~\cite{L-HA, L-VI}), the \emph{categories
$A\text{-}Mod^{E_n}$ for $A\in E_n\text{-Alg}$ assembles to form an
$(\infty,1)$-category} of all $E_ n$-algebras and their modules,
denoted $Mod^{E_n}$ (or $Mod^{E_n}(\mathcal{C})$ when we want to
emphasize $\mathcal{C}$).
 The canonical functor $Fin \to Fin_*$ adding a base point yields a canonical functor \footnote{which essentially forget the underlying module}
$\iota: Mod^{E_n}(\mathcal{C})\to Alg_{E_n}(\mathcal C)$ which gives rise,
for any $E_n$-algebra $A$,  to a (homotopy) pullback square:
\begin{equation}\label{eq:pullbackModAlg}
 \xymatrix{  A\text{-}Mod^{E_n} \ar[r] \ar[d] & Mod^{E_n} \ar[d]^{\iota} \\ \{A\}\ar[r] & E_n\text{-Alg}}
\end{equation}
 We refer to~\cite{L-III, L-HA, F} for details. Note that the functor $\iota$ is monoidal. Further, if $A\stackrel{f}\to B$, $A\stackrel{g}\to C$ are two maps of $E_\infty$-algebras, and $M\in B\text{-}Mod^{E_\infty}$ and $N\in  C\text{-}Mod^{E_\infty}$, then
\begin{equation} \label{eq:iotapushout}
\iota\Big( M\mathop{\otimes}_{A}^{\mathbb{L}} N \Big) \quad \cong \quad B \mathop{\otimes}_{A}^{\mathbb{L}} C.
\end{equation}
\begin{ex}\label{E:E1andEinftyModules}
If $n=1$, $A\text{-}Mod^{E_1}$ is equivalent to the $(\infty,1)$-category of $A$-bimodules and if $n=\infty$,  $A\text{-}Mod^{E_\infty}$ is equivalent to the $(\infty,1)$-category of left $A$-modules, see~\cite{L-III, L-HA} (and Proposition~\ref{P:EnMod}, Theorem~\ref{T:lifttoEinfty} below). In general, $A\text{-}Mod^{E_n}$ can be described in terms of factorization homology of $A$, see~\S~\ref{S:FactandMod}.
\end{ex}

\subsection{Factorization algebras and factorization homology}\label{SS:FactAlgebraFactHomology}

\begin{definition}\label{D:FacAlg}Given a topological manifold $M$ of dimension $n$, one can define a colored operad whose objects
are open subsets of $M$ that are homeomorphic to $\mathbb R^n$ and whose morphisms from $\{U_1, \cdots, U_n\}$ to $V$
are empty except  when the $U_i$'s are mutually disjoint subsets of $V$, in which case they are singletons.
The $\infty$-operad associated to this colored operad is denoted by $N(\text{Disk}(M))$, see \cite{L-HA}, Remark 5.2.4.7.
 \end{definition}
 Unfolding the definition we find  that an $N(\text{Disk}(M))$-algebra  on a manifold $M$, with value in chain complexes,
  is a rule that assigns to any open disk\footnote{i.e. an open subset of $M$  homeomorphic to a Euclidean ball}
  $U$ a chain complex $\mathcal{F}(U)$ and, to any finite family of disjoint open disks $U_1,\dots, U_n \subset V$
   included in a disk $V$, a natural map $\mathcal{F}(U_1)\otimes\cdots\otimes \mathcal{F}(U_n) \to \mathcal{F}(V)$.
An $N(\text{Disk}(M))$-algebra is \textbf{locally constant} if for any inclusion of open disks $U\hookrightarrow V$
in $X$, the structure map $\mathcal{F}(U) \to \mathcal{F}(V)$ is a quasi-isomorphism (see~\cite{L-HA, L-VI}) .

Locally constant $N(\text{Disk}(M))$-algebra are actually (homotopy) \emph{locally constant factorization algebras}
 in the sense of Costello~\cite{CG, Co}, see Remark~\ref{R:FactAlgisFactAlg} below.

A \emph{locally constant factorization coalgebra} is an
$N(\text{Disk}(M))$-coalgebra such that for any inclusion of open
disks $U\hookrightarrow V$ in $X$, the structure map $\mathcal{F}(V)
\to \mathcal{F}(U)$ is a quasi-isomorphism.

\noindent \textbf{Notation: } we   denote \emph{$\text{Fac}^{lc}_M$
the  $(\infty,1)$-category of  locally constant
$N(\text{Disk}(M))$-algebras} (see~\cite{CG, G-Houches}). We will
also denote $N(\text{Disk}(M))\text{-Alg}$ the $(\infty,1)$-category
of $N(\text{Disk}(M))$-algebras (that is of prefactorization
algebras).

\medskip

If $\mathcal{A}$ is a locally constant $N(\text{Disk}(M))$-algebra, the rule which to an open disk $D$ associates
the chain complex $\mathcal{A}(D)$ can be extended to any open set of $M$. In fact, Lurie has proved~\cite{L-HA, L-VI}
that the functor $\text{Disk}(M)\stackrel{\mathcal{A}}\longrightarrow \hkmod$ has a left Kan extension along
the embedding $\text{Disk}(M)\hookrightarrow Op(M)$ where $Op(M)$ is the standard ($(\infty,1)$-)category of open subsets
of $M$, \emph{i.e.},  with objects the open subsets of $M$ and morphism  from $U$ to $V$ are empty unless
when $U \subset V$ in which case they are singletons.

\begin{definition}\label{D:FactHomology} Let $M$ be a topological manifold and $\mathcal{A}$ be  a locally constant
 factorization algebra.

\emph{Factorization homology} is the  ($\infty,1)$-functor $Op(M)\otimes \text{Fac}_{M}^{lc}\to \hkmod$,
denoted $(M,\mathcal{A})\mapsto \int_M \mathcal{A}$, given by the left Kan extension\footnote{the existence of this extension is a non-trivial Theorem of~\cite{L-HA}} of
 $\text{Disk}(M)\stackrel{\mathcal{A}}\longrightarrow \hkmod$. We say that $\int_M \mathcal{A}$ is the factorization
  homology of $M$ with values in $\mathcal{A}$.
\end{definition}
\begin{rem}
Factorization homology is also called topological chiral homology in~\cite{L-HA, L-TFT, L-VI}.
We prefer Francis terminology~\cite{F, AFT, F2} which is justified by the fact that factorization homology is actually
the homology (or said otherwise derived sections) of factorizations algebras in the sense of Costello~\cite{CG}
as we proved~\cite{GTZ2}, see Remark~\ref{R:FactAlgisFactAlg} below.
\end{rem}
\begin{ex}[Framed manifolds]\label{ex:framedisEn}
Let $M$ be a framed manifold, then any $E_n$-algebra determines a
locally constant factorization algebra on $M$ (for instance,
see~\cite{L-HA, F, GTZ2, G-Houches} or Theorem~\ref{T:Theorem6GTZ2}) so that one can define the
factorization homology $\int_M A$.   These locally constant
factorization algebras are essentially constant, in the sense that
they satisfy the property that there is a (globally defined)
$E_n$-algebra $A$
 together with  natural (with respect to the structure map of the factorization algebra) quasi-isomorphism
 $\mathcal{A}(D) \stackrel{\simeq}\to A$ for every disk $D$. Thus,  we call such a factorization algebra the \emph{constant factorization algebra}
 on $M$ associated to $A$. For instance,   for $n=0,1,3,7$,  there is a faithful embedding of $E_n$-algebras into   locally constant factorization algebras over the $n$-sphere $S^n$.

 For $M=\mathbb{R}^n$, one
actually gets an equivalence between all locally constant
factorization algebras over $M=\mathbb{R}^n$ and $E_n$-algebras, see
Theorem~\ref{P:En=Fact} below. 
\end{ex}
\begin{ex}\label{E:EinftygivesNDisk}
The canonical map $N(\text{Disk}(M))\to N(Fin_\ast)$ shows that any $E_\infty$-algebras has a canonical structure of
 prefactorization algebra on any topological manifold $M$ which turns out to actually be a locally constant factorization
 algebra. This construction is studied in detail (using the Hochschild chain models) in~\cite{GTZ2}
 and actually extends to define a factorization algebra on any
 $CW$-complex, \emph{a fortiori} to any manifold with corners.
\end{ex}

\begin{rem}\label{R:FactAlgisFactAlg}
Let us justify a bit more the terminology of locally constant
factorization algebras we are using (hoping it will avoid any
possible confusion, also see \cite[\S~4.2]{G-Houches}). The notion
of locally constant $N(\text{Disk}(X))$-algebra is actually
equivalent to the \lq\lq{}full\rq\rq{} notion of a  locally constant
factorization algebra on $X$ in the sense of Costello~\cite{CG, Co}
which are a similar construction where the $U_i$ are allowed to be
any open subsets, satisfying a kind of
\lq\lq{}\v{C}ech/cosheaf-like\rq\rq{} condition (and still being
locally constant in the above sense). Let us now be more precise.
\begin{definition}\label{D:PrefactAlgebras}
A \textbf{prefactorization} algebra is an algebra over the colored operad whose objects are open subsets of $X$
and whose morphisms from $\{U_1, \cdots, U_n\}$ to $V$ are empty unless  when $U_i$'s are mutually disjoint subsets of $U$,
 in which case they are singletons.
\end{definition}
The above definition makes sense for \emph{any}  topological space
$X$.
  Unfolding the definition, we find  that a prefactorization algebra  on $X$, with value in chain complexes,
  is a rule that assigns to any open set $U$ a chain complex $\mathcal{F}(U)$ and, to any finite family
  of pairwise disjoint open sets $U_1,\dots, U_n \subset V$ included in an open $V$, a natural map
  $\mathcal{F}(U_1)\otimes\cdots\otimes \mathcal{F}(U_n) \to \mathcal{F}(V)$.
  These structure maps are required to satisfy obvious associativity and symmetry conditions, see~\cite{CG}.
   They allow us to define  \lq\lq{}\v{C}ech-complexes\rq\rq{} associated to a cover $\mathcal{U}$ of $U$.
   Denoting $P\mathcal{U}$ the set of finite pairwise disjoint open subsets $\{U_1,\dots,U_n \, ,\, U_i\in \mathcal{U}\}$,
     it is, by definition the
chain (bi-)complex
$$\check{C}(\mathcal{U},\mathcal{F})= \bigoplus_{P\mathcal{U}} \mathcal{F}(U_1) \otimes \cdots \otimes \mathcal{F}(U_n)
\leftarrow \bigoplus_{P\mathcal{U} \times P\mathcal{U}} \mathcal{F}({U_1}\cap {V_1}) \otimes \cdots \otimes
\mathcal{F}({U_n}\cap {V_m})  \leftarrow \cdots$$
where the horizontal arrows are induced by the alternating sum of the natural inclusions as for the usual \v{C}ech complex
 of a cosheaf (see~\cite{CG} or \cite{GTZ2, G-Houches}).
The prefactorization algebra structure also induce a canonical map $\check{C}(\mathcal{U},\mathcal{F})\to \mathcal{F}(U)$.

\begin{definition}\label{D:FactAlgebraGlobal}\begin{itemize}
\item A prefactorization algebra $\mathcal{F}$ on $X$  is said to be a
\textbf{factorization algebra} (in the sense of~\cite{CG}) if, for
all open subsets $U\in Op(X)$ and for every factorizing
 cover\footnote{an open cover of $\mathcal{U}$  is {factorizing} if,  for all finite collections $x_1,\dots, x_n$
 of distinct points in $U$, there are pairwise disjoint open subsets $U_1,\dots, U_k$ in $\mathcal{U}$
 such that $\{x_1,\dots, x_n\} \subset \bigcup_{i=1}^k U_i$} $\mathcal{U}$ of $U$, the canonical map
$\check{C}(\mathcal{U},\mathcal{F})\to \mathcal{F}(U)$ is a quasi-isomorphism (see~\cite{Co,CG}).
Again, a factorization algebra is  \textbf{\emph{locally constant}} if for any inclusion of open disks
$U\hookrightarrow V$  in $X$, the structure map $\mathcal{F}(U) \to \mathcal{F}(V)$ is a quasi-isomorphism.

\item
In the above Definition, one can replace the symmetric monoidal
$\infty$-category of chain complexes by any symmetric monoidal
$\infty$-category $(\mathcal{C}, \otimes)$ which has all colimits,
sifted limits and such that geometric realization (see~\cite{Lu11})
distributes with respect to the monoidal structure, see~\cite{CG,
G-Houches}.

\item A (resp. locally constant) \emph{factorization coalgebra} on $X$ with value in
$(\mathcal{C}, \otimes)$ is a (resp. locally constant) factorization
algebra on $X$ with value in the opposite category
$(\mathcal{C}^{op}, \otimes)$, that is an object of
$\text{Fac}_X(\mathcal{C}^{op})$ (resp.
$\text{Fac}_X^{lc}(\mathcal{C}^{op})$).
\end{itemize}
\noindent \textbf{Notation: } we   denote
\emph{$\text{Fac}_X(\mathcal{C})$ the  $(\infty,1)$-category of
locally constant factorization algebras on $X$ with values in
$(\mathcal{C},\otimes)$} and $\text{Fac}^{lc}_X(\mathcal{C})$ its
$(\infty,1)$-subcategory of locally constant factorization algebras. We also denote
$\text{coFac}_X(\mathcal{C})$ (resp. $\text{coFac}^{lc}_X(\mathcal{C})$) the $(\infty,1)$-categories of (resp. locally constant) factorization coalgebras. 
\end{definition}

\smallskip

In~\cite{GTZ2}, we proved
\begin{theorem}[{\cite[Theorem 6]{GTZ2}}]\label{T:Theorem6GTZ2}
  The functor $(U,\mathcal{A})\mapsto \int_U \mathcal{A}$ induces an equivalence of $(\infty,1)$-categories
  between locally constant $N(\text{\emph{Disk}}(X))$-algebra and locally constant factorization algebras
  on the manifold $X$ in the sense of~\cite{CG}. Further this functor is (equivalent to) the identity functor
  when restricted to open disks.
\end{theorem}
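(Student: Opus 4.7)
The plan is to produce mutually inverse $(\infty,1)$-functors between $\text{Fac}^{lc}_X$ and the $\infty$-category of locally constant $N(\text{Disk}(X))$-algebras. In one direction, restriction along the inclusion $\text{Disk}(X)\hookrightarrow Op(X)$ sends a locally constant factorization algebra $\mathcal{F}$ (in the sense of Costello--Gwilliam) to its values on open disks; local constancy on disks is inherited tautologically, and the $N(\text{Disk}(X))$-algebra structure is the restriction of the prefactorization structure. In the other direction, factorization homology $(U,\mathcal{A})\mapsto \int_U\mathcal{A}$, defined as the left Kan extension along $\text{Disk}(X)\hookrightarrow Op(X)$, produces a functor on all open sets. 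The second statement of the theorem, namely that $\int_D \mathcal{A} \simeq \mathcal{A}(D)$ for $D$ a disk, is then immediate: $D$ is terminal in the slice $\text{Disk}(X)_{/D}$, so the defining colimit collapses.

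Next I would check that $U\mapsto \int_U \mathcal{A}$ is actually a locally constant factorization algebra, which requires three things. The prefactorization structure maps $\bigotimes_i \int_{U_i}\mathcal{A}\to \int_V\mathcal{A}$ for pairwise disjoint $U_i\subset V$ come from the monoidality of the Kan extension, which reduces to the already-given multiplication maps on $\text{Disk}(X)$ by a cofinality argument on slice categories $\text{Disk}(X)_{/U_i}$ and the compatibility of disjoint unions of disks with the operadic structure. Local constancy of $\int_{-}\mathcal{A}$ on open disks follows from local constancy of $\mathcal{A}$ together with the fact that for an inclusion $D\hookrightarrow D'$ of disks, both sides compute the same colimit (one cofinally contained in the other).

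The main obstacle is the \v{C}ech/cosheaf descent condition: for every open $U$ and every factorizing cover $\mathcal{U}$ of $U$, the canonical map $\check{C}(\mathcal{U},\int_{-}\mathcal{A})\to \int_U\mathcal{A}$ must be an equivalence. The strategy is cofinality. Factorization homology $\int_U\mathcal{A}$ is computed as a colimit over a category $\mathcal{D}_U$ of finite disjoint unions of disks mapping into $U$ (a Weiss-type indexing category), and the key claim is that, for a factorizing cover $\mathcal{U}$, the subcategory $\mathcal{D}_U^{\mathcal{U}}\subset \mathcal{D}_U$ consisting of disjoint unions of disks subordinate to elements of $\mathcal{U}$ is cofinal. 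This cofinality rests precisely on the definition of factorizing cover, which ensures that any finite configuration of points in $U$ is contained in some $U_{i_1}\sqcup\cdots \sqcup U_{i_k}\in P\mathcal{U}$, so that every disjoint union of disks in $U$ admits a refinement subordinate to $\mathcal{U}$. Once cofinality is established, rewriting the \v{C}ech bicomplex as a colimit over the nerve of $P\mathcal{U}$ and identifying the resulting double colimit with $\int_U\mathcal{A}$ gives the descent equivalence.

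Finally, to check the two functors are mutually inverse, one direction ($\int \circ \text{restriction}$ applied to a factorization algebra) is exactly the content of the descent condition for $\mathcal{F}$ itself, applied to a factorizing cover of $U$ by disks (such covers exist since $X$ is a manifold and disks form a basis closed under finite disjoint refinements of multi-configurations). The other direction ($\text{restriction}\circ \int$ applied to a locally constant $N(\text{Disk}(X))$-algebra) is the second clause of the theorem, already noted above. I expect the \v{C}ech descent/cofinality step to be the technical heart of the argument; the rest is bookkeeping in the $\infty$-categorical setting using the universal property of left Kan extensions and the symmetric monoidality of the $\text{Disk}(X)$-structure.
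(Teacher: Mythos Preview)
The paper does not prove this theorem. It is stated inside Remark~2.10 as a result imported from~\cite{GTZ2} (``In~\cite{GTZ2}, we proved\ldots''), with no argument given here; the theorem serves only to justify the terminology ``locally constant factorization algebra'' and to identify factorization homology with the Costello--Gwilliam notion. There is therefore nothing in the present paper to compare your proposal against.

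That said, your outline is the standard strategy and is essentially what is carried out in~\cite{GTZ2}: restriction and left Kan extension as candidate inverse functors, with the \v{C}ech descent condition verified by a cofinality argument on the Weiss-type indexing category of disjoint unions of disks subordinate to a factorizing cover. If you want to assess your sketch, you should consult~\cite{GTZ2} directly rather than this paper.
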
 This justifies our terminology of locally constant factorization algebras
and factorization homology; further, the extension  on any open set $U$ of a
(locally constant) $N(\text{Disk}(X))$-algebra $\mathcal{A}$ is precisely given by the factorization homology
$\int_{U} \mathcal{A}$, see \emph{loc. cit.}. \end{rem}

\begin{ex}[Trivial example: the unit factorization algebra $k$] \label{E:kasFact}
The unit object of the symmetric monoidal category of factorization
algebras over a CW-complex or topological manifold with corners $X$
is the trivial factorization algebra associated to the ground ring
$k$. 
We review it here for latter use. We will simply denote it
by $k$.

 Its prefactorization
algebra structure is given by $k(U):=k$ for any open set $U\subset
X$ and the structure maps are given, for any pairwise disjoint open subsets
$U_1,\dots, U_r$ of an open $V\subset X$, by
$$k(U_1)\otimes \cdots \otimes k(U_r) = \bigotimes_{i=1}^{r} k
\stackrel{\text{multiply}}{\underset{\simeq}\longrightarrow}
k=k(V)$$ where the last map is the multiplication in the ring $k$.
\begin{lem}\label{L:kasFact}The prefactorization algebra $U\mapsto k(U)=k$ is a
factorization algebra. It is further naturally equivalent to the
Hochschild chains (Proposition~\ref{P-CHbifunctorTop}: $CH_U(k)
\stackrel{\simeq}\longrightarrow k(U)=k$
\end{lem}
$k$ is by definition locally constant on any
 manifold and actually a commutative constant
factorization algebra in the terminology of~\cite[\S~4.2]{GTZ2}.
\begin{proof}[Proof of Lemma~\ref{L:kasFact}] The symmetry and associativity axioms of a
prefactorization algebra follows respectively  from the
commutativity and associativity of the ring structure of $k$. Note
that if $X$ is a manifold, $k$ is locally constant by definition and
thus a factorization algebra. Note also that the Hochschild chain
functor (as described in Section~\ref{S:HHforEinftyAlg}) induces a
factorization algebra $U\mapsto CH_U(A)$ for any commutative algebra
$A$ by~\cite[Theorem 4]{GTZ2}. For $A=k$, and any simplicial model
$X_\bullet$ of a space $X$, one has that $CH_{X_\bullet}(k)$ is the
differential graded commutative algebra associated to the constant
simplicial $k$-module $n\mapsto CH_{X_n}^{simp}(k)\cong k$ see
Definition~\ref{D-CH-PROP}. Hence the projection
$CH_{X_\bullet}\twoheadrightarrow CH_{X_0}(k)=k$ is a natural (in
$X_{\bullet}$) quasi-isomorphism of CDGA's and we obtain this way a
natural (in $X$) equivalence $CH_{X}(k)\stackrel{\simeq}\to k$. The
commutative diagram (induced by \cite[Lemma 2]{GTZ2})
$$\xymatrix{ CH_{V}(k) \ar[rrr]^{\simeq} &&& k \\
\bigotimes_{i=1}^{r} CH_{U_i}(k) \ar[u]^{\mu_{U_1,\dots,
U_r,V}}\ar[rrr]^{\simeq} &&& \bigotimes_{i=1}^{r} k
\ar[u]_{\text{multiply}}}$$
 proves that $U\mapsto k(U)$ is
equivalent to $U\mapsto CH_{U}(k)$ and thus is a factorization
algebra since the latter is (of course, the latter can also be
checked by direct inspection rather easily).
\end{proof}
\end{ex}

\begin{rem}[\textbf{Alternative definition: parametrized prefactorization algebras}]\label{R:AlternativeFacAlg}
There is a slight variation of the notion of locally constant (pre)factorization algebras, \emph{i.e.},
locally constant $N(\text{Disk}(M))$-algebras.
Following Lurie~\cite[Remark 5.2.4.8]{L-HA}, we let
\begin{definition}\label{D:AlternativeFacAlg} $N(\text{Disk}(M)')$ be the $\infty$-operad
associated to the colored operad
 $\text{Disk}(M)'$ whose objects are open embeddings $\R^n \hookrightarrow M$ and whose morphisms
  $\text{Disk}(M)'(\phi, \psi)$ are commutative diagrams
$$\xymatrix{ \mathbb{R}^n \ar[rr]^{h} \ar[rd]_{\phi} & & \mathbb{R}^n \ar[ld]^{\psi} \\ & M &}$$
where $f$ is an \emph{open embedding}.

An $N(\text{Disk}(M)')$-algebra $\mathcal{A}$ is said to be \emph{locally constant} if the structure map
 $\mathcal{A}(h):\mathcal{A}(\phi) \to \mathcal{A}(\psi)$ is a quasi-isomorphism for any open embedding $h$ such that
 $\psi\circ h= \phi$  as in the above diagram.
\end{definition}
Note that the (forgetful) functor $\iota:\phi\mapsto \phi(\mathbb{R}^n)$ is an equivalence of categories.
Hence,
\begin{prop}\label{P:AlternativeFacAlg} The functors $\iota^*: {\text{Disk}(M)}^{lc}\text{-Alg}\to
{\text{Disk}(M)'}\text{-Alg}$ and  $\iota^*: \text{Fac}^{lc}_M\to {\text{Disk}(M)'}^{lc}\text{-Alg}$ are equivalences of
 $(\infty,1)$-categories.

 Similarly, the functors $\iota^*: {\text{Disk}(M)}^{lc}\text{-coAlg}\to
{\text{Disk}(M)'}\text{-Alg}$ and  $\iota^*: \text{Fac}^{lc}_M\to
{\text{Disk}(M)'}^{lc}\text{-coAlg}$ are equivalences of
 $(\infty,1)$-categories.
\end{prop}
We will refer to locally constant ${\text{Disk}(M)'}$-algebras as \emph{locally constant parametrized factorization
algebras}. By the above proposition~\ref{P:AlternativeFacAlg}, the two notions of factorizations algebras are essentially
the same.

Unfolding Definition~\ref{D:AlternativeFacAlg}, we see that a locally constant parametrized factorization algebra
$\mathcal{F}$  is thus a rule  which associates to each
embedding $\phi: \mathbb{R}^n \to M$ a chain complex $\mathcal{F}(\phi)$ with natural maps
$\mathcal{F}(\phi_1)\otimes \cdots\otimes \mathcal{F}(\phi_r) \to \mathcal{F}(\psi)$ associated to any open embedding
 $h: \coprod_{i=1}^r \R^n \to \R^n$ such that $\psi\circ h= \coprod_{i=1}^r \phi_i: \coprod_{i=1}^r \R^n \to M$
 (satisfying the obvious associativity and symmetry conditions). Further, for any $h:\phi \mapsto \psi $ (\emph{i.e.}
$\psi \circ h =\phi$), the structure map $\mathcal{F}(\phi)\to \mathcal{F}(\psi)$ is required to be a quasi-isomorphism.
\end{rem}
\medskip

\begin{definition}
Let $U$ be an open subset of $X$. By restricting to open subsets of $U$,  a (locally constant) factorization algebra
$\mathcal{A}$ on $X$ has a canonical restriction $\mathcal{A}_{|U}: Op(U)\ni V\mapsto \mathcal{A}(V)$ to a
(locally constant) factorization algebra on $U$.
\end{definition}

Factorization algebras satisfy a local-to-global property and can thus be defined out of their restriction to a basis
or descent/gluing data.
Indeed, let $\mathcal{U}$ be a \emph{basis} for the topology of a manifold $M$ which is stable by finite intersections
 and  is also a factorizing cover (as in Remark~\ref{R:FactAlgisFactAlg}).
\begin{definition}\label{D:UPrefactAlg}A \emph{$\mathcal{U}$-prefactorization algebra} is defined
similarly to a prefactorization algebra on $M$, except that we are
only considering opens that belongs to $\mathcal{U}$ in the
definition. In other words, it is an algebra over
$N\big(\text{Disk}_{\mathcal{U}}(M)\big)$ where
$\text{Disk}_{\mathcal{U}}(M)$ is the colored sub-operad of
$\text{Disk}(M)$ whose only colors are those consisting of opens in
$\mathcal{U}$.

Similarly, a \emph{$\mathcal{U}$-factorization algebra} is defined
similarly to a factorization algebra on $M$, except that we are only
considering opens that belongs to $\mathcal{U}$ in the definition
(in other words, it is a $\mathcal{U}$-prefactorization algebra
satisfying the descent condition of
Remark~\ref{R:FactAlgisFactAlg}).
\end{definition}
We refer to\cite{CG, G-Houches} for more details.
\begin{prop}[Costello-Gwilliam~\cite{CG}]\label{P:extensionfrombasis} Let $\mathcal{F}$ be a $\mathcal{U}$-factorization algebra.
There is an unique\footnote{up to natural equivalence}  factorization algebra $i_*^{\mathcal{U}}(\mathcal{F})$  on $M$ extending $\mathcal{F}$\footnote{More precisely, for an open set $V\subset X$, one has
$i_*^{\mathcal{U}}(\mathcal{F})(V) \,:=\, \check{C}(\mathcal{U}_V, \mathcal{F})$
where $\mathcal{U}_V$ is the open cover of $V$ consisting of all open subsets of $V$ which are in $\mathcal{U}$} (that is equipped with a quasi-isomorphism of $\mathcal{U}$-factorization algebras $i_*^{\mathcal{U}}(\mathcal{F})\to \mathcal{F}$). 

In fact the canonical functor $\text{Fac}_{\mathcal{U}}(\mathcal{C})\to \text{Fac}_{M}(\mathcal{C})$ is an equivalence of $\infty$-categories.
\end{prop}

More generally,  if $\mathcal{A}$ is a factorization algebra on $X$ and  $\mathcal{U}=(U_i)_{i\in I}$ is a cover of $X$,
then  $\mathcal{A}$  can be uniquely recovered by the data of the factorization algebras $\mathcal{A}_{|U_i}$ restricted to the $U_i$'s (thanks to the \v{C}ech condition applied to suitable covers).
In fact, any family of factorization algebras $\mathcal{F}_i$ on $U_i$, satisfying natural compatibility conditions on the intersections of the $U_i$'s, extends uniquely into a factorization algebra on $X$;
we refer to Costello-Gwilliam~\cite[Section 4]{CG} (and~\cite[\S~5]{G-Houches}) for details on this descent property of factorization algebras.

\subsection{$E_n$-algebras as factorization algebras}
\label{S:EnasFact}
Theorem 5.2.4.9 of \cite{L-HA} (also see~\cite{L-VI, GTZ2}) provides an equivalence  between $E_n$-algebras and
 \emph{locally constant factorization algebra} on the open disk $D^n$:
\begin{theorem}\label{P:En=Fact} Let $\mathcal{C}$ be a symmetric monoidal $(\infty,1)$-category.
There is a natural equivalence of $(\infty,1)$-categories
$$E_n\text{-Alg}(\mathcal{C}) \;\cong \;\text{Fac}^{lc}_{\R^n}(\mathcal{C}).$$
Similarly there is an equivalence between the
$(\infty,1)$-categories of locally constant factorization coalgebras
on $\R^n$ and the one of $E_n$-coalgebras.
\end{theorem}
In particular, an $E_n$-algebra can be seen as an $n$-dimensional
(topological) field theory (over the space-time manifold
$\mathbb{R}^n$), providing an invariant for framed $n$-manifolds
which is precisely computed by factorization homology.

\medskip

 Let $X$, $Y$ be topological spaces and $f:X\to Y$ be continuous. There is a \emph{pushforward functor}
 \begin{equation} \label{eq:DefPushforward} f_*: \text{Fac}_X(\mathcal{C})\to \text{Fac}_Y(\mathcal{C})\end{equation} that was constructed in~\cite{CG}. It is given, for $\mathcal{F}\in \text{Fac}_X(\mathcal{C})$ and $V\in Op(Y)$,  by $f_*(\mathcal{F})(V) =\mathcal{F}(f^{-1}(V))$.  
 In particular, let $p: M\to pt$  be the canonical map from $M$ to a point and $\mathcal{F}$ be a factorization algebra on $M$.
 
 \smallskip
 
\noindent \textbf{Notation:} we also denote   $p_*(\mathcal{F})$ the object $p_*(\mathcal{F})(pt)\cong \mathcal{F}(M)$. 
 If $\mathcal{F}$ is locally constant, we have natural equivalences
 \begin{equation} \label{eq:FacHomologyispushforward}
  p_*(\mathcal{F}) \, \cong \, \mathcal{F}(M) \, \cong \, \int_{M} \mathcal{F}.
 \end{equation}

\smallskip

Assume $X$, $Y$ are manifolds and let $\pi: X\times Y \to X$ be the canonical projection. 
Then, there is  a factorization of the  pushforward functor:
$$\xymatrix{\text{Fac}^{lc}_{X\times Y}(\mathcal{C}) \ar@{^{(}->}[d] \ar[rr]^{\pi_*} & &
\text{Fac}^{lc}_{X}\big(\text{Fac}^{lc}_Y(\mathcal{C})\big) \ar@{^{(}->}[d] \ar[rr]^{\quad (Y\to pt)_*} & &
\text{Fac}^{lc}_{X}(\mathcal{C}) \ar@{^{(}->}[d]\\  \text{Fac}_{X\times Y} (\mathcal{C}) \ar[rr]^{\pi_*} & &
\text{Fac}_{X}\big(\text{Fac}_{Y}(\mathcal{C})\big) \ar[rr]^{\quad(Y\to pt)_*} & &
\text{Fac}_{X}(\mathcal{C})}$$ where the vertical arrows are induced by the fully faithfull inclusion of the locally constant factorization algebras inside all factorization algebras.  The right horizontal arrows are induced by the pushforward along the canonical map $Y\to pt$. 
Here the fact that the pushforward of a
locally constant factorization algebra is locally constant follows
from the fact that the fibers are all naturally identified with the
same manifold $Y$. We refer to~\cite{G-Houches} for more details. 

\smallskip

The fact that locally constant factorization
algebras on $\R^n$ are $E_n$-algebras implies that, when $Y=\R^n$,
the pushforward factors through a functor $\pi_*:
\text{Fac}^{lc}_{X\times \R^n}\to
\text{Fac}^{lc}_{X}(E_n\text{-Alg})$ see~\cite{GTZ2}. In particular,
we can take $X=\R^m$. The following $\infty$-category version of
Dunn's Theorem was proved by Lurie~\cite{L-HA} (and~\cite{GTZ2} for
the pushforward interpretation):
\begin{theorem}[Dunn's Theorem]\label{T:Dunn}
There is an equivalence of $(\infty,1)$-categories
$E_{m+n}\text{Alg} \cong E_n\text{-Alg}(E_m\text{-Alg})$.

Under the
equivalence $E_n\text{-Alg} \cong Fac_{\R^n}^{lc}$
(Theorem~\ref{P:En=Fact}), the above equivalence is realized by the
pushforward $\pi_*:\text{Fac}_{\R^m\times \R^n}^{lc} \to
\text{Fac}_{\R^m}^{lc}(E_n\text{-Alg})$ associated to the canonical
projection $\pi:\R^m\times \R^n\to \R^m$.
\end{theorem}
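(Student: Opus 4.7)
The plan is to work entirely within the world of locally constant factorization algebras. By Proposition~\ref{P:En=Fact}, the statement to prove becomes the claim that the pushforward functor $\pi_*\colon \text{Fac}^{lc}_{\R^m \times \R^n} \to \text{Fac}^{lc}_{\R^m}(E_n\text{-}Alg)$ along the first projection is an equivalence of $(\infty,1)$-categories, where the target is, by unwinding definitions, the same as $\text{Fac}^{lc}_{\R^m}(\text{Fac}^{lc}_{\R^n})$ and as $E_n\text{-}Alg(E_m\text{-}Alg)$.

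First I would verify that $\pi_*$ lands in the claimed target. Given $\mathcal{A} \in \text{Fac}^{lc}_{\R^{m+n}}$ and an open disk $U \subset \R^m$, define $\pi_*(\mathcal{A})(U) := \mathcal{A}(U \times \R^n)$ using the extension of $\mathcal{A}$ to all open sets from Definition~\ref{D:FactHomology}. For each fixed $U$, the assignment $V \mapsto \mathcal{A}(U \times V)$ is a locally constant $N(\text{Disk}(\R^n))$-algebra: multiplication maps come from the prefactorization structure of $\mathcal{A}$ applied to disjoint slabs $U \times V_i \subset U \times V$, and local constancy is inherited because any inclusion of product disks is itself a disk inclusion in $\R^{m+n}$. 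Hence $\pi_*(\mathcal{A})(U)$ is naturally an $E_n$-algebra. Functoriality in $U$, including the $N(\text{Disk}(\R^m))$-algebra structure, is obtained analogously by applying the prefactorization structure to disjoint families $U_i \times V$ for each $V$; these operations commute with those in the $V$-direction by symmetry of the construction, producing a locally constant factorization algebra on $\R^m$ with values in $E_n\text{-}Alg$.

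Next I would produce a candidate inverse $\Phi$. Given $\mathcal{B} \in \text{Fac}^{lc}_{\R^m}(\text{Fac}^{lc}_{\R^n})$, set $\Phi(\mathcal{B})(U \times V) := \mathcal{B}(U)(V)$ on product disks, with multiplication maps supplied by the combined $E_n$-in-$E_m$ structure of $\mathcal{B}$ whenever a disjoint collection of product disks is organised as a nested disjoint family in each factor. By Theorem~\ref{T:Theorem6GTZ2}, such a locally constant $N$-algebra defined on an adequate basis of opens extends uniquely, via left Kan extension (i.e.\ factorization homology), to a locally constant factorization algebra on all of $\R^{m+n}$; local constancy of the extension follows because any disk inclusion in $\R^{m+n}$ can be refined through product disks, on which the structure maps are equivalences by local constancy of $\mathcal{B}$ in each variable.

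The main obstacle will be verifying that $\pi_*$ and $\Phi$ are mutually inverse equivalences of $(\infty,1)$-categories. The essential geometric input is that products $U \times V$ of open cubes form a factorizing cover of $\R^{m+n}$ rich enough that any locally constant factorization algebra on $\R^{m+n}$ is determined, via the descent property of Remark~\ref{R:FactAlgisFactAlg}, by its restriction to them. Operadically, this is the Dunn-type statement that the natural map $\mathbb{E}_m^\otimes \otimes \mathbb{E}_n^\otimes \to \mathbb{E}_{m+n}^\otimes$ is an equivalence of $\infty$-operads, i.e.\ every rectilinear embedding of $(m{+}n)$-cubes is coherently isotopic to a product of rectilinear embeddings of $m$-cubes and $n$-cubes. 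Once this key fact is in place, the unit and counit of the adjunction $\pi_* \dashv \Phi$ are equivalences on product disks by construction and extend to equivalences on all opens by \v{C}ech descent, giving the desired equivalence of symmetric monoidal $(\infty,1)$-categories.
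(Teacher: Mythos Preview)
The paper does not give its own proof of this theorem: it is stated as a result of Lurie \cite{L-HA}, with the pushforward interpretation attributed to \cite{GTZ2}. So there is no argument in the paper to compare against, only citations.

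Your outline is coherent and correctly identifies the architecture of the result. You set up $\pi_*$ and the inverse $\Phi$ on product disks, and you correctly isolate the crux: that a locally constant factorization algebra on $\R^{m+n}$ is determined by its values on product cubes, which is equivalent to the operadic statement that $\mathbb{E}_m^{\otimes} \otimes \mathbb{E}_n^{\otimes} \to \mathbb{E}_{m+n}^{\otimes}$ is an equivalence of $\infty$-operads. But you then take this ``key fact'' as given. That statement \emph{is} the Dunn theorem in Lurie's formulation, and its proof (the contractibility of the relevant spaces of decompositions of rectilinear embeddings into products) is the entire content of the result. So your proposal is a correct reduction of the factorization-algebra formulation to the operadic one, together with a citation of the latter---which is exactly what the paper does, just unpacked. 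It is not an independent proof, and you should be clear that the hard step is being imported rather than supplied.
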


\section{Higher Hochschild (co)chains for $E_\infty$-algebras} \label{S:HHforEinftyAlg}

In this section we define and study higher Hochschild (co)chains modeled over spaces for $E_\infty$-algebras with values in $E_\infty$-modules.

\subsection{Factorization homology of $E_\infty$-algebras and higher Hochschild chains}\label{SS:FactandHHforEinfty}

Factorization homology with values in $E_\infty$-algebras has special properties. It becomes an homotopy invariant and can be defined over any space, providing an homology theory for spaces.
Indeed, it identifies with the Hochschild chains modeled on a
 space and with values in an $E_\infty$-algebra, see Theorem~\ref{T:CH=TCH} below.
We will denote $(X,A)\mapsto CH_X(A)$ the latter construction which we explain further in this section.
The reader familiar with factorization homology for commutative algebras can skip it and keep in mind that  $ CH_X(A)$ means factorization homology extended to spaces.

\smallskip

 The Hochschild chains $CH_{X_\com, \com}(A,A)$ over a simplicial set $X_\bullet$ with value in a CDGA $A$ is defined in~\cite{P}. As explained in~\cite{GTZ2}, it can be defined using the PROPic definition of commutative (differential graded) k-algebras as follows. A CDGA over $k$ is a  strict symmetric monoidal functor $A: Fin \to k\text{-Mod} $ from the category of finite sets (with disjoint union for the monoidal structure) to the category of chain complexes (with tensor product as the monoidal structure). Given a finite simplicial set $\Delta^{op}\to Fin$ we can compose these two functors to get a simplicial $k$-module. The geometric realization of this simplicial $k$-module is the Hochschild complex modeled on $X$. In fact one can do better. A strictly symmetric monoidal functor $A: Fin \to k\text{-Mod} $ has a canonical lift to $A: Fin \to k\text{-Alg} $, along the forgetful functor $k\text{-Alg}\to k\text{-Mod}$. This is due the simple fact that for a commutative algebra $A$, the multiplication $m: A\otimes A\to A$ is a map of algebras or, said otherwise, the fact that the tensor product is a coproduct in CDGA. Thus, the above procedure gives rise to a simplicial CDGA whose geometric realization is the Hochschild complex modeled on $X_\bullet$, with a canonical CDGA structure.    In the classical example $X=S^1$, whereby we get the classical Hochschild complex, this CDGA structure is given by the shuffle product on $CH_\bullet(A, A)$. Another way of saying this is to say that   that $\cdga$ is tensored over simplicial sets and that Hochschild chains over $X_\bullet$ with values in $A$ is simply the tensor $A\boxtimes X_\bullet$ (see \cite{L-HA, Lu11, EKMM, Ke, MCSV} for details on tensored categories over spaces and Remark~\ref{R:Einftyistensored}).
 We now discuss how all of the above generalizes to the case of $E_\infty$-algebras.

\smallskip

Recall from Section~\ref{S:operad} that the $(\infty,1)$-category of $E_\infty$-algebras (with value in chain complexes) is equivalent to the  $(\infty,1)$-category $Fun^{\otimes}(N(Fin),\hkmod)$ of (lax) monoidal functors from  (the $\infty$-category associated to) $Fin$ to the $(\infty,1)$-category of chain complexes (see Lurie~\cite{L-III, L-HA, KM}). We denote  $$\Big(X\mapsto CH^{simp}_X(A) \Big) \, \in \, Fun^{\otimes}(N(Fin),\hkmod)$$ the\footnote{it is unique up to contractible choices} monoidal functor associated to   an $E_\infty$-algebra $A$. This functor extends naturally to a functor $Fun^{\otimes}(N(Set),\hkmod)$ (where $Set$ is the category of sets) by taking colimits; that is to say, $X\mapsto \colim_{Fin \ni K\to X}  CH^{simp}_K(A)$.

By Proposition~\ref{P:tensorEinftyisEinfty}, $CH^{simp}_K(A)$ has a natural structure of $E_\infty$-algebras; more precisely, the functor $\big(X\mapsto CH_X^{simp}(A)\big)$ factors as  $$N(Fin) \longrightarrow E_\infty\text{-Alg} \xrightarrow{forget} \hkmod.$$ 
We  simply denote $\big(X\mapsto CH_X^{simp}(A)\big)\in Fun^{\otimes}(N(Set), E_\infty\text{-Alg})$ the induced lift. 

\begin{rem}
Fixing a set $X$, $CH_X(A)$  is  (functorially) quasi-isomorphic to the tensor product $A^{\otimes X}$ (where $A$ is viewed as a chain complex). Note that this construction (of the underlying chain complex structure) is  the same as the one in\cite[Section 2.1]{GTZ} in the case of CDGAs. However the functorial structure involves higher homotopies and not only the multiplication and seems difficult to write explicitly on this particular choice of cochain complex.
\end{rem}

Let $DK: sk\text{-}Mod_\infty \to \hkmod$ be the Dold-Kan functor
from the $(\infty,1)$-category of simplicial $k$-modules to the
chain complexes. The Dold-Kan functor refines to a functor $s
E_\infty\text{-Alg}  \to E_\infty\text{-Alg} $ from simplicial
$E_\infty$-algebras to differential graded $E_\infty$-algebras which
preserves weak-equivalences  (see~\cite[Section 3]{M}).
 \begin{definition}\label{D-CH-PROP}
 The derived Hochschild chains of an $E_\infty$-algebra $A$ and a simplicial set $X_\bullet$ is
 $$CH_{X_\bullet}(A) := DK \Big( \colim_{Fin \ni K\to X_\bullet}  CH^{simp}_K(A)\Big). $$
  \end{definition}

\begin{rem}\label{R:Astrict}
In the case where the $E_\infty$-algebra $A$ is strict, \emph{i.e.} a CDGA, it follows from Corollary~\ref{C:cdgaE} below that $CH_{X_\bullet}(A)$ is quasi-isomorphic to the Hochschild chain complex over $X_\bullet$ described in details in\cite[Section 2.1]{GTZ} (also see\cite{P, G, GTZ2}).
\end{rem}

\begin{prop} \label{P-CHbifunctor}The derived Hochschild chain $(X_\bullet,A) \mapsto CH_{X_\bullet}(A)$ lifts as a functor of $(\infty,1)$-categories
 $$CH: \hsset \times E_\infty\text{-Alg} \longrightarrow E_\infty\text{-Alg} .$$ Further, it is the tensor of $A$ and $X_\bullet$ in $E_\infty\text{-Alg}$, \emph{i.e.}, there is a natural equivalence $CH_{X_\bullet}(A) \cong A\boxtimes X_{\bullet}$. In particular,
\begin{equation}\label{eq:CH=tensored}\text{Map}_{\hsset}\big({X_\bullet},\text{Map}_{E_\infty\text{-Alg}}(A,B)\big) \cong \text{Map}_{E_\infty\text{-Alg}}(CH_{X_\bullet}(A),B). \end{equation}
\end{prop}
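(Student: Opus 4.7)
The strategy is to show that the assignment $(X_\bullet, A) \mapsto CH_{X_\bullet}(A)$ is precisely a model for the tensoring of $E_\infty\text{-}Alg$ over simplicial sets. Once this is established, both the functoriality of $CH$ as a bifunctor of $(\infty,1)$-categories and the adjunction equivalence \eqref{eq:CH=tensored} are automatic from the universal property of a tensored enrichment (Remark \ref{R:Einftyistensored}). So the task reduces to identifying our construction with $A\boxtimes X_\bullet$.

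First I would treat the pointwise case. For a \emph{finite} set $K$, the monoidal functor $N(Fin)\to \hkmod$ corresponding to $A$ sends $K$ to $A^{\otimes K}$, and by Proposition \ref{P:tensorEinftyisEinfty} this has a canonical lift $CH^{simp}_K(A) \in E_\infty\text{-}Alg$. Proposition \ref{P:tensor=coprod} identifies this lift with the coproduct $\coprod_{k\in K} A$ in $E_\infty\text{-}Alg$, i.e.\ with the colimit of the constant diagram $K \to E_\infty\text{-}Alg$ with value $A$. In particular, for a finite set $K$ we already have $CH^{simp}_K(A) \simeq A\boxtimes K$.

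Next I would extend to arbitrary sets and then to simplicial sets. For a set $X$ (viewed as a discrete simplicial set), the formula $\colim_{Fin\ni K \to X} CH^{simp}_K(A)$ computes the coproduct $\coprod_X A$ in $E_\infty\text{-}Alg$, because $Fin/X$ is sifted and a general coproduct is a filtered colimit of finite ones; this again matches $A\boxtimes X$. For a general simplicial set $X_\bullet$, the formula $\colim_{Fin \ni K \to X_\bullet} CH^{simp}_K(A)$ taken levelwise produces a simplicial $E_\infty$-algebra whose $n$-simplices are $A\boxtimes X_n$. Applying the Dold-Kan functor $DK: sE_\infty\text{-}Alg \to E_\infty\text{-}Alg$ (which preserves weak equivalences, see \cite{M}) then realizes this simplicial object in $E_\infty\text{-}Alg$. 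The point is that $X_\bullet$ is itself the colimit in $\hsset$ of its simplices (each of which is contractible, hence equivalent to a point in $\hsset$), so by the defining property of the tensor one has
\[
 A\boxtimes X_\bullet \;\simeq\; A\boxtimes \bigl(\colim_{\Delta^n\to X_\bullet} \Delta^n\bigr) \;\simeq\; \colim_{\Delta^n \to X_\bullet} (A\boxtimes \Delta^n) \;\simeq\; \colim_{\Delta^n \to X_\bullet} A,
\]
using that $\boxtimes$ preserves colimits in the space variable and that $A\boxtimes \mathrm{pt}\simeq A$. This last colimit, computed in $E_\infty\text{-}Alg$, is exactly what $DK\bigl(\colim_{Fin \ni K\to X_\bullet} CH^{simp}_K(A)\bigr)$ computes.

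The bifunctoriality in $(X_\bullet,A)$ is then inherited from the bifunctoriality of the tensor, and the adjunction \eqref{eq:CH=tensored} is the very definition of being tensored. The main obstacle I anticipate is the $\infty$-categorical bookkeeping in the middle step: verifying that the levelwise colimit followed by Dold-Kan genuinely computes the $(\infty,1)$-categorical colimit $\colim_{\Delta^n \to X_\bullet} A$ in $E_\infty\text{-}Alg$, rather than some strict model that could differ from the homotopy colimit. This requires knowing that $DK$ of a simplicial $E_\infty$-algebra computes its geometric realization in $E_\infty\text{-}Alg$ (so that one can commute $DK$ past the colimit in the $Fin$ variable) and that $Fin$-indexed colimits of $E_\infty$-algebras are computed via the PROPic model. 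Both are standard consequences of the theory of $\infty$-operads developed in \cite{L-HA, KM}, but they are the substantive ingredient of the argument.
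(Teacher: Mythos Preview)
Your proposal is correct and follows essentially the same approach as the paper: both arguments hinge on Proposition~\ref{P:tensor=coprod} to identify $CH^{simp}_{X_n}(A)$ with the coproduct $\coprod_{X_n} A \simeq A\boxtimes X_n$ levelwise, and then pass to geometric realization via the refined Dold-Kan functor (invoking~\cite{M} for preservation of weak equivalences). The one organizational difference is that the paper first establishes homotopy invariance in both variables by a forward reference to Proposition~\ref{P-CH-coeq} (the coequalizer description $CH_{X_\bullet}(A)\simeq C_\ast(X_\bullet)\otimes^{\mathbb L}_{\mathbb{E}_\infty^\otimes} A$), and only then identifies the result with the tensor; you instead go straight to the tensor identification and read off invariance and functoriality from that. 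Your route is slightly more self-contained, at the cost of the $\infty$-categorical bookkeeping you flag in your last paragraph---which is exactly the content the paper outsources to~\cite{M} and~\cite{L-HA, KM}.
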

Note that we could also just have used the tensor definition $A\boxtimes X_\bullet$  to define higher Hochschild chains.
\begin{proof}[Proof of Proposition~\ref{P-CHbifunctor}]
  Proposition~\ref{P-CH-coeq} below  implies that the derived Hochschild chain functor is invariant under (weak) equivalences of $E_\infty$-algebras and simplicial sets and thus lifts as an ($(\infty,1)$-)functor $\hsset\times E_\infty\text{-Alg} \to \hkmod$, $(X_\bullet,A)\mapsto CH_{X_\bullet}(A)$. Since the tensor products of $E_\infty$-algebras is an $E_\infty$-algebra,  $CH^{simp}_{K_\bullet}(A)$ is a simplicial $E_\infty$-algebra for any simplicial set $K_\bullet$. Since the (refined) Dold-Kan functor  $s E_\infty\text{-Alg}  \to E_\infty\text{-Alg} $ preserves weak-equivalences~\cite{M}, the derived Hochschild chain functor lifts as a functor of $(\infty,1)$-categories $CH: \hsset \times E_\infty\text{-Alg} \to E_\infty\text{-Alg}$. By Proposition~\ref{P:tensor=coprod}, $CH_{X_\bullet}(A) \cong A\boxtimes X_\bullet$ in $sE_\infty\text{-Alg}$ from which the second assertion of the Proposition follows after passing to geometric realization.
\end{proof}

\begin{rem}There is a derived functor interpretation of the above Definition~\ref{D-CH-PROP}. 
Recall that to any simplicial set $X_\bullet$, one can associate a canonical $E_\infty$-coalgebra structure on its chains~\cite{Ma, BF}, denoted $C_\ast(X_\bullet)$ (Example~\ref{E:singularchainasEinfty}). Dually to the case of algebras, an $E_\infty$-coalgebra $C$ defines a  \emph{contravariant} monoidal functor $X\mapsto {CH^{simp}}^{co}_X(C)$, \emph{i.e.}, an object of $Fun^{\otimes}(N(Fin)^{op},\hkmod)$.

\smallskip

In particular, an $E_\infty$-coalgebra $C$ defines a \emph{right} module over the $\infty$-operad $\mathbb{E}_\infty^{\otimes}$ and an  $E_\infty$-algebra a \emph{left} module over the $\infty$-operad $\mathbb{E}_\infty^{\otimes}$. We can thus form their (derived) tensor products $C\mathop{\otimes}\limits_{\mathbb{E}_\infty^{\otimes}}^\mathbb{L} A$ which is computed as a (homotopy) coequalizer:
$$C\mathop{\otimes}_{\mathbb{E}_\infty^{\otimes}}^\mathbb{L} A \cong  \mathop{hocoeq} \Big( \coprod_{f:\{1,\dots, q\} \to \{1,\dots, p\}} C^{\otimes p} \otimes \mathbb{E}_\infty^{\otimes}(q,p)\otimes A^{\otimes q} \rightrightarrows  \coprod_{n} C^{\otimes n}\otimes A^{\otimes n}\Big) $$
where the maps $f:\{1,\dots, q\} \to \{1,\dots, p\}$ are maps of sets. The upper map  in the coequalizer is induced by the maps $f^*:C^{\otimes p} \otimes \mathbb{E}_\infty^{\otimes}(q,p)\otimes A^{\otimes q} \to C^{\otimes q} \otimes A^{\otimes q}$ obtained from the coalgebra structure of $C$ and the lower map is induced by the maps $f_*:C^{\otimes p} \otimes \mathbb{E}_\infty^{\otimes}(q,p)\otimes A^{\otimes q} \to C^{\otimes p} \otimes A^{\otimes p}$ induced by the algebra structure.
\begin{prop}\label{P-CH-coeq} Let $X_\bullet$ be a simplicial set and $A$ be an $E_\infty$-algebra. There is a natural equivalence
 \begin{multline*} CH_{X_\bullet}(A) \cong \hspace{-0.2pc}  C_\ast(X_\com) \mathop{\otimes}_{\mathbb{E}_\infty^{\otimes}}^\mathbb{L} A \\ \cong  \hspace{-0.2pc}\mathop{hocoeq} \Big(\hspace{-0.3pc} \coprod_{f:\{1,\dots, q\} \to \{1,\dots, p\}}\hspace{-1.5pc} C_\ast(X_\bullet)^{\otimes p} \otimes \mathbb{E}_\infty^{\otimes}(q,p)\otimes A^{\otimes q} \rightrightarrows  \coprod_{n} C_\ast(X_\bullet)^{\otimes n}\otimes A^{\otimes n}\Big)\end{multline*}
\end{prop}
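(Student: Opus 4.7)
The plan is to recognize both sides as presentations of the same derived coend over $N(Fin)$. As recalled in Section~\ref{S:operad}, an $E_\infty$-algebra $A$ is a (lax) symmetric monoidal functor $K\mapsto CH^{simp}_K(A)$ from $N(Fin)$ to $\hkmod$, i.e.\ a left $\mathbb E_\infty^{\otimes}$-module. Dually, a simplicial set $X_\bullet$ provides the symmetric monoidal functor $K\mapsto k[X_\bullet]^{\otimes K}$ on $N(Fin)^{op}$, whose Dold-Kan realization is the $E_\infty$-coalgebra $C_\ast(X_\bullet)$ of Example~\ref{E:singularchainasEinfty}, i.e.\ a right $\mathbb E_\infty^{\otimes}$-module. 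By the definition displayed in the Remark preceding the Proposition, the derived tensor product of a right and a left $\mathbb E_\infty^{\otimes}$-module is precisely the two-sided bar coequalizer appearing on the right-hand side, so it suffices to exhibit a natural equivalence between $CH_{X_\bullet}(A)$ and this coend.

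To do so, I would first massage Definition~\ref{D-CH-PROP} into coend form. For each simplicial level $n$, the colimit $\colim_{Fin \ni K\to X_n} CH^{simp}_K(A)$ indexed over the comma category $Fin \downarrow X_n$ is the standard formula for the pointwise left Kan extension of $CH^{simp}_{(-)}(A)$ along $N(Fin)\hookrightarrow N(Set)$ evaluated at $X_n$, and is therefore computed by the coend $\int^{K \in N(Fin)} X_n^{K} \cdot CH^{simp}_K(A)$. Since $K$ is finite, $k[X_n^K]\cong k[X_n]^{\otimes K}$; assembling as $n$ varies and passing through the Dold-Kan functor, the Eilenberg-Zilber shuffle quasi-isomorphism identifies the geometric realization of $n\mapsto k[X_n]^{\otimes K}$ with $C_\ast(X_\bullet)^{\otimes K}$, yielding
$$CH_{X_\bullet}(A) \;\simeq\; \int^{K \in N(Fin)} C_\ast(X_\bullet)^{\otimes K} \otimes CH^{simp}_K(A).$$
A standard bar-resolution presentation of this coend over $N(Fin)$ then produces exactly the homotopy coequalizer written in the statement, with the parallel arrows induced respectively by the right $\mathbb E_\infty^{\otimes}$-module structure on $C_\ast(X_\bullet)$ (the coalgebra structure) and by the left $\mathbb E_\infty^{\otimes}$-module structure on $A$ (its $E_\infty$-algebra structure).

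The hard part is homotopical coherence: one must check that the level-wise colimit indexed over sets, Dold-Kan realization, and coend formation over $N(Fin)$ all commute up to quasi-isomorphism. The necessary compatibilities are packaged in Proposition~\ref{P-CHbifunctor} together with the monoidality of Eilenberg-Zilber (so that $C_\ast(X_\bullet)^{\otimes K}$ really computes the chains on $X_\bullet^K$ up to a natural quasi-isomorphism of symmetric monoidal functors on $N(Fin)^{op}$). Granting these, both descriptions realize the same left Kan extension of the tensor product functor along $N(Fin)\hookrightarrow N(Set)$, and the claimed equivalence follows.
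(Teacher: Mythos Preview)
Your approach is essentially the same as the paper's: both recognize the two sides as the same derived coend over $N(Fin)$, with the key input being that the $E_\infty$-coalgebra structure on $C_\ast(X_\bullet)$ is the functor $I\mapsto k[\,Hom_{Fin}(I,X_\bullet)\,]\simeq C_\ast(X_\bullet)^{\otimes I}$. The paper states precisely this identification and then outsources the remaining coend/bar argument to \cite[Proposition~4]{GTZ2}; what you have written is a reasonable sketch of exactly that outsourced argument.
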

\begin{proof}
Note that the $E_\infty$-coalgebra structure on $C_\ast(X_\com)$ is given by the functor $N(Fin_\ast)^{op}\to \hkmod $ defined by $I\mapsto
 k\big[Hom_{Fin}(I, X_\bullet)\big]$. The rest of the proof now is the same as in~\cite[Proposition~4]{GTZ2}.
\end{proof}
\end{rem}

In~\cite{GTZ2}, a functor $CH^{cdga}: \hsset \times \hcdga \to
\hcdga $ was defined\footnote{it was simply denoted $CH$ in loc.
cit.}. There is a forgetful functor $\hcdga \to
E_\infty\text{-Alg}$. Proposition~\ref{P-CH-coeq}, Proposition~4
in~\cite{GTZ2} and the equivalence $\mathbb{E}_\infty^{\otimes}
\stackrel{\simeq}\to Comm^{\otimes}$ yield
\begin{cor} \label{C:cdgaE} If $A$ is a commutative differential graded algebra, the following diagram is commutative in the
$(\infty,1)$-category   $Fun(\hsset\times \hcdga,
E_\infty\text{-Alg})$:
 \begin{eqnarray*}
  \xymatrix{\hsset\times \hcdga  \ar[r]^{\quad CH^{cdga}} \ar[d]  & \hcdga \ar[d] \\ \hsset\times E_\infty\text{-Alg} \ar[r]^{\quad CH}&  E_\infty\text{-Alg}  }
 \end{eqnarray*}
In particular, $CH^{cdga}_{X_\bullet}(A)$ is naturally equivalent to $CH_{X_\bullet}(A)$.
\end{cor}
In other words, the corollary means that the functors $CH^{cdga}$ and $CH$ are equivalent (for a CDGA).
\begin{rem}
 In the sequel we will use the equivalence given by corollary~\ref{C:cdgaE} to identify the functors $CH$ and $CH^{cdga}$ without further notice.
\end{rem}

There is an equivalence of $(\infty,1)$-categories
$\hsset  \stackrel{\sim} {\underset{\sim}{\rightleftarrows}} Top_\infty$ induced by the underlying Quillen equivalence between $\sset$ and $\Top$~\cite{GoJa, Ho}. The left and right equivalences are respectively induced by the standard singular set functor $X\mapsto S_\bullet(X):=\mathop{Map}(\Delta^{\bullet}, X)$ and geometric realization $X_\bullet\mapsto |X_\bullet|$ functors.
In particular, we can replace simplicial sets by \emph{topological spaces} in Definition~\ref{D-CH-PROP} and Proposition~\ref{P-CH-coeq} to get the following analogue of Proposition~\ref{P-CHbifunctor}. Letting $C_\ast(X)$ be the natural $E_\infty$-coalgebra structure on the singular chains of $X$, we deduce from Proposition~\ref{P-CHbifunctor} and Proposition~\ref{P-CH-coeq}:
\begin{prop}\label{P-CHbifunctorTop}
 The derived \emph{Hochschild chain with value in an $E_\infty$-algebra $A$ modeled on a space $X$} given by
 \begin{eqnarray*}
  CH_X(A)&:=&  DK \Big( \colim_{Fin \ni K\to S_\bullet(X)}  CH^{simp}_K(A)\Big)\\
  &\cong & C_\ast(X) \mathop{\otimes}\limits_{\mathbb{E}_\infty^{\otimes}}^{\mathbb{L}} A
 \end{eqnarray*}
 induces a $(\infty,1)$-functor    $CH: (X,A) \mapsto CH_{X}(A)$ from $\hTop \times E_\infty\text{-Alg} $ to $E_\infty\text{-Alg}$. Further, one has a natural equivalence $A\boxtimes X \cong CH_X(A)$; in particular
 \begin{equation}\label{eq:CH=tensoredTop}\text{Map}_{\hTop}\big({X},\text{Map}_{E_\infty\text{-Alg}}(A,B)\big) \cong \text{Map}_{E_\infty\text{-Alg}}(CH_{X}(A),B). \end{equation}
\end{prop}
\begin{rem}\label{R:SpaceActiononCH}
 Since $(X,A)\mapsto CH_X(A)\cong A\boxtimes X$ is a functor of both variables, $CH_X(A)$ has a natural action of the topological monoid $Map_{\hTop}(X,X)$ (and in particular of the group $Homeo(X)$). 
 This means that there is a monoid map $Map_{\hTop}(X,X)\to Map_{E_\infty\text{-Alg}}(CH_X(A), CH_X(A))$; in other words  a chain map $C_\ast\big(Map_{\hTop}(X,X)\big)\otimes CH_X(A)\to CH_X(A)$ which makes $CH_X(A)$ a $Map_{\hTop}(X,X)$-algebra in $E_\infty\text{-Alg}$ (for the monad associated to the monoid $Map_{\hTop}(X,X)$).

 \smallskip

 Similarly, given any map $f:X\times K\to Y$ of topological spaces, we get  a
 canonical map $\tilde{f}:K\to Map_{E_\infty\text{-Alg}}(CH_X(A),CH_Y(A))$ in $\hTop$ as follows.
 By Proposition~\ref{P-CHbifunctorTop}, the map $f_*:CH_{X\times K}(A)\to CH_{Y}(A)$ yields a natural map of mapping spaces:
 \begin{multline}\label{eq:DefofTransffromspacetoCH}
  \tau_f: Map_{E_\infty\text{-Alg}}\big(CH_{Y}(A), CH_Y(A)\big) \\ \stackrel{(f_*)^*}\longrightarrow Map_{E_\infty\text{-Alg}}\big(CH_{X\times K}(A), CH_Y(A)\big)\\ \cong Map_{\hTop}\big(K, Map_{E_\infty\text{-Alg}}(CH_X(A), CH_Y(A))\big)
 \end{multline}
where the last equivalence follows from Proposition~\ref{P-CHbifunctorTop}~\eqref{eq:CH=tensoredTop} and Corollary~\ref{C:properties}.(4) below.  Applying the map~\eqref{eq:DefofTransffromspacetoCH} to the identity $id_{CH_Y(A)}$ we get the map
\begin{equation}\label{eq:deftilde} \tilde{f}:= \tau_f(id_{CH_Y(A)}) \in Map_{\hTop}\big(K, Map_{E_\infty\text{-Alg}}(CH_X(A), CH_Y(A))\big).\end{equation}
In particular, the map $\tilde{f}$ yields a map \begin{equation}\label{eq:deftilde*}\tilde{f}_*: C_\ast(K)\otimes CH_X(A)\to CH_Y(A)\end{equation}                                                                                                                                    in $\hkmod$.  

\medskip

The map $\tilde{f}$ (respectively $\tilde{f}_*$) are functorial in the obvious sense.
 Indeed, let $f:K\times X\to Y$, $g:K\times Y\to Z$, $j:L\to K$ be continuous maps. Out of $f$ and $g$ we can form  the map $$K\times X \stackrel{(p_K, f)} \longrightarrow K\times Y\stackrel{g}\longrightarrow Z$$ where $p_K: K\times X\to K$ is the canonical projection while out of $j$ and $f$, we can form the composition $$L\times X \stackrel{j\times id_X}\longrightarrow K\times X \stackrel{f}\longrightarrow Y.$$
 We thus  get  the maps $\tilde{f}$, $\tilde{g}$ as well as $$\widetilde{(f\circ (j\times id_X))}: L\to Map_{E_\infty\text{-Alg}}(CH_X(A), CH_Y(A))\big)$$ and $\widetilde{(g\circ(p_K,f))}: K\to  Map_{E_\infty\text{-Alg}}(CH_X(A), CH_Z(A))\big)$.
 The functoriality relation are given by:
\begin{prop}\label{P:Functorialitytildemap}
  The following two diagrams
 $$ \xymatrix{K \ar[rr]^{(\tilde{f}, \tilde{g})\hspace{12pc}} \ar@/_/[rrd]_{\widetilde{(g\circ (p_K,f))}\hspace{4pc}} &&  Map_{E_\infty\text{-Alg}}(CH_X(A), CH_Y(A)) \times Map_{E_\infty\text{-Alg}}(CH_Y(A), CH_Z(A)) \ar[d]^{-\circ -}  \\  && Map_{E_\infty\text{-Alg}}(CH_X(A), CH_Z(A)) },$$ where the vertical arrow is the composition of morphisms, and
 $$ \xymatrix{L \ar[d]_{j} \ar[rr]^{\widetilde{(f\circ (j\times id_X))}\hspace{5pc}}    & & Map_{E_\infty\text{-Alg}}(CH_X(A), CH_Y(A)) \\ K \ar[rru]_{\tilde{f}} &&  }$$
 are commutative.
\end{prop}
\begin{proof}The result follows from the following two factorizations
$$(f\circ (j\times id_X))_*:CH_{L\times X}(A)\stackrel{(j\times id_X)_*}\longrightarrow CH_{K\times X}(A) \stackrel{f_*}\longrightarrow CH_{Y}(A), $$
$$(g\circ (p_K,f))_*: CH_{K\times X}(A)\stackrel{(p_K\times f)_*}\longrightarrow CH_{K\times Y}(A) \stackrel{g_*}\longrightarrow CH_{Z}(A) $$
which in turn follow from Proposition~\ref{P-CHbifunctorTop}.
\end{proof}
\begin{ex}\label{ex:squarecommutestildemap}
Consider a commutative diagram of spaces
\begin{equation*}
\xymatrix{L\times K\times X \ar[rr]^{p_L\times f} \ar[d]_{(q\times id_X)}&&  L\times Y \ar[d]^{g} \\ R\times X \ar[rr]^{h} && Z }
\end{equation*} where $p_L:L\times K\times X\to L$ is the canonical projection, $f:K\times X\to Y$ and  $q:L\times K\to R$ are continuous maps.
Then by Proposition~\ref{P:Functorialitytildemap}, we get that the following diagram is commutative
\begin{equation*}
\xymatrix{C_\ast(L\times K) \otimes CH_{X}(A) \ar[rr]^{ \tilde{f}_*}
\ar[d]_{q_*\otimes id} && C_\ast(L)\otimes CH_{Y}(A) \ar[d]^{\tilde{g}_*} \\
C_\ast(R) \otimes CH_{X}(A) \ar[rr]^{\tilde{h}_*} && CH_Z(A).}
\end{equation*}
\end{ex}
\end{rem}

As we previously mentioned, the higher Hochschild functor (modeled on spaces) agrees with factorization homology (see~\cite{L-HA, F} and Definition~\ref{D:FactHomology}) for $E_\infty$-algebras.
Indeed the following result (whose CDGA version was proved in~\cite{GTZ2}) was proved by  Francis~\cite{F}.
\begin{theorem} \label{T:CH=TCH} Let $M$ be a  manifold of dimension $m$  and $A$ be an $E_\infty$-algebra viewed as an $N(\text{Disk}(M))$-algebra (by restriction of structure, Example~\ref{E:EinftygivesNDisk}).
 Then, the factorization homology $\int_M A$ of $M$ with coefficients in $A$   is naturally
equivalent  to $CH_M(A)$.
\end{theorem}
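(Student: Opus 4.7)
The plan is to identify both sides with a colimit of the constant diagram with value $A$, indexed by the underlying $\infty$-groupoid of $M$, and so deduce the natural equivalence.

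First, for the Hochschild side, Proposition~\ref{P-CHbifunctorTop} identifies $CH_M(A)$ with the tensor $A \boxtimes M$ in the presentable symmetric monoidal $\infty$-category $E_\infty\text{-}Alg$. Any tensor of an object by a space in a presentable $\infty$-category is computed as the colimit of the constant diagram, so
$$CH_M(A) \;\simeq\; A \boxtimes M \;\simeq\; \colim_{x \in M} c_A,$$
where $c_A : M \to E_\infty\text{-}Alg$ denotes the constant functor with value $A$ indexed by the $\infty$-groupoid $M$.

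Second, by Definition~\ref{D:FactHomology}, factorization homology $\int_M A$ is the left Kan extension of $\mathcal{A}:N(\text{Disk}(M)) \to \hkmod$ along $N(\text{Disk}(M)) \hookrightarrow Op(M)$ evaluated at $M$. It is therefore computed as a colimit
$$\int_M A \;\simeq\; \colim_{U \in N(\text{Disk}(M))} \mathcal{A}(U),$$
where, on a finite disjoint union $U = U_1 \sqcup \cdots \sqcup U_k$ of Euclidean disks, one has $\mathcal{A}(U) \simeq A^{\otimes k}$. Because $A$ is an $E_\infty$-algebra, Proposition~\ref{P:tensor=coprod} identifies $A^{\otimes k}$ with the $k$-fold coproduct of $A$ in $E_\infty\text{-}Alg$. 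Consequently, when the above colimit is lifted from $\hkmod$ to $E_\infty\text{-}Alg$, the multi-disk pieces of the diagram are absorbed by the coproduct structure, and a cofinality argument reduces the colimit to one indexed by the $\infty$-category $\text{Disk}_1(M)$ of single open disks in $M$, with open embeddings as morphisms.

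Third, I would verify that $\text{Disk}_1(M)$, after inverting the structure maps given by inclusions of smaller disks into larger ones (which, by the local constancy condition, become equivalences), is equivalent to the underlying $\infty$-groupoid of $M$. This follows from the fact that the space of open Euclidean disks in $M$ centered at a fixed point is contractible (by radial rescaling), so the assignment $U \mapsto \mathop{center}(U)$ exhibits the desired equivalence. Combining the three steps,
$$\int_M A \;\simeq\; \colim_{U \in \text{Disk}_1(M)} A \;\simeq\; \colim_{x \in M} c_A \;\simeq\; CH_M(A),$$
with naturality following from the functoriality of both constructions in $A$ and $M$. The main obstacle is the second step: carefully verifying that the $\infty$-operadic colimit which a priori defines $\int_M A$ can, via Proposition~\ref{P:tensor=coprod}, be rewritten as an ordinary colimit in $E_\infty\text{-}Alg$ indexed only by single disks. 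This passage between operadic and categorical colimits is precisely where the commutative (rather than merely $E_n$) hypothesis on $A$ is essential, since it is only there that tensor power equals coproduct.
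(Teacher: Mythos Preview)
Your outline is correct and is essentially the approach of Francis in~\cite{F}, but it is \emph{not} the route the paper takes. The paper's proof is axiomatic: one checks that the assignment $M\mapsto \int_M A$ (for $A$ an $E_\infty$-algebra) satisfies the axioms of an $E_\infty$-homology theory in the sense of Definition~\ref{D:axioms} --- value on a point, monoidality under disjoint union, and excision for collar-gluings --- and then invokes the uniqueness statement of Theorem~\ref{T:derivedfunctor} (or Corollary~\ref{C:properties}(1)). The nontrivial input there is the excision property of factorization homology, which is imported from~\cite{GTZ2, F, L-HA}. Your argument instead identifies both sides directly with the tensor $A\boxtimes M$ by rewriting the operadic colimit defining $\int_M A$ as an ordinary colimit in $E_\infty\text{-}Alg$ over the $\infty$-groupoid underlying $M$. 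The axiomatic route generalizes painlessly to the module-valued version and makes naturality in $M$ automatic; your direct route is more self-contained once one has the $\infty$-categorical cofinality machinery in hand, and it exposes more clearly \emph{why} commutativity is the decisive hypothesis.

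One small correction: in your third step you invoke ``the center of a disk'', but an open subset of $M$ merely homeomorphic to $\mathbb{R}^m$ has no canonical center. The statement $|\text{Disk}_1(M)|\simeq M$ is nonetheless correct; it follows because the poset of open disks containing a given point is filtered with contractible members (so the evaluation-at-a-point map is an equivalence), or alternatively by passing to the parametrized variant $\text{Disk}(M)'$ of Remark~\ref{R:EdHochwithDiskprimeoperad}, where each disk does carry a distinguished center. You have correctly flagged the genuinely delicate step: the passage from the operadic left Kan extension in $\hkmod$ to an ordinary colimit in $E_\infty\text{-}Alg$ indexed by single disks. This hinges on Proposition~\ref{P:tensor=coprod} together with a cofinality argument showing that, once multi-disk configurations are expressed as coproducts in $E_\infty\text{-}Alg$, the single-disk subcategory is final; that argument is not difficult but does require care.
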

\begin{proof} The proof is the same as the ones for CDGA's in~\cite{GTZ2} (see Theorem 6 and Corollary 9 in \emph{loc. cit.}) using the axioms of Theorem~\ref{T:derivedfunctor}. Further, as pointed out by John Francis~\cite{F}, the proof also applies to topological manifolds.
\end{proof}
In particular, it follows that the factorization homology of an $E_\infty$-algebra and framed manifold $M$ is canonically an $E_\infty$-algebra which is independent of the choices of framing, and further, can be extended functorially with respect to all continuous maps $h:N\to M$.

\begin{rem}
There is also a nice interpretation of Hochschild chains over spaces  in terms of  derived algebraic geometry.
Let $\mathbf{dSt}_k$ be the (model) category of derived stacks over the ground ring $k$ described in details
in~\cite[Section 2.2]{ToVe}. This category, which admits internal Hom's denoted by $\mathbb{R}\mathop{Map}(F,G)$
following~\cite{ToVe, ToVe2}, is an enrichment of the homotopy category of spaces. Indeed, any
simplicial set $X_\com$  yields a constant simplicial presheaf
$E_\infty\text{-Alg} \to \sset$ defined by $R\mapsto X_\com$ which,
in turn, can be stackified.  We denote $\mathfrak{X}$ the associated
stack, \emph{i.e.} the stackification of $R\mapsto X_\com$, which
depends only on the (weak) homotopy type of $X_\bullet$. It is sometimes called the \emph{Betti stack} of $X_\bullet$.

For a
(derived) stack $\mathfrak{Y}\in \mathbf{dSt}_k$, we denote
$\mathcal{O}_{\mathfrak{Y}}$ its functions, \emph{i.e.},
$\mathcal{O}_{\mathfrak{Y}}:=\mathbb{R}\underline{Hom}(\mathfrak{Y},\mathbb{A}^1)$,
(see~\cite{ToVe}).

\begin{cor}\label{C:mappingstack} Let $\mathfrak{R}=\mathbb{R}\mathop{Spec}(R)$ be an affine derived stack (for instance an
affine stack)~\cite{ToVe} and $\mathfrak{X}$ be the stack associated to a space $X$. Then the Hochschild chains over $X$ with
coefficients in $R$ represent the mapping stack $\mathbb{R}\mathop{Map}(\mathfrak{X}, \mathfrak{R})$. That is, there are canonical
equivalences $$\mathcal{O}_{\mathbb{R}\mathop{Map}(\mathfrak{X},\mathfrak{R})}\; \cong \; CH_{X}(R),
\qquad \mathbb{R}\mathop{Map}(\mathfrak{X},\mathfrak{R}) \;\cong \; \mathbb{R}\mathop{Spec}\big( CH_{X}(R) \big)$$
\end{cor}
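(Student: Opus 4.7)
The plan is to verify the equivalence by testing both sides against affine derived stacks and invoking the universal property of $CH_X(R)$ as the tensor $R\boxtimes X$ in $E_\infty\text{-}Alg$ (Proposition~\ref{P-CHbifunctorTop}). It suffices to prove the second equivalence, since applying $\mathcal{O}_{(-)} = \mathbb{R}\underline{\mathop{Hom}}(-,\mathbb{A}^1)$ then yields the first one (as $\mathcal{O}_{\mathbb{R}\mathop{Spec}(A)}\simeq A$ for any $E_\infty$-algebra $A$, by the adjunction defining affine derived stacks in~\cite{ToVe}).

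First, I would compute the functor of points of the mapping stack $\mathbb{R}\mathop{Map}(\mathfrak{X},\mathfrak{R})$. By definition of the internal Hom in $\mathbf{dSt}_k$, for an affine test stack $\mathbb{R}\mathop{Spec}(S)$, we have
\[
\mathbb{R}\mathop{Map}(\mathfrak{X},\mathfrak{R})\big(\mathbb{R}\mathop{Spec}(S)\big)
\;\simeq\; \mathop{Map}_{\mathbf{dSt}_k}\!\big(\mathfrak{X}\times \mathbb{R}\mathop{Spec}(S),\, \mathbb{R}\mathop{Spec}(R)\big).
\]
Since $\mathfrak{X}$ is the stackification of the constant simplicial presheaf with value $X$ and stackification commutes with finite (homotopy) products of representables, mapping out of $\mathfrak{X}\times \mathbb{R}\mathop{Spec}(S)$ identifies this space with $\mathop{Map}_{\hTop}\big(X,\mathop{Map}_{\mathbf{dSt}_k}(\mathbb{R}\mathop{Spec}(S),\mathbb{R}\mathop{Spec}(R))\big)$. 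Invoking the defining adjunction $\mathbb{R}\mathop{Spec} \dashv \mathcal{O}_{(-)}$ between affine derived stacks and $E_\infty$-algebras, this becomes $\mathop{Map}_{\hTop}\big(X,\mathop{Map}_{E_\infty\text{-}Alg}(R,S)\big)$.

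Second, I would apply the tensoring property of $E_\infty\text{-}Alg$ over $\hTop$, which by Proposition~\ref{P-CHbifunctorTop} (or the equivalence~\eqref{eq:CH=tensored} translated to topological spaces via $\hsset\simeq \hTop$) gives a natural equivalence
\[
\mathop{Map}_{\hTop}\!\big(X,\mathop{Map}_{E_\infty\text{-}Alg}(R,S)\big)
\;\simeq\; \mathop{Map}_{E_\infty\text{-}Alg}\!\big(CH_X(R),S\big).
\]
Applying the $\mathbb{R}\mathop{Spec}\dashv \mathcal{O}_{(-)}$ adjunction once more rewrites this as $\mathop{Map}_{\mathbf{dSt}_k}\!\big(\mathbb{R}\mathop{Spec}(S),\,\mathbb{R}\mathop{Spec}(CH_X(R))\big)$. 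Chaining the equivalences exhibits a natural equivalence of functors of points
\[
\mathbb{R}\mathop{Map}(\mathfrak{X},\mathfrak{R})\big(\mathbb{R}\mathop{Spec}(S)\big)
\;\simeq\;
\mathbb{R}\mathop{Spec}\!\big(CH_X(R)\big)\big(\mathbb{R}\mathop{Spec}(S)\big),
\]
natural in $S$. Since affine derived stacks generate $\mathbf{dSt}_k$ (every derived stack is a homotopy colimit of affines), Yoneda in $\mathbf{dSt}_k$ upgrades this pointwise equivalence to the desired equivalence of derived stacks.

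The main obstacle is the first step: carefully justifying that mapping out of the stackification $\mathfrak{X}$ is the same as mapping out of the constant presheaf $R\mapsto X$, i.e.\ that for $\mathfrak{Y}$ a derived stack, $\mathop{Map}_{\mathbf{dSt}_k}(\mathfrak{X},\mathfrak{Y})\simeq \mathop{Map}_{\hTop}(X,\mathfrak{Y}(k))$ when the test is affine. This is the universal property of stackification combined with the fact that $\mathfrak{X}$ only depends on the weak homotopy type of $X$; once this is granted, the rest of the argument is a clean chain of adjunctions and the already-established tensor formula for $CH_X(R)$. The second equivalence in the statement then follows formally by taking global sections.
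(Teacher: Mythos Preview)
Your argument is correct and is precisely the chain of adjunctions the paper has in mind when it says the proof is analogous to~\cite[Corollary~6.4.4]{GTZ2}: test on affines, unwind the internal Hom, use the $\mathbb{R}\mathop{Spec}\dashv \mathcal{O}$ adjunction, and then invoke the tensor identity $CH_X(R)\simeq R\boxtimes X$ from Proposition~\ref{P-CHbifunctorTop}. One small slip: in your final sentence you write ``the second equivalence then follows by taking global sections,'' but you have just established the second equivalence; it is the \emph{first} equivalence $\mathcal{O}_{\mathbb{R}\mathop{Map}(\mathfrak{X},\mathfrak{R})}\simeq CH_X(R)$ that follows by applying $\mathcal{O}_{(-)}$.
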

\begin{proof}
The proof is analogous to the one of~\cite[Corollary 6.4.4]{GTZ2}.
\end{proof}
Note that if a group  $G$ acts on $X$, the natural action of $G$ on $CH_{X}(A)$ (see Remark~\ref{R:SpaceActiononCH}) identifies with the natural one of $G$ on $\mathbb{R}\mathop{Map}(\mathfrak{X},\mathfrak{R})$ under the equivalence given by Corollary~\ref{C:mappingstack}.
\end{rem}

\subsection{Higher Hochschild (co)chains with values in $E_\infty$-modules} \label{SS:coHHEinfty}
We now consider a dual notion of the Hochschild chain functor, which is well defined in the $E_\infty$-case.

  Let $\epsilon:pt\to X_\bullet$ be a base point of $X_\bullet$.
The map $\epsilon$ yields a map of $E_\infty$-algebras $ A\cong CH_{pt}(A) \stackrel{\epsilon_*}\to CH_{X_\bullet}(A)$
and thus makes   $CH_{X_\bullet}(A)$ an $A$-module.  Let $M$  be another $E_\infty$-$A$-module.

\begin{definition} \label{D-coHoch} The (derived) \emph{Hochschild cochains} of an $E_\infty$-algebra $A$ with value in $M$ over
(the pointed simplicial set) $X_\bullet$ is given by $$CH^{X_\bullet}(A,M)= Hom_{A}(CH_{X_\bullet}(A), M),$$
the (derived) chain complex of the underlying  left $E_1$-$A$-module homomorphisms.
\end{definition}
The definition above depends on the choice of the base point even though we do not write it explicitly in the definition.
We define similarly $CH^{X}(A,M)$ for any \emph{pointed topological space} $X$.
\begin{rem}\label{R:D-coHochE1Version}
 According to Theorem~\ref{T:lifttoEinfty}, one can also alternatively consider the chain complex of $E_\infty$-$A$-modules in Definition~\ref{D-coHoch}.
\end{rem}

\smallskip

\begin{definition}\label{D:HochChainMod}
The \emph{Hochschild chains} of an $E_\infty$-algebra $A$ with values in $M$ over (the pointed simplicial set)
$X_\bullet$ is defined as $$CH_{X_\bullet}(A,M)= M\mathop{\otimes}^{\mathbb{L}}_{A} CH_{X_\bullet}(A)$$ the relative tensor product of $E_\infty$-$A$-modules (as defined, for instance, in~\cite[Section 3.3.3]{L-HA} or~\cite{KM}).
\end{definition}
\begin{rem}
Any $E_\infty$-$A$-module has an underlying  $E_1$-module structure given by the forgetful functor $A\text{-}Mod^{E_\infty} \to A\text{-}Mod^{E_1}$ hence both a left and right $A$-module structure. Thus, given two $E_\infty$-$A$-modules $M, N$, one can form their relative tensor product
$ M \mathop{\otimes}^{\mathbb{L}}_{A} N$ where $M$ is viewed as a right $A$-module, $N$ as a left $A$-module and $A$ as an $E_1$-algebra.
According to Theorem~\ref{T:lifttoEinfty} and~\cite[Section 4.4.1]{L-HA} or~\cite[Section 5]{KM}, this  tensor is equivalent (as an object of $\hkmod$) to the relative tensor product  computed in $E_\infty$-$A$-modules. Hence, the tensor product of Definition~\ref{D:HochChainMod} can be computed  using this alternative definition.
\end{rem}

Since the based point map $\epsilon_*: A\to CH_{X_\bullet}(A)$ is a map of $E_\infty$-algebras, the canonical module structure of $CH_{X_\bullet}(A)$ over itself induces a module structure on $CH_{X_\bullet}(A,M)$ over $CH_{X_\bullet}(A)$ after tensoring by $A$ (also see \cite[Part V]{KM}, \cite{L-HA}):
\begin{lem}\label{L:HochModstructure}
 Let $M$ be in $A\text{-}Mod^{E_\infty}$, that is, $M$ is an $E_\infty$-$A$-module. Then $CH_{X_\bullet}(A,M)$ is canonically a  $CH_{X_\bullet}(A)$-$E_\infty$-module.
\end{lem}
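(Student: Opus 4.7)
The plan is to realize the module structure via base change along the $E_\infty$-algebra map $\epsilon_* : A \cong CH_{pt}(A) \to CH_{X_\bullet}(A)$ induced by the base point. Concretely, I would proceed as follows.

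First, I note that by Proposition~\ref{P-CHbifunctor} (applied functorially in the pointed space variable), the map $\epsilon_*$ is a morphism of $E_\infty$-algebras, so $CH_{X_\bullet}(A)$ becomes a commutative algebra object in the symmetric monoidal $(\infty,1)$-category $A\text{-}Mod^{E_\infty}$, i.e.\ an $E_\infty$-$A$-algebra. Next, I invoke the general principle (cf.\ \cite[\S 3.4, \S 4.5]{L-HA} and Example~\ref{E:CDGAMod} above for the CDGA shadow) that for any map $f:A\to B$ of $E_\infty$-algebras, the relative tensor product defines a symmetric monoidal $(\infty,1)$-functor
\[
f^* : A\text{-}Mod^{E_\infty} \longrightarrow B\text{-}Mod^{E_\infty}, \qquad M \longmapsto M \mathop{\otimes}_{A}^{\mathbb{L}} B,
\]
which on underlying complexes agrees with the relative tensor product of $E_1$-modules by Theorem~\ref{T:lifttoEinfty} (i.e.\ the equivalence $A\text{-}Mod^{E_\infty}\simeq A\text{-}LMod$) together with \cite[\S 4.4.1]{L-HA}. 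Applying this functor to $f=\epsilon_*$ and $M$, I obtain that
\[
CH_{X_\bullet}(A,M) \;=\; M \mathop{\otimes}_{A}^{\mathbb{L}} CH_{X_\bullet}(A) \;=\; \epsilon_*^{*}(M)
\]
carries a canonical structure of $E_\infty$-module over $CH_{X_\bullet}(A)$, natural in $M$.

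The module action unwinds to the expected one: the $CH_{X_\bullet}(A)$-action factors through its own (commutative) multiplication acting on the second tensor factor, which is compatible with the $A$-action on $M$ precisely because $\epsilon_*$ is an $E_\infty$-algebra map, so that tensoring over $A$ is legitimate. Since all of $(M,A,CH_{X_\bullet}(A),\epsilon_*)$ are produced functorially (in $X_\bullet$ and $M$) by Proposition~\ref{P-CHbifunctor} and Definition~\ref{D:HochChainMod}, the resulting $CH_{X_\bullet}(A)$-module structure on $CH_{X_\bullet}(A,M)$ is itself canonical.

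I expect no substantive obstacle here: the only mild subtlety is the coherent identification between the two a priori different relative tensor products (one formed in $\hkmod$ regarding $A$ as $E_1$, one formed internally to $A\text{-}Mod^{E_\infty}$); this is handled by Theorem~\ref{T:lifttoEinfty}, which is already in place. Everything else is a formal consequence of base change of modules along a morphism of commutative algebra objects in a symmetric monoidal $\infty$-category.
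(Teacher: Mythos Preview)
Your proposal is correct and follows essentially the same approach as the paper: the paper's argument is simply that $\epsilon_*:A\to CH_{X_\bullet}(A)$ is an $E_\infty$-algebra map, so the canonical $CH_{X_\bullet}(A)$-module structure on $CH_{X_\bullet}(A)$ itself induces one on $M\otimes^{\mathbb L}_A CH_{X_\bullet}(A)$ after tensoring over $A$ (with a reference to \cite[Part V]{KM} and \cite{L-HA}). Your write-up is a more detailed unpacking of this same base-change argument, including the remark on reconciling the $E_1$- and $E_\infty$-relative tensor products via Theorem~\ref{T:lifttoEinfty}, which the paper handles in the remark immediately following the lemma.
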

\begin{rem}
By definition, if $A$ is endowed with its canonical $A$-$E_\infty$-module structure, the natural map
$CH_{X_\bullet}(A,A)\cong A\otimes^{\mathbb{L}}_{A} CH_{X_\bullet}(A)\to  CH_{X_\bullet}(A) $ is an equivalence of
$CH_{X_\bullet}(A)$-modules. Hence, tensoring by $M\otimes_A -$, we get a canonical lift of the relative tensor products $M\otimes_A^{\mathbb{L}} CH_{X_\bullet}(A)$, computed as a relative tensor product of left and right modules over $A$ seen as an $E_1$-algebra, to a  $CH_{X_\bullet}(A)$-$E_\infty$-module as well.
\end{rem}

\begin{prop}\label{P-CHfunctorMod}
The derived Hochschild chain $CH_{X_\bullet}(A,M)$ with value in an $E_\infty$-algebra $A$ and an $A$-module $M$
over a space $X_\bullet$ given by Definition~\ref{D:HochChainMod}
 induces a functor of $(\infty,1)$-categories
 $CH: (X_\bullet,M) \mapsto CH_{X_\bullet}(\iota(M),M)$ from $\hssetp \times Mod^{E_\infty}$ to $Mod^{E_\infty}$.

\smallskip

The derived Hochschild cochains $CH^{X_\bullet}(A,M)$ with value in an $E_\infty$-algebra $A$ and an $A$-module $M$
over a space $X_\bullet$ given by Definition~\ref{D-coHoch}
 induces a functor of $(\infty,1)$-categories
 $(X_\bullet,M) \mapsto CH^{X_\bullet}(A,M)$ from $\hssetp^{op} \times A\text{-}Mod^{E_\infty}$ to $A\text{-}Mod^{E_\infty}$,
which is further contravariant\footnote{using the canonical functor (similar to the one of
Example~\ref{E:CDGAMod})  $f_*: B\text{-}Mod^{E_\infty}
\to A\text{-}Mod^{E_\infty}$ associated to any $E_\infty$-algebras map $f: A\to B$} with respect to $A$.
\end{prop}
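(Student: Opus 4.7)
My plan is to reduce both parts of the proposition to the functoriality already established in Proposition~\ref{P-CHbifunctor}, combined with the general functoriality of derived relative tensor products and internal Hom-complexes in $(\infty,1)$-categories of modules~\cite{L-HA}. The main new ingredient is to track the basepoint compatibly with all morphisms. First I would observe that the basepoint $\epsilon: pt \to X_\bullet$ yields a natural $E_\infty$-algebra map $\epsilon_*: A \cong CH_{pt}(A) \to CH_{X_\bullet}(A)$. Since morphisms in $\hssetp$ preserve basepoints, and $(X_\bullet,A)\mapsto CH_{X_\bullet}(A)$ is already a functor of $(\infty,1)$-categories, this assignment assembles into an $(\infty,1)$-functor from $\hssetp \times E_\infty\text{-}Alg$ to the $(\infty,1)$-category of morphisms in $E_\infty\text{-}Alg$.

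For the chains, I would then compose with the functoriality of the derived relative tensor product $(A, M, N) \mapsto M \otimes^{\mathbb{L}}_A N$, specializing to $N = CH_{X_\bullet}(A)$ with its $A$-algebra (hence $A$-module) structure from $\epsilon_*$, and to $M \in Mod^{E_\infty}$ viewed as a right module over $\iota(M) = A$ through the underlying $E_1$-structure (Example~\ref{E:E1andEinftyModules}). The resulting composite is an $(\infty,1)$-functor $\hssetp \times Mod^{E_\infty} \to \hkmod$. To refine the target to $Mod^{E_\infty}$, I would invoke Lemma~\ref{L:HochModstructure} providing the canonical $CH_{X_\bullet}(A)$-$E_\infty$-module enhancement; the pullback square~\eqref{eq:pullbackModAlg} characterizing $Mod^{E_\infty}$ automatically upgrades this data to a functor landing in $Mod^{E_\infty}$, since both of its legs (the underlying algebra and the underlying chain complex) are separately functorial.

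For the cochains, I would use that the internal Hom $\mathrm{Hom}_A(-,-)$ in $A\text{-}Mod$ is a functor of $(\infty,1)$-categories, contravariant in the first slot and covariant in the second. Combined with the covariance of $CH_{X_\bullet}(A)$ in $X_\bullet$, this yields contravariance of $CH^{X_\bullet}(A, M)$ in $X_\bullet$ and covariance in $M$; the residual $A$-module structure on $CH^{X_\bullet}(A, M)$ is inherited from the $A$-algebra structure on $CH_{X_\bullet}(A)$ (via $\epsilon_*$) acting on the source of the Hom. For contravariance in $A$, given $f: A \to B$, I would combine the natural map $CH_{X_\bullet}(A) \to f_* CH_{X_\bullet}(B)$ of $A$-modules with the base-change $f_*: B\text{-}Mod^{E_\infty} \to A\text{-}Mod^{E_\infty}$ to produce $CH^{X_\bullet}(B, M) \to CH^{X_\bullet}(A, f_* M)$. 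The main obstacle, as I see it, is not computational but a coherence issue: one must verify that the various assembled functorialities are mutually compatible and define functors of $(\infty,1)$-categories, not merely on homotopy categories. I would handle this by working inside the presentable $\infty$-categorical framework of~\cite{L-HA} and exploiting the pullback description~\eqref{eq:pullbackModAlg} of $Mod^{E_\infty}$ throughout, which reduces the coherence check to the separately established coherence of the algebra and the underlying module.
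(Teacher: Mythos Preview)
Your proposal is correct and follows essentially the same approach as the paper's proof, which is a one-line sketch: ``It follows from Lemma~\ref{L:HochModstructure} and \S~\ref{SS:FactandHHforEinfty}. The fact that homomorphisms of $A$-$E_\infty$-modules have a canonical structure of $A$-$E_\infty$-modules follows from the same argument as for the tensor product above or from~\cite[Theorem V.8.1]{KM}.'' You have simply unpacked what the paper leaves implicit---tracking the basepoint through Proposition~\ref{P-CHbifunctor}, invoking the functoriality of the derived tensor product and internal Hom, and using the pullback description~\eqref{eq:pullbackModAlg} to handle the coherence---which is exactly the content behind the paper's terse citation of Lemma~\ref{L:HochModstructure} and \S~\ref{SS:FactandHHforEinfty}.
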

\begin{proof} It follows from Lemma~\ref{L:HochModstructure} and \S~\ref{SS:FactandHHforEinfty}. The fact that homomorphisms of $A$-$E_\infty$-modules have a canonical structure of $A$-$E_\infty$-modules follows from the same argument as for the tensor product above or from~\cite[Theorem V.8.1]{KM}.
\end{proof}

\begin{rem}
As usual, one obtains a similar version of the above Definition~\ref{D:HochChainMod} and Lemma~\ref{L:HochModstructure}
for pointed topological space $X$.
\end{rem}

\begin{rem} \label{R:cdgaEMod} If $A$ is a CDGA and $M$ a left $A$-module, similarly to Corollary~\ref{C:cdgaE},  there are
 natural equivalences $$CH^{cdga}_{X_\bullet}(A,M) \cong CH_{X_\bullet}(A,M), \qquad CH_{cdga}^{X_\bullet}(A,M)\cong  CH^{X_\bullet}(A,M)$$
where $CH^{cdga}_{X_\bullet}(A,M)$ and $CH_{cdga}^{X_\bullet}(A,M)$ are the usual higher Hochschild chain and cochain functors for CDGA's and their modules defined respectively in~\cite{P} and~\cite{G}.
\end{rem}

\subsection{Axiomatic characterization}\label{SS:AxiomHH}
The axiomatic approach to Hochschild functors over spaces for CDGA's
studied in the authors' previous work~\cite{GTZ2} extends formally
to $E_\infty$-algebras as well. It is actually an immediate
corollary of the fact that $E_\infty\text{-Alg}$ (as well as any
presentable $(\infty,1)$-category) is tensored over simplicial sets
in a unique way (up to homotopy).  We now recall quickly the
axiomatic characterization (similar to the Eilenberg-Steenrod
axioms) and some consequences for Hochschild theory over spaces with
value in $Mod^{E_\infty}$. A similar story for factorization
homology of $E_n$-algebras has been developed recently by
Francis~\cite{F2, AFT}.

We first collect the axioms characterizing the (derived) Hochschild chain theory over spaces into the following definition. Let $Forget: \hTopp \to \hTop$ be the functor that forget the base point.
\begin{definition} \label{D:axioms} An $E_\infty$-homology theory\footnote{with values in the symmetric monoidal $(\infty,1)$-category $(\hkmod, \otimes)$} is a pair of $\infty$-functors
$\mathcal{CA}: \hTop\times E_\infty\text{-Alg} \to
E_\infty\text{-Alg}$, denoted $(X,A)\mapsto \mathcal{CA}_X(A)$, and
$\mathcal{CM}: \hTopp\times Mod^{E_\infty}\to Mod^{E_\infty}$,
denoted $(X,M)\mapsto \mathcal{CM}_X(M)$, fitting in a commutative
diagram
\begin{equation}\label{eq:CMCA}
\xymatrix{ \hTopp \times Mod^{E_\infty} \ar[rr]^{\mathcal{CM}} \ar[d]_{Forget\times \iota}& & Mod^{E_\infty} \ar[d]^{\iota} \\
 \hTop \times {E_\infty}\text{-Alg} \ar[rr]^{\mathcal{CA}} & & {E_\infty}\text{-Alg} }
\end{equation}
 satisfying the following axioms:
\begin{description}
\item[i) value on a point] there is a natural equivalence $\mathcal{CM}_{pt}(M)\cong M$ in $Mod^{E_\infty}$;
\item[ii) monoidal] the natural map $$\mathcal{CM}_{X}(M) \otimes \mathcal{CA}_{Y}(\iota(M)) \stackrel{\simeq}\longrightarrow \mathcal{CM}_{X\coprod Y}(M)$$ (where $X\in \hTopp$ and $Y\in \hTop$)  is an equivalence.
\item[iii) excision] $\mathcal{CM}$ commutes with homotopy pushout of spaces, \emph{i.e.}, there is a canonical equivalence
$$\mathcal{CM}_{X\cup^h_{Z}Y}(M)\cong  \mathcal{CM}_{X}(M)\mathop{\otimes}\limits_{\mathcal{CA}_{Z}(\iota(M))}^{\mathbb{L}} \mathcal{CA}_{Y}(\iota(M))$$ where $X\in \hTopp$, $Y,Z\in \hTop$.
\end{description}
\end{definition}
\begin{rem}
Since any $E_\infty$-algebra is canonically a module over itself,
there is also a canonical functor $\phi:E_\infty\text{-Alg} \to
Mod^{E_\infty}$, hence a functor $\big(-\coprod \{*\}\big) \times
\phi: \hTop\times E_\infty\text{-Alg}\to \hTopp \times
Mod^{E_\infty}$ giving rise, by composition with $\iota\circ
\mathcal{CM}$ to a functor $\psi:\hTop\times E_\infty\text{-Alg}\to
E_\infty\text{-Alg}$. By axioms $i)$ and $ii)$ in
Definition~\ref{D:axioms} and commutativity of the
diagram~\eqref{eq:CMCA}, we get a natural equivalence
$$\psi_X(A) \;\cong\; \phi(A)\otimes \mathcal{CA}_X(A) . $$ Hence, the functor $\mathcal{CA}$ is actually completely defined by the functor $\mathcal{CM}$.
\end{rem}
\begin{rem}\label{R-GenHomTheory} We also define
a generalized  $E_\infty$-homology theory to be a triple of functors
$F:Mod^{E_\infty}\to Mod^{E_\infty}$, $\mathcal{CA}: \hTop\times
E_\infty\text{-Alg} \to E_\infty\text{-Alg}$ and $\mathcal{CM}:
\hTopp\times Mod^{E_\infty}\to Mod^{E_\infty}$ satisfying all
properties as in Definition~\ref{D:axioms} except that the value on
a point axiom is modified by requiring a natural equivalence
$\mathcal{CM}_{pt}(M)\cong F(M)$ in $Mod^{E_\infty}$.
\end{rem}

The next theorem shows that higher Hochschild homology theory is the unique functor satisfying the assumptions of Definition~\ref{D:axioms}.
\begin{theorem}\label{T:derivedfunctor} \begin{enumerate}
\item The derived Hochschild chains functors $CH:\hTop \times E_\infty\text{-Alg}\to E_\infty\text{-Alg}$ (see Proposition~\ref{P-CHbifunctorTop}) and the derived Hochschild chains with value in a module $ CH_{X}: \hTopp\times Mod^{E_\infty} \to Mod^{E_\infty}$ (see Proposition~\ref{P-CHfunctorMod}) form a  $E_\infty$-homology theory in the sense of Definition~\ref{D:axioms}.
\item Any  $E_\infty$-homology theory (in the sense of Definition~\ref{D:axioms}) is naturally equivalent to derived Hochschild chains, \emph{i.e.},   there are natural equivalences
$\mathcal{CA}_X(A)\cong CH_X(A)$ and $\mathcal{CM}_X(M)\cong CH_X(\iota(M),M)$.
\end{enumerate}
\end{theorem}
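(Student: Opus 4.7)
For part (1), the plan is to verify each of the three axioms for the pair $(CH, CH_{(-)}(-,-))$ using the tensor interpretation of Propositions~\ref{P-CHbifunctor} and~\ref{P-CHbifunctorTop}. The value on a point axiom $CH_{pt}(A)\cong A$ follows since the tensor $A\boxtimes pt$ is a terminal operation, and then $CH_{pt}(A,M)\cong M\otimes_A^{\mathbb L} A\cong M$. For the monoidal axiom, I would first observe that on the algebra side one has $CH_{X\coprod Y}(A)\cong A\boxtimes (X\coprod Y)\cong (A\boxtimes X)\otimes (A\boxtimes Y)$, where the second equivalence uses that tensor over a disjoint union of spaces is the coproduct in $E_\infty\text{-}Alg$, which by Proposition~\ref{P:tensor=coprod} coincides with the tensor product of the underlying chain complexes; then, tensoring with $M$ over $A$ and using the base-point map to identify the relative tensor with the second factor being sent to $A\otimes CH_Y(A)$, one obtains the module-level formula. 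Excision reduces to the statement that $CH_{(-)}(A)$ and $CH_{(-)}(A,M)$ commute with homotopy pushouts in the space variable. For the algebra version this is immediate since $A\boxtimes(-)$ is a left adjoint (with right adjoint $Map_{E_\infty\text{-}Alg}(A,-)$, see~\eqref{eq:CH=tensored}); for the module version it follows from this together with the compatibility of relative tensor products with colimits, giving the identification with $CH_X(A,M)\mathop{\otimes}\limits_{CH_Z(A)}^{\mathbb L} CH_Y(A)$.

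For part (2), fix an $E_\infty$-homology theory $(\mathcal{CA},\mathcal{CM})$; the goal is to produce natural equivalences $\mathcal{CA}_X(A)\cong CH_X(A)$ and $\mathcal{CM}_X(M)\cong CH_X(\iota(M),M)$. The first reduction is to show $\mathcal{CA}_{pt}(A)\cong A$ in $E_\infty\text{-}Alg$: apply the module functor $\mathcal{CM}$ to the $A$-module $A$ over the pointed space $pt$; by value-on-a-point we get $\mathcal{CM}_{pt}(A)\cong A$, and commutativity of~\eqref{eq:CMCA} forces $\iota(\mathcal{CM}_{pt}(A))\cong \mathcal{CA}_{pt}(\iota(A))$, hence $\mathcal{CA}_{pt}(A)\cong A$ in $E_\infty\text{-}Alg$.

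Next, I would argue that the combined axioms force $\mathcal{CA}_{(-)}(A):\hTop\to E_\infty\text{-}Alg$ to preserve all homotopy colimits: disjoint unions by the monoidal axiom, and pushouts by excision (via the same reduction as above, applying $\mathcal{CM}$ and then $\iota$). Since $\hTop$ is freely generated under homotopy colimits by the point, any colimit-preserving functor from $\hTop$ to a cocomplete $(\infty,1)$-category is uniquely determined, up to contractible choice, by its value on $pt$; concretely, it must coincide with the tensor action of $\hTop$ on its value at $pt$. Since this tensor action is precisely $A\boxtimes(-)\cong CH_{(-)}(A)$ by Proposition~\ref{P-CHbifunctorTop}, we get the desired equivalence $\mathcal{CA}_X(A)\cong CH_X(A)$ naturally in both variables. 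Finally, for $\mathcal{CM}$, combine the value-on-a-point axiom $\mathcal{CM}_{pt}(M)\cong M$ with excision applied to the standard presentation of a pointed space $X\cong pt\cup_{X_+} X$ (or its cellular analogue), together with the already-established identification on the algebra side, to conclude $\mathcal{CM}_X(M)\cong M\mathop{\otimes}_{A}^{\mathbb L} CH_X(A)=CH_X(\iota(M),M)$.

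The main obstacle is the precise formulation of the reduction argument in part (2): one needs the statement that a functor of $(\infty,1)$-categories from $\hTop$ into a presentable target which preserves homotopy colimits is determined by its value on a point, applied in a way that is natural in the algebra or module argument. This is a standard consequence of the universal property of the tensored structure on $E_\infty\text{-}Alg$ and $Mod^{E_\infty}$, but making the naturality in $A$ (and in $M$) coherent, especially through the compatibility diagram~\eqref{eq:CMCA}, is the technical point that requires care. Once this is in place, parts (1) and (2) assemble into the claimed equivalence of $E_\infty$-homology theories with derived Hochschild chains.
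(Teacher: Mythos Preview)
Your approach is essentially the same as the paper's: both use the tensor interpretation $CH_X(A)\cong A\boxtimes X$ for part (1), and for part (2) both reduce to $\mathcal{CA}$ and invoke that $\hTop$ is generated by a point under coproducts and homotopy pushouts, so a colimit-preserving functor out of $\hTop$ is determined by its value on $pt$.

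One correction worth noting: your final reduction for $\mathcal{CM}$ via a ``presentation $X\cong pt\cup_{X_+} X$'' is not well-formed (it is unclear what pushout this is meant to be). The paper's reduction is simpler and cleaner: apply the excision axiom with $X=Z=pt$ to the trivial pushout $Y\cong pt\cup^h_{pt} Y$, obtaining directly
\[
\mathcal{CM}_Y(M)\;\cong\;\mathcal{CM}_{pt}(M)\mathop{\otimes}^{\mathbb L}_{\mathcal{CA}_{pt}(\iota(M))}\mathcal{CA}_Y(\iota(M))\;\cong\; M\mathop{\otimes}^{\mathbb L}_{\iota(M)}\mathcal{CA}_Y(\iota(M)).
\]
This reduces $\mathcal{CM}$ to $\mathcal{CA}$ in one line, after which the argument for $\mathcal{CA}$ (which you already gave) finishes the proof. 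The paper also makes explicit that $\mathcal{CA}$ inherits the monoidal and excision axioms from $\mathcal{CM}$ via the fact that $\iota$ is monoidal and the compatibility~\eqref{eq:iotapushout}; you gesture at this via diagram~\eqref{eq:CMCA}, which is correct but less direct.
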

\begin{proof} This is essentially implied by the fact that $CH_{X}(A) \cong A\boxtimes X$ is the tensor of $A$ with the space $X$ and that such a tensor is defined uniquely, see~\cite[Corollary 4.4.4.9]{Lu11}. Note that the first assertion follows from Proposition~\ref{P-CHfunctorMod} and Proposition~\ref{P-CHbifunctorTop}. The proof of the uniqueness follows from the proofs of Theorem 4.2.7 and Theorem 4.3.1 in~\cite{GTZ2}. The excision and the value on a point axioms applied to $X=Z=pt$ show that there is a natural equivalence $$\mathcal{CM}_Y(M)\cong M\mathop{\otimes}\limits_{\iota(M)}^{\mathbb{L}} \mathcal{CA}_Y(\iota(M))$$ which reduces to proving the assertion for $\mathcal{CA}$. Since $\iota: Mod^{E_\infty}\to E_\infty\text{-Alg}$ is monoidal, $\mathcal{CA}$ is monoidal. Similarly, the natural equivalence\eqref{eq:iotapushout} implies that $\mathcal{CA}$ satisfies the excision axiom (in the category of $E_\infty$-algebras).
Now the proof of~\cite[Theorem 2]{GTZ2} applies verbatim. The argument boils down to the fact that $\hTop$ is generated by a point using coproducts and homotopy pushouts.
\end{proof}

We now list a few easy properties derived from the above Theorem~\ref{T:derivedfunctor}.
\begin{cor}\label{C:properties}
\begin{enumerate} \item The derived Hochschild chain functor is the unique functor $\hTop\times E_\infty\text{-Alg}\to E_\infty\text{-Alg}$ satisfying the following three axioms
\begin{enumerate}
\item {\bf value on a point:} \label{A:point} There is a natural equivalence of $E_\infty$-algebras $CH_{pt}(A)\cong A$.
\item {\bf coproduct:} \label{A:coproduct}There are canonical equivalences
$$CH_{\coprod\limits_{I} (X_i)}(A)\stackrel{\simeq}\longleftarrow 
\colim_{\text{\scriptsize $\begin{array}{l}K \subset I \\ K \mbox{ finite}\end{array}$}} \bigotimes_{k\in K} CH_{X_k}(A) $$
\item {\bf homotopy glueing/pushout:} \label{A:pushout} there are natural equivalences
$$CH_{X\cup^{h}_{Z} Y}(A)\stackrel{\simeq}\longleftarrow  CH_{X}(A)\otimes_{CH_{Z}(A)}^{\mathbb{L}} CH_{Y}(A).$$
\end{enumerate}
\item \textbf{(generalized uniqueness)} Let $F:Mod^{E_\infty}\to Mod^{E_\infty}$, $\mathcal{CA}: \hTop\times E_\infty\text{-Alg} \to E_\infty\text{-Alg}$ and $\mathcal{CM}: \hTopp\times Mod^{E_\infty}\to Mod^{E_\infty}$ be a generalized  $E_\infty$-homology theory. Then there is a natural equivalence
$$ \mathcal{CM}_X(M)\; \cong \; CH_{X}\big(\iota(F(M)),F(M)\big). $$
\item \textbf{(commutations with colimits)} The derived Hochschild chains functors $CH:\hTop\times E_\infty\text{-Alg} \to E_\infty\text{-Alg}$ and  $CH:\hTopp\times Mod^{E_\infty} \to Mod^{E_\infty}$ commutes with finite colimits in $\hTop$ and all colimits in $Mod^{E_\infty}$, that is  there are natural equivalences
$$CH_{\colim_{\mathcal{F}} {X_i}}(\iota(M),M)\cong \colim_{\mathcal{F}} CH_{{X_i}}(\iota(M),M) \quad (\text{for a finite category } \mathcal{F}),$$
$$CH_{X}(\colim {A_i}) \cong \colim CH_{X}(A_i). $$
\item {\bf (product)} Let $X$, $Y$ be pointed spaces, $M\in Mod^{E_\infty}$ and $A=\iota(M)\in E_\infty\text{-Alg}$. There is a natural equivalence
$$CH_{X\times Y}(A,M) \stackrel{\sim}\to CH_{X}\left( CH_{Y}(A), CH_{Y}(A,M))\right)$$ in $Mod^{E_\infty}$.
\end{enumerate}
\end{cor}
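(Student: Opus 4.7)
My plan is to derive all four clauses from the uniqueness statement in Theorem~\ref{T:derivedfunctor} combined with the identification $CH_X(A)\cong A\boxtimes X$ which exhibits $CH$ as the (essentially unique) tensoring of $E_\infty\text{-}Alg$ over $\hTop$. The module version then reduces to this via the defining identity $CH_X(A,M)\cong M\otimes^{\mathbb L}_{A} CH_X(A)$ from Definition~\ref{D:HochChainMod}. Everything that follows is a manipulation of these adjunctions together with the excision and monoidal axioms of Definition~\ref{D:axioms}.

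For part (1), given any $\mathcal{CA}$ satisfying the three listed axioms, I would extend it to a full $E_\infty$-homology theory in the sense of Definition~\ref{D:axioms} by setting $\mathcal{CM}_X(M):=M\otimes^{\mathbb L}_{\iota(M)}\mathcal{CA}_X(\iota M)$. The required value-on-a-point, monoidal and excision axioms of $\mathcal{CM}$ follow from those of $\mathcal{CA}$ and standard base-change properties of relative tensor products of $E_\infty$-modules; Theorem~\ref{T:derivedfunctor} then forces $\mathcal{CA}\cong CH$. The general-coproduct axiom (1)(b) follows by writing $\coprod_I X_i$ as the filtered colimit over finite subsets $K\subset I$ of $\coprod_{k\in K} X_k$, the finite case being exactly the monoidal axiom. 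For part (2), I would mimic the proof of Theorem~\ref{T:derivedfunctor}: applying excision with $X=Z=pt$ together with the square~\eqref{eq:CMCA}, which forces $\mathcal{CA}_{pt}(\iota M)\cong\iota F(M)$, yields $\mathcal{CM}_Y(M)\cong F(M)\otimes^{\mathbb L}_{\iota F(M)}\mathcal{CA}_Y(\iota M)$. I then rerun the uniqueness argument of Theorem~\ref{T:derivedfunctor} on the algebra functor $A\mapsto\mathcal{CA}_X(A)$, whose value on a point is now $G(A):=\mathcal{CA}_{pt}(A)\cong\iota F(A)$, to identify $\mathcal{CA}_Y(\iota M)\cong CH_Y(\iota F(M))$. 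Assembling the two pieces gives $\mathcal{CM}_Y(M)\cong CH_Y(\iota F(M),F(M))$.

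Parts (3) and (4) are both consequences of the tensored structure. Because $A\boxtimes(-)$ is a left adjoint to $Map_{E_\infty\text{-}Alg}(A,-)$ (Remark~\ref{R:Einftyistensored}), and symmetrically $(-)\boxtimes X$ is a left adjoint, $CH$ commutes with colimits in each variable separately; for the space variable we restrict to finite colimits since $\hTop$ is generated by $pt$ under finite coproducts and pushouts, on which monoidality and excision already give preservation directly. The module case combines this with the standard fact that relative tensor products commute with colimits of modules. For part (4) I would exploit the associativity $A\boxtimes(X\times Y)\cong(A\boxtimes Y)\boxtimes X$ of tensoring over spaces, yielding $CH_{X\times Y}(A)\cong CH_X(CH_Y(A))$ as $E_\infty$-algebras; the module identity then follows by tensoring on the left with $M\otimes^{\mathbb L}_{A}(-)$ and using associativity of relative tensor products to rewrite the result as $CH_Y(A,M)\otimes^{\mathbb L}_{CH_Y(A)} CH_X(CH_Y(A))$. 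The main technical point I anticipate is the compatibility in (4) at the level of $E_\infty$-$CH_Y(A)$-module structures: one must check that the module structure on $CH_{X\times Y}(A,M)$ coming from the base-point map $CH_Y(A)\to CH_{X\times Y}(A)\cong CH_X(CH_Y(A))$ agrees with the one produced by Lemma~\ref{L:HochModstructure}, so that the resulting equivalence lives genuinely in $Mod^{E_\infty}$ and not merely in $\hkmod$.
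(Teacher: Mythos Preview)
Your proposal is correct and follows essentially the same route as the paper, which dispatches the corollary in two lines: part (1) ``follows directly from Theorem~\ref{T:derivedfunctor} by applying the monoidal functor $\iota$'', and parts (2)--(4) are declared identical to the CDGA arguments in~\cite{GTZ2}. Your write-up is in effect an unpacking of those two lines: you promote a candidate $\mathcal{CA}$ to an $E_\infty$-homology theory and invoke Theorem~\ref{T:derivedfunctor}, you rerun the point/excision reduction for the generalized case, and you read (3)--(4) off the tensoring $CH_X(A)\cong A\boxtimes X$ together with Definition~\ref{D:HochChainMod}. One small notational slip: in part (2) you write $F(A)$ with $A$ an $E_\infty$-algebra, but $F$ is defined on $Mod^{E_\infty}$; you mean $F$ applied to $A$ regarded as the canonical module over itself, so that $\iota F(M)$ makes sense via diagram~\eqref{eq:CMCA}.
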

\begin{proof}
 The proof of the first assertion follows directly from Theorem~\ref{T:derivedfunctor} by applying the monoidal functor $\iota$. The proof of the other assertions are  the same as the analogous result for CDGA's proved in~\cite{GTZ2}.
\end{proof}

\subsection{Higher Hochschild (co)chains models for mapping spaces} \label{S:EinftyModel}
This section is devoted to the relationship in between higher Hochschild chains and mapping spaces. In particular,
we prove an
\emph{$E_\infty$-algebra version of the Chen iterated integral} morphism studied in~\cite{GTZ}.

\smallskip

Let $A$ be an $E_\infty$-algebra. Recall that
by the coproduct axiom and functoriality of Hochschild chains (see Theorem~\ref{T:derivedfunctor}, Corollary~\ref{C:properties}),
there is a natural equivalence $A\otimes A \cong CH_{S^0} (A)$ of $E_\infty$-algebras as well as a natural
$E_\infty$-algebras map $CH_{S^0} (A) \to CH_{pt}(A)\cong A$.

\begin{lem}\label{L:AWEinfty} Let $X,Y$ be topological spaces and $C^\ast(X)$, $C^\ast(Y)$ be their $E_\infty$-algebras of cochains. Denote $\pi_X:X\times Y\to X$ and $\pi_Y: X\times Y\to Y$ the projections onto the first and second factors. The composition,

\begin{multline}\label{eq:AWEinfty}
C^\ast(X)\otimes C^\ast(Y) \stackrel{\pi_X^*\otimes \pi_Y^*}\longrightarrow C^\ast(X\times Y)\otimes C^\ast(X\times Y)
\stackrel{\simeq}\rightarrow CH_{S^0}\big(C^{\ast}(X\times Y)\big) \\
\longrightarrow CH_{pt} \big(C^{\ast}(X\times Y)\big) \cong C^{\ast}(X\times Y)\end{multline}
is a natural morphism of $E_\infty$-algebras. It is further  an equivalence under the assumption that $H_\ast(Y)$ (or $H_\ast(X)$) is    finitely generated in each degree.
\end{lem}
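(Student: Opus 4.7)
The plan is to identify the composition~\eqref{eq:AWEinfty} with the classical cochain cross product $C^\ast(X)\otimes C^\ast(Y)\to C^\ast(X\times Y)$, $\alpha\otimes\beta\mapsto \pi_X^\ast(\alpha)\cup \pi_Y^\ast(\beta)$, and then observe that it is an $E_\infty$-algebra map since each constituent is, and a quasi-isomorphism since its underlying chain map is the cross product.

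More precisely, I would first unpack the middle map. By Proposition~\ref{P:tensor=coprod}, the tensor product is the coproduct in $E_\infty\text{-}Alg$, so the equivalence $CH_{S^0}(A)\cong A\otimes A$ (given by the coproduct axiom of Corollary~\ref{C:properties}) identifies the functorial map $CH_{S^0}(A)\to CH_{pt}(A)\cong A$ induced by the unique map $S^0\to pt$ with the codiagonal $A\otimes A\to A$ in the coproduct sense, which is precisely the $E_\infty$-multiplication $m_A$ from Proposition~\ref{P:tensorEinftyisEinfty}. Applying this with $A=C^\ast(X\times Y)$ and composing with $\pi_X^\ast\otimes \pi_Y^\ast$, the whole composition~\eqref{eq:AWEinfty} is exhibited as
\[
C^\ast(X)\otimes C^\ast(Y)\xrightarrow{\pi_X^\ast\otimes\pi_Y^\ast} C^\ast(X\times Y)\otimes C^\ast(X\times Y)\xrightarrow{m_{C^\ast(X\times Y)}} C^\ast(X\times Y),
\]
that is, the standard cochain cross product.

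Next, I would check that this composition is a map of $E_\infty$-algebras. The map $\pi_X^\ast\otimes\pi_Y^\ast$ is obtained from the functoriality of the $E_\infty$-algebra $C^\ast(-)$ (Example~\ref{E:singularchainasEinfty}) applied to the projections, and the tensor product of two $E_\infty$-algebra maps is an $E_\infty$-algebra map (because the tensor product is the coproduct). The multiplication $m_A$ is an $E_\infty$-algebra map by Proposition~\ref{P:tensorEinftyisEinfty}. Hence the composition is an $E_\infty$-algebra morphism.

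It remains to verify that the underlying chain map is a quasi-isomorphism. This is the classical K\"unneth/cross product statement: the cup product after pullback along the projections induces an isomorphism $H^\ast(X)\otimes H^\ast(Y)\xrightarrow{\sim} H^\ast(X\times Y)$ (and already at the cochain level a quasi-isomorphism once one works with the derived tensor product implicit in $\hkmod$). I do not expect any genuine obstacle here; the only mildly delicate point is the identification of the collapse-induced map $CH_{S^0}(A)\to CH_{pt}(A)$ with the $E_\infty$ multiplication, which I would justify by invoking Proposition~\ref{P:tensor=coprod} together with the naturality built into Corollary~\ref{C:properties} (value on a point and coproduct axioms).
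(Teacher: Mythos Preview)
Your proposal is correct and follows essentially the same line as the paper: identify the composition with the cross product $\alpha\otimes\beta\mapsto \pi_X^\ast(\alpha)\cup\pi_Y^\ast(\beta)$, observe it is an $E_\infty$-map by construction, and conclude by the Eilenberg--Zilber/Alexander--Whitney theorem.

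The one place where your argument and the paper's differ is in how the collapse map $CH_{S^0}(A)\to CH_{pt}(A)$ is identified with the multiplication. You invoke the coproduct universal property (Proposition~\ref{P:tensor=coprod}) to recognize it directly as the fold map $m_A$ in $E_\infty\text{-}Alg$. The paper instead passes through the underlying $E_1$-structure and factorization homology on the interval: it uses that $\int_{D^1_-\coprod D^1_+}A\to\int_{D^1}A$ is the strict multiplication of the dga $(C^\ast(X\times Y),\cup)$, and then invokes Theorem~\ref{T:CH=TCH} to match this with $CH_{S^0}(A)\to CH_{pt}(A)$. Your route is cleaner categorically; the paper's buys an explicit identification of the underlying \emph{chain} map with the Alexander--Whitney diagonal, which is what they then cite as a quasi-isomorphism over an arbitrary ground ring. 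Your appeal to ``K\"unneth'' on cohomology is not quite right as stated (there are Tor terms in general), but your parenthetical about the derived tensor product is the correct fix and matches the paper's use of the Alexander--Whitney equivalence at the chain level.
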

\begin{proof}
 That the maps involved are natural (in $X,Y\in \hTop$) maps of $E_\infty$-algebras follows from the functoriality of
 $X\mapsto C^\ast(X)$ and the functorial and monoidal properties of the higher Hochschild derived functor
 (see Theorem~\ref{T:derivedfunctor}).

We now prove that the map~\eqref{eq:AWEinfty} is an equivalence under the assumption that  that $H_\ast(Y)$  is   projective, finitely generated in each degree.  The idea is to prove that the map~\eqref{eq:AWEinfty} is homotopy equivalent to the cross product.

Note that if the ground ring $k$ is a field of characteristic zero,  the map~\eqref{eq:AWEinfty} induces a map $H^\ast(X)\otimes H^{\ast}(Y)\to H^{\ast}(X\times Y)$
 which is easily identified with the K\"unneth morphism since for a graded commutative algebra, the map $A\otimes A \cong CH_{S^0} (A) \to CH_{pt}(A)\cong A$ is
 given by the multiplication in $A$ (by Corollary~\ref{C:cdgaE}).

For a general ground ring of coefficients, note that as a mere
$E_1$-algebra (via the forgetful functor $E_\infty\text{-Alg}
\hookrightarrow E_1\text{-Alg}$),
 the singular cochain complex $C^\ast(X)$ is endowed with the (strictly) associative algebra structure given by the cup-product.
Let $D^1_+, D^1_{-}$ be two open disjoint sub-intervals of $D^1$ and $i: D^1_{-}\coprod D^1_+ \hookrightarrow D^1$ be the inclusion
map. By definition (see~\cite{L-HA, L-VI, F}), for any differential graded associative algebra $(A,m)$, the canonical map of
\emph{chain complexes} (and not $E_1$-algebras)
$$ A\otimes A\; \cong \; \int_{D^1_{-}\coprod D^1_{+}} A \stackrel{i_*}\longrightarrow \int_{D^1} A \;\cong\; A$$
is the multiplication map $m:A\otimes A\to A$ defining the $E_1$-structure of $A$.
 If furthermore $(A,m)$ is actually an $E_\infty$-algebra,  by Theorem~\ref{T:CH=TCH} and functoriality of derived Hochschild
functor, there is a (homotopy) commutative diagram of chain complexes
$$\xymatrix{A\otimes A \ar[ddd]_{m} \ar[rd]_{\simeq} \ar@/^/[rrd]^{\simeq} \ar@/^1pc/[rrrd]^{\simeq}& & & \\
&\int_{D^1_{-}\coprod D^1_{+}} A  \ar[r]^{\simeq} \ar[d]_{i}    & CH_{D^1_{-}\coprod D^1_{+}}(A)\ar[r]^{\simeq} \ar[d]^{i_*}
& CH_{S^0}(A) \ar[d]  \\
 &\int_{D^1} A \ar[ld]_{\simeq}\ar[r]^{\simeq} & CH_{D^1}(A) \ar[r]^{\simeq} \ar@/^/[lld]_{\simeq} & CH_{pt}(A)  \ar@/^1pc/[llld]^{\simeq}\\ A & & &} $$
and thus, the map~\eqref{eq:AWEinfty} is homotopy equivalent, as a map of chain complexes, to
 \begin{equation}\label{eq:AWEinfty2}
  C^\ast(X)\otimes C^\ast(Y) \stackrel{\pi_X^*\otimes \pi_Y^*}\longrightarrow C^\ast(X\times Y)\otimes C^\ast(X\times Y)
 \stackrel{\cup}\longrightarrow
 C^{\ast}(X\times Y).\end{equation} The cochain complex structure of $C^\ast(X)$ is the normalization of the cosimplicial
$k$-module $n\mapsto C^n(X)$ so that
 the above map~\eqref{eq:AWEinfty2} is the (dual of the) Alexander-Whitney diagonal  (in $\hkmod$):
\begin{equation} \label{eq:AWdual} AW: C^\ast(X)\otimes C^\ast(Y)\hookrightarrow  \big(C_\ast(X)\otimes C_\ast(Y)\big)^{\vee} \stackrel{AW^{\vee}}\longrightarrow C^{\ast}(X\times Y).\end{equation} Here the first arrow is the canonical inclusion and the second one the dual of the Alexander-Whitney quasi-isomorphism: $AW: C_\ast(X\times Y)\stackrel{\simeq}\to C_\ast(X)\otimes C_\ast(Y)$. Since $C_\ast(X)$, $C_\ast(Y)$ are complexes of  modules and $C_\ast(Y)$ has finitely generated  homology in each degree, both maps in the composition~\eqref{eq:AWdual} are quasi-isomorphisms; the lemma follows.
\end{proof}

\begin{rem}\label{R:AWEinfty} The map of $E_\infty$-algebra $C^\ast(X)\otimes C^\ast(Y)\to C^\ast(X\times Y)$ given by Lemma~\ref{L:AWEinfty} is  in particular a map of chain complexes. From the last part of the proof of Lemma~\ref{L:AWEinfty}, it follows that this map is equivalent   in $k\text{-}Mod_\infty$ to the dual of the Alexander-Whitney diagonal (see the maps~\eqref{eq:AWEinfty2},~\eqref{eq:AWdual}), \emph{i.e.} the map given by Lemma~\ref{L:AWEinfty} is an $E_\infty$-lifting of the Alexander-Whitney diagonal (also called the cross product).
\end{rem}

\smallskip

Let $X_\bullet$ be a simplicial set  and $Y$ be a topological space. We define a map $$ev: Y^{|X_\bullet|} \times \Delta^n \to Y^{X_n} $$ by $ev(f, (t_0, \cdots, t_n)) =g$, where for $$f: \big(\coprod (X_n\times \Delta^n)/\sim\big) \to Y \text{ and } (t_0, \cdots, t_n)\in \Delta^n,$$ we have,  $$g(\sigma_n) = f([\sigma_n, (t_0, \cdots, t_n)]), \quad\quad \text{for }\sigma_n\in X_n. $$

Note that this is a well defined map of cosimplicial topological spaces. In fact, $ev$ is induced by the canonical map $X_n \to \mathop{Map}(\Delta^n, |X_\bullet|)$  given by the unit of the adjunction between simplicial sets and topological spaces.

Applying the $E_\infty$ cochain functor $C^\ast(-)$ (Example~\ref{E:singularchainasEinfty}) yields a natural map
\begin{equation}\label{eq:Einftyeta}ev^*: \big(C^{\ast}( Y^{X_i}) \big)_{(i\in \mathbb{N})} \to \big(C^{\ast}(Y^{|X_\bullet|} \times \Delta^i)\big)_{(i\in \mathbb{N})}\end{equation}
of simplicial $E_\infty$-algebras.

\begin{lem}\label{L:realizationmapping}
The geometric realization of the simplicial $E_\infty$-algebra $\big(C^{\ast}(Z \times \Delta^i)\big)_{(i\in \mathbb{N})}$ is naturally equivalent to $C^{\ast}(Z)$, as an $E_\infty$-algebra.
\end{lem}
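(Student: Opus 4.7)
The plan is to compare the given simplicial $E_\infty$-algebra $(C^\ast(Z\times \Delta^i))_{i\in \mathbb{N}}$ with the \emph{constant} simplicial $E_\infty$-algebra $c(C^\ast(Z))$ at $C^\ast(Z)$, via the projections $\pi_i:Z\times \Delta^i \to Z$. Since $Z\mapsto Z$ is a constant cosimplicial space and $\Delta^\bullet$ is the standard cosimplicial space, the collection $(\pi_i)_{i\geq 0}$ forms a map of cosimplicial spaces $Z\to Z\times \Delta^\bullet$. Applying the (contravariant) $E_\infty$-algebra functor $C^\ast(-)$ from Example~\ref{E:singularchainasEinfty} yields a map of simplicial $E_\infty$-algebras
\[
\pi^\ast : c\big(C^\ast(Z)\big) \; \longrightarrow \; \big(C^\ast(Z\times \Delta^i)\big)_{i\in \mathbb{N}}.
\]

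Next I would observe that this is a levelwise weak equivalence. Indeed, for every $i$, the space $\Delta^i$ is contractible, so the projection $\pi_i:Z\times \Delta^i\to Z$ is a weak homotopy equivalence, and therefore $\pi_i^\ast$ is a quasi-isomorphism (this is immediate either from the K\"unneth equivalence $C^\ast(Z\times \Delta^i)\simeq C^\ast(Z)\otimes C^\ast(\Delta^i)$ furnished by Lemma~\ref{L:AWEinfty} combined with $C^\ast(\Delta^i)\simeq k$, or directly from homotopy invariance of singular cochains). A quasi-isomorphism of $E_\infty$-algebras is an equivalence in the $(\infty,1)$-category $E_\infty\text{-}Alg$, so $\pi^\ast$ is a levelwise equivalence of simplicial objects in $E_\infty\text{-}Alg$.

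Since geometric realization, viewed as the homotopy colimit over $\Delta^{op}$, preserves levelwise equivalences in $E_\infty\text{-}Alg$, it suffices to identify the realization of the constant simplicial $E_\infty$-algebra $c(C^\ast(Z))$. But the category $\Delta^{op}$ has a terminal object $[0]$, so its nerve is contractible, and consequently the homotopy colimit of any constant diagram $c(A)$ of $E_\infty$-algebras is naturally equivalent to $A$ itself; equivalently, in the tensor-over-spaces language of Remark~\ref{R:Einftyistensored}, one has $|c(A)|\simeq A\boxtimes |\Delta^\bullet_{pt}|\simeq A\boxtimes pt \simeq A$. Combining these steps produces the desired natural equivalence $|(C^\ast(Z\times \Delta^i))_{i\in \mathbb{N}}|\simeq C^\ast(Z)$ of $E_\infty$-algebras, which is moreover natural in $Z$ by construction.

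The only point requiring real care is checking that $\pi^\ast$ is indeed a map of \emph{simplicial} $E_\infty$-algebras (compatibility with all face and degeneracy operators), but this is automatic because $\pi_\bullet:Z\times \Delta^\bullet \to c(Z)$ is a map of cosimplicial spaces in $\hTop$ and $C^\ast$ is an $(\infty,1)$-functor to $E_\infty\text{-}Alg$. So the main (minor) obstacle is merely bookkeeping the cosimplicial/simplicial variance; once that is in place, the argument reduces to the two formal facts invoked above: levelwise equivalences realize to equivalences, and the realization of a constant simplicial object is the value.
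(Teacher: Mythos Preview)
Your proof is correct and follows essentially the same line as the paper's: both arguments produce a levelwise equivalence between the constant simplicial $E_\infty$-algebra $c(C^\ast(Z))$ and $\big(C^\ast(Z\times\Delta^i)\big)_{i}$ using contractibility of $\Delta^i$, and then conclude. The paper routes this through the K\"unneth equivalence of Lemma~\ref{L:AWEinfty} (writing $C^\ast(Z\times\Delta^i)\simeq C^\ast(Z)\otimes C^\ast(\Delta^i)$ and then contracting the second factor), whereas you go directly via the projections $\pi_i$; these are the same map. One small slip: $[0]$ is \emph{initial} in $\Delta^{op}$ (it is terminal in $\Delta$), not terminal, but this still makes the nerve contractible and your conclusion about the realization of a constant simplicial object stands.
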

\begin{proof}
By Lemma~\ref{L:AWEinfty}, there is a natural equivalence
$C^{\ast}(Z \times \Delta^i) \cong C^{\ast}(Z)\otimes
C^\ast(\Delta^i)$ in $E_\infty\text{-}Alg $. This induces an
equivalence, $$C^{\ast}(Z) \otimes
\big(C^{\ast}(\Delta^i)\big)_{(i\in \mathbb{N})}  \stackrel{\simeq}
\longrightarrow  \big(C^{\ast}(Z) \times \Delta^i)\big)_{(i\in
\mathbb{N})} $$ of simplicial $E_\infty$-algebras. Since the
constant map $\Delta^i \to pt$ is a homotopy equivalence, the
canonical map $C^\ast(pt) \to C^\ast(\Delta^i)$, where $C^\ast(pt)$
is viewed as a constant simplicial $E_\infty$-algebra, is an
equivalence. Composing the above with the equivalence,
$$C^{\ast}(Z) \otimes \big(C^{\ast}(pt)\big)_{(i\in \mathbb{N})}  \stackrel{\simeq} \longrightarrow C^{\ast}(Z) \otimes \big(C^{\ast}(\Delta^i)\big)_{(i\in \mathbb{N})} $$
gives rise to an equivalence between $C^{\ast}(Z)$ and the constant simplicial $E_\infty$-algebra $C^{\ast}(Z \times \Delta^i)$.
\end{proof}

Let $X_\bullet$ be a simplicial set.  Iterating Lemma~\ref{L:AWEinfty}, we get, for any $n\in \mathbb{N}$, a natural map  of $E_\infty$-algebras
\begin{equation} \label{eq:simpHochmapping}CH_{X_n}(C^{\ast}(Y)) \longrightarrow C^{\ast}\big( Y^{X_{n}}\big)
 \end{equation}
Composing the map~\eqref{eq:simpHochmapping} with the $ev^\ast$ map in ~\eqref{eq:Einftyeta}, we get a natural morphism of simplicial  $E_\infty$-algebras,

\begin{equation}\label{eq:ItEinfty} \mathcal{I}t:CH^{simp}_{X_\bullet}(C^{\ast}(Y)) \longrightarrow C^{\ast}\big( Y^{X_{\bullet}}\big)  \stackrel{ev^\ast}\longrightarrow C^{\ast}\big(Y^{|X_\bullet|} \times \Delta^{\bullet}\big).  \end{equation}

The following result is an integral, $E_\infty$-lifting of the iterated integrals~\cite{GTZ2}. 
\begin{theorem}\label{T:Einftymapping}
The geometric realization of the map $$\mathcal{I}t: CH^{simp}_{X_\bullet}(C^{\ast}(Y))\longrightarrow C^{\ast}\big(Y^{|X_\bullet|} \times \Delta^{\bullet}\big)$$ yields a natural (in $X_\bullet$ and $Y$) morphism of $E_\infty$-algebras $$\mathcal{I}t: CH_{X_\bullet}(C^\ast(Y)) \longrightarrow C^{\ast}\big( Y^{|X_\bullet|} \big).$$
Further, if $|X_\bullet|$ is $n$-dimensional (\emph{i.e.} the highest degree of any non-degenerate simplex is $n$) and $Y$  is $n$-connected, then the map $\mathcal{I}t$ is an equivalence.
\end{theorem}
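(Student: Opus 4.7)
The plan is to verify that $\mathcal{I}t$ is a well-defined morphism of $E_\infty$-algebras by direct construction, and then to establish it is an equivalence under the connectivity assumptions by inducting on a cell decomposition of $|X_\bullet|$, combining the excision axiom for higher Hochschild chains with an Eilenberg--Moore identification on the mapping space side. For the construction, $\mathcal{I}t$ is the geometric realization of the simplicial morphism \eqref{eq:ItEinfty}. Its simplicial degree $n$ component is an iteration of the $E_\infty$-equivalence provided by Lemma~\ref{L:AWEinfty}, yielding $CH_{X_n}(C^{\ast}(Y)) \to C^{\ast}(Y^{X_n})$, followed by the cosimplicial evaluation $ev^{\ast}$. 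Each is a morphism of $E_\infty$-algebras natural in both variables. Since realization preserves this structure and Lemma~\ref{L:realizationmapping} identifies the realization of $C^{\ast}(Y^{|X_\bullet|} \times \Delta^{\bullet})$ with $C^{\ast}(Y^{|X_\bullet|})$, the composite $\mathcal{I}t$ is the claimed natural $E_\infty$-algebra morphism.

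For the equivalence claim, I proceed by induction on the cell filtration of $|X_\bullet|$. The base case $X_\bullet = \mathrm{pt}$ is immediate, as both sides equal $C^{\ast}(Y)$ and $\mathcal{I}t$ is the identity. For the inductive step, suppose $|X'_\bullet| \simeq |X_\bullet| \cup_{S^{k-1}} D^k$ with $k \leq n$. On the source, Corollary~\ref{C:properties}(1)(c) gives the pushout decomposition
\[
CH_{|X'_\bullet|}(C^{\ast}(Y)) \;\simeq\; CH_{|X_\bullet|}(C^{\ast}(Y)) \mathop{\otimes}_{CH_{S^{k-1}}(C^{\ast}(Y))}^{\mathbb{L}} CH_{D^k}(C^{\ast}(Y)).
\]
On the target, the internal-Hom adjunction converts the pushout of spaces into a pullback of mapping spaces, $Y^{|X'_\bullet|} \simeq Y^{|X_\bullet|} \times_{Y^{S^{k-1}}} Y^{D^k}$.

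The principal obstacle is the Eilenberg--Moore identification
\[
C^{\ast}\bigl(Y^{|X_\bullet|} \times_{Y^{S^{k-1}}} Y^{D^k}\bigr) \;\simeq\; C^{\ast}(Y^{|X_\bullet|}) \mathop{\otimes}_{C^{\ast}(Y^{S^{k-1}})}^{\mathbb{L}} C^{\ast}(Y^{D^k})
\]
of $E_\infty$-algebras. This requires the base $Y^{S^{k-1}}$ to be simply connected, which follows from the connectivity hypotheses: the evaluation fibration $\Omega^{k-1} Y \to Y^{S^{k-1}} \to Y$ has $(n-k+1)$-connected fibre and $n$-connected base (using $k \leq n$), so $Y^{S^{k-1}}$ is itself simply connected, and the Eilenberg--Moore spectral sequence converges with the expected $E_\infty$-upgrade (an observation which also requires promoting the classical statement on cohomology to the $(\infty,1)$-categorical level, e.g.\ by replacing the diagram by a Reedy fibrant one and using that the cotensor $Y^{(-)}$ sends homotopy pushouts to homotopy pullbacks). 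Finally, $\mathcal{I}t$ intertwines these two decompositions, which follows from naturality of the construction in $X_\bullet$ together with the compatibility of the Alexander--Whitney $E_\infty$-lift of Lemma~\ref{L:AWEinfty} with the adjunction between product and internal Hom. The induction then yields the equivalence for all $n$-dimensional $X_\bullet$.
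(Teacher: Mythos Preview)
Your construction of $\mathcal{I}t$ as an $E_\infty$-morphism coincides with the paper's. For the equivalence claim, however, you take a genuinely different route: the paper does not induct on cells but instead identifies the underlying chain map explicitly as the iterated Alexander--Whitney diagonal composed with slant products by the fundamental chains $[\Delta^n]$, and then defers to \cite{GTZ, PT} for the fact that this concrete map is a quasi-isomorphism under the stated connectivity hypotheses. Your excision-plus-Eilenberg--Moore argument is more self-contained and exercises the axiomatics of higher Hochschild chains directly, whereas the paper's route makes contact with classical Chen iterated integrals and avoids having to invoke an $E_\infty$-level Eilenberg--Moore theorem. Two remarks on your version: first, the compatibility of $\mathcal{I}t$ with the two pushout decompositions already follows from naturality of $\mathcal{I}t$ in the space variable together with the universal property of the pushout in $E_\infty\text{-}Alg$, so the extra appeal to an Alexander--Whitney/internal-Hom adjunction is not needed (and is a bit obscure as stated); second, your induction as written only treats finite cell complexes---extending to arbitrary $n$-dimensional $X_\bullet$ would require an additional argument about commuting $C^\ast(Y^{(-)})$ with filtered colimits in $X$, which is not automatic, whereas the spectral-sequence methods underlying the cited references handle this directly.
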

\begin{proof}
Since the natural map~\eqref{eq:simpHochmapping}, $CH^{simp}_{X_\bullet}(C^{\ast}(Y))\to C^{\ast}\big( Y^{X_{\bullet}}\big)$, and the map~\eqref{eq:Einftyeta}, $ C^{\ast}\big( Y^{X_{\bullet}}\big) \stackrel{ev^\ast}\to C^{\ast}\big( Y^{|X_\bullet|} \times \Delta^\bullet\big)$, are simplicial, their realization yields a map of $E_\infty$-algebras $$CH_{X_\bullet}(C^\ast(Y)) \to |C^{\ast}\big( Y^{|X_\bullet|} \times \Delta^\bullet \big) |\;\cong\; C^{\ast}\big( Y^{|X_\bullet|} \big)  $$ where the last equivalence follows from  Lemma~\ref{L:realizationmapping}.  This defines the map $\mathcal{I}t$ which is natural by construction.

\smallskip

Now, we assume  $|X_\bullet|$ is $n$-dimensional and  $Y$ is $n$-connected. We only need to check that the underlying map of \emph{cochain complexes} $CH_{X_\bullet}(C^{\ast}(Y))\to C^{\ast}\big(Y^{|X_\bullet|} \times \Delta^{\bullet}\big)$ is an equivalence in the $(\infty,1)$-category of cochain complexes. The proof of Lemma~\ref{L:AWEinfty} (see Remark~\eqref{R:AWEinfty}) implies that the cochain complex morphism
$$CH_{X_\bullet}(C^{\ast}(Y))\longrightarrow  C^{\ast}(Y^{X_\bullet}) $$ is the map induced by the iterated Alexander-Whitney diagonal.
  Since the geometric realization commutes with the forgetful functor $E_\infty\text{-Alg}  \to k\text{-}Mod_\infty$, the geometric realization of the map $ C^{\ast}(Y^{X_\bullet}) \to  C^{\ast}\big(Y^{|X_\bullet|} \times \Delta^{\bullet}\big)$ is equivalent in $k\text{-}Mod_\infty$ to the
map induced  by the slant products $$C^{\ell}(Y^{X_n}) \to C^{\ell}\Big(Y^{|X_\bullet|} \times \Delta^{\ell}\Big) \stackrel{ \slash [\Delta^n]}\longrightarrow C^{\ell-n}\Big(Y^{|X_{\bullet}|}\Big)$$
by the fundamental chain $[\Delta^n]$ given by the unique non-degenerate $n$-simplex of $\Delta^n$.

 Hence we have proved that $\mathcal{I}t$ is equivalent in $k\text{-}Mod_\infty$ to the composition
\begin{multline*}
\bigoplus_{n \geq 0} CH_{X_n}(C^{\ast}(Y))\longrightarrow \bigoplus_{n\geq 0} C^{\ast}(Y^{X_n}) \\ \stackrel{\bigoplus ev^\ast}\longrightarrow  \bigoplus_{n\geq 0} C^{\ast}\Big(Y^{|X_\bullet|} \times \Delta^{n}\Big)
\stackrel{\bigoplus_{n\geq 0} \slash [\Delta^{n}]}\longrightarrow  C^{\ast}\Big(Y^{|X_{\bullet}|}\Big).
\end{multline*}
This last map is a quasi-isomorphism under the appropriate assumptions on $X_\bullet$ and $Y$ using the same argument as in~\cite{GTZ, PT}.
\end{proof}
\begin{rem}[Relationship with Chen integrals] Let $Y=M$ be a manifold and $k$ a field of characteristic zero. Then, by Corollary~\eqref{C:cdgaE} and homotopy invariance of higher Hochschild cochains, there is a natural equivalence of $E_\infty$-algebras $$CH_{X_\bullet}(C^\ast(M))\; \cong \; CH_{X_\bullet}^{cdga}(\Omega(M)).$$
Recall that the slant product is a model for integration over $\Delta^n$. Unfolding the proof of Theorem~\ref{T:Einftymapping} and the construction of the map in Lemma~\ref{L:AWEinfty},  one can check that the map $\mathcal{I}t: CH_{X_\bullet}(C^\ast(M))\to C^{\ast}(M^{|X_\bullet|})$, given by Theorem~\ref{T:Einftymapping}, coincides\footnote{as natural transformations of $\infty$-functors $\hsset \to E_\infty\text{-Alg}$} with  the generalized Chen's iterated integral map defined in~\cite[Section 2]{GTZ}. In particular, when $X_\bullet$ is the standard simplicial set model of the compact interval or the circle, we recover the original Chen iterated integral construction~\cite{Ch}. This justifies our notation $\mathcal{I}t$ for the map defined in Theorem~\ref{T:Einftymapping}.

\smallskip

Similarly, the argument of the proofs of Theorem~\ref{T:Einftymapping} and Lemma~\ref{L:AWEinfty} as well as Theorem~\ref{T:CH=TCH} (applied to the forgetful functor from $E_\infty$-algebras to $E_1$-algebras) show that the iterated integral map $\mathcal{I}t:CH_{X_\bullet}(C^{\ast}(Y)) \rightarrow C^{\ast}\Big(Y^{|X_\bullet|}\Big)$ given by Theorem~\ref{T:Einftymapping} is homotopy equivalent to the map of differential graded algebras described in~\cite{PT}. In particular, for $X=S^1_\bullet$, we recover an $E_\infty$-algebra lift of Jones  quasi-isomorphism~\cite{Jo}.
\end{rem}

 Similarly, if $X$ is a topological space, by choosing a   simplicial model $X_\bullet$ for $X$ (that is a simplicial set with an equivalence $|X_\bullet| \to X$), we  get a natural equivalence $CH_{X}(C^{\ast}(Y)) \stackrel{\simeq}\longrightarrow CH_{X_\bullet}(C^{\ast}(Y))$ and thus Theorem~\ref{T:Einftymapping} yields the following corollary.  Note that an independent proof was obtained by Francis~\cite{F2} in the case where $X$ is a manifold.
\begin{cor}\label{C:Einftymapping} The map $$\mathcal{I}t:CH_{X}(C^{\ast}(Y))\stackrel{\simeq}\longrightarrow CH_{X_\bullet}(C^{\ast}(Y)) \longrightarrow C^\ast(Y^{X}) $$ is a natural (in $X$, $Y$) morphism of $E_\infty$-algebras and an equivalence if $Y$ is $\dim(X)$- connected.
\end{cor}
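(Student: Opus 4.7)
The plan is to reduce the statement to Theorem~\ref{T:Einftymapping} by transport along the equivalence $\hsset\simeq Top_\infty$ recalled in Example~\ref{E:hsset}. Given a topological space $X$, I would choose a simplicial set $X_\bullet$ together with a weak equivalence $|X_\bullet|\stackrel{\simeq}\to X$; for instance one may take $X_\bullet=S_\bullet(X)$, and if $X$ has the homotopy type of a finite-dimensional CW-complex one can even arrange that the non-degenerate simplices of $X_\bullet$ lie in degrees $\leq \dim(X)$.

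First, I would invoke the homotopy invariance of derived Hochschild chains. Since, by Proposition~\ref{P-CHbifunctorTop}, the assignment $(Z,A)\mapsto CH_Z(A)$ is a functor of $(\infty,1)$-categories $\hTop\times E_\infty\text{-}Alg\to E_\infty\text{-}Alg$ and since $|X_\bullet|\to X$ is an equivalence in $\hTop$, this produces the first natural equivalence of $E_\infty$-algebras
\[
CH_X(C^{\ast}(Y))\;\stackrel{\simeq}\longleftarrow\;CH_{|X_\bullet|}(C^{\ast}(Y))\;\cong\;CH_{X_\bullet}(C^{\ast}(Y)).
\]
Composing with the morphism $\mathcal{I}t:CH_{X_\bullet}(C^{\ast}(Y))\to C^{\ast}(Y^{|X_\bullet|})$ provided by Theorem~\ref{T:Einftymapping} and then with the identification $C^{\ast}(Y^{|X_\bullet|})\simeq C^{\ast}(Y^X)$ induced by $|X_\bullet|\simeq X$ (the derived mapping space being itself an $(\infty,1)$-functor) yields the desired map of $E_\infty$-algebras.

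For the equivalence statement, if $Y$ is $\dim(X)$-connected then by the choice of $X_\bullet$ above $|X_\bullet|$ is $\dim(X)$-dimensional, so Theorem~\ref{T:Einftymapping} applies and gives that $\mathcal{I}t:CH_{X_\bullet}(C^{\ast}(Y))\to C^{\ast}(Y^{|X_\bullet|})$ is a quasi-isomorphism; pre- and postcomposing with the equivalences above concludes. The main subtle point in this plan is really the naturality in $X$, which boils down to checking that the construction does not depend on the chosen simplicial model $X_\bullet$: this follows from the fact that all intermediate equivalences are $(\infty,1)$-functorial and that the space of simplicial models of a given topological space is contractible under the equivalence $\hsset\simeq Top_\infty$. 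Beyond this packaging, no new work is needed since Theorem~\ref{T:Einftymapping} supplies all the substantive input.
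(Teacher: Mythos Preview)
Your proposal is correct and follows essentially the same route as the paper: the corollary is deduced from Theorem~\ref{T:Einftymapping} by choosing a simplicial model $X_\bullet$ of $X$ and using the homotopy invariance of the Hochschild chain functor (Proposition~\ref{P-CHbifunctorTop}) together with the equivalence $Y^{|X_\bullet|}\simeq Y^X$. Your remark that for the equivalence claim one must choose $X_\bullet$ with non-degenerate simplices only in degrees $\leq\dim(X)$ is exactly the point, since the singular set $S_\bullet(X)$ alone would not satisfy the finite-dimensionality hypothesis of Theorem~\ref{T:Einftymapping}.
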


We will give a cohomological version of Theorem~\ref{T:Einftymapping}.  Assume now that $X$ is pointed (by a map $\epsilon: pt\to X$) and choose  a pointed simplicial set model $X_\bullet$ of $X$. By naturality of the map $\mathcal{I}t$ in Theorem~\ref{T:Einftymapping}, there is a commutative diagram of $E_\infty$-algebras maps:
\begin{equation}
\xymatrix{ CH_{X_\bullet}(C^\ast(Y)) \ar[rr]^{\mathcal{I}t} && C^{\ast}\big( Y^{|X_\bullet|} \big) \\ C^{\ast}(Y) \cong CH_{pt}(C^{\ast}(Y)) \ar[u]^{\epsilon_*} \ar[rr]^{\mathcal{I}t} && C^{\ast}\big(Y^{pt}\big)\cong C^{\ast}(Y) \ar[u]^{C^{\ast}(\epsilon^*)}.}
\end{equation}
in which the lower map is seen to be the identity map by construction. It follows that $\mathcal{I}t$ is a $C^\ast(Y)$-$E_\infty$-module map. Denoting $M^{\vee}=Hom_{k}(M,k)$ the linear dual of $M$ (equipped with its canonical $A$-$E_\infty$-structure if $M$ is an $A^{op}$-$E_\infty$-module), we thus get a map
\begin{multline}\label{eq:Itdual}
 \mathcal{I}t^*: C_{\ast}\Big( Y^{X}\Big)\cong C_{\ast}\Big( Y^{|X_\bullet|}\Big) \longrightarrow Hom_{k}\left(\big(C^{\ast}(Y^{|X_\bullet|})\big), k \right) \\ \stackrel{\simeq}\longrightarrow  Hom_{C^{\ast}(Y)}\left(\big(C^{\ast}(Y^{|X_\bullet|})\big), \Big(C^{\ast}(Y)\Big)^{\vee}\right)  \\ \stackrel{-\circ \mathcal{I}t} \longrightarrow Hom_{C^{\ast}(Y)}\left(CH_{X_\bullet}\big(C^{\ast}(Y)\big), \Big(C^{\ast}(Y)\Big)^{\vee}\right)\\ \cong CH^{X_\bullet}\left(C^{\ast}(Y), \big(C^{\ast}(Y)\big)^{\vee} \right) \cong CH^{X}\left(C^{\ast}(Y), \big(C^{\ast}(Y)\big)^{\vee} \right)
\end{multline}
where the first map is biduality morphism, the second map is the canonical isomorphism and the last two  isomorphisms are from Definition~\ref{D:HochChainMod}.

 \begin{cor}\label{C:Itdual}
  The morphism $\mathcal{I}t^*:C_{\ast}\Big( Y^{X}\Big) \longrightarrow CH^{X}\left(C^{\ast}(Y), \big(C^{\ast}(Y)\big)^{\vee} \right) $ in $k\text{-}Mod_\infty$ is natural in $X$ and $Y$.

  Further, if $Y$ is $\dim(X)$-connected, $X$ is compact and the homology groups of $Y$ are  finitely generated in each degree, then $\mathcal{I}t^*$ is a quasi-isomorphism.
 \end{cor}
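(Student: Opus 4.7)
The statement splits into two independent claims: the naturality of $\mathcal{I}t^\ast$ in both $X$ and $Y$, and the quasi-isomorphism property under the connectivity/finiteness hypotheses. My plan is to verify each by chasing the four constituent maps in the defining composition~\eqref{eq:Itdual}.

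For naturality, I would simply observe that every factor in the composition is functorial. The biduality map $C_\ast(Z)\to \mathop{Hom}_k(C^\ast(Z),k)$ is natural in the topological space $Z=Y^{|X_\bullet|}$ (contravariant in $X$ via a functorial simplicial model $X\mapsto X_\bullet$, covariant in $Y$). The identification $\mathop{Hom}_k(M,k)\cong \mathop{Hom}_{C^\ast(Y)}(M, C^\ast(Y)^\vee)$ is the derived tensor–Hom adjunction applied to the unit $k\to C^\ast(Y)$, and is natural both in $M$ and in the coefficient algebra $C^\ast(Y)$. The precomposition by $\mathcal{I}t$ is natural because $\mathcal{I}t$ is natural by Theorem~\ref{T:Einftymapping} and Corollary~\ref{C:Einftymapping}. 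Finally, the identification $CH^{X_\bullet}\cong CH^X$ is homotopy invariance of the Hochschild cochain functor (Proposition~\ref{P-CHfunctorMod}). Assembling these yields the naturality of $\mathcal{I}t^\ast$.

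For the quasi-isomorphism, I would check that each of the four maps is itself a quasi-iso. The adjunction step and the homotopy-invariance step are actual equivalences with no hypothesis needed. The precomposition $-\circ \mathcal{I}t$ is a quasi-iso on derived $\mathop{Hom}_{C^\ast(Y)}(-,C^\ast(Y)^\vee)$ because, under the connectivity assumption on $Y$, Corollary~\ref{C:Einftymapping} says $\mathcal{I}t\colon CH_X(C^\ast(Y))\to C^\ast(Y^X)$ is an equivalence of $C^\ast(Y)$-$E_\infty$-modules. The only nontrivial step left is the biduality map $C_\ast(Y^X)\to C^\ast(Y^X)^\vee$, which is a quasi-iso precisely when the (co)homology of $Y^X$ is finitely generated projective in each degree; this is where the finiteness hypothesis on $H_\ast(Y)$ will be used.

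The main obstacle is therefore the degreewise finiteness of $H_\ast(Y^X)$. My strategy is to transport this to $CH_X(C^\ast(Y))$ via the equivalence $\mathcal{I}t$, and then exploit the axiomatic properties of higher Hochschild chains. Choose a CW model of $X$ with finitely many cells in each dimension; using the value-on-a-point, coproduct and pushout axioms (Corollary~\ref{C:properties}), $CH_X(C^\ast(Y))$ is built up from copies of $C^\ast(Y)^{\otimes k}$ by finitely many homotopy pushouts in each step. Universal coefficients (with $H_\ast(Y)$ projective and finitely generated) gives the same finiteness for $H^\ast(Y)$, and a Künneth argument propagates this to the tensor powers. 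Since each dimension of $X$ contributes finitely many cells and only finitely many such cells contribute to any given homological degree of the resulting complex, the required degreewise finiteness of $H^\ast(CH_X(C^\ast(Y)))\cong H^\ast(Y^X)$ follows. Biduality is then a quasi-iso, so $\mathcal{I}t^\ast$ is a quasi-isomorphism as claimed.
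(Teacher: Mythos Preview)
Your proposal is correct and takes essentially the same approach as the paper: naturality follows because every constituent map in the composition~\eqref{eq:Itdual} is natural, and the quasi-isomorphism claim reduces to $\mathcal{I}t$ being an equivalence (from the connectivity hypothesis, via Corollary~\ref{C:Einftymapping}) together with the biduality map being a quasi-isomorphism (from degreewise finiteness). You supply considerably more detail than the paper on why $H_\ast(Y^X)$ is degreewise finitely generated projective---the paper simply asserts that the hypotheses ensure this---and your reduction via $\mathcal{I}t$ to the Hochschild side followed by a cell-by-cell K\"unneth argument is a clean way to see it; note however that your choice of a CW model of $X$ with finitely many cells in each dimension is not strictly guaranteed by the stated hypotheses, though it holds in all the paper's applications (where $X=S^n$).
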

\begin{proof}
 That $\mathcal{I}t^*$ is natural in $X$ and $Y$ is immediate since all maps involved in its definition are natural in their two arguments.  

 The assumption $Y$ is $\dim(X)$-connected ensures that $\mathcal{I}t$ is a quasi-isomorphism. Further, for a model $X=|X_\bullet|$ where $X_k$ is finite in every degree,   the above assumption together with the assumption on the homology groups of $Y$ ensures that the biduality map $C_{\ast}\Big( Y^{|X_\bullet|}\Big) \longrightarrow Hom_{k}\left(\big(C^{\ast}(Y^{|X_\bullet|})\big), k \right)$ is a quasi-isomorphism as well. Indeed, the connectivity assumption ensures that $H_{\ast}\Big( Y^{|X_\bullet|}\Big) $ is the abutment of the (first quadrant hence) converging  spectral sequence given by the simplicial filtration of $X_\bullet$ (and so is $Hom_{k}\left(\big(C^{\ast}(Y^{|X_\bullet|})\big), k \right)$). Its $E_1$-term is given by the (reduced) homology of $\bigoplus_{k} C_\ast(Y^{X_k})$.  The finiteness of $X_k$   ensures that each $C_\ast(Y^{X_k}) \stackrel{\simeq}\to \bigotimes_{X_k} C_\ast(Y)$   has  finite type homology groups in every degree (since $Y$ has), hence is  quasi-isomorphic to its bidual from which we deduce that the biduality map is already an isomorphism at the $E_1$-page.
\end{proof}
\begin{rem}[\emph{Weakening the connectivity condition}]
The assumption of $Y$ being $\dim(X)$-connected in Theorem~\ref{T:Einftymapping} and Corollary~\ref{C:Itdual} is merely there to ensure the convergence of a spectral sequence (introduced in the proof of Corollary~\ref{C:Itdual}, see~\cite{GTZ, PT} for more details), which boils down to the convergence of an Eilenberg-Moore spectral sequence (as explained in~\cite{PT, BoSe}). 
When $X_\bullet$ is a finite simplicial set,   the convergence   is ensured under the weaker assumption that  $Y$  is connected, nilpotent, with finite homotopy groups in degree less or equal to $n$ as is proved in~\cite{F2}. 
It follows that we have the following proposition.
\begin{prop}\label{P:weakenconnectivity}
Assume $X\cong |X_\bullet|$ is compact and $n$-dimensional. Then, Theorem~\ref{T:Einftymapping} and Corollaries~\ref{C:Einftymapping} and~\ref{C:Itdual} hold true if $Y$ is only connected, nilpotent, with finite homotopy groups in degree less or equal to $n$.
\end{prop}
\end{rem}

\section{Algebraic structure of higher Hochschild cochains} \label{S:Operation}

\subsection{Wedge and cup products}\label{S:wedge}

Let $A$ be an $E_\infty$-algebra and assume $B$ is an $A$-algebra, \emph{i.e.},
an $E_\infty$-algebra object in the symmetric monoidal $(\infty,1)$-category $A\text{-}Mod^{E_\infty}$ of $A$-modules, see~\cite{L-HA, KM} for details.
\begin{ex}\label{ex:EinftyAMod}
 A map $f:A\to B$ of $E_\infty$-algebras induces a natural
$E_\infty$-$A$-algebra structure on $B$.

Note further that, if $B$ is a unital $E_\infty$-$A$-algebra, then the map $a\mapsto a\cdot 1_B$ lifts to
a map $f:A\to B$ of $E_\infty$-algebras such that the induced $E_\infty$-$A$-algebra structure
on $B$ is equivalent to the original one.
\end{ex}

Since there is a canonical map $m_A: A\otimes A\to A$ of $E_\infty$-algebras (Proposition~\ref{P:tensorEinftyisEinfty}), any $A$-module inherits  a canonical structure of $A\otimes A$-module (Proposition~\ref{P:tensorEinftyisEinftyMod}).

\begin{lem} \label{L:wedge} Let $M\in A\text{-}Mod^{E_\infty}$ be an $A$-module and $X,Y$ be pointed topological spaces.
There is a natural equivalence $$\mu:\, Hom_{A\otimes A}\left( CH_{X}(A)\otimes CH_{Y}(A), M\right)\stackrel{\simeq}\longrightarrow CH^{X\vee Y}(A,M)$$
\end{lem}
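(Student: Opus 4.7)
My plan is to establish the equivalence $\mu$ through a short chain of natural equivalences, using the wedge as a pushout of pointed spaces together with the excision axiom for higher Hochschild chains and a base change adjunction along the multiplication map $m_A\colon A\otimes A\to A$.

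First, I would observe that the wedge $X\vee Y$ is the (homotopy) pushout $X\sqcup_{pt} Y$ in pointed topological spaces, with the basepoint inclusions $pt\hookrightarrow X$ and $pt\hookrightarrow Y$ as the two arrows. Applying the homotopy glueing/pushout axiom (Corollary~\ref{C:properties}(1)(c)) and the fact that $CH_{pt}(A)\cong A$ (axiom~\ref{A:point}), I get a natural equivalence of $E_\infty$-algebras
\[
CH_{X\vee Y}(A)\;\simeq\; CH_X(A)\,\mathop{\otimes}^{\mathbb{L}}_{A}\, CH_Y(A),
\]
where each $CH_\bullet(A)$ is viewed as an $A$-module via the map induced by its basepoint. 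Hence by Definition~\ref{D-coHoch},
\[
CH^{X\vee Y}(A,M)\;\simeq\; Hom_A\!\left(CH_X(A)\mathop{\otimes}^{\mathbb{L}}_{A}CH_Y(A),\,M\right).
\]

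Next, I would use the standard base change identity that, for the $E_\infty$-algebra map $m_A\colon A\otimes A\to A$ of Proposition~\ref{P:tensorEinftyisEinfty}, the relative tensor product over $A$ is computed from the external tensor by extension of scalars along $m_A$. Concretely, since $CH_X(A)\otimes CH_Y(A)$ carries a natural $A\otimes A$-module structure (one $A$ acting on each factor through the respective basepoints), one has
\[
CH_X(A)\mathop{\otimes}^{\mathbb{L}}_{A}CH_Y(A)\;\simeq\; A\mathop{\otimes}^{\mathbb{L}}_{A\otimes A}\bigl(CH_X(A)\otimes CH_Y(A)\bigr).
\]
This follows from the tensor-product description of the relative tensor product of $E_\infty$-modules (see~\cite[\S 4.4]{L-HA}, or equivalently from the pushout square~\eqref{eq:iotapushout} applied to $A\otimes A\to A$ in both factors), together with the fact that the $A$-module structures on $CH_X(A)$ and $CH_Y(A)$ assemble to the external $A\otimes A$-action.

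Combining these two steps with the extension/restriction adjunction along $m_A$ gives
\begin{align*}
CH^{X\vee Y}(A,M)
&\simeq Hom_A\!\left(A\mathop{\otimes}^{\mathbb{L}}_{A\otimes A}\bigl(CH_X(A)\otimes CH_Y(A)\bigr),M\right)\\
&\simeq Hom_{A\otimes A}\!\left(CH_X(A)\otimes CH_Y(A),\,M\right),
\end{align*}
where in the last line $M$ is regarded as an $A\otimes A$-module via $m_A$, as recalled before the statement. Reading this chain backwards defines the map $\mu$, and the composition is manifestly natural in $X$, $Y$, $A$ and $M$.

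The only real obstacle is bookkeeping: one must check that the $A\otimes A$-action on $CH_X(A)\otimes CH_Y(A)$ induced by the two basepoints coincides with the action making the base-change identity hold, and that the $A$-module structure obtained on $A\otimes^{\mathbb{L}}_{A\otimes A}(CH_X(A)\otimes CH_Y(A))$ matches the one given by the wedge-pushout description of $CH_{X\vee Y}(A)$. Both are tautological from the definition of the tensoring of $E_\infty\text{-}Alg$ over pointed spaces and the universal property of $m_A$ as the codiagonal (Proposition~\ref{P:tensor=coprod}), but writing them out carefully is the main content of the verification.
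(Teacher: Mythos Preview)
Your proof is correct and follows essentially the same approach as the paper: excision applied to the wedge, followed by the extension/restriction adjunction along $m_A:A\otimes A\to A$. The only minor difference is the choice of pushout presentation of $X\vee Y$: the paper uses $X\vee Y\simeq (X\sqcup Y)\cup^h_{S^0} pt$, which together with the coproduct and value-on-a-point axioms gives directly $CH_{X\vee Y}(A)\simeq A\otimes^{\mathbb L}_{A\otimes A}\bigl(CH_X(A)\otimes CH_Y(A)\bigr)$, whereas you use $X\vee Y\simeq X\cup^h_{pt} Y$ to obtain $CH_X(A)\otimes^{\mathbb L}_A CH_Y(A)$ and then invoke a separate base-change identity to reach the same expression. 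Both routes are equivalent; the paper's decomposition just saves you that intermediate step and the bookkeeping you flagged at the end.
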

\begin{proof}
The excision property yields a natural equivalence $$CH_{X\vee Y}(A) \cong A\mathop{\otimes}^{\mathbb{L}}_{A\otimes A} \Big( CH_{X}(A) \otimes CH_{Y}(A)\Big) $$ It follows that we have an equivalence
$$Hom_{A\otimes A}\left( CH_{X}(A)\otimes CH_{Y}(A), M\right)\;\cong \; Hom_{A}\left(CH_{X\vee Y}(A), M\right) $$
and the result now follows by Definition~\ref{D-coHoch}.
\end{proof}
Using the above Lemma~\ref{L:wedge}, for pointed spaces $X,Y$ and $B$ an $A$-algebra, we can define the following map
\begin{multline}\label{eq:muvee}
\mu_{\vee}: CH^{X}(A,B) \otimes CH^{Y}(A,B) \longrightarrow Hom_{A\otimes A}\Big( CH_{X}(A) \otimes CH_{Y}(A), B\otimes B\Big) \\
\stackrel{(m_B)_*}\longrightarrow Hom_{A\otimes A}\Big( CH_{X}(A) \otimes CH_{Y}(A), B\Big) \cong CH^{X\vee Y}(A,B)
\end{multline}
where the first map is given by the tensor products $(f,g)\mapsto f\otimes g$ of functions.
\begin{definition}\label{D:wedgeproduct} We call $\mu_{\vee}: CH^{X}(A,B) \otimes CH^{Y}(A,B)\to CH^{X\vee Y}(A,B)$
the \emph{wedge product} of Hochschild cochains (here we do not require that $B$ is unital).
 \end{definition}
 Note that this construction was already studied in some particular cases in our previous papers~\cite{G,GTZ}.
\begin{ex}[Small model for CDGA's]
If $A,B$ are actually CDGA\rq{}s and given finite pointed set models $X_\bullet, Y_\bullet$ of $X,Y$, the map $\mu_{\vee}$ can be combinatorially described as follows.  We have two cosimplicial chain complexes  $CH^{X_\bullet}(A,B)\otimes CH^{Y_\bullet}(A,B)$ (with the diagonal cosimplicial
structure) and  $CH^{X_\bullet \vee Y_\bullet}(A,B)$. There is a cosimplicial
map $\tilde{\mu}:CH^{X_\bullet}(A,B)\otimes CH^{Y_\bullet}(A,B)\to CH^{X_\bullet \vee Y_\bullet}(A,B)$
given, for any $f\in CH^{X_n}(A,B)\cong Hom_A(A^{\otimes \# X_n},B)$, $g\in CH^{X_n}(A,B)\cong Hom_A(A^{\otimes
  \# Y_n},B))$ by $$\mu(f,g)(a_0,a_2,\dots a_{\# X_n},b_2,\dots,
b_{\#Y_n})=\pm a_0.f(1,a_2,\dots a_{\# X_n}).g(1,b_2,\dots, b_{\#Y_n})$$ where $a_0$ corresponds to the element indexed by the base point of $X_n\vee Y_n$ (the sign is given by the usual Koszul-Quillen sign convention). Composing the map $\tilde{\mu}$ with the dual of the Eilenberg-Zilber quasi-isomorphism realizes the wedge map~\eqref{eq:muvee}: $$\mu_{\vee}:CH^{X}(A,B) \otimes CH^{Y}(A,B)\to CH^{X\vee Y}(A,B).$$
\end{ex}

\begin{prop}\label{P:wedge}
The map $\mu_{\vee}$ is associative, \emph{i.e.}, there is a commutative diagram
$$\xymatrix{CH^{X}(A,B) \otimes CH^{Y}(A,B) \otimes CH^{Z}(A,B)   \ar[r]^{\quad\quad  \mu_{\vee} \otimes id }  \ar[d]_{id\otimes \mu_{\vee}} & CH^{X\vee Y}(A,B) \otimes CH^{Z}(A,B) \ar[d]^{\mu_{\vee}}  \\
CH^{X}(A,B) \otimes CH^{Y\vee Z}(A,B) \ar[r]^{\mu_{\vee}} & CH^{X\vee Y\vee Z}(A,B)  } $$ in $k\text{-}Mod_\infty$.
\end{prop}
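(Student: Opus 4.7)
The plan is to establish associativity by identifying both composites with a single canonical map built from the threefold multiplication of the $E_\infty$-algebra $B$. The key algebraic input is that, since $B$ is an $E_\infty$-$A$-algebra, Proposition~\ref{P:tensorEinftyisEinfty} provides a canonical threefold multiplication $m_B^{(3)}: B^{\otimes 3}\to B$ in $A\text{-}Mod^{E_\infty}$, and the two ``binary reductions'' $m_B\circ(m_B\otimes id_B)$ and $m_B\circ(id_B\otimes m_B)$ are both naturally equivalent to $m_B^{(3)}$.

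First, I would iterate the excision axiom from Corollary~\ref{C:properties} (or equivalently apply Lemma~\ref{L:wedge} twice). Since $X\vee Y\vee Z$ is the homotopy pushout of $X\vee Y \leftarrow pt \rightarrow Z$ as well as of $X\leftarrow pt \rightarrow Y\vee Z$, excision yields natural equivalences
\[
CH_{X\vee Y\vee Z}(A) \;\cong\; A\mathop{\otimes}_{A^{\otimes 3}}^{\mathbb{L}}\Big(CH_X(A)\otimes CH_Y(A)\otimes CH_Z(A)\Big)
\]
and consequently, by the same argument as in Lemma~\ref{L:wedge},
\[
CH^{X\vee Y\vee Z}(A,B)\;\cong\;Hom_{A^{\otimes 3}}\Big(CH_X(A)\otimes CH_Y(A)\otimes CH_Z(A),\,B\Big).
\]

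Next, I would trace each of the two composites $\mu_\vee\circ(\mu_\vee\otimes id)$ and $\mu_\vee\circ(id\otimes\mu_\vee)$ through this identification. Unfolding the definition~\eqref{eq:muvee}, the top-right composite sends a triple $(f,g,h)$ to the $A^{\otimes 3}$-linear map $m_B\circ(m_B\otimes id_B)\circ(f\otimes g\otimes h)$, while the bottom-left composite sends it to $m_B\circ(id_B\otimes m_B)\circ(f\otimes g\otimes h)$. The required commutativity in $\hkmod$ therefore reduces to an equivalence of $A^{\otimes 3}$-module maps $m_B\circ(m_B\otimes id_B)\simeq m_B\circ(id_B\otimes m_B): B^{\otimes 3}\to B$.

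Finally, this last equivalence is exactly the content of Proposition~\ref{P:tensorEinftyisEinfty} applied to $B$ viewed as an $E_\infty$-algebra in $A\text{-}Mod^{E_\infty}$: both binary parenthesizations are canonically equivalent to the symmetric-monoidal functor value $m_B^{(\{0,1,2\})}$ on the unique active map $\{0,1,2\}\to\{*\}$ in $N(Fin)$, and the coherence of the monoidal functor provides the required homotopy. I expect the main technical subtlety to be not the algebra but the bookkeeping: verifying that the equivalences of Lemma~\ref{L:wedge} (iterated via the two different bracketings of excision) are compatible, so that the two descriptions of $CH^{X\vee Y\vee Z}(A,B)$ as $Hom_{A^{\otimes 3}}$ actually agree. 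This compatibility follows from the functoriality of excision applied to the associativity isomorphism $(X\vee Y)\vee Z\cong X\vee(Y\vee Z)$ of wedges of pointed spaces, together with the naturality of the coherence data of the symmetric monoidal $(\infty,1)$-category $A\text{-}Mod^{E_\infty}$.
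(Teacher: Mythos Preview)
Your argument is correct and follows essentially the same approach as the paper's proof, which simply states that the result follows from the associativity of the wedge product of spaces and of tensor products of $E_\infty$-algebras as used in Lemma~\ref{L:wedge} and Proposition~\ref{P:tensorEinftyisEinfty}. Your write-up is a careful unpacking of exactly these two ingredients: the iterated excision identifies $CH^{X\vee Y\vee Z}(A,B)$ with a threefold $Hom_{A^{\otimes 3}}$, and the coherence of the $E_\infty$-multiplication on $B$ reconciles the two parenthesizations.
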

\begin{proof}
It follows from the associativity of the wedge product of spaces and tensor products
of $E_\infty$-algebras as used in Lemma~\ref{L:wedge} and Proposition~\ref{P:tensorEinftyisEinfty}.
\end{proof}

Let $X$ be a homotopy coassociative  co-$H$-space, \emph{i.e.}, a topological space $X$ endowed with a continuous map
$\delta_X: X\to X\vee X$ which is co-associative (up to homotopy). Note that all suspension spaces has this structure, even though they are rarely manifolds.
Then, by functoriality,  we get a morphism  $\delta_X^*: CH^{X\vee X}(A,B) \to CH^{X}(A,B) $.
\begin{cor}\label{C:cupHoch} Assume $X$ is a homotopy coassociative  co-$H$-space. The composition
$$ \cup_X:\quad
CH^{X}(A,B) \otimes CH^{X}(A,B) \stackrel{\mu_{\vee}} \longrightarrow CH^{X\vee X}(A,B) \stackrel{\delta_X^*}\longrightarrow CH^{X}(A,B)
,$$ called the \emph{cup-product}, induces a structure of graded associative algebra on the cohomology groups $HH^{X}(A,B)$.
 It is further unital if $B$ is unital and $X$ counital.
\end{cor}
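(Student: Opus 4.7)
The plan is to deduce both claims from three ingredients already at hand: the associativity of $\mu_\vee$ (Proposition~\ref{P:wedge}), the naturality of $\mu_\vee$ in each wedge-factor (an immediate consequence of Lemma~\ref{L:wedge} together with functoriality of $CH^{(-)}(A,B)$ in the pointed-space argument), and the homotopy co-associativity/counitality of $\delta_X$. Everything will be up to homotopy, but this is enough to obtain an honest \emph{strict} graded associative algebra structure after passing to $HH^{X}(A,B)$.

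For associativity, I would unfold the definition to write
\[
(a\cup_X b)\cup_X c \;=\; \delta_X^*\Bigl(\mu_\vee\bigl(\delta_X^*(\mu_\vee(a,b)),\,c\bigr)\Bigr).
\]
Naturality of $\mu_\vee$ in its first argument (applied to $\delta_X\colon X\to X\vee X$) yields a commutative square $\mu_\vee\circ(\delta_X^*\otimes\mathrm{id})\simeq (\delta_X\vee\mathrm{id})^*\circ\mu_\vee$, so that
\[
(a\cup_X b)\cup_X c \;\simeq\; \bigl((\delta_X\vee\mathrm{id})\circ\delta_X\bigr)^*\bigl(\mu_\vee(\mu_\vee(a,b),c)\bigr).
\]
The same argument in the other slot gives
$a\cup_X(b\cup_X c)\simeq \bigl((\mathrm{id}\vee\delta_X)\circ\delta_X\bigr)^*\bigl(\mu_\vee(a,\mu_\vee(b,c))\bigr)$. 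Proposition~\ref{P:wedge} identifies $\mu_\vee(\mu_\vee(a,b),c)$ with $\mu_\vee(a,\mu_\vee(b,c))$ in $CH^{X\vee X\vee X}(A,B)$, while the assumed homotopy co-associativity gives $(\delta_X\vee\mathrm{id})\circ\delta_X\simeq (\mathrm{id}\vee\delta_X)\circ\delta_X$ as maps $X\to X\vee X\vee X$. Combining, $(a\cup_X b)\cup_X c$ and $a\cup_X(b\cup_X c)$ are homotopic, hence equal in $HH^{X}(A,B)$.

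For unitality, assume $B$ is unital and $X$ is counital with counit $\eta\colon X\to pt$. I would define the unit $1_X\in CH^{X}(A,B)$ as $1_X:=\eta^*(u_B)$, where $u_B\in CH^{pt}(A,B)\cong \mathrm{Hom}_A(A,B)$ corresponds to the structure map $A\to B$. Using the canonical identification $pt\vee X\cong X$ of pointed spaces and, once more, the naturality of $\mu_\vee$ (now applied to $\eta\vee\mathrm{id}\colon X\vee X\to pt\vee X=X$), we get
\[
1_X\cup_X a \;=\; \delta_X^*\bigl(\mu_\vee(\eta^*(u_B),a)\bigr)\;\simeq\; \bigl((\eta\vee\mathrm{id})\circ\delta_X\bigr)^*\bigl(\mu_\vee(u_B,a)\bigr).
\]
By Lemma~\ref{L:wedge} and the definition of $\mu_\vee$, $\mu_\vee(u_B,a)\in CH^{pt\vee X}(A,B)=CH^{X}(A,B)$ agrees with $a$ under the identification $pt\vee X\cong X$ (because tensoring with the unit $u_B$ is neutral with respect to $m_B$). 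The counit axiom $(\eta\vee\mathrm{id})\circ\delta_X\simeq \mathrm{id}_X$ then collapses the right-hand side to $a$. The right unit law is symmetric.

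The argument is essentially a diagram chase, and no single step is truly difficult; the only mild subtlety is keeping track of the $A\otimes A$-module structures when invoking Lemma~\ref{L:wedge} to check that the wedge of Hochschild cochains with the unit is indeed neutral. This is routine once one observes that $\mu_\vee$ is, by construction, induced by the multiplication $m_B$ applied after tensoring functions, so the unital property of $B$ in $A\text{-}\mathrm{Mod}^{E_\infty}$ translates directly into the unital property of $\mu_\vee$ at the level of the unit element $u_B$.
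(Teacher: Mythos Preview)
Your proof is correct and follows essentially the same approach as the paper. The paper's own argument is terser---it simply cites Proposition~\ref{P:wedge} together with the contravariant functoriality of $CH^{(-)}(A,B)$ (Proposition~\ref{P-CHfunctorMod}) for associativity, and then for unitality writes down the same candidate unit $k\stackrel{1_B}\to B\cong CH^{pt}(A,B)\stackrel{(X\to pt)^*}\to CH^X(A,B)$ and invokes the counit homotopies $(\mathrm{id}\vee\eta)\circ\delta_X\simeq\mathrm{id}\simeq(\eta\vee\mathrm{id})\circ\delta_X$ together with the observation that $\mu_\vee(-,1_B)$ is the identity---exactly the ingredients you spell out in detail.
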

\begin{proof}
 The associativity follows from Proposition~\ref{P:wedge} and Proposition~\ref{P-CHfunctorMod}. When $B$ has an unit
$1_B$ and $X$ is counital, then it follows from the contravariance of Hochschild cochains with respect to maps of pointed
spaces that
the unit of $\cup_X$ is given by the canonical map
\begin{equation}\label{eq:defunitwedge} k\stackrel{1_B}\longrightarrow B \cong CH^{pt}(A,B)\stackrel{(X\to pt)^*}\longrightarrow CH^{X}(A,B).
 \end{equation}
Indeed,  the two compositions $\Big(id\vee (X\to pt) \Big)\circ \delta_X$ and
$\Big((X\to pt)\vee id \Big)\circ \delta_X$ are homotopical to the identity. Further, the composition
$$CH^{X}(A,B)\otimes k \stackrel{id\otimes 1_B}\to CH^{X}(A,B)\otimes CH^{pt}(A,B) \stackrel{\mu_\vee}\to
CH^{X}(A,B)$$ is the identity map of $CH^{X}(A,B)$ (which can be checked on any simplicial set model of $X$).
\end{proof}

In particular, the pinching map $S^d\to S^d\vee S^d$ obtained by collapsing the equator to a point induces a cup product
$\cup_{S^d}:CH^{S^d}(A,B) \otimes CH^{S^d}(A,B) \to CH^{S^d}(A,B) $ for Hochschild cohomology over spheres for any $E_\infty$-algebra $A$ and $A$-algebra $B$. For CDGA\rq{}s, this cup-product agrees by definition and Remark~\ref{R:cdgaEMod} with the one introduced by the first author in~\cite{G}.

\begin{ex}[\textbf{Cup-product on the standard simplicial model for spheres}]\label{E:cupSphere}
In the case of spheres and \emph{CDGA\rq{}s}, there is an explicit description of the cup product if one uses the standard model of the dimension $d$ sphere.
Recall that the standard simplicial set model of the circle $S^1$ is the simplicial set, denoted $(S^1_{st})_\bullet$, generated by a unique non-degenerate simplex of dimension $1$. Thus $(S^1_{st})_n:=n_+$ where $n_+=\{0,\cdots,n\}$ has $\{0\}$ for its base point see~\cite{G, GTZ, P}.
 The standard simplicial set $(S^d_{st})_\bullet$ is the iterated smash product
$(S^d_{st})_\bullet=(S^1_{st})_\bullet\wedge \dots \wedge (S^1_{st})_\bullet$
so that $(S^d_{st})_n=({n^d})_+$.
 Using this standard simplicial set model, we have an equivalence
$$CH^{S^d}(A,M)\cong CH^{(S^d_{st})_\bullet}(A,M) \cong
Hom_{k}\big(A^{\otimes (\bullet)^d},M\big) $$ see~\cite{G} (in particular, for the description
of the differential on the right hand side).
Note that we do not know any simplicial map $(S^d_{st})_\bullet \to
(S^d_{st})_\bullet\vee (S^d_{st})_\bullet$ modeling the pinching map.
However, there is a simplicial map $q:sd_2((S^d_{st})_\bullet )\to (S^d_{st})_\bullet\vee (S^d_{st})_\bullet$ modeling
it.
Here $sd_2((S^d_{st})_\bullet)$ is the edgewise subdivision~\cite{McC} (also see~\cite[\S~3.3.2]{GTZ} for examples of applications in the context of higher Hochschild complexes) of $(S^d_{st})_\bullet$;
it can be seen
 as the simplicial model of the circle obtained by gluing two intervals at their endpoints. In other words $sd_2((S^d_{st})_n:=(2n+1)_{+}=\{0,\dots,2n+1\}$ pointed in $0$. The map $q:sd_2((S^d_{st})_n)\to (S^d_{st})_n\vee (S^d_{st})_n\cong \{1,\dots,n\}\cup \{0\} \cup \{n+2,\dots 2n+1\}$  identifies $n+1$ with $0$.
 The cup-product is thus realized by the  induced map $$\Big(CH^{(S^d_{st})_\bullet}(A,B)\Big)^{\otimes 2} \stackrel{\mu_\vee}\longrightarrow CH^{(S^d_{st})_\bullet\vee (S^d_{st})_\bullet}(A,B) \stackrel{q^*}\longrightarrow CH^{sd_2((S^d_{st})_\bullet)}(A,B).$$

\smallskip

There is also  a cochain complex map (\emph{not} induced by a map simplicial sets) making $CH^{(S^d_{st})_\bullet}(A,B)$ a differential graded associative algebra on the nose described in~\cite{G}.
Let $f\in C^{(S^d_{st})_p}(A,B)\cong Hom_k(A^{\otimes \big(p^d\big)},B)$ and  $g\in
 C^{(S^d_{st})_q}(A,B)\cong Hom_k(A^{\otimes \big(q^d\big)},B)$.  Define
$f\cup_0 g \in  C^{(S^d_{st})_{p+q}}(A,B)\cong Hom_k\big(A^{\otimes\big((p+q)^d\big)},B\big)$ by
\begin{multline}f\cup_0 g\Big((a_{i_1,\dots,i_d})_{1\leq i_1,\dots,i_d\leq
  p+q}\Big)\\ =f((a_{i_1,\dots,i_d})_{1\leq i_1,\dots,i_d\leq
  p}\big)g((a_{i_1,\dots,i_d})_{p+1\leq i_1,\dots,i_d\leq
  p+q}\big)\prod a_{j_1,\dots,j_d} \end{multline} where the last product is over all
indices which are not in the argument of $f$ or $g$. Note that for $d=1$, this is the formula of the usual cup-product for Hochschild cochains as in~\cite{Ge} and for $n=2$, this is the Riemann sphere product as defined in~\cite{GTZ}.

The following lemma is proved using a straightforward computation
\begin{lem}\label{L:cupstandard}
Let $A$ be a CDGA and $B$ a commutative differential  graded $A$-algebra.
 Then $\big(CH^{(S^d_{st})_{\bullet}}(A,B), d, \cup_0\big)$
(where $d$ is the total differential as in \cite{G,GTZ}) is an associative differential graded algebra (and unital if $B$ is unital).
\end{lem}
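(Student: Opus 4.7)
The plan is to verify each of the three required properties (associativity, Leibniz rule, unitality) by direct computation on the explicit simplicial model, exploiting the commutativity of $B$ at every turn.

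First, I would unwind the notations. Write $N = (p+q)^d$ and decompose the index set $\{1,\dots,p+q\}^d$ of an element $(a_{i_1,\dots,i_d})$ into three disjoint pieces: the $p$-subcube $P := \{1,\dots,p\}^d$, the shifted $q$-subcube $Q := \{p+1,\dots,p+q\}^d$, and the mixed remainder $R := \{1,\dots,p+q\}^d \setminus (P \cup Q)$. Then by definition
\[
(f\cup_0 g)\big((a_J)_{J}\big) \;=\; f\big((a_J)_{J\in P}\big)\cdot g\big((a_J)_{J\in Q}\big)\cdot\prod_{J\in R} a_J ,
\]
the products being taken in the commutative $A$-algebra $B$ (the $a_J \in A$ act via the structure map $A\to B$).

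For associativity, I would iterate this expression for both $(f\cup_0 g)\cup_0 h$ and $f\cup_0(g\cup_0 h)$, where $f,g,h$ have degrees $p,q,r$. In each case the result expands, on an argument $(a_J)_{J\in\{1,\dots,p+q+r\}^d}$, into a product in $B$ of the three factors $f((a_J)_{J\in P})$, $g((a_J)_{J\in Q'})$, $h((a_J)_{J\in R'})$ (on three disjoint corner subcubes of size $p^d$, $q^d$, $r^d$) times the product over $a_J$ for all remaining indices $J$. Since $B$ is commutative, the two parenthesizations agree identically. Similarly, if $1_B\in B = CH^{(S^d_{st})_0}(A,B)$ is used as input on the empty sub-cube, both $P$ (or $Q$) and $R$ are empty in the relevant expressions, so $1_B\cup_0 f = f = f\cup_0 1_B$, establishing unitality.

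The main work lies in the Leibniz rule with respect to the total differential $d = d_B + \partial$, where $d_B$ is the internal differential induced by $B$ and $\partial = \sum_i (-1)^i d_i^*$ is the cosimplicial coboundary dual to the face maps of $(S^d_{st})_\bullet$. The Leibniz rule for $d_B$ is automatic because $\cup_0$ is built pointwise from the multiplication of the commutative DGA $B$, which itself satisfies Leibniz. The nontrivial piece is the compatibility with $\partial$: recall that the face map $d_i$ on $(S^1_{st})_n = n_+$ either sends a point to the basepoint or shifts indices, and on $(S^d_{st})_n = (n^d)_+$ it acts diagonally in each coordinate. Dually, $d_i^*$ acts on a cochain by inserting factors of $1_A$ and multiplying adjacent tensor factors using the product of $A$ (and on the boundary, using the $A$-module structure of $B$). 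I would separate the sum $\partial(f\cup_0 g) = \sum_i (-1)^i d_i^*(f\cup_0 g)$ according to whether the index $i$ acts on the first $p$, the last $q$, or the boundary row/column in between: the first group assembles into $(\partial f)\cup_0 g$, the second, after a sign shift of $(-1)^{|f|}$ coming from reindexing, into $(-1)^{|f|} f\cup_0(\partial g)$, and the middle terms, which would a priori produce obstructions, cancel (or collapse to a factor of $1$ in $A$) precisely because the relevant indices live in the mixed set $R$, where $\cup_0$ only uses the commutative multiplication in $A$ and $B$. This is where commutativity of $B$ is crucial and where the \emph{standard} simplicial model (as opposed to a subdivision) is used: the rigid rectangular decomposition $\{1,\dots,p+q\}^d = P \sqcup Q \sqcup R$ is preserved by the face maps after a reindexing, and the complementary entries appear only via an ordinary commutative product.

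The main obstacle is therefore the sign bookkeeping in this last step: tracking how the summation index $i$ over face maps of $(S^d_{st})_n$ splits after the reindexing $i \mapsto i-p$ in the $Q$-block, and verifying that the contributions of face maps hitting the boundary between $P$ and $Q$ vanish. Once these cancellations are checked one level at a time (the $d=1$ case reduces to the classical Leibniz rule of the cup product on Hochschild cochains of a CDGA, and the general $d$ case is a multi-index version of the same calculation by applying the $d=1$ argument in each of the $d$ simplicial directions), the assertion follows.
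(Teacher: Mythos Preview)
Your proposal is correct and is precisely the kind of direct verification the paper has in mind: the paper itself offers no argument beyond the sentence ``The following Lemma is proved using a straightforward computation,'' and your decomposition into the subcubes $P$, $Q$, $R$ together with the case splitting of the cosimplicial faces is exactly how that computation goes. In particular your reduction of the $d=1$ case to the classical Leibniz identity for the Hochschild cup product, and the observation that the higher-$d$ case is a diagonal (multi-index) version of the same cancellation, match the intended argument.
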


The above explicit formula for the cup-product realizes the cup-product induced
 by the co-$H$ space structures of the spheres.
\begin{prop}\label{P:stagrees}
The natural equivalence $CH^{{S^{d}_{st}}_\bullet}(A,B) \cong CH^{S^d}(A,B)$ is an equivalence of $E_1$-algebras (with $E_1$-structures induced by Lemma~\ref{L:cupstandard} and Corollary~\ref{C:cupHoch}).
\end{prop}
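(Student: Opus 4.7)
The strategy is to realize both associative products as induced from a single simplicial model of the pinching map, then compare them directly. By Corollary~\ref{C:cupHoch}, the product $\cup_{S^d}$ is the composite $\delta_{S^d}^{\ast}\circ \mu_\vee$, where $\delta_{S^d}: S^d \to S^d \vee S^d$ is the pinching map. I would therefore analyze $\mu_\vee$ and $\delta_{S^d}^\ast$ separately on the standard simplicial model, and match the result against the formula for $\cup_0$.

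First, I would unfold $\mu_\vee$ combinatorially. For a pointed simplicial set $X_\bullet$ with basepoint $\star$, the Hochschild chain complex $CH_{X_\bullet}(A)$ is the realization of $[n]\mapsto A^{\otimes X_n}$, and the $\star$-factor carries the distinguished $A$-module structure used in Lemma~\ref{L:wedge}. For the wedge $X_\bullet \vee Y_\bullet$, excision translates into the $A$-module tensor product $A^{\otimes X_n}\otimes_A A^{\otimes Y_n}$ along this basepoint factor. Tracing through~\eqref{eq:muvee}, the wedge product is given simplicially, for $f\in CH^{X_n}(A,B)$ and $g\in CH^{Y_n}(A,B)$, by
\[
\mu_\vee(f,g)\big((a_x)_{x\in X_n},(b_y)_{y\in Y_n}\big) \;=\; a_\star\cdot f(1,(a_x)_{x\neq \star})\cdot g(1,(b_y)_{y\neq\star}),
\]
which is a strict simplicial map of cosimplicial chain complexes. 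Next, as noted just before the proposition, there is no direct simplicial model of $\delta_{S^d}$, but edgewise subdivision provides a simplicial map $\pi:sd_2((S^d_{st})_\bullet) \to (S^d_{st})_\bullet \vee (S^d_{st})_\bullet$ realizing it. Using the canonical equivalence $CH_{(S^d_{st})_\bullet}(A) \xrightarrow{\sim} CH_{sd_2((S^d_{st})_\bullet)}(A)$ (from the homotopy invariance of $CH$ in $X_\bullet$, Proposition~\ref{P-CHbifunctor}), realized concretely by the Eilenberg--Zilber shuffle, I would pre-compose with $\pi^\ast$ to obtain $\delta_{S^d}^\ast$ as an actual chain map on the standard model. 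On $(S^d_{st})_{p+q} = ((p+q)^d)_+$, the subdivision $sd_2$ in degree $p+q$ splits into two cubical blocks of sizes $(p)^d$ and $(q)^d$ sharing the base point, and the ``complementary'' factors become the entries filled by $1$'s through the $A$-tensor product, i.e.\ replaced by the CDGA multiplication in $A$. The resulting formula for $\pi^\ast\circ \mu_\vee$ is precisely that of Lemma~\ref{L:cupstandard}: $f\cup_0 g$ evaluates $f$ on the $p$-subcube, $g$ on the $q$-subcube, and multiplies by the remaining $a_{j_1,\dots,j_d}$'s.

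The main obstacle is producing an $E_1$-coherent comparison rather than a mere chain-level identification: one must check that the zigzag through $sd_2$ respects associativity up to a coherent system of higher homotopies.  This is handled by observing that $\pi$ is an honest simplicial map, so $\pi^{\ast}\circ \mu_\vee$ is strictly associative on the subdivided model, that the Eilenberg--Zilber shuffle connecting $CH_{(S^d_{st})_\bullet}(A)$ and $CH_{sd_2((S^d_{st})_\bullet)}(A)$ is a well-known $E_1$-equivalence, and that $\cup_0$ is strictly associative by Lemma~\ref{L:cupstandard}. Putting these together, the natural equivalence $CH^{{S^{d}_{st}}_\bullet}(A,B) \cong CH^{S^d}(A,B)$ intertwines $\cup_0$ with $\cup_{S^d}$ up to the requisite coherent homotopies, and is therefore an equivalence of $E_1$-algebras in $k\text{-}\mathrm{Mod}_\infty$; this parallels the CDGA-level argument already carried out in~\cite{G}.
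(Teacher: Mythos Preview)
Your overall strategy is correct and is essentially what the paper does: it simply cites \cite[Proposition~3.3.17]{GTZ} for the case $d=2$ and asserts the same argument works for general $d$. That argument proceeds exactly along the lines you sketch---realize $\cup_{S^d}$ via a simplicial model of the pinch map through the edgewise subdivision, unfold $\mu_\vee$ combinatorially, and identify the composite with $\cup_0$ up to the Eilenberg--Zilber comparison.

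That said, two technical points in your write-up are imprecise and would need correction in a full proof. First, the sentence ``the subdivision $sd_2$ in degree $p+q$ splits into two cubical blocks of sizes $(p)^d$ and $(q)^d$'' conflates two different things: the edgewise subdivision $(sd_2 X)_n = X_{2n+1}$ does not produce a $(p,q)$-splitting in degree $p+q$; rather, the $(p,q)$-decomposition in the formula for $\cup_0$ arises from the Alexander--Whitney front-face/back-face map on the cosimplicial object (iterated $d$ times via the smash-product description $(S^d_{st})_\bullet = (S^1_{st})_\bullet^{\wedge d}$). The subdivision model and the AW decomposition are related by the standard Eilenberg--Zilber zigzag, but they are not literally the same combinatorics. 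Second, the equivalence $CH_{(S^d_{st})_\bullet}(A) \simeq CH_{sd_2((S^d_{st})_\bullet)}(A)$ is not ``the Eilenberg--Zilber shuffle'' per se; it comes from the homotopy invariance of $CH$ (Proposition~\ref{P-CHbifunctor}) together with the natural weak equivalence between a simplicial set and its edgewise subdivision. The EZ/AW maps enter when you pass between the tensor-product and wedge-product models. Once these identifications are set up carefully, the $E_1$-coherence follows as you say: both products are strictly associative on their respective models, and the comparison zigzag is through maps of (co)simplicial objects, hence respects the multiplicative structure up to coherent homotopy.
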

\begin{proof} The proof  in the case $d=2$ is given in the proof of \cite[Proposition 3.3.17]{GTZ}. The argument for general $d$ is the same.
\end{proof}
\end{ex}

We finish this section by giving a more structured version of the wedge product. The wedge product~\eqref{eq:muvee} is the degree $0$-component  of a higher homotopical tower of wedge products. Since $B$ is an $E_\infty$-algebra, it is in particular, by restriction of structure, an $E_n$-algebra for any positive integer $n$.

Assume, that $A$ and $B$ are algebras over the (chains on the) linear isometry operad (\cite{KM}) viewed as algebras over $C_\ast(\mathcal{C}_n)$, the (chains on the) little dimension $n$-cubes operad.
For any $c \in C_\ast(\mathcal{C}_n(r))$, we have a map
$m_B(c): B^{\otimes r} \longrightarrow B$. Similarly to the wedge product, we can thus define the composition
\begin{multline}\label{eq:muveeEn}
\mu_{\vee}(c): \bigotimes_{i=1}^r CH^{X_i}(A,B) \longrightarrow Hom_{A^{\otimes r}}\Big( \bigotimes_{i=1}^r CH_{X_i}(A) , B^{\otimes r}\Big) \\
\stackrel{(m_B(c))_*}\longrightarrow Hom_{A^{\otimes r}}\Big( \bigotimes_{i=1}^r CH_{X_i}(A), B\Big) \cong CH^{\bigvee_{i=1}^r X_i}(A,B).
\end{multline}
where the first map is given by the tensor products of morphisms.
\begin{rem}
When $B$ is a CDGA, then all operations $\mu_{\vee}(c)$ vanishes if $c$ is not of degree $0$.
\end{rem}

\begin{ex}[Strict chain model for algebras over an $E_\infty$ Hopf-operad]
 The map $\mu_{\vee}(c)$ can be defined similarly at the chain level whenever  $A$, $B$ are algebras over  an $E_\infty$-operad $(\mathcal{E}(n))_{n}$ which is further an Hopf-operad, that is, is  equiped with a diagonal of operads $\mathcal{E}(n)\to \mathcal{E}(n)\otimes \mathcal{E}(n)$). In that case one gets map $\mu_{\vee}(c)$ for any $c\in \mathcal{E}(n)$.  A nice model of such an  $E_\infty$ Hopf operad is the Barratt-Eccles operad~\cite{BF}.
\end{ex}

\subsection{A natural $E_d$-algebra structure on Hochschild cochains modeled on
$d$-dimensional spheres}\label{S:Edcochains}
We have already seen  the definition of the cup product for Hochschild cochains modeled on spheres for $E_\infty$-algebras,
 see Corollary~\ref{C:cupHoch}.
We now turn to the full $E_d$-structure on $CH^{S^d}(A,B)$. In ~\cite{G}, the first
author proved that if $A$ is a CDGA and  $B$ is a commutative $A$-algebra (for example $B=A$),
there is a natural $E_n$-algebra structure on $CH^{S^n}(A,B)$. In this section,
we recall this construction in the context of $\infty$-categories of $E_\infty$-algebras. We
will relate this construction to centralizers in the sense of Lurie~\cite{L-HA, L-VI} in
Section~\ref{S:centralizers}.

Recall that we denote $\mathcal{C}_d$ the usual $d$-dimensional little cubes operad
(as an operad of topological spaces)
whose associated $\infty$-operad is a model for $\mathbb{E}^{\otimes}_d$, see\cite{L-HA, L-VI}.
$\mathcal{C}_d(r)$ is the configuration space  of $r$ many $d$-dimensional open cubes in
$I^d$.
Any element $c\in \mathcal{C}_d(r)$ defines a map $pinch_c:S^d\to \bigvee_{i=1 \dots r}S^d$ by collapsing
the complement of the interiors of
the $r$ cubes to the base point. The maps $pinch_c$ assemble
together
to give a continuous map
\begin{equation}\label{eq:pinchcube} pinch: \mathcal{C}_d(r) \times S^d \longrightarrow
 \bigvee_{i=1\dots r}\, S^d.\end{equation}
Note that the map $pinch$ preserves the base point of $S^d$, hence passes to the pointed category.

For any topological space $X$, the singular set functor $X\mapsto \Delta_\bullet(X):=Map(\Delta^\bullet,X)$ defines a (fibrant) simplicial set model of $X$. Hence,
applying  the singular set functor to the above map $pinch$,
the contravariance\footnote{with respect to maps of topological spaces} of Hochschild cochains (see Proposition~\ref{P-CHbifunctorTop} and Proposition~\ref{P-CHfunctorMod})
and the wedge
 product~\eqref{eq:muveeEn} $\mu_{\vee}$, we get, for all $r\geq 1$, a morphism
\begin{multline}\label{eq:pinchSr}
pinch_{S^d,r}^*: C_{\ast}\big(\mathcal{C}_d(r)\big) \otimes \left(CH^{S^d}(A,B)\right)^{\otimes r}\\ \stackrel{diag\otimes id}\longrightarrow C_{\ast}\big(\mathcal{C}_d(r)\big)^{\otimes 2} \otimes \left(CH^{S^d}(A,B)\right)^{\otimes k} \\
\stackrel{\mu_{\vee}(diag^{(2)})_*}\longrightarrow  C_{\ast}\big(\mathcal{C}_d(r)\big) \otimes CH^{\bigvee_{i=1}^r S^d}\big(A, B \big)\\
\stackrel{pinch^*}\longrightarrow CH^{S^d}(A,B)
\end{multline}
in $k\text{-}Mod_\infty$. Here $diag:  C_{\ast}\big(\mathcal{C}_d(r)\big)\to C_{\ast}\big(\big(\mathcal{C}_d(r)\big)^2\big)\stackrel{AW}\to  \Big(C_{\ast}\big(\mathcal{C}_d(r)\Big)^{\otimes 2}$ is the diagonal and $diag^{(1)}$, $diag^{(2)}$ its components.

\begin{theorem}\label{T:EdHoch}
Let  $A$ be an $E_\infty$-algebra and $B$ an $E_\infty$-$A$-algebra (not necesssarily unital).
 The collection of maps $(pinch_{S^d,k})_{k\geq 1}$ makes
$CH^{S^d}(A,B)$ an $E_d$-algebra (naturally in $A$, $B$), which is unital if $B$ is unital.
 Further, the underlying $E_1$-structure of $CH^{S^d}(A,B)$ agrees with the one given by Corollary~\ref{C:cupHoch}.
\end{theorem}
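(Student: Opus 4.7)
The plan is to exhibit $S^d$ as an $E_d$-coalgebra in the symmetric monoidal $(\infty,1)$-category $(\hTopp, \vee)$ of pointed spaces with wedge, and then transport this structure through the contravariant functor $X\mapsto CH^X(A,B)$, which turns wedges into tensor products via the wedge product $\mu_\vee$ of Definition~\ref{D:wedgeproduct}.

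First, I would verify that the pinch maps of~\eqref{eq:pinchcube} assemble into an $E_d$-coalgebra structure on $S^d$ in pointed spaces. The key coherence is that, for every operadic composition $\gamma(c; c_1,\dots,c_r) \in \mathcal{C}_d(k_1+\dots+k_r)$ with $c\in \mathcal{C}_d(r)$ and $c_i\in \mathcal{C}_d(k_i)$, the composite $(\bigvee_i pinch_{c_i}) \circ pinch_c$ agrees with $pinch_{\gamma(c; c_1,\dots,c_r)}$. Geometrically this is essentially tautological: inserting the rescaled $c_i$-cubes inside the $c$-cubes and then collapsing the complement in two stages produces the same quotient of $S^d$ as collapsing the total complement in one stage. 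The symmetric group equivariance and unitality (the $0$-ary operation being the constant map $S^d\to pt$) are equally direct.

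Next I would apply the contravariant functor $CH^{(-)}(A,B)$ from Proposition~\ref{P-CHfunctorMod} together with the higher wedge products $\mu_\vee(c)$ of~\eqref{eq:muveeEn}. Starting from a morphism $\mathcal{C}_d(r)\to Map_{\hTopp}(S^d, \bigvee^r S^d)$, passing to singular chains and post-composing with the iterated wedge product turns it into a map $C_\ast(\mathcal{C}_d(r)) \otimes CH^{S^d}(A,B)^{\otimes r} \to CH^{S^d}(A,B)$, which is exactly the map $pinch_{S^d,r}^\ast$ of~\eqref{eq:pinchSr}. The associativity of $\mu_\vee$ (Proposition~\ref{P:wedge}) together with the coherence checked in Step~1 assembles these into an $\infty$-operad morphism $\mathbb{E}_d^{\otimes} \to \mathrm{End}(CH^{S^d}(A,B))$, producing the required $E_d$-algebra structure. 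When $B$ is unital, the unit comes from~\eqref{eq:defunitwedge} applied to the $0$-ary pinch $S^d\to pt$. Naturality in $A$ and $B$ is inherited from that of $\mu_\vee$.

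For the agreement of the underlying $E_1$-structure with the cup-product, I would use that restricting along the stabilization map $\mathbb{E}_1^{\otimes} \to \mathbb{E}_d^{\otimes}$ picks out the binary operation encoded by a standard configuration $c\in \mathcal{C}_d(2)$ of two side-by-side cubes; the associated $pinch_c: S^d\to S^d\vee S^d$ is pointed-homotopic to the standard coassociative pinch map collapsing the equator that is used in Corollary~\ref{C:cupHoch} to define $\cup_{S^d}$. Since $CH^{(-)}(A,B)$ is a functor of pointed spaces up to homotopy, the two induced binary operations on $CH^{S^d}(A,B)$ coincide. The main obstacle is to make Step~1 genuinely $\infty$-coherent, that is, to produce an actual map of $\infty$-operads rather than merely a diagram commuting up to homotopy. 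One clean way is to work directly in the $(\infty,1)$-category $\hTopp$, noting that the family $(pinch_{S^d,k})_k$ is extracted from the little-cubes operad acting on $I^d/\partial I^d \simeq S^d$ by quotienting by complements; alternatively, one can model the coherences through the Hopf-operadic filtration of the Barratt--Eccles operad~\cite{BF} already used in the paper. Once this is in place, Steps~2 and~3 are essentially formal.
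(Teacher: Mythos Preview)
Your proposal is correct and follows essentially the same route as the paper: both reduce the operadic compatibility of the maps $pinch_{S^d,r}^*$ to the geometric fact that $S^d$ is a $\mathcal{C}_d$-coalgebra in $(\hTopp,\vee)$ via the pinch maps, invoke the associativity of $\mu_\vee$ (Proposition~\ref{P:wedge}) together with the fact that the diagonal on $C_\ast(\mathcal{C}_d)$ is a map of $\infty$-operads, and identify the underlying $E_1$-structure by choosing a specific binary configuration in $\mathcal{C}_d(2)$. Your concern about $\infty$-coherence is not really an obstacle here, since the diagram expressing compatibility with operadic composition commutes strictly at the level of pointed spaces (the two-stage and one-stage collapses of cube complements are literally equal maps), so no additional rectification is needed.
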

Note that the last  map $C_{\ast}\big(\mathcal{C}_d(r)\big) \otimes
CH^{\bigvee_{i=1}^r S^d}\big(A, B \big)
\stackrel{pinch^*}\longrightarrow CH^{S^d}(A,B)$ in the definition
of the composition~\eqref{eq:pinchSr} is just the map dual to the
one associated to $$ \widetilde{pinch}:
\mathcal{C}_d(r)\longrightarrow
Map_{E_\infty\text{-Alg}}(CH_{S^d}(A), CH_{\bigvee_{i=1}^r
S^d}(A))\big) $$ in Remark~\ref{R:SpaceActiononCH} (see
formula~\eqref{eq:DefofTransffromspacetoCH}).
\begin{proof}
To prove the first statement we need to prove that the morphisms $pinch^*_{S^d,r}$ are compatible with the operadic composition in   $C_{\ast}\big(\mathcal{C}_d(r)\big)$, the  singular chains on the little $d$-dimensional cubes.
Since the diagonal  $diag:  C_{\ast}\big(\mathcal{C}_d(r)\to  \Big(C_{\ast}\big(\mathcal{C}_d(r)\Big)^{\otimes 2}$ is a map of $\infty$-operads, by Proposition~\ref{P:wedge} and Proposition~\ref{P:Functorialitytildemap} (as in Example~\ref{ex:squarecommutestildemap}), the statement reduces to the commutativity of the following diagram for every $j\in \{1,\dots, k\}$
$$\xymatrix{ \mathcal{C}_d(k) \times \mathcal{C}_d(\ell) \times S^d \ar[r]^{pinch} \ar[d]_{\circ_j \times id_{S^d}} &  \mathcal{C}_d(\ell) \times \bigvee_{i=1\dots k}\, S^d \ar[d]^{id_{\bigvee_{i=1\dots j-1} S^d}\times pinch\times id_{\bigvee_{i=j+1\dots k} S^d}} \\  \mathcal{C}_d(k+\ell)\times   S^d \ar[r]^{pinch} &  \bigvee_{i=1\dots k+\ell}\, S^d .} $$ In other words, it reduces to the fact that the pointed sphere $S^d$ is a $\mathcal{C}_d$-coalgebra in the category of pointed topological spaces endowed with the monoidal structure given by the wedge product.

\smallskip

The underlying $E_1$-structure is given by any element in $\mathcal{C}_d(2)$ generating
the homology group $H_0(\mathcal{C}_d(2),\mathbb{Z})\cong \mathbb{Z}$. We can, for instance, take   the
configuration of the two open cubes  $(-1,0)^d$ and $(0,1)^d$ in $(-1,1)^d$. It follows immediately with this choice,
  that the associated $E_1$-structure is given by the cup-product $\cup_{S^d}$
of Corollary~\ref{C:cupHoch} up to equivalences of $E_1$-algebras. The unit is given by the
map~\eqref{eq:defunitwedge} as in Corollary~\ref{C:cupHoch}.

\end{proof}
This theorem will be generalized in Theorem \ref{THM:Ed-on-HC(AM)}
 below to also include generalized sphere topology operations. The naturality in $A$ and $B$ means that if $C$ is
a $B$-$E_\infty$-algebra map, then, there is an $E_d$-algebra homomorphism
$$CH^{S^d}(A,B) \otimes CH^{S^d}(B,C) \longrightarrow CH^{S^d}(A,C)$$
see Proposition~\ref{P:HH(A,B)=CH(A,B)} and Theorem~\ref{T:EnAlgHoch}.

\begin{rem}
For $d>1$, Theorem~\ref{T:EdHoch}  implies that the cup-product makes the  Hochschild cohomology groups $HH^{S^d}(A,B)$ a graded \emph{commutative} algebra (and not only associative as in the case $d=1$).

Further,  when $B=A$  (endowed with its canonical $A$-algebra structure), the $E_d$-structure can actually be lifted naturally to an $E_{d+1}$-structure; see Theorem~\ref{T:Deligne}.(3).
\end{rem}
\begin{rem}
Similarly to Example~\ref{E:cupSphere}, it is possible (but a bit tedious) to give explicit description of the higher $\cup_i$-products on the standard models of the spheres. Details are left to the interested reader.
\end{rem}

The core of the proof of Theorem~\ref{T:EdHoch} is the $E_d$-co-$H$-space structure of the sphere. We say that a pointed topological space $X$  is an $E_d$-co-$H$-space if it is an $E_d$-coalgebra in the category of pointed spaces  with monoidal structure given by the wedge product.

 In other words, there are continuous maps $\mathcal{C}_d(k)\times X\to \bigvee_{i=1\dots k} X$ which are compatible with the operadic composition in    $\mathcal{C}_d$. Mimicking the proof of Theorem~\ref{T:EdHoch} gives the following enhancement of Corollary~\ref{C:cupHoch}:
\begin{cor} Let $X$ be an $E_d$-co-$H$-space, $A$ an $E_\infty$-algebra and $B$ an $E_\infty$-$A$-algebra. Then there is a natural (in $X$ in $E_d$-co-$H$-space, $A$ and $B$) $E_d$-algebra structure on $CH^{X}(A,B)$ refining the cup-product of Corollary~\ref{C:cupHoch}
\end{cor}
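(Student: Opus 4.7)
The plan is to mimic the proof of Theorem~\ref{T:EdHoch} almost verbatim, with the pinching maps $pinch: \mathcal{C}_d(r) \times S^d \to \bigvee_{i=1}^r S^d$ replaced throughout by the $E_d$-co-$H$-space structure maps $\mu_{X,r}: \mathcal{C}_d(r) \times X \to \bigvee_{i=1}^r X$. First, applying the singular set functor to $\mu_{X,r}$ and then invoking the contravariance of Hochschild cochains (Proposition~\ref{P-CHbifunctorTop} and Proposition~\ref{P-CHfunctorMod}) together with the higher wedge product~\eqref{eq:muveeEn}, I would construct, for each $r\geq 1$, a morphism
\[
pinch_{X,r}^*: C_\ast\bigl(\mathcal{C}_d(r)\bigr) \otimes \bigl(CH^{X}(A,B)\bigr)^{\otimes r} \longrightarrow CH^{X}(A,B)
\]
defined by exactly the same three-step composition as in~\eqref{eq:pinchSr}: apply the diagonal on $C_\ast(\mathcal{C}_d(r))$, then apply $\mu_{\vee}$ (twisted by the operad chains on one factor) to land in $CH^{\bigvee_{i=1}^r X}(A,B)$, then pull back along $\mu_{X,r}^*$.

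Next, I need to check that the collection $(pinch_{X,r}^*)_{r\geq 1}$ assembles into an action of the $\infty$-operad $\mathbb{E}_d^\otimes$ on $CH^{X}(A,B)$. By the coherence argument used in the proof of Theorem~\ref{T:EdHoch} (which only relies on the diagonal $C_\ast(\mathcal{C}_d(r)) \to C_\ast(\mathcal{C}_d(r))^{\otimes 2}$ being a map of $\infty$-operads, and on Proposition~\ref{P:wedge}), this compatibility reduces to the commutativity, for each $j\in\{1,\dots,k\}$, of the diagram
\[
\xymatrix{
\mathcal{C}_d(k) \times \mathcal{C}_d(\ell) \times X \ar[r] \ar[d]_{\circ_j \times id_X} & \mathcal{C}_d(\ell) \times \bigvee_{i=1}^k X \ar[d]^{id \times \mu_{X,\ell} \times id} \\
\mathcal{C}_d(k+\ell) \times X \ar[r] & \bigvee_{i=1}^{k+\ell} X
}
\]
in pointed spaces. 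But this is precisely the statement that $X$ is an $E_d$-coalgebra in $(\mathrm{Top}_*,\vee)$, i.e.\ the defining condition of an $E_d$-co-$H$-space. So there is no additional work: the hypothesis on $X$ is tailored exactly to make the compatibility hold.

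Finally, to identify the underlying $E_1$-algebra structure with the cup-product of Corollary~\ref{C:cupHoch}, I would pick any point $c_0 \in \mathcal{C}_d(2)$ representing the generator of $H_0(\mathcal{C}_d(2),\mathbb{Z}) \cong \mathbb{Z}$; then $pinch_{X,2}^*(c_0,-)$ unwinds, on the nose, to the composition $\mu_{\vee}$ followed by $\mu_{X,2}^*$ for the co-$H$-structure map $X \to X\vee X$ obtained from $c_0$, which is the definition of $\cup_X$ in Corollary~\ref{C:cupHoch}. Naturality in $A$, $B$, and in $X$ as an $E_d$-co-$H$-space is automatic, since each ingredient (the singular set, $\mu_{\vee}$, and contravariance of $CH^{(-)}(A,B)$) is functorial.

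The main (and only) obstacle I expect is essentially bookkeeping: making sure the $\infty$-operadic coherence of the action is not merely checked on homotopy classes but at the level of $\infty$-operads in $\hkmod$. As in the proof of Theorem~\ref{T:EdHoch}, this is handled by working with the topological operad $\mathcal{C}_d$ and taking singular chains only after all compatibilities are established at the space level via the $E_d$-co-$H$ structure of $X$; no genuinely new argument beyond that of Theorem~\ref{T:EdHoch} is required.
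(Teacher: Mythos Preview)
Your proposal is correct and follows essentially the same approach as the paper, which states only that the corollary is obtained by ``mimicking the proof of Theorem~\ref{T:EdHoch}.'' You have spelled out exactly that mimicry: replacing $S^d$ by $X$ throughout, the operadic compatibility reduces to the $E_d$-coalgebra structure of $X$ in $(\hTopp,\vee)$, which is precisely the hypothesis.
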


\begin{ex}[Smooth CDGA] In characteristic zero,  there is an equivalence $E_n\text{-Alg}\cong H_\ast(\mathcal{C}_n)\text{-Alg}$ between the $\infty$-categories of $E_n$-algebras and (homotopy) $H_\ast(\mathcal{C}_n)$-algebras induced by any choice of formality of the little $n$-disks-operad. 
Note that for $n=1$ the latter operad $H_\ast(\mathcal{C}_1)$ is the operad of associative algebras while for $n\geq 2$, $H_\ast(\mathcal{C}_n)$ is the operad governing $P_n$-algebras.

The next proposition shows that for  \emph{free} graded 
commutative algebras, the homotopy $P_n$-structure given by Theorem~\ref{T:EdHoch} is trivial.

Here a $P_n$-algebra stands for a differential graded commutative unital algebra $(B,d,\cdot)$ equipped with a (homological) degree $n-1$ bracket which makes the iterated suspension $A[1-n]$ a differential graded Lie algebra. 
The bracket and product are further required to satisfy the graded Leibniz identity, see paragraph~\ref{SS:Pn} below.

If $P$ is a $P_n$-algebra and $C$ a $P_n$-coalgebra, we can form the convolution $P_n$-algebra $Hom(C, P)$ (as in \cite{Ta-Defofdalgebra}).

\begin{prop}\label{P:HKRrelative}
 Let $A=(\mathrm{Sym}(V), d)$ and $B=(\mathrm{Sym}(W),b)$ be  differential free graded commutative algebras and assume $n\geq 2$.
 \begin{itemize}
  \item There is an natural quasi-isomorphism $$CH_{S^n}(A) \stackrel{\simeq}\longrightarrow (\mathrm{Sym}(V\otimes H_\bullet(S^n)),\partial)\cong (\mathrm{Sym}(V\oplus V[-n], \partial)$$ of CDGA. Here the right hand side is equipped with the unique differential such that for any $v\in V$,  $\partial (v)=d(v)$ and $\partial v[-n]= (-1)^n s_n(d(v))$ where $s_n$ is the unique derivation satisfying $s_n(w)=w[-n]$, $s_n(w[-n])=0$ for $w\in V$.
  \item There is an natural equivalence of (homotopy) $P_n$-algebras 
  $$CH^{S^n}(A,B) \cong Hom_{\mathrm{Sym}(V)}\Big(\mathrm{Sym}(V\otimes H_\bullet(S^n)),\mathrm{Sym}(W)\Big) $$ where the right hand side is endowed with  the convolution $P_n$-algebra structure\footnote{which has a zero bracket} given by the linear isomorphism
  $$ Hom_{\mathrm{Sym}(V)}\Big(\mathrm{Sym}(V\otimes H_\bullet(S^n)),\mathrm{Sym}(W)\Big)\cong Hom(\mathrm{Sym}(V[-n]),\mathrm{Sym}(W)\Big) .$$ where $\mathrm{Sym}(V[-n])$ is the cofree coproartinian graded cocommutative coalgebra seen as a $P_n$-coalgebra with trivial bracket.  
 \end{itemize}
\end{prop}
 \begin{proof}
  The first claim is (a special case of) the Hochschild-Kostant-Rosenberg Theorem for higher Hoschild homology proved in~\cite{P} and Remark~\ref{R:Astrict}. Note that the quasi-isomorphism is obtained by the degeneration of a spectral sequence which is natural in both $A$ and maps of topological spaces~\cite[\S 2]{P}. In fact, in our case we can use Proposition~\ref{P-CHbifunctorTop}: one has natural equivalences
  $CH_{S^n}(A)\cong C_\ast(S^n) \mathop{\otimes}\limits_{\mathbb{E}_\infty^{\otimes}}^{\mathbb{L}} A$ and 
  $$(\mathrm{Sym}(V\otimes H_\bullet(S^n)),\partial)\cong  H_\ast(S^n) \mathop{\otimes}\limits_{\mathbb{E}_\infty^{\otimes}}^{\mathbb{L}} A.$$
  The equivalence is then induced by the fact that $S^n$ is formal (this is essentially the approach in \cite{P}); alternatively, one can use Corollary~\ref{C:mappingstack} if $V$ is negatively graded.

  The first claim thus also implies that $$CH^{S^n}(A,B) \cong Hom_{\mathrm{Sym}(V)}\Big(\mathrm{Sym}(V\otimes H_\ast(S^n)),\mathrm{Sym}(W)\Big) $$ as cochain complexes and that this equivalence is an equivalence of (homotopy) $P_n$-algebras, where the $P_n$-structures are given as algebras over the operad $\big(H_\ast(\mathcal{C}_n(r))\big)$.  The right hand side is equipped with a $\big(H_\ast(\mathcal{C}_n(r))\big)$-algebra structure given by the action of $\big(H_\ast(\mathcal{C}_n(r))\big)$ on $H_\ast(S^n)$ and thus on $H_\ast(S^n) \mathop{\otimes}\limits_{\mathbb{E}_\infty^{\otimes}}^{\mathbb{L}} A$; this action being similar to the one on $CH_{S^n}(A)\cong C_\ast(S^n) \mathop{\otimes}\limits_{\mathbb{E}_\infty^{\otimes}}^{\mathbb{L}} A$ given by Theorem~\ref{T:EdHoch}. Namely it is given by the composition:  
  \begin{multline}\label{eq:pinchSrbis}
pinch_{S^d,r}^*: H_{\ast}\big(\mathcal{C}_n(r))\big) \otimes \left(Hom_{\mathrm{Sym}(V)}\Big(\mathrm{Sym}(V\otimes H_\ast(S^n)),\mathrm{Sym}(W)\Big)\right)^{\otimes r}\\ 
\stackrel{\mu_B}\longrightarrow  H_{\ast}\big(\mathcal{C}_n(r))\big) \otimes Hom_{\mathrm{Sym}(V)}\Big(\mathrm{Sym}\Big(V\otimes H_\ast\big(\bigvee_{i=1}^r S^n\big)\Big),\mathrm{Sym}(W)\Big)\\
\stackrel{pinch^*}\longrightarrow Hom_{\mathrm{Sym}(V)}\Big(\mathrm{Sym}(V\otimes H_\ast(S^n)),\mathrm{Sym}(W)\Big).
\end{multline}
 Here $\mu_B$ stands for the multiplication in $B=\mathrm{Sym}(W)$. For degree reason, this $H_\ast\big(\mathcal{C}_n(r))\big)$-algebra structure is the one of a CDGA endowed with zero bracket. In particular it corresponds to the convolution $P_n$-algebra  mentioned in the Proposition.
 \end{proof}

\end{ex}

\section{Factorization homology and $E_n$-modules}

In this section,  for $n=\{1,2,\dots,\}\cup \{+\infty\}$, we collect some results on the category of $E_n$-modules over an $E_n$-algebra $A$. In particular we identify it with the category of left modules over the factorization homology $\int_{S^{n-1}}A$ in \S~\ref{S:FactandMod}. Then we apply this to $E_\infty$-modules to show the existence and uniqueness of the lift of Poincar\'e duality in the category of $E_\infty$-modules in \S~\ref{SS:E-inf-PD}. These results are latter used in \S~\ref{S:centralizers}  and \S~\ref{S:Brane}.

We first start by presenting a Factorization algebra point of view on $E_n$-modules.

\subsection{Stratified factorization algebras and $E_n$-modules}\label{SS:EnMod}

One can define a notion of locally constant factorization algebra for stratified manifolds as well as factorization homology for such spaces. We refer to~\cite{AFT} and \cite{G-Houches} for details. 

In this paper, we will essentially only need very special and easy cases: the disk and the sphere  with a marked point.

Let $X$ be a  Hausdorff paracompact topological space. By a \emph{stratification of $X$}, we mean  an union of a sequence of closed subspaces $\emptyset= X_{-1} \subset X_0\subset X_1\subset \cdots \subset X_n=X$   such that, for any point $x_i\in X_i$, there  is a neighborhood $U_{x_i}$ and a filtration preserving homeomorphism $U_{x_i}\stackrel{\phi}{\simeq} \R^i \times C(L)$ in $X$ where $C(L)$ is the (open) cone on a stratified space of dimension $n-i-1$.

 Note that  $X_{i+1}\setminus X_i$ is not necessarily connected nor non-empty. 
In all the examples considered in this paper, it will nevertheless be  a smooth manifold  of dimension $i+1$.
 We call the connected components of $X_i\setminus X_{i-1}$ the  strata of dimension $i$ of $X$. 

We define the \emph{\textbf{index}} of an open subset $U\subset X$ to be the \emph{smallest integer} $j$ such that $U\cap X_j\neq \emptyset$. 
\begin{definition}  \label{D:Stratified} An an open subset $U$ of $X$ is called a (stratified) \emph{disk} if there is a filtration preserving homeomorphism  $U \cong R^i\times C(L)$ with $L$ stratified of dimension $n-i-1$, and $i$ is the index of $U$. 

\smallskip

A  \emph{factorization algebra} $\mathcal{F}$ over a \emph{stratified manifold} $\emptyset \subset X_0\subset X_1\subset \cdots \subset X_n=X$  is  called \emph{locally constant} if the following condition is satisfied: 

{ }if $U\hookrightarrow V$ is an inclusion of  (stratified) disks of \emph{same index}\footnote{that is, for all $j=0,\dots, n-1$, either  $V\cap X_j =\emptyset$ or both $U\cap X_j$ and $V\cap X_j$ are non empty} and further $V$ intersects only one stratum of $X_i \setminus X_{i-1}$ where $i$ is the index of $V$, 
 then we require that the structure map $\mathcal{F}(U) \to \mathcal{F}(V)$ is a quasi-isomorphism.

We will also say that $\mathcal{F}$ is \emph{stratified locally constant} when we want to insist on the stratification. 
\end{definition}

\begin{ex}[Pointed disk] We write $D^n_*$ for the pointed 
 Euclidean open disk viewed as a stratified manifold. It has  only two  strata: a dimension $0$ stratum given by its center
and the dimension $n$-stratum given by the complement of the center. In other words $D^n_0=\{0\}=D^n_1\dots=D^n_{n-1}$ and $D^n_n=\R^n$.

Thus a factorization algebra (or an $N(\text{Disk}(D^n)$-algebra) on $D^n_*$ is locally constant if the
structure map $\mathcal{F}(U) \to \mathcal{F}(V)$ is an equivalence  when  $U\subset V$ are open disks such that either  $U$
contains the base point or $V$ is included in the $n$-stratum $D^n-\{0\}$. In other words, we do \emph{not}
require $\mathcal{F}(U) \to \mathcal{F}(V)$ to be an equivalence if $V$ contains the base point while $U$ does not.

\smallskip

We let $Fac^{lc}_{D^n_*}$ be ($(\infty,1)$)-category of stratified locally constant factorization algebras on the pointed disk.
\end{ex}

\begin{definition}\label{D:EnModCat} \begin{itemize}
\item We denote  $Fac^{lc, res}_{D^n_*}$, the $(\infty,1)$-category 
$$Fac^{lc, res}_{D^n_*} :=  \text{Fac}^{lc}_{D^n_*}\times_{\text{Fac}^{lc}_{\R^n\setminus\{0\}}}\text{Fac}^{lc}_{\R^n}$$  of
 pairs $(\mathcal{M}, \mathcal{A})$ of locally constant factorization algebras on respectively $D^n_*$ and $D^n$ together with an equivalence of factorization algebras  $\mathcal{M}_{|D^n-\{0\}} \stackrel{\simeq}\to \mathcal{A}_{|D^n-\{0\}}$  between the restrictions of $\mathcal{M}$ and  $\mathcal{A}$ to  $D^n-\{0\}$.
 \item
 Fix $\mathcal{B} \in \text{Fac}^{lc}_{D^n}$. 
 We denote $Fac^{lc}_{D^n_*|\mathcal{B}}$ the $(\infty,1)$-category
 $$Fac^{lc}_{D^n_*|\mathcal{B}} :=  \text{Fac}^{lc}_{D^n_*}\times_{\text{Fac}^{lc}_{\R^n\setminus\{0\}}}\{\mathcal{B}\},$$ that is the $(\infty,1)$-category  of  pairs $(\mathcal{M}, \mathcal{A})$ as above such that $\mathcal{A}$ is (equipped with an equivalence with) $\mathcal{B}$. \end{itemize} 
\end{definition}

\begin{rem}\label{R:EnModCat} 
Given an object $(\mathcal{M}, \mathcal{A})\in Fac^{lc, res}_{D^n_*}$, the factorization algebra $\mathcal{A}$ is essentially  determined by $\mathcal{M}$, since, by the locally constant condition, it is essentially defined by its restriction to any open ball in $D^n$, thus to any open ball included in $D^n-\{0\}$.

\smallskip

The same is \emph{locally} true for any object $\mathcal{N}$ of $Fac^{lc}_{D^n_*}$. Indeed, restricting to any disk $D$ of  $D^n-\{0\}$ yields a locally constant factorization algebra on the disk and thus an $E_n$-algebra $A_D\cong \mathcal{N}(D)$. For any two disks $D_1$, $D_2$, the  $E_n$-algebra $A_{D_1}$ and $A_{D_2}$ are equivalent, but such an equivalence depends on a choice of a bigger disk containing both of them. Thus, the main difference  between $Fac^{lc, res}_{D^n_*}$ and $Fac^{lc}_{D^n_*}$ is that we assume that these equivalences can be made canonically, which amout to the fact that $\mathcal{N}_{|D^\setminus\{0\}}$ is equivalent ot the restriction of a fixed factorization algebra on $\R^n$.
\end{rem}
\begin{ex}
 For $n=1$, the category $Fac^{lc, res}_{D^1_*}$ is equivalent to the category of all bimodules over an $E_1$-algebra.  
 However, $Fac^{lc}_{D^1_*}$  is equivalent to the category of all bimodules, that is the category whose objects are $(A,B)$-bimodules for  some  $E_1$-algebras $A,B$ which may be non-equivalent.
\end{ex}

Theorem~\ref{P:En=Fact} has an analogue for modules:
\begin{prop}[\cite{G-Houches}]\label{P:EnMod=Fact}
\begin{itemize} \item There is an equivalence between the $(\infty,1)$-categories $Mod^{E_n}$ of all $E_n$-modules (\S~\ref{S:EnAlg}) and $Fac^{lc, res}_{D^n_*}$, the
locally constant factorization algebras on the pointed disk as in Definition~\ref{D:EnModCat}.
\item Let $A$ be an $E_n$-algebra corresponding to a factorization algebra $\mathcal{A}\in \text{Fac}^{lc}_{\R^n}$ under Theorem~\ref{P:En=Fact}. 
Then the above equivalence restricts to an equivalence 
$$A\text{-Mod}^{E_n} \, \cong \, Fac^{lc}_{D^n_*|\mathcal{A}}.$$
\end{itemize}
\end{prop}
Note that the pushforward $D^n_*\to pt$ realizes the forgetful
functor $Mod^{E_n}(\mathcal{C}) \to \mathcal{C}$ of \S~\ref{S:EnAlg}. 
Further, as
noted in Remark~\ref{R:EnModCat}, fixing any Euclidean sub-disk
$D\subset D^n_*-\{0\}$ we get a functor $Fac^{lc, res}_{D^n_*} \to
Fac^{lc}_{D}$ which is equivalent to the functor $\iota:
Mod^{E_n}\to E_n\text{-Alg}$, \emph{i.e.}, to the forgetful functor
$(\mathcal{M}, \mathcal{A})\mapsto \mathcal{A}$. 

Forgetting the
stratification yields a canonical functor $Fac^{lc}_{D^n} \to Fac^{lc, res}_{D^n_*}$
realizing the canonical functor $E_n\text{-Alg}\to Mod^{E_n}$ (which
views an $E_n$-algebra as a module over itself in a canonical way).

\begin{rem}[Induced $E_n$-module structure associated to an $E_n$-algebra map]\label{R:Enmodulestructure}
Let $A$ be an $E_n$-algebra  and $f:A\to B$ an $E_n$-algebra map and let $B$ be endowed with the induced $A$-$E_n$-module structure. This module structure has an easy description in terms of factorization algebras. Denote $\mathcal{A}: U\mapsto \int_U A$ and $\mathcal{B}: V\mapsto \int_V B$ be the associated factorization algebras
on $\mathbb{R}^n$ (see Theorem~\ref{T:Theorem6GTZ2}). By Proposition~\ref{P:EnMod=Fact} and Proposition~\ref{P:AlternativeFacAlg}, the data of the $A$-$E_n$-module structure on $B$ is equivalent  to the data of a parametrized factorization algebra. 
Thus,
to  any embedding $\coprod_{i=0}^r \phi_i :  \coprod_{i=0}^r \mathbb{R}^n \to \mathbb{R}^n$ (with $\phi_0(0)=0$)
and commutative diagram
\begin{equation*}\xymatrix{ \coprod_{i=0}^r \mathbb{R}^n \ar[rr]^{h} \ar[rd]_{\coprod_{i=0}^r \phi_i} & & \mathbb{R}^n \ar[ld]^{\psi} \\ &  \mathbb{R}^n &}\end{equation*}
 of embeddings, one can  associate a natural\footnote{with respect to composition of embeddings, that is satisfies the usual associativity condition of the structure maps of a prefactorization algebra in the sense of~\cite{CG}} map
\begin{equation}\label{eq:dataEnmodulestructure}
\mathcal{A}(\phi_1(\R^n))  \otimes \cdots \otimes \mathcal{A}(\phi_r(\R^n))  \otimes\mathcal{B}(\phi_0(\R^n))   \longrightarrow \mathcal{B}(\psi(\R^n)).
\end{equation}
This map~\eqref{eq:dataEnmodulestructure} is very simple to describe, it is the composition
\begin{multline*}
\int_{\phi_1(\R^n)} A \otimes \cdots \otimes \int_{\phi_r(\R^n)} A \otimes \int_{\phi_0(\R^n)} B  \\
\stackrel{\big(\bigotimes_{i=1}^r \int_{\phi_i(\R^n)} f \big)\otimes id}\longrightarrow \int_{\phi_1(\R^n)} B \otimes \cdots \otimes \int_{\phi_r(\R^n)} B \otimes \int_{\phi_0(\R^n)} B
\\ \longrightarrow\int_{\psi(\R^n)} B \cong \mathcal{B}(\psi(\R^n)).
\end{multline*} where the last map is given by the factorization algebra structure of $\mathcal{B}$, \emph{i.e.}, the $E_n$-algebra structure of $B$.

 Now, let $g: B\to C$ be another $E_n$-algebra map endowing $C$ with an $A$-$E_n$-module structure; let $\mathcal{C}: U\mapsto \int_U C$ be the associated factorization algebra. Then a map $A$-$E_n$-modules $h: B\to C$ is equivalent to the data of a stratified parametrized factorization algebra map $\int_{U} h: \mathcal{B}(U)\cong \int_U B\to \int_U C \cong \mathcal{C}(U)$ such that, for all $\phi_0,\dots, \phi_r$ and $\psi$ as above,  the following diagram
\begin{equation*} \xymatrix{
\Big(\bigotimes_{i=1}^r \int_{\phi_i(\R^n)} A\Big)\otimes \int_{\phi_0(\R^n)} B
\ar[d]_{\big(\bigotimes id\big)\otimes \int_{\phi_0(R^n)}h}  \ar[rr]^{\qquad(\bigotimes \int_{\phi_i(\R^n)} f \big)\otimes id}& &
\bigotimes_{i=0}^r \int_{\phi_i(\R^n)}B \ar[r] & \int_{\psi(\R^n)} B \ar[d]^{\int_{\psi(\R^n)} h} \\
\Big(\bigotimes_{i=1}^r \int_{\phi_i(\R^n)} A\Big)\otimes \int_{\phi_0(\R^n)} C
 \ar[rr]_{\qquad(\bigotimes \int_{\phi_i(\R^n)} g\circ f \big)\otimes id}& &
\bigotimes_{i=0}^r \int_{\phi_i(\R^n)}C \ar[r] & \int_{\psi(\R^n)} C}
\end{equation*} is commutative.
\end{rem}

\subsection{Universal enveloping algebra of an $E_n$-algebra}\label{S:FactandMod}
In this section, we will recall some general results that are needed,
among other places, in the proof of Proposition~\ref{P:coHH=coTCH}.
We start with the following very useful result describing the
universal enveloping algebra of an $E_n$-algebra in terms of
factorization homology. Note that  universal enveloping algebras of
$E_n$-algebras are given by the left adjoint of the forgetful
functor $E_n\text{-Alg} \to \hkmod$ (for instance see~\cite{F}).

\begin{prop}[Francis, Lurie]\label{P:EnMod}
Let $A$ be an $E_n$-algebra ($n\in \mathbb{N}$). The category  $A\text{-}Mod^{E_n}$ is equivalent as a symmetric monoidal $(\infty, 1)$-category to the category of left modules over the factorization homology  $\int_{S^{n-1}}(A)$, with respect to the canonical outward n-framing on $S^{n-1}\subset \mathbb R^n$.
\end{prop}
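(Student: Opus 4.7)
The plan is to use the factorization algebra interpretation on both sides of the claimed equivalence. By Proposition~\ref{P:EnMod=Fact}, the $(\infty,1)$-category $A\text{-}Mod^{E_n}$ is equivalent to the category $\text{Fac}^{lc}_{D^n_*}$ of pairs $(\mathcal{M},\mathcal{A})$ where $\mathcal{A}$ is the (locally constant) factorization algebra on $D^n\cong\R^n$ determined by the $E_n$-algebra $A$ (Proposition~\ref{P:En=Fact}) and $\mathcal{M}$ is a locally constant factorization algebra on the stratified disk $D^n_*$ whose restriction to $D^n-\{0\}$ agrees with $\mathcal{A}_{|D^n-\{0\}}$. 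Since $\mathcal{A}$ is fixed by $A$, the relevant data is really $\mathcal{M}$ together with the identification on $D^n-\{0\}$.

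The key geometric idea is to use polar coordinates. Choose the radial projection $\pi: \R^n-\{0\}\to (0,\infty)$, viewed as the fiber bundle $S^{n-1}\times(0,\infty)\to(0,\infty)$ with its canonical outward framing. The pushforward $\pi_*$ takes the restriction $\mathcal{A}_{|\R^n-\{0\}}$ to a locally constant factorization algebra on $(0,\infty)$ whose value on any open interval $(a,b)$ is, by Theorem~\ref{T:CH=TCH} applied to the cylinder $S^{n-1}\times(a,b)$ and by local constancy, canonically equivalent to $\int_{S^{n-1}}A$. This construction, following the pushforward interpretation of Dunn's Theorem~\ref{T:Dunn}, endows $\int_{S^{n-1}}A$ with its $E_1$-algebra structure. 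Extending $\pi$ to the stratified radial projection $\bar{\pi}:D^n_*\to[0,\infty)$ (with strata $\{0\}$ and $(0,\infty)$) and pushing forward $\mathcal{M}$ yields a stratified locally constant factorization algebra on the half-line, whose value on any neighborhood $[0,\varepsilon)$ of the basepoint is a chain complex $M$ and whose structure maps $\mathcal{M}([0,\varepsilon))\otimes\mathcal{A}(U_1)\otimes\cdots\otimes\mathcal{A}(U_k)\to\mathcal{M}([0,\varepsilon'))$ for disjoint $U_i\subset(0,\infty)$ exhibit $M$ as a left module over $\int_{S^{n-1}}A$.

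Conversely, given a left $\int_{S^{n-1}}A$-module $M$, one reconstructs a stratified factorization algebra on $[0,\infty)$ (with $\{0\}$-fiber $M$) and pulls it back along $\bar{\pi}$ to a factorization algebra on $D^n_*$ whose restriction to $D^n-\{0\}$ is $\mathcal{A}_{|\R^n-\{0\}}$: radially symmetric open sets in $D^n-\{0\}$ recover $\int_{S^{n-1}}A$ by local constancy, and general open disks are filled in by the (essentially unique) extension of a locally constant factorization algebra from a cofinal family of opens (using Theorem~\ref{T:Theorem6GTZ2}, or rather its stratified analogue from~\cite{AFT}). This produces the inverse functor.

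The main obstacle is matching the symmetric monoidal structures. On $A\text{-}Mod^{E_n}$ the tensor product is the relative tensor over $A$ computed in $E_n$-modules; on $\int_{S^{n-1}}A\text{-}LMod$ it is the relative tensor over $\int_{S^{n-1}}A$ as $E_1$-algebra. To identify them, the plan is to use the pushforward description above together with the fact (Remark~\ref{R:Enmodulestructure} and the descent property of factorization algebras from~\cite{CG}) that tensor products of $E_n$-modules are computed by gluing the corresponding stratified factorization algebras along $D^n-\{0\}$, an operation which, after pushforward along $\bar{\pi}$, precisely corresponds to the bar construction computing the relative tensor product over $\int_{S^{n-1}}A$. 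The verification that the symmetric (commutative) monoidal refinements of these two tensor products agree is where the bulk of the technical work lies, and is handled in~\cite{L-HA, F} via the coherence of the $\infty$-operads $\mathbb{E}^{\otimes}_n$; we adopt their argument.
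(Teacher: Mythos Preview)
Your approach is correct in outline but differs from the paper's own proof. The paper does not argue via pushforward of factorization algebras; instead it invokes the $\infty$-categorical Barr--Beck theorem to produce an $E_n$-enveloping algebra $U^{(n)}_A \in E_1\text{-}Alg$ together with an equivalence $U^{(n)}_A\text{-}LMod \cong A\text{-}Mod^{E_n}$, and then cites Francis's identification $U^{(n)}_A \simeq \int_{S^{n-1}} A$ (\cite[Proposition~3.19]{F}). This is largely a citation rather than an argument, but the two-step structure---existence of an enveloping algebra via monadicity, then its computation as factorization homology---is conceptually different from yours. Your route via the radial pushforward $\bar\pi: D^n_* \to [0,\infty)_*$ is in fact exactly what the paper sketches in the \emph{remark} immediately following the proposition, so you have independently found the geometric interpretation the authors record there. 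Your approach makes the $E_1$-structure on $\int_{S^{n-1}} A$ and the module structure visible; the Barr--Beck route has the advantage that the symmetric monoidal compatibility comes essentially for free from the universal property of the enveloping algebra, whereas you correctly flag this as the technical burden and defer it to~\cite{L-HA, F}.

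Two small corrections. First, your appeal to Theorem~\ref{T:CH=TCH} to compute $\pi_*\mathcal{A}((a,b)) \cong \int_{S^{n-1}} A$ is misplaced: that theorem is specific to $E_\infty$-algebras. For a general $E_n$-algebra you should invoke the definition of factorization homology on framed manifolds together with local constancy (or the Fubini theorem, \cite[Corollary~17]{GTZ2}). Second, your inverse construction via ``pullback along $\bar\pi$'' is not a standard operation on factorization algebras; the cleaner formulation is that $\bar\pi_*$ is itself an equivalence between the relevant categories of stratified locally constant factorization algebras, which is what~\cite{AFT} supplies.
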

\begin{proof} This is proved in~\cite{F} and can also be found in~\cite{L-VI, L-HA}.
 Note that by the $\infty$-version of the Barr-Beck theorem \cite{L-II, L-HA} for any $E_n$-algebra $A$,
there is an $E_n$-enveloping algebra $U^{(n)}_A\in E_1\text{-Alg}$
with a natural equivalence $U^{(n)}_A\text{-}LMod \cong
A\text{-}Mod^{E_n}$, see \emph{loc. cit} and also~\cite{Fre-Mod}.
Now the result follows from the natural equivalence
$U^{(n)}_A\stackrel{\simeq}\to\int_{S^{n-1}}A$ see~\cite[Proposition
3.19]{F}.
 \end{proof}

 This Lemma extends to the case $n=\infty$, see Lemma~\ref{L:EinftyMod} and more importantly Theorem~\ref{T:lifttoEinfty} below.

 \begin{rem}
  In terms of factorization algebra, the equivalence in Proposition~\ref{P:EnMod} can be thought
of as the pushforward of factorization algebras.
A Euclidean norm of a vector defines a canonical map $N:D^n_*\to [0,1)_*$, where $[0,1)_*$
is the half open interval with a unique closed stratum given by the point $0$.
The $(\infty,1)$-category of locally constant factorization algebra on the stratified manifold $[0,1)_*$ is
equivalent to the $(\infty,1)$-category $LMod$. The equivalence of Proposition~\ref{P:EnMod} is then just  induced by the pushforward
$N_*: Fac^{lc}_{D^n_*}\to Fac^{lc}_{[0,1)_*}$ by $N$. See~\cite{G-Houches} for details.
 \end{rem}

 \smallskip

 We will later need the following lemma, which expresses the compatibility of the equivalence of categories given
 by Proposition~\ref{P:EnMod} with the inclusions of $E_{n+1}$-algebras inside $E_n$-algebras. We feel this lemma is of independent interest anyhow.
 Suppose $X$ is a codimension $1$  submanifold of an $n$-framed manifold and $Y$ endowed with a
trivialization $\psi:X\times \mathbb{R}\hookrightarrow Y$ of a tubular neighborhood in $Y$.
Then, for any $E_n$-algebra $A$, there is a canonical map $\psi:\int_X A \to \int_Y A$
(which depends on the trivialization).

 \begin{lem}\label{L:EnModseq} Let $A$ be an $E_{n+1}$-algebra
and $\phi_n: S^{n-1}\times \mathbb{R} \hookrightarrow S^{n}$ the inclusion of an open (tubular) neighborhood
of the equatorial sphere
 $S^{n-1}=S^n\cap \big(\mathbb{R}^n\times \{0\}\big)$ inside $S^n$.
The following diagram, in which the vertical arrows are given by Proposition~\ref{P:EnMod}, is commutative,
  $$\xymatrix{  A\text{-}Mod^{E_{n+1}} \ar@{^{}->}[r]   \ar[d]^{\simeq}&  A\text{-}Mod^{E_n}  \ar[d]^{\simeq}  \\
  \ar[r]^{\phi_n^*}  (\int_{S^{n}}A)\text{-}LMod &  (\int_{S^{n-1}}A)\text{-}LMod     }.$$
 \end{lem}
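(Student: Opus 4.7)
The plan is to work uniformly with locally constant factorization algebras on stratified disks, invoking Proposition~\ref{P:EnMod=Fact}, in which framework the vertical equivalences of Proposition~\ref{P:EnMod} are realized via pushforward along the norm map $N_k\colon D^k_\ast \to [0,1)_\ast$, $x\mapsto \|x\|$. Explicitly, given an $E_k$-module $M$ over $A$ encoded as a factorization algebra $\mathcal{M}$ on $D^k_\ast$ whose restriction to the open stratum is $A$ seen as an $E_k$-algebra, $(N_k)_\ast \mathcal{M}$ is a locally constant factorization algebra on $[0,1)_\ast$, equivalently a left module over the $E_1$-algebra of its values on open sub-intervals $(r,r')\subset (0,1)$, which is $\int_{S^{k-1}} A$ by the very definition of factorization homology (the fibers of $N_k$ over $(0,1)$ are copies of $S^{k-1}$).

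Next I would describe the forgetful functor $A\text{-}Mod^{E_{n+1}}\to A\text{-}Mod^{E_n}$ as the pullback along the equatorial stratified open embedding $\iota\colon D^n_\ast \hookrightarrow D^{n+1}_\ast$ determined by $\mathbb{R}^n\times\{0\}\subset\mathbb{R}^{n+1}$. Indeed, by Dunn's Theorem~\ref{T:Dunn} and Proposition~\ref{P:En=Fact}, the operadic inclusion $\mathbb{E}_n^\otimes \to \mathbb{E}_{n+1}^\otimes$ translates into restriction of locally constant factorization algebras along $\mathbb{R}^n\hookrightarrow\mathbb{R}^{n+1}$, and this is compatible with the module interpretation of Proposition~\ref{P:EnMod=Fact}, because $\iota$ preserves the stratifications and sends the open stratum into the open stratum.

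The core of the argument is then the following observation: for $\mathcal{M}$ a locally constant factorization algebra on $D^{n+1}_\ast$, the left $\int_{S^{n-1}}A$-action on the underlying object $M=\mathcal{M}(D^{n+1}_\ast)$ coming from restricting $\mathcal{M}$ to the equatorial $D^n_\ast$ is, by construction, the structure map of $\mathcal{M}$ associated to a tubular neighborhood of the equatorial sphere $S^{n-1}$ inside $D^n\setminus\{0\}\subset D^{n+1}\setminus\{0\}$. This neighborhood is contained in a tubular neighborhood of a larger sphere $S^n\subset D^{n+1}\setminus\{0\}$, and the induced open inclusion $S^{n-1}\times\mathbb{R}\hookrightarrow S^n$ is precisely the map $\phi_n$ of the statement. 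The $\int_{S^{n-1}}A$-action on $M$ therefore factors, through the $\int_{S^n}A$-action inherited from the $E_{n+1}$-structure, via $\phi_n\colon\int_{S^{n-1}}A\to\int_{S^n}A$, which is exactly the commutativity claim. Equivalently, one assembles these restrictions into the commutative diagram of stratified manifolds
\[
\xymatrix{D^n_\ast \ar[r]^{\iota} \ar[d]_{N_n} & D^{n+1}_\ast \ar[d]^{N_{n+1}} \\ [0,1)_\ast \ar@{=}[r] & [0,1)_\ast}
\]
and applies the pushforward compatibilities of (locally constant) factorization algebras.

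The main obstacle I anticipate is a framing-compatibility verification: $\int_{S^{k-1}}A$ depends on the outward $k$-framing of $S^{k-1}\subset\mathbb{R}^k$, so one must check that the open inclusion $S^{n-1}\times\mathbb{R}\hookrightarrow S^n$ obtained geometrically from $\mathbb{R}^n\hookrightarrow\mathbb{R}^{n+1}$, equipped with the standard orthogonal trivialization of its normal line, yields the $E_1$-algebra map $\phi_n$ of the statement rather than a twisted variant. This is ensured by the canonical choice of a unit vector normal to $\mathbb{R}^n\subset\mathbb{R}^{n+1}$, which makes the resulting framings on both sides match those used to define $\phi_n$ and the $E_1$-structures on $\int_{S^{n-1}}A$ and $\int_{S^n}A$.
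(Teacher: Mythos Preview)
Your proposal is correct and takes a genuinely different route from the paper's own proof. The paper argues via the explicit presentation of the $E_n$-enveloping algebra $U_A^{(n)}$: it uses Francis's simplicial colimit formula
\[
U_A^{(n)} \simeq \mathrm{colim}\Big(\coprod_{K} \mathbb{E}_n^\otimes(K\amalg\{pt\})\otimes A^{\otimes K}\leftleftarrows\cdots\Big),
\]
together with the comparison maps $\mathbb{E}_n^\otimes(K\amalg\{pt\})\to \mathrm{Emb}^{fr}(\coprod_K D^n, S^{n-1}\times\mathbb{R})$ realizing $U_A^{(n)}\simeq\int_{S^{n-1}}A$, and then checks that the canonical map $U_A^{(n)}\to U_A^{(n+1)}$ is carried, termwise in these colimits, to the map induced by the embedding $\phi_n\times\mathbb{R}$. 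In other words, the paper works at the level of embedding spaces and operadic universal properties.

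Your argument instead stays entirely inside the stratified factorization-algebra picture: you realize both vertical equivalences as pushforward along the norm maps $N_k$, identify the top forgetful functor with restriction along the equatorial $\iota\colon D^n_\ast\hookrightarrow D^{n+1}_\ast$, and deduce commutativity from the elementary equality $N_{n+1}\circ\iota=N_n$ of stratified maps, reading off $\phi_n$ from the inclusion of fibers $S^{n-1}\hookrightarrow S^n$. This is cleaner and more in keeping with the paper's general philosophy, and it avoids unpacking the bar-type presentation of $U_A^{(n)}$. The price is that you must take as given the compatibility of Proposition~\ref{P:EnMod=Fact} with the operadic stabilization map (your identification of the forgetful functor with $\iota^\ast$), and the pushforward description of the equivalence in Proposition~\ref{P:EnMod}; both are stated or indicated in the paper but not proved there in detail. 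The paper's approach, by contrast, reduces everything to a concrete commuting square of embedding spaces, which is longer but entirely self-contained modulo the cited result of Francis.
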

\begin{proof}
 The universal property of the  $E_n$-enveloping algebra $U^{(n)}_A$ implies that the map of
$\infty$-operad $\mathbb{E}^{\otimes}_{n}\to \mathbb{E}^{\otimes}_{n+1}$ (see \S~\ref{S:EnAlg})
yields a canonical map of
$E_1$-algebras $U^{(n)}_A \to U^{(n+1)}_A$. It remains to identify the composition
 $\theta_n:\int_{S^{n-1}} A\cong U^{(n)}_A \to U^{(n+1)}_A\cong \int_{S^n} A$ with $\phi_n$ to prove the lemma.
 From the proof of~\cite[Proposition 3.19]{F}, we know that
 $U^{(n)}_A$ is computed by the colimit of a (simplicial) diagram,
 \begin{equation}\label{eq-UA}
  \coprod_{K\in Fin} \mathbb{E}_{n}^{\otimes}\Big(K\coprod\{pt\}\Big)\otimes A^{\otimes K} \leftleftarrows
\coprod_{{\mathbb{E}_n^{\otimes}}(J,I)} \mathbb{E}_{n}^{\otimes}\Big(I\coprod\{pt\}\Big)\otimes A^{\otimes J} \cdots.
 \end{equation}
Similarly,  $\int_{S^{n-1}}A$ can be computed as the colimit of a similar diagram,
\begin{equation}\label{eq-UA2}
  \coprod_{K\in Fin}\hspace{-0.5pc}\mathop{Emb^{fr}}\Big(\coprod_{K}D^n, S^{n-1}\times \mathbb{R}\Big)\otimes A^{\otimes K}
\leftleftarrows \coprod_{{\mathbb{E}_n^{\otimes}}(J,I)} \hspace{-0.5pc}\mathop{Emb^{fr}}\Big(\coprod_{I} D^n, S^{n-1}\times \mathbb{R}\Big)
\otimes A^{\otimes J} \cdots,
 \end{equation}
 where $\mathop{Emb^{fr}}$ denotes the space of framed embeddings.

 Furthermore, the equivalence $U^{(n)}_A \stackrel{\simeq}\to \int_{S^{n-1}} A$ is induced by the canonical maps
$\mathbb{E}_{n}^{\otimes}\Big(K\coprod\{pt\}\Big)\to \mathop{Emb^{fr}}\Big(\coprod_{K}D^n, S^{n-1}\times \mathbb{R}\Big)$
obtained by translating the disk labeled by the distinguished point to the origin;
see the proof of ~\cite[Proposition 3.19]{F}.

The natural map $U^{(n)}_A \to U^{(n+1)}_A$ is induced by the natural maps
 $\mathbb{E}_{n}^{\otimes}\Big(K\coprod\{pt\}\Big)\to \mathbb{E}^{\otimes}_{n+1}\Big(K\coprod\{pt\}\Big)$
in diagram~\eqref{eq-UA}.
 Since the natural map of $\infty$-operads $\mathbb{E}_{n}^{\otimes}\to \mathbb{E}^{\otimes}_{n+1}$
is given by sending  $n$-dimensional disks $D$ to $D\times \mathbb{R}$, we get commutative diagrams
 $$\xymatrix{  \mathbb{E}_{n}^{\otimes}\Big(K\coprod\{pt\}\Big)\ar[rr] \ar[d]&&
\mathbb{E}^{\otimes}_{n+1}\Big(K\coprod\{pt\}\Big) \ar[d] \\
 \mathop{Emb^{fr}}\Big(\coprod_{K}D^n, S^{n-1}\times \mathbb{R}\Big) \ar[rr] &&
\mathop{Emb^{fr}}\Big(\coprod_{K}D^{n+1}, S^{n}\times \mathbb{R}\Big)} $$
 where the lower map is induced by the embedding $$\phi_n\times \mathbb{R}: (S^{n-1}\times \mathbb{R})\times \mathbb{R} \hookrightarrow S^n\times \mathbb{R}$$ prescribed in the assumptions of the lemma. It follows that $\theta_n: \int_{S^{n-1}} A\cong U^{(n)}_A \to U^{(n+1)}_A\cong \int_{S^n} A$ is obtained by taking the colimit of  these lower maps $$\mathop{Emb^{fr}}\Big(\coprod_{K}D^n, S^{n-1}\times \mathbb{R}\Big) \stackrel{\phi_n\times \mathbb{R}}\longrightarrow \mathop{Emb^{fr}}\Big(\coprod_{K}D^{n+1}, S^{n}\times \mathbb{R}\Big)$$ applied to diagram~\eqref{eq-UA2}, which, by definition, is the map  $\phi_n:\int_{S^{n-1}} A\to \int_{S^n} A$.
\end{proof}

\begin{rem}[The trivial $E_n$-$A$-module structure on $A$]\label{R:A=intDA}It follows from the axioms of factorization homology
(see~\cite{L-VI, L-HA, F} or \cite[Section 6]{GTZ2}) that for any $E_n$-algebra $A$, there is a natural
equivalence $A\cong \int_{D^n} A$ in $k-Mod_{\infty}$. 
Note that  $D^n$ has an immediate trivialization ${S^{n-1}\times D^1}\cong D^n\setminus\{0\}$ of a complement of a point (and in fact of any complement of a closed disk). Hence, there is a natural left
$(\int_{S^{n-1}}A)$-module structure  on $\int_{D^n} A$ see~\cite{L-VI, L-HA},
\cite[Section 3]{F} or \cite[Section 6.3]{GTZ2} for details. Note that this left $\big(\int_{S^{n-1}}A\big)$-module structure is given by a map
$$\int_{S^{n-1}}A\otimes \int_{D^n} A\cong \int_{\big(S^{n-1}\times D^1\big)\coprod D^n} A
\, \longrightarrow \, \int_{D^{n}}A$$
induced by any embedding $\big(S^{n-1}\times D^1\big)\coprod D^n\hookrightarrow D^n$
mapping $D^n$ onto a subdisk
$D(0,r)\subset D^n$ (for some radius $r>0$) and $S^{n-1}\times D^1$ onto a sub-annulus included in
$D^n \setminus D(0,r)$.

By Proposition~\ref{P:EnMod}, we get a natural $A$-$E_n$-module
structure on $A$ which relates to the canonical $A$-$E_n$-module structure of $A$ as follows.
\begin{lem}\label{L:A=intDA}
 The natural equivalence $A\cong \int_{D^n} A$ is an equivalence of $A$-$E_n$-modules.
\end{lem}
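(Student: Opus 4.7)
My approach is to realize both left $\bigl(\int_{S^{n-1}}A\bigr)$-module structures on $\int_{D^n}A\simeq A$ from one and the same locally constant factorization algebra on the stratified pointed disk $D^n_*$ (Definition~\ref{D:EnModCat}), and then to apply Proposition~\ref{P:EnMod} uniformly. Let $\mathcal{A}$ denote the locally constant factorization algebra on $\mathbb{R}^n$ corresponding to the $E_n$-algebra $A$ via Proposition~\ref{P:En=Fact}. Restricting $\mathcal{A}$ to $D^n$ and viewing the result as a locally constant factorization algebra on $D^n_*$ by forgetting the stratification (as in the paragraph following Proposition~\ref{P:EnMod=Fact}) is, by construction, the factorization-algebra incarnation of $A$ equipped with its canonical $A$-$E_n$-module structure; this is the content of the functor $E_n\text{-}Alg\to Mod^{E_n}$ discussed just before Remark~\ref{R:Enmodulestructure}.

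I would then transport $\mathcal{A}$ via the pushforward along the Euclidean norm map $N\colon D^n_*\to [0,1)_*$, which (by the remark following Proposition~\ref{P:EnMod}) realizes the equivalence $A\text{-}Mod^{E_n}\simeq \bigl(\int_{S^{n-1}}A\bigr)\text{-}LMod$. Using local constancy and the defining axioms of factorization homology (cf.\ Theorem~\ref{T:CH=TCH} and the results of~\cite{L-HA, F, GTZ2}), the stalk of $N_*(\mathcal{A})$ at the closed stratum satisfies $N_*(\mathcal{A})([0,\epsilon)) \simeq \mathcal{A}(D(0,\epsilon)) \simeq \int_{D^n}A$, while the outer $E_1$-algebra satisfies $N_*(\mathcal{A})\bigl((a,b)\bigr) \simeq \mathcal{A}\bigl(S^{n-1}\times(a,b)\bigr) \simeq \int_{S^{n-1}}A$, where the last step uses the invariance of factorization homology under an extra $\mathbb{R}$-factor along a framed tubular neighborhood.

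The key observation is that, in this description, the left action is literally the structure map of the prefactorization algebra $\mathcal{A}$ applied to any embedding realizing $(S^{n-1}\times (a,b))\coprod D(0,\epsilon)$ as a disjoint sub-configuration of a larger open in $D^n$. But this is exactly the construction recalled in Remark~\ref{R:A=intDA}, specialised to the embedding $(S^{n-1}\times D^1)\coprod D^n \hookrightarrow D^n$ that places $S^{n-1}\times D^1$ as a collar of the equatorial sphere and sends the inner $D^n$ onto a concentric sub-disk $D(0,r)$ disjoint from that collar. Consequently both left $\bigl(\int_{S^{n-1}}A\bigr)$-actions on $\int_{D^n}A$ arise from the same prefactorization structure map of $\mathcal{A}$, and are thus canonically equivalent.

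The main obstacle is to ensure that the trivialization of the tubular neighborhood of $S^{n-1}\subset \mathbb{R}^n$ implicit in the pushforward along $N$ matches the canonical outward $n$-framing of $S^{n-1}$ used in the statement of Proposition~\ref{P:EnMod}; otherwise the two $E_1$-algebra structures on $\int_{S^{n-1}}A$ could differ by passage to an opposite algebra, and the module equivalence would only hold up to the $\mathbb{Z}/2$-action of Example~\ref{R:OppositeAlgebra}. This is handled by orienting the collar $S^{n-1}\times D^1$ outward (away from the origin) in the embeddings of Remark~\ref{R:A=intDA}, which corresponds precisely to using $N$ rather than $-N$ in the pushforward; with this compatible choice the identification is strict.
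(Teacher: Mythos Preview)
Your proposal is correct and follows essentially the same underlying idea as the paper, though the presentations differ: the paper's proof is a two-line citation, invoking \cite[Remark~3.26]{F} and noting that the proof of \cite[Proposition~3.19]{F} applied to $A$ (rather than the unit object) yields directly an equivalence of left $\int_{S^{n-1}}A$-modules between $A$ and $\int_{D^n}A$. What you have done is unpack that citation using the paper's own machinery---the stratified factorization algebra description of $E_n$-modules (Proposition~\ref{P:EnMod=Fact}) and the pushforward along the norm map (the remark following Proposition~\ref{P:EnMod})---which is precisely the content of Francis's argument. Your treatment of the framing compatibility is a welcome explicit check that the paper leaves implicit in the citation.
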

\end{rem}
\begin{proof} Consider a framed embedding of $D^n\hookrightarrow \mathbb{R}^n$.
 Since $D^n\setminus \{0\}$ is framed, the result follows from \cite[Remark 3.26]{F}. In fact, the
proof of \cite[Proposition 3.19]{F} applied to $A$ and not the unit object of
$\mathcal{C}=k\text{-}Mod_\infty$ gives an equivalence of left $\int_{S^{n-1}}A$-modules between $A$ viewed
as an  $(\int_{S^{n-1}}A)$-module and $\int_{D^n}A$.
\end{proof}

\subsection{Application of higher Hochschild chains to prove Theorem~\ref{T:lifttoEinfty}} \label{SS:lifttoEinfty}

For $E_\infty$-algebras, Proposition~\ref{P:EnMod} has a simpler and well-known (see~\cite{L-HA, KM, Fre-Mod}) form, see Theorem~\ref{T:lifttoEinfty} below. In this section, we recall this result and then give an independent proof using the formalism of factorization homology/higher Hochschild chains.

\smallskip

 The following theorem is due to Lurie~\cite[Proposition 4.4.1.4]{L-HA},~\cite{L-III} and also appeared independently in the work of Fresse~\cite{Fre-Mod}.
\begin{theorem}\label{T:lifttoEinfty} Let $A$ be an $E_\infty$-algebra. There is an equivalence of symmetric monoidal $\infty$-categories between the category $A\text{-}Mod^{E_\infty}$ of $E_\infty$ $A$-Modules and the category of left $A$-modules (where $A$ is viewed as an $E_1$-algebra). In particular:
\begin{itemize}
\item Any left $A$-module can be promoted into an $E_\infty$-$A$-module (up to quasi-isomorphisms)
\item Any map $f:M\to N$ of left $A$-modules can be lifted to a map of $E_\infty$-modules (up to a contractible family of choices)
\end{itemize}
\end{theorem}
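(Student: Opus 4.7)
The plan is to construct the equivalence by passing to a suitable limit in the identification of Proposition~\ref{P:EnMod}, exploiting the contractibility of $S^\infty$ and the homotopy invariance of higher Hochschild chains for $E_\infty$-algebras.

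First, I would use that $\mathbb{E}_\infty^\otimes \simeq \colim_n \mathbb{E}_n^\otimes$ (from the tower~\eqref{eq:towerofEnoperad}) to identify
\[
A\text{-}Mod^{E_\infty} \;\simeq\; \lim_n \, A\text{-}Mod^{E_n},
\]
where the transition functors are the forgetful functors $A\text{-}Mod^{E_{n+1}} \to A\text{-}Mod^{E_n}$ induced by $\mathbb{E}_n^\otimes \to \mathbb{E}_{n+1}^\otimes$. Next, I would invoke Proposition~\ref{P:EnMod} at each finite stage to rewrite $A\text{-}Mod^{E_n} \simeq \bigl(\int_{S^{n-1}} A\bigr)\text{-}LMod$, and then use Theorem~\ref{T:CH=TCH} (applicable because $A$ is $E_\infty$) to replace $\int_{S^{n-1}} A$ by the higher Hochschild chains $CH_{S^{n-1}}(A)$. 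Lemma~\ref{L:EnModseq} then identifies each transition functor with the restriction of scalars along the map $CH_{S^{n-1}}(A) \to CH_{S^n}(A)$ coming from the equatorial inclusion $S^{n-1} \hookrightarrow S^n$.

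Now I would exchange the limit of module categories for modules over the colimit of algebras,
\[
\lim_n \, \bigl(CH_{S^{n-1}}(A)\bigr)\text{-}LMod \;\simeq\; \Bigl(\colim_n CH_{S^{n-1}}(A)\Bigr)\text{-}LMod,
\]
and compute the colimit using commutation of higher Hochschild chains with colimits of spaces (Corollary~\ref{C:properties}(3)) together with homotopy invariance (Proposition~\ref{P-CHbifunctorTop}):
\[
\colim_n CH_{S^{n-1}}(A) \;\simeq\; CH_{\colim_n S^{n-1}}(A) \;\simeq\; CH_{S^\infty}(A) \;\simeq\; CH_{pt}(A) \;\simeq\; A,
\]
where the penultimate step uses that $S^\infty$ is contractible. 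This yields the desired equivalence $A\text{-}Mod^{E_\infty} \simeq A\text{-}LMod$. For the symmetric monoidal enhancement, I would observe that each equivalence in the chain above is induced by maps of $E_1$-algebras and that the monoidal structures on both sides are computed as the relative tensor product $\otimes_A^{\mathbb{L}}$, so they match naturally.

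The main obstacle is the exchange step: making rigorous the identification between a limit of module $\infty$-categories over a tower of $E_1$-algebra maps and the category of modules over the colimit of those algebras, while respecting the symmetric monoidal structures. This requires invoking $\infty$-categorical monadic descent (Barr--Beck--Lurie) and verifying that the forgetful functors along the tower satisfy the requisite hypotheses. Keeping track of the coherences needed for the symmetric monoidal structure in the limit, together with the precise compatibility of higher Hochschild chains with the colimit of the equatorial sequence $S^0 \hookrightarrow S^1 \hookrightarrow \cdots$, is where the technical work lies; everything else reduces to applying the structural results already established in the previous sections.
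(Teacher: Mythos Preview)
Your outline matches the paper's own argument almost step for step: the paper factors the proof through Lemma~\ref{L:EinftyMod} (which carries out exactly the tower-of-module-categories argument you describe, via Proposition~\ref{P:EnMod}, Theorem~\ref{T:CH=TCH}, and Lemma~\ref{L:EnModseq}) and Lemma~\ref{L:SinftyMod} (the contractibility of $S^\infty$). Two small points of precision: the identification $A\text{-}Mod^{E_\infty}\simeq\lim_n A\text{-}Mod^{E_n}$ is not taken as a formal consequence of $\mathbb{E}_\infty^\otimes\simeq\colim_n\mathbb{E}_n^\otimes$ but is proved separately (Lemma~\ref{L:EinftyModLim}) via the trivial-extension equivalence $A\text{-}Mod^{E_n}\simeq E_n\text{-}Alg_{/A}$; and Corollary~\ref{C:properties}(3) is stated only for \emph{finite} colimits of spaces, so for $\colim_n CH_{S^{n-1}}(A)\simeq CH_{S^\infty}(A)$ the paper appeals instead to the argument of \cite[Lemma~5.1.3]{GTZ2}.
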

 The theorem allows us to reduce the study of $E_\infty$-modules on $C^\ast(X)$ to the study of left modules on the (differential graded) associative algebra $(C^\ast(X),\cup)$, for instance see~\S~\ref{SS:E-inf-PD}. Also, see Example~\ref{E:LiftModstructure} and Remark~\ref{R:LiftMod} for a more explicit description of the lifts of left modules into $E_\infty$-ones.
 \begin{rem}When $A$ is an $E_\infty$-algebra the categories of left and right modules over $A$ (viewed as an $E_1$-algebra) are equivalent. Hence, one can replace left modules by right modules in Theorem~\ref{T:lifttoEinfty}.
 \end{rem}

The rest of this section is devoted to an alternative proof of Theorem~\ref{T:lifttoEinfty} using \S~\ref{S:FactandMod} and higher Hochschild theory. We first start with the following analogue of Proposition~\ref{P:EnMod}.

\begin{lem}\label{L:EinftyMod}
Let $A$ be an $E_\infty$-algebra. The category $A\text{-}Mod^{E_\infty}$ of $E_\infty$-$A$-Modules is equivalent as a symmetric monoidal $(\infty, 1)$-category to the category of left modules over the derived Hochschild  chains $CH_{S^{\infty}}(A)$, viewed as an $E_1$-algebra by forgetting extra structure.
\end{lem}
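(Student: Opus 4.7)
The plan is to realize the equivalence as the limit of the equivalences of Proposition~\ref{P:EnMod} for finite $n$, using that $\mathbb{E}_\infty^\otimes$ is the colimit of the tower~\eqref{eq:towerofEnoperad}. First, restricting structure yields a compatible system of symmetric monoidal forgetful functors $A\text{-}Mod^{E_{n+1}} \to A\text{-}Mod^{E_n}$, and since $\mathbb{E}_\infty^\otimes \simeq \colim_n \mathbb{E}_n^\otimes$, the category $A\text{-}Mod^{E_\infty}$ is equivalent to the limit $\lim_n A\text{-}Mod^{E_n}$ in the $\infty$-category of symmetric monoidal $(\infty,1)$-categories.

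Next, Proposition~\ref{P:EnMod} provides symmetric monoidal equivalences $A\text{-}Mod^{E_n} \simeq (\int_{S^{n-1}} A)\text{-}LMod$ at each level, and Lemma~\ref{L:EnModseq} identifies the restriction functor $A\text{-}Mod^{E_{n+1}} \to A\text{-}Mod^{E_n}$ with pullback along the canonical map $\theta_n : \int_{S^{n-1}} A \to \int_{S^n} A$ coming from the equatorial embedding $S^{n-1}\hookrightarrow S^n$. Since $A$ is $E_\infty$, Theorem~\ref{T:CH=TCH} identifies $\int_{S^{n-1}} A$ with $CH_{S^{n-1}}(A)$, and by the functoriality of Hochschild chains in the space argument (Proposition~\ref{P-CHbifunctorTop}), $\theta_n$ corresponds to the map $CH_{S^{n-1}}(A) \to CH_{S^n}(A)$ induced by the equatorial inclusion.

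Since $S^\infty \simeq \colim_n S^n$ along these equatorial inclusions, and since $CH_X(A) \cong A\boxtimes X$ preserves colimits in $X$ as a left adjoint (Proposition~\ref{P-CHbifunctorTop}), one has $\colim_n CH_{S^{n-1}}(A) \simeq CH_{S^\infty}(A)$ as $E_\infty$-algebras, and in particular as $E_1$-algebras. A standard fact about sequential colimits of $E_1$-algebras (that left modules over a sequential colimit form the limit of left module categories along the base-change functors) then yields
\[
\lim_n \bigl(\textstyle\int_{S^{n-1}} A\bigr)\text{-}LMod \;\simeq\; \bigl(\colim_n CH_{S^{n-1}}(A)\bigr)\text{-}LMod \;\simeq\; CH_{S^\infty}(A)\text{-}LMod.
\]
Combining with the preceding steps gives the desired equivalence $A\text{-}Mod^{E_\infty} \simeq CH_{S^\infty}(A)\text{-}LMod$.

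The principal obstacle is to ensure that the finite-$n$ equivalences of Proposition~\ref{P:EnMod} assemble coherently into an equivalence of towers of symmetric monoidal $(\infty,1)$-categories, so that passing to the limit on both sides is compatible with the symmetric monoidal structures; the essential non-formal input is Lemma~\ref{L:EnModseq}, which guarantees that the transition functors match up on the algebraic and module-theoretic sides. As a consistency check, $S^\infty$ is contractible, so $CH_{S^\infty}(A)\simeq CH_{pt}(A)\simeq A$, and the lemma immediately recovers Theorem~\ref{T:lifttoEinfty}.
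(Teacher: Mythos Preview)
Your proof is correct and follows essentially the same route as the paper's: both identify $A\text{-}Mod^{E_\infty}$ with the limit of the tower $A\text{-}Mod^{E_n}$, invoke Proposition~\ref{P:EnMod} and Lemma~\ref{L:EnModseq} at each finite level (together with Theorem~\ref{T:CH=TCH}), and then pass to the colimit $\colim_n CH_{S^{n-1}}(A)\simeq CH_{S^\infty}(A)$ to conclude. The only minor difference is that the paper isolates the identification $A\text{-}Mod^{E_\infty}\simeq \lim_n A\text{-}Mod^{E_n}$ as a separate lemma (Lemma~\ref{L:EinftyModLim}), proved via the trivial-extension equivalence $A\text{-}Mod^{E_n}\simeq E_n\text{-}Alg_{/A}$, rather than deducing it directly from $\mathbb{E}_\infty^\otimes\simeq\colim_n\mathbb{E}_n^\otimes$ as you do; your shortcut is reasonable but the paper's argument is slightly more self-contained.
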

\begin{proof} By Theorem~\ref{T:CH=TCH}, there is a canonical equivalence $\int_{S^{n}} A \cong CH_{S^{n}}(A)$ for any $n\in \mathbb{N}$.

The maps of operad $\mathbb{E}^{\otimes}_{i}\to \mathbb{E}^{\otimes}_{i+1}$ are induced by the maps $\mathbb{R}^{i}\cong \mathbb{R}^{i}\times\{0\}\hookrightarrow \mathbb{R}^{i+1}$ which, by restriction induces canonical maps $S^{i-1}\cong S^i\cap \big(\mathbb{R}^{i}\times\{0\}\big) \hookrightarrow S^{i}$, and, by functoriality, maps $\phi_i: CH_{S^{i-1}}(A) \to CH_{S^{i}}(A)$.

By Lemma~\ref{L:EnModseq} (and Theorem~\ref{T:CH=TCH}), we   get a commutative diagram
{\small $$\xymatrix{ \dots   \ar@{^{(}->}[r]&  \ar@{^{(}->}[r]  A\text{-}Mod^{E_{n+1}} \ar[d]^{\simeq}&  A\text{-}Mod^{E_n}  \ar[d]^{\simeq} \ar@{^{(}->}[r] & \dots   \ar@{^{(}->}[r] &A\text{-}Mod^{E_1} \ar[d]^{\simeq} \\
\dots   \ar[r]&  \ar[r]^{\phi_n^*}  CH_{S^{n}}(A)\text{-}LMod &  CH_{S^{n-1}}(A)\text{-}LMod   \ar[r] & \dots   \ar[r]^{\hspace{-2pc}\phi_0^*} &CH_{S^0}(A)\text{-}LMod   }.$$}
From Lemma~\ref{L:SinftyMod}, we deduce a natural equivalence $$A\text{-}Mod^{E_\infty}\;\cong\; \mathop{\lim}\limits_{n\geq 1} CH_{S^n}(A)\text{-}LMod.$$
Mimicking the proof of~\cite[Lemma 5.1.3]{GTZ2}, we get a natural equivalence $$\mathop{\colim}\Big( CH_{S^0}(A)\to CH_{S^1}(A)\to \dots \to CH_{S^n}(A)\to \dots\Big) \stackrel{\simeq}\longrightarrow CH_{S^\infty}(A).$$ It follows that we have an equivalence $CH_{S^\infty}(A)\text{-}LMod \to \mathop{\lim}\limits_{n\geq 1} CH_{S^n}(A)\text{-}LMod$ and the lemma follows.
\end{proof}

\begin{lem} \label{L:SinftyMod} Let $A$ be an $E_\infty$-algebra, then $CH_{S^\infty}(A)$ is canonically  equivalent to $A$ as an $E_\infty$-algebra. In particular, there is a canonical equivalence $$A\text{-}Mod^{E_n}\;\cong\; CH_{S^\infty}(A)-Mod^{E_n}$$ for any  $n\in \{0,1,\dots, \infty\}$.
\end{lem}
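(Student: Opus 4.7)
The plan is to deduce the lemma directly from the homotopy invariance of the Hochschild chain functor together with the value-on-a-point axiom. The only thing to verify is that $S^{\infty}$, viewed as an object of $\hTop$ (or equivalently $\hsset$), is equivalent to a point. Recall that $S^{\infty}$ is the colimit of the tower
\[ S^{0} \hookrightarrow S^{1}\hookrightarrow \cdots \hookrightarrow S^{n}\hookrightarrow S^{n+1} \hookrightarrow \cdots \]
of equatorial inclusions (the same tower that appears implicitly in the proof of Lemma~\ref{L:EinftyMod}), and this space is contractible since all its homotopy groups vanish.

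Now, by Proposition~\ref{P-CHbifunctorTop}, the Hochschild chain construction $CH:\hTop\times E_\infty\text{-}Alg\to E_\infty\text{-}Alg$ is a functor of $(\infty,1)$-categories and hence preserves weak equivalences in the space variable. Applying it to the equivalence $S^{\infty}\stackrel{\simeq}\to pt$ and using the value-on-a-point axiom from Corollary~\ref{C:properties}.(1)(a), I would obtain a canonical chain of equivalences
\[ CH_{S^{\infty}}(A) \;\stackrel{\simeq}\longrightarrow\; CH_{pt}(A) \;\cong\; A \]
in $E_\infty\text{-}Alg$, proving the first assertion.

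For the \emph{in particular} clause, the equivalence $CH_{S^\infty}(A)\simeq A$ is one of $E_\infty$-algebras, so in particular an equivalence of $E_n$-algebras for every $n\in\{0,1,\dots,\infty\}$ (via the forgetful functor $E_\infty\text{-}Alg\to E_n\text{-}Alg$ induced by the map of $\infty$-operads $\mathbb{E}_n^{\otimes}\to \mathbb{E}_\infty^{\otimes}$). Since restriction of scalars along an equivalence of $E_n$-algebras induces an equivalence of $(\infty,1)$-categories of $E_n$-modules (functoriality of the assignment $B\mapsto B\text{-}Mod^{E_n}$, cf.\ the pullback square~\eqref{eq:pullbackModAlg}), we conclude $A\text{-}Mod^{E_n}\simeq CH_{S^\infty}(A)\text{-}Mod^{E_n}$, as desired.

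I do not anticipate any real obstacle here: the only point worth double-checking is that the equivalence $S^{\infty}\simeq pt$ really is implemented in $\hTop$ (rather than only at the level of homotopy groups), but this is standard since $S^{\infty}$ is a CW-complex with trivial homotopy groups.
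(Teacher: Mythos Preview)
Your proof is correct and takes essentially the same approach as the paper: the paper's one-line argument simply says the lemma follows from Theorem~\ref{T:derivedfunctor} since $S^\infty$ is a deformation retract of a point, which is exactly the homotopy-invariance plus value-on-a-point reasoning you spell out. Your treatment of the ``in particular'' clause is a reasonable elaboration of what the paper leaves implicit.
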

\begin{proof} This follows from Theorem~\ref{T:derivedfunctor} since $S^\infty$ has a deformation retraction to a point.
\end{proof}

The  canonical map $\mathbb{E}^{\otimes}_{n-1}\to \mathbb{E}^{\otimes}_{n}$ yields a natural functor
 $A\text{-}Mod^{E_n} \to A\text{-}Mod^{E_{n-1}}$ for any $E_n$-algebra $A$.
\begin {lem}\label{L:EinftyModLim}
Let $A$ be an $E_\infty$-algebra. Then $A\text{-}Mod^{E_\infty}$ is the (homotopy) limit
$$A\text{-}Mod^{E_\infty}\cong \mathop{lim} \Big( \dots \to A\text{-}Mod^{E_n} \to A\text{-}Mod^{E_{n-1}} \to \dots \to A\text{-}Mod^{E_1}\Big). $$
\end{lem}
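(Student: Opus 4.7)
The plan is to deduce this limit identification by re-indexing the tower through the factorization homology description of $E_n$-modules and then appealing to the previous lemmas essentially as a black box. In particular, the proof will be a direct corollary of the machinery already set up in Proposition~\ref{P:EnMod}, Lemma~\ref{L:EnModseq}, Lemma~\ref{L:EinftyMod}, and Theorem~\ref{T:lifttoEinfty}.

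First I would translate the tower to one of left module $\infty$-categories. By Proposition~\ref{P:EnMod} and Theorem~\ref{T:CH=TCH}, for every $n\geq 1$ there is a symmetric monoidal equivalence $A\text{-}Mod^{E_n}\simeq CH_{S^{n-1}}(A)\text{-}LMod$. By Lemma~\ref{L:EnModseq} (applied with $n$ replaced by $n-1$), the forgetful functor $A\text{-}Mod^{E_{n+1}}\to A\text{-}Mod^{E_n}$ corresponds under this equivalence to the restriction-of-scalars functor $\phi_{n-1}^{*}\colon CH_{S^n}(A)\text{-}LMod\to CH_{S^{n-1}}(A)\text{-}LMod$ along the canonical map of $E_1$-algebras $CH_{S^{n-1}}(A)\to CH_{S^n}(A)$ (induced by the equatorial inclusion $S^{n-1}\hookrightarrow S^n$). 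Consequently the tower in the statement is equivalent to the tower $\{CH_{S^{n-1}}(A)\text{-}LMod\}_{n\geq 1}$ with its restriction-of-scalars transition functors.

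Second, I would invoke the standard continuity property of left module $\infty$-categories with respect to colimits of $E_1$-algebras: for any sequential diagram $B_0\to B_1\to\cdots$ of $E_1$-algebras with colimit $B_\infty$, restriction of scalars induces an equivalence
\begin{equation*}
B_\infty\text{-}LMod\;\stackrel{\simeq}\longrightarrow\;\lim_{n}\, B_n\text{-}LMod
\end{equation*}
in the $\infty$-category of presentable $\infty$-categories (see~\cite{L-HA}). Applied to the sequence $CH_{S^0}(A)\to CH_{S^1}(A)\to\cdots$, the argument in the proof of Lemma~\ref{L:EinftyMod} (mimicking \cite[Lemma 5.1.3]{GTZ2}) identifies the colimit with $CH_{S^\infty}(A)$, which by Lemma~\ref{L:SinftyMod} is equivalent to $A$ as an $E_\infty$-algebra, in particular as an $E_1$-algebra. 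Hence
\begin{equation*}
A\text{-}LMod\;\simeq\; CH_{S^\infty}(A)\text{-}LMod\;\simeq\;\lim_{n}\, A\text{-}Mod^{E_n}.
\end{equation*}

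Finally, Theorem~\ref{T:lifttoEinfty} provides an equivalence $A\text{-}Mod^{E_\infty}\simeq A\text{-}LMod$, completing the identification. The main technical point is the compatibility of left module $\infty$-categories with filtered (sequential) colimits of $E_1$-algebras; this is the only ingredient not strictly internal to the tower, and while it is standard in Lurie's framework, it is what ultimately lets us convert the colimit computation of Lemma~\ref{L:SinftyMod} into the limit description of module categories asserted here.
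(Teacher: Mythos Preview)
Your argument is circular in the context of the paper. The entire point of \S\ref{SS:lifttoEinfty} is to give an independent proof of Theorem~\ref{T:lifttoEinfty} via factorization homology, and Lemma~\ref{L:EinftyModLim} is one of the ingredients of that proof: the proof of Lemma~\ref{L:EinftyMod} needs the identification $A\text{-}Mod^{E_\infty}\simeq \lim_{n} A\text{-}Mod^{E_n}$ (the reference there to ``Lemma~\ref{L:SinftyMod}'' is almost certainly a mislabel for Lemma~\ref{L:EinftyModLim}, since $CH_{S^\infty}(A)\simeq A$ alone cannot produce that limit statement), and Theorem~\ref{T:lifttoEinfty} is then deduced from Lemmas~\ref{L:EinftyMod} and~\ref{L:SinftyMod}. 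So invoking Theorem~\ref{T:lifttoEinfty} and the argument of Lemma~\ref{L:EinftyMod} to prove Lemma~\ref{L:EinftyModLim} closes a loop.

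The paper's proof proceeds by an entirely different and self-contained route: it uses the trivial (square-zero) extension functor $M\mapsto A\oplus M$ to identify $A\text{-}Mod^{E_n}$ with $E_n\text{-}Alg_{/A}$ compatibly in $n$, and then appeals to the operadic fact $\mathbb{E}_\infty^{\otimes}\simeq \mathop{\mathrm{colim}}_{n}\mathbb{E}_n^{\otimes}$ to conclude $E_\infty\text{-}Alg_{/A}\simeq \lim_n E_n\text{-}Alg_{/A}$. This never touches left module categories or $CH_{S^\infty}(A)$. If you are willing to import Theorem~\ref{T:lifttoEinfty} directly from \cite{L-HA} as a black box, your argument becomes logically sound --- but then Lemma~\ref{L:EinftyModLim} has no role left to play in the paper's narrative.
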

\begin{proof}
 Recall that $\mathbb{E}^{\otimes}_\infty\cong  \mathop{colim}\limits_{n\geq 1} \mathbb{E}_n^{\otimes}$~\cite{L-VI, L-HA}.
Since we have commuting restriction maps $A\text{-}Mod^{E_\infty}\to A\text{-}Mod^{E_n}$ ($n\in \mathbb{N}$),
there is a canonical map $$\tau: A\text{-}Mod^{E_\infty}\longrightarrow \mathop{lim}\limits_{n\geq 1} A\text{-}Mod^{E_n}.$$
We want to prove that this map $\tau$ is an equivalence.
 Given any $E_n$-algebra $A$ and an $E_n$-$A$-module $M$, the trivial extension
 $A\oplus M$ has a natural structure of $E_n$-algebra.
 The trivial extension functor $M\mapsto A\oplus M$ is a (natural in $A$) equivalence
 of $\infty$-categories between $A\text{-}Mod^{E_n}$
 and $E_n\text{-Alg}_{/A}$ which, by naturality, commutes with the restriction of structure functors
$A\text{-}Mod^{E_n} \to A\text{-}Mod^{E_{n-1}}$ and
$E_n\text{-Alg}_{/A}\to E_{n-1}\text{-Alg}_{/A}$.
 It follows that any object of  $ \mathop{\lim}\limits_{n\geq 1} A\text{-}Mod^{E_n}$ is equivalent to an object of
 $ \mathop{lim}\limits_{n\geq 1}E_n\text{-Alg}_{/A}$.
 Such an object is a (homotopy type of) chain complex equipped with compatible $E_n$-structures for all
$n\geq 1$, thus is an $E_\infty$-algebra. It is also endowed with compatible augmentations of $E_n$-algebras to $A$.
 Hence we get a map $$\varphi:\mathop{lim}\limits_{n\geq 1} E_n\text{-Alg}_{/A} \longrightarrow E_\infty\text{-Alg}_{/A}$$
which is a quasi-inverse of the canonical map $$ \tau:
E_\infty\text{-Alg}_{/A}\longrightarrow
  \mathop{lim}\limits_{n\geq 1} E_n\text{-Alg}_{/A}$$ induced by the restrictions functors.
 The result now follows by applying the (quasi-inverse of the) trivial extension functor.
\end{proof}

\begin{proof}[Proof of Theorem~\ref{T:lifttoEinfty}] The first statement follows from Lemmas~\ref{L:EinftyMod} and~\ref{L:SinftyMod}
 and the last two statements are consequences of the first one.
\end{proof}

\begin{ex}\label{E:LiftModstructure} Let $A$ be an $E_\infty$-algebra and  $M$ be a left $CH_{S^\infty}(A)$-module (here $CH_{S^\infty}(A)$ is equipped with its canonical $E_1$-structure by restriction of structure along the operad maps~\eqref{eq:towerofEnoperad}).
Since  $CH_{S^\infty}(A)$ is in fact an $E_\infty$-algebra, for any $n\in \{1,\dots,\infty\}$, it is  canonically  equivalent to its opposite $E_n$-algebra $CH_{S^\infty}(A)^{op}$. The equivalence is explicitly given by the antipodal map $ S^{\infty}\stackrel{\text{ant}}\to S^{\infty}$ (and functoriality of Hochschild chains). Thus, there is a canonical structure of $CH_{S^\infty}(A)\otimes CH_{S^\infty}(A)$-$E_\infty$-modules on $CH_{S^\infty}(A)$. By restriction of structure, the map $$CH_{S^\infty}(A)\stackrel{1\otimes id}\longrightarrow CH_{S^\infty}(A)\otimes CH_{S^\infty}(A)$$ endows $CH_{S^\infty}(A)$  with a right module structure over itself (viewed as an $E_1$-algebra)  which commutes with the $CH_{S^\infty}(A)$-module structure induced by the map  $$CH_{S^\infty}(A)\stackrel{id\otimes 1}\longrightarrow CH_{S^\infty}(A)\otimes CH_{S^\infty}(A).$$
This extra structure of  $CH_{S^\infty}(A)$ endows the tensor product (of a right and left module over $CH_{S^\infty}(A)$ viewed as an $E_1$-algebra)
$$ CH_{S^\infty}(A)\otimes_{CH_{S^\infty}(A)} M$$  with a structure of $E_\infty$-module.
\end{ex}

\begin{rem}[Iterative liftings]\label{R:LiftMod} One can lift any left $A$-module to an $A$-$E_\infty$-module in the same way as in Example~\ref{E:LiftModstructure}.

By restriction of structure, any left $A$-module map between $E_\infty$-$A$-modules can be lifted  to a map of $E_n$-modules (for $n\in \mathbb{N}\cup \{\infty\}$).
 For the sake of explicit computations, we now explain how to realize this concretely using the higher Hochschild functor. Let $M$, $N$ be $E_\infty$-modules over $A$. By restriction of structure we get in particular left $A$-modules structure on $M$ and $N$.
 Let $f:M\to N$ be a map of left $A$-modules.

 The natural structure of $A \otimes A^{op}$-$E_\infty$-module structure on $A$ also yields, by restriction of structure, Proposition~\ref{P:EnMod} and Lemma~\ref{T:CH=TCH}, a natural structure of left $A\otimes \Big(CH_{S^{n-1}}(A)\Big)^{op}$-module on $A$ where the left factor $A$ is viewed as an $E_1$-algebra  only.

 It follows that, viewing $N$ as left $A$-module only by restriction, $Hom_{A}(A,N)$ is endowed with a natural left $CH_{S^{n-1}}(A)$-module structure and further that we have a natural isomorphism of left $CH_{S^{n-1}}(A)$-modules $Hom_{A}(A,N)\stackrel{\simeq}\to N$ (given by $f\mapsto f(1)$).    We get similarly a  left $CH_{S^{n-1}}(A)\otimes A^{op}$-module structure on $A$ and a natural equivalence of   left $CH_{S^{n-1}}(A)$-modules
 $A\otimes_{A}M \stackrel{\simeq}\to M$
(where the tensor product is over $A$ viewed as an $E_1$-algebra only).

We now explain how to lift $f$ to an $E_n$-module map (here $n\in \{1,\dots, \infty\}$).
The canonical map $D^n\to pt$ being a homotopy equivalence, we get a natural
quasi-isomorphism $ CH_{D^n}(A)\stackrel{\sim}\to A$ with quasi-inverse induced by the map
sending a point to the center of $D^n$.
 The canonical map $S^{n-1}\hookrightarrow D^n$ given by the boundary of $D^n$ gives a map
of $E_\infty$-algebra $CH_{S^{n-1}}(A)\to CH_{D^n}(A)$ which,
together with the previous morphism, endow $CH_{D^n}(A)$ with a structure
of left $CH_{S^{n-1}}(A)\otimes A ^{op}$-module.
 We thus have a natural quasi-isomorphism (of chain complexes)
\begin{multline*}
Hom_{CH_{S^{n-1}}(A)}(M,N)\cong  Hom_{CH_{S^{n-1}}(A)}\big(A\otimes_{A} M,Hom_A(A,N)\big)\\
\stackrel{\sim}\longrightarrow Hom_{CH_{S^{n-1}}(A)}\big(CH_{D^n}(A)\otimes_{A} M,Hom_A(A,N)\big)\\
\stackrel{\simeq}\longrightarrow  Hom_{A}\big(A\otimes_{CH_{S^{n-1}}(A)}CH_{D^n}(A)\otimes_{A} M,N)\big)
\end{multline*} where the last map is the canonical isomorphism $$\psi\mapsto \Big( x\otimes_{CH_{S^{n-1}}(A)} y\otimes_A m\mapsto \pm \psi(y\otimes_A m)(x)\Big)$$ where the sign $\pm$ is given by the Koszul-Quillen signs rule.

Note that there is an equivalence of $E_\infty$-algebras $A\otimes_{CH_{S^{n-1}}(A)}CH_{D^n}(A) \stackrel{\simeq}\leftarrow CH_{S^n} A$ which induces, by restriction, a quasi-isomorphism  of left $A\otimes A^{op}$-modules (induced by the choice of two antipodal points on $S^n$).    We thus get a quasi-isomorphism
\begin{multline*}
Hom_{A}\big(A\otimes_{CH_{S^{n-1}}(A)}CH_{D^n}(A)\otimes_{A} M,N)\big) \stackrel{\simeq}\longrightarrow Hom_{A}\big(CH_{S^{n}}(A)\otimes_{A} M,N)\big)
\end{multline*} hence an explicit quasi-isomorphism
\begin{equation}
\label{eq:liftEnMod}  Hom_{CH_{S^{n-1}}(A)}(M,N)\stackrel{\simeq}\longrightarrow Hom_{A}\big(CH_{S^{n}}(A)\otimes_{A} M,N)\big).
\end{equation}
The canonical map $S^n\to pt$ also yield a map of $E_\infty$-algebras $CH_ {S^n}(A)\to A$, which, by restriction of structures is also a map  of left $A\otimes A^{op}$-modules. Hence; we have a natural morphism
\begin{equation}\label{eq:liftEnMod2}
Hom_A(M,N)\cong Hom_A(A\otimes_A M,N)\longrightarrow Hom_{A}\big(CH_{S^{n}}(A)\otimes_{A} M,N)\big).
\end{equation}
Thus, for any $n$, we can lift the left module map $f \in Hom_A(M,N)$ to a map of left $CH_{S^{n-1}}(A)$-module hence a map of $A$-$E_n$-module (by Proposition~\ref{P:EnMod} or Lemma~\ref{L:EinftyMod}). Note that by Lemma~\ref{L:SinftyMod} the map $CH_{S^\infty}(A)\to A$ is a quasi-isomorphism, hence the map \eqref{eq:liftEnMod2} is a quasi-isomorphism for $n=\infty$ and the lift of $f$ is unique in that case. However, lift of $f$ to $E_n$-module maps are not unique in general for finite $n$.
\end{rem}
\begin{rem}[CDGA case]

When $A$ is a CDGA (over a field), the Hochschild chain complex $CH_{D^n}(A)$ is a semi-free module over $CH_{S^{n-1}}(A)$ (provided we choose a simplicial model $D^n_\bullet$ for $D^n$ and take $\partial D^n_\bullet$ as a model for $S^{n-1}$), and therefore all equivalences involved in the maps \eqref{eq:liftEnMod} and \eqref{eq:liftEnMod2} can be (quasi-)inverted by standard homological algebra techniques. Note that when $A= C^{\ast}(X)$ is the algebra of cochains for a topological space $X$,  the map of $E_\infty$-algebras $CH_{S^n}(A) \to A$ can be factorized as a map

$$CH_{S^n}( C^{\ast}(X)) \to C^{\ast}(\mathop{Map}(S^n, X)) \to C^\ast(X) $$
where the last map is induced by the map $X\to \mathop{Map}(S^n,X)$ that sends every point in $p\in X$ to a constant map $C_p:S^n \to X$ defined as $C_p(a)=p$.  Hence, in the special case $n=1$, we recover the construction of~\cite{FTV}, which was done for $M= C^{\ast}(X)$ and $N=C_\ast(X)$ only.
\end{rem}

\subsection{Poincar\'e duality as a map of $E_\infty$-modules}\label{SS:E-inf-PD}

 We apply the results of the previous sections to achieve an $E_\infty$-lift of the Poincar\'e duality isomorphism for a closed manifold.

\medskip

Let $C$ be an $E_\infty$-coalgebra and let $C^{\vee}=Hom_{k}(C,k)$ be its linear dual
endowed with its canonical $E_\infty$-algebra structure; in particular,  $C^{\vee}$ is naturally a $E_\infty\text{-}C^{\vee}$-module.
Similarly, the dual space $(C^{\vee})^\vee$ is $E_\infty\text{-}C^\vee$-module.
Note that $C \subset (C^{\vee})^\vee$ has an induced $E_\infty\text{-}C^\vee$-module structure. If $C$ is an $E_1$-coalgebra, then $C^\vee$ is an $E_1$-algebra has well.

\smallskip

We recall the following standard definition of the cap-product
\begin{definition} \label{D-cap-product} Let $C$ be an $E_1$-coalgebra. The \emph{cap-product} is
the composition
\[\cap : C^{\vee}\otimes C\stackrel{id\otimes \Delta}\longrightarrow C^{\vee}\otimes C\otimes C
\stackrel{\langle-,-\rangle\otimes id}\longrightarrow C
  \]
where $\Delta: C\to C\otimes C$ is the coproduct (given by the $E_1$-structure of $C$) and
$\langle-,-\rangle: C^\vee \otimes C\to k$ is the duality pairing. The cap-product of $x\in C^\vee$, $y\in C$
will be denoted $x\cap y$ as usual.
\end{definition}
The cap-product map $\cap : C^{\vee}\otimes C\to C$ allows us to associate to any cycle $c$ in $C$,  a
map of \emph{left $C^{\vee}$-modules} $\cap c: C^{\vee} \to C$, $x\mapsto x\cap c$, called \emph{the cap-product by $c$}. Note that this construction only
uses the underlying $E_1$-coalgebra structure of $C$ (even if $C$ is an $E_\infty$-algebra).

\begin{cor}\label{C:PD}Let $C$ be an $E_\infty$-coalgebra. The cap product by $c$, $C^{\vee} \stackrel{\cap c  } \longrightarrow C$, lifts uniquely to a map of $E_\infty$-modules $\rho_c:C^\vee \to C$
which is an equivalence if $\cap c$ is a quasi-isomorphism.
\end{cor}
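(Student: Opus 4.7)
The plan is to apply Theorem~\ref{T:lifttoEinfty} directly. Since $C$ is an $E_\infty$-coalgebra, its linear dual $C^{\vee}$ is naturally an $E_\infty$-algebra, and both $C^{\vee}$ and $C \hookrightarrow (C^\vee)^\vee$ are $E_\infty$-$C^\vee$-modules as recalled just before the statement. The first step is to verify that the cap-product by $c$, i.e. the map $\cap c : C^\vee \to C$ sending $x \mapsto x \cap c$, is indeed a morphism of left $C^\vee$-modules in the classical (strict) sense, where $C$ is viewed as a left $C^\vee$-module via the cap action of Definition~\ref{D-cap-product}. This is a standard computation: associativity of the cap-product with respect to multiplication in $C^\vee$ follows immediately from coassociativity of the $E_1$-coproduct $\Delta : C \to C \otimes C$ and the definition of the product on $C^\vee$ as dual to $\Delta$. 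So one obtains $(yx)\cap c = y\cap(x\cap c)$, which says exactly that $\cap c$ intertwines the left $C^\vee$-actions.

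Next, I invoke Theorem~\ref{T:lifttoEinfty} applied to the $E_\infty$-algebra $A = C^\vee$: the forgetful $(\infty,1)$-functor $A\text{-}Mod^{E_\infty} \to A\text{-}LMod$ is an equivalence of symmetric monoidal $(\infty,1)$-categories. Consequently, every morphism of left $C^\vee$-modules between two $E_\infty$-$C^\vee$-modules lifts, uniquely up to a contractible space of choices, to a morphism of $E_\infty$-$C^\vee$-modules. Applying this to $\cap c : C^\vee \to C$ yields the desired unique lift $\rho_c : C^\vee \to C$ in $C^\vee\text{-}Mod^{E_\infty}$.

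Finally, for the equivalence statement: the equivalence of $(\infty,1)$-categories provided by Theorem~\ref{T:lifttoEinfty} in particular preserves and reflects weak equivalences, since its composition with the forgetful functor to $\hkmod$ is (naturally equivalent to) the forgetful functor from $E_\infty$-modules to chain complexes. Hence $\rho_c$ is an equivalence in $C^\vee\text{-}Mod^{E_\infty}$ if and only if its underlying chain map $\cap c$ is a quasi-isomorphism. The only delicate point in the argument is really the appeal to Theorem~\ref{T:lifttoEinfty}, which has already been established (in \S\ref{SS:lifttoEinfty} via the identification $A\text{-}Mod^{E_\infty} \cong CH_{S^\infty}(A)\text{-}LMod$ together with the equivalence $CH_{S^\infty}(A)\simeq A$ of Lemma~\ref{L:SinftyMod}); once this is granted, the corollary follows formally. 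An explicit model for $\rho_c$, if desired, is given by the iterative formula~\eqref{eq:liftEnMod2} of Remark~\ref{R:LiftMod} applied inductively in $n$.
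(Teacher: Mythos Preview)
Your proof is correct and follows essentially the same approach as the paper: verify that $\cap c$ is a left $C^\vee$-module map via coassociativity of $\Delta$, invoke Theorem~\ref{T:lifttoEinfty} for the unique $E_\infty$-lift, and deduce the equivalence statement from the fact that the forgetful functor to left modules (and hence to $\hkmod$) detects equivalences. The paper phrases the last step slightly differently, arguing that a quasi-isomorphism is an invertible element in $Hom_{C^\vee}(C^\vee,C)$ and hence its lift is invertible in $Hom_{CH_{S^\infty}(C^\vee)}(C^\vee,C)$, but this is the same content as your reflection-of-equivalences argument.
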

\begin{proof} The cap-product by $c$, denoted $\cap c: C^\vee\to C$,
is a map of left modules over $C^\vee$ (seen as an $E_1$-algebra) because $\Delta: C\to C\otimes C$
is an $E_1$-coalgebra structure.
It follows from Theorem~\ref{T:lifttoEinfty} that the unique lift exists.
 If $\cap c$ is a quasi-isomorphism,
 then it is an invertible element in $Hom_{C^{\vee}}(C^{\vee}, C)$
and thus its lift is invertible in $Hom_{CH_{S^\infty}(C^{\vee})}(C^{\vee},C)$
(see Remark~\ref{R:LiftMod} for an explicit description of the equivalence).
\end{proof}

\smallskip

We now specialize to the case where $C$ is the singular cochain of a space.
Let us recall the following definition.
\begin{definition}\label{D:PDspace}
By a \emph{Poincar\'e duality space}, we mean a topological space $X$ together
with a choice of cycle $[X]\in C_d(X)$ (for some integer $d$) such that that cap-product
$C^{\ast}(X) \stackrel{\cap [X]} \longrightarrow C_{d-\ast}(X)$
by $[X]$ is a quasi-isomorphism.
The integer $d$ is called the \emph{dimension} of $X$ and denoted $d=\dim(X)$.
\end{definition}
\begin{ex}
 An oriented\footnote{with respect to the homology with coefficients in the ground ring $k$}
closed manifold $M$ of dimension $\dim(M)$ (in the usual manifold sense of dimension)
 is a Poincar\'e duality space of dimension $\dim(M)$.
\end{ex}
\begin{rem}\label{R:PDimpliesFinGen}
 By definition, the cap product by a class $[X]$ is given by
$f\mapsto \sum f\big([X]^{(1)}\big)\, [X]^{(2)}$ (where we denote
$\Delta([X]):= \sum [X]^{(1)}\otimes [X]^{(2)}$ the coproduct). It follows that the image
 $\chi_X\big(H^{\ast}(X) \big)$ is a finitely generated sub $k$-module of $H_{\ast}(X)$. Thus,
\emph{if $X$ is a Poincar\'e duality space, its (co)homology groups are finitely generated}
(as $k$-modules).
\end{rem}

Let $X$ be a Poincar\'e duality space (for instance, an oriented closed manifold) with fundamental class $[X]$.
 Recall that $C_{\ast}(X)$ is the singular cochains of $X$ with its natural structure of $E_\infty$-coalgebra (Example~\ref{E:singularchainasEinfty}).
Its linear dual $C^{\ast}(X)$ is endowed with the dual $E_\infty$-algebra structure.
Then, by Corollary~\ref{C:PD} we have
\begin{cor}\label{C:PDmap} Let $(X, [X])$ be a Poincar\'e duality space.
The cap-product by $[X]$ induces a quasi-isomorphism of $E_\infty$-$C^{\ast}(X)$-modules
\begin{equation}\label{eq:PDmap}
\chi_X: C^{\ast}(X) \stackrel{\simeq}\longrightarrow  C_{\ast}(X)[\dim(X)]
\end{equation}
realizing the (unique) $E_\infty$-lift of the Poincar\'e duality isomorphism.
\end{cor}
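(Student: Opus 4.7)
The plan is to apply Corollary~\ref{C:PD} directly to the $E_\infty$-coalgebra $C = C_\ast(X)$ with the cycle $c = [X]$. Recall from Example~\ref{E:singularchainasEinfty} that the singular chain complex $C_\ast(X)$ carries a natural $E_\infty$-coalgebra structure whose linear dual recovers the $E_\infty$-algebra $(C^\ast(X),\cup)$. The underlying $E_1$-coproduct is the Alexander--Whitney diagonal, and it is precisely this coassociative $E_1$-coalgebra structure that enters Definition~\ref{D-cap-product} of the cap-product.

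By the very definition of a Poincar\'e duality space (Definition~\ref{D:PDspace}), the cap product by $[X]$ is a quasi-isomorphism
\[
\chi_X = \cap [X]\colon C^{\ast}(X) \stackrel{\simeq}\longrightarrow C_{\ast}(X)[\dim(X)],
\]
and as noted just after Definition~\ref{D-cap-product}, $\chi_X$ is automatically a map of left $C^\ast(X)$-modules where $C^\ast(X)$ is viewed as an $E_1$-algebra (because the coproduct $\Delta$ on $C_\ast(X)$ is an $E_1$-coalgebra map). These are exactly the hypotheses of Corollary~\ref{C:PD}, so we may invoke it to produce the unique lift $\rho_{[X]}\colon C^\ast(X)\to C_\ast(X)[\dim(X)]$ in $C^\ast(X)\text{-}Mod^{E_\infty}$; the second half of Corollary~\ref{C:PD} then guarantees that $\rho_{[X]}$ is an equivalence of $E_\infty$-$C^\ast(X)$-modules because the underlying left-module map is a quasi-isomorphism.

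There is no genuine obstacle: the statement is a direct specialization of Corollary~\ref{C:PD}, which itself is the content of Theorem~\ref{T:lifttoEinfty} (the equivalence between $E_\infty$-$A$-modules and left $A$-modules over $A$ viewed as an $E_1$-algebra) applied to the cap-product map. The only bookkeeping point is the degree shift $[\dim(X)]$, which is forced by the homological degree of $[X]$ and is compatible with the $E_\infty$-module structure since shifting is a symmetric monoidal auto-equivalence of $\hkmod$. Uniqueness of the $E_\infty$-lift in the statement is exactly the uniqueness (up to a contractible space of choices) asserted in Theorem~\ref{T:lifttoEinfty}.
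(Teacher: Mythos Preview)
Your proof is correct and follows exactly the paper's approach: the paper simply states that Corollary~\ref{C:PDmap} follows ``by Corollary~\ref{C:PD}'' applied to $C = C_\ast(X)$ with $c = [X]$, and your write-up just unpacks this specialization in slightly more detail.
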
In other words, a Poincar\'e duality space $X$
(in the sense of Definition~\ref{D:PDspace}) gives rise to a canonical equivalence of $E_\infty$-modules between
its singular chains and cochains.
\begin{definition}\label{D:PDmap}
 Let $(X,[X])$, $(Y,[Y])$ be Poincar\'e duality space (of  same dimension $d=\dim(X)=\dim(Y)$).
A map of Poincar\'e duality space $f:(X,[X])\to (Y,[Y])$ is a map of topological spaces $f:X\to Y$
such that the following diagram is commutative
\[\xymatrix{ C^\ast(X) \ar[r]^{\cap [X]} & C_\ast(X)[d] \ar[d]^{f_*}  \\C^\ast(Y) \ar[r]^{\cap [Y]} \ar[u]^{f^*}
 & C_\ast(Y)[d] }  \]
in $C^\ast(Y)\text{-Mod}^{E_\infty}$.
\end{definition}
\begin{ex}
 Let $f:M\to N$ be a continuous map between oriented smooth manifolds such that $f_*([M])=[N]$.
Then $f$ induces a map of Poincar\'e duality spaces.
\end{ex}

\section{Centralizers} \label{S:centralizers}

Given any map $f:A\to B$ of $E_\infty$-algebras, by Theorem~\ref{T:EdHoch}, there is a natural $E_n$-algebra
structure on $CH^{S^n}(A,B)$.  On the other hand,
for a map $f:A\to B$ of $E_n$-algebras, Lurie~\cite{L-HA, L-VI} constructs an $E_n$-algebra $\mathfrak{z}(f)$.
We prove in \S~\ref{S:maincentralizers} that $CH^{S^n}(A,B)$ is equivalent to $\mathfrak{z}(f)$ as an $E_n$-algebra.
This will be a corollary of a more general construction for $E_n$-Hochschild cohomology. Indeed, Hochschild cochains
modeled on spheres $CH{S^n}(A,B)$ is a special case of $E_n$-Hochschild cohomology $HH_{\mathcal{E}_n}(A,B)$ of $A,B$
viewed as $E_n$-algebras, see \S~\ref{SS:EnHoch=Hochoverspheres}.
In Section~\ref{S:Prelimcentralizers}, in the general case of a map $f:A\to B$ between $E_n$-algebras, we will
give an explicit $E_n$-algebra structure on
$HH_{\mathcal{E}_n}(A,B)$ ,
 similar
to the one obtained in Section~\ref{S:Edcochains}. We then prove that $HH_{\mathcal{E}_n}(A,B)$ is equivalent to $\mathfrak{z}(f)$. We will apply these results to the case $A=B$, \emph{i.e.}, to get solutions of the (higher) Deligne conjecture in \S~\ref{S:Deligne}.

\subsection{$E_n$-Hochschild cohomology and  Hochschild cohomology over $S^n$} \label{SS:EnHoch=Hochoverspheres}
There is an (operadic) notion of cohomology for $E_n$-algebras closely related to their deformation complexes,
see~\cite{F, KS}. We start with the following definition.
\begin{definition}\label{D:EnHoch}
Let $M$ be an $E_n$-$A$-module over an $E_{n}$-algebra $A$.
The $E_n$-Hochschild complex of $A$ with values $M$, denoted
by $HH_{\mathcal E_n}(A, M)$, is by definition (see~\cite{F})
$RHom_{A}^{\mathcal E_n}(A, M)$. Here $RHom_{A}^{\mathcal E_n}$ denotes the hom space in the ($\infty$-)category $A\text{-}Mod^{E_n}$ of $E_n$-$A$-modules.
\end{definition}
In particular, if $A$ is an $E_m$-algebra
with $m\in \{n,n+1\,\dots, \infty\}$ (for instance a CDGA), we can define the $E_n$-Hochschild complex of $A$
$HH_{\mathcal E_n}(A,A)$.

In the case where $A$ is an $E_\infty$-algebra, its $E_n$-Hochschild complex can be described by higher Hochschild
cochains over the $n$-dimensional sphere $S^n$:
\begin{prop}\label{P:coHH=coTCH} If $A$ is an $E_\infty$-algebra and $M$ an $E_\infty$-$A$-module,
there is a natural equivalence $$HH_{\mathcal E_n}(A, M) \cong CH^{S^n}(A, M),$$
where $CH^{S^n}$ denotes the derived  higher Hochschild cochain functor.
\end{prop}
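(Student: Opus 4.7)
The plan is to use Proposition~\ref{P:EnMod} to rewrite both sides of the claimed equivalence as derived mapping spaces of left modules over $\int_{S^{n-1}}A$, and then identify them via excision for higher Hochschild chains together with the tensor-hom adjunction.

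First I would invoke the symmetric monoidal equivalence $A\text{-}Mod^{E_n}\cong (\int_{S^{n-1}}A)\text{-}LMod$ of Proposition~\ref{P:EnMod} together with Theorem~\ref{T:CH=TCH}, which identifies $\int_{S^{n-1}}A$ with $CH_{S^{n-1}}(A)$ as an $E_1$-algebra. Under this equivalence, $M$ (viewed as an $E_n$-$A$-module by restriction along $\mathbb{E}_n^{\otimes}\to\mathbb{E}_\infty^{\otimes}$) corresponds to a left module over $CH_{S^{n-1}}(A)$. By Remark~\ref{R:A=intDA} and Lemma~\ref{L:A=intDA}, the equivalence $A\cong \int_{D^n}A$ exhibits $A$ as a left $\int_{S^{n-1}}A$-module whose image under the equivalence is the tautological $E_n$-$A$-module structure. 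Consequently,
\[ HH_{\mathcal{E}_n}(A,M) = RHom_A^{\mathcal{E}_n}(A,M) \cong RHom_{\int_{S^{n-1}}A}(A,M). \]

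Next I would decompose $S^n$ as the homotopy pushout $D^n_{-}\cup^h_{S^{n-1}} D^n_{+}$ of its two closed hemispheres along the equator. The excision axiom (Corollary~\ref{C:properties}(1c)), the value-on-a-point axiom, and Theorem~\ref{T:CH=TCH} together yield a natural equivalence of $E_\infty$-algebras
\[ CH_{S^n}(A) \cong CH_{D^n_{-}}(A)\otimes^{\mathbb{L}}_{CH_{S^{n-1}}(A)} CH_{D^n_{+}}(A) \cong A\otimes^{\mathbb{L}}_{\int_{S^{n-1}}A} A, \]
in which the two copies of $A$ carry the left and right $\int_{S^{n-1}}A$-actions coming from disk-filling (Remark~\ref{R:A=intDA}) applied to the two hemispheres. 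Applying the tensor-hom adjunction I would then obtain
\[ CH^{S^n}(A,M) = RHom_A(CH_{S^n}(A),M) \cong RHom_A(A\otimes^{\mathbb{L}}_{\int_{S^{n-1}}A} A,M) \cong RHom_{\int_{S^{n-1}}A}(A,M), \]
which matches the expression for $HH_{\mathcal{E}_n}(A,M)$ obtained above. Naturality in $A$ and $M$ is inherited from each of the intermediate equivalences, and Theorem~\ref{T:lifttoEinfty} is used implicitly to pass between $E_\infty$-$A$-modules and left $A$-modules when forming the derived Hom on the $CH^{S^n}(A,M)$ side.

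The main obstacle is the bookkeeping of module structures. One must verify that the left $\int_{S^{n-1}}A$-action on $A$ produced by filling in a disk (Lemma~\ref{L:A=intDA}) really agrees, under the symmetric monoidal equivalence of Proposition~\ref{P:EnMod}, with the canonical $E_n$-$A$-module structure used to define $HH_{\mathcal{E}_n}(A,M)$, and further that the two copies of $A$ appearing in the excision decomposition of $CH_{S^n}(A)$ carry the two opposite module structures required for the relative tensor product to make sense. Once these compatibilities are pinned down, the formal adjunction argument proceeds cleanly in the $\infty$-categorical framework.
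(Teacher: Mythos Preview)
Your proposal is correct and follows essentially the same route as the paper: both arguments pass through Proposition~\ref{P:EnMod} and Lemma~\ref{L:A=intDA} to rewrite $HH_{\mathcal{E}_n}(A,M)$ as $RHom_{\int_{S^{n-1}}A}^{left}(A,M)$, then use excision for higher Hochschild chains (the paper writes $CH_{D^n}(A)\otimes^{\mathbb{L}}_{CH_{S^{n-1}}(A)}A$ where you write $A\otimes^{\mathbb{L}}_{\int_{S^{n-1}}A}A$) together with the tensor--hom adjunction to identify this with $CH^{S^n}(A,M)$. The module-compatibility issue you flag is exactly what the paper handles by invoking Lemma~\ref{L:A=intDA} and Theorem~\ref{T:CH=TCH}.
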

\begin{proof} Given left modules $M,N$ over an $E_1$-algebra $R$, we write $ RHom_{R}^{left}\left(M,N\right)$
for the hom space in the ($\infty$,1)-category $R\text{-}LMod$ of left $R$-modules.
By Proposition~\ref{P:EnMod}, there is an equivalence of $\infty$-categories
$A\text{-}Mod^{E_n} \cong \big(\int_{S^{n-1}}A\big)\text{-}LMod$ where $\int_{S^{n-1}}A$
is the factorization homology of $
S^{n-1}$ with value in $A$. Here, $S^{n-1}$ is endowed with the  $n$-framing induced by the
natural embedding  $S^{n-1}\hookrightarrow \mathbb R^{n}$.  Thus we have a sequence of natural
 equivalences

\begin{eqnarray*}
HH_{\mathcal E_n}(A, M)&\cong & RHom_A^{\mathcal E_n}(A, M)\\
&\cong & RHom_{\int_{S^{n-1}}A}^{left}(A, M)\\
&\cong & RHom_{\int_{S^{n-1}}A}^{left}\left({\int_{D^{n}}A}, M\right)\\
&\cong & RHom_{CH_{S^{n-1}}(A)}^{left}\left({CH_{D^n}(A)}, M\right)\\
&\cong & RHom_A^{left}\left( CH_{D^n}(A)\otimes_{CH_{S^{n-1}}(A)}^{\mathbb{L}} A   , M\right)\\
&\cong & RHom_A^{left}\left( CH_{S^n}(A), M\right)\\
&\cong & CH^{S^n}(A,M).
\end{eqnarray*}
Here we are using the natural equivalence of $E_n$-$A$-modules
$\int_{D^n} A\stackrel{\simeq}\to A$ (Lemma~\ref{L:A=intDA}).

Note that, by Theorem \ref{T:CH=TCH}, when $A$ is further an
$E_\infty$-algebra, we  get  a natural equivalence of $E_1$-algebras
$\int_{S^{n-1}}(A)\cong CH_{S^{n-1}}(A)$ and by
Theorem~\ref{T:derivedfunctor} a natural equivalence of $E_\infty$-algebras
 $CH_{S^n}(A)\cong CH_{D^n}(A)\otimes_{CH_{S^{n-1}}(A)}^{\mathbb{L}} A$ .
\end{proof}

\begin{rem}\label{R:equivHHRHomTCH}
 Let $A, B$ be $E_n$-algebras and $f:A\to B$ an $E_n$-algebra map so that $B$ inherits an $A$-$E_n$-module structure.
 By Definition~\ref{D:EnHoch},  Proposition~\ref{P:EnMod} and Lemma~\ref{L:A=intDA},
we have natural
equivalences $$HH_{\mathcal{E}_n}(A,B)\;\cong\;  RHom^{\mathcal{E}_n}_{A} \Big({A}, {B}  \Big)
\;\cong\; RHom_{\int_{S^{n-1}}A}^{left}\Big( \int_{D^n} A, \int_{D^n }B\Big).$$
\end{rem}

\subsection{The $E_n$-algebra structure on $\mathcal{E}_n$-Hochschild cohomology
$HH_{\mathcal{E}_n}$(A,B)}
\label{S:Prelimcentralizers}
In this section, we construct an explicit $E_n$-algebra structure on the
$\mathcal{E}_n$-Hochschild cohomology $HH_{\mathcal{E}_n}(A,B)$ of an $E_n$-algebra $A$ with
value in an $E_n$-algebra $B$ endowed with an $A\text{-}E_n$-module structure given by a map $A\to B$
of $E_n$-algebras.

\smallskip

We fix a map $f:A\to B$ of $E_n$-algebras and
we endow $B$ with the induced $A$-$E_n$-module structure so that we have  $E_n$-Hochschild cohomology\footnote{which depends on the map $f:A\to B$ even though it is not explicitly written in the notation} $HH_{\mathcal{E}_n}(A,B)$.

\smallskip

 Recall from Section~\ref{S:EnasFact} (and~\cite{L-VI, L-HA}) that
 giving an $E_n$-algebra structure
to $HH_{\mathcal{E}_n}(A,B)\cong  RHom^{\mathcal{E}_n}_{A} \Big({A}, {B}  \Big) $ is equivalent to
giving a structure of
locally constant factorization algebra on $D^n$ whose global
section\footnote{,\emph{i.e.}, its factorization homology over the whole disk $D^n$} are
$ RHom^{\mathcal{E}_n}_{A} \Big({A}, {B}  \Big)$. That is, we need to associate to any disk $U\subset D^n$
 a chain complex
$ HH_{\mathcal{E}_n} \Big({A}, {B}  \Big)\big(U\big)$ naturally quasi-isomorphic to
$ RHom^{\mathcal{E}_n}_{A} \Big({A}, {B}  \Big)$ equipped with natural chain maps from
\begin{equation}\label{eq:HHenABstructure} \rho_{U_1,\dots, U_\ell, V} HH_{\mathcal{E}_n} \Big({A}, {B}  \Big)\big(U_1\big)\otimes\cdots
\otimes HH_{\mathcal{E}_n} \Big({A}, {B}  \Big)\big(U_\ell\big)\to
HH_{\mathcal{E}_n} \Big({A}, {B}  \Big)\big(V\big)\end{equation} for any pairwise disjoint (embedded) sub-disks $U_i$ in a bigger disk $V$.

\smallskip

Let  $\mathcal{A}$, $\mathcal{B}$ be the  underlying
 locally constant factorization algebras on $D^n$ associated to $A$ and $B$ given by Theorem~\ref{P:En=Fact}
and still denote
$f:\mathcal{A}\to \mathcal{B}$ the induced map of factorization algebras. In other words:

\emph{we assume from now on that $A$, $B$ and $f$ are given by locally constant factorization algebras as in Section~\ref{S:EnasFact}.}

Similarly, given any map of $A$-$E_n$-modules
$g: A\to B$, by Proposition~\ref{P:EnMod=Fact}, we can \emph{assume that $g$ is given by  a map $g:\mathcal{A}\to \mathcal{B}$
of (stratified) factorization algebras}, as well as, by Proposition~\ref{P:EnMod}
a map of (left)
$\int_{S^{n-1}} A$-modules $g:\int_{D^n}A\to \int_{D^n} B$.

\begin{rem}[\emph{Sketch of the construction}]\label{R:sketchofEnHochstructure} We first sketch the idea of the construction. For any sub-disk $U_i$, we can think of $HH_{\mathcal{E}_n}(A,B)\cong RHom_{A}^{\mathcal{E}_n}(A,B)$ as the space of stratified factorization algebras maps on the disk $U_i$ (with a distinguished point $*_i$, see Proposition~\ref{P:EnMod=Fact}). Hence,   given $g_1,\dots, g_\ell\in HH_{\mathcal{E}_n}(A,B)$, we define the structure map~\eqref{eq:HHenABstructure} $\rho_{U_1,\dots, U_\ell, V}(g_1,\dots, g_{\ell})$ to be the factorization algebra map which, to any sub-disk $D$ inside a given $U_i$ associates $g_i(D)$ and, to any disk $D$ inside (a small neighborhood of) the complement of the $U_i$'s associates $f(D)$. The family of those disks is a basis of all disks inside $V$, so that such a rule does define a factorization algebra map, which underlies a map of $A$-$E$-modules (see Remark~\ref{R:Enmodulestructure}).  This is roughly described in Figure~\ref{fig:algebramap}.
\begin{figure}
\includegraphics[scale=0.4]{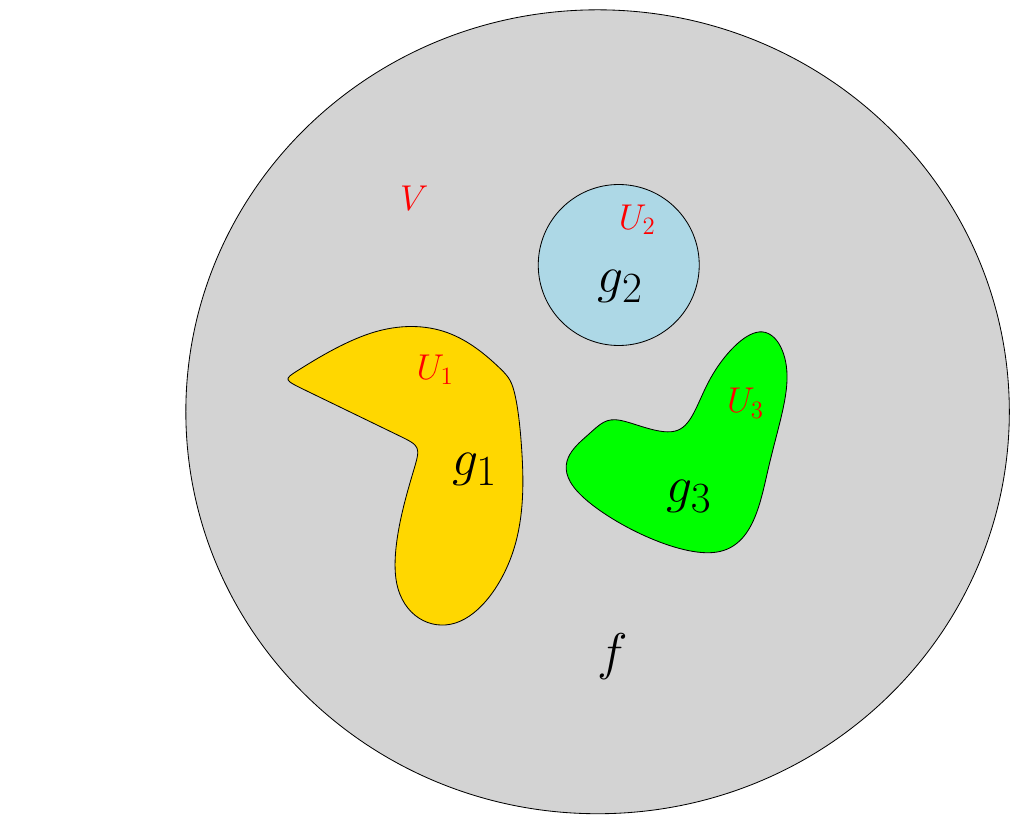}
\caption{The factorization algebra map $\mathcal{A}_{|V} \to \mathcal{B}_{|V}$ obtained by applying the relevant maps of  modules $g_1, g_2, g_3$ (viewed as maps of factorizations algebras) and the $E_n$-algebra map $f:A\to B$ on the respective regions}\label{fig:algebramap}
\end{figure}
\end{rem}

We now construct the locally constant factorization algebra on $\R^n$ we are looking for.

\subsubsection{Step 1: the underlying chain complexes.}\label{SS:step1} For any open
subset $U$, the restrictions $\mathcal{A}_{|U}$, $\mathcal{B}_{|U}$
are locally constant factorization
algebras\footnote{quasi-isomorphic to $A$ and $B$ by definition} on
$U$, and $f_{|U}: \mathcal{A}_{|U}\to \mathcal{B}_{|U}$ a
factorization algebra morphism. 
Thus, if $U$ is a disk\footnote{that
is an open set homeomorphic to a disk}, $\mathcal{A}(U)$ ($\cong
\int_{U} A$) is an $E_n$-algebra and $f_{|U}=\int_{U}f$ makes
$\mathcal{B}(U) \cong \int_{U}B$ an
$\mathcal{A}(U)$-$E_n$-module\footnote{in this section we will write
$\int_U A$ for $\mathcal{A}(U)$ viewed as an $E_n$-module over
itself and reserve the notation $\mathcal{A}(U)$ when we think of it
as an $E_n$-algebra}. In particular, given any point $*_U$ in $U$, the restrictions 
$\mathcal{A}_{|U}$, $\mathcal{B}_{|U}$ are canonically stratified factorization algebras on the pointed disk $U$ and further define canonical objects in $\text{Fac}^{lc}_{U_{*_U}|\mathcal{A}_{U}}\hookrightarrow \text{Fac}^{lc,res}_{U_{*_U}}$, see Definition~\ref{D:EnModCat}.

Thus to any open disk $U$, we  can associate the following object of $\hkmod$:
\begin{equation}\label{eq:Rhom(U)}
 RHom^{\mathcal{E}_n}_{A} \Big({A}, {B}  \Big)\big(U\big):=
RHom_{\mathcal{A}(U)}^{\mathcal{E}_n}\Big( \int_{U} A, \int_{U}B\Big)
\end{equation}
Note that $RHom^{\mathcal{E}_n} \Big({A}, {B}  \Big)\big(U\big)$ is pointed since
 our starting map of $E_n$-algebras $f:A\to B$ induces a canonical element
$\int_{U} f \in RHom^{\mathcal{E}_n}_{A} \Big({A}, {B}  \Big)\big(U\big)$.

\smallskip

\subsubsection{Step 2: the structure maps.}\label{SS:step2} By
Proposition~\ref{P:extensionfrombasis}, we only need to construct
the factorization algebra  $RHom^{\mathcal{E}_n}_{A} \Big({A}, {B}
\Big)\big(U\big)$ on the basis of opens subsets $\mathcal{CVX}$
consisting of all bounded open convex subsets of $D^n$.  The basis
$\mathcal{CVX}$ is stable by finite intersection and a factorizing
cover. Note that if $U\in \mathcal{CVX}$ with center $*_U$, then
(see~Remark~\ref{R:equivHHRHomTCH})  $$RHom^{\mathcal{E}_n}_{A}
\Big({A}, {B} \Big)\big(U\big)=RHom^{left}_{\int_{U\setminus\{*_U\}}
A} \Big(\int_U A, \int_U B  \Big)$$ which is \emph{the mapping space
between the associated stratified (in $*_U$) factorization algebras
$\mathcal{A}_{|U}$, $\mathcal{B}_{|U}$} corresponding to the module
structures of $\int_U A$, $\int_U B$ as given by
Proposition~\ref{P:EnMod=Fact}.

For pairwise disjoints disks $U_1,\dots, U_r \in \mathcal{CVX}$ included in a larger disk $D\in \mathcal{CVX}$,
we define the structure map
\begin{multline}\label{eq:structMapsCVX}
 RHom^{\mathcal{E}_n}_{A} \Big({A}, {B}  \Big)\big(U_1\big)\otimes \cdots
\otimes RHom^{\mathcal{E}_n}_{A} \Big({A}, {B}  \Big)\big(U_r\big) \stackrel{\rho_{U_1,\dots,U_r,D}}
\longrightarrow  RHom^{\mathcal{E}_n}_{A} \Big({A}, {B}  \Big)\big(D\big)
\end{multline}
as follows. Denote $*_1,\dots, *_r$, the respective centers of the $U_i$'s. First we use $U_1,\dots, U_r$ to define the cover $\mathcal{U}_{U_1,\dots, U_r,V}$ consisting of all opens $V$ in $D$ which
\begin{itemize}
 \item either do not contain any $*_i$: $V\subset D\setminus\{*_1,\dots, *_r\}$,
 \item or else is included in one of the $U_i$ and is a neighborhood of  $*_i$.
\end{itemize}
 Maps of factorization algebras over $D$ are uniquely determined by their value on $\mathcal{U}_{U_1,\dots, U_r,V}$ since it is a factorizing cover of $D$.
Let be given maps $g_i:\int_{|D_i} {A} \to \int_{|D_i}{B}$ of (left)
$\mathcal{A}(U_i)$-modules ($i=1\dots r$) and also denotes
$g_i:\mathcal{A}_{|D_i} \to \mathcal{B}_{|D_i}$ the induced maps of stratified (at the point $*_i$)
factorization algebras. We define
$ \rho_{U_1,\dots,U_r,D}\,(g_1,\dots, g_r)$ on an open $V\in \mathcal{U}_{U_1,\dots, U_r,V}$ by:
\begin{equation}\label{eq:FormularhoonV}
  \rho_{U_1,\dots,U_r,D}\,(g_1,\dots, g_r)_{|V} = \left\{\begin{array}{ll}
   f_{|V} & \mbox{ if $V\subset D\setminus\{*_1,\dots, *_r\}$, }\\
    {g_i}_{|V}      & \mbox{ if $*_i\in V \subset U_i$}.                                               \end{array}
 \right.
\end{equation}

\begin{lem}\label{L:CVXFactmaps}
 The rule $V\mapsto \rho_{U_1,\dots,U_r,D}\,(g_1,\dots, g_r)_{|V}$ (given by Formula~\eqref{eq:FormularhoonV}), defines a map of factorization algebras\footnote{where $\text{Fac}^{lc, res}_{D_*}$ is given by Definition~\ref{D:EnModCat}}: $$\rho_{U_1,\dots,U_r,D}\,(g_1,\dots, g_r) \in \text{Map}_{\text{Fac}^{lc, res}_{D_*}}(\mathcal{A}_{|D}, \mathcal{B}_{|D}).$$ 
\end{lem}
\begin{proof}  First we check that $\rho_{U_1,\dots,U_r,D}\,(g_1,\dots, g_r)$ defines a factorization algebra map.
 Since $\mathcal{U}_{U_1,\dots, U_r,V}$  is a factorizing cover of $D$, we only need to check that it is compatible with the structure maps of $\mathcal{A}$ and $\mathcal{B}$. If all opens involved lies either in $D\setminus \{*_1,\dots\}$ or contains a same point $*_i$, then the result is immediate since $f$ and $g_i$ are maps of factorization algebras.
 Now, assume  $*_i\in V\subset U_i$ and that there are pairwise disjoint opens $V_1,\dots, V_\ell \subset V$ (at most one of them can contain $*_i$). Since $g_i$ comes from a map of  $E_n$-$A$-modules (with module structure induced by $f$), ${g_i}_{|V_k} =f_{|V_k}$ whenever $V_k$ does not contain $*_i$.  It follows that the following diagram
 $$\xymatrix{ \bigotimes_{i=1\dots \ell} \mathcal{A}(V_k)\ar[d]_{\bigotimes_{i=1\dots \ell} \rho_{U_1,\dots,U_r,D}\,(g_1,\dots, g_r)_{|V_k}} \ar[rrr]^{\rho_{V_1,\dots, V_\ell,V}} &&& \mathcal{A}(V) \ar[d]^{g_i} \\ \bigotimes_{i=1\dots \ell} \mathcal{B}(V_k) \ar[rrr]^{\rho_{V_1,\dots, V_\ell,V}}
 &&& \mathcal{B}(V)} $$
 is commutative, hence $\rho_{U_1,\dots,U_r,D}\,(g_1,\dots, g_r)$ is a map of (pre-)factorization algebras.
\end{proof}

\begin{lem}\label{L:CVXFactmapsisStratified}                                                                                                                                                                                                                     The induced (by Lemma~\ref{L:CVXFactmaps}) map
$$\int_{D}\rho_{U_1,\dots,U_r,D}\,(g_1,\dots, g_r) \in RHom_{\mathcal{A}(D)}^{\mathcal{E}_n}\Big( \int_{D} A, \int_{D}B\Big)$$
is a map of $\mathcal{A}(D)$-$E_n$-modules.
\end{lem}
In particular, we define the map~\eqref{eq:structMapsCVX} to be the global section $\int_{D}\rho_{U_1,\dots,U_r,D}\,(g_1,\dots, g_r)$ of the maps defined by formula~\eqref{eq:FormularhoonV}.
\begin{proof}[Proof of Lemma~\ref{L:CVXFactmapsisStratified}]      Passing to the global section in  Lemma~\ref{L:CVXFactmaps}, we have  the map $\rho_{U_1,\dots,U_r,D}\,(g_1,\dots, g_r)\big( D\big): A\to B$ and we need to prove that it is a   map of $\mathcal{A}(D)$-$E_n$-module. By   Proposition~\ref{P:EnMod}, it is equivalent to  prove that  the induced map  $\int_{D}\rho_{U_1,\dots,U_r,D}\,(g_1,\dots, g_r): \int_{D} A \to \int_{D} B$  is a morphism of left
$\int_{S^{n-1}} A$-modules (where the module structure is induced by $f$).
Let $\tilde{D}$ be a closed sub-disk of $D$ containing $U_1\coprod \cdots\coprod U_r$.  The open sub-set $D\setminus \tilde{D}\cong S^{n-1}\times (0,\epsilon\rq{})$
lies in the complement (in $D$) of the $U_i$\rq{}s.
Since  $\int_{S^{n-1}}A$ is the  section
 $\int_{S^{n-1}}A=\mathcal{A}\big(S^{n-1}\times (0,\epsilon\rq{})\big)$ (Theorem~\ref{T:Theorem6GTZ2}),
we are left to prove that  the map
$\rho_{U_1,\dots,U_r,D}\,(g_1,\dots, g_r)$ restricted to
 $D\setminus \tilde{D}\cong S^{n-1}\times (0,\epsilon\rq{})$ is equivalent to $f$. This is an immediate consequence of the fact that $D\setminus \tilde{D}\subset D\setminus\{*_1,\dots,*_i\}$ and
$\big(\rho_{U_1,\dots,U_r,D}\,(g_1,\dots, g_r)\big)_{|\tilde{D}\subset D\setminus\{*_1,\dots,*_i\}}=f$ as given by construction~\eqref{eq:FormularhoonV}.
\end{proof}

\begin{rem}\label{R:StructMapemptytoD}
Let us consider the case of the inclusion of the empty set $\emptyset$ inside a disk $D$.
 Unwinding the definition of the structure map
$$\rho_{\emptyset, D}: k\cong RHom^{\mathcal{E}_n}_{A} \Big({A}, {B}  \Big)\big(\emptyset\big)
\longrightarrow RHom^{\mathcal{E}_n}_{A}\Big({A}, {B}  \Big)\big(D\big)$$ we see immediately
that $\rho_{\emptyset, D}(1)= \int_D f$, in other words $1$ is mapped to the base point of
$RHom^{\mathcal{E}_n} \Big({A}, {B}  \Big)\big(D\big)$.

A straightforward computation also shows that
$$\rho_{U_1,\dots, U_r,D}\Big(\int_{U_1}f,\dots,\int_{U_r}f\Big) = \int_{D} f .$$
\end{rem}

\medskip

\subsubsection{Step 3: the global structure.}\label{SS:step3} The cochain complexes
$U\mapsto RHom^{\mathcal{E}_n}_{A} \Big({A}, {B}  \Big)\big(U\big)
\cong Hom_{\mathcal{A}(U)}^{\mathcal{E}_n}\Big( \int_{U} A,
\int_{U}B\Big)$ are equipped with the structure
maps~\eqref{eq:structMapsCVX} (given by
formula~\eqref{eq:FormularhoonV} and
Lemma~\ref{L:CVXFactmapsisStratified}). These maps  assemble to form
a locally constant factorization algebra over $\mathbb{R}^n$,
yielding an $E_n$-algebra structure to $RHom^{\mathcal{E}_n}_A
\big({A}, {B}  \big)$. This is the content of the following result:
\begin{theorem}\label{T:EnAlgHoch} Let $f:A\to B$ be a map of $E_n$-algebras\footnote{which we may assume to be given by a map $f:\mathcal{A}\to \mathcal{B}$ of factorization algebras, see \S~\ref{S:EnasFact}}.
\begin{enumerate}
\item The structure maps~\eqref{eq:StructFactcenter} $\rho_{U_1,\dots, U_r,V}$ (given by step~2 above) make
 $U\mapsto  RHom^{\mathcal{E}_n}_{A} \big({A}, {B}  \big)\big(U\big)$ a locally constant
factorization algebra on $\mathbb{R}^n$ whose global section are naturally equivalent to
$RHom^{\mathcal{E}_n}_{A} \Big({A}, {B}  \Big)$.
\item In particular
 $HH_{\mathcal{E}_n}(A,B) \cong RHom^{\mathcal{E}_n}_A \big({A}, {B}  \big)$
inherits a natural $E_n$-algebra structure (with unit given by $f$).
\item Let $g:B\to C$ be another map of $E_n$-algebrass\footnote{which we may assume to be given by a map $g:\mathcal{B}\to \mathcal{C}$ of factorization algebras, see \S~\ref{S:EnasFact}}. The (derived) functor of composition of
 $E_n$-modules homomorphisms
$$RHom^{\mathcal{E}_n}_A \big({A}, {B}  \big)
\otimes RHom^{\mathcal{E}_n}_B \big({B}, {C}  \big) \stackrel{\circ}\longrightarrow
RHom^{\mathcal{E}_n}_A \big({A}, {C}  \big)$$ is a homomorphism of $E_n$-algebras\footnote{the
left hand side being endowed with the  $E_n$-algebra structure induced on the tensor products of
$E_n$-algebras and the $A$-module structure on $C$ being given by the $E_n$-algebra map $g\circ f: A\to C$}.
\item Let $h:C\to D$ be an $E_n$-algebra map.
The canonical map $$RHom^{\mathcal{E}_n}_A \big({A}, {B}  \big)
\otimes RHom^{\mathcal{E}_n}_C \big({C}, {D}  \big)
\longrightarrow RHom^{\mathcal{E}_n}_{A\otimes C} \big({A\otimes C}, {B\otimes D}  \big) $$
is a homomorphism of $E_n$-algebras.
\end{enumerate}
\end{theorem}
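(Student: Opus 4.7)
The plan is to take the explicit construction of the structure maps $\rho_{U_1,\dots,U_r,D}$ from Lemma~\ref{L:glueFactmaps} and show that, together, they assemble into a locally constant prefactorization algebra in the sense of \S\ref{SS:FactAlgebraFactHomology}. Part (1) is the heart of the theorem; once we have it, parts (2)--(4) will follow from the dictionary between $E_n$-algebras and locally constant factorization algebras on $\R^n$ (Proposition~\ref{P:En=Fact}) together with naturality of the gluing construction. So first I would verify the three axiomatic properties: associativity of composition of structure maps under composition of disk inclusions, equivariance under permutations of the labels $U_1,\dots,U_r$, and local constancy $RHom^{\mathcal{E}_n}_A(A,B)(U)\stackrel{\simeq}\to RHom^{\mathcal{E}_n}_A(A,B)(V)$ whenever $U\subset V$ are disks. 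Local constancy is automatic because the structural maps $\int_U A \to \int_V A$ and $\int_U B\to \int_V B$ are equivalences (both of $E_n$-algebras and of $A$-$E_n$-modules via Lemma~\ref{L:A=intDA}), so the corresponding $RHom$-spaces agree up to natural equivalence. Associativity and equivariance are inherited from the associativity and equivariance of the gluing of factorization algebras (Remark~\ref{R:FactAlgisFactAlg}): given nested disks $U_{i,1},\dots,U_{i,s_i}\subset U_i \subset D$, both ways of gluing the local data $(g_{i,j})$ with $f$ on all complementary collars yield the same map out of $\mathcal{A}_{|D}$, because the collars coming from $U_i\subset D$ are themselves contained in the $U_\partial$-region for the inner collection, and Lemma~\ref{L:glueFactmaps} already says that $g_i$ restricted to any such collar coincides with $f_\partial$.

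Next, I would identify the global sections with $RHom^{\mathcal{E}_n}_A(A,B)$. Taking $D=\R^n$ itself is the easy (and slightly cheating) choice, but in general one should note that for the locally constant factorization algebra structure, the value on all of $\R^n$ is computed by left Kan extension from disks, and evaluating on a single disk equivalent to $\R^n$ gives exactly~\eqref{eq:Rhom(U)}, which by Remark~\ref{R:equivHHRHomTCH} is equivalent to $HH_{\mathcal{E}_n}(A,B)$. Via Proposition~\ref{P:En=Fact} this turns $HH_{\mathcal{E}_n}(A,B)$ into an $E_n$-algebra. The unit is fixed by Remark~\ref{R:StructMapemptytoD}: the structure map from the empty configuration sends $1\in k$ to the canonical class $\int_D f$, which indeed serves as a two-sided identity since gluing with the constant factorization map $f$ on any collar does not change the map.

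For part (3), given an additional $E_n$-algebra map $g:B\to C$, composition $h\mapsto g\circ h$ induces a map of factorization algebras on $\R^n$ of the form $RHom^{\mathcal{E}_n}_A(A,B)(U)\otimes RHom^{\mathcal{E}_n}_B(B,C)(U)\to RHom^{\mathcal{E}_n}_A(A,C)(U)$. I would check the compatibility of this with the structure maps $\rho$ locally: on each interior disk $U_i$ composition of module maps is obvious, while on the boundary region $U_\partial$ one uses $g\circ f$ as the relevant $E_n$-algebra map; since the module structure on $C$ is by definition induced from $g\circ f$, the two descriptions agree. Part (4) is analogous: the external tensor product of an $A$-module map and a $C$-module map is an $A\otimes C$-module map, and gluing with $f\otimes h$ on the collar gives exactly the structure map for the tensor factorization algebra, which corresponds (under Dunn's theorem, Theorem~\ref{T:Dunn}) to the tensor product of $E_n$-algebras.

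The main obstacle, I expect, will not be any single step but rather keeping the book-keeping of collars, gluings, and stratifications coherent in the $\infty$-categorical setting. In particular, the naturality statements in (3) and (4) require that the ``locally use $g_i$, globally use $f$'' construction is functorial up to coherent higher homotopies in the $g_i$'s. To handle this cleanly I would either (a) work strictly at the level of the presentation of $E_n$-algebras by locally constant factorization algebras (Proposition~\ref{P:En=Fact}), so that the gluing recipe is literally a morphism in $Fac^{lc}_{\R^n}$, or (b) replace the pointwise definition by a derived mapping object of factorization algebras, whose functoriality is automatic. Both routes avoid the strictification issues that would otherwise obscure the argument.
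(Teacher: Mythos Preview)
Your proposal is correct and follows essentially the same route as the paper: verify the associativity identity for the gluing construction, check local constancy, identify the unit via $f$, and then handle (3) and (4) by showing that composition and external tensor product commute with the structure maps $\rho$ on each disk. Two small remarks: for local constancy it is not quite enough to observe that the $RHom$-spaces are abstractly equivalent---you must show that the \emph{specific} structure map $\rho_{U,V}$ realizes that equivalence, which the paper does by noting that the glued map $\rho_{U,V}(g)$ restricts to $g$ on $U$; and for (4) the appeal to Dunn's theorem is unnecessary, since the tensor product of locally constant factorization algebras on $\R^n$ already computes the tensor product of $E_n$-algebras directly.
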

The naturality (in $B$) of the $E_n$-algebra structure of $RHom^{\mathcal{E}_n}_A \big({A}, {B}  \big)$ means
that, given a morphism $\phi: B\to B'$ of $E_n$-algebras, the induced map
$$\phi_{*}: RHom^{\mathcal{E}_n}_A \big({A}, {B}  \big) \to
RHom^{\mathcal{E}_n}_A \big({A}, {B'}  \big) \quad \mbox{(given by $g\mapsto \phi\circ g$)}  $$
is an $E_n$-algebra morphism. Here, the $A$-module structure of $B$ is of course given by
the $E_n$-algebra morphism $\phi\circ f: A\to B'$.
 Similarly, the naturality in $A$ means that, given a morphism
$\psi: A'\to A$ of $E_n$-algebras, the induced map
$$\psi_{*}: RHom^{\mathcal{E}_n}_A \big({A}, {B}  \big) \to
RHom^{\mathcal{E}_n}_{A'} \big({A'}, {B}  \big) \quad \mbox{(given by $g\mapsto g\circ \psi$)}  $$
is an $E_n$-algebra morphism.
 \begin{proof}[Proof of Theorem~\ref{T:EnAlgHoch}]
Since the global section $\mathcal{F}(\mathbb{R}^n)$ of a locally constant factorization algebra
 $\mathcal{F}$ on $\mathbb{R}^n$ is an $E_n$-algebra (Theorem~\ref{P:En=Fact}), the second statement is an immediate consequence of the first one.

We now prove the first one. Proposition~\ref{P:extensionfrombasis} implies that we need only to check the axioms of a locally constant factorization algebra on the basis of opens $\mathcal{CVX}$.

First we prove the naturality of the structure maps~\eqref{eq:structMapsCVX}  with respect to the inclusion of
open convex disks (in other words we check the prefactorization algebra axiom). That is we need to check that for a family of pairwise disjoints disks $U_1,\dots U_r \in \mathcal{CVX}$ inside a disk
$V\in \mathcal{CVX}$ and families $W^j_{1}\dots W^{j}_{i_j}$ of pairwise disjoints convex disks inside $U_j$ (for $j=1\dots r$) we have
\begin{equation} \label{eq:idnatFact}\rho_{U_1,\dots, U_r, V}
\Big( \rho_{W_{1}^1,\dots, W_{i_1}^1, U_1},\dots, \rho_{W^r_{1},\dots, W^r_{i_r}, U_r} \Big) =
\rho_{W_1^1,\dots, W_{i_1}^1,\dots, W_1^r,\dots, W_{i_r}^r,V  }.\end{equation}  We write respectively $*_i$ and $*^i_{i_j}$ for the centers of the $U_i$\rq{}s and $W^i_{i_j}$\rq{}s.
Recall that the structure maps in the above identity~\eqref{eq:idnatFact} are obtained by applying the construction~\eqref{eq:FormularhoonV} on the relevant opens subsets.
Thus the right hand side of~\eqref{eq:idnatFact} is  the global section of the map of factorization algebras which is equal to $g_{i_k}^j$ on  a open $W_{i_k}^j$ which contains $*_{i_k}^j$ and $f$ on opens lying in the complement of $*_{i_\ell}^s$.

To evaluate the left hand side, we first define a specific cover of $V$ as follows. For each $U_i$ ($i=1\dots r$), we choose a convex closed sub-disk of $U_i$ which contains each $W^i_j$ and $*_i$. We write $col_i$ for its complement in $U_i$. Then, we have a cover of $V$ given by the $U_i$\rq{}s and $U_\partial=V\setminus \big(\coprod_{i=1}^{r} (U_i-col_i)\big) $. The left hand side of identity~\eqref{eq:idnatFact} is determined by its restriction on the cover (see~\S~\ref{S:EnasFact} and \cite{CG, G-Houches}).
By construction~\eqref{eq:FormularhoonV}, the map ${\rho_{U_1,\dots, U_r, V}}_{|U_\partial}$ is equal to $f_{|U_\partial}$. Let $i\in\{1,\dots, r\}$ and $D$ be an  opens in $U_i$.  If $*_j^i\in D\subset W^i_{j}$, by construction~\eqref{eq:FormularhoonV}, the composition $$\rho_{U_1,\dots, U_r, V}\Big( \rho_{W_{1}^1,\dots, W_{i_1}^1, U_1}(g_1^1,\dots, g_{i_1}^1),\dots, \rho_{W^r_{1},\dots, W^r_{i_r}, U_r}(g_1^r,\dots, g_{i_r}^r) \Big)_{|D}(V)  $$
on the open $V$ is given by $g_j^i: \mathcal{A}_{|W_{j}^k}\to \mathcal{B}_{|W_j^k}$, that is, is equal to $\int_V g_{j}^i$. While if $D\subset U_i \setminus\{*_1^i,\dots,*^i_{j_i}\}$, then this composition is equal to $\int_V f$.
Note that the composition agrees with the map induced by $f$ on the intersection of the $U_i$'s with $U_\partial$.
It follows that the left hand side of identity~\eqref{eq:idnatFact} is the unique factorization algebra map which coincides with $g_j^i$ on each open subset of  $W^i_j$ containing $*_j^i$  and  coincides with $f$ on opens which do not contains any $*_j^i$.
It is thus equal to the right hand side of~\eqref{eq:idnatFact}. We have proved that the structure maps $\rho_{U_1,\dots,U_r,V}$ satisfies the associativity condition of a prefactorization algebra.  They also satisfy the symmetry condition since they are independent  of any  ordering of the opens $U_1,\dots, U_r$.

\smallskip

It remains to check that $ U\mapsto RHom^{\mathcal{E}_n}_{A} \Big({A}, {B}  \Big)\big(U\big)$ is locally constant.
Since the factorization algebras $\mathcal{A}$ and $\mathcal{B}$ are locally constant,
 the natural maps $\int_U A \, \to \int_V A $
and $\int_U B \, \to \int_V B $ are equivalences for any embedding
 $U\hookrightarrow V$ of a disk $U$  inside a bigger disk $V$. By definition we have
\begin{eqnarray*} RHom^{\mathcal{E}_n}_{A} \Big({A}, {B}  \Big)\big(U\big) & \cong &
Hom_{\mathcal{A}(U)}^{\mathcal{E}_n}\Big( \int_{U} A, \int_{U}B\Big), \\
  RHom^{\mathcal{E}_n}_{A} \Big({A}, {B}  \Big)\big(V\big)& \cong
 & Hom_{\mathcal{A}(V)}^{\mathcal{E}_n}\Big( \int_{V} A, \int_{V}B\Big).\end{eqnarray*}
 By definition, for any $g\in  RHom^{\mathcal{E}_n}_{A} \big({A}, {B}  \big)\big(U\big)$,  the map
 $$\rho_{U,V}:  Hom_{\mathcal{A}(U)}^{\mathcal{E}_n}\Big( \int_{U} A, \int_{U}B\Big)\longrightarrow
 Hom_{\mathcal{A}(V)}^{\mathcal{E}_n}\Big( \int_{V} A, \int_{V}B\Big)$$ applied to $g$ is induced
 by a map of factorization algebras $\rho_{U,V}(g):\mathcal{A}_{|V} \to \mathcal{B}_{|V}$ whose restriction
 to  $U$ is just $g$. It follows that the following diagram is commutative
$$\xymatrix{ \int_{V} A \ar[rr]^{\rho_{U,V}(g)}&  & \int_{V} B \\
 \int_{U} A \ar[rr]^{g} \ar[u]^{\simeq} & & \int_{U} B \ar[u]_{\simeq} } $$
for all $g\in  RHom^{\mathcal{E}_n}_{A} \big({A}, {B}  \big)\big(U\big)$.
Since the vertical maps are equivalences and independent of $g$, it follows that
 $\rho_{U,V}:  Hom_{\mathcal{A}(U)}^{\mathcal{E}_n}\Big( \int_{U} A, \int_{U}B\Big)\to
 Hom_{\mathcal{A}(V)}^{\mathcal{E}_n}\Big( \int_{V} A, \int_{V}B\Big)$
is an equivalence.
Note in particular that, taking $V=\mathbb{R}^n$, we have  canonical equivalences
\begin{multline*}
RHom^{\mathcal{E}_n}_{A} \Big({A}, {B}  \Big)\big(U\big)
\cong Hom_{\mathcal{A}(U)}^{\mathcal{E}_n}\Big( \int_{U} A, \int_{U}B\Big)  \\
\cong Hom_{\mathcal{A}(\mathbb{R}^n)}^{\mathcal{E}_n}\Big( \int_{\mathbb{R}^n} A, \int_{\mathbb{R}^n}B\Big)
\cong RHom^{\mathcal{E}_n}_{A} \Big({A}, {B}  \Big) \end{multline*}
 for any disk $U$ in $\mathbb{R}^n$.

\smallskip

A map of $E_n$-algebras $g:B\to C$ induces a canonical object in
$RHom_{B}^{\mathcal{E}_n}(B,C)$ given by $g$ itself. Thus the naturality of the $E_n$-algebra structure (claimed in
assertion~\textbf{(2)}) is in fact a
consequence of the assertion~\textbf{(3)} in the Theorem (that we will prove below). To finish the proof of
claims~\textbf{(1)}, \textbf{(2)} in the Theorem we need to see that
 the canonical element $f\in RHom_{A}^{\mathcal{E}_n}(A,B)$ is a unit.
Indeed, let $U_1,\dots, U_r, V$ be a finite family of pairwise disjoints convex disks inside a bigger bounded convex open set $D$,
and $g_i \in RHom_{A}^{\mathcal{E}_n}(A,B)(U_i)$ ($i=1\dots r$). Denote $*_1,\dots, *_r, *_V$ the respective centers of $U_i$'s and $V$. Let also
$f \in RHom_{A}^{\mathcal{E}_n}(A,B)(V)$ be  the canonical element induced by $f$.
By definition (see construction~\eqref{eq:FormularhoonV}) $\rho_{U_1,\dots, U_r,V, D}(g_1,\dots, g_r, f)$ is the factorization algebra map whose values on any open subset $W \subset V\setminus\{*_1,\dots, *_r,*_V\}$ is given by (the restriction to $V$ of) $f$, whose value on any open subset $*_i\in W\subset U_i$ is given by $g_i$ and its value on $*_V\in W \subset V$ is again given by $f$. It follows that this map is equal to $f$ on all $V$ and thus we get
$$\rho_{U_1,\dots, U_r,V, D}(g_1,\dots, g_r, f) = \rho_{U_1,\dots, U_r, D}(g_1,\dots, g_r). $$
This proves that $f$ is a unit for the $E_n$-algebra structure of $RHom_{A}^{\mathcal{E}_n}(A,B)$.

\smallskip

We now prove statement~\textbf{(3)}.
Since
the $B$-module structure of $C$ is given by the $E_n$-algebra map $g:B\to C$, the (derived) composition of maps
$RHom^{\mathcal{E}_n}_A \big({A}, {B}  \big)
\otimes RHom^{\mathcal{E}_n}_B \big({B}, {C}  \big) \stackrel{\circ}\longrightarrow
Hom_{\hkmod} \big({A}, {C}  \big)$
naturally  lands  in $RHom_{A}^{\mathcal{E}_n}(A,C)$, where $C$ is endowed with
the $A$-module structure induced by the $E_n$-algebra morphism $g\circ f: A\to C$.
Since the tensor product of $E_n$-algebras is induced by the tensor products of (locally constant)
factorization algebras, it remains to prove that, for any family $U_1,\dots, U_r$  of pairwise
disjoint open disks included inside a bigger disk $D$, the following diagram
\begin{equation}\label{eq:EnAlgHochcomp}
\xymatrix{\mathop{\bigotimes}\limits_{i=1}^r \Big( RHom^{\mathcal{E}_n}_{A} \big({A}, {B}  \big)(U_i)
\otimes RHom^{\mathcal{E}_n}_{B} \big({B}, {C}  \big)(U_i)\Big)  \ar[d]_{\rho_{U_1,\dots,U_r,D}^{\otimes 2}}
\ar[r]^{\hspace{4pc}\mathop{\bigotimes}\limits_{i=1}^r \circ} & \mathop{\bigotimes}\limits_{i=1}^r
 RHom^{\mathcal{E}_n}_{A} \big({A}, {C}  \big)(U_i)
\ar[d]^{\rho_{U_1,\dots,U_r,D}} \\
 RHom^{\mathcal{E}_n}_{A} \big({A}, {B}  \big)(D)\otimes
RHom^{\mathcal{E}_n}_{B} \big({B}, {C}  \big)(D) \ar[r]^{\hspace{4pc}\circ }
 & RHom^{\mathcal{E}_n}_{A} \big({A}, {C}  \big)(D)}
\end{equation}
is commutative in $\hkmod$. Let be given $\phi_i\in
RHom^{\mathcal{E}_n}_{A} \big({A}, {B}  \big)(U_i)$ and $\psi_i\in
RHom^{\mathcal{E}_n}_{B} \big( {B}, {C}  \big)(U_i)$. We keep
denoting $\phi_i:\mathcal{A}_{|U_i}\to \mathcal{B}_{|U_i}$ and
 $\psi_i:\mathcal{B}_{|U_i}\to \mathcal{C}_{|U_i}$ the induced maps of factorization algebras.
 The result of the two compositions in diagram~\eqref{eq:EnAlgHochcomp}, namely
 $\rho_{U_1,\dots,U_r,D}(\psi_1,\dots,\psi_r) \circ \rho_{U_1,\dots,U_r,D}(\phi_1,\dots,\phi_r)$ and
 $\rho_{U_1,\dots,U_r,D}\circ \big(\otimes_{i=1}^{r} \psi_i\circ \phi_i \big)$    are both global
 sections over $D$ of factorization algebras morphisms. It is thus
 enough to prove that the underlying diagram of factorizations
 algebras
\begin{equation}\label{eq:FactAlgHochcomp}
\xymatrix{\mathcal{A}_{|D}
\ar@/^1pc/[rrrrr]^{{\rho_{U_1,\dots,U_r,D}}
\big(\otimes_{i=1}^{r}\psi_i\circ \phi_i  \big)_{|D}}
\ar@/_1pc/[rrrrr]_{{\rho_{U_1,\dots,U_r,D}(\psi_1,\dots,\psi_r)}_{|D}
\circ {\rho_{U_1,\dots,U_r,D}(\phi_1,\dots,\phi_r)}_{|D}} &&&&&
\mathcal{C}_{|D}}
\end{equation}
 is commutative (here the structure maps are given by construction~\eqref{eq:FormularhoonV}).

It is sufficient to prove the result on the stable under finite
intersection factorizing basis $\mathcal{U}_{U_1,\dots,U_r,D}$ given
in step \textbf{2} (\S~\ref{SS:step2}). The upper arrow in
diagram~\eqref{eq:FactAlgHochcomp} is simply the factorization
algebra map which is equal to $\psi_i\circ phi_i$ on any open subset
$*_i\subset W\subset U_i$ and is equal to $g\circ f$ for any other
$W\in\mathcal{U}_{U_1,\dots,U_r,D}$.  On the other hand the lower
map in diagram~\eqref{eq:FactAlgHochcomp} is the composition of two
factorizations algebras maps: one of which being given by $\phi_i$
on any open subset $*_i\subset W\subset U_i$ and $f$ on any other
$W\in\mathcal{U}_{U_1,\dots,U_r,D}$; while the other one is given by
$\psi_i$ on any open subset $*_i\subset W\subset U_i$ and $g$ on any
other $W\in\mathcal{U}_{U_1,\dots,U_r,D}$.
 The commutativity of
diagram~\eqref{eq:FactAlgHochcomp} follows on
$\mathcal{U}_{U_1,\dots,U_r,D}$ and thus
diagram~\eqref{eq:EnAlgHochcomp} also commutes.

\smallskip

It remains to prove statement (4) in Theorem~\ref{T:EnAlgHoch} which is almost trivial:
 let $h:C\to D$ be an
 $E_n$-algebra map and denote $\mathcal{C}$, $\mathcal{D}$ the associated factorization algebras
on $\mathbb{R}^n$. By~\cite[Theorem 5.3.3.1]{L-HA},
 $$\int_{U\cup V} A\; \cong\; \int_{U} A \otimes \int_{V} A$$ for any $E_n$-algebra $A$
and disjoint open sets $U,V$. Thus, the factorization algebra
associated (in Proposition~\ref{P:EnMod}) to $A\otimes C$  is given
by $U\mapsto \mathcal{A}(U)\otimes \mathcal{C}(U)$.
For any pairwise
disjoints open convex disks $U_1,\dots, U_r$ included in a bigger
convex disk $V\subset D^n$, and maps $g_i\in
RHom_{A}^{\mathcal{E}_n}(A,B)(U_i)$, $g'_i\in
RHom_{C}^{\mathcal{E}_n}(C,D)(U_i)$ ($i=1\dots r$), the map
\begin{multline*}\rho_{U_1,\dots,
U_r,V}\Big(\big(g_1\otimes\cdots\otimes g_r \big) \otimes
\big(g'_1\otimes \cdots\otimes g'_r\big)\Big)
\in \Big(RHom_{A}^{\mathcal{E}_n}(A,B)\otimes RHom_{C}^{\mathcal{E}_n}(C,D)\Big)(V)\\
\cong RHom_{A}^{\mathcal{E}_n}(A,B)(V)\otimes
RHom_{C}^{\mathcal{E}_n}(C,D)(V)\end{multline*} is the map obtained
as the global section of a map of factorization algebras
$$\rho_{U_1,\dots, U_r,V}\Big(\big(g_1\otimes\cdots\otimes g_r \big)
\otimes \big(g'_1\otimes \cdots\otimes g'_r\big)\Big)_{|V}:
\mathcal{A}_{|V}\otimes \mathcal{C}_{|V}\to \mathcal{B}_{|V}\otimes
\mathcal{D}_{|V}$$ as constructed in~\S~\ref{SS:step2}. In
particular, for any $i\in \{1,\dots,r\}$, its value in any open
subset $*_i\in W\subset U_i$ is given by  the map $g_i\otimes
g_i':\mathcal{A}_{|U_i}\otimes \mathcal{C}_{|U_i}
\to\mathcal{B}_{|U_i}\otimes \mathcal{D}_{|U_i}$. Further, its value
on any other open subset $W\in \mathcal{U}_{U_1,\dots,U_r,V}$ is
given by $f_{|V}\otimes g_{|V}$ (here we use freely the notations
introduced in \S~\ref{SS:step2} to define the structure maps
$\rho_{U_1,\dots, U_r,V}$).

Hence, the  map $\rho_{U_1,\dots,
U_r,V}\Big(\big(g_1\otimes\cdots\otimes g_r \big) \otimes
\big(g'_1\otimes \cdots\otimes g'_r\big)\Big)_{|V}$ identifies, on
(the factorizing and stable by finite intersection) cover
$\mathcal{U}_{U_1,\dots,U_r,V}$, with the map obtained by evaluating
the composition
\begin{multline*}
 \Big( \bigotimes_{i=1}^{r} RHom_{A}^{\mathcal{E}_n}(A,B)(U_i)\Big) \otimes
 \Big( \bigotimes_{i=1}^{r} RHom_{C}^{\mathcal{E}_n}(C,D)(U_i)\Big)\\
 \longrightarrow
\bigotimes_{i=1}^{r} RHom_{A\otimes C}^{\mathcal{E}_n}(A\otimes C,B\otimes D)(U_i)\\
\stackrel{\rho_{U_1,\dots,U_r,V}}\longrightarrow RHom_{A\otimes C}^{\mathcal{E}_n}(A\otimes C,B\otimes D)(V)
\end{multline*}
at the tensor product $ \big(g_1\otimes\cdots\otimes g_r
\big) \otimes \big(g'_1\otimes \cdots\otimes g'_r\big)$.
This proves that the canonical maps
$$RHom^{\mathcal{E}_n}_A \big({A}, {B}  \big)(V)
\otimes RHom^{\mathcal{E}_n}_C \big({C}, {D}  \big) (V)
\longrightarrow RHom^{\mathcal{E}_n}_{A\otimes C} \big({A\otimes C}, {B\otimes D}  \big)(V) $$
assembles into a map of factorization algebras and, consequently,
$$RHom^{\mathcal{E}_n}_A \big({A}, {B}  \big)
\otimes RHom^{\mathcal{E}_n}_C \big({C}, {D}  \big)
\longrightarrow RHom^{\mathcal{E}_n}_{A\otimes C} \big({A\otimes C}, {B\otimes D}  \big) $$
is a homomorphism of $E_n$-algebras.
\end{proof}

\begin{rem} The $E_n$-algebra structure given by Theorem~\ref{T:EnAlgHoch} is in fact
the solution of a universal property as will be given by Proposition~\ref{P:HHEn=z} below
which identifies $HH_{\mathcal{E}_n}(A,B)$ with the centralizer of the map $f:A\to B$.
\end{rem}

\begin{ex}
Assume $n=1$, then
$$HH_{\mathcal{E}_1}(A,B)\cong RHom_{A}^{\mathcal{E}_1}\big(A,B\big)
\cong RHom_{\int_{S^0} A} \big(A,B\big)\cong RHom_{A\otimes A^{op}}\big(A,B\big)$$
 is the standard Hochschild cohomology of the algebra $A$ with value
in the algebra $B$. It is straightforward that the $E_1$-structure given by Theorem~\ref{T:EnAlgHoch} is induced
on the standard Hochschild complex
 by the usual cup-product~\cite{Ge}.
\end{ex}

\begin{ex}\label{E:A=k}
Assume $A=k$ the ground ring and let  $f:k\to B$ be the unit map.
 We have a canonical equivalence $RHom_{k}^{\mathcal{E}_n}(k,B) \cong B$ in $\hkmod$.
This equivalence is in fact an \emph{equivalence of $E_n$-algebras}\footnote{where the left hand side
is endowed with the $E_n$-algebra structure given by Theorem~\ref{T:EnAlgHoch}}.
Since $f:k\to B$ is the unit map, it is immediate from the definition
of the structure maps~\eqref{eq:StructFactcenter} to check that
the locally constant factorization algebra structure of $RHom_{k}^{\mathcal{E}_n}(k,B) $
 is the one of $\mathcal{B}$, the locally constant factorization algebra on $\mathbb{R}^n$ associated to $B$
(in \S~\ref{S:EnasFact}).
\end{ex}

\begin{rem}\label{R:FactAlgCollar} Since the factorization algebra constructed by Theorem~\ref{T:EnAlgHoch} is locally constant, in particular,
 \emph{its value $RHom^{\mathcal{E}_n}_{A} \big({A}, {B}
\big)(U)$ on any (non-necessarily convex) disk $U$  is
$RHom_{\mathcal{A}(U)}^{\mathcal{E}_n}\Big( \int_{U} A,
\int_{U}B\Big)$} as asserted in step 1 (\S~\ref{SS:step1}).

\smallskip

One can describe directly the structure maps
$\rho_{U_1,\dots,U_r,D}$ associated to pairwise disjoint subdisks
$U_1,\dots, U_r$ of a disk $D$ (that is without further covering
them by convex subsets). This can be done as follows.
First, we use
$U_1,\dots, U_n$ to see the factorization algebra $\mathcal{A}$
restricted to $D$, denoted $\mathcal{A}_{|D}$, as obtained by gluing
together $r+1$-factorization algebras on $D$ (see
\S~\ref{SS:FactAlgebraFactHomology} and~\cite{CG}). Note that the
$\mathcal{A}(U)$-$E_n$-module structure on $\int_U A$ allows us to
see $\mathcal{A}_{|U}$ as a stratified factorization algebra on $U$
with a closed strata given by a point $*_i$ (or a sub-disk);
different choices of points leads to canonically equivalent
$\mathcal{A}_{|U}$-modules structures. We can choose a  collar $c_i$
in the neighborhood of the boundary (in $D$) of each $U_i$ such that
$c_i\cong S^{n-1}\times (0,\epsilon)$ for a homeomorphism  induced
by a homeomorphism $U_i\cong D^{n}$; for instance we just choose
$c_i$ to be the complement of the point $*_i\in U_i$ (or a suitable
sub-disk). This way we get a connected open set
 $$U_{\partial}:= D \setminus \big(\coprod_{i=1}^r (U_i-col_i)\big)$$ (the notation $\partial$
is meant to suggest that $U_{\partial}$ is the boundary of $\coprod U_i$ in $D$; below we will
sometimes refer to it using this terminology).
By definition, the $U_i$'s and $U_{\partial}$ cover $D$, hence the factorization algebra
 $\mathcal{A}_{|D}$ is obtained as the gluing of the restricted factorizations algebras
 $\mathcal{A}_{|U_1}, \dots, \mathcal{A}_{|U_r} $ and $\mathcal{A}_{|U_{\partial}}$.

 Let be given maps $g_i:\int_{|D_i} {A} \to \int_{|D_i}{B}$ of (left)
$\mathcal{A}(U_i)$-modules ($i=1\dots r$) and also denotes
$g_i:\mathcal{A}_{|D_i} \to \mathcal{B}_{|D_i}$ the induced maps of (stratified)
factorization algebras. We also denote  $f_{\partial}:\mathcal{A}_{|U_{\partial}} \to \mathcal{B}_{|U_{\partial}}$
the restriction of $f:\mathcal{A}\to \mathcal{B}$ to $U_{\partial}$.

\begin{lem}\label{L:glueFactmaps}
The family $(g_1,\dots, g_r, f_{\partial})$ of maps of factorization algebras glues together
to define a map \begin{equation*}
 \rho_{U_1,\dots,U_r,D}\,(g_1,\dots, g_r) \in RHom(\mathcal{A}_{|D}, \mathcal{B}_{|D}) \end{equation*}
 which is independent in $\hkmod$ of the choices (of collars) involved.
Further, on global sections, the induced map
$\int_{D}\rho_{U_1,\dots,U_r,D}\,(g_1,\dots, g_r): \int_{D}A\to
\int_{D}B$ is a map of $\mathcal{A}(D)$-$E_n$-modules. The induced
map \begin{multline}\label{eq:StructFactcenter}
 {\rho_{U_1,\dots,U_r,D}}: RHom^{\mathcal{E}_n}_{A} \Big({A}, {B}  \Big)\big(U_1\big)\otimes \cdots
\otimes RHom^{\mathcal{E}_n}_{A} \Big({A}, {B}  \Big)\big(U_r\big)
\\ \longrightarrow  RHom^{\mathcal{E}_n}_{A} \Big({A}, {B}  \Big)\big(D\big)
\end{multline}  is the map given by the factorization algebra structure
of Theorem~\ref{T:EnAlgHoch}.
\end{lem}
\begin{proof}[Proof of Lemma~\ref{L:glueFactmaps}]
Note that all triples intersections in the family $(U_1,\dots, U_r, U_{\partial})$ are empty and
that the only non-empty intersections are those of the form
$U_i\cap U_{\partial}=col_i\cong S^{n-1}\times(0,\epsilon)$. Hence,
by definition of the gluing of factorization algebras, we only have to check that the
maps $g_i$ and $f_{\partial}$ are equivalent on $\mathcal{A}_{|(U_i\cap U_{\partial})}$.
 By assumption, the map $g_i: \int_{U_i} A\to \int_{U_i} B$ is a map of
 $\mathcal{A}(U_i)$-modules.
Then Proposition~\ref{P:EnMod} (and Theorem~\ref{T:Theorem6GTZ2})
implies that the map of factorization algebras
$g_i:\mathcal{A}_{|col_i}\to \mathcal{B}_{|col_i}$ is equivalent to
 the map induced by the $\mathcal{A}(U_i)$-module structure of $\int_{U_i}B\cong B$.
 Since this module structure is given by $f:\mathcal{A}\to \mathcal{B}$, it follows that
$\big(g_i\big)_{|col_i}$
is equivalent to $\big(f_{\partial}\big)_{|col_i}$. Hence the collection
$(g_1,\dots, g_r, f_{\partial})$  assembles to give an object in
$ RHom(\mathcal{A}_{|D}, \mathcal{B}_{|D})$.
Further, we also just proved, that for any choice of collar $col_i\rq{}$  in the disk $U_i$,
 the value of $g_i$ on $\mathcal{A}_{col\rq{}_{i}}$ is given by $f$.
 It is thus independent of the choice of the collar. In order to
 check it induces the same map as Theorem~\ref{T:EnAlgHoch}, it is
 sufficient to check that the underlying maps of factorizations algebras agrees. For this, it is
 further sufficient to do it on the cover of $D$ obtained by taking
 only convex open subsets which are required to belong to either one of the $U_i$ or to $U_\partial$, for which the result follows  by
 definition.
\end{proof}
\end{rem}

\subsubsection{The parametrized factorization algebra structure on Hochschild cohomology of $E_n$-algebras}\label{R:EdHochwithDiskprimeoperad} The $E_n$-algebra structure given in Theorem~\ref{T:EnAlgHoch} is  given by a factorization algebra structure (by Theorem~\ref{P:En=Fact}).
In view of Proposition~\ref{P:AlternativeFacAlg}, it can also be
obtained as a \emph{parametrized} locally constant factorization
algebra (see Definition~\ref{D:AlternativeFacAlg}), that is a
locally constant algebra over the operad  $N(\text{Disk}(M)')$. This
structure is rather easy to describe as we now explain.

Let $f:A\to B$ be an $E_n$-algebra map. As before we may assume that the map is induced by a map  $f:\mathcal{A}\to\mathcal{B}$ of locally constant factorization algebras over $\R^n$.
From Definition~\ref{D:AlternativeFacAlg}, we see  that we need to  associate to any open embedding $\phi: \R^n\to \R^n$ a chain complex $HH_{\mathcal{E}_n}(A,B)(\phi)$. We set
\begin{equation}
HH_{\mathcal{E}_n}(A,B)(\phi):= RHom_{\mathcal{A}(\phi(\R^n))}^{\mathcal{E}_n}(A,B)(U) \cong RHom^{left}_{\int_{\partial U}A}\Big(\int_{U}A, \int_{U}B \Big)
\end{equation}
where $\partial U:=\phi\big( \R^n\setminus D(0,1)\big)$ is the image by $\phi$ of the complement of a closed (bounded) Euclidean disk centered at $0$. Note that any different choice of radius yields canonically equivalent chain complexes since $\mathcal{A}$, $\mathcal{B}$ are locally constant.

Now, we need to define structure maps associated to any open embedding $h: \coprod_{i=1}^r \R^n \to \R^n$ such that $\psi\circ h= \coprod_{i=1}^r \phi_i: \coprod_{i=1}^r \R^n \to M$.
 The structure map $$\rho^h_{\phi_1,\dots, \phi_r,\psi}: HH_{\mathcal{E}_n}(A,B)(\phi_1)\otimes \cdots \otimes HH_{\mathcal{E}_n}(A,B)(\phi_r) \longrightarrow HH_{\mathcal{E}_n}(A,B)(\psi)$$ is defined exactly in the same way as in Remark~\ref{R:FactAlgCollar} and in particular Lemma~\ref{L:glueFactmaps}. Here, we can use the canonical collars given by the complement of a disk centered at $0$ inside each disk $\mathbb{R}^n$.
  Then, using this slight alternative definition of factorization algebras, one proves (in a similar way) the obvious analogue of Theorem~\ref{T:EnAlgHoch}. A proof similar to the one of Theorem~\ref{T:EnAlgHoch} yields
\begin{prop}\label{P:EnAlgHochParam} Let $f:A\to B$ be a map of $E_n$-algebras\footnote{which we may assume to be given by a map $f:\mathcal{A}\to \mathcal{B}$ of factorization algebras, see \S~\ref{S:EnasFact}}.
\begin{enumerate}
\item The structure maps~\eqref{eq:StructFactcenter} $\rho^h_{\phi_1,\dots, \phi_r,\psi}$  make
 $U\phi\mapsto  RHom^{\mathcal{E}_n}_{A} \big({A}, {B}  \big)\big(\phi\big)$ a locally constant parametrized
factorization algebra on $\mathbb{R}^n$ whose global section are naturally equivalent to
$RHom^{\mathcal{E}_n}_{A} \Big({A}, {B}  \Big)$.
\item This parametrized factorisation algebra is equivalent to the one of Theorem~\ref{T:EnAlgHoch} under the equivalence of Proposition~\ref{P:AlternativeFacAlg}.
\item the composition and tensor product of endomorphisms are maps of (locally constant) parametrized factorization algebras.
\end{enumerate}
\end{prop}

\medskip

\subsubsection{The case of $E_\infty$-algebras again.}
 If $f: A\to B$ is a map of $E_\infty$-algebras, then Theorem~\ref{T:EnAlgHoch}
and Proposition~\ref{P:coHH=coTCH} give an $E_n$-algebra structure to $CH^{S^n}(A,B)$. The latter has also
an $E_n$-algebra structure given by Theorem~\ref{T:EdHoch}. The following result shows that these two structures
are the same.
\begin{prop}\label{P:HH(A,B)=CH(A,B)} Let $f:A\to B$ be a map of $E_\infty$-algebra and let
$B$ be endowed with the induced $A$-$E_\infty$-module structure.
 Then the natural equivalence $HH_{\mathcal{E}_n}(A,B)\cong CH^{S^n}(A,B)$ given by
Proposition~\ref{P:coHH=coTCH} is an equivalence of $E_n$-algebras\footnote{where the left hand-side
is the $E_n$-algebra given by Theorem~\ref{T:EnAlgHoch} and the right hand side is the $E_n$-algebra
given by Theorem~\ref{T:EdHoch}}.
\end{prop}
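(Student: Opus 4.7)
The plan is to exhibit both $E_n$-algebra structures as arising from the same locally constant factorization algebra on $\mathbb{R}^n$, and then invoke the uniqueness (i.e., the equivalence $E_n\text{-}Alg \cong \text{Fac}^{lc}_{\mathbb{R}^n}$ of Proposition~\ref{P:En=Fact}). For the left-hand side this is immediate by construction (Theorem~\ref{T:EnAlgHoch}), so the real content is to realize the structure from Theorem~\ref{T:EdHoch} by a factorization algebra and then to identify the two factorization algebras via Proposition~\ref{P:coHH=coTCH} applied locally.

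First, for any open disk $U \subset \mathbb{R}^n$ I would associate a pointed sphere $S^n(U)$ constructed as the one-point compactification of $U$ (equivalently, as $\overline{U}/\partial \overline{U}$), and set $\mathcal{F}(U) := CH^{S^n(U)}(A,B)$. Given pairwise disjoint disks $U_1,\ldots,U_r$ inside a larger disk $V$, collapsing $V \setminus \coprod_i U_i$ to the base point yields a canonical pinching map $S^n(V) \to \bigvee_i S^n(U_i)$; combined with the wedge product $\mu_{\vee}$ (Definition~\ref{D:wedgeproduct}), this defines a structure map $\mathcal{F}(U_1) \otimes \cdots \otimes \mathcal{F}(U_r) \to \mathcal{F}(V)$. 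Taking $V = \mathbb{R}^n$ and varying over configurations of sub-cubes, these are precisely the pinching maps used in~\eqref{eq:pinchSr}, so by Proposition~\ref{P:En=Fact} the resulting $E_n$-algebra structure on $\mathcal{F}(\mathbb{R}^n) \cong CH^{S^n}(A,B)$ coincides with the one given by Theorem~\ref{T:EdHoch}. Local constancy is clear since any inclusion $U \hookrightarrow V$ of disks induces a homotopy equivalence $S^n(U) \simeq S^n(V)$, hence a quasi-isomorphism of Hochschild cochains by Corollary~\ref{C:properties}.

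Next, I would upgrade the pointwise equivalence of Proposition~\ref{P:coHH=coTCH} to an equivalence of factorization algebras $\mathcal{F} \cong HH_{\mathcal{E}_n}(A,B)(-)$ on $\mathbb{R}^n$. For each disk $U$, the proof of Proposition~\ref{P:coHH=coTCH} gives, via $A \simeq \int_{D^n} A$ (Lemma~\ref{L:A=intDA}) and $CH_{S^n(U)}(A) \simeq CH_{D^n(U)}(A) \otimes^{\mathbb{L}}_{CH_{S^{n-1}}(A)} A$ (using excision, Corollary~\ref{C:properties}.(c), applied to the pushout $S^n(U) = D^n_+ \cup_{S^{n-1}} D^n_-$), a canonical equivalence $RHom_A^{\mathcal{E}_n}(A,B)(U) \simeq CH^{S^n(U)}(A,B)$. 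The crucial point is that the pushout description identifies the ``outer'' hemisphere $D^n_-$ with the collar region $U_\partial$ appearing in the definition of $\rho_{U_1,\ldots,U_r,V}$: in both cases the data on $U_\partial$ (resp.\ on $D^n_-$) is provided by $f: A \to B$ itself, encoding the $E_n$-$A$-module structure on $B$.

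The main obstacle is to verify that this local equivalence is natural with respect to the structure maps, i.e.\ that the following square commutes (up to coherent homotopy) for every configuration $U_1,\ldots,U_r \subset V$:
\begin{equation*}
\xymatrix{
\bigotimes_i HH_{\mathcal{E}_n}(A,B)(U_i) \ar[r]^{\quad\rho_{U_1,\ldots,U_r,V}} \ar[d]_{\simeq} & HH_{\mathcal{E}_n}(A,B)(V) \ar[d]^{\simeq} \\
\bigotimes_i CH^{S^n(U_i)}(A,B) \ar[r]^{\quad\mu_{\vee}\circ pinch^*} & CH^{S^n(V)}(A,B)
}
\end{equation*}
Tracking this amounts to showing that gluing factorization algebra maps (as in Lemma~\ref{L:glueFactmaps})---where the $g_i$ govern the interiors $U_i$ and the map $f$ governs the collar $U_\partial$---corresponds under Proposition~\ref{P:coHH=coTCH} to applying $\mu_{\vee}$ (which uses the multiplication $m_B$, itself coming from the $E_\infty$-$A$-algebra structure of $B$, hence from $f$) after pinching. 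This reduces to a compatibility between the $\int_{S^{n-1}}A$-module structure on $\int_{D^n(U_i)} B \simeq B$ on one side and the base point structure of $S^n(U_i)$ together with the $A$-algebra structure of $B$ on the other side, which follows by unwinding the construction of both equivalences and using that $S^n$ is an $E_n$-co-$H$-space in pointed spaces and $B$ an $E_n$-algebra object in $A\text{-}Mod^{E_\infty}$.
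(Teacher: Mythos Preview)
Your approach is essentially the same as the paper's: both realize each side as a locally constant factorization algebra on $\mathbb{R}^n$ (the paper uses an intermediate object $U\mapsto Hom^{\mathcal{E}_n}_{CH_U(A)}(CH_U(A),B)$ rather than your $U\mapsto CH^{S^n(U)}(A,B)$, but these are identified via the same collar/excision argument $CH^{U/\partial U}(A,B)\cong RHom^{left}_{CH_{\partial U}(A)}(CH_U(A),B)$), and then check that the structure maps $\rho_{U_1,\dots,U_r,V}$ transfer to the wedge/pinch maps under this equivalence. Your final paragraph sketching the commutativity of the square is exactly the computation the paper carries out explicitly as the map~\eqref{eq:rhovee}.

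There is one step you pass over too quickly. You write that since the structure maps of your factorization algebra $\mathcal{F}$, when evaluated on configurations of sub-cubes, reproduce the pinching maps~\eqref{eq:pinchSr}, ``by Proposition~\ref{P:En=Fact} the resulting $E_n$-algebra structure on $\mathcal{F}(\mathbb{R}^n)$ coincides with the one given by Theorem~\ref{T:EdHoch}.'' But Proposition~\ref{P:En=Fact} is only an equivalence of $(\infty,1)$-categories; it does not by itself say that a $\mathcal{C}_n$-operad action and a factorization algebra structure that happen to agree on cube configurations define the \emph{same} $E_n$-algebra. The paper isolates exactly this point as a separate technical lemma (Lemma~\ref{L:cubetoDisk}): given a $\mathcal{C}_n$-action on $A$ and a locally constant factorization algebra $\mathcal{A}$ with $\mathcal{A}(\mathbb{R}^n)\simeq A$, if the compatibility diagram~\eqref{eq:compatibilitycubetodisk} commutes for every cube configuration, then the two $E_n$-structures agree. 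Its proof goes through the passage $\mathcal{C}_n\to \mathbb{E}_{\mathbb{R}^n}^\otimes \to N(\text{Disk}(\mathbb{R}^n))^\otimes$ and uses that open cubes form a factorizing basis of $\mathbb{R}^n$. Your argument implicitly relies on this; making it explicit (or citing it) would complete the proof.
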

\begin{proof}
The proof of Proposition~\ref{P:coHH=coTCH} shows that
we have equivalences
\begin{multline}\label{eq:equivFactCHTCH}
 HH_{\mathcal{E}_n}(A,B) \cong RHom^{left}_{\int_{S^{n-1}}A}\Big(\int_{D^n}A,B\Big) \cong
RHom^{left}_{CH_{S^{n-1}}(A)}\big(CH_{D^n}(A),B\big)\\
 \cong  Hom^{\mathcal{E}_n}_{CH_{D^n}(A)}\big(CH_{D^n}(A),B\big).
\end{multline}
By Theorem~\ref{T:CH=TCH}, we have natural (in spaces and $E_\infty$-algebras) equivalences
$\int_U A \cong CH_{U}(A)$ and, by the value on a point axiom in Definition~\ref{D:axioms},
 a canonical equivalence $CH_U(B)\cong B$.
We can thus define a rule $U\mapsto Hom^{\mathcal{E}_n}_{CH_{U}(A)}\big(CH_{U}(A),B\big)$
and structure maps (for $U_1,\dots, U_r$ pairwise disjoints convex opens included in a  larger bounded convex open $D$)
\begin{multline}\label{eq:StructFactCH}
 {\rho_{U_1,\dots,U_r,D}}: Hom^{\mathcal{E}_n}_{CH_{U_1}(A)}\big(CH_{U_1}(A),B\big)\otimes \cdots
\otimes Hom^{\mathcal{E}_n}_{CH_{U_r}(A)}\big(CH_{U_r}(A),B\big)
\\ \longrightarrow  Hom^{\mathcal{E}_n}_{CH_{D}(A)}\big(CH_{D}(A),B\big)
\end{multline}
defined exactly as the structure maps~\eqref{eq:structMapsCVX} (in step 2, \S~\ref{SS:step2}) for $RHom^{\mathcal{E}_n}_A(A,B)(U)$.
Then the proof of
Theorem~\ref{T:EnAlgHoch} applies \emph{mutatis mutandis} to prove that
 $U\mapsto Hom^{\mathcal{E}_n}_{CH_{U}(A)}\big(CH_{U}(A),B\big)$
is a \emph{locally constant factorization algebra on $D^n$} and further,
 that the equivalences
$$Hom^{\mathcal{E}_n}_{\mathcal{A}(U)}\Big(\int_{U}A,\int_{U} B\Big)\;\cong \;
Hom^{\mathcal{E}_n}_{CH_{U}(A)}\big(CH_{U}(A),B\big) $$ (induced by Theorem~\ref{T:CH=TCH})
are  equivalences of factorization algebras.

\smallskip

Now, for any collar $\partial U \cong S^{n-1}\times (0,\epsilon)$ inside a disk $U$,
 we have  natural equivalences
\begin{eqnarray*}
Hom^{\mathcal{E}_n}_{CH_{U}(A)}\big(CH_{U}(A),B\big) &\cong &
RHom_{CH_{\partial U}(A)}^{left}\big(CH_{U}(A),B \big) \\&\cong & RHom_A\Big(A, RHom_{CH_{\partial U}(A)}^{left}\big(CH_{U}(A),B\big) \Big) \\
&\cong &  RHom_A\Big(CH_{U}(A) \mathop{\otimes}\limits_{CH_{\partial U}(A)}^{\mathbb{L}} A, \, B\Big) \\
&\cong &  RHom_A\Big( CH_{U/\partial U}(A),B) \cong CH^{U/\partial U}(A,B)
\end{eqnarray*}
where the last equivalences are by the excision axiom (Definition~\ref{D:axioms}) and definition of Hochschild cochains.
Recall from the proof of Proposition~\ref{P:coHH=coTCH} that, for $U=D^n$,
 the above equivalences and the equivalences~\eqref{eq:equivFactCHTCH} are precisely the natural equivalence
$HH_{\mathcal{E}_n}(A,B)\cong CH^{S^n}(A,B)$ of Proposition~\ref{P:coHH=coTCH}.
 Hence, we are left to prove Proposition~\ref{P:HH(A,B)=CH(A,B)} replacing $HH_{\mathcal{E}_n}(A,B)$
with $CH^{S^n}(A,B)$ endowed with the $E_n$-algebra structure
given by the locally constant factorization algebra structure.

\smallskip

By functoriality of Hochschild chains, we have natural maps of $E_\infty$-algebras
\begin{equation}\label{eq:rhoB} \bigotimes_{i=1}^r B\cong \bigotimes_{i=1}^r CH_{U_i}(B)
\cong CH_{\bigcup_{i=1}^r U_i}(B)
\stackrel{(\bigcup_{i=1}^r U_i\hookrightarrow V)_*}\longrightarrow CH_{V}(B) \cong B \end{equation}

Further, we have pinching maps
$D\stackrel{p_{U_1,\dots,U_r,D}}\longrightarrow \bigvee_{i=1}^r \big(U_i/\partial U_i\big)$ obtained by
 collapsing $D\setminus \big(\bigcup_{i=1}^r U_i\setminus \partial U_i\big)$ to a point.
By functoriality of Hochschild cochains, the pinching maps yield
a map
 \begin{equation}\label{eq:rhoA}
 \bigotimes_{i=1}^r CH_{U_i}(A) \cong CH_{\bigcup_{i=1}^r U_i}(A)
\stackrel{(p_{U_1,\dots,U_r,D})_*}\longrightarrow CH_{\bigvee_{i=1}^r \big(U_i/\partial U_i\big)} (A).
\end{equation}
Using the last two maps, we get the composition
\begin{multline}\label{eq:rhovee}
\bigotimes_{i=1}^r RHom^{left}_{CH_{\partial U_i}(A)}\big(CH_{U_i}(A),B\big)
\longrightarrow RHom_{ \bigotimes_{i=1}^r A}\big( \bigotimes_{i=1}^r CH_{U_i}(A),\,  \bigotimes_{i=1}^r B \big)\\
\longrightarrow RHom_{ \bigotimes_{i=1}^r A}\big( \bigotimes_{i=1}^r CH_{U_i}(A),\, B\big)
\cong RHom_{A}\big(CH_{\bigvee_{i=1}^r \big(U_i/\partial U_i\big)} (A),\, B \big)\\
\longrightarrow RHom_{A}(CH_{V}(A), \, B\big)
\end{multline}
where the last maps are respectively induced by the map~\eqref{eq:rhoB},  Lemma~\ref{L:wedge}
 and the map~\eqref{eq:rhoA}.
Using the homotopy invariance of Hochschild cochains and unfolding the definition
 of $\rho_{U_1,\dots,U_r,D}$,  we find that the structure map~\eqref{eq:StructFactCH}
 is transfered to the above map~\eqref{eq:rhovee} under the natural  equivalences
$RHom_{CH_{\partial U}(A)}^{left}\big(CH_{U}(A),B \big)\cong CH^{U/\partial U}(A,B) $
(where $U$ is any disk in $D^n$).
Note that when the $U_i$ are cubes in $\mathcal{C}_n(r)$ and $D$ is $D^n$,
the composition of the map~\eqref{eq:rhovee} with the equivalence
\begin{multline*}\Big(\bigotimes_{i=1}^r CH^{S^n}(A,B) \Big) \;\cong \;  \bigotimes_{i=1}^r
RHom^{left}_{CH_{S^{n-1}}(A,B)}\big(CH_{D^n}(A),B\big) \\ \cong \;
\bigotimes_{i=1}^r RHom^{left}_{CH_{\partial U_i}(A)}\big(CH_{U_i}(A),B\big) \end{multline*}
is the pinching map~\eqref{eq:pinchSr}
$pinch_{S^d,r}^*(c):\left(CH^{S^d}(A,B)\right)^{\otimes r}\longrightarrow CH^{S^d}(A,B)$
(where $c$ is the cube associated to the $U_i$\rq{}s).
 Now, thanks to Theorem~\ref{T:EdHoch} and the definition of the factorization algebra
structure on $U\mapsto Hom^{\mathcal{E}_n}_{CH_{U}(A)}\big(CH_{U}(A),B\big)$, we can apply Lemma~\ref{L:cubetoDisk} below
which implies that the two $E_n$-algebra structure
 on  $CH^{S^n}(A,B)$ (given by Theorem~\ref{T:EdHoch} and the one introduced in this proof by
the structures maps~\eqref{eq:rhovee})
are equivalent.
\end{proof}

\begin{lem}\label{L:cubetoDisk} Let $A\in \hkmod$ and assume that
\begin{enumerate}
\item $A$ has an $\mathcal{C}_n$-algebra structure, \emph{i.e.}, an $E_n$-algebra structure  given
by an action of the chains little $n$-dimensional cube operad.
\item There is  a locally constant
factorization algebra  $\mathcal{A}$  on $\R^n$ (identified with the open unit cube)
 together with an equivalence $\varphi:\mathcal{A}(\R^n)\stackrel{\simeq}\to A$ (in $\hkmod$);
which thus induces another $E_n$-algebra structure on $A$.
\end{enumerate}
Assume further that the two structures given by \emph{(1)} and \emph{(2)} are \emph{compatible} in the following sense:
 for any configuration of cubes $c\in \mathcal{C}_n(r)$, the following diagram
\begin{equation}\label{eq:compatibilitycubetodisk}
 \xymatrix{  A^{\otimes r} \ar[rrr]^{\mu_c} & & & A \\
 \mathcal{A}(c_1) \otimes \cdots \otimes \mathcal{A}(c_r)
\ar[u]^{\bigotimes_{i=1}^r \varphi\circ \rho_{c_i, \R^n}} \ar[rrr]^{\hspace{1pc}\rho_{c_1,\dots,c_r, \R^n}}
&& &\mathcal{A}(\R^n)\ar[u]_{\varphi}  \, ,}
\end{equation}
 (where we denote $c_1,\dots, c_r$ the configuration
of cubes defined by $c=(c_1,\dots, c_r)$, $\mu_c$ is the structure map given by the
operadic structure and $\rho_{U_1,\dots,U_r,V}$ the structure maps of the
factorization algebra structure),
is commutative in $\hkmod$.

\smallskip

Then the two $E_n$-algebras structures on $A$ defined by \emph{(1)} and \emph{(2)} are equivalent
(in $E_n$-Alg).

A similar statement holds with $E_n$-coalgebra structure instead of $E_n$-algebra structures.
\end{lem}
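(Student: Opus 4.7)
The plan is to invoke the equivalence $E_n\text{-}Alg \simeq \text{Fac}^{lc}_{\R^n}$ of Proposition~\ref{P:En=Fact} in order to reduce the claim to the comparison of two $\mathcal{C}_n$-algebra structures on the common underlying chain complex $A$. The first step is to make explicit the $\mathcal{C}_n$-algebra structure on $\mathcal{A}(\R^n)$ corresponding to $\mathcal{A}$ under that equivalence: upon choosing the $\mathcal{C}_n$-model for $\mathbb{E}^{\otimes}_n$, its structure map associated to a configuration $c=(c_1,\dots,c_r)\in \mathcal{C}_n(r)$ is precisely
$$
\mathcal{A}(\R^n)^{\otimes r}\;\stackrel{\simeq}{\longleftarrow}\;\mathcal{A}(c_1)\otimes\cdots\otimes \mathcal{A}(c_r)\;\stackrel{\rho_{c_1,\dots,c_r,\R^n}}{\longrightarrow}\;\mathcal{A}(\R^n),
$$
where the left equivalence comes from local constancy of $\mathcal{A}$ applied to the embedding $c_i\hookrightarrow \R^n$. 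Consequently, the $E_n$-algebra structure on $A$ in \emph{(2)} is precisely this $\mathcal{C}_n$-algebra structure transported along the equivalence $\varphi:\mathcal{A}(\R^n)\stackrel{\simeq}\to A$.

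The second step is to reinterpret the compatibility hypothesis: diagram~\eqref{eq:compatibilitycubetodisk} is now exactly the statement that, for each configuration $c\in \mathcal{C}_n(r)$, the equivalence $\varphi$ intertwines the $\mathcal{C}_n$-structure map on $\mathcal{A}(\R^n)$ just described with the structure map $\mu_c$ of~\emph{(1)} on $A$. Since both families of structure maps vary continuously with $c$ and are natural with respect to operadic composition of configurations (these being the defining properties of $\mathcal{C}_n$-algebras and of locally constant factorization algebras restricted to cubical sub-families), the pointwise compatibility assembles into a morphism of $\mathcal{C}_n$-algebras
$$
\varphi:\mathcal{A}(\R^n)\longrightarrow A
$$
from the structure induced by $\mathcal{A}$ to the structure~\emph{(1)}.

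Finally, since $\varphi$ is by hypothesis an equivalence in $\hkmod$ and the forgetful functor $E_n\text{-}Alg\to \hkmod$ is conservative, $\varphi$ promotes to an equivalence in $E_n\text{-}Alg$. Pulling back the structure~\emph{(1)} on $A$ along $\varphi^{-1}\simeq \varphi$, this identifies~\emph{(1)} with the transport of the $\mathcal{A}$-induced structure along $\varphi$, i.e., with~\emph{(2)}; hence the two $E_n$-algebra structures on $A$ are equivalent. The main obstacle will be the first step, namely making precise the identification of the $\mathcal{C}_n$-algebra associated to $\mathcal{A}$ under Proposition~\ref{P:En=Fact} in terms of the cubical structure maps $\rho_{c_1,\dots,c_r,\R^n}$ and the local constancy equivalences; this amounts to unpacking the construction used in the proof of that proposition and tracking coherently how the $\infty$-operadic datum of $\mathcal{A}$ restricts to the suboperad $\mathcal{C}_n\subset \text{Disk}(\R^n)$.
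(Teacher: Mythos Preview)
Your proposal is correct and rests on the same equivalence $E_n\text{-}Alg \simeq \text{Fac}^{lc}_{\R^n}$ as the paper, but you traverse that equivalence in the opposite direction. The paper converts the $\mathcal{C}_n$-structure~(1) into a locally constant factorization algebra $\mathcal{A}'$ on $\R^n$ (via the chain $\mathcal{C}_n \to \mathbb{E}_{\R^n}^{\otimes}$ and then $N(\text{Disk}(\R^n))^{\otimes} \to \mathbb{E}_{\R^n}^{\otimes}$, invoking Lurie's Theorem 5.2.4.9), and then compares $\mathcal{A}$ with $\mathcal{A}'$ on the factorizing basis of open cubes in $\R^n$; the compatibility diagram~\eqref{eq:compatibilitycubetodisk} is exactly the statement that $\mathcal{A}$ and $\mathcal{A}'$ agree on this basis, and the general fact that a factorization algebra is determined by its restriction to a factorizing basis finishes the argument. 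You instead convert $\mathcal{A}$ into a $\mathcal{C}_n$-algebra and compare with~(1) directly through $\varphi$. The paper's route has the advantage that the reduction to the hypothesis becomes a one-line appeal to the basis formalism from~\cite{CG}, whereas your route requires (as you correctly flag) unpacking the inverse equivalence of Proposition~\ref{P:En=Fact} and arguing that the pointwise compatibility diagrams assemble coherently over the spaces $\mathcal{C}_n(r)$ into a genuine $\mathcal{C}_n$-algebra morphism. Both arguments are sound; the paper's is slightly more economical on that last coherence point.
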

\begin{proof} The statement for coalgebra follows \emph{mutatis mutandis} from the one for algebras; we only prove the last one.

The $E_n$-algebra structure defined by (1), that is by the action of
the little $n$-dimensional cube operad on $A$ yields a locally
factorization algebra $\mathcal{A}'$ on $\R^n$ which is equivalent
to the $E_n$-structure given by (1) and satisfies
$\mathcal{A}'(U)\cong \int_U A$ (see Theorem~\ref{P:En=Fact} and
Theorem~\ref{T:Theorem6GTZ2}).
 Thus
we only have to prove that  $\mathcal{A}'$ is equivalent to
$\mathcal{A}$ as a factorization algebra on  $(0,1)^n$. Let us
analyze further the construction of  $\mathcal{A}'$. The
$\mathcal{C}_n$-action on $A$ gives a structure of
$\mathbb{E}_{\R^n}^{\otimes}$-algebra to $A$, where
$\mathbb{E}_{\R^n}^{\otimes}$ is the $\infty$-operad introduced by
Lurie in~\cite[Section 5.2]{L-HA}, that is, the operad whose
algebras are precisely those given by Definition~\ref{D:EnasDisk}.
  The canonical map of
operad $\mathcal{C}_n\to \mathbb{E}_{\R^n}^{\otimes}$ is an
equivalence by the results of Lurie~\cite[Example 5.2.4.3]{L-HA}.
 By~\cite[Theorem 5.2.4.9]{L-HA} (also see
\S~\ref{S:EnasFact}), the $\infty$-operad map
$N(Disk(\R^n))^{\otimes}\to \mathbb{E}_{\R^n}^{\otimes}$ now yields
the locally constant factorization algebra  structure on
$\mathbb{R}^n$ (denoted $\mathcal{A}'$ above). Let $\mathcal{R}$ be
the factorizing basis  of $(0,1)^n$ given by the open rectangles and
denote $\text{PFac}_{\mathcal{R}}^{lc}$  the category of locally
constant $\mathcal{R}$-prefactorization algebras
(Definition~\ref{D:UPrefactAlg}).
 Then, similarly, we have an equivalence $\mathcal{C}_n\text{-Alg} \stackrel{\simeq}\to\text{PFac}_{\mathcal{R}}^{lc}$ which
  fits into the following commutative diagram:
\begin{equation}\label{eq:Cuben=Fact}\xymatrix{
\mathcal{C}_n\text{-Alg}  \ar[rr]^{\simeq} &&
\text{PFac}_{\mathcal{R}}^{lc} \\
\mathbb{E}_{\R^n}^{\otimes}\text{-Alg}
\ar[u]^{\simeq}\ar[rr]^{\simeq}&& \text{Fac}_{(0,1)^n}^{lc}
\ar@{^{(}->}[u] }
\end{equation}
where the right arrow is given by restrictions to the opens
belonging to $\mathcal{R}$.
  In particular, we
have natural (with respect to the factorization algebra structure)
equivalences $\mathcal{A}'(c)\stackrel{\simeq}\to A$ for any open
any little rectangle $c\in \mathcal{R}$.

\smallskip

Since the family of open cubes inside $\R^n$ forms a factorization
basis of $\R^n$, it is enough, by
Proposition~\ref{P:extensionfrombasis}, to check that the
factorization algebra structures on $\mathcal{A}'$ and $\mathcal{A}$
are equivalent on the bais $\mathcal{R}$ of open rectangles. From
diagram~\ref{eq:Cuben=Fact}, we see that this is precisely the
compatibility condition~\eqref{eq:compatibilitycubetodisk} of the
Lemma. The result follows.
\end{proof}

\begin{rem} It is possible, though more technically involved, to use directly, in the spirit of
Section~\ref{S:Edcochains}, the little cube operad $\mathcal{C}_n$ to
make $RHom^{\mathcal{E}_n} \Big({A}, {B}  \Big)$ an $E_n$-algebra. We now sketch how to do this,
leaving to the interested reader the task to fill in the many details.

\smallskip

We let again $\mathcal{A}$, $\mathcal{B}$ be the factorization algebras corresponding to $A$, $B$.
Recall that we have  factorizations algebras $\mathcal{A}^{\otimes k}$, $\mathcal{B}^{\otimes k}$
on $\coprod_{i=1}^{k} D^n$ and similarly for $\mathcal{B}$.
Let $c\in \mathcal{C}_n(r)$ be a framed embedding $\coprod_{i=1}^r D^n \hookrightarrow D^n$.
Then the little cube $c$ induces a natural (in $\mathcal{A}$ and $c$) equivalence
$\mathcal{A}_{| c^{-1}(D^n)} \cong \mathcal{A}^{\otimes r}$.
We can define a map
\begin{equation*} comp_{r}(f,c): RHom^{\mathcal{E}_n} \Big({A}, {B}  \Big)^{\otimes r}
\longrightarrow RHom^{\mathcal{E}_n} \Big({A}, {B}  \Big)\end{equation*}
similarly to the definition of the structure maps~\eqref{eq:StructFactcenter}. Indeed,
 we first use $c$ to see the factorization algebras $\mathcal{A}$ and $\mathcal{B}$
on $D^n$ as obtained by gluing together  $r+1$-factorization algebras on $D^n$.  The
 image $c(\coprod_{i=1}^r D^n)$ has $r$-open connected components, denoted $D_1,\dots, D_r$.
Choosing small collars $col_1, \dots col_r$  in the neighborhood of
each $D^n\subset \coprod_{i=1}^r D^n$ yield a connected open set
 $U_{\partial}:= D^n \setminus c\big(\coprod_{i=1}^r (D^n-col_i)\big)$.
Since $c$ is an embedding, it induces an identification $\mathcal{A}\cong \mathcal{A}_{|D_i}$
for each $i=1\dots r$. Thus  from any family of maps
$g_1,\dots, g_r \in RHom^{\mathcal{E}_n}\big (\mathcal{A}, \mathcal{B}\big)$,
we get induced maps of factorization algebras $g_i:\mathcal{A}_{|D_i} \to \mathcal{B}_{|D_i}$.
 Further,  we also have, by restriction of $f$ to the open set $U_c$, an induced map
$f_c:\mathcal{A}_{|U_c} \to \mathcal{B}_{|U_c}$.

The argument of the proof of Lemma~\ref{L:glueFactmaps} apply to show
\begin{lem}\label{L:glueFactmapscubes}
The family $(g_1,\dots, g_r, f_c)$ of maps of factorization algebras glues together to define
a map of factorization algebras
$$comp_r(f,c)\,(g_1,\dots, g_r)\in Hom(\mathcal{A}, \mathcal{B}) $$
Further, on global sections, the induced map
$comp_r(f,c)\,(g_1,\dots, g_r)\big( D^n\big): A\to B$ is a map of $A$-$E_n$-module.
\end{lem}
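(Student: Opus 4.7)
The plan is to mimic the proof of Lemma~\ref{L:glueFactmaps}, with the only change being that the role of the arbitrary pairwise disjoint sub-disks $U_i \subset D$ is now played by the components $D_i$ of the image $c(\coprod_{i=1}^r D^n)$, together with the chosen collars $col_i$. The key point is that the framed embedding $c \colon \coprod_{i=1}^r D^n \hookrightarrow D^n$ gives canonical equivalences $\mathcal{A}_{|D_i} \simeq \mathcal{A}$ and $\mathcal{B}_{|D_i} \simeq \mathcal{B}$ through which the given maps $g_i \in RHom^{\mathcal{E}_n}(\mathcal{A},\mathcal{B})$ are transferred to factorization algebra maps $g_i \colon \mathcal{A}_{|D_i} \to \mathcal{B}_{|D_i}$. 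The cover $\{D_1,\dots,D_r, U_\partial\}$ of $D^n$ has the crucial property that all triple intersections are empty, and the only non-empty double intersections are $D_i \cap U_\partial = col_i \cong S^{n-1}\times (0,\epsilon)$.

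For the first assertion, I would show that on each collar $col_i$ the two maps $g_i|_{col_i}$ and $(f_c)|_{col_i}$ agree up to coherent equivalence. As in the proof of Lemma~\ref{L:glueFactmaps}, this follows from Proposition~\ref{P:EnMod} (combined with Lemma~\ref{L:A=intDA} and Remark~\ref{R:Enmodulestructure}): the restriction of an $\mathcal{A}(D_i)$-$E_n$-module map $g_i$ to a collar surrounding the puncture/basepoint is determined by the underlying $E_n$-module structure of $\int_{D_i}B$, which in turn is given by $\int f$. Since $f_c$ is by definition the restriction of $f \colon \mathcal{A}\to\mathcal{B}$ to $U_\partial$, we get $g_i|_{col_i} \simeq (f_c)|_{col_i}$, so by the descent property of factorization algebras on a cover (\S\ref{SS:FactAlgebraFactHomology}, \cite{CG}), the family $(g_1,\dots,g_r,f_c)$ glues to a well-defined element of $Hom(\mathcal{A},\mathcal{B})$, independent (in $\hkmod$) of the choice of collars.

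For the second assertion, by Proposition~\ref{P:EnMod} it is enough to verify that the induced map $comp_r(f,c)(g_1,\dots,g_r)\big(D^n\big) \colon A \to B$ is a map of left $\int_{S^{n-1}}A$-modules. Choose a closed sub-disk $\tilde D \subset D^n$ containing $c(\coprod_{i=1}^r D^n)$; then $D^n \setminus \tilde D \cong S^{n-1}\times(0,\epsilon')$ lies entirely inside $U_\partial$, and by Theorem~\ref{T:Theorem6GTZ2} the section of $\mathcal{A}$ on this annulus computes $\int_{S^{n-1}}A$. By construction, the restriction of $comp_r(f,c)(g_1,\dots,g_r)$ to $D^n \setminus \tilde D$ coincides with $f$, which is an honest $E_n$-algebra map and therefore automatically compatible with the module structure. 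This gives exactly what is required.

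The main obstacle, as in the proof of Lemma~\ref{L:glueFactmaps}, will be the verification of the collar compatibility $g_i|_{col_i}\simeq f|_{col_i}$: it requires unwinding the factorization algebra description of the $E_n$-module structure on $\int_{D_i}B$ induced by the $E_n$-algebra map $f$, and checking that any $\mathcal{A}(D_i)$-$E_n$-module map out of $\int_{D_i}A \simeq A$ is necessarily implemented by $f$ on a sufficiently small neighborhood of the boundary sphere inside $D_i$. All the remaining verifications (independence of collars, coherence under refinement, and the module identity on the outermost annulus) then proceed exactly as in the proof of Lemma~\ref{L:glueFactmaps}.
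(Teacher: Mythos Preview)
Your proposal is correct and follows essentially the same approach as the paper: the paper simply states that ``the argument of the proof of Lemma~\ref{L:glueFactmaps} apply to show'' this lemma, and the (suppressed) proof in the paper's source matches yours step by step---reducing gluing to the collar intersections $D_i\cap U_c=col_i$, using Proposition~\ref{P:EnMod} to identify $g_i|_{col_i}$ with the map induced by the $\int_{S^{n-1}}A$-module structure (hence by $f$), and then checking the $A$-$E_n$-module property on global sections by choosing a closed sub-disk $\tilde D\supset D_1\coprod\cdots\coprod D_r$ and observing the glued map restricts to $f_c$ on the outer annulus $D^n\setminus\tilde D\subset U_c$.
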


It follows (from the above Lemma~\ref{L:glueFactmaps}) that we have a well-defined  map $(g_1,\dots, g_r)\mapsto comp_r(f,c)(g_1,\dots, g_r)(D^n)$, simply denoted by
\begin{equation}\label{eq:compfc}
 comp_{r}(f,c): RHom^{\mathcal{E}_n} \Big({A}, {B}  \Big)^{\otimes r}
\longrightarrow RHom^{\mathcal{E}_n} \Big({A}, {B}  \Big).
\end{equation}

Recall that the set of $A$-$E_n$-modules homomorphisms is simplicially enriched. Similarly,
there are simplicial sets of maps of factorization algebras, see~\cite{CG}. Equivalently,
 we have topological spaces of such maps.
Using the fact that the factorization
algebras $\mathcal{A}$ and $\mathcal{B}$ are locally constant, one can prove the following
 \begin{lem}\label{L:compContinuity}
  The map $comp_{r}(f,c): RHom^{\mathcal{E}_n} \Big({A}, {B}  \Big)^{\otimes r}
\longrightarrow RHom^{\mathcal{E}_n} \Big({A}, {B}  \Big) $ depends continuously on $c$.
 \end{lem}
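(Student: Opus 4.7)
The plan is to exploit the topological enrichment inherent in the definition of locally constant factorization algebras on $\R^n$, which by Proposition~\ref{P:En=Fact} correspond to $E_n$-algebras and hence carry an action of the topological operad $\mathcal{C}_n$. Fix a configuration $c_0\in\mathcal{C}_n(r)$ and work over a small open neighborhood $W\subset\mathcal{C}_n(r)$ of $c_0$, over which everything in the construction of Lemma~\ref{L:glueFactmapscubes} can be parametrized continuously. For $c\in W$, the embedded disks $D_i(c)=c_i(D^n)$ vary continuously, and one can choose collars $col_i(c)\subset D_i(c)$ continuously as well; for instance, exploiting that $c_i$ carries a canonical radial coordinate, one may set $col_i(c)=c_i(D^n\setminus \overline{D(0,1/2)})$. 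Consequently the boundary region $U_\partial(c)=D^n\setminus \coprod_i(D_i(c)-col_i(c))$ and the factorizing cover $\{D_i(c), U_\partial(c)\}$ of $D^n$ vary continuously with $c\in W$.

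Next, I would decompose the value $comp_r(f,c)(g_1,\dots,g_r)$ into three ingredients, each of which is manifestly continuous in $c$:
\begin{enumerate}
\item[(a)] the maps $\mathcal{A}|_{D_i(c)}\to\mathcal{B}|_{D_i(c)}$ obtained by transporting $g_i$ along the framed embedding $c_i\colon D^n\hookrightarrow D^n$;
\item[(b)] the restriction $f|_{U_\partial(c)}\colon \mathcal{A}|_{U_\partial(c)}\to\mathcal{B}|_{U_\partial(c)}$;
\item[(c)] the descent/gluing of these maps along the cover $\{D_i(c),U_\partial(c)\}$.
\end{enumerate}
Continuity of (a) is the structural fact that a locally constant factorization algebra on $\R^n$, being (by Proposition~\ref{P:En=Fact} and Definition~\ref{D:EnasDisk}) a symmetric monoidal functor out of the topologically enriched category $\mathop{Disk}^{fr}_n$, sends continuous families of framed embeddings to continuous families of chain maps. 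Continuity of (b) follows because $f\colon\mathcal{A}\to\mathcal{B}$ is a fixed morphism of (locally constant) factorization algebras on $\R^n$ and $U_\partial(c)$ varies continuously with $c$; again local constancy ensures that the restrictions over varying open sets assemble into a continuous family. Continuity of (c) is a tautology: the cosheaf-like descent producing a global map from compatible local data is natural in, and hence continuous with respect to, the cover, provided the compatibility on overlaps (established in the proof of Lemma~\ref{L:glueFactmapscubes}) is preserved along the family, which it is since collars were chosen continuously.

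Combining these three continuous dependencies through the topologically enriched mapping space of factorization algebras yields the desired continuous map
\[
\mathcal{C}_n(r)\;\longrightarrow\;\mathop{Map}\!\Big(RHom^{\mathcal{E}_n}(A,B)^{\otimes r},\,RHom^{\mathcal{E}_n}(A,B)\Big),
\]
whose value at $c$ is $comp_r(f,c)$. The main obstacle is the passage from the $\infty$-categorical framework used throughout the paper, where mapping spaces are only defined up to coherent homotopy, to an honestly continuous map of topological spaces. I would address this by working with the explicit point-set model $N(\mathop{Disk}(\R^n)')$ of Remark~\ref{R:EdHochwithDiskprimeoperad}, in which each embedded disk carries a canonical parameterization and hence canonical collars; with this model the choices made in Lemma~\ref{L:glueFactmapscubes} become canonical and the continuity of $c\mapsto comp_r(f,c)$ is manifest from the $\mathcal{C}_n$-action on the factorization algebra of Theorem~\ref{T:EnAlgHoch}.(1), via the compatibility between the little cubes action and the factorization structure established in Lemma~\ref{L:cubetoDisk}.
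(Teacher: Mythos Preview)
The paper does not actually prove this lemma. It appears inside a remark that explicitly says ``We now sketch how to do this, leaving to the interested reader the task to fill in the many details,'' and the only hint given is the sentence immediately preceding the lemma: ``Using the fact that the factorization algebras $\mathcal{A}$ and $\mathcal{B}$ are locally constant, one can prove the following.'' So there is no proof in the paper to compare against; your proposal is precisely the kind of elaboration the authors invite the reader to supply.

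Your outline is consistent with that hint and with the surrounding machinery: you correctly identify local constancy as the key input (so that transporting $g_i$ along the moving embedding $c_i$ and restricting $f$ to the moving complement $U_\partial(c)$ yield continuously varying equivalences), you use the topological enrichment of $\text{Disk}^{fr}_n$ from Definition~\ref{D:EnasDisk} to make sense of ``continuous families of embeddings act continuously,'' and you invoke the canonical-collar model of Remark~\ref{R:EdHochwithDiskprimeoperad} to rigidify the choices. This is exactly the spirit of the paper's approach. The one place where your argument is still genuinely sketchy is step~(c), the continuity of gluing/descent in the cover: the \v{C}ech complex of Remark~\ref{R:FactAlgisFactAlg} is not obviously a continuous functor of the cover at the point-set level, and making this precise requires either working entirely in the $\infty$-categorical setting (where the gluing is a homotopy colimit and continuity is automatic up to coherent homotopy) or fixing a strict model as you suggest at the end. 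But since the paper itself leaves all of this to the reader, your proposal is as complete as one could reasonably expect, and in fact more detailed than what the paper provides.
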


The above Lemma~\ref{L:compContinuity} allows us to consider the maps  $comp_{r}(f,c)$ in families over the operad of little cubes and thus one can  let $c$ runs through the operad
$\mathcal{C}_n(r)$ so that we get  the first part of the following result.

\begin{prop}\label{P:comprf} Let $f:A\to B$ be a map of $E_n$-algebras.
\begin{enumerate}\item The maps $comp_r(f,c)$ assembles to give a map
$$comp_r(f): C_{\ast}\big(\mathcal{C}_n(r)\big) \otimes
RHom^{\mathcal{E}_n} \Big({A}, {B}  \Big)^{\otimes r}
\longrightarrow RHom^{\mathcal{E}_n} \Big({A}, {B}  \Big)$$ in $k\text{-}Mod_\infty$.
\item The maps $comp_r(f)$  gives to $ RHom^{\mathcal{E}_n} \Big({A}, {B}  \Big)$ a natural $E_n$-algebra
structure.\end{enumerate}
\end{prop}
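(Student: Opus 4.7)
The plan is to deduce the proposition from Lemmas~\ref{L:glueFactmapscubes} and~\ref{L:compContinuity} together with the comparison result Lemma~\ref{L:cubetoDisk}, so as to piggyback on the $E_n$-algebra structure already constructed in Theorem~\ref{T:EnAlgHoch}.

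For assertion (1), I would observe that Lemma~\ref{L:compContinuity} promotes the pointwise construction $c\mapsto comp_{r}(f,c)$ into a continuous map from $\mathcal{C}_n(r)$ into the space (or simplicial set) of morphisms $\mathop{Map}\big(RHom^{\mathcal{E}_n}(A,B)^{\otimes r}, RHom^{\mathcal{E}_n}(A,B)\big)$ in $\hkmod$. Passing to singular chains, and using that $C_{\ast}(\mathop{Map}(X,Y))$ maps naturally to $RHom(C_\ast(X), C_\ast(Y))$ and that $RHom^{\mathcal{E}_n}(A,B)$ is already an object of $\hkmod$, yields the desired map $comp_r(f):C_{\ast}(\mathcal{C}_n(r))\otimes RHom^{\mathcal{E}_n}(A,B)^{\otimes r}\to RHom^{\mathcal{E}_n}(A,B)$. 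The naturality in $c$ already encodes the $\Sigma_r$-equivariance since we are taking singular chains of $\mathcal{C}_n(r)$ with its natural symmetric group action.

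For assertion (2), I would verify that the family of maps $comp_r(f)$ is compatible with the operadic composition of the little cubes operad $\mathcal{C}_n$ and has $\int_{\R^n} f$ as a unit. Rather than checking this directly (which would require tracking simultaneous gluings through nested cube configurations), I would instead invoke the compatibility Lemma~\ref{L:cubetoDisk} applied to $A=RHom^{\mathcal{E}_n}(A,B)$ equipped with the factorization algebra $\mathcal{F}: U\mapsto RHom^{\mathcal{E}_n}(A,B)(U)$ of Theorem~\ref{T:EnAlgHoch} and the prospective $\mathcal{C}_n$-action given by $comp_r(f)$. The crucial point is that, for any configuration $c=(c_1,\dots,c_r)\in \mathcal{C}_n(r)$ with image sub-disks $D_1,\dots,D_r\subset D^n$, both the operadic map $comp_r(f,c)$ and the factorization algebra structure map $\rho_{D_1,\dots,D_r,D^n}$ are constructed by the \emph{identical} gluing recipe: apply each $g_i$ on the sub-disk $D_i$ and apply $f$ on the complement of small collars (compare the proof of Lemma~\ref{L:glueFactmapscubes} with that of Lemma~\ref{L:glueFactmaps}). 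Hence the compatibility diagram~\eqref{eq:compatibilitycubetodisk} commutes tautologically under the canonical identifications $\mathcal{F}(c_i)\simeq RHom^{\mathcal{E}_n}(A,B)$ provided by local constancy. Lemma~\ref{L:cubetoDisk} then gives the full $E_n$-algebra structure and, simultaneously, the equivalence with the structure from Theorem~\ref{T:EnAlgHoch}.

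The main obstacle I expect is at the level of bookkeeping: one has to make sure that the choice of collars used to define $comp_r(f,c)$ can be made to vary continuously in $c$, so that Lemma~\ref{L:compContinuity} genuinely yields a map out of $\mathcal{C}_n(r)$ as a topological space and not just out of its underlying set. A clean way to handle this is, following the variant described in Remark~\ref{R:EdHochwithDiskprimeoperad}, to replace the auxiliary choice of a collar by a canonical one (e.g.\ the complement of a fixed smaller sub-cube inside each $c_i$), so that the structure map depends manifestly continuously on $c$. Once this is in place, the identification with the Theorem~\ref{T:EnAlgHoch} structure via Lemma~\ref{L:cubetoDisk} is essentially formal and the $E_n$-algebra axioms are inherited from the factorization algebra side.
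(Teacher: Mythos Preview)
Your argument for (1) matches what the paper does: it simply says that Lemma~\ref{L:compContinuity} allows the maps $comp_r(f,c)$ to assemble over families in $c\in\mathcal{C}_n(r)$ and thus yields the first claim.

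For (2) the paper and you diverge. The paper's one-line justification is that the proof ``is essentially the same as the ones of Theorem~\ref{T:EnAlgHoch} and Theorem~\ref{T:EdHoch}'', i.e.\ one directly verifies the operadic composition relation for nested configurations of cubes by unwinding the gluing construction (precisely the check you said you would rather avoid). Your alternative is to bootstrap from the already-proven Theorem~\ref{T:EnAlgHoch} via Lemma~\ref{L:cubetoDisk}.

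There is, however, a circularity in how you invoke Lemma~\ref{L:cubetoDisk}: that lemma has as hypothesis~(1) that the object \emph{already carries} a $\mathcal{C}_n$-algebra structure, and then compares it with a given factorization-algebra structure. It does not manufacture a $\mathcal{C}_n$-action out of a candidate family of maps, which is what assertion~(2) asks for. So as written your argument does not close.

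Your underlying observation --- that $comp_r(f,c)$ and $\rho_{c_1,\dots,c_r,D^n}$ are literally the same gluing recipe --- is correct and does salvage the approach, but not \emph{through} Lemma~\ref{L:cubetoDisk}. The clean fix is: since the factorization-algebra structure maps $\rho$ satisfy the associativity identity~\eqref{eq:idnatFact} (proved in Theorem~\ref{T:EnAlgHoch}), and since $comp_r(f,c)$ coincides with $\rho_{c_1,\dots,c_r,\R^n}$ under the canonical identifications, the maps $comp_r(f,c)$ automatically satisfy the $\mathcal{C}_n$-operad relations. This is just the restriction of the factorization algebra to the sub-operad of rectilinear cubes, so Proposition~\ref{P:En=Fact} (or, equivalently, the \emph{proof} of Lemma~\ref{L:cubetoDisk} rather than its statement) yields the $E_n$-structure. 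Phrased this way your shortcut is legitimate and genuinely shorter than the paper's direct verification, at the cost of relying on Theorem~\ref{T:EnAlgHoch} as a black box rather than giving an independent construction.
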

The proof of the second assertion of this Proposition is essentially the same as the ones of
Theorem~\ref{T:EnAlgHoch} and Theorem~\ref{T:EdHoch}.
\end{rem}

\subsection{$E_n$-Hochschild cohomology as centralizers}\label{S:maincentralizers}
We will now relate the natural $E_n$-algebra structure of
$RHom_{A}^{\mathcal{E}_n}(A,B)$ (for an $E_n$-algebra
map $f:A\to B$) given in Section~\ref{S:Prelimcentralizers}
with the centralizer $\mathfrak{z}(f)$.
The following definition is due to Lurie~\cite{L-HA, L-VI}
(and generalize the notion of center of a category due to Drinfeld).

\begin{definition}\label{D:center}
The (derived) centralizer of an $E_n$-algebra map $f:A\to B$ is the \emph{universal} $E_n$-algebra $\mathfrak{z}(f)$ endowed with a homomorphism of $E_n$-algebras $e_{\mathfrak{z}(f)}: A\otimes \mathfrak{z}(f)\to B$  making the following diagram
\begin{equation}
\label{eq:Defcenter} \xymatrix{ & A\otimes \mathfrak{z}(f) \ar[rd]^{e_{\mathfrak{z}(f)}} & \\
A \ar[ru]^{id\otimes 1_{\mathfrak{z}(f)}} \ar[rr]^{f} && B  }
\end{equation}
commutative in $E_n$-Alg.
\end{definition}
The existence of the derived centralizer $\mathfrak{z}(f)$ of   an $E_n$-algebra map $f:A\to B$ is a \emph{non-trivial} Theorem of Lurie~\cite{L-HA, L-VI}.
The universal property of the centralizer implies that there are natural
maps of $E_n$-algebras
\begin{equation}
\label{eq:z(circ)} \mathfrak{z}(\circ): \mathfrak{z}(f)\otimes \mathfrak{z}(g) \longrightarrow \mathfrak{z}(g\circ f)
\end{equation}
see~\cite{L-HA, L-VI}.

The $E_n$-algebra structure on the $\mathcal{E}_n$-Hochschild cohomology given by
Theorem~\ref{T:EnAlgHoch} gives an explicit description of the centralizer
$\mathfrak{z}(f)$ (as an $E_n$-algebra):
\begin{prop}\label{P:HHEn=z} Let $f:A\to B$ be an $E_n$-algebra map and endow $HH_{\mathcal{E}_n}(A,B)$ with the $E_n$-algebra structure given by Theorem~\ref{T:EnAlgHoch}.

Then the $\mathcal{E}_n$-Hochschild cohomology
 $HH_{\mathcal{E}_n}(A,B)\cong RHom^{\mathcal{E}_n}_{A} \big({A}, {B}  \big)$ is the centralizer
$\mathfrak{z}(f)$, \emph{i.e.}, there is a natural  equivalence of $E_n$-algebras
$HH_{\mathcal{E}_n}(A,B)\cong \mathfrak{z}(f)$ such that, for any $E_n$-algebra map $g:B\to C$,
the following diagram
$$\xymatrix{RHom^{\mathcal{E}_n}_{A} \big({A}, {B}  \big)
\otimes RHom^{\mathcal{E}_n}_{B} \big({B}, {C}  \big) \ar[d]_{\circ}
\ar[rr]^{\hspace{4pc}\cong\otimes \cong} && \mathfrak{z}(f)\otimes
\mathfrak{z}(g) \ar[d]^{\mathfrak{z}(\circ)} \\
RHom^{\mathcal{E}_n}_{A} \big({A}, {C}  \big) \ar[rr]^{\cong} &
&\mathfrak{z}(g\circ f) } $$ commutes in $E_n\text{-Alg}$.
\end{prop}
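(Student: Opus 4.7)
The plan is to exhibit the $E_n$-algebra $HH_{\mathcal{E}_n}(A,B)$, endowed with an explicit evaluation morphism $e \colon A \otimes HH_{\mathcal{E}_n}(A,B) \to B$ constructed from the factorization algebra of Theorem~\ref{T:EnAlgHoch}, and to verify that $(HH_{\mathcal{E}_n}(A,B), e)$ satisfies the universal property of Definition~\ref{D:center}; by the uniqueness of the centralizer this produces the desired equivalence of $E_n$-algebras with $\mathfrak{z}(f)$. The compatibility with composition will then follow from the universal property applied to the evident composed evaluation.

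Keeping the notation $\mathcal{A}, \mathcal{B}$ of the proof of Theorem~\ref{T:EnAlgHoch} and writing $\mathcal{Z}$ for the factorization algebra $U \mapsto RHom^{\mathcal{E}_n}_{\mathcal{A}(U)}\bigl(\int_U A, \int_U B\bigr)$ on $\mathbb{R}^n$, I would define an evaluation on each open disk $U$ by
$$ev_U \colon \mathcal{A}(U) \otimes \mathcal{Z}(U) \longrightarrow \mathcal{B}(U), \qquad a \otimes \phi \longmapsto \phi(a),$$
using the equivalence $\int_U A \cong \mathcal{A}(U)$ of Theorem~\ref{T:Theorem6GTZ2} to view $a$ in the source of $\phi$. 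The very definition of the structure maps $\rho_{U_1, \dots, U_r, V}$ in the proof of Theorem~\ref{T:EnAlgHoch}---gluing the chosen module maps $g_i$ on the $U_i$ with $f$ on the boundary $U_\partial$---is exactly what makes these $ev_U$ assemble into a morphism of factorization algebras $\mathcal{A} \otimes \mathcal{Z} \to \mathcal{B}$, and hence, on global sections, into the desired $E_n$-algebra map $e$. Since Theorem~\ref{T:EnAlgHoch}(2) identifies the unit of $HH_{\mathcal{E}_n}(A,B)$ with $f$ itself, the composition $A \cong A \otimes k \to A \otimes HH_{\mathcal{E}_n}(A,B) \xrightarrow{e} B$ equals $f$, so diagram~\eqref{eq:Defcenter} commutes.

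The main step is the universal property: given an $E_n$-algebra $Z'$ and an $E_n$-algebra map $e' \colon A \otimes Z' \to B$ with $e' \circ (\mathrm{id} \otimes 1_{Z'}) = f$, one must produce a unique $\widetilde{e}' \colon Z' \to HH_{\mathcal{E}_n}(A,B)$ such that $e \circ (\mathrm{id}_A \otimes \widetilde{e}') = e'$. Viewing $e'$ as a morphism $\mathcal{A} \otimes \mathcal{Z}' \to \mathcal{B}$ of locally constant factorization algebras, adjunction yields, for each disk $U$, a map $\mathcal{Z}'(U) \to RHom_k\bigl(\mathcal{A}(U), \mathcal{B}(U)\bigr)$; the key claim is that it factors through $\mathcal{Z}(U)$, i.e.\ through $\mathcal{A}(U)$-$E_n$-module maps. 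This is where the hypothesis $e' \circ (\mathrm{id} \otimes 1_{Z'}) = f$ is essential: inserting $1_{Z'}$ in a collar around $U$ forces the $\mathcal{A}$-action on $\mathcal{B}$ arising from $e'$ on that collar to coincide with the action through $f$, which is exactly the condition characterizing $\mathcal{A}(U)$-$E_n$-module maps via the factorization algebra description of Remark~\ref{R:Enmodulestructure}. The factorization $\widetilde{e}' \colon \mathcal{Z}' \to \mathcal{Z}$ is then uniquely determined, and its uniqueness (up to contractible choice) as an $E_n$-algebra map follows since $e'$ recovers $\widetilde{e}'$ by adjunction. I expect this compatibility check, in the spirit of the gluing argument of Lemma~\ref{L:glueFactmaps}, to be the main technical obstacle of the proof.

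For the last assertion, both composition maps $\circ$ (from Theorem~\ref{T:EnAlgHoch}(3)) and $\mathfrak{z}(\circ)$ fit into evaluations of the form $A \otimes HH_{\mathcal{E}_n}(A,B) \otimes HH_{\mathcal{E}_n}(B,C) \to B \otimes HH_{\mathcal{E}_n}(B,C) \to C$ obtained by stringing together the factorization-algebra evaluations of the construction step; by the uniqueness of $\widetilde{e}'$ established above, applied with $Z' = HH_{\mathcal{E}_n}(A,B) \otimes HH_{\mathcal{E}_n}(B,C)$, these two morphisms into $\mathfrak{z}(g \circ f) \cong HH_{\mathcal{E}_n}(A,C)$ must coincide, yielding the required commutative diagram in $E_n\text{-}Alg$.
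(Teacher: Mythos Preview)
Your outline matches the paper's proof closely: construct the evaluation $ev\colon A\otimes RHom_A^{\mathcal{E}_n}(A,B)\to B$ as an $E_n$-algebra map (the paper isolates this as Lemma~\ref{L:univpropHHEn}, derived from Theorem~\ref{T:EnAlgHoch}(3) together with the identification $A\cong RHom_k^{\mathcal{E}_n}(k,A)$ of Example~\ref{E:A=k}), check that the unit $f$ makes the triangle commute, and then verify the universal property via the adjoint $\theta_\phi\colon\mathfrak{z}\to RHom(A,B)$. Your argument that $\theta_\phi$ lands in $RHom_A^{\mathcal{E}_n}(A,B)$ is also the paper's.

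The one place where the paper does something you do not is in showing that $\widetilde{\theta}_\phi\colon\mathfrak{z}\to RHom_A^{\mathcal{E}_n}(A,B)$ is a map of $E_n$-algebras, and in proving uniqueness. You propose checking directly that the pointwise adjoints assemble into a factorization-algebra map; the paper instead observes that $\widetilde{\theta}_\phi$ factors as
\[
\mathfrak{z}\cong k\otimes\mathfrak{z}\xrightarrow{1\otimes id} RHom_A^{\mathcal{E}_n}(A,A)\otimes RHom_k^{\mathcal{E}_n}(k,\mathfrak{z})\longrightarrow RHom_A^{\mathcal{E}_n}(A,A\otimes\mathfrak{z})\xrightarrow{\phi_*} RHom_A^{\mathcal{E}_n}(A,B),
\]
each arrow being an $E_n$-algebra map by Theorem~\ref{T:EnAlgHoch}(2) and~(4). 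This bypasses any direct gluing check. For uniqueness, your appeal to adjunction only pins down the underlying $k$-linear map of $\alpha$, not its homotopy class in $E_n\text{-}Alg$; the paper closes this gap by exhibiting a commutative diagram of $E_n$-algebra maps (built from the same factorization and the identity~\eqref{eq:splitid}) that forces any such $\alpha$ to agree with $\widetilde{\theta}_\phi$ in $E_n\text{-}Alg$. Your approach could presumably be completed by a careful compatibility verification with the structure maps $\rho_{U_1,\dots,U_r,V}$, but the paper's factorization trick is cleaner and is the point you should take away.
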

\begin{rem}
 Note that in the proof of Proposition~\ref{P:HHEn=z}, we \emph{do not} assume the existence of
centralizers, but actually prove that $HH_{\mathcal{E}_n}(A,B)$ satisfies the universal property
of centralizers. In particular the proof of Proposition~\ref{P:HHEn=z} implies the existence of
centralizers of any map $f:A\to B$ of $E_n$-algebras.
\end{rem}

We first prove a lemma.  Denote $ev: A\otimes RHom_{A}^{\mathcal{E}_n}\big(A, B\big)\to B$ the (derived) evaluation map $(a,f)\mapsto f(a)$.
\begin{lem}\label{L:univpropHHEn}
The evaluation map $ev: A\otimes RHom_{A}^{\mathcal{E}_n}\big(A, B\big)\to B$ is an $E_n$-algebra morphism. Further, the following diagram
$$  \xymatrix{ & A\otimes RHom_{A}^{\mathcal{E}_n}\big(A, B\big) \ar[rd]^{ev} & \\
A \ar[ru]^{id\otimes 1} \ar[rr]^{f} && B  } $$
 is commutative in $E_n$-Alg.
\end{lem}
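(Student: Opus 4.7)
The plan is to exhibit the evaluation map as a morphism of the factorization algebras on $\R^n$ that encode the three $E_n$-algebras involved, so that being an $E_n$-algebra map becomes a local, pointwise statement that can be read off from the construction in Theorem~\ref{T:EnAlgHoch}. Let $\mathcal{A}$, $\mathcal{B}$ be the locally constant factorization algebras associated to $A$, $B$ by Proposition~\ref{P:En=Fact}, and let $\mathcal{Z}$ be the one associated to $HH_{\mathcal{E}_n}(A,B)$ by Theorem~\ref{T:EnAlgHoch}, so that $\mathcal{Z}(U)\cong RHom^{\mathcal{E}_n}_{\mathcal{A}(U)}\bigl(\int_U A,\int_U B\bigr)$. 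Then $A\otimes HH_{\mathcal{E}_n}(A,B)$ corresponds to the tensor factorization algebra $\mathcal{A}\otimes \mathcal{Z}$ on $\R^n$, and the plan is to define, for each open disk $U\subset\R^n$, a map
\[
ev_U\colon \mathcal{A}(U)\otimes \mathcal{Z}(U)\longrightarrow \mathcal{B}(U),\qquad (a,g)\longmapsto g(a),
\]
and prove that the collection $(ev_U)$ assembles into a morphism of factorization algebras $\mathcal{A}\otimes\mathcal{Z}\to\mathcal{B}$. By Proposition~\ref{P:En=Fact}, taking global sections will then yield the desired $E_n$-algebra morphism $ev\colon A\otimes HH_{\mathcal{E}_n}(A,B)\to B$.

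The first step will be to verify compatibility with the structure maps. Fix pairwise disjoint sub-disks $U_1,\dots,U_r$ in a larger disk $V$, together with $a_i\in\mathcal{A}(U_i)$ and $g_i\in\mathcal{Z}(U_i)$. The structure map of $\mathcal{A}\otimes\mathcal{Z}$ sends $\bigotimes_i(a_i\otimes g_i)$ to $(a_1\cdots a_r)\otimes \rho_{U_1,\dots,U_r,V}(g_1,\dots,g_r)$, where $a_1\cdots a_r$ denotes the image of $\bigotimes a_i$ under the factorization structure of $\mathcal{A}$. Applying $ev_V$ then amounts to evaluating the glued map $\rho_{U_1,\dots,U_r,V}(g_1,\dots,g_r)\colon \mathcal{A}|_V\to \mathcal{B}|_V$ on this element. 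By the very construction of $\rho_{U_1,\dots,U_r,V}$ from Lemma~\ref{L:glueFactmaps}, restricted to each $U_i$ the glued map agrees with $g_i$ (and with $\int_{U_\partial}f$ on the boundary region), so evaluation on $a_1\cdots a_r$ equals the $\mathcal{B}(V)$-product of the elements $g_i(a_i)\in \mathcal{B}(U_i)$. This is exactly what one obtains by first applying $ev_{U_i}$ componentwise and then using the structure map of $\mathcal{B}$; hence the required square commutes, and the $ev_U$'s define a morphism of factorization algebras.

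The second step is to check the unit condition, i.e., that $ev\circ (id\otimes 1)\simeq f$. By Theorem~\ref{T:EnAlgHoch}(2), the unit of the $E_n$-algebra $HH_{\mathcal{E}_n}(A,B)$ is the canonical element $f\in RHom_A^{\mathcal{E}_n}(A,B)$ (that is, $1$ corresponds to $\int_{\R^n}f$ under the identification with global sections, compare Remark~\ref{R:StructMapemptytoD}). Thus $ev\circ(id\otimes 1)$ sends $a$ to $ev(a\otimes f)=f(a)$, proving the commutativity of the triangle. The main (but mild) obstacle is bookkeeping: one must carefully identify the multiplication on $\mathcal{A}(V)$ of elements coming from the $\mathcal{A}(U_i)$'s with the action of $\rho_{U_1,\dots,U_r,V}(g_1,\dots,g_r)$ via the stratified module structure of Remark~\ref{R:Enmodulestructure}, but this is precisely the content of Lemma~\ref{L:glueFactmaps} and so requires no additional argument.
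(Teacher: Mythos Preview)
Your proof is correct, but it takes a different route from the paper's. The paper's argument is a two-line deduction from machinery already in place: it invokes the canonical equivalences $A\cong RHom_k^{\mathcal{E}_n}(k,A)$ and $B\cong RHom_k^{\mathcal{E}_n}(k,B)$ of Example~\ref{E:A=k}, and then identifies the evaluation map with the composition
\[
RHom_k^{\mathcal{E}_n}(k,A)\otimes RHom_A^{\mathcal{E}_n}(A,B)\;\stackrel{\circ}{\longrightarrow}\;RHom_k^{\mathcal{E}_n}(k,B),
\]
which is an $E_n$-algebra map by Theorem~\ref{T:EnAlgHoch}(3). The commutativity of the triangle then follows immediately from the fact (also from Theorem~\ref{T:EnAlgHoch}) that $f$ is the unit of $RHom_A^{\mathcal{E}_n}(A,B)$.

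Your approach instead unpacks the factorization-algebra structure directly and checks compatibility of $ev_U$ with the structure maps by hand, using the gluing description of $\rho_{U_1,\dots,U_r,V}$ from Lemma~\ref{L:glueFactmaps}. This works, and in some sense is a specialization of the argument the paper already carried out in proving statement~(3) of Theorem~\ref{T:EnAlgHoch} (the commutativity of diagram~\eqref{eq:EnAlgHochcomp} there is essentially the same computation you do). The paper's approach is shorter and more conceptual because it treats the evaluation map as a special case of the composition pairing rather than as a new object requiring its own verification; your approach has the virtue of being self-contained and making the local mechanism explicit. Your second step, on the unit, is identical to the paper's.
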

\begin{proof}
There are canonical equivalences of $E_n$-algebras $RHom_{k}^{\mathcal{E}_n}(k,A) \cong A$ and $RHom_{k}^{\mathcal{E}_n}(k,B) \cong B$ (see Example~\ref{E:A=k}).  Thus, the fact that $ev$ is a map of $E_n$-algebras follows from statement (3) in Theorem~\ref{T:EnAlgHoch}. Further,  the same Theorem implies that the unit of $RHom_{A}^{\mathcal{E}_n}\big(A, B\big)$ is $f:A\to B$. It follows that $ev \circ (id\otimes 1)=f$ which proves the Lemma.
\end{proof}

\begin{proof}[Proof of Proposition~\ref{P:HHEn=z}] By Lemma~\ref{L:univpropHHEn},
 we already know that $HH_{\mathcal{E}_n}^{\bullet}(A,B)\cong RHom_{A}^{\mathcal{E}_n}(A,B)$ fits
into a commutative diagram similar to diagram~\eqref{eq:UnivProp} below.
 We have to prove that for any $E_n$-algebra $\mathfrak{z}$,
endowed with a $E_n$-algebra map $\phi: A\otimes \mathfrak{z}\to B$ fitting in a commutative diagram
\begin{equation}
\label{eq:UnivProp} \xymatrix{ & A\otimes \mathfrak{z} \ar[rd]^{\phi} & \\
A \ar[ru]^{id\otimes 1_{\mathfrak{z}}} \ar[rr]^{f} && B  \, ,}
\end{equation} there exists an $E_n$-algebra map $\mathfrak{z}\to RHom_{A}^{\mathcal{E}_n}$
which makes $A\otimes \mathfrak{z}\stackrel{\phi}\longrightarrow B$
factor through
 $ A\otimes RHom_{A}^{\mathcal{E}_n}\big(A, B\big)\stackrel{ev}\longrightarrow B$.

Let $\theta_{\phi}: \mathfrak{z}\to RHom(A,B)$ be the  map associated to
 $\phi: A\otimes \mathfrak{z}\to B$
under the (derived) adjunction
$RHom(A\otimes \mathfrak{z}, B) \cong RHom\big(\mathfrak{z}, RHom(A,B)\big)$ (in $\hkmod$).

We now prove that $\theta_{\phi}$ takes values in $RHom_{A}^{\mathcal{E}_n}(A,B) $. We use again
the factorization algebra characterization of $E_n$-algebras.
 Let $\mathcal{A}$,  $\mathcal{B}$ and $\mathcal{Z}$ be the locally constant factorization
algebras associated to $A$, $B$ and $\mathfrak{z}$.
For any open sub-disk $D\hookrightarrow D^n$, we get the induced map\footnote{we make, for simplicity, an abuse of notation still denoting by $\phi$ the induced map and similarly with $\theta_{\phi}$ below}
$$\phi: \big(\mathcal{A}\otimes \mathcal{Z}\big)(D)\cong \int_D A\otimes \int_D \mathfrak{z}
\stackrel{\int_D \phi}\longrightarrow \int_D B\cong \mathcal{B}(D)$$
and its (derived) adjoint
$\theta_{\phi}:\mathcal{Z}(D)\longrightarrow
RHom(\mathcal{A}(D),\mathcal{B}(D))$.
We are left to check that this last map is compatible with the factorization algebra structures
 (describing the $A$-$E_n$-module structure of $A$ and $B$).
Let $U_0, U_1,\dots, U_r$ be pairwise disjoints open disks included in a bigger disk $V$,
where we assume that $U_0$ contains the base point of $D^n$.
Also we use the same notation
$$\rho_{U_0,\dots, U_r,V}: \mathcal{F}(U_0)\otimes \cdots \otimes \mathcal{F}(U_r)\longrightarrow \mathcal{F}(V)$$
for the associated structure maps of any one of the factorization algebras
$\mathcal{F}= \mathcal{A}$, $\mathcal{B}$ or  $\mathcal{Z}$ on $D^n$.
 Since $\phi: A\otimes \mathfrak{z}\to B$ is a map of $E_n$-algebras,
 for any $a_i\in \mathcal{A}(U_i)$ ($i=1\dots r$), $x\in \mathcal{A}(U_0)$
and $z\in \mathfrak{z}(U_0)$,  we have
\begin{eqnarray*}
\phi\Big(  \rho_{U_0,\dots, U_r,V}\big(x, a_1,\dots, a_r\big)\!\otimes\! \rho_{U_0,V}\big(z\big)\!\Big) \hspace{-0.7pc}
&= &  \hspace{-0.7pc}\phi\Big(  \rho_{U_0,\dots, U_r,V}\big(x\otimes z, a_1\otimes 1_{\mathfrak{z}},\dots, a_r\otimes 1_{\mathfrak{z}}\big) \!\Big) \\
\hspace{-0.7pc}
&= &  \hspace{-0.7pc} \rho_{U_0,\dots, U_r,V}\Big(\phi(x\otimes z),\phi(a_1\otimes 1_{\mathfrak{z}}),\dots \\
\hspace{-0.7pc}
& & \hspace{7pc}\dots,\phi(a_r\otimes 1_{\mathfrak{z}}) \Big)\\
\hspace{-0.7pc}
&= &  \hspace{-0.7pc}\rho_{U_0,\dots, U_r,V}\big(\phi(x\otimes z),f(a_1),\dots, f(a_r)\big)
\end{eqnarray*} where the last identity follows  from the commutativity of diagram~\eqref{eq:UnivProp}.
 Note that the map $z\mapsto  \rho_{U_0,V}\big(z\big) $ is an equivalence
(since $\mathcal{Z}$ is locally constant).
Since the $A$-$E_n$-module structure on $B$ is given by $f$, the above string of equalities
ensures that
$\theta_{\phi}$ is a map from $\mathfrak{z}$ to $ RHom_{A}^{\mathcal{E}_n}(A,B)$. In particular,
 the map $\theta_{\phi}:\mathfrak{z}\to RHom(A,B)$ factors as
$$\mathfrak{z}\stackrel{\tilde{\theta_{\phi}}}\longrightarrow RHom_{A}^{\mathcal{E}_n}(A,B)
\cong RHom^{left}_{\int_{S^{n-1}} A}\Big(\int_{D^n} A, \int_{D^n} B\Big)
\hookrightarrow RHom(A,B). $$

\smallskip

To finish the proof of Proposition~\ref{P:HHEn=z},
we  need to check that $\tilde{\theta_{\phi}}:\mathfrak{z}\to RHom_{A}^{\mathcal{E}_n}(A,B)$ is a map of $E_n$-algebras.
Recall that there are equivalences $\mathfrak{z}\cong RHom_{k}^{\mathcal{E}_n}(k,\mathfrak{z})$,
$\mathfrak{z}\cong k\cong \mathfrak{z}$
 of $E_n$-algebras (see Example~\ref{E:A=k}).  By definition of the (derived) adjunction,
$\tilde{\theta_{\phi}}:\mathfrak{z}\to RHom_{A}^{\mathcal{E}_n}(A,B)$ factors as the composition
\begin{multline}\label{eq:factorthetaphi}
 \mathfrak{z} \cong k\otimes \mathfrak{z}\hspace{-0.3pc}\stackrel{ 1_{RHom_A^{\mathcal{E}_n}(A,A)}\otimes id}
\longrightarrow \hspace{-0.8pc} RHom_A^{\mathcal{E}_n}(A,A)\otimes \mathfrak{z}
\cong RHom_A^{\mathcal{E}_n}(A,A) \otimes RHom_{k}^{\mathcal{E}_n}(k,\mathfrak{z})\\
\longrightarrow RHom_{A}^{\mathcal{E}_n}(A,A\otimes \mathfrak{z})
\stackrel{\phi_*}\longrightarrow RHom_{A}^{\mathcal{E}_n}(A,B).
\end{multline}
By Theorem~\ref{T:EnAlgHoch}.(2) and (4), the last two maps are an $E_n$-algebra Homomorphisms.
 Thus the composition~\eqref{eq:factorthetaphi} is a composition of $E_n$-algebras maps hence
$\tilde{\theta_{\phi}}:\mathfrak{z}\to RHom_{A}^{\mathcal{E}_n}(A,B)$ itself is
a map of $E_n$-algebras.

\smallskip

Further, by definition of $\theta_{\phi}$,  the identity
$$ev \circ \big(id_A\otimes \theta_{\phi} \big)\;=\; \phi$$ holds. Hence we eventually get
 a commutative  diagram
$$\xymatrix{A \ar[rd]^{id\otimes 1_{\mathfrak{z}}}
\ar@/^/[rrrd]^{id\otimes 1_{RHom_A^{\mathcal{E}_n}(A,B)}} \ar@/_2pc/[rrdd]_{f} & & & \\
 & A\otimes \mathfrak{z}\ar[rd]^{\phi} \ar[rr]^{\hspace{-2.5pc}id\otimes \tilde{\theta_\phi}} & &
A\otimes RHom_{A}^{\mathcal{E}_n}(A,B) \ar[ld]^{ev} \\
& & B  &   } $$ in $E_n$-Alg.

\smallskip

It remains to prove the uniqueness of the map $\mathfrak{z}\to  RHom_{A}^{\mathcal{E}_n}(A,B)$
inducing such a commutative diagram.
Thus assume  that $\alpha: \mathfrak{z}\to  RHom_{A}^{\mathcal{E}_n}(A,B)$ is a map of $E_n$-algebras
such that the following diagram
\begin{equation}\label{eq:univdiagcenter}
 \xymatrix{A \ar[rd]^{id\otimes 1_{\mathfrak{z}}}
\ar@/^/[rrrd]^{id\otimes 1_{RHom_A^{\mathcal{E}_n}(A,B)}} \ar@/_2pc/[rrdd]_{f} & & & \\
 & A\otimes \mathfrak{z}\ar[rd]^{\phi} \ar[rr]^{\hspace{-2.5pc}id\otimes \alpha} & &
A\otimes RHom_{A}^{\mathcal{E}_n}(A,B) \ar[ld]^{ev} \\
& & B  &   }
\end{equation}
is commutative in $E_n$-Alg. Note that the composition
\begin{multline}\label{eq:splitid}
 RHom_{A}^{\mathcal{E}_n}(A,B)\cong
RHom_k^{{\mathcal{E}_n}}\Big(k,RHom_{A}^{\mathcal{E}_n}(A,B)\Big) \\
\stackrel{1_{RHom_A^{\mathcal{E}_n}(A,A)}\otimes id}\longrightarrow
RHom_A^{\mathcal{E}_n}(A,A) \otimes \Big(k,RHom_{A}^{\mathcal{E}_n}(A,B)\Big) \\
\longrightarrow RHom_A^{\mathcal{E}_n}\Big(A,A\otimes RHom_{A}^{\mathcal{E}_n}(A,B)\Big)\\
\stackrel{ev_*}\longrightarrow RHom_{A}^{\mathcal{E}_n}(A,B)
\end{multline}
is the identy map. From the commutativity of Diagram~\eqref{eq:univdiagcenter},
we get the following commutative diagram
\begin{equation}\label{eq:univimplytheta}
 \xymatrix{\mathfrak{z} \cong RHom_k^{{\mathcal{E}_n}}\big(k,\mathfrak{z}\big)
\ar[rr]^{\hspace{-5pc}\alpha_*} \ar[d]  &&
RHom_{A}^{\mathcal{E}_n}(A,B) \cong
RHom_k^{{\mathcal{E}_n}}\big(k,RHom_{A}^{\mathcal{E}_n}(A,B)\big) \ar[d]  \\
 RHom_A^{\mathcal{E}_n}\big(A,A\otimes \mathfrak{z}\big) \ar[rrd]^{\phi_*}
&& RHom_A^{\mathcal{E}_n}\big(A,A\otimes RHom_{A}^{\mathcal{E}_n}(A,B)\big)\ar[d]^{ev_*} \\
&& RHom_{A}^{\mathcal{E}_n}(A,B) }
\end{equation}
in $E_n$-Alg. The composition of the right vertical maps in Diagram~\eqref{eq:univimplytheta}
is the composition~\eqref{eq:splitid}, hence is the identity, and the upper map is
$\alpha:\mathfrak{z}\to RHom_{A}^{\mathcal{E}_n}(A,B)$. It follows that the map $\alpha$ is
equivalent to the map~\eqref{eq:factorthetaphi} hence to $\tilde{\theta_{\phi}}$. This gives the
uniqueness statement and the Proposition will follow once we proved the diagram depicted in Proposition~\ref{P:HHEn=z} is commutative. The latter is an immediate consequence of the universal property of the centralizers (and thus of $HH_{A}^{\mathcal{E}_n}(A,B)$) and of Theorem~\ref{T:EnAlgHoch}.(3).
\end{proof}

\begin{ex}[$E_n$-Koszul duality]\label{E:Bardual}
Assume  $B=k$ so that $f:A\to k$ is an augmentation.
Then by Proposition~\ref{P:HHEn=z} and~\cite[Example 6.1.4.14]{L-HA} and~\cite[Remark 7.13]{L-MP}
 there is an equivalence
of $E_n$-algebras $$ HH_{\mathcal{E}_n}(A,k) \cong RHom(Bar^{(n)}(A),k)$$ where
$Bar^{(n)}(A)$ is the $E_n$-coalgebra given by the iterated Bar construction on $A$, that is,
the (derived) \emph{Koszul dual} of $A$.
Thus Theorem~\ref{T:EnAlgHoch} gives an explicit description of the $E_n$-algebra structure
on the dual of $Bar^{(n)}(A)$. See Section~\ref{S:Barmain} for a more detailed description.
\end{ex}

\smallskip

Combining Proposition~\ref{P:HHEn=z} and Proposition~\ref{P:HH(A,B)=CH(A,B)}, we get
\begin{cor}\label{C:HH(AB)=z}
 let $f:A\to B$ be a map of $E_\infty$-algebras. Then the Hochschild cochains $CH^{S^n}(A,B)$
 over the $n$-sphere (endowed with its $E_n$-algebra structure given by Theorem~\ref{T:EdHoch})
is the centralizer $\mathfrak{z}(f)$ of $f$ viewed as a map of $E_n$-algebras (by restriction).
\end{cor}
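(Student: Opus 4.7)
The plan is to deduce this corollary as an essentially formal consequence of the two preceding results, namely Proposition~\ref{P:HHEn=z} (which identifies $HH_{\mathcal{E}_n}(A,B)$, equipped with the $E_n$-structure of Theorem~\ref{T:EnAlgHoch}, with the centralizer $\mathfrak{z}(f)$) and Proposition~\ref{P:HH(A,B)=CH(A,B)} (which identifies that same $E_n$-algebra with $CH^{S^n}(A,B)$ equipped with the $E_n$-structure of Theorem~\ref{T:EdHoch}). The two results have been carefully set up to be compatible, so combining them should give exactly the statement we want.

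First I would observe that since $f:A\to B$ is a map of $E_\infty$-algebras, by restriction along the canonical map of $\infty$-operads $\mathbb{E}^{\otimes}_n \to \mathbb{E}^{\otimes}_{\infty}$ (see \S\ref{S:EnAlg} and the stabilization tower~\eqref{eq:towerofEnoperad}), $f$ can be viewed as a map of $E_n$-algebras. Similarly, $B$ becomes an $A$-$E_n$-module by restriction of structure from its $A$-$E_\infty$-module structure, so the $E_n$-Hochschild cohomology $HH_{\mathcal{E}_n}(A,B)$ is defined and comes equipped with the natural $E_n$-algebra structure of Theorem~\ref{T:EnAlgHoch}.

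Next I would apply Proposition~\ref{P:HH(A,B)=CH(A,B)}, which gives a natural equivalence of $E_n$-algebras
\[
CH^{S^n}(A,B) \;\cong\; HH_{\mathcal{E}_n}(A,B),
\]
where the left-hand side carries the $E_n$-structure from Theorem~\ref{T:EdHoch} (arising from the pinching maps on $S^n$) and the right-hand side carries the $E_n$-structure from Theorem~\ref{T:EnAlgHoch} (arising from the locally constant factorization algebra built in \S\ref{S:Prelimcentralizers}). This is precisely the comparison between the two a priori different $E_n$-structures that we need. Then I would invoke Proposition~\ref{P:HHEn=z} to identify the right-hand side with $\mathfrak{z}(f)$ as $E_n$-algebras. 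Composing the two equivalences yields a natural equivalence of $E_n$-algebras $CH^{S^n}(A,B) \cong \mathfrak{z}(f)$.

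There is essentially no obstacle here — the corollary is just a synthesis of the previous two propositions, whose proofs have already done all the real work (the hard part being the proof of Proposition~\ref{P:HH(A,B)=CH(A,B)}, which uses Lemma~\ref{L:cubetoDisk} to compare the cube-operad structure on $CH^{S^n}(A,B)$ with the factorization algebra structure on $HH_{\mathcal{E}_n}(A,B)$). The only thing worth mentioning explicitly is that all constructions are natural in $f$, and that naturality with respect to composition (the commuting square at the end of Proposition~\ref{P:HHEn=z}) transfers along the equivalence of Proposition~\ref{P:HH(A,B)=CH(A,B)}, so that the resulting equivalence $CH^{S^n}(A,B)\cong \mathfrak{z}(f)$ is compatible with the composition maps $\mathfrak{z}(\circ)$ of~\eqref{eq:z(circ)}.
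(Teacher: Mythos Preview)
Your proposal is correct and matches the paper's own approach exactly: the corollary is stated immediately after the sentence ``Combining Proposition~\ref{P:HHEn=z} and Proposition~\ref{P:HH(A,B)=CH(A,B)}, we get'', and your argument is precisely that combination. The additional remarks on naturality and compatibility with composition are accurate and go slightly beyond what the paper spells out.
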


\begin{rem}\label{R:derivedcompositionasusualcomposition}
Assume $f: A\to B$ and $g: B\to C$ are maps of CDGA's. Then by the above Corollary~\ref{C:HH(AB)=z} or Proposition~\ref{P:HH(A,B)=CH(A,B)}, there is a composition \begin{equation}\label{eq:defderivedcomposition} CH^{S^n}(A,B)\otimes CH^{S^n}(B,C) \stackrel{\circ}\longrightarrow CH^{S^n}(A,C)\end{equation} (which is a map of $E_n$-algebras) induced by the natural equivalence
$CH^{S^n}(A,B)\cong RHom^{left}_{CH_{S^{n-1}}(A)}(CH_{D^n}(A),B)$ and (derived) compositions of homomorphisms. In the setting of CDGA's, this composition can be represented in an easy way as follows. Let  $I_\bullet$  be the standard simplicial model of the interval (\cite{G, GTZ}); its  boundary $\partial I_\bullet^n$ is a simplicial model for $S^{n-1}$.
Then  the map~\eqref{eq:defderivedcomposition} is represented by the usual composition (of left dg-modules)
\begin{multline*} Hom_{CH_{\partial I_\bullet^n}(A)}^{left}\left(CH_{ I^n_\bullet}(A), CH_{ I^n_\bullet}(B)\right)\otimes Hom_{CH_{\partial I_\bullet^n}(B)}^{left}\left(CH_{ I_\bullet^n}(B), CH_{ I_\bullet^n}(C)\right)\\
 \stackrel{\circ}\longrightarrow  Hom_{CH_{\partial I_\bullet^n}(A)}^{left}\left(CH_{ I_\bullet^n}(A),
CH_{ I_\bullet^n}(C)\right) \end{multline*}
since $CH_{ I^n_\bullet}(A)$ is a (semi-)free $CH_{\partial I_\bullet^n}(A)$-algebra.
\end{rem}

\medskip

\subsection{The higher Deligne conjecture} \label{S:Deligne}
In this section we deal with (some of) the solutions of the higher Deligne conjecture.
That is we specialized the results of the previous sections~\ref{S:centralizers}
and~\ref{S:Edcochains} to the case  $A=B$ and $f=id$.

\smallskip

By Theorem~\ref{T:EnAlgHoch} above,  the composition
of morphisms of $A$-$E_n$-modules
\begin{equation}\label{eq:compEndoA}
RHom_{A}^{\mathcal{E}_n}(A,A)\otimes RHom_{A}^{\mathcal{E}_n}(A,A)
\stackrel{\circ}\longrightarrow  RHom_{A}^{\mathcal{E}_n}(A,A)
\end{equation} is a homomorphism
of $E_n$-algebras (with unit given by the identity map $id:A\to A$).
The composition of morphisms is further (homotopy) associative and unital (with unit $id$);
thus $RHom_{A}^{\mathcal{E}_n}(A,A)$ is actually an $E_1$-algebra
 in the $\infty$-category $E_n$-Alg.

By the $\infty$-category version of Dunn Theorem~\cite{Du, L-HA, L-VI} or see Theorem~\ref{T:Dunn}, there is an equivalence
of $(\infty,1)$-categories $E_1-Alg\big(E_n-Alg\big)\cong E_{n+1}-Alg$. Thus the
multiplication~\eqref{eq:compEndoA} lift the $E_n$-algebra structure of
$HH^\bullet_{\mathcal{E}_n}(A,A)\cong RHom_{A}^{\mathcal{E}_n}(A,A)$ to an $E_{n+1}$-algebra
structure.

In particular we just proved the first part of the following result, which
 has already been given by  Francis~\cite{F} (and Lurie~\cite{L-HA, L-VI}).
\begin{theorem}\label{T:Deligne}  \emph{(Higher Deligne Conjecture)}
\begin{enumerate}\item  Let $A$ be an $E_n$-algebra.
There is a natural $E_{n+1}$-algebra structure on $HH^\bullet_{\mathcal{E}_n}(A,A)$
with underlying $E_n$-algebra
structure  given by Theorem~\ref{T:EnAlgHoch}\footnote{where we take $B=A$ and $f=id$}.
\item Let now $A$ be an $E_\infty$-algebra.
Then there is a natural $E_{n+1}$-algebra structure on $CH^{S^n}(A,A)$ whose underlying
$E_n$-algebra structure is the one given by Theorem~\ref{T:EdHoch}. In particular, the underlying
$E_1$-algebra structure is given by the standard cup-product (see Corollary~\ref{C:cupHoch}
and Example~\ref{E:cupSphere}).
\item For $A$ an $E_\infty$-algebra, the two $E_{n+1}$-structures given by statements (1)  and (2)
are equivalent.
\end{enumerate}
\end{theorem}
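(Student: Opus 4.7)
The plan is to observe that statement (1) is essentially a direct application of Theorem~\ref{T:EnAlgHoch}(3) plus the $\infty$-categorical Dunn Theorem~\ref{T:Dunn}; statements (2) and (3) then follow by transport along the $E_n$-equivalence of Proposition~\ref{P:HH(A,B)=CH(A,B)}.

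\textbf{Proof of (1).} Specialize Theorem~\ref{T:EnAlgHoch}(3) to $f=g=\mathrm{id}_A$ to get that the (derived) composition
\[
\circ\;:\; RHom^{\mathcal{E}_n}_A(A,A)\otimes RHom^{\mathcal{E}_n}_A(A,A)\longrightarrow RHom^{\mathcal{E}_n}_A(A,A)
\]
is a morphism of $E_n$-algebras. Composition of morphisms in any $\infty$-category is homotopy associative and unital, and by Theorem~\ref{T:EnAlgHoch}(2) the unit $\mathrm{id}_A\in HH_{\mathcal{E}_n}(A,A)$ agrees with the unit of the $E_n$-algebra structure. Hence $HH_{\mathcal{E}_n}(A,A)\cong RHom^{\mathcal{E}_n}_A(A,A)$ lifts to an object of $E_1\text{-}Alg(E_n\text{-}Alg)$, and Dunn's equivalence $E_{n+1}\text{-}Alg\cong E_1\text{-}Alg(E_n\text{-}Alg)$ of Theorem~\ref{T:Dunn} produces the claimed $E_{n+1}$-algebra structure whose underlying $E_n$-algebra is by construction the one from Theorem~\ref{T:EnAlgHoch}.

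\textbf{Proof of (2).} Apply the natural equivalence of $E_n$-algebras $HH_{\mathcal{E}_n}(A,A)\cong CH^{S^n}(A,A)$ provided by Proposition~\ref{P:HH(A,B)=CH(A,B)} (which holds since $A$ is an $E_\infty$-algebra) and transport the $E_{n+1}$-algebra structure of (1) across it. The resulting $E_{n+1}$-algebra has underlying $E_n$-structure identified, \emph{via} Proposition~\ref{P:HH(A,B)=CH(A,B)}, with the one of Theorem~\ref{T:EdHoch}, and its underlying $E_1$-structure is the cup-product by Corollary~\ref{C:cupHoch} (also Example~\ref{E:cupSphere}).

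\textbf{Proof of (3).} Both $E_{n+1}$-structures on $CH^{S^n}(A,A)$ are obtained by applying Dunn's theorem to an $E_1$-monoid structure in $E_n\text{-}Alg$ given by composition of $\mathcal{E}_n$-module endomorphisms, once the $E_n$-structures have been identified by Proposition~\ref{P:HH(A,B)=CH(A,B)}. Since Dunn's equivalence is natural, it suffices to show that the composition operations correspond under the equivalence of Proposition~\ref{P:HH(A,B)=CH(A,B)}; this follows from the universal property of centralizers (Proposition~\ref{P:HHEn=z}) applied to $f=\mathrm{id}_A$, together with the naturality diagram in Proposition~\ref{P:HHEn=z} for the composition law~\eqref{eq:z(circ)}.

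\textbf{Main obstacle.} The only non-formal point is step~(3): one must check that the explicit $E_n$-equivalence of Proposition~\ref{P:HH(A,B)=CH(A,B)} intertwines the two composition maps on $HH_{\mathcal{E}_n}(A,A)$ and $CH^{S^n}(A,A)$. The cleanest route is to rely on the centralizer description of Proposition~\ref{P:HHEn=z}, which characterizes composition by a universal property that both models satisfy; as a sanity check one can verify this in the strict case using Remark~\ref{R:derivedcompositionasusualcomposition}, where composition on the CDGA model $RHom^{left}_{CH_{\partial I^n_\bullet}(A)}(CH_{I^n_\bullet}(A),CH_{I^n_\bullet}(A))$ is manifestly the ordinary composition of left dg-module maps.
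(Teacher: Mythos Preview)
Your proof of (1) is exactly the paper's argument: specialize Theorem~\ref{T:EnAlgHoch}(3) to $f=g=\mathrm{id}_A$, observe that composition is associative and unital, and apply Dunn's Theorem~\ref{T:Dunn}. Your proof of (2) by transport along Proposition~\ref{P:HH(A,B)=CH(A,B)} is likewise the paper's approach.

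For (3) you do more than the paper. In the paper, the $E_{n+1}$-structure on $CH^{S^n}(A,A)$ in statement~(2) is \emph{defined} to be the transport of the $E_{n+1}$-structure on $HH_{\mathcal{E}_n}(A,A)$ along the equivalence of Proposition~\ref{P:coHH=coTCH}; Proposition~\ref{P:HH(A,B)=CH(A,B)} is then invoked only to identify the underlying $E_n$-structure with that of Theorem~\ref{T:EdHoch}. With this reading, (3) is tautological and there is no separate composition law on the $CH^{S^n}$ side that needs to be matched. Your extra argument via the universal property of centralizers (Proposition~\ref{P:HHEn=z}) and Remark~\ref{R:derivedcompositionasusualcomposition} is correct and would be needed if one had defined the $E_1$-refinement on $CH^{S^n}(A,A)$ independently, but the paper sidesteps this by construction.
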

\begin{proof}We have already proved the first claim. Note that the underlying $E_n$-algebra structure of an $E_{n+1}$-algebra is induced by the pushforward along the canonical projection $\R^{n} \times \R \to \R^n$, see Theorem~\ref{T:Dunn}.  By Proposition~\ref{P:coHH=coTCH}, $CH^{S^n}(A,A)$ also  inherits  a structure of $E_{n+1}$-algebra whose underlying $E_n$-algebra is the same as the one given by Theorem~\ref{T:EdHoch} thanks to Proposition~\ref{P:HH(A,B)=CH(A,B)}.
 This proves both claims (2) and (3).
\end{proof}

\begin{ex}\label{E:Deligneforn=1}
 In the case $n=1$, Theorem~\ref{T:Deligne} recovers the original Deligne conjecture asserting the existence of a natural $E_2$-algebra structure on the Hochschild cochains lifting the associative algebra structure induced by the cup-product.
It can be proved that this $E_2$-algebra structure induces the usual Gerstenhaber algebra structure (from~\cite{Ge}) on the Hochschild cohomology groups.
\end{ex}
\begin{rem}
Francis~\cite{F} has given a different solution to the higher Deligne conjecture. His solution is directly and explicitly related to the degree $n$ Lie algebra structure on $HH_{\mathcal{E}_n}(A,A)$. However, the underlying cup-product (\emph{i.e.} $E_1$-algebra structure) is more mysterious. This is in contrast to the solution given by Theorem~\ref{T:Deligne}. This latter solution is, by definition, the same as the one of Lurie~\cite{L-HA}. It would therefore be very interesting and useful to relate Francis' construction to ours. Note that the explicit knowledge of the cup-product is useful to us to relate this construction to the higher string topology operations, see~\S~\ref{S:Brane}.
\end{rem}
\subsection{Explicit computations via higher formality}
For the remaining of this section, we work over \emph{a characteristic zero field} and we let $A$ be a commutative differential graded algebra. In that case, we can use the higher formality Theorem~\cite{Toen-Formality, CaWi-Formality} mentioned first in~\cite{PTVV}  to compute more explicitly the $E_{n+1}$-algebra structure on the derived center $HH^\bullet_{\mathcal{E}_n}(A,A)$. 
We will start by identifying our $E_{n+1}$-algebra structure with the one obtained very recently by To\"en~\cite{Toen-Formality}.

\subsubsection{$P_n$-algebras and To\"en Brane operations}\label{SS:Pn}

In characteristic zero, the operad $\mathbb{E}_n$ is formal when $n\geq 2$ (\cite{LV}) so that one can choose\footnote{there are however many possible choices, for instance see~\cite{Tamarkin-GTactsE2}} an equivalence of $\infty$-operads $\mathbb{E}_n \to \mathbb{P}_{n}$ where $\mathbb{P}_{n}$ is the ($\infty$-operad associated to the) operad governing (homotopy) ${P}_n$-algebras (in particular $\mathbb{P}_n\cong H_\bullet(\mathbb{E}_n)$ for $n\geq 2$). 
More precisely, we have the differential graded operad  $\mathbb{P}_n$ which is the (cofibrant) minimal resolution of the  operad  $P_n\cong H_\bullet(\mathbb{P}_n)$ of (strict, differential graded) $P_n$-algebras. 
These two operads yields (using the standard nerve functor from operads to $\infty$-operads) equivalent $\infty$-operads (and dendroidal sets), which we still denote $\mathbb{P}_n$.  

By a $P_n$-algebra\footnote{which are sometimes called $n$-algebras~\cite{GeJo, G} or $e_n$-algebras~\cite{Tamarkin-GTactsE2, CaWi-Formality} in the literature} we mean a differential graded commutative unital algebra $(B,d,\cdot)$ equipped with a (homological) degree $n-1$ bracket which makes the iterated suspension $A[1-n]$ a differential graded Lie algebra. 
The bracket and product are further require to satisfy the graded Leibniz identity $[a\cdot b, c]=\pm a\cdot[b,c] +\pm b\cdot [a,c]$. In other words $P_2$-algebras are the same as (differential graded) Gerstenhaber algebras.

 In particular, if $B$ is a $P_{n+1}$-algebra, then $B[-n]$ is a differential graded Lie algebra. By the above choice of formality  maps $\mathbb{E}_n \to \mathbb{P}_{n}$ of operads, then for any $E_{n+1}$-algebra $H$, $H[-n]$ inherits an (homotopy) dg-Lie algebra structure as well. 

\smallskip

Slightly after a first draft of our paper was written, To\"en used the machinery of $\infty$-operad of configuration type to prove the following result. 
\begin{theorem}[To\"en~\cite{Toen-Formality}] \label{T:ToenFormality} Let $\mathfrak{X}$ be a derived stack and $n\geq 2$. There is a canonical equivalence of  (homotopy) dg-Lie algebras $$\mathbb{R}\Gamma\big(\mathfrak{X}, {\mathrm{Sym}}_{\mathcal{O}_{\mathfrak{X}}}\big(\mathbb{T}_{\mathfrak{X}}[n]\big)\big)[-n] \cong HH^\bullet_{\mathcal{E}_n}(\mathcal{O}_{\mathfrak{X}},\mathcal{O}_{\mathfrak{X}})[-n].$$
Here both sides are given a Lie algebra structure induced by a canonical $P_n$ and $E_n$-algebras structures constructed  in~\cite[{\S~5}]{Toen-Formality}.
\end{theorem}

We are only interested in theorem~\ref{T:ToenFormality} in the case when $\mathfrak{X}=\mathbb{R}\mathrm{Spec}(A)$ is affine, given by a \emph{(differential graded) commutative algebra $A$}. In that case, the tangent complex $\mathbb{T}_{\mathfrak{X}}$ is equivalent to $\mathbb{R}\mathrm{Der}(A,A)$ the right derived functor of derivations of $A$. 
 Hence, theorem~\ref{T:ToenFormality} provides a canonical equivalence of  \emph{(homotopy) dg-Lie algebras}:
\begin{equation}\label{eq:ToenFormalityAffine}
{\mathrm{Sym}}_{A}\big(\mathbb{R}\mathrm{Der}(A,A)[n]\big)[-n] \cong HH^\bullet_{\mathcal{E}_n}(\mathcal{O}_{\mathbb{R}\mathrm{Spec}(A)},\mathcal{O}_{\mathbb{R}\mathrm{Spec}(A)})[-n].
\end{equation}
In fact, To\"en deduced the above equivalence~\eqref{eq:ToenFormalityAffine} from \emph{an equivalence of objects in $E_1\text{-Alg}(E_n\text{-Alg})$} which is a consequence of  the main Theorem of \cite{Toen-Formality}. Note that the left hand side ${\mathrm{Sym}}_{A}\big(\mathbb{R}\mathrm{Der}(A,A)[n]\big)$ of~\eqref{eq:ToenFormalityAffine} is the  $\mathbb{P}_n$-branes cohomology of $\mathbb{R}\mathrm{Spec}(A)$ while the right hand side $HH^\bullet_{\mathcal{E}_n}(\mathcal{O}_{\mathbb{R}\mathrm{Spec}(A)},\mathcal{O}_{\mathbb{R}\mathrm{Spec}(A)})$ is its $\mathbb{E}_n$-branes cohomology.

\smallskip

It is essentially immediate from the definition\footnote{see~\cite[\S 5]{Toen-Formality} or the proof of Proposition~\ref{P:E1Pn} below} that, \emph{as a cochain complex}, $HH^\bullet_{\mathcal{E}_n}(\mathcal{O}_{\mathbb{R}\mathrm{Spec}(A)}, \mathcal{O}_{\mathbb{R}\mathrm{Spec}(A)})$ is equivalent to $HH^\bullet_{\mathcal{E}_n}(A,A)$, the $E_n$-center of $A$ (Definition~\ref{D:EnHoch} and Corollary~\ref{C:HH(AB)=z}) . 
We  prove below that the equivalence is actually an equivalence of $E_{n+1}$-algebras between the right hand side of~\eqref{eq:ToenFormalityAffine} and  our solution to Deligne conjecture from \S~\ref{S:maincentralizers}. 

To do so, for $n\geq 2$, we first choose equivalences of $\infty$-operad  $\mathcal{F}_n:\mathbb{E}_{n} \stackrel{\simeq}\longrightarrow \mathbb{P}_{n}$. 

 Denote $\mathbb{O}\circledcirc \mathbb{P}$  the tensor product of two $\infty$-operads  $\mathbb{O}$ and  $\mathbb{P}$.
It is an $\infty$-operad governing 
$\mathbb{O}$-algebras in $\mathbb{P}$-algebras. If one uses the model given by dendroidal sets for $\infty$-operads, the tensor product is represented by the (derived) tensor product of dendroidal sets (which forms a symmetric monoidal model category) of \cite{CiMo-Dendroidalasinftyoperad, CiMo-DendroidalasinftyoperadII, CiMo-Dendroidalasimplicialoperad}. 

Recall also, that, by Dunn Theorem~\ref{T:Dunn} (see \cite{L-HA}), we  have an equivalence $D_{n+1}:\mathbb{E}_{1}\circledcirc \mathbb{E}_n \stackrel{\simeq}\longrightarrow \mathbb{E}_{n+1} $ of $\infty$-operads.  We sum up these two  facts in the
\begin{prop}
\label{P:TamarkinE1Pn} Let $n\geq 2$. There are equivalences of $\infty$-operads
$$ \xymatrix{  \mathbb{E}_{1}\circledcirc \mathbb{P}_n    &\mathbb{E}_{1}\circledcirc \mathbb{E}_n    \ar[l]^{\simeq}_{id\circledcirc \mathcal{F}_n} \ar[r]_{\simeq}^{D_{n+1}} & \mathbb{E}_{n+1}   \ar[r]_{\simeq}^{\mathcal{F}_{n+1}} & \mathbb{P}_{n+1}}$$
Here $\mathbb{E}_{1}\circledcirc \mathbb{P}_n$ (resp.  $\mathbb{E}_{1}\circledcirc \mathbb{E}_n$) 
are the $\infty$-operad governing $E_1$-algebras in the (symmetric monoidal $\infty$-)category of $P_{n}$-algebras (resp. $E_n$-algebras).
\end{prop}
In particular, we thus get  equivalences $\mathbb{E}_{1}\circledcirc \mathbb{P}_n \cong \mathbb{P}_{n+1}$  (for $n\geq 2$) fitting into a commutative diagram of equivalences of $\infty$-operads:
\begin{equation}\label{eq:En=Pndiagram}
\xymatrix{ \mathbb{E}_{1}\circledcirc \cdots\circledcirc \mathbb{E}_{1}\circledcirc \mathbb{E}_{2}   \ar[d]_{\simeq} \ar[r]^{\qquad \simeq} 
& \cdots \ar[d]_{\simeq} \ar[r]^{\hspace{-2pc}\simeq} &   \mathbb{E}_{1}\circledcirc \mathbb{E}_{1}\circledcirc \mathbb{E}_{n-1}\ar[d]_{\simeq} \ar[r]^{\quad\simeq} &     \mathbb{E}_{1}\circledcirc \mathbb{E}_n\ar[r]^{\simeq} \ar[d]_{\simeq} &  \mathbb{E}_{n+1} \ar[d]_{\simeq}\\
\mathbb{E}_{1}\circledcirc  \cdots\circledcirc \mathbb{E}_{1}\circledcirc \mathbb{P}_{2}  \ar@{.>}[r]^{\qquad\simeq} & \ldots \ar@{.>}[r]^{\hspace{-2pc}\simeq} & \mathbb{E}_{1}\circledcirc  \mathbb{E}_{1}\circledcirc \mathbb{P}_{n-1} \ar@{.>}[r]^{\quad \simeq} & \mathbb{E}_{1}\circledcirc \mathbb{P}_{n} \ar@{.>}[r]^{\simeq}&  \mathbb{P}_{n+1}  }
\end{equation}
where the vertical pointing down arrows are induced by the formality maps $\mathcal{F_i}$ and the horizontal upper arrows by iteration of Dunn Theorem.

We can identify the right hand side of~\eqref{eq:ToenFormalityAffine} with the structure given by the Deligne conjecture (Theorem~\ref{T:Deligne}):
\begin{prop}\label{P:E1Pn} One has a canonical equivalence of $\mathbb{E}_{1}\circledcirc \mathbb{E}_n $-algebras $$HH^\bullet_{\mathcal{E}_n}(\mathcal{O}_{\mathbb{R}\mathrm{Spec}(A)},\mathcal{O}_{\mathbb{R}\mathrm{Spec}(A)})\cong  CH^{S^n}(A,A)$$ where the right hand side is given the structure given by Theorem~\ref{T:Deligne} and the left hand side is given the structure of~\cite[{\S~5}]{Toen-Formality}.
\end{prop}
\begin{proof}
To\"en has proved in~\cite{Toen-Formality} (in particular Corollary 5.1 in \emph{loc. cit.}) that the  $\mathbb{E}_n$-branes cohomology $HH^\bullet_{\mathcal{E}_n}(\mathcal{O}_{\mathbb{R}\mathrm{Spec}(A)},\mathcal{O}_{\mathbb{R}\mathrm{Spec}(A)})$  of $\mathbb{R}\mathrm{Spec}(A)$ is an $E_1\circledcirc E_n$-algebra. It  is given, by definition, by  $$RHom_{D\big(\mathcal{L}^{n-1}(\mathbb{R}\mathrm{Spec}(A))\big)}\Big(\mathcal{O}_{\mathbb{R}\mathrm{Spec}(A)}, \mathcal{O}_{\mathbb{R}\mathrm{Spec}(A)}\Big)$$ where $\mathcal{L}^{n-1}(\mathbb{R}\mathrm{Spec}(A))$ is the  iterated (derived) loop stack $\mathbb{R}Map(S^{k-1}, \mathbb{R}\mathrm{Spec}(A) )$. 
By Corollary~\ref{C:mappingstack}, we get that the derived category $D\big(\mathcal{L}^{n-1}_{f}(\mathbb{R}\mathrm{Spec}(A))\big)$ is equivalent to $CH_{S^{k-1}}(A)\text{-LMod}$. Hence, we get an equivalence of $E_1$-algebras (for the structure given by composition of endomorphisms): 
\begin{equation}\label{eq:Toen=CH} RHom_{D\big(\mathcal{L}^{n-1}(\mathbb{R}\mathrm{Spec}(A))\big)}\Big(\mathcal{O}_{\mathbb{R}\mathrm{Spec}(A)}, \mathcal{O}_{\mathbb{R}\mathrm{Spec}(A)}\Big)\cong RHom^{left}_{CH_{S^{n-1}}(A)}(A,A).\end{equation} 
To\"en has proved that the left hand side has an $E_n$-algebra structure given by a natural map of $\infty$-operads $\big(\mathcal{C}_n(r)\big)_{r}\to End \Big( {D\big(\mathcal{L}^{n-1}(\mathbb{R}\mathrm{Spec}(A))\big)}\Big)$. 
Under the above equivalence,
this action of the little cubes is seen to be transfered to the map~\eqref{eq:rhovee} (for any family of little cubes $U_i$ inside the unit cube $D$) defined in the proof of Proposition~\ref{P:HH(A,B)=CH(A,B)}.
The proof of this Proposition also shows that this  $E_n$-algebra structure on $RHom^{left}_{CH_{S^{n-1}}(A)}(A,A)$ is equivalent to the one  on $CH^{S^n}(A,A)$ given by Theorem~\ref{T:EdHoch} in a natural way. It follows that the equivalence~\eqref{eq:Toen=CH} is an equivalence of $E_1\circledcirc E_n$-algebras.
\end{proof}

\begin{cor}\label{L:ToenHH=CH} For any choice of formality equivalence as in Proposition~\ref{P:E1Pn}, one has canonical equivalences of $E_{n+1}$-algebras (and homotopy $P_{n+1}$-algebras as well)
 $$HH^\bullet_{\mathcal{E}_n}\!(\mathcal{O}_{\mathbb{R}\mathrm{Spec}(A)},\mathcal{O}_{\mathbb{R}\mathrm{Spec}(A)})\cong \mathrm{Sym}_{A}\!\big(\mathbb{R}\mathrm{Der}(A,A)[n]\big)\cong    CH^{S^n}\!(A,A) \cong HH^\bullet_{\mathcal{E}_n}(A,A)$$ where the two right hand sides are given the structure given by Theorem~\ref{T:Deligne} and the left hand sides are given the structure of~\cite[{\S~5}]{Toen-Formality}.
\end{cor}
\begin{proof}
The choice of formality provides an equivalence $(id \circledcirc \mathcal{F}_n)^*:\mathbb{E}_{1}\circledcirc \mathbb{P}_n\text{-Alg} \stackrel{\simeq}\longrightarrow \mathbb{E}_{1}\circledcirc \mathbb{E}_n \text{-Alg}$. Note that    that the map of ($\infty$-)operad $\mathbb{E}_n\stackrel{\mathcal{F}_n}\to \mathbb{P}_n \to  {Comm}$ is the canonical one. 
Further there is a canonical equivalence $\mathbb{E}_1\otimes Comm \stackrel{\simeq}\longrightarrow Comm$ and the following diagram of $\infty$-operads
$$\xymatrix{ \mathbb{E}_{1}\circledcirc \mathbb{P}_n  \ar[rd]   &\mathbb{E}_{1}\circledcirc \mathbb{E}_n   \ar[d]  \ar[l]^{\simeq}_{id\circledcirc \mathcal{F}_n} \ar[r]_{\simeq}^{D_{n+1}} & 
\mathbb{E}_{n+1} \ar[d]    \ar[r]_{\simeq}^{\mathcal{F}_{n+1}} & \mathbb{P}_{n+1}\ar[ld]  \\  & \mathbb{E}_1\otimes Comm \ar[r]^{\simeq}&  Comm &} $$
is commutative. 

Since $A$ is a CDGA seen as both a $P_n$-algebra with trivial bracket and an $E_n$-algebra through the above maps $\mathbb{P}_n\to Comm$  and $\mathbb{E}_n\to Comm$, it follows that the space of $P_n$-branes\footnote{which is $\mathrm{Sym}_{A}\!\big(\mathbb{R}\mathrm{Der}(A,A)[n]\big)$} and the space of $\mathbb{E}_n$-branes\footnote{which is $HH^\bullet_{\mathcal{E}_n}\!(\mathcal{O}_{\mathbb{R}\mathrm{Spec}(A)},\mathcal{O}_{\mathbb{R}\mathrm{Spec}(A)})$} of $A$ (in the sense of  \cite{Toen-Formality}) are equivalent as $\mathbb{E}_{1}\circledcirc \mathbb{E}_n$-algebras. 

Now, in view of   Proposition~\ref{P:TamarkinE1Pn} and Theorem~\ref{T:Deligne}, the result is a corollary of Proposition~\ref{P:E1Pn}.
\end{proof}

\subsubsection{Higher formality and Tamarkin homotopy $P_{n+1}$-structure}

By Corollary~\ref{L:ToenHH=CH}, $\mathrm{Sym}_{A}\big(\mathbb{R}\mathrm{Der}(A,A)[n]\big)$ inherits an homotopy $P_{n+1}$-algebra structure induced by its interpration (due to \cite{Toen-Formality}) as $\mathbb{P}_n$-branes cohomology. 

There is also a canonical (strict) $P_{n+1}$-algebra structure on $\mathrm{Sym}_{A}\big(\mathbb{R}\mathrm{Der}(A,A)[n]\big)$. Indeed, there is a Lie algebra structure on $\mathrm{Sym}_{A}\big(\mathbb{R}\mathrm{Der}(A,A)[n]\big)[-n]$
  given by the Schouten bracket. 
More precisely, $\mathbb{R}\mathrm{Der}(A,A)$ has a canonical  differential graded Lie algebra structure\footnote{given by the standard Lie algebra structure on the derivations $\mathrm{Der}(P_A,P_A)$ where $P_A\to A$ is a resolution of $A$ by semi-free CDGAs}  such that the canonical map $\mathrm{Der}(A,A)\to \mathbb{R}\mathrm{Der}(A,A)$ is a map of (dg-)Lie algebras.  

Then, $\mathrm{Sym}_{A}\big(\mathbb{R}\mathrm{Der}(A,A)[n]\big)$ is made into a $P_{n+1}$-algebra whose underlying CDGA structure is given by the (graded) symmetric algebra construction on the (dg-) $A$-module $ \mathbb{R}\mathrm{Der}(A,A)[n]$. There is a unique extension of the Lie bracket 
on $(\mathbb{R}\mathrm{Der}(A,A)[n])[-n] =\mathbb{R}\mathrm{Der}(A,A)[n]$ satisfying the Leibniz rule, which defines the $P_{n+1}$-algebra structure. 
This (strict)  $P_{n+1}$-structure induces canonically a $\mathbb{P}_{n+1}$-structure (\emph{i.e.} an homotopy $P_{n+1}$-structure) on  $\mathrm{Sym}_{A}\big(\mathbb{R}\mathrm{Der}(A,A)[n]\big)$ and thus,   given any  choice of a formality map,   an $E_{n+1}$-algebra structure as well.

\smallskip

Associated to any $P_n$-algebra $V$, one can define its cohomology complex 
$HH^\bullet_{\mathbb{P}_n}(V,V)$ which by a result of Tamarkin~\cite{Ta-Defofdalgebra} has a canonical homotopy $P_{n+1}$-algebra structure.

Calaque and Willwacher have  recently proved the following higher formality relating the  $P_{n+1}$-structure of ${\mathrm{Sym}}_{A}\big(\mathbb{R}\mathrm{Der}(A,A)[n]\big) $ and the $P_{n+1}$-structure of $HH^\bullet_{\mathbb{P}_n}(A,A)$ for a CDGA $A$, seen as a $P_n$-algebra with trivial bracket.
\begin{theorem}[Calaque Willwacher~\cite{CaWi-Formality}] \label{T:CaWi-Formality}  Let $A$ be a differential graded commutative algebra over a characteristic zero field.  There is a canonical equivalence of  $\mathbb{P}_{n+1}$-algebras: 
\begin{equation}\label{eq:CaWi-Formality}
{\mathrm{Sym}}_{A}\big(\mathbb{R}\mathrm{Der}(A,A)[n]\big) \cong HH^\bullet_{\mathbb{P}_n}(A,A)
\end{equation}
where the right hand side is endowed with the (homotopy) $P_{n+1}$-structure constructed by Tamarkin in~\cite{Ta-Defofdalgebra}.
\end{theorem}
The left hand side of~\eqref{eq:CaWi-Formality} has a very explicit $P_{n+1}$-structure. The same complex has another homotopy $P_{n+1}$-structure given by Corollary~\ref{L:ToenHH=CH}, that is, by our solution to the higher Deligne conjecture for $n\geq 2$.   In order to prove that these two structures are actually the same, now,   we only need to check that Tamarkin  structure on  $HH^\bullet_{\mathbb{P}_n}(A,A)$ is equivalent to the one given by the center (and thus Toen's one as well).

\smallskip

The cochain complex $HH^\bullet_{\mathbb{P}_n}(A,A)$ is defined in~\cite[\S 2]{Ta-Defofdalgebra} as follows.
We denote $\mathbb{P}_n^{\vee}(A)$ the (coproartinian) cofree $P_n$-coalgebra on $A[-n]$ equipped with the differential $\partial_{\mathbb{P}_n^{\vee}(A)}$ corresponding to the (homotopy) $P_n$-algebra structure of $A$ (this is a coderivation of $\mathbb{P}_n^{\vee}(A)$). 
Let $\mathrm{coDer}(\mathbb{P}_n^{\vee}(A), \mathbb{P}_n^{\vee}(A))$ be the vector space
 of coderivations of $\mathbb{P}_n^{\vee}(A)$.
 Then the cochain complex  $HH^\bullet_{\mathbb{P}_n}(A,A)$ is  $\mathrm{coDer}(\mathbb{P}_n^{\vee}(A), \mathbb{P}_n^{\vee}(A))$ equipped with the differential $[\partial_{\mathbb{P}_n^{\vee}(A)},-]$ obtained as the bracket of a coderivation with $\partial_{\mathbb{P}_n^{\vee}(A)}$. 
\begin{lem}\label{L:Tam=Center} Let $A$ be a differential graded commutative algebra over a characteristic zero field.
 There is an natural equivalence of $P_{n+1}$-algebras 
 $$HH^\bullet_{\mathbb{P}_n}(A,A) \cong CH^{S^n}(A,A)$$ where the right hand side is endowed with the structure given by Theorem~\ref{T:Deligne} and the left hand side is endowed with the one constructed by Tamarkin in~\cite{Ta-Defofdalgebra}.
\end{lem}
\begin{proof}
Since $\mathbb{P}_n^{\vee}(A)$ is a coproartinian cofree coalgebra, there is an isomorphism $\mathrm{coDer}(\mathbb{P}_n^{\vee}(A), \mathbb{P}_n^{\vee}(A))\cong Hom(\mathbb{P}_n^{\vee}(A),A[-n])$ from which one deduced an isomorphism of cochain complexes $HH^\bullet_{\mathbb{P}_n}(A,A) \cong Hom(\mathbb{P}_n^{\vee}(A),A)[-n]$ where the right hand side is endowed with the inner differential of $\mathbb{P}_n^{\vee}(A)$ twisted by the canonical map $\mathbb{P}_n^{\vee}(A)\to A[-n]$, see \cite[\S 4]{Ta-Defofdalgebra}.  Further, in \emph{loc. cit.}, Tamarkin 
  proved that $Hom(\mathbb{P}_n^{\vee}(A),A)[-n]$ is an homotopy $P_n$-algebra so that  $\mathbb{P}_n^{\vee}\Big(Hom(\mathbb{P}_n^{\vee}(A),A)[-n]\Big)$ is a $n$-bialgebra in the sense of \cite[\S 4]{Ta-Defofdalgebra}, that is an $E_1$-algebra in the category of homotopy $P_n$-algebras.   
  The latter chain complex is also denoted $\underline{Hom}^{Id}(A,A)$ in \cite[\S 3]{Ta-Defofdalgebra}.
  
 More generally, Tamarkin proved that, associated to any $P_n$-algebra morphism 
 $\phi: A\to B$, one obtains a similar way an homotopy $P_n$-algebra structure on  $Hom(\mathbb{P}_n^{\vee}(A),B)[-n]$, with underlying differential given by the inner differential of $\mathbb{P}_n^{\vee}(A)$ and $B$ and twisted by $\phi$. This structure is equivalent to a differential on the cofree coalgebra    $\mathbb{P}_n^{\vee}\Big(Hom(\mathbb{P}_n^{\vee}(A),B)[-n]\Big)$ which corepresents 
 the canonical functor $F^\phi_{A,B}: d-coart^0\to Sets$ of moduli problems for $P_n$-algebras, see~\cite[\S 2 and 3]{Ta-Defofdalgebra}. It is also denoted $\underline{Hom}^{\phi}(A,B)$ in \emph{loc. cit.} and its universal property induces  an associative map of homotopy $P_n$-algebras  $$\underline{Hom}^{\phi}(A,B) \otimes \underline{Hom}^{\psi}(B,C) \longrightarrow \underline{Hom}^{\psi \circ \phi}(A,C)$$
 which precisely gives the aforementionned $n$-bialgebra structure of $\underline{Hom}^{Id}(A,A)$. 
 
  The (homotopy) $P_{n+1}$-structure on  $HH^\bullet_{\mathbb{P}_n}(A,A)$ is canonically induced by the $n$-bialgebra structure on $\underline{Hom}^{Id}(A,A)$, see \cite[Corollary 4.5 and \S 5]{Ta-Defofdalgebra}, \cite{CaWi-Formality} and the fact it corepresents the functor $F^{id}_{A,A}$. 
  
 Hence in order to prove the lemma we now need to prove that, for any map $f: A\to B$ between CDGAs, $CH^{S^n}(A,B)$ is isomorphic to  $\underline{Hom}^{\phi}(A,B)$ as an homotopy $P_n$-algebra. Since both functors are functorial with respect to CDGA maps, we can further assume that $A$ and $B$ are free graded commutative as algebras. That is $A=(\mathrm{Sym}(V), d)$ and $B=(\mathrm{Sym}(W),b)$.

 By definition, as a coalgebra $\mathbb{P}_n^{\vee}(A) = \mathrm{Sym}\big( \mathrm{CoLie}(A[-1])[1-n]\big)$ where $\mathrm{CoLie}$ is the free Lie coalgebra functor and $\mathrm{Sym}$ is endowed with the cofree coproartinian cocommutative cobracket. Note that $\mathrm{CoLie}(A[-1])$ is simply the underlying vector space of the Harrison chain complex of $A$ (see~\cite{Ta-formality, GiHa, L}). 
 Since $A$ is seen as a $P_n$-algebra with trivial bracket, the differential $\partial_{\mathbb{P}_n^{\vee}(A)}$ boils down to the usual Hochschild/Harisson complex differential. Hence, one has an isomorphism of complexes
 $$\underline{Hom}^{\phi}(A,B)= Hom(\mathbb{P}_n^{\vee}(A),B)[-n] \cong  Hom_{A}\Big(\mathrm{Sym}_{A}\big(\mathrm{Harr}_\ast(A,A)[1-n]\big),B\Big)[-n]$$ where the right hand side is   endowed with the tensor product of Harrison differentials on $A$ and consists of $A$-linear maps; here $B$ is seen as an $A$-module through the map $\phi:A\to B$ and  the $A$-module structure on $\mathrm{Harr}_\ast(A,A)$  is given by the tensor product $A \otimes \mathrm{CoLie}(A[-1])$. The proof is the same as the one in the case $\phi=Id$ in~\cite{Ta-formality, GiHa}.
 
 We recall that the Harrison chain complex $\mathrm{Harr}_\ast(A,A)$ is a sub-complex of the Hochschild chain complex of $A$: it is precisely the weight $1$ part of the Hodge decomposition of the Hochschild complex of the differential graded commutative algebra $A$, see~\cite{L, GiHa}. Its homology is equal to the Andr\'e-Quillen homology since we are in characteristic $0$.
Hence, since $A=(\mathrm{Sym}(V), d)$, by the Hochschild-Kostant-Rosenberg Theorem,  the Harrison chain complex is quasi-isomorphic to $\Omega^1(A)= \mathrm{Sym}(V)\otimes  \mathrm{Sym}(V[-1])$ where the differential is induced by the one on $S(V)$ and $d(v[-1])= -s(d(v))$ where $s$ is the unique derivation extending $v\mapsto v[-1]$ for $v\in V$ (cf. \cite[\S~5]{L}). 

It follows that we have a quasi-isomorphism of $P_n$-algebras 
$$ Hom_{A}\big(\mathrm{Sym}(V\oplus V[-n]), B\big)\stackrel{\simeq}\longrightarrow Hom(\mathbb{P}_n^{\vee}(A),B)[-n]=\underline{Hom}^{\phi}(A,B)$$ given by the convolution product on the left hand side $ Hom_{A}\big(\mathrm{Sym}(V\oplus V[-n]), B\big)\cong Hom(\mathrm{Sym}(V[-n]),B)$ where $\mathrm{Sym}(V[-n])$ is seen as a $P_n$-coalgebra with trivial cobracket and its cofree cocommutative structure.
 Now the result follows from Lemma~\ref{P:HKRrelative}.
\end{proof}

Combining the previous statements, we get:
\begin{cor}\label{C:HigherFormality}
Let $A$ be a differential graded commutative algebra over a characteristic zero field. For  $n\geq i \geq 2$,  choose  formality equivalences of $\infty$-operads  $\mathcal{F}_i:\mathbb{E}_{i} \stackrel{\simeq}\to \mathbb{P}_{i}$. There is a canonical equivalence of $E_{n+1}$-algebras (and thus of $P_{n+1}$-algebras):
\begin{equation}\label{eq:HigherFormality}
{\mathrm{Sym}}_{A}\big(\mathbb{R}\mathrm{Der}(A,A)[n]\big) \cong  HH^\bullet_{\mathcal{E}_n}(A,A)
\end{equation}
 where the right hand side is induced by the $E_{n+1}$-algebra structure given by the Deligne conjecture (Theorem~\ref{T:Deligne}) and the left hand side is given the Schouten structure, with differential induced by the one in $A$.
\end{cor}
\begin{proof}
By Theorem \ref{T:CaWi-Formality} we are left to prove that Tamarkin  $P_{n+1}$-structure on $HH_{\mathbb{P}_n}^\bullet (A,A)$ is equivalent to the (homotopy) $P_{n+1}$-structure on $HH_{\mathcal{E}_n}^\bullet (A,A)\cong CH^{S^n}(A,A)$  provided by Theorem~\ref{T:ToenFormality}. This is precisely the content of Lemma~\ref{L:Tam=Center}.
\end{proof}
\begin{rem}Corollary~\ref{C:HigherFormality} and Corollary~\ref{L:ToenHH=CH} implies in particular that the $P_n$-cohomology $HH_{\mathbb{P}_n}^\bullet (A,A)$  of Tamarkin is equivalent as an $E_1\circledcirc P_n$-algebra to the $\mathbb{P}_n$-branes cohomology of To\"en. 
\end{rem}

\subsubsection{Explicit computations using higher formality} \label{SS:HigherFormalityforFreeAlg}
Now we assume $A= (S(V),d)$ is a \emph{Sullivan algebra}, that is, as an algebra, it is the free graded commutative algebra on a graded vector space $V$ and it is  also equipped with a differential $d$. In that case the canonical map $\mathrm{Der}(A,A)\to \mathbb{R}\mathrm{Der}(A,A)$ is an equivalence of (dg-) Lie algebras. Hence, by Corollary~\ref{C:HigherFormality} we have:
\begin{cor}\label{C:HigherFormalityforFreeAlg} Let $A= (S(V),d)$ be a Sullivan algebra. Under the assumptions of Corollary~\ref{C:HigherFormality}, we have a canonical equivalence of $E_{n+1}$-algebras (and thus $P_{n+1}$-algebras as well):
\begin{equation}\label{eq:HigherFormality2}
{\mathrm{Sym}}_{A}\big(\mathrm{Der}(A,A)[n]\big) \cong HH^\bullet_{\mathcal{E}_n}(A,A).
\end{equation}
Here the right hand side is has the $E_{n+1}$-algebra structure given by the Deligne conjecture (Theorem~\ref{T:Deligne}) and the left hand side is endowed with the structure corresponding to the Schouten algebra structure.
\end{cor} 
The main interest of Corollary~\ref{C:HigherFormalityforFreeAlg} for us, is that the left hand side has a totally explicit and elementary strict $P_{n+1}$-algebra structure. 
It thus gives a \emph{large class of examples of explicit computations} of the $E_{n+1}$-structure of centers of commutative algebras (viewed as $E_n$-algebras).

In the next three examples, we calculate the $P_{n+1}$ structures of the left hand side of equation \eqref{eq:HigherFormality2} for the cases of the Sullivan models of an odd sphere, an even sphere, and for the complex projective space.

\begin{ex}[$P_{n+1}$-structure \eqref{eq:HigherFormality2} for the odd sphere $S^{2k+1}$]
\label{EXA:Pn+1-on-odd-sphere}
We compute the $P_{n+1}$-structure of \eqref{eq:HigherFormality2} for the Sullivan algebra $A$ of the $(2k+1)$-sphere. In this case, we consider the Sullivan algebra $A=(S(V),d)$ given by the free algebra generated by $x$ in degree $|x|=2k+1$ with trivial differential $d=0$. Since $x$ is in odd degree, we have that $x^2=0$, so that $A=span\{1,x\}$ with the trivial algebra structure. Now note, that a (graded) derivation of $A$ is uniquely determined by its value on the generator $x$, and that any derivation maps $1$ to $0$. We have essentially the two derivations of $A$, $\alpha$ and $\beta$, given by
\[
\alpha(1)=0, \quad \alpha(x)=1, \hspace{1cm} \text{and} \hspace{1cm}\beta(1)=0, \quad \beta(x)=x.
\]
Note that $\beta=x.\alpha$, displaying $\Der(A,A)$ as a module over $A$. Furthermore, the Lie-bracket is calculated as the commutator,
\begin{equation}\label{EQU:Sym-bracket-of-odd-sphere}
[\alpha,\beta]=\alpha, \quad\text{ and }\quad [\alpha,\alpha]=[\beta,\beta]=0.
\end{equation}
This induces the bracket of $\Der(A,A)[n]=span\{\alpha,\beta\}$ after a shift by $n$, where $\alpha$ and $\beta$ now have degrees $|\alpha|=n-(2k+1)$ and $|\beta|=n$. Using this, we next calculate the $P_{n+1}$-structure on
\begin{eqnarray*}
\Sym_A(\Der(A,A)[n])&=&A\oplus \Der(A,A)[n]\\
&&\oplus \big(\Der(A,A)[n]\odot_A \Der(A,A)[n]\big)\oplus \dots
\end{eqnarray*}
Here we denote the algebra structure on $\Sym_A(\Der(A,A)[n])$ by ``$\odot$'' or ``$\odot_A$'' to indicate linearity over $A$. Since $\beta=x.\alpha$, note that any element of $\Sym_A(\Der(A,A)[n])$ is a sum of elements of the form $a.\alpha^{\odot p}=a. \alpha\odot_A\dots\odot_A \alpha$ for some $a\in A$ and $\alpha$ is as above. The differential on $\Sym_A(\Der(A,A)[n])$ is zero, since $d=0$. Recall the usual Poisson relation and anti-symmetry for the bracket in a $P_{n+1}$-algebra, \emph{e.g.} from \cite[page 220]{SW},
\begin{eqnarray}
\label{EQU:Poisson-rel-1}
\, [f\odot g, h] &=& f\odot [g, h]+(-1)^{|g|(|h|+n)}[f, h] \odot g \\ 
\label{EQU:Poisson-rel-2}
\, [f,g\odot h] &=& [f,g]\odot h+(-1)^{|g|(|f|+n)}g\odot [f,h], \\ 
\label{EQU:Anti-symmetry-for-n-bracket}
\, [f,g] &=& -(-1)^{(|f|+n)(|g|+n)} [g,f],
\end{eqnarray}
for $f,g,h \in \Sym_A(\Der(A,A)[n])$, as well as the Schouten identities,
\begin{eqnarray}
\label{EQU:Schouten-1}
 \, [\rho,a]&=&\rho(a)\hspace{1.3cm} \text{ for } \rho\in \Der(A,A)[n], \text{ and } a \in A, \\
\label{EQU:Schouten-0}
 \, [a,b]&=&0 \hspace{1.8cm} \text{ for } a,b \in A,
 \end{eqnarray}
which are used in defining the bracket on $\Sym_A(\Der(A,A)[n])$ together with \eqref{EQU:Sym-bracket-of-odd-sphere}. (Note that this gives indeed a well-defined bracket on $\Sym_A(\Der(A,A)[n])$ due to the commutator of derivations giving the consistency relation $[\rho,a.\lambda]=\rho\circ (a\lambda)-(-1)^{|\rho|(|a|+|\lambda|)}(a\lambda)\circ \rho=\rho(a)\lambda+(-1)^{|\rho|\cdot |a|}a[\rho,\lambda]$ for $a\in A$ and $\rho,\lambda\in\Der(A,A)$ before shifting by $n$.)

In the case when $n$ is odd, the degree $|\alpha|=n-(2k+1)$ is even, so that for any $a,b\in A$, and $p,q\in \N_0$,
\begin{eqnarray*}
[a.\alpha^{\odot p},b.\alpha^{\odot q}]&=& a[\alpha^{\odot p},b\alpha^{\odot q}]+[a,b\alpha^{\odot q}]\odot \alpha^{\odot p}  \\
&=& a[\alpha^{\odot p},b]\odot \alpha^{\odot q} +(-1)^{|b|} ab\underbrace{[\alpha^{\odot p},\alpha^{\odot q}}_{=0}] \\
&& +\underbrace{[a,b]}_{=0}\alpha^{\odot p+q}+(-1)^{|b|(|a|+n)}b\underbrace{[a,\alpha^{\odot q}]}_{=-(-1)^{|a|+n}[\alpha^{\odot q},a]}\odot \alpha^{\odot p}\\
&=& \big(p\cdot a\cdot \alpha(b)-(-1)^{(|b|+1)(|a|+1)}q\cdot \alpha(a)\cdot b\big)\alpha^{\odot (p+q-1)}
\end{eqnarray*}
Thus, we obtain the following brackets for $\alpha^{\odot p}$ and $x\alpha^{\odot p}$,
\begin{eqnarray*}
\, [\alpha^{\odot p},\alpha^{\odot q}]&=&0, \\
\, [\alpha^{\odot p},x\alpha^{\odot q}]&=&-[x\alpha^{\odot q},\alpha^{\odot p}]=p\cdot \alpha^{\odot(p+q-1)}, \\
\, [x\alpha^{\odot p},x\alpha^{\odot q}]&=&(p-q)\cdot x\alpha^{\odot (p+q-1)}.
\end{eqnarray*}

In the case where $n$ is even, the degree of $|\alpha|=n-(2k+1)$ is odd, so that $\alpha\odot \alpha=0$ in $\Sym_A(\Der(A,A)[n])$. 
Thus, $\Sym_A(\Der(A,A)[n])=A\oplus \Der(A,A)[n]$ with $n$-bracket given by \eqref{EQU:Schouten-0}, \eqref{EQU:Schouten-1}, and \eqref{EQU:Sym-bracket-of-odd-sphere},
\[ [x,x]=[\alpha,\alpha]=[x\alpha,x\alpha]=0, [\alpha,x]=1, [\alpha,x\alpha]=\alpha, [x\alpha,x]=x\alpha(x)=x. \]
\end{ex}

We note that the above example is consistent with the calculation of the sphere product, see Remark \ref{REM:odd-sphere-consistency} below. We next consider the Sullivan algebra of the even sphere.

\begin{ex}[$P_{n+1}$-structure \eqref{eq:HigherFormality2} for the even sphere $S^{2k}$]\label{EXA:even-sphere-P(n+1)-structure}
For $A=(S(V),d)$ the Sullivan algebra of the even $2k$-sphere, we calculate the left hand side of \eqref{eq:HigherFormality2}. More precisely, let $A$ be the free algebra generated by $x$ and $y$, where $|x|=2k$ and $|y|=4k-1$. Since $y$ is an odd element, it is $y^2=0$. The differential $d$ is given by $d(x)=0$ and $d(y)=x^2$. Any (graded) derivation of $A$ is determined by its action on the generators $x$ and $y$. For $\ell=0,1,2,\dots$, we can define derivations $\alpha_\ell, \beta_\ell, \gamma_\ell, \delta_\ell:A\to A$ of $A$ whose actions on the generators are as follows.
\begin{align}
\label{DERIV:a} & \alpha_\ell(x)=x^\ell, && \alpha_\ell(y)=0, && \\
\label{DERIV:b} & \beta_\ell(x)=x^\ell y, && \beta_\ell(y)=0,&& \\
\label{DERIV:c} & \gamma_\ell(x)=0, && \gamma_\ell(y)=x^\ell,&& \\
\label{DERIV:d} & \delta_\ell(x)=0, && \delta_\ell(y)=x^\ell y.&&
\end{align}
The degrees of these derivations $\alpha_\ell, \beta_\ell, \gamma_\ell,\delta_\ell \in \Der(A,A)[n]$ (after the shift by $n$) are
\begin{align*}
&|\alpha_\ell |=2k(\ell-1)+n, \\
& |\beta_\ell|=2k(\ell-1)+(4k-1)+n=2k(\ell+1)+n-1, \\
&|\gamma_\ell |=2k\ell-(4k-1)+n=2k(\ell-2)+n+1, \\
& |\delta_\ell|= 2k\ell+n.
\end{align*}
Note, that any derivation can be written as a linear combination of the derivations $\alpha_\ell, \beta_\ell, \gamma_\ell$, and $\delta_\ell$. (In particular, $d=\gamma_2$.) 
Furthermore, 
\begin{equation}\label{EQU:module-even-sphere}
 x.\alpha_\ell=\alpha_{\ell+1},\quad\quad
 y.\alpha_\ell=\beta_\ell, \quad\quad
 x.\gamma_\ell=\gamma_{\ell+1}, \quad\quad
 y.\gamma_\ell=\delta_\ell, 
\end{equation}
 showing that $\Der(A,A)[n]$ is freely generated by $\alpha_0$ and $\gamma_0$ as an $A$-module. The commutator in $\Der(A,A)[n]$ for $\alpha_0$ and $\gamma_0$ is easily verified to vanish,

\begin{equation}\label{EQU:even-sphere-gen-bracket}
 [\alpha_0,\alpha_0]=[\alpha_0,\gamma_0]=[\gamma_0,\gamma_0]=0.
 \end{equation}
(However, this does not imply that the bracket vanishes identically, since the bracket is not $A$-linear, but rather satisfies equations \eqref{EQU:Poisson-rel-1}, \eqref{EQU:Poisson-rel-2}, and \eqref{EQU:Schouten-1}. For example, it is $[\alpha_\ell,\alpha_m]=(m-\ell)\alpha_{\ell+m-1}$, etc.) The differential $d$ of $A$ induces a differential $D$ on $\Der(A,A)[n]$ of degree $+1$ given by $D(\rho)=[d,\rho]$ for which we obtain $D(\alpha_\ell)=-2\gamma_{\ell+1}, D(\beta_\ell)=2\delta_{\ell+1}+\alpha_{\ell+2},  D(\gamma_\ell)=0, D(\delta_\ell)=\gamma_{\ell+2}$.

Since $\Der(A,A)[n]$ is generated by $\alpha_0$ and $\gamma_0$ as an $A$-module, we see that any element of 
\begin{equation*}
\Sym_A(\Der(A,A)[n])=A\oplus \Der(A,A)[n]\oplus \big(\Der(A,A)[n]\odot_A \Der(A,A)[n]\big)\oplus \dots
\end{equation*}
can be written as a sum of terms of the form $a.\alpha_0^{\odot p}\odot \gamma_0^{\odot q}$ for $a\in A$ and $p,q\in \N_0$. We thus may obtain a differential $D$ on $\Sym_A(\Der(A,A)[n])$ by taking
\[
D(x)=0,\quad D(y)=x^2, \quad D(\alpha_0)=-2\gamma_1=-2x\gamma_0, \quad D(\gamma_0)=0,
\]
and extending this as a graded derivation (with respect to $\odot$).

Note that $[\alpha_0^{\odot p}\odot \gamma_0^{\odot q}, \alpha_0^{\odot r}\odot \gamma_0^{\odot s}]=0$ by \eqref{EQU:Poisson-rel-1} and \eqref{EQU:Poisson-rel-2} and \eqref{EQU:even-sphere-gen-bracket}. In general, we have (with $|\alpha_0|\equiv n ($mod $2)$ and $|\gamma_0|\equiv n+1 ($mod $2)$ ):
\begin{multline}\label{EQU:n-bracket-in-even-sphere}
[a.\alpha_0^{\odot p}\odot \gamma_0^{\odot q},b.\alpha_0^{\odot r}\odot\gamma_0^{\odot s}]\\
= a[\alpha_0^{\odot p}\odot \gamma_0^{\odot q},b.\alpha_0^{\odot r}\odot\gamma_0^{\odot s}]+(-1)^{\epsilon_1}[a,b.\alpha_0^{\odot r}\odot\gamma_0^{\odot s}] \odot \alpha_0^{\odot p}\odot \gamma_0^{\odot q} \\
= a[\alpha_0^{\odot p}\odot \gamma_0^{\odot q},b]\odot\alpha_0^{\odot r}\odot\gamma_0^{\odot s}+(-1)^{\epsilon_1+\epsilon_2}b[a,\alpha_0^{\odot r}\odot\gamma_0^{\odot s}] \odot \alpha_0^{\odot p}\odot \gamma_0^{\odot q} \\
= a[\alpha_0^{\odot p}\odot \gamma_0^{\odot q},b]\odot\alpha_0^{\odot r}\odot\gamma_0^{\odot s}-(-1)^{\epsilon_1+\epsilon_2+\epsilon_3}b[\alpha_0^{\odot r}\odot\gamma_0^{\odot s},a] \odot \alpha_0^{\odot p}\odot \gamma_0^{\odot q},
\end{multline}
where we used \eqref{EQU:Poisson-rel-1} in the first equality, \eqref{EQU:Poisson-rel-2} with $[\alpha_0^{\odot p}\odot \gamma_0^{\odot q}, \alpha_0^{\odot r}\odot \gamma_0^{\odot s}]=[a,b]=0$ in the second equality, and \eqref{EQU:Anti-symmetry-for-n-bracket} in the third equality. The signs are given as follows,
\begin{eqnarray*}
\epsilon_1&=&(pn+q(n+1))(|b|+rn+s(n+1)+n), \\
\epsilon_2&=& |b|(|a|+n),\\
\epsilon_3&=& (|a|+n)(rn+s(n+1)+n),\\
\text{so that } \epsilon_1+\epsilon_2+\epsilon_3&=& (pn+q(n+1)+|a|+n)(rn+s(n+1)+|b|+n).
\end{eqnarray*}
The right hand side of \eqref{EQU:n-bracket-in-even-sphere} may be evaluated further by evaluating $[\alpha_0^{\odot p}\odot \gamma_0^{\odot q},b]$ and $[\alpha_0^{\odot r}\odot\gamma_0^{\odot s},a]$ using equations \eqref{EQU:Poisson-rel-1} and \eqref{EQU:Schouten-1}.

To be more concrete, we now restrict to the case $n$ being even. In this case $\alpha_0$ is an even element while $\gamma_0$ is an odd element implying that $\gamma_0\odot\gamma_0=0$, so that elements in $\Sym_A(\Der(A,A)[n])$ are either of the form $a.\alpha_0^{\odot p}$ or $a.\alpha_0^{\odot p}\odot \gamma_0$ for some $a\in A$. We obtain
\begin{align*}
& [\alpha_0^{\odot p},x^q]=pq x^{q-1}\alpha_0^{\odot (p-1)}, && [\alpha_0^{\odot p}\odot \gamma_0,x^q]=pq x^{q-1}\alpha_0^{\odot (p-1)}\odot \gamma_0, \\
& [\alpha_0^{\odot p},x^q y]=pq x^{q-1}y\alpha_0^{\odot (p-1)}, && [\alpha_0^{\odot p}\odot \gamma_0,x^q y]=x^q\alpha_0^{\odot p}-pq x^{q-1}y\alpha_0^{\odot (p-1)}\odot \gamma_0.
\end{align*}
This, together with \eqref{EQU:n-bracket-in-even-sphere} gives the following $n$-brackets:
\begin{equation}\label{EQU:n-bracket-for-even-sphere-n-even-1}
\left\{ \begin{array}{cl}
\, [x^r\alpha_0^{\odot p},x^s\alpha_0^{\odot q}]&=(ps-qr)x^{r+s-1}\alpha_0^{\odot(p+q-1)},  \\
\, [x^r y\alpha_0^{\odot p},x^s y\alpha_0^{\odot q}]&=0,  \\
\, [x^r\alpha_0^{\odot p}\odot \gamma_0,x^s\alpha_0^{\odot q}\odot \gamma_0]&=0,  \\
\, [x^r y\alpha_0^{\odot p}\odot \gamma_0,x^s y\alpha_0^{\odot q}\odot \gamma_0]&=0,  
\end{array} \right.
\end{equation}
and
\begin{equation}\label{EQU:n-bracket-for-even-sphere-n-even-2}
\left\{ \begin{array}{cl}
\, [x^r\alpha_0^{\odot p},x^s y\alpha_0^{\odot q}]&=(ps-qr)x^{r+s-1}y\alpha_0^{\odot(p+q-1)},  \\
\, [x^r\alpha_0^{\odot p},x^s\alpha_0^{\odot q}\odot \gamma_0]&=(ps-qr)x^{r+s-1}\alpha_0^{\odot(p+q-1)}\odot \gamma_0,  \\
\, [x^r\alpha_0^{\odot p},x^s y\alpha_0^{\odot q}\odot \gamma_0]&=(ps-qr)x^{r+s-1}y\alpha_0^{\odot(p+q-1)}\odot\gamma_0,  \\
\, [x^r y\alpha_0^{\odot p},x^s\alpha_0^{\odot q}\odot \gamma_0]&=(ps-qr)x^{r+s-1}y\alpha_0^{\odot(p+q-1)}\odot\gamma_0 
+x^{r+s}\alpha_0^{\odot (p+q)},  \\
\, [x^r y\alpha_0^{\odot p},x^s y\alpha_0^{\odot q}\odot \gamma_0]&=-x^{r+s}y\alpha_0^{\odot (p+q)},  \\
\, [x^r \alpha_0^{\odot p}\odot \gamma_0,x^s y\alpha_0^{\odot q}\odot \gamma_0]&=x^{r+s}\alpha_0^{\odot (p+q)}\odot \gamma_0.  
\end{array} \right.
\end{equation}
\end{ex}

\begin{ex}[$P_{n+1}$-structure \eqref{eq:HigherFormality2} for the complex projective space $\C P^m$]
For the complex projective space $\C P^m$, the Sullivan model $A=(S(V),d)$ is generated by $x$ in degree $|x|=2$ and $y$ in degree $|y|=2m+1$ with differential $d(x)=0$ and $d(y)=x^{m+1}$. Note, that in this case we have again two generators $x$ and $y$ which have the same parity as in the generators $x$ and $y$ in the last Example \ref{EXA:even-sphere-P(n+1)-structure} for the even sphere. Therefore, much of the arguments from Example \ref{EXA:even-sphere-P(n+1)-structure} can be repeated with only minor modifications. First, note that $\Der(A,A)[n]$ is generated by the derivations $\alpha_\ell, \beta_\ell, \gamma_\ell, \delta_\ell$ given by formulas \eqref{DERIV:a}-\eqref{DERIV:d}, however with $x$ and $y$ in the new degrees stated above. Thus, the degrees in the (shifted) space of derivations $\Der(A,A)[n]$ are now
\begin{align*}
&|\alpha_\ell |=2(\ell-1)+n, \\
& |\beta_\ell|=2(\ell-1)+(2m+1)+n=2(\ell+m)+n-1, \\
&|\gamma_\ell |=2\ell-(2m+1)+n=2(\ell-m)+n-1, \\
& |\delta_\ell|= 2\ell+n.
\end{align*}
The module relations \eqref{EQU:module-even-sphere} and the basic brackets \eqref{EQU:even-sphere-gen-bracket} remain the same. However, the differential is now $d=\gamma_{m+1}$, so that $D$ on $\Der(A,A)$ becomes $D(\rho)=[\gamma_{m+1},\rho]$, which gives the relations $D(\alpha_\ell)=-(m+1)\gamma_{\ell+m}, D(\beta_\ell)=(m+1)\delta_{\ell+m}+\alpha_{\ell+m+1}, D(\gamma_\ell)=0, D(\delta_\ell)=\gamma_{\ell+m+1}$. Thus $D$ is defined on $\Sym_A(\Der(A,A)[n])$ by taking
\[
D(x)=0,\quad D(y)=x^{m+1}, \quad D(\alpha_0)=-(m+1)\gamma_m=-(m+1)x^m\gamma_0, \quad D(\gamma_0)=0,
\]
and extending this to $\Sym_A(\Der(A,A)[n])$ as a graded derivation. Also, the considerations concerning the $n$-bracket (such as equation \eqref{EQU:n-bracket-in-even-sphere} and, when $n$ is even, equations \eqref{EQU:n-bracket-for-even-sphere-n-even-1} and \eqref{EQU:n-bracket-for-even-sphere-n-even-2}) apply just as in Example \ref{EXA:even-sphere-P(n+1)-structure} for the even sphere.
\end{ex}


\section{Integral chain models for higher string topology operations}\label{S:Brane}

We will use the $E_\infty$-Poincar\'e duality and Hochschild chains to give an algebraic model
for  Brane Topology at the \emph{chain} level, over an arbitrary coefficient ring.

\subsection{Brane operations for $n$-connected Poincar\'e duality space }
Recall that the $n$-dimensional free sphere space is denoted $X^{S^n}=Map(S^n, X)$.
It is the  space of continuous map from $S^n$ to $X$ endowed with the compact-open topology.
Sullivan and Voronov~\cite[Section 5]{CV} have shown that there is a natural graded
commutative algebra structure, called the \emph{sphere product},
 on the shifted homology $H_{\bullet+\dim(M)}(M^{S^n})$ of an oriented closed manifold.
For $n=1$, this structure agrees with the Chas-Sullivan loop product~\cite{CS}.
 This product was extended to all oriented stacks in~\cite{BGNX}.
For $n=2$, the sphere product is a special case of the surface product studied in~\cite{GTZ}.
 Further, it is claimed that $H_{\bullet+\dim(M)}(M^{S^n})$ is an algebra
over the homology $H_{\ast}(\mathcal{E}_{n+1}^{fr})$ of the framed little disk operad
$\mathcal{E}_{n+1}^{fr}$. Below we will forget about the $SO(n+1)$-action and
deal with action of the $\mathcal{E}_{n+1}$-operad at the \emph{chain} (and not homology) level
 and without specific assumptions on the characteristic of the ground ring $k$.

We start by stating one of our main results:
\begin{theorem}\label{T:BraneChain} Let $X$ be an $n$-connected Poincar\'e duality space. Then the shifted chain complex $C_{\bullet+\dim(X)} (X^{S^n})$ has a natural\footnote{with respect to maps of Poincar\'e duality spaces in the sense of Definition~\ref{D:PDmap}}  $E_{n+1}$-algebra structure which induces the sphere product~\cite[Section 5]{CV}
$$H_{p}\big(X^{S^n}\big)\otimes H_{q}\big(X^{S^n}\big) \to H_{p+q-\dim(X)}\big(X^{S^n}\big)$$ in homology when $X$ is an oriented closed manifold.
\end{theorem}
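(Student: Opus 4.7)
The strategy is to combine the three main ingredients developed in the paper: the dual Chen iterated integral (Corollary~\ref{C:Itdual}), the $E_\infty$-Poincar\'e duality lift (Corollary~\ref{C:PDmap}), and the $E_{n+1}$-algebra structure on $CH^{S^n}(A,A)$ provided by the higher Deligne conjecture (Theorem~\ref{T:Deligne}). First, since $X$ is $n$-connected and (by Remark~\ref{R:PDimpliesFinGen}) its homology groups are automatically finitely generated and projective, Corollary~\ref{C:Itdual} applied to $Y=X$ and the $n$-dimensional CW-complex $S^n$ yields a natural equivalence of chain complexes
\[
\mathcal{I}t^\ast\colon C_\ast\bigl(X^{S^n}\bigr)\;\stackrel{\simeq}\longrightarrow\; CH^{S^n}\!\Bigl(C^\ast(X),\,\bigl(C^\ast(X)\bigr)^\vee\Bigr).
\]

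Next, I would transport the Poincar\'e duality equivalence through this identification. By Corollary~\ref{C:PDmap}, the cap product with $[X]$ furnishes a quasi-isomorphism $\chi_X\colon C^\ast(X)\stackrel{\simeq}\to C_\ast(X)[\dim(X)]$ of $E_\infty$-$C^\ast(X)$-modules, and the projectivity/finiteness of the homology groups makes biduality $C_\ast(X)\simeq\bigl(C^\ast(X)\bigr)^\vee$ into a quasi-isomorphism of $E_\infty$-$C^\ast(X)$-modules (using Theorem~\ref{T:lifttoEinfty} to lift the underlying left-module equivalence uniquely). Composing yields an equivalence $\bigl(C^\ast(X)\bigr)^\vee\simeq C^\ast(X)[-\dim(X)]$ of $E_\infty$-$C^\ast(X)$-modules. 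By the functoriality of $CH^{S^n}(A,-)$ in the module variable (Proposition~\ref{P-CHfunctorMod}), this induces a natural equivalence in $\hkmod$
\[
CH^{S^n}\!\Bigl(C^\ast(X),\bigl(C^\ast(X)\bigr)^\vee\Bigr)\;\simeq\; CH^{S^n}\!\bigl(C^\ast(X),C^\ast(X)\bigr)[-\dim(X)].
\]
Combining with the previous step, I obtain a natural equivalence $C_{\ast+\dim(X)}\!\bigl(X^{S^n}\bigr)\simeq CH^{S^n}\!\bigl(C^\ast(X),C^\ast(X)\bigr)$, and the higher Deligne conjecture (Theorem~\ref{T:Deligne}) equips the right-hand side with an $E_{n+1}$-algebra structure whose underlying $E_n$-algebra is the one of Theorem~\ref{T:EdHoch} (\emph{i.e.} built from the sphere pinch maps). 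Naturality with respect to maps of Poincar\'e duality spaces (Definition~\ref{D:PDmap}) follows formally from the naturality of each of the three ingredients.

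The genuinely delicate part is the last assertion: that the induced product on $H_{\ast+\dim(X)}(X^{S^n})$ is the Sullivan--Voronov sphere product. I plan to argue this by unpacking the underlying $E_1$-structure. Through the chain of equivalences above, the $E_1$-product comes from the cup product $\cup_{S^n}$ of Corollary~\ref{C:cupHoch}, which is induced by the topological pinch map $S^n\to S^n\vee S^n$. Via Corollary~\ref{C:HH(AB)=z}, this cup product is precisely the centralizer composition of Theorem~\ref{T:EnAlgHoch}.(3); using the explicit factorization algebra description of that composition (Remark~\ref{R:sketchofEnHochstructure} and Figure~\ref{fig:algebramap}) together with the dual iterated integral model of mapping spaces, one can identify the $H_\ast$-level multiplication with the intersection-plus-concatenation recipe that defines the Sullivan--Voronov product on $H_\ast(M^{S^n})$. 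The main obstacle is exactly this geometric identification: one must check that under $\mathcal{I}t^\ast$ the composition of two homomorphisms $\phi_1,\phi_2:A\to B$ of $E_n$-modules corresponds, in homology, to first restricting cycles in $X^{S^n}\times X^{S^n}$ to those with matching value at the basepoint (which is implemented by Poincar\'e duality, exactly as in the proof of the loop product for $n=1$) and then concatenating along the equator of $S^n$. This is the analogue for $n$-spheres of the string topology/Hochschild comparison of~\cite{FT,FTV}, and the $E_\infty$-lift of Poincar\'e duality together with the centralizer description makes the argument work over arbitrary coefficients.
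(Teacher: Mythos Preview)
Your overall strategy matches the paper exactly: combine Corollary~\ref{C:Itdual}, Corollary~\ref{C:PDmap}, and Theorem~\ref{T:Deligne} to produce the equivalence~\eqref{eq:HH=chain} and then transport the $E_{n+1}$-structure. Two points deserve comment.

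First, the naturality with respect to maps of Poincar\'e duality spaces does \emph{not} follow quite as formally as you suggest. A map $f:(X,[X])\to(Y,[Y])$ gives $f^*:C^\ast(Y)\to C^\ast(X)$ and $f_*:C_\ast(X)\to C_\ast(Y)$, and the induced map on $CH^{S^n}(C^\ast(-),C^\ast(-))$ is not simply obtained from functoriality of Theorem~\ref{T:Deligne} in one $E_\infty$-algebra variable. The paper handles this by passing through $CH^{S^n}(C^\ast(X),C_\ast(X))$ and invoking the functor $CH^{S^n}:\AM\to E_n\text{-}Alg$ of Theorem~\ref{THM:Ed-on-HC(AM)} (with Example~\ref{EX:examples-for-AM}.(2)); one then checks the resulting maps are also compatible with the composition~\eqref{eq:compEndoA} and applies Dunn's Theorem~\ref{T:Dunn} to conclude they are $E_{n+1}$-maps. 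You should cite this machinery rather than claim formality.

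Second, your plan for identifying the underlying product with the Sullivan--Voronov sphere product is too vague. The paper's argument (Proposition~\ref{P:brane=cupsphere}) is not a general ``centralizer unpacking'' but a concrete comparison with the Pontryagin--Thom definition~\eqref{eq:Dsphereproduct}: one rewrites the cup product $\cup_{S^n}$ as the map $\mu_{S^n}$ of~\eqref{eq:muSn} via Lemma~\ref{L:cupproductdual}, expresses the diagonal Gysin map $(\rho_{in})_!$ through an $E_\infty$-module map $\nabla_X$ built from the Thom class (Lemma~\ref{L:diagasmodmap}), and then checks compatibility with $\mathcal{I}t$ (Lemma~\ref{L:sphereproduct=Hoch}). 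Your sketch omits the Thom class and collapse map entirely; without Lemma~\ref{L:diagasmodmap} there is no link between the algebraic map $\nabla_X$ and the tubular-neighborhood definition of the sphere product.
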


\begin{proof}  Remark~\ref{R:PDimpliesFinGen} implies that the homology groups of $X$ are   finitely generated so that the biduality homomorphism $C_\ast(X)\to (C^\ast(X))^{\vee}$ is a quasi-isomorphism. Since $X$ is a Poincar\'e duality space, it then follows from Corollary~\ref{C:PDmap}
that the Poincar\'e duality  map~\eqref{eq:PDmap}
$$\chi_X: C^{\ast}(X) \to C_{\ast}(X)[\dim(X)] \; \cong\;
\big(C^{\ast}(X)\big)^{\vee}[\dim(X)]$$ is an equivalence of $C^{\ast}(X)$-$E_\infty$-Modules.
 Thus it yields an equivalence
\begin{multline}\label{eq:HHPD}
 CH^{S^n} \left(C^{\ast}(X), C^{\ast}(X)\right)
\cong Hom_{C^{\ast}(X)}\left( CH_{S^n}(C^{\ast}(X)),
 C^{\ast}(X)\right) \\ \stackrel{(\chi_X)\circ -}\longrightarrow
 Hom_{C^{\ast}(X)}\left( CH_{S^n}(C^{\ast}(X)),  \big(C^{\ast}(X)\big)^{\vee}\right)[\dim(X)] \\
 \cong CH^{S^n}\left(C^{\ast}(X)),  \big(C^{\ast}(X)\big)^{\vee}\right)[\dim(X)].
\end{multline}
 Since $X$ is $n$-connected, by Corollary~\ref{C:Itdual}, there is an equivalence
\begin{equation}
 \label{eq:ItPD} CH^{S^n}\left(C^{\ast}(X)),  \big(C^{\ast}(X)\big)^{\vee}\right)\;\cong \; C_{\ast}\big(X^{S^n}\big).
\end{equation}
Combining the equivalences~\eqref{eq:HHPD} and~\eqref{eq:ItPD}, we get
a natural equivalence
\begin{equation}\label{eq:HH=chain}
 CH^{S^n} \left(C^{\ast}(X), C^{\ast}(X)\right) \cong C_{\ast}\big(X^{S^n}\big)[\dim(X)].
\end{equation}
By Theorem~\ref{T:Deligne}, $CH^{S^n} \left(C^{\ast}(X), C^{\ast}(X)\right)$ has
 a natural $E_{n+1}$-algebra structure, whose underlying $E_1$-algebra structure is given
by the cup-product. Hence the equivalence~\eqref{eq:HH=chain} yields a natural
 $E_{n+1}$-structure on $C_{\ast}\big(X^{S^n}\big)[\dim(X)]$. Note that the naturality with
respect to maps $f:X\to Y$ of Poincar\'e duality spaces follows from Theorem~\ref{THM:Ed-on-HC(AM)} below
 since a Poincar\'e duality space yields an object of $\AM$ and a map of Poincar\'e duality space
is  a map in $\AM$, see Example~\ref{EX:examples-for-AM}.(2) below. From this observation follows  the commutativity
of the following diagram (in which $d=\dim(X)=\dim(Y)$)
\[\xymatrix{  \Big(CH^{S^n} \left(C^{\ast}(X), C^{\ast}(X)\right)\Big)^{\otimes 2}
\ar[d]_{{(\chi_X)\circ -}^{\otimes 2}}^{\cong}  \ar[r]^{\circ}
& CH^{S^n} \left(C^{\ast}(X), C^{\ast}(X)\right) \ar[d]^{(\chi_X)\circ -}_{\cong} \\
 \Big(CH^{S^n} \left(C^{\ast}(X), C_{\ast}(X)\right)[d]\Big)^{\otimes 2}
\ar[d]_{(f_*)^{\otimes 2}} & CH^{S^n} \left(C^{\ast}(X), C_{\ast}(X)\right)[d] \ar[d]^{f_*}  \\
\Big(CH^{S^n} \left(C^{\ast}(Y), C_{\ast}(Y)\right)[d]\Big)^{\otimes 2} &
CH^{S^n} \left(C^{\ast}(Y), C_{\ast}(Y)\right)[d]  \\
 \Big(CH^{S^n} \left(C^{\ast}(Y), C^{\ast}(Y)\right)\Big)^{\otimes 2} \ar[r]^{\circ}
\ar[u]^{{(\chi_Y)\circ -}^{\otimes 2}}_{\cong} &  CH^{S^n} \left(C^{\ast}(Y), C^{\ast}(Y)\right)
\ar[u]_{(\chi_Y)\circ -}^{\cong}}
 \]
where the horizontal arrows are given by the composition~\eqref{eq:compEndoA} of (derived)
homomorphisms (and Proposition~\ref{P:coHH=coTCH}). By Theorem~\ref{THM:Ed-on-HC(AM)}, the
vertical maps are maps of $E_n$-algebras. Thus the above diagram shows that a map of
Poincar\'e duality space induces a map of $E_1$-algebras (with respect to the composition~\eqref{eq:compEndoA})
in the (symmetric monoidal) category of $E_n$-algebras and thus induces a map of $E_{n+1}$-algebras by
Dunn Theorem (see~\cite{Du, L-HA} or Theorem~\ref{T:Dunn}):  $E_1-Alg\big(E_n-Alg\big)\cong E_{n+1}-Alg$.

\smallskip

It remains to identify the underlying multiplication in homology with its purely
topological counterpart. This is done in Section~\ref{S:BraneProduct},
 see Proposition~\ref{P:brane=cupsphere}.
\end{proof}
Passing to homology in Theorem~\ref{T:BraneChain},
we recover the following result first stated in~\cite{CV}. 
\begin{cor}\label{C:BraneHomol}
 Let $X$ be a $n$-connected Poincar\'e duality space.
Then the shifted homology $H_{\bullet+\dim(X)} (X^{S^n})$ has a
  natural $P_{n+1}$-algebra\footnote{such that is the induced Lie algebra structure is the one of a restricted Lie algebra} structure which induces the sphere product~\cite[Section 5]{CV}
$$H_{p}\big(X^{S^n}\big)\otimes H_{q}\big(X^{S^n}\big) \to H_{p+q-\dim(X)}\big(X^{S^n}\big)$$ in homology when $X$ is an oriented closed manifold.
\end{cor}

\begin{rem}
Theorem~\ref{T:BraneChain} (as well as Corollary~\ref{C:homologyinvarianceSphereproduct} below) still holds if $X$ is a Poincar\'e duality space which is connected, nilpotent with finite homotopy groups in degree less than or equal to $n$. This is seen by
using Proposition~\ref{P:weakenconnectivity} in addition to Corollary~\ref{C:Itdual} in the proof of the Theorem. 
\end{rem}

\begin{ex}[Explicit computation in characteristic zero]
 Let $n\geq 2$.  In characteristic zero, the singular cochains on $X$ are equivalent, as an $E_\infty$-algebra, 
 to a Sullivan algebra (as in \S~\ref{SS:HigherFormalityforFreeAlg}) and, in particular, one can compute the Brane topology structure given by Theorem~\ref{T:BraneChain} using Corollary~\ref{C:HigherFormalityforFreeAlg} which gives very explicit combinatorial models.
\end{ex}

\begin{ex}
Assume $M$ is a simply connected closed manifold. Then Theorem~\ref{T:BraneChain} yields an $E_2$-structure on the chains $C_\ast(LM)[\dim(M)]$ of the free loop space $LM$, thus string topology operations at the chain level. According to Example~\ref{E:Deligneforn=1} and Proposition~\ref{P:brane=cupsphere} below, the underlying Gerstenhaber structure is the  classical Chas-Sullivan one~\cite{CS}.
\end{ex}

\begin{cor}\label{C:homologyinvarianceSphereproduct} Let $X, Y$ be $n$-connected  ($n\geq 1$) closed manifolds of the same dimension and assume $f: M\to N$ induces an isomorphism in homology such that $f_*([X])= [Y] \in H_\ast(Y,k)$. Then the induced bijection $H_\ast(X^{S^n}) \cong H_{\ast}(Y^{S^n})$ is an algebra isomorphism (with respect to the sphere product).
\end{cor}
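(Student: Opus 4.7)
The plan is to identify the hypotheses of the corollary with the conditions that make $f\colon X\to Y$ a map of Poincar\'e duality spaces (in the sense of Definition~\ref{D:PDmap}), then invoke the naturality statement in Theorem~\ref{T:BraneChain} and verify that the resulting $E_{n+1}$-algebra morphism $f_*\colon C_{*+d}(X^{S^n})\to C_{*+d}(Y^{S^n})$ is an equivalence.

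First I would check that $f$ \emph{is} a map of Poincar\'e duality spaces. Since $f$ is a $k$-homology isomorphism, $f^*\colon C^{*}(Y)\to C^{*}(X)$ is a quasi-isomorphism of $E_\infty$-algebras, and $f_*\colon C_*(X)\to C_*(Y)$ is a quasi-isomorphism of $C^{*}(Y)$-$E_\infty$-modules (via $f^*$). The required square
\[
f_{*}\big(f^{*}(\alpha)\cap [X]\big)\;=\;\alpha\cap f_{*}([X])\;=\;\alpha\cap [Y]
\]
commutes on the nose as a diagram of chain complexes (the classical projection formula), and by Theorem~\ref{T:lifttoEinfty} together with the uniqueness clause of Corollary~\ref{C:PDmap}, this single commuting square of left $C^{*}(Y)$-modules lifts \emph{uniquely} to a commuting square in $C^{*}(Y)\text{-}\mathrm{Mod}^{E_\infty}$. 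Thus $f$ fits Definition~\ref{D:PDmap}.

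Next I would apply Theorem~\ref{T:BraneChain}: by its naturality clause, $f$ induces a morphism of $E_{n+1}$-algebras
\[
f_{*}\colon C_{*+d}\big(X^{S^{n}}\big)\longrightarrow C_{*+d}\big(Y^{S^{n}}\big),
\]
where $d=\dim(X)=\dim(Y)$. To identify this with an isomorphism in homology I would transport the problem through the natural equivalence~\eqref{eq:HH=chain} of the proof of Theorem~\ref{T:BraneChain}: combining Corollary~\ref{C:Itdual} (applicable since $X,Y$ are $n$-connected with projective finitely generated homology, the latter following from Remark~\ref{R:PDimpliesFinGen} and our hypothesis) with Corollary~\ref{C:PDmap} yields a commutative square
\[
\xymatrix{
CH^{S^{n}}\bigl(C^{*}(Y),C^{*}(Y)\bigr)[d]\ar[r]^{\simeq}\ar[d]_{(f^{*})^{*}} & C_{*+d}(Y^{S^{n}})\ar[d]^{f_{*}}\\
CH^{S^{n}}\bigl(C^{*}(X),C^{*}(X)\bigr)[d]\ar[r]^{\simeq} & C_{*+d}(X^{S^{n}})
}
\]
(with the horizontal arrows the Poincar\'e-duality-twisted iterated-integral equivalences). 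Since $f^{*}$ is a quasi-isomorphism of $E_\infty$-algebras, the contravariant functoriality of higher Hochschild cochains in the algebra argument (Proposition~\ref{P-CHfunctorMod}) makes the left vertical arrow a quasi-isomorphism, hence so is $f_{*}$ on the right. Together with Proposition~\ref{P:brane=cupsphere} (identifying the underlying product with the Sullivan--Voronov sphere product), passing to homology gives the claimed algebra isomorphism.

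The main subtlety, and the step I would expect to require the most care, is justifying that $f$ is a \emph{map of Poincar\'e duality spaces in the $E_\infty$-module sense}: the projection formula only gives commutativity on homology/chain complexes a priori, and lifting this to a coherently commutative square in the $\infty$-category of $E_\infty$-modules over $C^{*}(Y)$ is where the uniqueness of the $E_\infty$-lift of Poincar\'e duality (Corollary~\ref{C:PDmap}) is essential. Once this point is established, the rest of the argument is a formal combination of naturality and the equivalences already proved.
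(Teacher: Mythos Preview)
Your approach is essentially the paper's: show $f$ is a map of Poincar\'e duality spaces, invoke the naturality clause of Theorem~\ref{T:BraneChain} to get an $E_{n+1}$-algebra map on shifted chains, argue it is an equivalence because $f^*$ is a quasi-isomorphism, and conclude via Proposition~\ref{P:brane=cupsphere}. Your treatment of the first step is in fact more careful than the paper's (which simply asserts the two cap-product maps are homotopic and leaves the passage to $C^*(Y)\text{-}Mod^{E_\infty}$ implicit); the only slip is that the vertical arrows in your displayed square should run from the $X$-side to the $Y$-side (the map on sphere-space chains is $f_*\colon C_{*+d}(X^{S^n})\to C_{*+d}(Y^{S^n})$, and the Hochschild side goes the same way via the $\AM$-functoriality of Theorem~\ref{THM:Ed-on-HC(AM)}), but this does not affect the argument.
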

 In particular,
the sphere product is an homotopy invariant of $n$-connected manifolds (with respect to orientation preserving maps).
\begin{proof}
By assumption, the induced map $ \cap f_*([X]) : C^\ast(Y) \to C_\ast(Y)[\dim(Y)]$ and $\cap [Y]  C^\ast(Y) \to C_\ast(Y)[\dim(Y)]$
are homotopic. Thus $f$ induces a map of Poincar\'e duality spaces $(X,[X]) \to (Y, [Y))$ which is a quasi-isomorphism.  Then, by Theorem~\ref{T:BraneChain}, $f_*: C_{\ast+\dim(X)}(X^{S^n}) \to C_{\ast+\dim(Y)}(Y^{S^n})$ is an equivalence of $E_n$-algebras. In particular, it is an algebra isomorphism in homology so that the result follows from the identification of the sphere product as asserted in Theorem~\ref{T:BraneChain} (see Proposition~\ref{P:brane=cupsphere}).
\end{proof}

The above brane product fits into a larger setting of setups\footnote{which is useful to study
functoriality of brane operations} to define
 $\mathcal E_n$-actions on $CH^{S^n}(A,M)$. In fact, we start with the following general setup.
\begin{definition}\label{DEF:category-AM}
We define $\AM$ as the following category. The objects of $\AM$ are triples $(A,M,\mu)$, where $A$ is an $E_\infty$-algebra, $M$ is an $E_\infty$-$A$-module, and, considering the $E_\infty$-algebra $A\otimes A$ with canonical $E_\infty$-$(A\otimes A)$-modules $M$ and $M\otimes M$ (induced via the $E_\infty$ structure map $A\otimes A\to A$), we assume that $\mu:M\otimes M\to M$ is an $E_\infty$-$(A\otimes A)$-module map\footnote{said otherwise, the objects of $\AM$ are the objects $N$ of the monoidal $\infty$-category $Mod^{E_n}$ endowed with a structure map $\mu_N:N\otimes N\to N$; the morphisms are however different}. The morphisms of $\AM$ consist of tuples $(f,g):(A,M,\mu)\to (A',M',\mu')$, where $f:A\to A'$ is an $E_\infty$-morphism, thus inducing an $E_\infty$-$A$-module structure on $M'$, and $g:M'\to M$ is an $E_\infty$-$A$-module map, satisfying the compatibility relation,
\begin{equation}\label{EQ:morph-in-AM}
 \xymatrix{M'\otimes M' \ar[r]^{\quad\mu'}\ar[d]_{g\otimes g} & M'\ar[d]^{g}\\ M\otimes M\ar[r]^{\quad\mu} & M  }
\end{equation} in $\hkmod$.
\end{definition}
There are two main examples we have in mind for the above definition.
\begin{ex}\label{EX:examples-for-AM}
\begin{enumerate}
\item
The first example relates to the sphere product as considered in Section \ref{S:Edcochains} and also in \cite{G}. Let $A$ and $B$ be two $E_\infty$-algebras, and let $h:A\to B$ be a morphism of $E_\infty$-algebras. 
Then, $h$ makes $M:=B$ into an $E_\infty$-$A$-module, and the $E_\infty$ structure of $B$ gives a map $B\otimes B\to B$ which is also an $E_\infty$-$(A\otimes A)$-module map. Furthermore, if $h$ factors through an $E_\infty$-algebra $B'$ as a composition of $E_\infty$-algebras maps $h:A\stackrel {h'} \to B'\stackrel {g}\to B$, then this induces a morphism between the spaces $(id_A,g):(A,B,\mu)\to(A,B',\mu')$.
\item
The second example relates to generalizations of sphere topology products as described in Theorem \ref{T:BraneChain} above. Let $A$ be an $E_\infty$-algebra and $M$ be an $E_\infty$-$A$-module and given an $E_\infty$-module map $\rho:M\to A$. We define the induced $E_\infty$-$(A\otimes A)$-module map $\mu:M\otimes M\to M$ as the composition of $\rho$ and the $E_\infty$-$A$-module structure of $M$,
\[
\mu: M\otimes M\stackrel{\rho\otimes id}{\longrightarrow} A\otimes M\longrightarrow M.
\]
Furthermore, any map of two given $E_\infty$-$A$-modules $g:M'\to M$ which commutes with $E_\infty$-$A$-module maps $\rho$ and $\rho'$,
\[
 \xymatrix{M' \ar[rrd]^{\rho'}\ar[dd]_{g} && \\ && A\\ M\ar[rru]^{\rho} && }
\]
also respects the induced relation \eqref{EQ:morph-in-AM}, since $g\circ \mu'(m'_1,m'_2)=g(\rho'(m'_1).m'_2)=\rho'(m'_1).g(m'_2)=\rho(g((m'_1)).g(m'_2)=\mu\circ(g\otimes g)(m'_1,m'_2)$.

For example, consider the setup from
Section \ref{SS:E-inf-PD}: $C_*(X)$ is an $E_\infty$-coalgebra,
$C^{*}(X)=Hom_{k}(C_*(X),k)$ is its linear dual endowed with its canonical
 $E_\infty$-algebra structure, and caping with the fundamental cycle
 $\cap [X]:C^*(X)\to C_*(X)[\dim(X)]$ induces an $E_\infty$-quasi-isomorphism
 of $E_\infty$-$A$-modules. The quasi-inverse of this map is an
 $E_\infty$-$A$-module map $\rho:M:=C_*(X)[\dim(X)]\to A:=C^*(X)$. Moreover, if
$f: (X, [X])\to (Y, [Y])$ is  a map of Poincar\'e duality space (Definition~\ref{D:PDmap}),
 then  the tuple $(f^*:C^{\ast}(Y)\to C^{\ast}(X), f_*:C_{\ast}(X) \to C_{\ast}(Y))$ is a map in
the category $\AM$.
\end{enumerate}
\end{ex}

For any triple $(A,M,\mu)$ which is an object of $\AM$ described in Definition \ref{DEF:category-AM}, we can consider the Hochschild cochains $CH^{S^d}(A,M)$. We claim that there is an $E_d$-algebra structure on $CH^{S^d}(A,M)$, generalizing the $E_d$-algebra structure from Theorem \ref{T:EdHoch}.
\begin{definition}\label{DEF:Ed-on-CHSd}
Using the notation from Section \ref{S:Edcochains}, we define the $E_d$-algebra structure on $CH^{S^d}(A,M)$ by,
\begin{multline*}
 C_{\ast}\big(\mathcal{C}_d(r)\big) \otimes \left(CH^{S^d}(A,M)\right)^{\otimes r}
 \longrightarrow  C_{\ast}\big(\mathcal{C}_d(r)\big)\otimes \left(Hom_A(A^{\otimes S^d},M)\right)^{\otimes r}
 \\
  \longrightarrow  C_{\ast}\big(\mathcal{C}_d(r)\big)\otimes Hom_{A^{\otimes r}}((A^{\otimes S^d})^{\otimes r},M^{\otimes r})
 \\
 \stackrel{id\otimes (\mu^{\circ (r-1)})_*}  \longrightarrow  C_{\ast}\big(\mathcal{C}_d(r)\big)\otimes Hom_{A^{\otimes r}}((A^{\otimes S^d})^{\otimes r},M)
 \\
 \stackrel{\cong}  \longrightarrow  C_{\ast}\big(\mathcal{C}_d(r)\big)\otimes Hom_{A^{\otimes r}}((A^{\otimes S^d})^{\otimes r},Hom_A(A,M))
\\
 \stackrel{\cong}  \longrightarrow  C_{\ast}\big(\mathcal{C}_d(r)\big)\otimes Hom_{A}(A\otimes^{\mathbb{L}}_{A^{\otimes r}}(A^{\otimes S^d})^{\otimes r},M)
\\
 \stackrel{\cong}  \longrightarrow  C_{\ast}\big(\mathcal{C}_d(r)\big)\otimes Hom_{A}(A^{\otimes ( \overbrace{S^d\vee\dots\vee S^d}^{r\text{ times}})},M)
\\
\longrightarrow C_{\ast}\big(\mathcal{C}_d(r)\big)\otimes CH^{S^d\vee\dots\vee S^d}(A,M) \stackrel{pinch^*}  \longrightarrow  CH^{S^d}(A,M).
\end{multline*}
We need to show compatibility of the involved operad action. This is similar to the proof in section \ref{S:Edcochains}.
\end{definition}
In fact, more is true:
\begin{theorem}\label{THM:Ed-on-HC(AM)}
The identification given in the previous Definition \ref{DEF:Ed-on-CHSd} defines a (contravariant) functor $CH^{S^d}:\AM\to  E_d-Alg$.
\end{theorem}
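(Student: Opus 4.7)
The plan is to establish two things: first, that Definition~\ref{DEF:Ed-on-CHSd} indeed produces a well-defined $E_d$-algebra structure on $CH^{S^d}(A,M)$ for each object $(A,M,\mu)\in\AM$; and second, that any morphism $(f,g):(A,M,\mu)\to(A',M',\mu')$ in $\AM$ induces a morphism of $E_d$-algebras $CH^{S^d}(A',M')\to CH^{S^d}(A,M)$ (the contravariance coming from the fact that $f$ goes the opposite way to the direction of the module map $g$). This will promote $CH^{S^d}$ to the desired functor $\AM\to E_d\text{-}Alg$.

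For the first point, I would essentially replay the proof of Theorem~\ref{T:EdHoch}, with $\mu:M\otimes M\to M$ taking the role that was played there by the multiplication map $m_B:B\otimes B\to B$. The only input used in that proof was that $m_B$ was a map of $A\otimes A$-$E_\infty$-modules (Proposition~\ref{P:tensorEinftyisEinfty} ensured this for the $E_\infty$-algebra $B$), together with the $E_d$-co-$H$-space structure on $S^d$ given by the pinching map~\eqref{eq:pinchcube}. Both ingredients are present in our setting: the hypothesis on $\mu$ in Definition~\ref{DEF:category-AM} gives the first, and the second is a topological fact independent of algebraic data. The iterated map $\mu^{\circ(r-1)}:M^{\otimes r}\to M$ is coherent because $\mu$ is a morphism of $(A\otimes A)$-modules compatible via the canonical $(A\otimes A)\to A$ map with the $A$-module structure of $M$; this coherence is what lets the different parenthesizations agree up to homotopy in the $\infty$-category. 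Operadic compatibility then reduces, exactly as in Theorem~\ref{T:EdHoch}, to the statement that $S^d$ is an $E_d$-coalgebra in pointed topological spaces with the wedge monoidal structure.

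For functoriality, given $(f,g):(A,M,\mu)\to(A',M',\mu')$ I would define
\[
(f,g)^*:CH^{S^d}(A',M')=Hom_{A'}(CH_{S^d}(A'),M')\longrightarrow Hom_A(CH_{S^d}(A),M)=CH^{S^d}(A,M)
\]
as the composition of restriction of structure along $f$ (which turns an $A'$-module map into an $A$-module map, since $M$ is viewed as $A$-module via $f$), precomposition with the $E_\infty$-algebra map $CH_{S^d}(f):CH_{S^d}(A)\to CH_{S^d}(A')$, and post-composition with $g:M'\to M$. I would then verify that this map intertwines the $E_d$-structure maps of Definition~\ref{DEF:Ed-on-CHSd}. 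At the $r$-th level this reduces, after unwinding, to the commutativity of the outer square
\[
\xymatrix{(M')^{\otimes r}\ar[r]^{(\mu')^{\circ(r-1)}}\ar[d]_{g^{\otimes r}} & M'\ar[d]^{g}\\ M^{\otimes r}\ar[r]^{\mu^{\circ(r-1)}} & M}
\]
which is obtained by iterating the square~\eqref{EQ:morph-in-AM}. Once this compatibility is established for every $r$, the two sequences of structure maps match up and $(f,g)^*$ becomes a map of $E_d$-algebras. Contravariant functoriality with respect to composition in $\AM$ is then a routine check.

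The main obstacle I foresee is the coherent handling of the iterated map $\mu^{\circ(r-1)}$ and its compatibility with the $E_\infty$-module structure at the level of the $(\infty,1)$-category. In the two examples of interest (Example~\ref{EX:examples-for-AM}) this coherence is automatic: case (1) inherits the full associativity from the $E_\infty$-algebra structure of $B$, and case (2) inherits it from the $E_\infty$-$A$-module action on $M$ since $\mu$ factors through it. For the general abstract object of $\AM$ the coherence needs to be extracted from the stated hypothesis that $\mu$ is an $E_\infty$-$(A\otimes A)$-module map, so some care is required to organize the argument so that no hidden associativity assumption creeps in. In parallel, checking that the pinch map $pinch^*$ intertwines everything in the presence of $\mu$ (rather than the honest multiplication of an $E_\infty$-algebra) is mildly delicate but follows the same pattern as in Theorem~\ref{T:EdHoch}.
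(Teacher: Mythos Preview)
Your proposal is correct and follows essentially the same approach as the paper. The paper treats the well-definedness of the $E_d$-structure as already handled in Definition~\ref{DEF:Ed-on-CHSd} (which notes it is ``similar to the proof in section~\ref{S:Edcochains}'', exactly your replay of Theorem~\ref{T:EdHoch}), and for functoriality it writes out one large explicit commutative diagram tracking the full composite of Definition~\ref{DEF:Ed-on-CHSd} under $(f^*,g_*)$; the commutativity of that diagram boils down to precisely the iterated square from~\eqref{EQ:morph-in-AM} that you isolate.
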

\begin{proof}
It only remains to show that morphisms $(f,g):(A,M,\mu)\to (A',M',\mu')$ in $\AM$ induce maps of $E_d$-algebras. Since $f:A\to A'$ makes $M'$ into an $E_\infty$-$A$-algebra, and with this $\mu':M'\otimes M'\to M'$ into a map of $E_\infty$-$(A\otimes A)$-modules, this follows from the commutativity of the following diagram:
{\small\[
 \xymatrix{
\left(CH^{S^d}(A',M')\right)^{\otimes r} \ar[d]\ar[r]^{(f^*)^{\otimes r}} &
\left(CH^{S^d}(A,M')\right)^{\otimes r} \ar[d]\ar[r]^{(g_*)^{\otimes r}} &
\left(CH^{S^d}(A,M)\right)^{\otimes r} \ar[d]
 \\
Hom_{{A'}^{\otimes r}}(({A'}^{\otimes S^d})^{\otimes r},{M'}^{\otimes r}) \ar[d]_{({\mu'}^{\circ (r-1)})_*} \ar[r]^{(f^{\otimes r})^*} &
Hom_{A^{\otimes r}}((A^{\otimes S^d})^{\otimes r},{M'}^{\otimes r}) \ar[d]^{({\mu'}^{\circ (r-1)})_*}\ar[r]^{(g^{\otimes r})_*} &
Hom_{A^{\otimes r}}((A^{\otimes S^d})^{\otimes r},M^{\otimes r}) \ar[d]^{({\mu}^{\circ (r-1)})_*}
  \\
Hom_{{A'}^{\otimes r}}\left(({A'}^{\otimes S^d})^{\otimes r},M'\right) \ar[d]\ar[r]^{(f^{\otimes r})^*} &
Hom_{A^{\otimes r}}\left((A^{\otimes S^d})^{\otimes r},M'\right) \ar[d]\ar[r]^{g_*} &
Hom_{A^{\otimes r}}\left((A^{\otimes S^d})^{\otimes r},M\right) \ar[d]
 \\
Hom_{A'}\left({A'}^{\otimes ( {S^d\vee\dots\vee S^d})},M'\right) \ar[d]_{{pinch}^*} \ar[r]^{f^*} &
Hom_{A}\left(A^{\otimes ( {S^d\vee\dots\vee S^d})},M'\right)  \ar[d]_{pinch^*} \ar[r]^{g_*}&
Hom_{A}\left(A^{\otimes ( {S^d\vee\dots\vee S^d})},M\right)  \ar[d]_{pinch^*}
   \\
Hom_{A'}({A'}^{\otimes  S^d},M') \ar[r]^{f^*} &
Hom_{A}(A^{\otimes  S^d},M') \ar[r]^{g_*} &
Hom_{A}(A^{\otimes  S^d},M)
   }
\]}
\end{proof}
By the virtue of the previous theorem and Example \ref{EX:examples-for-AM}(2),
we can thus define a family of sphere topology operations,
 one for each $E_\infty$-module map $C_\ast(X)[\dim(X)]\to C^\ast(X)$,
which are related by morphisms of $E_d$-algebras.

In particular, for $d=1$, we can obtain (chain level, characteristic free)
\emph{string topology} operations associated to any $E_\infty$-module map
$C_\ast(M)[\dim(M)]\to C^\ast(M)$.

\subsection{Topological identification of the  brane product} \label{S:BraneProduct}
In this section, we prove that the cup product of Hochschild cochains over spheres
identifies with the usual ``brane product'' in the homology of a free sphere space.
The idea of the proof follows the surface product kind of proof from \cite[Theorem 3.4.2]{GTZ}.

\smallskip

We start by recalling the construction of the sphere product of Sullivan-Voronov~\cite{CV}. Let  $M$ be a manifold equipped with a Riemannian metric and let the sphere  spaces $Map(S^n,M)$ be equipped with  Fr\'echet manifold structures. We further assume that $M$ is closed, oriented.
We have a cartesian square of fibrations
\begin{equation}\label{eq:cartesiansquarebrane}
\xymatrix{
  Map(S^n\vee S^n,M) \ar[r]^{\qquad \rho_{in}\quad\quad\quad\quad} \ar[d]
  & Map(S^n,M)\times Map(S^n,M) \ar[d]^{ev\times ev} \\
 M  \ar[r]^{\text{diagonal}\quad} & M\times M }
\end{equation}
where the evaluation maps on the right are furthermore submersions. We denote  $Tub(M)\subset M\times M$ a tubular neighborhood of the diagonal of $M$, which can be identified to the normal bundle of the diagonal. The pullback $(ev\times ev)^{-1}(Tub(M))$  by the submersion $ev\times ev:Map(S^n,M)\times Map(S^n,M)\to M\times M$ can be identified with a tubular neighborhood $Tub(Map(S^n\vee S^n,M))$ of $\rho_{in}$ and thus with a normal bundle of $\rho_{in}$. One forms the corresponding Thom spaces $M^{-TM}$ and $Map(S^n\vee S^n,M)^{-TM}$  by collapsing all the complements of the tubular neighborhood to a point. These Thom spaces are spheres (of dimension $\dim(M)$)  bundles over, respectively $M$, and $Map (S^n\vee S^n,M)$. Hence, we have a diagram of pullback squares
\begin{equation*}
\xymatrix{   Map(S^n\coprod S^n,M) \ar[r]^{\hspace{-1pc}\text{collapse}}\ar[d]_{ev\times ev} & Map(S^n\vee S^n,M)^{-TM} \ar[r]^{\pi} \ar[d]^{ev} & Map(S^n\vee S^n,M)  \ar[d]^{ev} \\
  M\times M   \ar[r]^{\text{collapse}\quad} &  M^{-TM} \ar[r]^{\pi}& M}
\end{equation*}
where the vertical arrows are  fibrations. In particular, the Thom class of $\rho_{in}$  is the pullback $(ev^*)(th(M))\in H^{\dim(M)}(Map(S^n\vee S^n,M)^{-TM})$ of the Thom class $th(M)\in H^{\dim(M)}(M^{-TM})$ of $M\to M\times M$.

The above setup allows us to define a Gysin map
$$(\rho_{in})_!: H_\ast\Big(M^{S^n\coprod S^n} \Big) \longrightarrow H_{\ast -\dim(M)}\Big( M^{S^n\vee S^n}\Big) $$
 as the composition
 \begin{equation} \label{eq:defnrhoin}(\rho_{in})_! = \pi_*\circ (-\cap ev^*(th(M))) \circ(\text{collapse})_*.\end{equation}
\begin{definition}[Sullivan-Voronov~\cite{CV}] \label{D:sphereproduct}
The sphere product is the composition
\begin{multline*}\star_{S^n}: H_{\ast+\dim(M)}\Big(M^{S^n}\Big)^{\otimes 2}\to H_{\ast+2\dim(M)}\Big(M^{S^n\coprod S^n}\Big)\\
\stackrel{(\rho_{in})_!}\longrightarrow H_{\ast+\dim(M)}\Big(M^{S^n\vee S^n}\Big)\stackrel{(\delta_{S^n}^*)_*}\longrightarrow H_{\ast+\dim(M)}\Big(M^{S^n}\Big) \end{multline*} where $\delta_{S^n}: S^n\to S^n\vee S^n$ is the pinching map.
\end{definition}

Note that the Thom class $th(M)$ can be represented by any cocycle
 $t(M)$ which is Poincar\'e dual to the pushforward of the fundamental cycle $[M]$ of $M$,
\emph{i.e.}, $\chi_{M\times M}(\text{collapse}_*(t(M))=\big(\text{diagonal}_*([M]) \big)$
or, equivalently,
$$\chi_{M^{-TM}}(t(M))=\big(\text{collapse} \circ \text{diagonal}_*([M]) \big).$$

By Corollary~\ref{C:PD}, we get  maps of $E_\infty$-modules
$$\rho_{th(M)}: C_{\ast}(M^{-TM}) \longrightarrow C_{\ast-\dim(M)}(M^{-TM}), $$
 $$\rho_{ev^*(th(M))}:C_{\ast}\big(Map(S^n\vee S^n,M)^{-TM}\big)
\longrightarrow C_{\ast-\dim(M)}\big(Map(S^n\vee S^n,M)^{-TM}\big)$$ lifting
the cap-products $-\cap t(M)$ and $-\cap ev^*(t(M))$.
Thus we obtain the following chain level interpretation of the sphere product.
\begin{lem}\label{L:Defnsphereproduct} The sphere product (Definition~\ref{D:sphereproduct}) is induced by passing to the homology
groups in the following composition
\begin{multline}\label{eq:Dsphereproduct}\star_{S^n}: \big(C_{\ast}\big(M^{S^n}\big)[\dim(M)]\big)^{\otimes 2}
\to C_{\ast}\big(M^{S^n\coprod S^n}\big)[2\dim(M)]\\
\stackrel{\text{collapse}_*}\longrightarrow C_{\ast}\big(\big(M^{S^n\vee S^n}\big)^{-TM}\big)[2\dim(M)]
\stackrel{\rho_{ev^*(th(M))}}\longrightarrow C_{\ast}\big(\big(M^{S^n\vee S^n}\big)^{-TM}\big)[\dim(M)]\\
\stackrel{\pi_*}\longrightarrow C_{\ast}\big(M^{S^n\vee S^n}\big)[\dim(M)]
\stackrel{(\delta_{S^n}^*)_*}\longrightarrow C_{\ast}\big(M^{S^n}\big)[\dim(M)].
 \end{multline}
\end{lem}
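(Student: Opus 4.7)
The plan is to verify the lemma by comparing the given chain-level composition term by term with the homological composition of Definition~\ref{D:sphereproduct}, using the identification~\eqref{eq:defnrhoin} of the Gysin map $(\rho_{in})_!$. Since both compositions have the same shape, the verification reduces to identifying each constituent map after passing to $H_\ast$.

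First, I would note that $M^{S^n\coprod S^n}\cong M^{S^n}\times M^{S^n}$, so the initial map $\big(C_\ast(M^{S^n})[\dim M]\big)^{\otimes 2}\to C_\ast(M^{S^n\coprod S^n})[2\dim M]$ is the Eilenberg--Zilber/cross product, which on homology gives the homology cross product $H_{\ast+\dim M}(M^{S^n})^{\otimes 2}\to H_{\ast+2\dim M}(M^{S^n}\times M^{S^n})$ appearing as the first arrow of Definition~\ref{D:sphereproduct}. Next, $\mathrm{collapse}_\ast$, $\pi_\ast$, and $(\delta_{S^n}^\ast)_\ast$ appear literally both in~\eqref{eq:Dsphereproduct} and in the composition obtained by expanding $(\rho_{in})_!$ via~\eqref{eq:defnrhoin} followed by $(\delta_{S^n}^\ast)_\ast$, so these match automatically.

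The only nontrivial step is identifying the middle piece: the $E_\infty$-module map $\rho_{ev^\ast(th(M))}$ must induce the cap product $-\cap ev^\ast(th(M))$ on homology. But this is precisely the content of Corollary~\ref{C:PD} applied to the $E_\infty$-coalgebra $C_\ast\big((M^{S^n\vee S^n})^{-TM}\big)$ with cycle dual to the Thom class: by construction $\rho_{ev^\ast(th(M))}$ is the unique $E_\infty$-$C^\ast\big((M^{S^n\vee S^n})^{-TM}\big)$-module lift of the cap-product map, and hence it represents cap product with $ev^\ast(th(M))$ at the level of homology. Recall here that $ev^\ast(th(M))$ is itself a cocycle representative of the Thom class pulled back from $M^{-TM}$ along the fibration $ev$, as explained in the paragraphs preceding~\eqref{eq:defnrhoin}.

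Assembling these identifications, applying $H_\ast$ to the composition~\eqref{eq:Dsphereproduct} yields exactly
$$
H_{\ast+\dim M}(M^{S^n})^{\otimes 2}\to H_{\ast+2\dim M}(M^{S^n\coprod S^n})\xrightarrow{(\rho_{in})_!} H_{\ast+\dim M}(M^{S^n\vee S^n})\xrightarrow{(\delta_{S^n}^\ast)_\ast} H_{\ast+\dim M}(M^{S^n}),
$$
which is the sphere product of Definition~\ref{D:sphereproduct}. The only potential obstacle is bookkeeping signs and degree shifts introduced by the $[\dim M]$ suspensions and by the Thom isomorphism convention, but these conventions have been fixed consistently in the definition of $\rho_{ev^\ast(th(M))}$ via Corollary~\ref{C:PD}, so no further adjustment is required.
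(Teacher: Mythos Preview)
Your proposal is correct and matches the paper's approach: the paper states the lemma without proof, treating it as immediate from the definitions once the maps $\rho_{th(M)}$ and $\rho_{ev^*(th(M))}$ have been introduced (in the paragraph just before the lemma) as the $E_\infty$-module lifts of the respective cap-products. Your unpacking of each step is exactly the verification the paper leaves implicit, and the only substantive identification---that $\rho_{ev^*(th(M))}$ induces the cap product on homology---is, as you say, built into its definition.
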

\begin{rem}
In this section we only identify the sphere product which is the degree $0$-component of a higher framed $E_{n+1}$-structure claimed in~\cite[Section 5]{CV}. The reason is that we do not know higher degree representative of this operations (in a way similar to the map~\eqref{eq:Dsphereproduct}) since such higher operations would involve a careful analysis of Gysin maps associated to higher cacti \emph{in families}. However, it is possible that the new operads introduced by Bargheer in~\cite{Ba} could lead in a near future to explicit representatives of the degree $n$ Lie Bracket in homology.
\end{rem}

\medskip

We now further assume $X$ is a general Poincar\'e duality space (see Definition~\ref{D:PDspace}).

\smallskip

Recall that by Corollary~\ref{C:Itdual} and Corollary~\ref{C:PD}, we have the
equivalence~\eqref{eq:HH=chain}:
$$CH^{S^n} \left(C^{\ast}(X), C^{\ast}(X)\right)\; \cong\; C_{\ast}\big(X^{S^n}\big)[\dim(X)].$$
The cup-product can be thus transfered (through the above equivalence) to give a multiplication
$\Big(C_{\ast}\big(X^{S^n}\big)[\dim(X)]\Big)^{\otimes 2} \to C_{\ast}\big(X^{S^n}\big)[\dim(X)]$.
We first wish to give another chain level representative for this multiplication, which is essentially the content of Lemma~\ref{L:cupproductdual} below. We will then compare it with the sphere product $\star_{S^n}$ given by the composition~\eqref{eq:Dsphereproduct}.

The $E_\infty$-algebra map $C^{\ast}(X\times X) \stackrel{diag^*}\longrightarrow C^{\ast}(X)$
induced by the diagonal $X\to X\times X$ makes $C_{\ast}(X)$ an
$E_\infty\text{-}C^{\ast}(X\times X)$-module. By functoriality of the cap-product,
the  diagonal $C_{\ast}(X)\to C_{\ast}(X\times X)$ is a map of left
$(C^{\ast}(X\times X), \cup)$-module.

By Theorem~\ref{T:lifttoEinfty}, we thus get a unique
lift $C_{\ast}(X)\stackrel{diag_*}\longrightarrow C_{\ast}(X\times X)$ of the diagonal map in
$C_{\ast}(X\times X)\text{-}Mod^{E_\infty}$.
By Lemma~\ref{L:AWEinfty}, there is an equivalence of $E_\infty$-algebras
$C^{\ast}(X\times X) \cong C^{\ast}(X)\otimes C^{\ast}(X)$. Further,
Poincar\'e duality (Corollary~\ref{C:PDmap})
gives  equivalences of $E_\infty$-$C^{\ast}(Y)$-modules
$\chi_X: C^{\ast}(Y)\stackrel{\simeq}\to C_{\ast}(Y)[\dim(Y)]$ for any Poincar\'e
duality space $Y$.

Putting together the last three statements we obtain the first assertion in
\begin{lem}\label{L:diagasmodmap} Let $X$ be a Poincar\'e duality space. 
 There is a   map in $C^{\ast}(X)\otimes C^{\ast}(X)-Mod^{E_\infty}$ given by the following
composition:
\begin{multline*}
 \nabla_X: C^{\ast}(X)\stackrel{\simeq}\to C_{\ast}(X)[\dim(X)]
\stackrel{diag_*}\longrightarrow C_{\ast}(X\times X)[\dim(X)] \\ \stackrel{\simeq}\to
 C_{\ast}(X\times X)[\dim(X)] \stackrel{\simeq}\leftarrow  C^{\ast}(X\times X)[-\dim(X)]
\\ \cong C^{\ast}(X)\otimes C^{\ast}(X)[-\dim(X)].
\end{multline*}
Further, for any closed oriented manifold $M$, the following diagram is commutative
\[ \xymatrix{C^{\ast}(M) \ar@/_/[rrrd]_{\nabla_M}\ar[r]^{\pi^*} & C^{\ast}(M^{-TM})
\ar[r]^{\hspace{-2pc}\rho_{th(M)}^{\vee}} &
C^{\ast}(M^{-TM})[-\dim(M)] \ar[r]^{\text{collapse}^*}
& C^{\ast}(M\times M) [-\dim(M)]\ar[d]^{\cong}\\
& &  & C^{\ast}(M)\otimes C^{\ast}(M)[-\dim(M)]   }
\]
in  $C^{\ast}(X)\otimes C^{\ast}(X)-Mod^{E_\infty}$.
\end{lem}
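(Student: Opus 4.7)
The plan is to establish the first assertion by verifying that each arrow in the composition defining $\nabla_X$ is indeed a morphism in the $(\infty,1)$-category $C^{\ast}(X)\otimes C^{\ast}(X)\text{-}Mod^{E_\infty}$. The diagonal $\Delta: X\to X\times X$ induces an $E_\infty$-algebra map $\Delta^*:C^{\ast}(X)\otimes C^{\ast}(X)\simeq C^{\ast}(X\times X)\to C^{\ast}(X)$ (using Lemma~\ref{L:AWEinfty} at the first step), which equips $C^{\ast}(X)$ and $C_{\ast}(X)[\dim(X)]$ with $(C^{\ast}(X)\otimes C^{\ast}(X))$-module structures by restriction of scalars. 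Corollary~\ref{C:PDmap} gives $\chi_X$ as an equivalence of $C^{\ast}(X)$-$E_\infty$-modules, hence of $(C^{\ast}(X)\otimes C^{\ast}(X))$-modules. For the middle arrow, the chain-level diagonal $\Delta_*:C_{\ast}(X)\to C_{\ast}(X\times X)$ intertwines the cap-products in the sense that $\Delta_*(\Delta^*(a)\cap x)=a\cap \Delta_*(x)$, so it is a map of left $(C^{\ast}(X\times X),\cup)$-modules and, by Theorem~\ref{T:lifttoEinfty}, admits an essentially unique lift to an $E_\infty$-$C^{\ast}(X\times X)$-module map. Finally $\chi_{X\times X}$ is an $E_\infty$-module equivalence by Corollary~\ref{C:PDmap} applied to $X\times X$ (of dimension $2\dim(X)$), and Lemma~\ref{L:AWEinfty} identifies $C^{\ast}(X\times X)$ with $C^{\ast}(X)\otimes C^{\ast}(X)$ as $E_\infty$-algebras.

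For the second assertion, by Theorem~\ref{T:lifttoEinfty}, it suffices to check commutativity of the diagram in the derived category of left $(C^{\ast}(M)\otimes C^{\ast}(M),\cup)$-modules, where chain-level manipulations are available. The defining property of a representative $t(M)\in C^{\dim(M)}(M^{-TM})$ of the Thom class is that $\chi_{M\times M}\bigl(\mathrm{collapse}_*(t(M))\bigr)=\Delta_*([M])\in C_{\dim(M)}(M\times M)$. Unwinding the composition along the top row of the diagram, one obtains the left module map sending $f\in C^{\ast}(M)$ to $\chi_{M\times M}^{-1}\bigl(\mathrm{collapse}_*(\pi^*(f)\cap t(M))\bigr)\in C^{\ast}(M\times M)[-\dim(M)]$.

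The key computation is then to identify the latter expression with $\chi_{M\times M}^{-1}\bigl(\Delta_*(f\cap [M])\bigr)$, which represents $\nabla_M(f)$ after the Alexander-Whitney identification. Using the projection formula for cap products along the bundle projection $\pi:M^{-TM}\to M$, together with naturality of cap products under the collapse, one gets
\[
\mathrm{collapse}_*\bigl(\pi^*(f)\cap t(M)\bigr)=f\cap \mathrm{collapse}_*(t(M))
\]
in $C_{\ast}(M\times M)$; applying $\chi_{M\times M}^{-1}$ and invoking the Thom class identity above converts the right-hand side to $\chi_{M\times M}^{-1}\bigl(f\cap \chi_{M\times M}^{-1}\Delta_*[M]\bigr)$, which, by $C^{\ast}(M)$-linearity of $\chi_{M\times M}$ via $\Delta^*$, is precisely $\chi_{M\times M}^{-1}\bigl(\Delta_*(f\cap [M])\bigr)$.

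The main obstacle will be keeping careful track of the module structures at each step and making sure the chain-level identity just sketched is genuinely a statement about left $(C^{\ast}(M\times M),\cup)$-module maps, so that Theorem~\ref{T:lifttoEinfty} can be invoked to upgrade it to an identification in the category of $E_\infty$-modules. In particular, one has to verify that $\rho_{th(M)}^{\vee}$, defined as the dual of the $E_\infty$-lift of cap-product by $t(M)$ supplied by Corollary~\ref{C:PD}, restricts on the underlying left $(C^{\ast}(M\times M),\cup)$-module structure to the naive cap-product; this is automatic from the uniqueness statement in Theorem~\ref{T:lifttoEinfty} but deserves to be spelled out.
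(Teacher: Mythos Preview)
Your overall strategy matches the paper's: the first assertion is assembled from the $E_\infty$-module properties established in the discussion immediately preceding the lemma, and for the second you reduce via Theorem~\ref{T:lifttoEinfty} to a chain-level identity involving the Thom class and a projection formula. The paper records that identity as
\[
\text{collapse}^*\bigl(\pi^*(x)\bigr)\cap\bigl(\text{collapse}^*(t(M))\cap[M\times M]\bigr)
=\text{collapse}^*\bigl(\pi^*(x)\bigr)\cap\text{diagonal}_*([M])
=\text{diagonal}_*(x\cap[M]),
\]
justified by $\pi\circ\text{collapse}\circ\text{diagonal}=\mathrm{id}$ and the defining property of the Thom class.

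However, your displayed computation is garbled and, as written, does not typecheck. The map $\text{collapse}$ goes $M\times M\to M^{-TM}$, so on cochains you must write $\text{collapse}^*$, not $\text{collapse}_*$. The expression $\pi^*(f)\cap t(M)$ is ill-typed since both factors are cochains on $M^{-TM}$; what $\rho_{th(M)}^{\vee}$ produces is the cup product $\pi^*(f)\cup t(M)$. Your formula $\chi_{M\times M}^{-1}\bigl(f\cap\chi_{M\times M}^{-1}\Delta_*[M]\bigr)$ applies $\chi_{M\times M}^{-1}$ twice and cannot be what you mean. Most importantly, the projection formula you need is for the \emph{diagonal}, not for $\pi$: one uses $\text{diagonal}_*\bigl(\text{diagonal}^*(y)\cap z\bigr)=y\cap\text{diagonal}_*(z)$ with $y=\text{collapse}^*\pi^*(f)$ and $z=[M]$, together with $\text{diagonal}^*\circ\text{collapse}^*\circ\pi^*=\mathrm{id}$. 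Once these corrections are made and you compare both routes after applying $\chi_{M\times M}$ (i.e.\ capping with $[M\times M]$), your argument becomes exactly the paper's.
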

\begin{proof}
 The second assertion follows from the identity
\begin{eqnarray*}
 \text{collapse}^*\big(\pi^*(x)\big)\cap \big(\text{collapse}^*\big(t(M) \big)
\cap [M\times M]\big)&\!=&\!
\text{collapse}^*\big(\pi^*(x)\big)\cap \text{diagonal}_*([M])\\
&\!=&\! \text{diagonal}_*(x\cap [M])
\end{eqnarray*}
which follows from $\pi\circ \text{collapse}\circ \text{diagonal} =id$ and the definition
of the Thom class.
\end{proof}

It follows that the map $\nabla_X$ yields a map of $C^{\ast}(X)$-$E_\infty$-modules
\begin{multline}\label{eq:defnabla}
 {\nabla_X}_* : CH_{S^n\vee S^n}(C^{\ast}(X)) \cong CH_{S^n\coprod S^n}(C^{\ast}(X))
\mathop{\otimes}^{\mathbb{L}}_{C^{\ast}(X)^{\otimes 2}} C^{\ast}(X) \\
\stackrel{1\otimes \nabla_X}\longrightarrow CH_{S^n\coprod S^n}(C^{\ast}(X))
\mathop{\otimes}^{\mathbb{L}}_{C^{\ast}(X)^{\otimes 2}} C^{\ast}(X)^{\otimes 2}[-\dim(X)]
 \\ \cong CH_{S^n\coprod S^n}(C^{\ast}(X))[-\dim(X)].
\end{multline}
Thus, dualizing, we get a map
\begin{multline}\label{eq:nabladual}
 \nabla^!:CH^{S^n\coprod S^n}\Big(C^{\ast}(X), (C^{\ast}(X))^{\vee}\Big)\cong
Hom_{C^{\ast}(X)}\Big(CH_{S^n\coprod S^n}(C^{\ast}(X)), (C^{\ast}(X))^{\vee} \Big) \\
\stackrel{{\nabla_X}_*^*}\longrightarrow
Hom_{C^{\ast}(X)}\Big(CH_{S^n\vee S^n}(C^{\ast}(X)), (C^{\ast}(X))^{\vee} \Big)[-\dim(X)]\\
\cong
CH^{S^n\vee S^n}\Big(C^{\ast}(X), (C^{\ast}(X))^{\vee}\Big)[-\dim(X)].
\end{multline}
Recall that we have a pinching map $\delta_{S^n}: S^n\to S^n\vee S^n$ induced by collapsing
the equator of $S^n$ to a point. This gives us a multiplication
\begin{multline}\label{eq:muSn}
\mu_{S^n}: CH^{S^n}\Big(C^{\ast}(X), (C^{\ast}(X))^{\vee}\Big)^{\otimes 2} \cong
Hom_{C^{\ast}(X)}\Big(CH_{S^n}(C^{\ast}(X)), (C^{\ast}(X))^{\vee} \Big)^{\otimes 2}
\\ \longrightarrow
 Hom_{C^{\ast}(X)^{\otimes 2}}\Big(CH_{S^n\coprod S^n}(C^{\ast}(X)), (C^{\ast}(X))^{\vee}\otimes
(C^{\ast}(X))^{\vee}  \Big) \\
\cong Hom_{C^{\ast}(X)}\Big(CH_{S^n\coprod S^n}(C^{\ast}(X)), (C^{\ast}(X))^{\vee} \Big)
\\ \stackrel{\nabla^!}\longrightarrow
CH^{S^n\vee S^n}\Big(C^{\ast}(X), (C^{\ast}(X))^{\vee}\Big)[-\dim(X)] \\
\stackrel{\delta_{S^n}^*}\longrightarrow CH^{S^n}\Big(C^{\ast}(X),
(C^{\ast}(X))^{\vee}\Big) [-\dim(X)].
\end{multline}

\begin{lem}\label{L:cupproductdual} Let $X$ be a Poincar\'e duality space.
There  is a commutative (in $\hkmod$) diagram
\[ \xymatrix{CH^{S^n} \left(C^{\ast}(X), C^{\ast}(X)\right)^{\otimes 2} \ar[rr]^{\cup_{S^n}}
\ar[d]_{\cong}  & &
   CH^{S^n} \left(C^{\ast}(X), C^{\ast}(X)\right) \ar[d]^{\cong} \\
\left(CH^{S^n} \left(C^{\ast}(X), (C^{\ast}(X))^{\vee}\right)[\dim(X)]\right)^{\otimes 2} \ar[rr]^{\mu_{S^n}}
 & &
   CH^{S^n} \left(C^{\ast}(X), (C^{\ast}(X))^{\vee}\right)[\dim(X)]
 }
\]
where the top arrow is the sphere cup-product of Corollary~\ref{C:cupHoch} and the vertical  arrows are  induced
by the Poincar\'e duality map $\chi_X: C^{\ast}(X)\to C_{\ast}(X)[\dim(X)]\to
\big(C^{\ast}(X)\big)^{\vee}[\dim(X)]$.
\end{lem}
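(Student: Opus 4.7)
My plan is to derive the commutativity of the diagram from the $\AM$-framework of Definition~\ref{DEF:category-AM} and the functoriality of $CH^{S^d}:\AM\to E_d\text{-}Alg$ established in Theorem~\ref{THM:Ed-on-HC(AM)}. The key idea is that the Poincar\'e-duality equivalence $\chi_X$ furnishes an $\AM$-isomorphism between two naturally occurring objects whose induced $E_n$-algebra structures on Hochschild cochains are precisely those appearing in the lemma.

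Setting $A = C^{\ast}(X)$, I would consider the $\AM$-objects $(A, A, m_A)$ (Example~\ref{EX:examples-for-AM}(1)) and $(A, A^{\vee}[\dim(X)], \mu_{\mathrm{dual}})$ (Example~\ref{EX:examples-for-AM}(2), with $\rho = \chi_X^{-1}: A^{\vee}[\dim(X)] \to A$, which is an $A$-$E_\infty$-module map by Section~\ref{SS:E-inf-PD} and Corollary~\ref{C:PDmap}). The first step is to check that $(id_A, \chi_X)$ is a morphism $(A, A^{\vee}[\dim(X)], \mu_{\mathrm{dual}}) \to (A, A, m_A)$ in $\AM$: the compatibility \eqref{EQ:morph-in-AM} reads $\chi_X \circ m_A = \mu_{\mathrm{dual}} \circ (\chi_X \otimes \chi_X)$, and unfolding $\mu_{\mathrm{dual}} = \mathrm{action} \circ (\chi_X^{-1} \otimes id)$ reduces this to the $A$-linearity of $\chi_X^{-1}$, which was precisely the content of Corollary~\ref{C:PDmap}. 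By Theorem~\ref{THM:Ed-on-HC(AM)}, this $\AM$-morphism yields a map of $E_n$-algebras $(\chi_X)_*: CH^{S^n}(A, A) \to CH^{S^n}(A, A^{\vee}[\dim(X)])$, where both sides carry the $E_n$-structure from Definition~\ref{DEF:Ed-on-CHSd}. Extracting the underlying $E_1$-part recovers exactly the square of the lemma, provided one identifies those $E_1$-multiplications with $\cup_{S^n}$ and $\mu_{S^n}$, respectively.

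The identification on the source is immediate from Corollary~\ref{C:cupHoch} and Theorem~\ref{T:EdHoch}. On the target, I would match the multiplication of Definition~\ref{DEF:Ed-on-CHSd} attached to $(A, A^{\vee}[\dim(X)], \mu_{\mathrm{dual}})$ with $\mu_{S^n}$ by invoking Lemma~\ref{L:diagasmodmap}: the $A \otimes A$-module map $\nabla_X: A \to A^{\otimes 2}[-\dim(X)]$ is constructed precisely as the Poincar\'e transpose of $m_A$, and hence its pullback $\nabla_{X,*}$ appearing in \eqref{eq:defnabla} implements, under the finiteness duality $Hom_{A \otimes A}(-,\, A^{\vee} \otimes A^{\vee}) \cong Hom_A(-,\, A^{\vee})$, the multiplication on Hochschild cochains induced by $\mu_{\mathrm{dual}}$. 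I expect the main obstacle to be this last identification, which requires careful bookkeeping of $A$- versus $A \otimes A$-module structures together with the K\"unneth duality $(A \otimes A)^{\vee} \cong A^{\vee} \otimes A^{\vee}$ (valid under the projective finitely generated hypothesis on $H_{\ast}(X)$) and the excision equivalence $CH_{S^n \vee S^n}(A) \cong CH_{S^n}(A) \otimes_A CH_{S^n}(A)$; once accomplished, the functoriality in Theorem~\ref{THM:Ed-on-HC(AM)} completes the proof, in fact giving commutativity at the level of $E_n$-algebras rather than merely in $\hkmod$.
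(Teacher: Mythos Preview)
Your approach is correct and is essentially a repackaging of the paper's argument through the $\AM$-framework. The paper proceeds directly: from Lemma~\ref{L:diagasmodmap} (and the identification $m_X \simeq diag^*$ via Lemma~\ref{L:AWEinfty}) one obtains the square $\chi_X \circ m_X \simeq (\nabla_X)^\vee \circ (\chi_X \otimes \chi_X)$ in $C^{\ast}(X)^{\otimes 2}\text{-}Mod^{E_\infty}$, then pushes it into $Hom_{A^{\otimes 2}}\big(CH_{S^n \coprod S^n}(A), -\big)$ and through the duality/adjunction identifications to obtain the key diagram~\eqref{eq:cupproductdual}, which together with naturality of $\delta_{S^n}^*$ gives the lemma. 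Your route encodes that first square as the $\AM$-compatibility for $(id_A, \chi_X)$ and invokes Theorem~\ref{THM:Ed-on-HC(AM)}; the remaining identification of the Definition~\ref{DEF:Ed-on-CHSd} multiplication on the target with $\mu_{S^n}$ is exactly the bottom half of the paper's diagram~\eqref{eq:cupproductdual}---a formal duality: postcomposition with $(\nabla_X)^\vee$ on the coefficient side corresponds, via $Hom_R(N, R^\vee) \cong N^\vee$ for $R = A^{\otimes 2}$ and $R = A$, to precomposition with $\nabla_{X,*} = id \otimes_{A^{\otimes 2}} \nabla_X$ on the source side. So the technical content is identical (indeed your $\mu_{\mathrm{dual}}$ equals $(\nabla_X)^\vee$ precisely by the paper's first square); what your organization buys is that the conclusion upgrades to a commutative square of $E_n$-algebra maps rather than merely in $\hkmod$.
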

\begin{proof}
 By Lemma~\ref{L:AWEinfty}, the $E_\infty$-algebra map
$m_X:C^{\ast}(X)\otimes C^{\ast}(X)\to C^{\ast}(X)$ is the composition
$$m_X:C^{\ast}(X)\otimes C^{\ast}(X)\stackrel{AW^{\vee}}\longrightarrow C^{\ast}(X\times X)
\stackrel{diag^*}\longrightarrow C^{\ast}(X).$$ It follows that the map $\nabla_X$ defined in
 Lemma~\ref{L:diagasmodmap} sits inside a commutative diagram
\begin{equation*}
 \xymatrix{ C^{\ast}(X)\otimes C^{\ast}(X)\ar[rr]^{m_X} \ar[d]_{\chi_X\otimes \chi_X}
&& C^{\ast}(X)
 \ar[d]^{\chi_X}\\ \big(C^{\ast}(X)\big)^{\vee}\otimes \big(C^{\ast}(X)\big)^{\vee}[2\dim(X)]
\ar[rr]^{\quad (\nabla_X)^{\vee}} && \big(C^{\ast}(X)\big)^{\vee}[\dim(X)]
}
\end{equation*}
in $C^{\ast}(X)\otimes C^{\ast}(X)-Mod^{E_\infty}$. It follows that we get a commutative
diagram
{\small \begin{equation}\label{eq:cupproductdual}
 \xymatrix{
Hom_{C^{\ast}(X)^{\otimes 2}}\Big(CH_{S^n\coprod S^n}(C^{\ast}(X)),
 C^{\ast}(X)^{\otimes 2} \Big) \ar[r]^{(m_X)_*} \ar[d]_{(\chi_X)_*^{\otimes 2}}
& Hom_{C^{\ast}(X)^{\otimes 2}}\Big(CH_{S^n\coprod S^n}(C^{\ast}(X)),C^{\ast}(X)\Big)
\ar[d]^{(\chi_X)_*} \\
Hom_{C^{\ast}(X)^{\otimes 2}}\Big(CH_{S^n\coprod S^n}(C^{\ast}(X)),
 (C^{\ast}(X))^{\vee})^{\otimes 2} \Big) \ar[r]^{(\nabla_X)^{\vee}_*} \ar[d]_{\cong}
& Hom_{C^{\ast}(X)^{\otimes 2}}\Big(CH_{S^n\coprod S^n}(C^{\ast}(X)),(C^{\ast}(X))^{\vee}\Big)
\ar[d]^{\cong} \\
Hom_{C^{\ast}(X)}\Big(CH_{S^n\coprod S^n}(C^{\ast}(X)),
 (C^{\ast}(X))^{\vee} \Big) \ar[r]^{\nabla^{!}}
& Hom_{C^{\ast}(X)}\Big(CH_{S^n\vee S^n}(C^{\ast}(X)),(C^{\ast}(X))^{\vee}\Big)
}
\end{equation}} in $\hkmod$ (note that we have suppress the degree shifting in the diagram
for simplicity). By functoriality, we have a commutative diagram
{\small \begin{equation*}
 \xymatrix{ Hom_{C^{\ast}(X)}\Big(CH_{S^n\vee S^n}(C^{\ast}(X)),C^{\ast}(X)\Big)
\ar[r]^{\delta_{S^n}^*} \ar[d]_{(\chi_X)_*} &
Hom_{C^{\ast}(X)}\Big(CH_{S^n}(C^{\ast}(X)),C^{\ast}(X)\Big)
\ar[d]^{(\chi_X)_*}  \\
Hom_{C^{\ast}(X)}\Big(CH_{S^n\vee S^n}(C^{\ast}(X)),(C^{\ast}(X))^{\vee}\Big)
\ar[r]^{\delta_{S^n}^*}
& Hom_{C^{\ast}(X)}\Big(CH_{S^n}(C^{\ast}(X)),(C^{\ast}(X))^{\vee}\Big)
}
\end{equation*}}
which, together with the previous diagram~\eqref{eq:cupproductdual} and the definition
of the map $\cup_{S^n}$ (see Corollary~\ref{C:cupHoch}) and the map~\eqref{eq:muSn}, implies the Lemma.
\end{proof}

The cartesian square of fibrations~\eqref{eq:cartesiansquarebrane} shows that,
 when $M$ is $n$-connected (and thus $M^{S^n}$ is path connected),
 there is a
quasi-isomorphism
\begin{equation}\label{eq:decspherespace}C^\ast\big(M^{S^n\vee S^n} \big) \cong  C^\ast\big(M^{S^n\coprod S^n}\big)
\mathop{\otimes}\limits^{\mathbb{L}}_{C^{\ast}\big(M\times M \big) } C^{\ast}(M) \end{equation}
so that the map $\nabla_M$ of Lemma~\ref{L:diagasmodmap} yields a map
\begin{multline*}
 id\otimes \nabla_M: C^\ast\big(M^{S^n\vee S^n} \big) \cong  C^{\ast}\big(M\times M \big)
\mathop{\otimes}\limits^{\mathbb{L}}_{C^{\ast}\big(M\times M \big) } C^{\ast}(M)
\\
 \stackrel{id\otimes_{C^{\ast}\big(M\times M \big)}\nabla_M}\longrightarrow
 C^{\ast}\big(M\times M \big)[-\dim(M)] \cong C^\ast\big(M^{S^n}\big)^{\otimes 2}[-\dim(M)].
\end{multline*}

\begin{lem}\label{L:sphereproduct=Hoch}
 Let $X$ be a $n$-connected Poincar\'e duality space. The following diagram
\[
\xymatrix{CH_{S^n}\big(C^{\ast}(X) \big)    \ar[r]^{\big(\delta_{S^n}\big)_*}
 \ar[d]_{\mathcal{I}t}  & CH_{S^n\vee S^n}\big(C^{\ast}(X) \big) \ar[d]_{\mathcal{I}t}
\ar[r]^{\hspace{-2pc}{\nabla_X}_*}
&  CH_{S^n}\big(C^{\ast}(X) \big)^{\otimes 2}
\ar[d]^{\mathcal{I}t^{\otimes 2}}[-\dim(X)] \\
 C^\ast\big(X^{S^n}\big) \ar[r]^{\delta_{S^n}^*} & C^{\ast}\big(X^{S^n\vee S^n} \big)
\ar[r]^{id\otimes \nabla_X}& C^\ast\big(X^{S^n}\big)^{\otimes 2}[-\dim(X)]
} \]
is commutative in $\hkmod$ (here the map ${\nabla_X}_*$ is the map~\eqref{eq:defnabla}).
\end{lem}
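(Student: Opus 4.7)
The diagram splits into two squares, each of which we handle by a naturality argument for the iterated integral map $\mathcal{I}t$ of Theorem~\ref{T:Einftymapping}.

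\emph{Left square.} This is precisely the naturality of $\mathcal{I}t$ with respect to the map of pointed spaces $\delta_{S^n}: S^n \to S^n\vee S^n$. Indeed, by Theorem~\ref{T:Einftymapping} (and Corollary~\ref{C:Einftymapping}), $\mathcal{I}t$ is a morphism in the $\infty$-category of functors $\hTopp \to E_\infty\text{-}Alg$ between $X_\bullet \mapsto CH_{X_\bullet}(C^\ast(X))$ and $X_\bullet \mapsto C^\ast(X^{|X_\bullet|})$ (via precomposition with maps of pointed spaces).

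\emph{Right square.} The plan is to identify both horizontal maps as base-change by the $C^\ast(X)^{\otimes 2}$-module map $\nabla_X: C^\ast(X)\to C^\ast(X)^{\otimes 2}[-\dim(X)]$ of Lemma~\ref{L:diagasmodmap}. By the excision axiom (Corollary~\ref{C:properties}) applied to the homotopy pushout $S^n\vee S^n = S^n \cup^h_{pt\coprod pt} pt$, there is a natural equivalence
\[
CH_{S^n\vee S^n}(C^\ast(X)) \;\cong\; CH_{S^n\coprod S^n}(C^\ast(X)) \mathop{\otimes}\limits^{\mathbb{L}}_{C^\ast(X)^{\otimes 2}} C^\ast(X),
\]
where the $C^\ast(X)^{\otimes 2}$-module structure on the first factor comes from the inclusion of the two basepoints and that on $C^\ast(X)$ from the $E_\infty$-multiplication. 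Under this identification, the upper map ${\nabla_X}_*$ is, tautologically by~\eqref{eq:defnabla}, the base change $id \otimes^{\mathbb{L}} \nabla_X$. On the topological side, the homotopy cartesian square $X^{S^n\vee S^n} = X^{S^n}\times_X X^{S^n}$ together with the $n$-connectedness of $X$ (which ensures convergence of the relevant Eilenberg--Moore resolution) yields the equivalence~\eqref{eq:decspherespace}, under which the lower horizontal map $id\otimes\nabla_X$ is also base change by $\nabla_X$ over $C^\ast(X\times X)\cong C^\ast(X)^{\otimes 2}$ (the latter equivalence being Lemma~\ref{L:AWEinfty}).

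Granted these two identifications, it suffices to show that the map $\mathcal{I}t : CH_{S^n\coprod S^n}(C^\ast(X)) \to C^\ast(X^{S^n\coprod S^n})$ is an equivalence of $C^\ast(X)^{\otimes 2}$-modules compatible with both excision decompositions; tensoring this equivalence over $C^\ast(X)^{\otimes 2}$ with the module map $\nabla_X$ then yields the commutativity of the right square. This final compatibility, which is the main obstacle, follows from combining two naturality facts about $\mathcal{I}t$: (i) as a morphism of $E_\infty$-algebras it intertwines the Hochschild coproduct splitting $CH_{S^n\coprod S^n}(C^\ast(X)) \cong CH_{S^n}(C^\ast(X))^{\otimes 2}$ (Corollary~\ref{C:properties}) with the K\"unneth/Alexander--Whitney splitting $C^\ast(X^{S^n}\times X^{S^n})\cong C^\ast(X^{S^n})^{\otimes 2}$ of Lemma~\ref{L:AWEinfty}; and (ii) applying naturality of $\mathcal{I}t$ to each of the two basepoint inclusions $pt \hookrightarrow S^n$ matches, on each factor, the $C^\ast(X)$-module structure induced by the basepoint on the Hochschild side with that induced by evaluation at the basepoint on the mapping space side.
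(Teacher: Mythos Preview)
Your proof is correct and follows the same line as the paper, which simply states that the lemma ``is a consequence of the naturality of the map $\mathcal{I}t:CH_X(C^\ast(Y))\to C^{\ast}(Y^{X})$, see Corollary~\ref{C:Einftymapping}.'' Your treatment of the left square is exactly this, and for the right square you make explicit what the paper leaves implicit: both horizontal arrows are, by their very definitions (\eqref{eq:defnabla} and the map just before the lemma built from \eqref{eq:decspherespace}), the base change $-\otimes^{\mathbb{L}}_{C^\ast(X)^{\otimes 2}}\nabla_X$ applied to the two excision/Eilenberg--Moore presentations, so the square commutes once one knows that $\mathcal{I}t$ intertwines those presentations as $C^\ast(X)^{\otimes 2}$-module maps---which is again naturality of $\mathcal{I}t$ (with respect to the basepoint inclusions and disjoint union) combined with Lemma~\ref{L:AWEinfty}.
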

\begin{proof}
 This is a consequence of the naturality of the map $\mathcal{I}t:CH_X(C^\ast(Y))\to C^{\ast}(Y^{X})$, see
Corollary~\ref{C:Einftymapping}.
\end{proof}

\begin{prop} \label{P:brane=cupsphere}Let $M$ be an $n$-connected  oriented closed manifold. 
Then the following diagram
 \[ \xymatrix{CH^{S^n} \left(C^{\ast}(X), C^{\ast}(X)\right)^{\otimes 2} \ar[rr]^{\cup_{S^n}}
\ar[d]_{\cong}  & &
   CH^{S^n} \left(C^{\ast}(X), C^{\ast}(X)\right) \ar[d]^{\cong} \\
\left(C_\ast\big(X^{S^n}\big)[\dim(X)]\right)^{\otimes 2} \ar[rr]^{\star_{S^n}}
 & &
   C_\ast\big(X^{S^n}\big)[\dim(X)]
 }
\] is commutative in $\hkmod$.
Here the horizontal arrows are the sphere cup-product of Corollary~\ref{C:cupHoch} and the sphere
product~\eqref{eq:Dsphereproduct};
  the vertical arrows are given by the equivalences~\eqref{eq:HH=chain} (induced
by the Poincar\'e duality map and Corollary~\ref{C:Itdual}).
\end{prop}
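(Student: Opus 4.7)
The plan is to assemble the previously established lemmas into a commutative diagram connecting the $E_\infty$ cup product $\cup_{S^n}$ with the topologically defined sphere product $\star_{S^n}$, and then to identify the only non-formal piece, namely the dualized map $\nabla_X$, with the Thom-Gysin construction underlying $\star_{S^n}$. First I would apply Lemma~\ref{L:cupproductdual} to replace $\cup_{S^n}$ by the map $\mu_{S^n}$ on $CH^{S^n}(C^\ast(X),(C^\ast(X))^\vee)[\dim(X)]$. Since $X$ is $n$-connected with projective and (by Remark~\ref{R:PDimpliesFinGen}) finitely generated homology groups, the biduality map $C_\ast(X)\to (C^\ast(X))^\vee$ is a quasi-isomorphism, so Corollary~\ref{C:Itdual} supplies a natural quasi-isomorphism $\mathcal{I}t^\ast\colon C_\ast(X^{S^n})\stackrel{\simeq}\to CH^{S^n}(C^\ast(X),(C^\ast(X))^\vee)$; by construction this is the map induced by dualizing $\mathcal{I}t$ along $C^\ast(X)\to (C^\ast(X))^\vee$.

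Next I would dualize Lemma~\ref{L:sphereproduct=Hoch}. Unfolding the definition \eqref{eq:muSn} of $\mu_{S^n}$, the composition is built out of (i) the tensor product pairing into $CH_{S^n\coprod S^n}$, (ii) the map ${\nabla_X}_\ast\colon CH_{S^n\vee S^n}(C^\ast(X))\to CH_{S^n\coprod S^n}(C^\ast(X))[-\dim(X)]$ induced by $\nabla_X$, and (iii) the pinching pullback $\delta_{S^n}^\ast$. Applying Lemma~\ref{L:sphereproduct=Hoch} termwise, after transferring through $\mathcal{I}t$ this operation becomes the composition on $C^\ast(X^{S^n})$ of $\delta_{S^n}^\ast\colon C^\ast(X^{S^n})\to C^\ast(X^{S^n\vee S^n})$ followed by $\mathrm{id}\otimes\nabla_X\colon C^\ast(X^{S^n\vee S^n})\to C^\ast(X^{S^n})^{\otimes 2}[-\dim(X)]$, where the first tensor factor reflects the pullback decomposition \eqref{eq:decspherespace} of $X^{S^n\vee S^n}$ coming from the cartesian square \eqref{eq:cartesiansquarebrane} (which requires $n$-connectivity to guarantee that the Eilenberg--Moore spectral sequence converges). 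Dualizing once more, this is exactly the dual (in the sense of $(C^\ast(X),(C^\ast(X))^\vee)$-duality) of a certain natural chain map on $C_\ast(X^{S^n})$, and we are reduced to identifying this chain map with the topologically defined $\star_{S^n}$ from Lemma~\ref{L:Defnsphereproduct}.

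The remaining step, which is the heart of the argument, is to identify $\mathrm{id}\otimes\nabla_X$ with the dual of the Gysin map $(\rho_{in})_!$. For this I would invoke the second assertion of Lemma~\ref{L:diagasmodmap}, which computes $\nabla_M$ as the composition $\mathrm{collapse}^\ast\circ\rho_{th(M)}^\vee\circ\pi^\ast$ using the Thom class of the diagonal $M\hookrightarrow M\times M$. Pulling this factorization through the fibered structure \eqref{eq:cartesiansquarebrane}, and using functoriality of $\pi^\ast$, $\mathrm{collapse}^\ast$, and the cap product against the pulled-back Thom class $ev^\ast(th(M))$, the composition $\mathrm{id}\otimes\nabla_X$ matches the dual of the Thom-collapse-cap-product construction~\eqref{eq:defnrhoin}; composing with $\delta_{S^n}^\ast$ then corresponds exactly to $(\delta_{S^n}^\ast)_\ast$ in \eqref{eq:Dsphereproduct}. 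This is compatible with the fact that the equivalence $\chi_X\colon C^\ast(X)\simeq C_\ast(X)[\dim(X)]$ sends the cup product to the cap/Thom product (as used in Lemma~\ref{L:cupproductdual}), so the only real content is that dualizing and passing from the Hochschild setting to mapping spaces intertwines the two pictures.

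The main obstacle will be this last identification: one must carefully track the interaction between the $E_\infty$-module lift of Poincaré duality (which governs $\nabla_X$) and the classical Thom isomorphism of the normal bundle of $\rho_{in}$, using the pullback property of the Thom class along $ev\times ev$. Everything else is diagram-chasing among lemmas already proved; the technical care lies in verifying that the Eilenberg--Moore type equivalence $C^\ast(X^{S^n\vee S^n})\simeq C^\ast(X^{S^n\coprod S^n})\otimes^{\mathbb L}_{C^\ast(X\times X)} C^\ast(X)$ (valid under $n$-connectivity) is compatible with $\mathcal{I}t$ and with the Thom-collapse factorization, so that dualization on both sides produces the same chain-level operation.
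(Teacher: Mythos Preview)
Your proposal is correct and follows essentially the same route as the paper's own proof: reduce $\cup_{S^n}$ to $\mu_{S^n}$ via Lemma~\ref{L:cupproductdual}, transfer to mapping spaces via (the dual of) Lemma~\ref{L:sphereproduct=Hoch}, and then identify $\mathrm{id}\otimes\nabla_M$ with the dual of the Gysin map by invoking the second assertion of Lemma~\ref{L:diagasmodmap} together with the Eilenberg--Moore decomposition~\eqref{eq:decspherespace}. The paper makes this last identification slightly more concrete by rewriting the cup-product by $ev^{\ast}(t(M))$ as $\mathrm{id}\otimes^{\mathbb{L}}_{C^{\ast}(M\times M)}(\cup\, t(M))$ under~\eqref{eq:decspherespace}, which is exactly the ``careful tracking'' you flag as the main obstacle.
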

Since the vertical arrows are the maps defining the $E_{n+1}$-structure given by Theorem~\ref{T:BraneChain}
 on $C^\ast\big(X^{S^n}\big)[\dim(X)]$; it follows that the underlying commutative algebra structure
on homology agrees with the sphere product.

\begin{proof}
Recall that there
 is a canonical isomorphism $CH^{S^n}(A,A^{\vee})\cong \Big(CH_{S^n}(A,A^{\vee})\Big)^{\vee}$.
By assumption $M$ is a Poincar\'e duality space, hence it has finitely generated homology groups (Remark~\ref{R:PDimpliesFinGen}) and the canonical biduality map
$C_{\ast}(X)\to \big(C^{\ast}(X)\big)^{\vee}$ is a quasi-isomorphism and it is sufficient
to prove that the dual of the diagram depicted in Proposition~\ref{P:brane=cupsphere} is
commutative.

By definition~\eqref{eq:muSn} of $\mu_{S^n}$, lemma~\ref{L:sphereproduct=Hoch} and lemma~\ref{L:cupproductdual}
we are left to prove that  the map
$ id\otimes \nabla_M: C^\ast\big(M^{S^n\vee S^n} \big) \to C^\ast\big(M^{S^n}\big)^{\otimes 2}[-\dim(M)]$
sits inside a commutative diagram
{\small \begin{equation}\label{eq:dualsphereproduct}\xymatrix{C^{\ast}\big(M^{S^n\vee S^n}\big) \
\ar@/_/[rrrdd]_{id\otimes \nabla_M}\ar[r]^{\pi^*} & C^{\ast}\big(\big(M^{S^n\vee S^n}\big)^{-TM}\big)
\ar[rr]^{\hspace{-2pc}\rho_{ev^*(th(M))}^{\vee}} & &
C^{\ast}\big(\big(M^{S^n\vee S^n}\big)^{-TM}\big)[-\dim(M)] \ar[d]^{\text{collapse}^*} \\
&& &C^{\ast}\big(M^{S^n\coprod S^n}\big) [-\dim(M)]\ar[d]^{\cong}\\
& &  & C^{\ast}\big(M^{S^n}\big)^{\otimes 2}[-\dim(M)]   }\end{equation}}
in $\hkmod$.

Recall the equivalence~\eqref{eq:decspherespace} above. Under this equivalence, the cup-product
by the pullback $ev^*(t(M))$ is given by
\begin{multline*}
C^\ast\big(M^{S^n\vee S^n} \big) \cong  C^\ast\big(M^{S^n\coprod S^n}\big)
\mathop{\otimes}\limits^{\mathbb{L}}_{C^{\ast}\big(M\times M \big) } C^{\ast}(M)
\\\stackrel{id\mathop{\otimes}^{\mathbb{L}}_{C^{\ast}\big(M\times M \big) }\cup t(M)} \longrightarrow
C^\ast\big(M^{S^n\coprod S^n}\big)
\mathop{\otimes}\limits^{\mathbb{L}}_{C^{\ast}\big(M\times M \big) } C^{\ast}(M) [-\dim(M)]\\
\cong C^\ast\big(M^{S^n\vee S^n} \big)[-\dim(M)].
\end{multline*}
Now, the commutativity of diagram~\eqref{eq:dualsphereproduct} follows from
Lemma~\ref{L:diagasmodmap}.
\end{proof}

\subsection{A spectral sequence to compute the  brane topology product}\label{SS:SpecSeqBrane}

In \cite{CJY}, the Chas-Sullivan product of spheres and projective spaces was computed using a spectral sequence of algebras. There is a similar approach for the Brane product. Indeed,
following arguments similar to those of \cite{CJY}, there exists a spectral sequence of algebras which converges (as algebras) to the homology $H_*\big((M)^{S^n}\big)$ with product being the sphere product from \eqref{eq:Dsphereproduct}. It is essentially given by the Serre spectral sequence applied to the fibration $\Omega^n M \to Map(S^n, M)\to M$.

\begin{prop}\label{P:spec-seq-for-M}Let $M$ be closed oriented connected manifold. 
There exists a second quadrant spectral sequence of algebras $\{E^r_{p,q}, d^r:E^r_{p,q}\to E^r_{p-r,q+r-1}\}_{r\geq 1}$, which converges (as algebras) to the homology $H_{*+\dim(M)}\big((M)^{S^n}\big)$ with product being the sphere product from \eqref{eq:Dsphereproduct}.

 Furthermore, the $E^2$ page of the spectral sequence is given by
\begin{equation}\label{equ:spec-seq-for-M}
E^2_{-p,q}\cong H^p(M;H_q(\Omega^n M)) \cong H^p(M) \otimes H_q(\Omega^n M).
\end{equation}
Here, the algebra structure on $H^p(M)$ is given by the usual cup product on cohomology, and the product on $H_q(\Omega^n M)$ is the Pontrjagin product on the based loop space.
\end{prop}

\begin{proof}The  spectral sequence  is essentially given by the Serre spectral sequence applied to the fibration $\Omega^n M \to Map(S^n, M)\to M$. The construction is similar to the calculation of the loop product in \cite[section 2]{CJY} where we now consider the sphere product as defined by the sequence of maps in \eqref{eq:Dsphereproduct}. We now give some of the details of this rather lengthy construction, and leave it to the interested reader to give the full details which are completely analogous to those in \cite{CJY}.

For a fibration $F\hookrightarrow E\stackrel \pi \to B$, consider the singular simplicial set with $r$-simplicies $S_r (E)=\{\sigma:\Delta^r\to E\}$. This simplicial set has a filtration obtained by taking simplicies whose induced simplicies on $B$ are essentially of simplicial degree at most $p$,
\begin{multline*}
 F_p(S_r(E))=\big\{\sigma:\Delta^r\to E: \pi\circ \sigma=\rho\circ f_\sharp, \text{ where } \rho\in S_q(B), q\leq p, \text{ and } \\
f_\sharp:\Delta^r\to \Delta^p \text{ is induced by some non-decreasing } f:\{0,\dots , r\} \to \{0,\dots, p\} \big\}
\end{multline*}
$F_p(S_\bullet(E))$ forms a subsimplicial set of $S_\bullet(E)$, and taking the associated chain complex, which is denoted by $F_p(C_*(E)):=C_*(F_p(S_\bullet(E)))$, induces the filtration
\[ \{0\}\to \dots\to F_{p-1}(C_*(E))\to F_p(C_*(E))\to\dots\to C_*(E) \]
of $C_*(E)$. This is the filtration which in turn induces the Serre spectral sequence converging to $H_*(E)$.

Just as in \cite{CJY}, we may analyze the involved maps in the sphere product \eqref{eq:Dsphereproduct} and their behavior under the above filtration. For the space $(M^{S^n\vee S^n})^{-TM}=ev^*(M^{-TM})$ appearing in \eqref{eq:Dsphereproduct}, which involves the Thom construction, we may in turn use the filtration of pairs,
\[
F_p(C_*(ev^*(D_{TM}), ev^*(S_{TM}))):= F_p(C_*(ev^*(D_{TM}))) / F_p(C_*(ev^*(S_{TM}))),
\]
where $M^{-TM}=D_{TM}/S_{TM}$ is given by a collapsing the tubular disk $D_{TM}$ of $M$ in $TM$ by the corresponding boundary sphere $S_{TM}$. This relative Serre spectral sequence converges to $H_*((M^{S^n\vee S^n})^{-TM})$. All the terms in \eqref{eq:Dsphereproduct} respect the filtration in the following sense:
\begin{eqnarray*}
F_p\big(C_{\ast}(M^{S^n})\big)\otimes F_q\big(C_{\ast}(M^{S^n})\big) &\longrightarrow & F_{p+q}\big(C_{\ast}\big(M^{S^n\coprod S^n}\big)\big) \\
F_{p}\big(C_{\ast}\big(M^{S^n\coprod S^n}\big)\big) &\stackrel{\text{collapse}_*}{\longrightarrow} & F_p(C_*(ev^*(D_{TM}), ev^*(S_{TM}))) \\
F_p(C_*(ev^*(D_{TM}), ev^*(S_{TM}))) &\stackrel{\rho_{ev^*(th(M))}}{\longrightarrow} & F_{p-\dim(M)}(C_*(ev^*(D_{TM}), ev^*(S_{TM}))) \\
F_p(C_*(ev^*(D_{TM}), ev^*(S_{TM}))) &\stackrel{\pi_*}{\longrightarrow} & F_p\big( C_{\ast}\big(M^{S^n\vee S^n}\big)\big)\\
F_p\big( C_{\ast}\big(M^{S^n\vee S^n}\big)\big) & \stackrel{(\delta_{S^n}^*)_*}\longrightarrow  & F_p\big( C_{\ast}\big(M^{S^n}\big)\big).
\end{eqnarray*}
The above is obvious for most of the stated maps, except for the map $\rho_{ev^*(th(M))}$ involving capping with the Thom class, which can be proved just as in \cite[Theorem 8]{CJY}. Thus, we arrive at a map of filtered chain complexes
\[ F_p(C_*(M^{S^n}))\otimes F_q(C_*(M^{S^n}))\to F_{p+q-\dim(M)}(C_*(M^{S^n})) \]
inducing a map of spectral sequences
\[ \tilde E^r_{p,s}\otimes \tilde E^r_{q,t}\to \tilde E^r_{p+q-\dim(M),s+t} \]
such that on the $\tilde E^2$ page this is given by a map
\[ H_p(M; H_s(\Omega^n M))\otimes H_q(M; H_t(\Omega^n M))\to H_{p+q-\dim(M)}(M; H_{s+t}(\Omega^n M)). \]
Checking the maps in the sphere product according to their component on the base $M$ and the fiber $\Omega^n M$ shows that this product is just the intersection product on $H_*(M)$, and the Pontrjagin product on $H_*(\Omega^n M)$. Now, shifting the spectral sequence to the left by $\dim(M)$, we obtain another spectral sequence $ E^r_{p,t}:=\tilde E^r_{p+\dim(M),t}$, which now lives completely in the second quadrant, and for which the sphere product induces a map $ E^r_{p,s}\otimes  E^r_{q,t}\to  E^r_{p+q,s+t}$ converging to the sphere product on homology,
\[ H_{p+\dim(M)}(M^{S^n})\otimes H_{q+\dim(M)}(M^{S^n})\to H_{p+q+\dim(M)}(M^{S^n}). \]
The $E^2$ page of this new spectral sequence is given by $E^2_{p,s}=H_{p+\dim(M)}(M;H_s(\Omega^n M))$, so that, as a last step for obtaining the claimed spectral sequence in \eqref{equ:spec-seq-for-M}, we use Poincar\'e duality to write $E^2_{p,s}=H^{-p}(M;H_s(\Omega^n M))$. As the the intersection product becomes the cup product in cohomology under Poincar\'e duality, we finally arrive at the claimed spectral sequence.\end{proof}

We end this section with a computation of the sphere product in the case where $M=S^k$ is itself a sphere of a certain dimension $k$.
\begin{ex}\label{EXA:sphere-product-of-odd-sphere}
Let $S^n$ denote the $n$-sphere, let $n>k$, and denote by $A=C^\bullet(S^k)$ a cochain model for the $k$-sphere. By formality of the sphere, we may assume that $A=H^\bullet(S^k)$. 
By Proposition~\ref{P:spec-seq-for-M}, 
there exists  a second quadrant spectral sequence of algebras $\{E^r_{p,q}, d^r:E^r_{p,q}\to E^r_{p-r,q+r-1}\}_{r\geq 1}$, which converges (as algebras) to the Brane homology $H_{*+k}\big((S^k)^{S^n}\big)$; the product being the sphere product from \eqref{eq:Dsphereproduct}. Furthermore, the $E^2$ page of the spectral sequence is given by
\begin{equation}\label{equ:spec-seq-for-Sk}
E^2_{-p,q}\cong H^p(S^k;H_q(\Omega^n S^k)) \cong H^p(S^k) \otimes H_q(\Omega^n S^k).
\end{equation}
Now we use the fact that $H^\bullet(S^k)=span\{1,a\}$ is a two dimensional space with $a$ in degree $k$ with the obvious cup product. To determine $H_\bullet(\Omega^n S^k)$, we distinguish between the cases where $k$ is odd or even.

First, assume that $k=2m+1$ is odd. One calculates the homology with Pontrjagin product as $H_\bullet(\Omega^n S^k)\cong \Lambda(x)$ as the free algebra in one generator $x$ in degree $k-n$. Thus, we can distinguish two cases, where $n$ is even or odd. Figure \ref{fig:odd-sphere} displays a picture of the generators of $H^\bullet(S^k) \otimes H_\bullet(\Omega^n S^k)$.
\begin{figure}
\includegraphics[scale=.4]{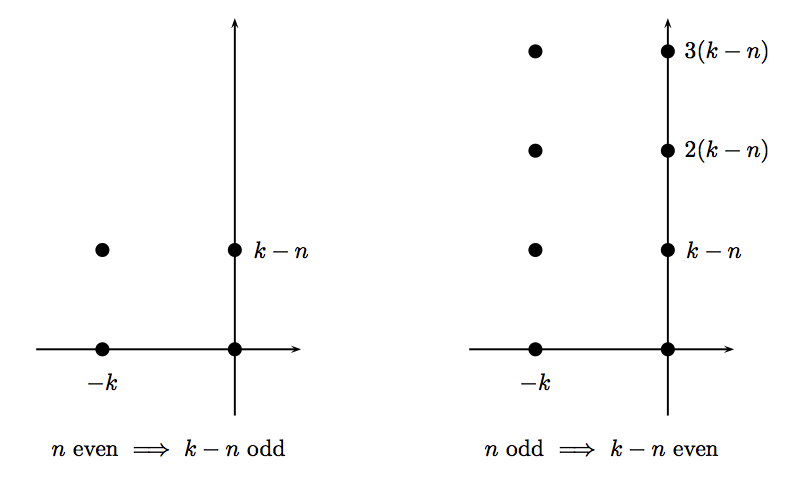}
\caption{The generators of $H^\bullet(S^k) \otimes H_\bullet(\Omega^n S^k)$}\label{fig:odd-sphere}
\end{figure}

In the case where $n$ is even, $x$ is of odd degree $k-n$, so that $x^2=0$. Since the differential $d^r$ on the $r$th page is of bi-degree $(-r, r-1)$, all differentials have to be zero in this case. (More precisely, the $r$th differential would map bi-degree $(0,0)$ to $(-r,r-1)\neq (-k,k-n)$ for any $r$.)

In the case where $n$ is odd, $x$ is of even degree $k-n$, so that $x^2\neq 0$. We see that the $r$th differential $d^r$ could possibly nonzero, namely in the case where $(-r,r-1)=(-k,p(k-n))$. Solving $r=k$ and thus $k-1=p(k-n)$, we see that the differential is always zero in the case when $k-1$ is not a factor of $k-n$. (In the case where $k-1$ is a factor of $k-n$ further analysis needs to be applied. For example, the above calculation applies when calculating the sphere product of $H_{*+13}\big((S^{13})^{S^5}\big)$ but it would not apply to $H_{*+13}\big((S^{13})^{S^7}\big)$ since $13-7=6$ is a factor of $13-1=12$.)

We thus obtained the following result:
\begin{multline}
\text{Let $k>n> 1$ with $k$ odd.} \\
\text{If ($n$ is even), or if ($n$ is odd, and $k-n$ is not a factor of $k-1$), then:}\\
H_{*+k}\big((S^k)^{S^n}\big)=\Lambda[a]\otimes \Lambda[x], \quad\quad \text{with $|a|=-k$ and $|x|=k-n$}
\end{multline}

One can ask for a similar analysis of the brane product when the underlying manifold is an even sphere $S^k$ with $k=2m$. One calculates the Pontrjagin ring as $H_\bullet(\Omega^n S^k)\cong \Lambda[x,y]$, where $x$ is of degree $k-n$, and $y$ is of degree $2k-1-n$. Unfortunately, the simple analysis of the involved degrees, similar to the one done for odd spheres, fails in this case, since one can check that the differential will never be zero purely by degree reasons. Thus, a computation of the brane product involves identifying the differentials of the spectral sequence, which are in general non trivial. Indeed, in \cite{CJY}, the differential $d^k$ in the spectral sequence for $k$ even and $n=1$ was shown to be non-zero.
\end {ex}

\begin{rem}\label{REM:odd-sphere-consistency}
For an odd sphere $S^k$ of sufficiently high connectivity, we have identified the sphere product with the cup product in Proposition \ref{P:brane=cupsphere}, $C_*((S^k)^{S^n})[k]\cong CH^{S^n}(A,A)$, where we may take $A$ to be the Sullivan model of the $k$-sphere. By Proposition \ref{P:HH(A,B)=CH(A,B)}, this structure is identified with the algebra structure on $CH^{S^n}(A,A)\cong HH_{\mathcal E_n}(A,A)$, which by Corollary \ref{C:HigherFormalityforFreeAlg}, was also identified with $HH_{\mathcal E_n}(A,A)\cong \Sym_A(\Der(A,A)[n])$. 
This, in turn, was explicitly calculated in Example \ref{EXA:Pn+1-on-odd-sphere}. Note that the two calculations in Examples \ref{EXA:sphere-product-of-odd-sphere} and \ref{EXA:Pn+1-on-odd-sphere} for the product on $C_*((S^k)^{S^n})[k]$ do indeed produce the same result.
\end{rem}

\section{Iterated bar constructions}\label{S:Barmain}
\subsection{Iterated loop spaces and iterated bar constructions for $E_\infty$-algebras}\label{S:Bar}
In this section we study the case of spaces of pointed maps from spheres to $X$,
\emph{i.e.} iterated loop spaces. The idea is to apply the formalism of the higher Hochschild functor to the Bar construction of augmented $E_\infty$-algebras.

\smallskip

Let $(A,d)$ be a differential graded unital associative algebra (DGA for short)
which is equipped with an augmentation $\epsilon: A\to k$.
Denote $\overline{A}=ker(A\stackrel{\epsilon}\to k)$ the augmentation ideal of $A$.
 The standard bar construction on $A$
is the chain complex $(Bar^{std}(A),b)$ defined by
 $$Bar^{std}(A)=\bigoplus_{n\geq 1} {\overline{A}}^{\otimes n} $$
with differential given by
\begin{eqnarray*}
 b(a_1\otimes \cdots a_n)     & = &   \sum_{i=1}^n \pm a_1\otimes \cdots \otimes d(a_i)\otimes \cdots
\otimes a_n\\
& & +  \sum_{i=1}^{n-1} \pm a_1\otimes \cdots \otimes (a_i\cdot a_{i+1})\otimes \cdots \otimes a_n
\end{eqnarray*}
see~\cite{FHT, Fre2, KM} for details (and signs). Further, if $A$ is a commutative differential graded algebra (CDGA for short), then the shuffle product makes  the Bar construction $Bar^{std}(A)$ a CDGA as well.
\begin{rem}Note that, by our convention on $\otimes_k$, if $A$ is not flat over
$k$, we replace it by a flat resolution. In particular $Bar^{std}: E_1-Alg\to \hkmod$ preserves weak equivalences. This definition of $Bar^{std}(A)$ thus agrees with the classical one as soon as the underlying
chain complex of
$A$ is flat over $k$.
\end{rem}

\begin{rem}
 The standard bar construction above extends naturally to $A_\infty$-algebras. It also extends
to any augmented $E_1$-algebra. Indeed, one can prove a Lemma similar to Lemma~\ref{L:Bar=HochI}
below
with factorization homology $\int_{I} (A,k)$ (of the $E_1$-algebra)
instead of Hochschild chains over $I$ (see~\cite{F, F2, L-HA, AFT}). Note that, there is a natural
equivalence $\int_{I} (A,k)\cong k\mathop{\otimes}\limits^{\mathbb{L}}_{A} k$.
In particular, if $B$ is any  DGA equivalent to
$A$, then the standard bar construction of $A$ is naturally equivalent to
$Bar^{std}(B) $ in
$\hkmod$.
\end{rem}

Let $A$ be an $E_\infty$-algebra and let $\epsilon: A\to k$ be an augmentation.
In particular, we can see $k$ as an $A$-module thanks to the augmentation $\epsilon$.
 In particular
$A$ is an $E_1$-algebra so that we can choose a  DGA $B$ and a quasi-isomorphism $f:B\to A$ of
$E_1$-algebras. Then we  define $Bar^{std}(A):=Bar^{std}(B)$. The fact that this construction is
well-defined\footnote{i.e. independent of the choice of $B$}
 indeed follows from the following lemma:
\begin{lem}\label{L:Bar=HochI}  Let $I=[0,1]$ be the closed interval. There is a natural equivalence (in $\hkmod$)
\begin{equation} \label{eq:Bar=HochI} CH_{I}(A) \mathop{\otimes}\limits_{CH_{S^0}(A)}^{\mathbb{L}} k
\;\cong \; Bar^{std}(A)\end{equation} where $Bar^{std}(A)$ is
the
standard Bar construction. Further if
$A$ is a CDGA, the equivalence~\eqref{eq:Bar=HochI} is an equivalence of $E_\infty$-algebras
(where $Bar^{std}(A)$ is endowed with its CDGA-structure induced by the shuffle product).
\end{lem}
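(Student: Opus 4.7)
My strategy is to reduce to a simplicial computation for a strict augmented DGA via homotopy invariance, identify the resulting complex with the classical two-sided bar construction, and finally check the multiplicative compatibility in the CDGA case.

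First, I would choose an $E_1$-algebra quasi-isomorphism $f:B\to A$ with $B$ a strict DGA preserving augmentations. By Proposition~\ref{P-CHbifunctorTop} and homotopy invariance of the Hochschild functor, the induced map
\[
CH_I(B)\mathop{\otimes}^{\mathbb L}_{CH_{S^0}(B)} k \;\longrightarrow\; CH_I(A)\mathop{\otimes}^{\mathbb L}_{CH_{S^0}(A)} k
\]
is an equivalence in $\hkmod$, so it suffices to handle the strict DGA case.

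Next, I would work with the standard simplicial model $I_\bullet=\Delta^1_\bullet$ of the interval, whose set of $n$-simplices has $n+2$ elements including the two extremal vertices which form $\partial\Delta^1=S^0$. Then $CH^{simp}_{\Delta^1_n}(B)=B^{\otimes(n+2)}$, and the map $CH^{simp}_{S^0}(B)=B\otimes B\hookrightarrow B^{\otimes(n+2)}$ places the two factors in positions $0$ and $n+1$; the internal simplicial face maps multiply adjacent tensor factors while the outermost face maps project onto the boundary. Tensoring levelwise with $k$ over $B\otimes B$ via $\epsilon\otimes\epsilon$ produces at level $n$ the chain complex $k\otimes B^{\otimes n}\otimes k\cong B^{\otimes n}$, whose simplicial boundary is the internal multiplication plus evaluation of $\epsilon$ at the outermost slots: this is precisely the unreduced two-sided bar construction $B(k,B,k)$. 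Passing to the associated Dold--Kan total complex and normalizing (so that degenerate simplices contributed by the unit of $B$ become nullhomotopic) recovers the reduced bar complex $Bar^{std}(B)=\bigoplus_{n\geq 1}\bar B^{\otimes n}$. At the abstract level, this is the classical identification $k\otimes^{\mathbb L}_{B\otimes B^{op}}B\simeq k\otimes^{\mathbb L}_B k$ computed via the bar resolution of $B$ as a bimodule over itself.

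For the second assertion, when $A$ is a CDGA, Corollary~\ref{C:cdgaE} (together with Proposition~\ref{P:tensor=coprod}) endows $CH_I(A)$ and $CH_{S^0}(A)=A\otimes A$ with natural CDGA structures such that the augmentation $A\otimes A\to k$ is a CDGA map. Consequently the relative tensor product $CH_I(A)\otimes^{\mathbb L}_{CH_{S^0}(A)}k$ inherits a natural CDGA structure, which via Corollary~\ref{C:cdgaE} recovers the $E_\infty$-structure of Proposition~\ref{P-CHbifunctorTop}. Under the simplicial identification above, this product acts componentwise on $A^{\otimes(n+2)}$, and after Eilenberg--Zilber shuffles on normalized chains, it becomes the classical Eilenberg--MacLane shuffle product on $Bar^{std}(A)$.

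The main technical obstacle is the matching of products in the CDGA case: one must carefully check that the componentwise simplicial product on the tensored Hochschild chains corresponds under Eilenberg--Zilber to the shuffle product on the bar complex, and that this matches the $E_\infty$-structure coming from Proposition~\ref{P:tensor=coprod} up to the contractible choices arising in the $E_\infty$-setting. All other steps are either formal consequences of the axiomatic properties of the higher Hochschild functor already established (homotopy invariance, excision, the value-on-a-point axiom) or classical facts in homological algebra of DGAs.
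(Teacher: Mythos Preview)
Your proposal is correct and follows essentially the same route as the paper: reduce to a strict augmented DGA, use the standard simplicial model of $\Delta^1$ to identify the levelwise complex with the two-sided bar, tensor over the boundary with $k$, normalize, and defer the shuffle-product compatibility in the CDGA case to the known identification (the paper cites \cite[Section~2]{GTZ} and Corollary~\ref{C:cdgaE} for exactly this). The only place the paper is slightly more careful is the reduction step: since the higher Hochschild functor $CH_I(-)$ is only defined for $E_\infty$-algebras, one cannot literally write ``$CH_I(B)$'' for a mere DGA $B$; the paper instead builds the simplicial object $\widetilde{C_{I^{std}_\bullet}}(B)=(B^{\otimes(k+2)})_{k\geq 0}$ by hand, notes explicitly that the simplicial identities over $\Delta^1$ do not require commutativity, and then compares this directly with $CH_{I^{std}_\bullet}(A)$ via $f$---which is exactly the computation you carry out in your second paragraph.
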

\begin{proof}

Let $I^{std}_\bullet$ be the standard simplicial set model of the interval
(viewed as a CW-complex with two vertices and one non-degenerate 1-cell). More precisely,
$I^{std}_k=\{0,\dots,k+1\}$ with face maps $d_i$ given, for $i=0,\dots,k$, by $d_i(j)$
equal to $j$ or $j-1$ depending on $j\leq i$ or $j>i$.
For any differential graded associative algebra $B$, one can form the simplicial dg-algebra
 $$\widetilde{C_{I^{std}_{\bullet}}}(B):=(B^{\otimes I^{std}_{k}})_{k\geq 0}
= (B\otimes B^{k}\otimes B)_{k\geq 0}$$ where the simplicial
structure is defined as for Hochschild chains of a CDGA (it is immediate to check, and well known, that the
commutativity is not necessary to check the simplicial identities in that case). Further the
associated\footnote{via the usual Dold-Kan construction} differential graded module $DK(\widetilde{C_{I^{std}_{\bullet}}}(B))$ is the two-sided Bar construction $Bar^{std}(B,B,B)$
of $B$ (see~\cite[Example 2.3.4]{GTZ}).
  In particular,
if $f:B\stackrel{\simeq}\to A$ is an equivalence of $E_1$-algebras, with $B$ a DGA, then
$DK(\widetilde{C_{I^{std}_{\bullet}}}(B))\stackrel{f_*}\to Bar^{std}(A)$ is an equivalence
(natural in $A$, $B$).

Now, forgetting the $E_\infty$-structure of the Hochschild chain complex
 $CH_{I^{std}_\bullet}(A)$, we get a quasi-isomorphism of simplicial chain complexes
$$f:\widetilde{C_{I^{std}_{\bullet}}}(B)\stackrel{\simeq}\longrightarrow
\widetilde{CH_{I^{std}_\bullet}(A)}  $$ and thus after taking the Dold-Kan $\infty$-functor
$DK:sk\text{-}Mod_\infty \to \hkmod$,
 we see that  $CH_{I^{std}_\bullet} (A) \cong DK(\widetilde{C_{I^{std}_{\bullet}}}(B))$.
Now the result
follows since $Bar^{std}(B)\cong \cong \bigoplus_{n\geq 1} {\overline{B}}^{\otimes n}$ is the
normalized chain complex associated to the simplicial chain complex
$\widetilde{C_{I^{std}_{\bullet}}}(B)$, thus is quasi-isomorphic to $DK(\widetilde{C_{I^{std}_{\bullet}}}(B)) $.

When $A$ is a CDGA, the result follows from Corollary~\ref{C:cdgaE}
 and \cite[Section 2]{GTZ}.
\end{proof}

In particular, we get an $E_\infty$-lifting of the Bar construction of an $E_\infty$-algebra
 that we denote \begin{equation}\label{eq:NBar}
Bar(A):= CH_{I}(A) \mathop{\otimes}\limits_{CH_{S^0}(A)}^{\mathbb{L}} k.\end{equation}
Note that the augmentation $\epsilon:A\to k$ induces  augmentations $\epsilon_*: CH_{I}(A)\to CH_{I}(k)\cong k$,   $\epsilon_*: CH_{S^0}(A)\to CH_{S^0}(k)\cong k$ and thus an augmentation $Bar(A)\to k$ as well.

 Since $Bar(A)$ is an augmented $E_\infty$-algebra, we can take its Bar construction again.
\begin{definition}\label{D:Bar}
The $n^{th}$-iterated Bar construction of an augmented $E_\infty$-algebra $A$ is the $E_\infty$-algebra $Bar^{(n)}(A)= Bar(\cdots (Bar(A))\cdots)$.
\end{definition}

Summing up the above results we have:
\begin{prop}\label{P:LiftBar}
The $n^{th}$-iterated Bar construction Bar is an $\infty$-functor
$$Bar^{(n)}: E_\infty\text{-Alg}\to E_\infty\text{-Alg}.$$ Further,
there is a natural equivalence in $E_\infty$-Alg between
$B^{(n)}(A)$ and the $n^{th}$-iterated Bar construction defined by
B. Fresse~\cite{Fre2}.
\end{prop}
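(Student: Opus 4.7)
The plan is to first verify that $Bar$ is an $\infty$-functor on augmented $E_\infty$-algebras, then argue iteration is well-defined, and finally compare with Fresse's model.

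First, I would observe that the constructions assembling $Bar$ are all $\infty$-functorial. By Proposition~\ref{P-CHbifunctorTop} (and Corollary~\ref{C:properties}.(3) which gives commutation with colimits), the assignments $A\mapsto CH_I(A)$ and $A\mapsto CH_{S^0}(A)$ are $\infty$-functors from $E_\infty\text{-}Alg$ to itself. The inclusion of pointed spaces $S^0\hookrightarrow I$ and the contravariance in the first variable provide a natural map of $E_\infty$-algebras $CH_{S^0}(A)\to CH_I(A)$. Combined with the natural map $CH_{S^0}(A)\to CH_{S^0}(k)\cong k$ induced by $\epsilon$ and functoriality, the homotopy pushout
\[ Bar(A) \;=\; CH_I(A) \mathop{\otimes}\limits_{CH_{S^0}(A)}^{\mathbb{L}} k \]
is $\infty$-functorial since pushouts in $E_\infty\text{-}Alg$ are functorial (and exist by presentability of $E_\infty\text{-}Alg$). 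The key point to make $Bar$ land in \emph{augmented} $E_\infty$-algebras (so that we can iterate) is that the collapse $I\to pt$ yields a natural $E_\infty$-algebra map $CH_I(A)\to CH_{pt}(A)\cong A\stackrel{\epsilon}\to k$ which is compatible with the two augmentations coming from $CH_{S^0}(A)\cong A\otimes A \to k$; this compatibility induces a canonical augmentation $Bar(A)\to k$ on the pushout. Thus $Bar$ lifts to an $\infty$-functor from augmented $E_\infty$-algebras to itself, and $Bar^{(n)}=Bar\circ\cdots\circ Bar$ makes sense.

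For the comparison with Fresse~\cite{Fre2}, I would first check that both constructions agree on augmented differential graded associative (in fact commutative) algebras. By Lemma~\ref{L:Bar=HochI}, our $Bar(A)$ is naturally equivalent as an $E_\infty$-algebra to the standard bar complex $Bar^{std}(A)$ equipped with the shuffle product whenever $A$ is a CDGA; and Fresse's construction is designed precisely to extend the classical shuffle-product bar complex to an $E_\infty$-functorial construction which agrees, by definition, with $Bar^{std}$ on CDGAs and respects quasi-isomorphisms. Iterating, $B^{(n)}_{Fresse}$ and our $Bar^{(n)}$ agree on CDGAs at each step. Then, using that every $E_\infty$-algebra is equivalent to the image of a cofibrant CDGA (working either over a field of characteristic $0$ or, in general, via a cofibrant $E_\infty$-resolution and the fact that both functors preserve weak equivalences of $E_\infty$-algebras), and that both $Bar$ and $B_{Fresse}$ are $\infty$-functors that agree on this generating subcategory, the universal property of the derived $\infty$-functors forces a natural equivalence $Bar^{(n)}(A)\simeq B^{(n)}_{Fresse}(A)$ in $E_\infty\text{-}Alg$ for every augmented $A$.

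The main technical obstacle will be the last comparison step: matching the $E_\infty$-structures (and not only the underlying chain complexes). At the level of chain complexes the equivalence is almost automatic since both compute $k\otimes_A^{\mathbb{L}} k$ iterated $n$ times, but checking that the $E_\infty$-structure defined iteratively via the pushout and $CH_I$ matches Fresse's explicit $E_\infty$-structure requires tracing through the shuffle product on DGAs in both pictures. Concretely, one can reduce to this DGA case via cofibrant resolution and then invoke the uniqueness (up to equivalence in the $\infty$-category $E_\infty\text{-}Alg$) of an $E_\infty$-functor extending $Bar^{std}$ on DGAs while preserving weak equivalences, which is guaranteed by the universal property of the Dwyer--Kan localization and the fact that (augmented) CDGAs form a generating subcategory up to weak equivalence.
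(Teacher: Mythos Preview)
Your argument for functoriality (first paragraph) is fine and essentially coincides with the paper's one-line observation that $A\mapsto CH_I(A)$ is an $\infty$-endofunctor of $E_\infty\text{-}Alg$, hence so is the pushout defining $Bar$.

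The comparison with Fresse, however, has a real gap. Your strategy is to establish agreement on CDGAs via Lemma~\ref{L:Bar=HochI} and then extend to all $E_\infty$-algebras by arguing that ``(augmented) CDGAs form a generating subcategory up to weak equivalence.'' This last claim is simply false over a general ground ring (and in particular in positive characteristic): not every $E_\infty$-algebra is quasi-isomorphic to a CDGA. Your parenthetical fallback (``in general, via a cofibrant $E_\infty$-resolution'') does not help, because a cofibrant $E_\infty$-resolution is not a CDGA, so the agreement you proved on CDGAs does not apply to it; the argument becomes circular. Likewise, the appeal to ``the universal property of the Dwyer--Kan localization'' does not produce the desired uniqueness unless you already know the two functors agree on a class of objects that is dense up to weak equivalence, which you have not established outside characteristic zero.

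The paper avoids this entirely by invoking directly the \emph{uniqueness theorem} proved by Fresse in~\cite{Fre2}: Fresse shows that there is, up to natural equivalence in $E_\infty\text{-}Alg$, a unique $E_\infty$-lift of the standard bar construction. Since by Lemma~\ref{L:Bar=HochI} your $Bar(A)$ is equivalent to $Bar^{std}(A)$ in $\hkmod$ (and as $E_\infty$-algebras when $A$ is a CDGA, which is what Fresse's characterization requires), Fresse's uniqueness immediately yields the natural equivalence $Bar^{(n)}\simeq B^{(n)}_{Fresse}$. You should replace your reduction-to-CDGAs argument by a direct citation of this uniqueness result.
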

\begin{proof}
Since $A\mapsto CH_{I}(A)$ is an $\infty$-endofunctor of $E_\infty$-Alg, the same follows for $Bar$ (and its iteration).
By Lemma~\ref{L:Bar=HochI}, the $Bar(A)$ is equivalent (in $\hkmod$) to $Bar^{std}(A)$ and, further, this   equivalence is an equivalence in $E_\infty$-Alg if $A$ is a CDGA and $Bar^{std}(A)$ is endowed with the CDGA structure given by the shuffle product.
Thus the uniqueness of the Bar construction in $E_\infty$-Alg obtained in~\cite{Fre2} shows that $Bar^{(n)}$ is the correct  $n^{th}$-iterated Bar construction.
\end{proof}
\begin{rem}
Since the canonical map $CH_{I}(A)\to CH_{pt}(A)\cong A$ is an equivalence,
 we recover immediately
from the excision axiom
$$Bar(A) \cong A \mathop{\otimes}\limits_{CH_{S^0}(A)}^{\mathbb{L}} k
\cong k \mathop{\otimes}\limits_{A}^{\mathbb{L}} k.$$
\end{rem}
\begin{rem}In terms of factorization algebras, one has the following definition. Considered the unit interval with two stratified points given by its endpoints. Then, the analogue of Proposition~\ref{P:EnMod=Fact} in that case is that a locally constant (stratified) factorization algebra on $I$ is the same as the data of an $E_1$-algebra $A$ and a pair of left $A$-module $M$ and a right $A$-module $N$. In particular taking the factorization algebra $\mathcal{A}$ for which $A$ is augmented and $M=N=k$, we obtain that the factorization homology $\int_I \mathcal{A}$ (denoted $\int_I (A,k)$ in~\cite{F}) is equivalent to the Bar construction, see~\cite{F} for details.
\end{rem}

There is an easy interpretation of the iterated Bar construction in terms
 of higher Hochschild chains. Note that, since $k$ is an $A$-algebra (via the augmentation),
$CH_{S^n}(A,k) \cong CH_{S^n}(A) \mathop{\otimes}\limits_{A}^{\mathbb{L}} k $ is an $E_\infty$-algebra.
\begin{prop}\label{P:nBarisCHIn}
There are natural equivalences of $E_\infty$-algebras $$CH_{S^n}(A,k) \cong Bar^{(n)}(A) .$$
\end{prop}
\begin{proof}
Since $S^n\cong D^n\cup_{S^{n-1}}^{h}{pt}$, the homotopy invariance and excision axiom for Hochschild chains implies the following sequence of natural (in $A$)  equivalences of $E_\infty$-algebras
\begin{eqnarray*}
CH_{S^n}(A,k)\;\;\cong\;\; CH_{S^n}(A)\mathop{\otimes}\limits_{A}^{\mathbb{L}} k & \cong & CH_{I^n}(A)
\mathop{\otimes}\limits_{CH_{S^{n-1}}(A)}^{\mathbb{L}} k
\end{eqnarray*}
Thus, for $n=1$, the Lemma is proved (by Definition~\eqref{eq:NBar}).

 Since $CH_{X}(k)\cong k$ for all $X\in \hTop$, by Corollary~\ref{C:properties}.(3), there are equivalences of $E_\infty$-algebras
\begin{eqnarray*}
 CH_{I}\Big(CH_{S^{n-1}}(A) \mathop{\otimes}\limits_{A}^{\mathbb{L}} k \Big)
&\cong & CH_{I}\big(CH_{S^{n-1}}(A) \big)\mathop{\otimes}\limits_{CH_{I}(A)}^{\mathbb{L}} k\\
&\cong & CH_{(I\times S^{n-1})/I\times\{1\}}(A).
\end{eqnarray*}
where the last equivalence follows from Corollary~\ref{C:properties}.(4) and the excision axiom.
Tensoring the last equivalence by
$\mathop{\otimes}\limits_{CH_{S^0}\big(CH_{S^{n-1}}(A,k)\big)}^{\mathbb{L}} \hspace{-1pc} k \hspace{0.5pc}$
and applying the excision axiom again, we get
\begin{eqnarray*} CH_{I}\Big(CH_{S^{n-1}}(A) \mathop{\otimes}\limits_{A}^{\mathbb{L}} k \Big) \mathop{\otimes}\limits_{CH_{S^0}\big(CH_{S^{n-1}}(A,k)\big)}^{\mathbb{L}} k
\;\;\cong \;\;CH_{S^n}(A)\mathop{\otimes}^{\mathbb{L}}_{A} k.
\end{eqnarray*}
Since the left hand side is $Bar\Big(CH_{S^{n-1}}(A) \mathop{\otimes}\limits_{A}^{\mathbb{L}} k  \Big)$, the Lemma now follows by induction.
\end{proof}

We now study the coalgebra structure carried by the iterated Bar construction.
Recall that the standard Bar construction of a DGA carries a natural associative
coalgebra structure. We wish to apply the results of Section~\ref{S:Edcochains}
to study the same result for $E_n$-coalgebras structures.

Recall the continuous map~\eqref{eq:pinchcube}
$ pinch: \mathcal{C}_n(r) \times S^n \longrightarrow  \bigvee_{i=1\dots r}\, S^n$.
Similarly to the definition of the map~\eqref{eq:pinchSr}, applying the singular set
functor to the map~\eqref{eq:pinchcube}  we get a morphism
\begin{multline}\label{eq:copinchSn}
pinch^{S^n,r}_*: C_{\ast}\big(\mathcal{C}_n(r)\big) \otimes CH_{S^n}(A)  \mathop{\otimes}\limits_{A}^{\mathbb{L}} k \\
\stackrel{pinch_*\otimes_{A}^{\mathbb{L}} id} \longrightarrow  CH_{\bigvee_{i=1}^r S^n}(A)  \mathop{\otimes}\limits_{A}^{\mathbb{L}} k
\cong \Big( CH_{\coprod_{i=1}^r S^n}(A)  \mathop{\otimes}\limits_{A^{\otimes r}}^{\mathbb{L}} A\Big) \mathop{\otimes}\limits_{A}^{\mathbb{L}} k \\
\cong \Big( CH_{\coprod_{i=1}^r S^n}(A) \Big)  \mathop{\otimes}\limits_{A^{\otimes r}}^{\mathbb{L}} k
\cong \Big( CH_{S^n}(A,k) \Big)^{\otimes r}
\end{multline}
where the last equivalences follows from the excision axiom, the coproduct axiom and the
 definition of $CH_{S^n}(A,k)$.

Note that there is a canonical equivalence
\begin{equation}\label{eq:coBarHH} Hom_{k}\Big(CH_{S^n}(A)\otimes_{A}^{\mathbb{L}} k, k\Big)\cong RHom_A\Big(CH_{S^n}(A),k\Big)\cong CH^{S^n}(A,k).\end{equation} Under this identification, the dual of the map~\eqref{eq:copinchSn} is the pinching map~\eqref{eq:pinchSr} from Section~\ref{S:wedge}.
\begin{theorem}\label{P:EncoAlgBar}Let $A$ be an $E_\infty$-algebra and $\epsilon: A\to k$ an augmentation.
\begin{enumerate}
\item The maps~\eqref{eq:copinchSn} $pinch^{S^n,r}_*: C_{\ast}\big(\mathcal{C}_n(r)\big) \otimes CH_{S^n}(A,k) \to   \Big( CH_{S^n}(A,k) \Big)^{\otimes r}$ makes  the iterated Bar construction $Bar^{(n)}(A)\cong  CH_{S^n}(A,k)$ a natural $E_n$-coalgebra (in the $(\infty,1)$-category of $E_\infty$-algebras)
\item The dual $E_n$-algebra $RHom(Bar^{(n)}(A),k)$ is naturally equivalent to $CH^{S^n}(A,k)$ in $E_n$-Alg and thus to the centralizer $\mathfrak{z}(\epsilon)$ of the augmentation (viewed as a map of $E_n$-algebra by restriction).
\end{enumerate}
\end{theorem}
\begin{proof}
The proof of the first statement is similar to the proof of Theorem~\ref{T:EdHoch} (except that we take the predual of it). Fixing $c\in C_{\ast}(\mathcal{C}_n(r))$,   all
maps involved in the composition~\eqref{eq:copinchSn} defining $pinch^{S^n,r}_*(c,-)$ are maps of $E_\infty$-algebras.  Hence the structure maps of the $E_n$-coalgebra structures are compatible with the $E_\infty$-structure.

Further, since the linear dual of the map~\eqref{eq:copinchSn} is the pinching map~\eqref{eq:pinchSr}, statement (2) follows from Theorem~\ref{T:EdHoch}, the equivalence
$$RHom(Bar^{(n)}(A),k) \cong RHom_A\Big(CH_{S^n}(A),k\Big)\cong  CH^{S^n}(A,k)$$
and Corollary~\ref{C:HH(AB)=z}
\end{proof}

If $Y$ is a pointed space, its $E_\infty$-algebra of cochains $C^\ast(Y)$ has a canonical augmentation $C^\ast(Y)\to C^\ast(pt)\cong k$ induced by the base point $pt\to Y$. Tensoring the map
$\mathcal{I}t: CH_{S^n}(C^\ast(Y)) \longrightarrow C^{\ast}\big( Y^{S^n} \big)$
 (given by Theorem~\ref{T:Einftymapping}) with $\otimes_{C^{\ast}(Y)}^{\mathbb{L}}k$ yields a natural $E_\infty$-algebra morphism
\begin{multline}
\label{eq:iteratedlooptoBar}
 \mathcal{I}t^{\Omega^n}: Bar^{(n)}(C^{\ast}(Y))\cong CH_{S^n}(C^{\ast}(Y),k)
 \\  \stackrel{\mathcal{I}t\otimes_{C^{\ast}(Y)}^{\mathbb{L}} k}\longrightarrow
C^{\ast}\big(Y^{S^n}\big)\otimes_{C^{\ast}(Y)}^{\mathbb{L}} k
 \longrightarrow C^{\ast}\big(\Omega^n(Y) \big)
\end{multline}
where the last map is induced by applying the singular cochain functor
 to $\Omega^n (Y)\cong Y^{S^n}\times^{h}_{Y} pt$.

Further, using the equivalence~\eqref{eq:coBarHH}, the linear dual of this map (composed with the canonical
 biduality morphism) yields a map
\begin{equation}
\label{eq:iteratedlooptoBarchain} \mathcal{I}t_{\Omega^n}:  C_{\ast}\big(\Omega^n(Y) \big)
 \longrightarrow  C^{\ast}\big(\Omega^n(Y)\big)^{\vee} \longrightarrow
 CH^{S^n}(C^{\ast}(Y),k)\cong  \Big(Bar^{(n)}(C^{\ast}(Y))\Big)^{\vee}
\end{equation}

We can now state our main application to iterated loop spaces,
 generalizing classical results in algebraic topology.
Since the iterated loop space $\Omega^n(Y)$ are $E_n$-algebras
in spaces,
 $C^{\ast}(\Omega^n(Y))$ is an $E_n-coalgebra$ in
$E_\infty$-Alg and $C_{\ast}(\Omega^n(Y))$ an  $E_n$-algebra (in $E_\infty$-coAlg, the
$(\infty,1)$-category of $E_\infty$-coalgebras).
\begin{cor}\label{C:iteratedlooptoBar} Let $Y$ be a pointed topological space.
\begin{enumerate}
\item  The map~\eqref{eq:iteratedlooptoBar}
$ \mathcal{I}t^{\Omega^n}: Bar^{(n)}(C^{\ast}(Y))
\to C^{\ast}\big(\Omega^n(Y)\big) $
is an $E_n$-coalgebra morphism in the category of $E_\infty$-algebras.
 It is further an equivalence
if $Y$ is $n$-connected.
\item Dually, the map~\eqref{eq:iteratedlooptoBarchain}
$\mathcal{I}t_{\Omega^n}:  C_{\ast}\big(\Omega^n(Y) \big) \longrightarrow
\Big(Bar^{(n)}(C^{\ast}(Y))\Big)^{\vee}$ is an $E_n$-algebra morphism (in $\hkmod$).
Further, if $k$ is a field, $Y$ is $n$-connected
and has finite dimensional homology groups, then $\Big(Bar^{(n)}(C^{\ast}(Y))\Big)^{\vee}$
is an $E_\infty$-coalgebra and the map~\eqref{eq:iteratedlooptoBarchain}
$\mathcal{I}t_{\Omega^n}$ is an equivalence of $E_n$-algebras in $E_\infty$-coAlg.
\end{enumerate}\end{cor}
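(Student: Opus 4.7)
The plan is to first identify the $E_n$-(co)algebra structures on the source and target of $\mathcal{I}t^{\Omega^n}$ as both being induced by the pinching maps $pinch:\mathcal{C}_n(r)\times S^n \to \bigvee_{i=1}^r S^n$ from Section~\ref{S:Edcochains}, then derive compatibility from the naturality of $\mathcal{I}t$, and finally address the equivalence assertions under the connectivity and finiteness hypotheses.

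For part (1), the $\mathcal{C}_n$-algebra structure on the iterated loop space $\Omega^n(Y)=Map_\ast(S^n, Y)$ is obtained by adjunction from the $E_n$-co-$H$-space structure on $S^n$ furnished by these pinching maps; applying the cochain functor together with the $E_\infty$-lift of the Alexander--Whitney diagonal (Lemma~\ref{L:AWEinfty}) endows $C^{\ast}(\Omega^n(Y))$ with an $E_n$-coalgebra structure in $E_\infty\text{-}Alg$. On the other hand, the $E_n$-coalgebra structure on $Bar^{(n)}(C^{\ast}(Y))\cong CH_{S^n}(C^{\ast}(Y),k)$ is defined in Proposition~\ref{P:EncoAlgBar} using the \emph{same} pinching maps via the composition~\eqref{eq:copinchSn}. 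The naturality of the iterated integral map (Corollary~\ref{C:Einftymapping}) applied to $pinch$, together with the monoidality $CH_{\bigvee_r S^n}(C^{\ast}(Y),k)\cong CH_{S^n}(C^{\ast}(Y),k)^{\otimes r}$ coming from the coproduct and excision axioms of Hochschild chains, yields commutative squares of $E_\infty$-algebra maps which, after relative tensoring with $k$ over $C^{\ast}(Y)$, establish the desired $E_n$-coalgebra compatibility for $\mathcal{I}t^{\Omega^n}$. The equivalence assertion then follows from Corollary~\ref{C:Einftymapping} (which gives $\mathcal{I}t:CH_{S^n}(C^{\ast}(Y))\xrightarrow{\simeq} C^{\ast}(Y^{S^n})$ when $Y$ is $n$-connected) combined with the Eilenberg--Moore equivalence $C^{\ast}(Y^{S^n})\otimes^{\mathbb{L}}_{C^{\ast}(Y)} k \simeq C^{\ast}(\Omega^n(Y))$ associated to the homotopy pullback $\Omega^n(Y)\simeq Y^{S^n}\times^h_Y pt$, which is a quasi-isomorphism since $n\geq 1$ forces $Y$ to be simply connected.

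For part (2), the strategy is simply to dualize (1). The contravariant functor $(-)^{\vee}$ is lax monoidal and sends $E_n$-coalgebras in $E_\infty\text{-}Alg$ to $E_n$-algebras in $\hkmod$, so dualizing the map of (1) and precomposing with the biduality morphism $C_{\ast}(\Omega^n(Y))\to C^{\ast}(\Omega^n(Y))^{\vee}$ yields the $E_n$-algebra morphism $\mathcal{I}t_{\Omega^n}$. When $k$ is a field and $H_\ast(Y)$ is degreewise finite-dimensional, an induction on $n$ using the definition of the iterated Bar construction, the excision axiom, and the K\"unneth formula shows that $H_\ast(Bar^{(n)}(C^{\ast}(Y)))$ is also degreewise finite-dimensional. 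Consequently the relevant biduality maps are quasi-isomorphisms and $(Bar^{(n)}(C^{\ast}(Y)))^{\vee}$ acquires a natural $E_\infty$-coalgebra structure by dualizing its $E_\infty$-algebra structure; the equivalence established in (1) then dualizes to an equivalence of $E_n$-algebras in $E_\infty\text{-}coAlg$.

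The main technical obstacle I anticipate is verifying the identification of the two a priori different $E_n$-coalgebra structures on $C^{\ast}(\Omega^n(Y))$ appearing in (1): the one coming directly from the $\mathcal{C}_n$-action on $\Omega^n(Y)$ pushed through $C^{\ast}$ and Alexander--Whitney, versus the one obtained by transport from the source structure on $Bar^{(n)}$ along $\mathcal{I}t^{\Omega^n}$. Reconciling these coherently in the $\infty$-categorical setting, in particular tracking the higher coherences supplied by Proposition~\ref{P:tensorEinftyisEinfty} and Remark~\ref{R:AWEinfty}, is where most of the real work will lie.
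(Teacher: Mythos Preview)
Your proposal is correct and follows essentially the same approach as the paper: both identify the $E_n$-coalgebra structures on source and target as arising from the pinching maps $pinch:\mathcal{C}_n(r)\times S^n\to\bigvee_r S^n$, invoke the naturality of $\mathcal{I}t$ to obtain the compatibility diagram, and then dualize for part~(2). The only minor packaging difference is that for the equivalence claim the paper argues via the excision decomposition $S^n\simeq I^n\cup_{S^{n-1}} pt$ (using that $\mathcal{I}t$ is an equivalence on $CH_{S^{n-1}}$ and $CH_{I^n}$ separately), whereas you invoke Corollary~\ref{C:Einftymapping} for $S^n$ directly together with Eilenberg--Moore for the fibration $\Omega^n Y\to Y^{S^n}\to Y$; these are the same argument rearranged, and your worry in the final paragraph about reconciling ``two a priori different'' coalgebra structures on $C^\ast(\Omega^n(Y))$ is not a genuine obstacle---there is only one such structure, and the task (which you already carried out) is simply to check that $\mathcal{I}t^{\Omega^n}$ intertwines it with the one on $Bar^{(n)}$.
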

In particular, the Hochschild chains over the spheres is a model for the natural $E_n$-algebra
 structure on $C_\ast(\Omega^n Y)$.
\begin{proof} By Theorem~\ref{T:Einftymapping}, the map
$\mathcal{I}t: CH_{S^n}(C^\ast(Y)) \longrightarrow C^{\ast}\big( Y^{S^n} \big)$ is an $E_\infty$-algebra
map and thus so is $\mathcal{I}t_{\Omega^n}$. Further, Theorem~\ref{T:Einftymapping}
gives a natural transformation
$$\mathcal{I}t: CH_{X}^{\ast}\big(C^{\ast}(Y)\big) \longrightarrow C^{\ast}\big(Y^X\big)$$
from which we deduce a commutative diagram
\begin{equation}\label{eq:ItfromCHtoBarisEinfty}
 \xymatrix{
CH_{I^n}\big(C^{\ast}(Y)\big)\hspace{-1pc} \mathop{\otimes}\limits_{CH_{S^{n-1}}
\big(C^{\ast}(Y)\big)}^{\mathbb{L}} \hspace{-1pc} C^{\ast}(Y)
\mathop{\otimes}\limits_{C^{\ast}(Y)}^{\mathbb{L}} k
 \ar[d]_{\mathcal{I}t\otimes_{\mathcal{I}t}^{\mathbb{L}}\,id }
\ar[r]^{\hspace{3pc}\simeq} &  CH_{S^n}\big(C^{\ast}(Y)\big) \mathop{\otimes}\limits_{C^{\ast}(Y)}^{\mathbb{L}} k
\ar[d]^{\mathcal{I}t\otimes_{C^{\ast}(Y)}^{\mathbb{L}} id}\\
C^{\ast}\big(Y^{I^n} \big)\hspace{-1pc}
\mathop{\otimes}\limits_{C^{\ast}\big(Y^{S^{n-1}}\big)}^{\mathbb{L}}
 \hspace{-1pc} C^{\ast}(Y)
\mathop{\otimes}\limits_{C^{\ast}(Y)}^{\mathbb{L}} k
 \ar[r] & C^{\ast}\big(Y^{S^n}\big)\mathop{\otimes}\limits_{C^{\ast}(Y)}^{\mathbb{L}} k }
\end{equation}
in $E_\infty$-Alg in which the horizontal arrows are induced by the homotopy pushout
 $\Omega^n X\cong X^{I^n}\cup_{X^{S^{n-1}}} pt$. The lower horizontal arrow is an equivalence when
$Y$ is $n$-connected. Further, the map $\mathcal{I}t:CH_{S^{n-1}}
\big(C^{\ast}(Y)\big)\to C^{\ast}\big(Y^{S^{n-1}}\big)$ is an equivalence when $Y$ is $n-1$-connected
by Theorem~\ref{T:Einftymapping}. Since the map induced by the base point
$C^{\ast}(Y)\to CH_{I^n}(C^{\ast}(Y))$ is an equivalence, the map
$\mathcal{I}t:CH_{I^n}(C^{\ast}(Y)) \to C^{\ast}\big(Y^{I^n}\big)$ is an equivalence when $Y$
is connected.
Thus, we deduce from the commutativity of diagram~\eqref{eq:ItfromCHtoBarisEinfty}
 that the map $\mathcal{I}t^{\Omega^n}: Bar^{(n)}(C^{\ast}(Y))
\to C^{\ast}\big(\Omega^n(Y)\big)$ is an equivalence when $Y$ is $n$-connected.

In order to finish the proof of Assertion 1 in Corollary~\ref{C:iteratedlooptoBar}, it remains
to check that $\mathcal{I}t^{\Omega^n}$ is a map of $E_n$-coalgebras. By definition, the
$E_n$-coalgebra structure of $C^{\ast}\big(\Omega^n (Y)\big)$ is induced by taking the singular
cochains functor (from $\hTop$ to $E_\infty$-Alg) to the $E_n$-algebra structure of
$\Omega^n(Y)$ which is the (homotopy pullback) $\Omega^n(Y)\cong \big(Y^{S^n}\times_{Y} pt\big)$.
By definition the $E_n$-algebra structure of $\Omega^n(Y)$ is induced by the pinching
map~\eqref{eq:pinchSr} $\mathcal{C}_n(r)\times S^n\to \bigvee_{i=1\dots r} S^n$. Indeed, since
the pinching map preserves the base point of $S^n$, we have the following composition
\begin{multline}\label{eq:pinchOmegan}
 \mathcal{C}_n(r) \times \big(Y^{S^n}\times^{h}_{Y} pt\big)^{r}
  \stackrel{\cong}\longrightarrow \mathcal{C}_n(r) \times
\big(Y^{\coprod_{i=1\dots r} S^n}\big) \times_{Y^{r}} pt
\\ \stackrel{\cong}\longrightarrow \big(Y^{\bigvee_{i=1\dots r} S^n}\big)\times_{Y} pt
\stackrel{pinch^*} \longrightarrow  Y^{S^n}\times_{Y} pt.
\end{multline}
By naturality of $\mathcal{I}t$, we have a commutative diagram
\[\xymatrix{ CH_{S^n}\big(C^{\ast}(Y)\big) \mathop{\otimes}\limits_{C^{\ast}(Y)}^{\mathbb{L}} k
\ar[rr]^{\hspace{-0.5pc}pinch_*\mathop{\otimes}\limits_{C^{\ast}(Y)}^{\mathbb{L}} id}
\ar[d]_{\mathcal{I}t\otimes_{C^{\ast}(Y)}^{\mathbb{L}} id}& &
CH_{\bigvee_{i=1\dots r}S^n}\big(C^{\ast}(Y)\big)
 \mathop{\otimes}\limits_{C^{\ast}(Y)}^{\mathbb{L}} k \ar[d]^{\mathcal{I}t\otimes_{C^{\ast}(Y)}^{\mathbb{L}} id}\\
C^{\ast}\big(Y^{S^n}\big)\mathop{\otimes}\limits_{C^{\ast}(Y)}^{\mathbb{L}} k
 \ar[rr]^{C^{\ast}(pinch^*)} &   &
C^{\ast}\big(Y^{\bigvee_{i=1\dots r} S^n}\big)\mathop{\otimes}\limits_{C^{\ast}(Y)}^{\mathbb{L}} k }.
 \]
The commutativity of this diagram, together with the  definition of the
map~\eqref{eq:copinchSn} $pinch_*^{S^n,r}:C_{\ast}\big(\mathcal{C}_n(r)\big)
 \otimes CH_{S^n}(A)  \mathop{\otimes}\limits_{A}^{\mathbb{L}} k \to
\Big(CH_{S^n}(A)  \mathop{\otimes}\limits_{A}^{\mathbb{L}} k\Big)^{\otimes r}$ giving the $E_n$-coalgebra
 structure of $Bar^{(n)}(C^{\ast}(Y))$, and the fact that the $E_n$-coalgeba structure of
$C^{\ast}(\Omega^n(Y))$ is given by applying the functor $C^{\ast}(-)$ to the
composition~\eqref{eq:pinchOmegan} show that $\mathcal{I}t^{\Omega^n}$ is an $E_n$-algebra map.

\smallskip

The proof of the fact that $\mathcal{I}t_{\Omega^n}$ is a map of $E_n$-algebra is similar, using
in addition the naturality of the biduality morphism $C\to C^{\vee \vee}$ and
Corollary~\ref{C:Itdual}. Further, when $k$ is a field and the groups $H_{\ell}(Y)$ are finitely
generated, then $C_{\ast}(Y)\to (C^{\ast}(Y))^{\vee}$ is an equivalence. Further, if $Y$ is
$n$-connected, it follows from the Eilenberg-Moore spectral sequence  that $Bar^{(n)}(C^{\ast}(Y))$ has finite
dimensional homology groups. Hence, the dual $\Big(Bar^{(n)}(C^{\ast}(Y))\Big)^{\vee}$ inherits an
natural $E_\infty$-coalgebra structure
(dual of the $E_\infty$-algebra structure of $Bar^{(n)}(C^{\ast}(Y))$). It is then immediate to
check that the arguments to prove Statement (1) above can be dualized to prove that
$\mathcal{I}t_{\Omega^n}$ is also an equivalence of $E_\infty$-coalgebras.
\end{proof}
\begin{rem}\label{R:EilenbergMooreconvergenceisenough}
 A careful analysis of the proof of Corollary~\ref{C:iteratedlooptoBar} shows that the assumption that $Y$ is $n$-connected can be replaced by the assumption that the cohomological Eilenberg-Moore spectral sequence of the path space fibration is strongly convergent for all $\Omega^{i}Y$ ($i\leq n$).
\end{rem}

\begin{rem}\label{R:CobarandHHforCoAlg}
Statement (2) in Corollary~\ref{C:iteratedlooptoBar} is somehow unsatisfying since one recovers an $E_\infty$-coalgebra structure on the right hand side $\Big(Bar^{(n)}(C^{\ast}(Y))\Big)^{\vee}$ only when the biduality morphism $Bar^{(n)}(C^{\ast}(Y))\to \Big(Bar^{(n)}(C^{\ast}(Y))\Big)^{\vee \vee}$ is an equivalence (while the left hand side has always such a structure). The reason for it is that this statement is in fact the bidual of a statement involving iterated \emph{coBar} construction of $E_\infty$-coalgebras.

Indeed, one can define Hochschild cochains over spaces for
$E_\infty$-coalgebras in a similar way to what we do in
Section~\ref{S:HHforEinftyAlg} getting an $\infty$-functor
$CH:\hTop^{op}\times E_\infty\text{-coAlg} \to
E_\infty\text{-coAlg}$ ($(X,C)\mapsto CH^X(C)$). For instance, one
has a natural equivalence $CH^{X}(C) \cong C
\mathop{\otimes}\limits^{\mathbb{L}}_{\mathbb{E}_\infty^{\otimes}}
C^\ast(X)$ similar to Proposition~\ref{P-CH-coeq}.

All results of Section~\ref{S:HHforEinftyAlg},
Section~\ref{S:EinftyModel} and Section~\ref{S:Operation} have
``dual'' counterparts which can be proved similarly. We claim that
there is an iterated cobar construction $coBar^{(n)}:
E_\infty\text{-coAlg}\to E_n\text{-Alg}(E_\infty\text{-coAlg})$
defined similarly to this Section~\ref{S:Bar} and that further there
is a natural  $E_n$-algebra map $coBar^{(n)}(C_{\ast}(Y)) \to
C_{\ast}(\Omega^n(Y))$ in $E_\infty$-coAlg which is an equivalence
when $Y$ is $n$-connected. We leave the many details to be filled in
to the interested reader.
\end{rem}

\subsection{Iterated Bar constructions of augmented $E_n$-algebras} \label{SS:BarforEn}
In this section we explain how to generalize the iterated Bar construction for $E_\infty$-algebras in \S~\ref{S:Bar}
 to $E_n$-algebras. In particular we describe the $E_n$-coalgebra structure of the $n$-times iterated Bar construction.
  Our definition and study of the Bar construction follows the ones given by Francis~\cite{F} and Lurie~\cite{L-HA}.

\smallskip

\subsubsection{Definition of the iterated Bar construction for $E_n$-algebras}\label{SS:DefBarIteratedforEn}
In this section we assume $A$ is an augmented $E_n$-algebra and we
denote $\epsilon: A\to k$ the augmentation (which is a map of
$E_n$-algebras). In particular, we endow $k$ with its structure of
$A$-$E_n$-module given by the augmentation. We denote
\emph{$E_n\text{-Alg}^{aug}$ the $(\infty,1)$-category of augmented
$E_n$-algebras}.

 For an augmented $E_n$-algebra, Definition~\eqref{eq:NBar} and Lemma~\ref{L:Bar=HochI} suggest to define
\begin{equation} \label{eq:DefnBarEm} Bar(A) := \int_{D^1 \times \mathbb{R}^{n-1}}A
\mathop{\otimes}^{\mathbb{L}}_{\int_{S^0\times \mathbb{R}^{n-1}} A} k\end{equation}
where $k\cong \int_{I\times \mathbb{R}^{n-1}} k$ is endowed with its natural structure of $A$-$E_1$-module.
 This definition agrees with the usual one:
\begin{lem}[Francis~\cite{F}] \label{L:Bar=BarstdEm}
 There is a natural equivalence (in $\hkmod$)
\begin{equation} \label{eq:Bar=HochIforEn} Bar(A)
\;\cong \; Bar^{std}(A) \; \cong \; k\otimes_{A}^{\mathbb{L}} k\end{equation} where $Bar^{std}(A)$ is
the
standard Bar construction as in \S~\ref{S:Bar}.
\end{lem}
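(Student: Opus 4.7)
My plan is to prove this by applying Dunn's Theorem (Theorem~\ref{T:Dunn}) via pushforward along the canonical projection $\pi:I\times \mathbb{R}^{n-1}\to I$, thereby reducing the computation to the $E_1$-case, where the identification of the bar construction with $k\otimes_A^{\mathbb{L}} k$ is standard.

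First, I would unfold what the pushforward $\pi_*\mathcal{A}$ of the factorization algebra $\mathcal{A}$ associated to $A$ looks like on $I$. Since the fiber of $\pi$ over any disk $U\subset I$ is $U\times\mathbb{R}^{n-1}$, and $\mathbb{R}^{n-1}$ is contractible, the axioms of factorization homology (Theorem~\ref{T:derivedfunctor} applied to the underlying $E_\infty$-case and more generally \cite{L-HA, F} for $E_n$) combined with Theorem~\ref{T:Dunn} yield
$$\pi_*\mathcal{A}(U)\;\cong\;\int_{U\times \mathbb{R}^{n-1}}A\;\simeq\;\int_{U}A_1,$$
where $A_1$ is $A$ viewed as an $E_1$-algebra along the $I$-direction of the chosen framing of $\mathbb{R}^n\cong I\times \mathbb{R}^{n-1}$. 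Concretely, by Dunn's Theorem, the $E_n$-structure on $A$ is recovered as an $E_1$-structure (in the direction of $I$) in the $(\infty,1)$-category of $E_{n-1}$-algebras, so $\pi_*\mathcal{A}$ is precisely the locally constant factorization algebra on $I$ corresponding to $A_1$.

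Second, I would apply this to compute the two factorization homologies appearing in formula~\eqref{eq:DefnBarEm}. Since $I\times \mathbb{R}^{n-1}\simeq\mathbb{R}^n$ and factorization homology is monoidal under disjoint unions (see~\cite[Theorem 5.3.3.1]{L-HA}),
$$\int_{I\times\mathbb{R}^{n-1}}A\;\simeq\;A,\qquad \int_{S^0\times\mathbb{R}^{n-1}}A\;\simeq\;\int_{\mathbb{R}^{n-1}}A\;\otimes\;\int_{\mathbb{R}^{n-1}}A\;\simeq\;A\otimes A^{op},$$
where the opposite algebra appears on one factor because the two components of $S^0$ sit on opposite sides of the interval $I$, so the corresponding $E_1$-actions come from opposite orientations (compare Example~\ref{R:OppositeAlgebra}). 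Moreover, the structure map $\int_{S^0\times\mathbb{R}^{n-1}}A\to\int_{I\times\mathbb{R}^{n-1}}A$ is the natural inclusion of two disjoint copies of $\mathbb{R}^{n-1}$ into $I\times\mathbb{R}^{n-1}$, which under the above identifications is precisely the bimodule action $A\otimes A^{op}\to A$ of $A_1$ on itself.

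Third, under the augmentation $\epsilon:A\to k$, the object $k$ is viewed as a left $A\otimes A^{op}$-module via $\epsilon\otimes\epsilon$. The relative tensor product therefore becomes
$$Bar(A)\;=\;\int_{I\times\mathbb{R}^{n-1}}A\mathop{\otimes}_{\int_{S^0\times\mathbb{R}^{n-1}}A}^{\mathbb{L}}k\;\simeq\;A\mathop{\otimes}_{A\otimes A^{op}}^{\mathbb{L}}k\;\simeq\;k\mathop{\otimes}_{A}^{\mathbb{L}}k,$$
where the last equivalence is the standard base change identification for Hochschild homology with coefficients in $k$ (computed via the two-sided bar resolution of $A$ over $A\otimes A^{op}$, or equivalently, of $k$ as a right $A$-module). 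Finally, the identification $k\otimes_A^{\mathbb{L}}k\simeq Bar^{std}(A)$ with the classical two-sided bar complex is the content of Lemma~\ref{L:Bar=HochI} applied to the underlying $E_1$-algebra $A_1$ (after replacing $A$ by a quasi-isomorphic strict DGA).

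The main obstacle will be a careful treatment of the orientation issue in Step~2, namely justifying that the two boundary components of $I\times\mathbb{R}^{n-1}$ contribute $A$ and $A^{op}$ respectively, rather than two copies of $A$ with the same $E_1$-structure, and that the resulting bimodule structure on $A$ coincides with the standard one coming from the $E_1$-algebra structure $A_1$. This is essentially the content of the framing conventions used in Proposition~\ref{P:EnMod} and its proof, and it is needed to identify the relative tensor product as the two-sided bar construction rather than some other coend.
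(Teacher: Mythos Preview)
Your proposal is correct. The paper does not supply its own proof of this lemma; it simply attributes the result to Francis~\cite{F}. Your argument --- identifying $\int_{I\times\mathbb{R}^{n-1}}A\simeq A$ and $\int_{S^0\times\mathbb{R}^{n-1}}A\simeq A\otimes A^{op}$, and then recognizing the relative tensor product as $k\otimes_A^{\mathbb{L}}k$ --- is essentially the standard one.

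Two minor remarks. First, the appeal to Dunn's Theorem in your opening step is more than is strictly needed: the only facts you use are that factorization homology over a disk returns the algebra, that it is monoidal under disjoint union, and that the two collar neighborhoods of $\partial I$ sit on opposite sides and hence contribute $A$ and $A^{op}$ to the $E_1$-structure in the $I$-direction. Second, the equivalence $A\otimes_{A\otimes A^{op}}^{\mathbb{L}}k\simeq k\otimes_A^{\mathbb{L}}k$ you invoke in Step~3 can be made completely explicit: resolving $A$ by the bar resolution over $A^e$ and tensoring with the trivial bimodule $k$ yields precisely the two-sided bar complex $B(k,A,k)$, term by term and with the same differential. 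This disposes of any concern about the ``standard base change'' step and also makes the final appeal to Lemma~\ref{L:Bar=HochI} unnecessary, since $B(k,A,k)$ \emph{is} $Bar^{std}(A)$.

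Your flagged obstacle about orientations is real but routine: it is exactly the content of the framing convention in Proposition~\ref{P:EnMod} and Remark~\ref{R:A=intDA}, where the left $\int_{S^{n-1}}A$-module structure on $\int_{D^n}A$ comes from embedding an annulus \emph{outside} a smaller disk, so the two components of $S^0$ contribute opposite $E_1$-actions.
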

When $X$ be a manifold of dimension $d$ equipped with a framing of
$X\times \mathbb{R}^k$, then for any $E_{d+k}$-algebra $B$,
$\int_{X\times \mathbb{R}^k} B$ is canonically an $E_k$-algebra,
see~\cite{L-HA, F} for details. Note that this follows from
Theorem~\ref{T:Theorem6GTZ2} and the fact that  factorization
algebras on $X\times \R^{k}$ are the same as factorization algebras
on $X$ with values in $E_{d}\text{-Alg}$ (see
Theorem~\ref{P:En=Fact} or~\cite{GTZ2}).   Applying this observation
to $X=I$ or $X=S^0$ we get the following result which is also proved
in~\cite{F, L-HA}.

\begin{prop}\label{P:BarEkisEkminus1} The Bar construction~\eqref{eq:Bar=HochIforEn} for augmented $E_m$-algebras
 ($m\geq 1$)
has a canonical lift  $$Bar: E_m\text{-Alg}^{aug} \to
E_{m-1}\text{-Alg}^{aug}$$ which coincides for $E_\infty$-algebras
with the one given in \S~\ref{S:Bar} and further sits into a
commutative diagram
\begin{equation*}
 \xymatrix{E_1\text{-Alg} \ar[d]^{Bar} & E_2\text{-Alg}  \ar[d]^{Bar}\ar[l]& \cdots \ar[l]&
 E_m\text{-Alg}\ar[l] \ar[d]^{Bar}  & \cdots \ar[l]& E_\infty\text{-Alg} \ar[l]  \ar[d]^{Bar} \\
 \hkmod  & E_1\text{-Alg} \ar[l]& \cdots\ar[l] & E_{m-1}\text{-Alg} \ar[l] & \cdots \ar[l]& E_\infty\text{-Alg} \ar[l]}
\end{equation*}
where the horizontal arrows are the canonical forget functors induced by the tower of maps
of operads~\eqref{eq:towerofEnoperad}.
\end{prop}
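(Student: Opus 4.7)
The plan is to reduce the statement to the standard $E_1$ Bar construction inside the symmetric monoidal $(\infty,1)$-category $E_{m-1}\text{-}Alg$, via Dunn's theorem. Concretely, by Theorem~\ref{T:Dunn}, an augmented $E_m$-algebra $A$ is the same as an augmented $E_1$-algebra object in $E_{m-1}\text{-}Alg$; the equivalence is realized by the pushforward $\pi_{1,*}$ of the associated factorization algebra $\mathcal{A}$ along the first coordinate projection $\pi_1:\mathbb{R}\times\mathbb{R}^{m-1}\to\mathbb{R}$. The augmentation $\epsilon:A\to k$ transports through Dunn's equivalence to an augmentation of this $E_1$-algebra-in-$E_{m-1}\text{-}Alg$.

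Next, I would define $Bar(A)$ as the Bar construction of this augmented $E_1$-algebra object, formed \emph{inside} $E_{m-1}\text{-}Alg$. That is, I compute the derived relative tensor product $k\otimes^{\mathbb{L}}_{A}k$ where both tensor factors and the relative tensor are interpreted in the presentable symmetric monoidal $(\infty,1)$-category $E_{m-1}\text{-}Alg$. The resulting object is, by construction, an augmented $E_{m-1}$-algebra and yields the desired functor $Bar: E_m\text{-}Alg^{aug}\to E_{m-1}\text{-}Alg^{aug}$. To match this with formula~\eqref{eq:DefnBarEm}, I would identify $\int_{I\times\mathbb{R}^{m-1}} A$ and $\int_{S^0\times\mathbb{R}^{m-1}} A$ with the corresponding evaluations of $\pi_{1,*}(\mathcal{A})$: since $I$ is contractible, $\int_{I\times\mathbb{R}^{m-1}}A\simeq A$ as an $E_{m-1}$-algebra (the underlying algebra of $\pi_{1,*}\mathcal{A}$), and $\int_{S^0\times\mathbb{R}^{m-1}}A\simeq A\otimes A$ as $E_{m-1}$-algebras (two disjoint disks). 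The inclusion $S^0\times\mathbb{R}^{m-1}\hookrightarrow I\times\mathbb{R}^{m-1}$ induces the multiplication $A\otimes A\to A$ in $E_{m-1}\text{-}Alg$, so~\eqref{eq:DefnBarEm} literally computes the two-sided bar complex in $E_{m-1}\text{-}Alg$.

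For the comparison with \S~\ref{S:Bar} when $m=\infty$, I would invoke Lemma~\ref{L:Bar=HochI} together with Corollary~\ref{C:cdgaE}: in the $E_\infty$-context, the excision axiom of higher Hochschild chains identifies~\eqref{eq:NBar} with the formula~\eqref{eq:DefnBarEm} obtained as a colimit of factorization homology pieces (using that $I\times\mathbb{R}^{\infty-1}$ and $S^0\times\mathbb{R}^{\infty-1}$ stabilize to $I$ and $S^0$ respectively under the natural tower). The commutativity of the diagram then follows from the naturality of Dunn's theorem with respect to the tower of operad maps~\eqref{eq:towerofEnoperad}: the forgetful functor $E_m\text{-}Alg\to E_{m-1}\text{-}Alg$ corresponds to performing an additional pushforward inside the $(m-1)$-direction, and associativity of pushforwards shows that $Bar$ commutes with this restriction of structure.

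The main obstacle will be the bookkeeping needed to verify that the relative tensor product in~\eqref{eq:DefnBarEm}, which a priori is only an object of $\hkmod$, carries the claimed $E_{m-1}$-algebra structure and that this structure is canonically the one coming from Dunn's equivalence; this is essentially an application of Francis's pushforward theorem (Theorem~\ref{T:Dunn} and its stratified variants in~\cite{F, AFT}), allowing one to compute factorization homology over products $X\times\mathbb{R}^{k}$ by first integrating over $\mathbb{R}^{k}$ with coefficients in an $E_k$-algebra. Once this identification is in place, the remaining compatibilities (commutativity of the diagram, agreement with the $E_\infty$-definition) are formal consequences of the naturality of factorization homology under framed embeddings and of the universal property of the Bar construction in a symmetric monoidal $(\infty,1)$-category.
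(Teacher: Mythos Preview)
Your proposal is correct and follows essentially the same route as the paper: both arguments use that $\int_{I\times\mathbb{R}^{m-1}}A$, $\int_{S^0\times\mathbb{R}^{m-1}}A$ and $k$ are locally constant factorization algebras on $\mathbb{R}^{m-1}$ (hence $E_{m-1}$-algebras), interpret the relative tensor product~\eqref{eq:DefnBarEm} in $E_{m-1}\text{-}Alg$, and deduce the commutativity of the diagram from the pushforward description of the forgetful functor $E_m\text{-}Alg\to E_{m-1}\text{-}Alg$. The paper invokes Theorem~\ref{T:CH=TCH} directly for the $E_\infty$ comparison rather than your stabilization phrasing, but this is the same content.
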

\begin{proof}
 By  Theorem~\ref{T:Theorem6GTZ2} (and the above observation which is a special case of  of the Fubini formula for
  factorization homology~\cite[Corollary 17]{GTZ2} ), we have that $\int_{S^0\times \mathbb{R}^{m-1}} A $,
 $\int_{I\times \mathbb{R}^{m-1}} A$ and $k \cong \int_{I\times \mathbb{R}^{m-1}} A$ are (global sections of)
  locally constant factorizations algebras
 over $\mathbb{R}^{m-1}$. Here, we see $S^0$ as being the boundary of
 the closed interval $I=[-1,1]$ which is framed (we choose the framing so that the induced orientation is the natural
 one); in particular $S^0\cong \{-1,1\}$ inherits a framing as well (note that the two points in $S^0$ get opposite
 orientation this way).
 In particular  $S^0\times \mathbb{R}^{m-1}$ is equipped with the product
 framing.
Since $S^0\times \mathbb{R}^{m-1}$ is a framed $m-1$ dimensional manifold and $A$ an $E_m$-algebra,  its factorization
homology with value in $A$ is the one of   the product of framed manifolds $S^0\times \mathbb{R}^{m-1} \times \mathbb{R}$.
Hence, we have that
   $\int_{S^0\times \mathbb{R}^{m-1}} A $ is in fact an $E_m$-algebra, that is a locally constant factorization algebra
    over
  $ \mathbb{R}^{m-1}\times \mathbb{R}$.

  In particular, using Theorem~\ref{T:Dunn},  $\int_{S^0\times \mathbb{R}^{m-1}} A
  $ is naturally an $E_1$-algebra in the symmetric monoidal category of $E_{m-1}$-algebras,
  \emph{i.e.}, an $E_1$-algebra in the category of
  locally constant factorizations algebras over $\mathbb{R}^{m-1}$.

  Similarly  $\int_{I\times \mathbb{R}^{m-1}} A$ is a left module over $\int_{S^0\times \mathbb{R}^{m-1}} A $
 in the symmetric monoidal category  locally constant factorizations algebras over $\mathbb{R}^{m-1}$
 (or equivalently of $E_{m-1}$-algebras).  In other words,
  it belongs to
  $\big(\int_{S^0\times \mathbb{R}^{m-1}} A\big)\text{-}Mod^{E_1}\big(\text{Fac}^{lc}_{\mathbb{R}^{m-1}}\big)$
  which is equivalent to  $\big(\int_{S^0\times \mathbb{R}^{m-1}} A\big)\text{-}Mod^{E_1}\big(E_{m-1}\text{-Alg}\big)$.
  Since the same holds for $k$, we obtain that the Bar construction is an object in $\text{Fac}^{lc}_{\mathbb{R}^{m-1}}$,
  hence
 inherits a structure of $E_{m-1}$-algebra.

 Further,   the augmentation
 $\epsilon: A\to k$ induces a maps  $\int_{I\times \mathbb{R}^{m-1}} \epsilon: \int_{I\times \mathbb{R}^{m-1}} A\to k$
 which is a map of locally constant factorization algebras on $\mathbb{R}^{m-1}$ hence of $E_{m-1}$-algebras.
 Similarly $\int_{S^0\times \mathbb{R}^{m-1}} A \to k$ is a map of $E_{m}$-algebras; hence $\epsilon$
 induces an augmentation $Bar(A)\to k$ in $E_{m-1}\text{-Alg}$. The equivalence of the two definitions for
  $E_\infty$-algebras is an immediate consequence of Theorem~\ref{T:CH=TCH} or~\cite[Theorem 5]{GTZ2}.

  The commutativity of the diagram follows from the fact that $E_m\text{-Alg} \to E_{m-1}\text{-Alg}$ is induced
   by the map of $\infty$-operad $\mathbb{E}_{m-1}^{\otimes} \to  \mathbb{E}_{m}^{\otimes}$ induced
   by taking the product of $m-1$-dimensional disks with the interval $\mathbb{R}$, \emph{i.e},
   it is induced by the pushforward of factorization algebras along the projection
    $\mathbb{R}^{m-1}\times \mathbb{R}\to \mathbb{R}^{m-1}$.
 \end{proof}
 By Proposition~\ref{P:BarEkisEkminus1}, we can iterate (up to $m$-times) the Bar constructions of an $E_m$-algebra.
 \begin{definition}\label{D:BariteratedforEm} Let $0\leq n\leq m$.
The $n^{th}$-iterated bar construction of an augmented $E_m$-algebra $A$ is the $E_{m-n}$-algebra
 (given by Proposition~\ref{P:BarEkisEkminus1}) $$Bar^{(n)}(A) := Bar(\cdots (Bar(A))\cdots)$$
which is the value on $A$ of the ($n$-fold iterated Bar) functor:
$Bar^{\circ n}: E_m\text{-Alg}^{aug} \longrightarrow
E_{m-n}\text{-Alg}^{aug}$.
\end{definition}
Proposition~\ref{P:BarEkisEkminus1} implies that Definition~\ref{D:BariteratedforEm}
agrees with Definition~\ref{D:Bar} for $E_\infty$-algebras.
\begin{rem}
The iterated Bar construction given in Definition~\ref{D:BariteratedforEm} should be closely related to the one
(obtained at the level of model categories) by Fresse~\cite{FresseBarEn}.
\end{rem}

 The following result, due to Francis~\cite[Lemma 2.44]{F}, identifies the iterated Bar construction
  in terms of factorization homology
 \begin{lem}[Francis]\label{L:iteratedBarforEnisCH} Let $A$ be an $E_m$-algebra and $0\leq n\leq m$.
 There is a natural equivalence of $E_{m-n}$-algebras
 $$Bar^{(n)}(A) \cong
 \int_{D^m\times \mathbb{R}^{m-n}} A\mathop{\otimes}^{\mathbb{L}}_{\int_{S^{m-1}
 \times\mathbb{R}^{m-n+1}} A} k$$
 \end{lem}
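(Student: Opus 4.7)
The plan is to proceed by induction on $n$. The base case $n=0$ is tautological: $Bar^{(0)}(A)=A$, and the right hand side reads $\int_{\R^m} A \otimes^{\mathbb{L}}_{k} k \cong A$ since $S^{-1}=\emptyset$ forces $\int_{\emptyset} A = k$ (the monoidal unit). The case $n=1$ recovers the definition of $Bar$ from \eqref{eq:DefnBarEm} and Lemma~\ref{L:Bar=BarstdEm} directly (under the convention that $S^{0}\times\R^{m-1}$ in \eqref{eq:DefnBarEm} is understood as the $m$-dimensional open tubular neighborhood $S^0\times\R\times\R^{m-1}$ of $\partial I\times\R^{m-1}$ in $I\times\R^{m-1}$, over which the $E_m$-algebra $A$ has well defined factorization homology).

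For the inductive step, assume the formula for $n-1$, so that $Bar^{(n-1)}(A)$ is the augmented $E_{m-n+1}$-algebra $\int_{I^{n-1}\times\R^{m-n+1}} A \otimes^{\mathbb{L}}_{\int_{S^{n-2}\times\R^{m-n+2}} A} k$. By Proposition~\ref{P:BarEkisEkminus1}, $Bar^{(n)}(A) = Bar(Bar^{(n-1)}(A))$ unfolds to
\[
\int_{I\times\R^{m-n}} Bar^{(n-1)}(A) \;\otimes^{\mathbb{L}}_{\int_{S^0\times\R^{m-n+1}} Bar^{(n-1)}(A)}\; k.
\]
Because factorization homology is defined as a left Kan extension (Definition~\ref{D:FactHomology}) it preserves colimits in the coefficient algebra; in particular it commutes with the relative tensor product defining $Bar^{(n-1)}(A)$. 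Combined with the Fubini/pushforward identity $\int_{X\times Y}A \cong \int_X \int_Y A$ (which follows iteratively from Dunn's Theorem~\ref{T:Dunn} and the definition of factorization homology as a pushforward of factorization algebras), this rewrites each of the four nested factorization homology expressions as a single factorization homology of $A$ over a product manifold whose factors are among $I$, $S^{k}$ and $\R$.

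The key geometric input to recombine these four pieces is the homotopy pushout decomposition of the boundary sphere
\[
S^{n-1} \;\cong\; \partial\bigl(I\times I^{n-1}\bigr) \;\cong\; \bigl(S^0\times I^{n-1}\bigr) \;\cup^{h}_{S^0\times S^{n-2}}\; \bigl(I\times S^{n-2}\bigr),
\]
which expresses $\partial I^n$ as the top-and-bottom faces glued to the side faces along the corner edges. The excision property for factorization homology (the factorization algebra analogue of the pushout axiom in Definition~\ref{D:axioms}) turns this pushout, after crossing with $\R^{m-n+1}$, into the identification
\[
\int_{S^{n-1}\times\R^{m-n+1}} \hspace{-1.5pc} A \;\cong\; \int_{S^0\times I^{n-1}\times\R^{m-n+1}}\hspace{-2.5pc} A\;\otimes^{\mathbb{L}}_{\int_{S^0\times S^{n-2}\times\R^{m-n+2}} A}\; \int_{I\times S^{n-2}\times\R^{m-n+1}}\hspace{-2.5pc} A.
\]
Together with the identity $I\times I^{n-1} = I^n$ (so that the nested integrals collapse to $\int_{I^n\times\R^{m-n}}A$), and the fact that every intermediate quotient $\int_{S^{n-2}\times\R^{m-n+2}} A\;\otimes^{\mathbb{L}}\,k$ arising from the induction is collapsed through the augmentation $\epsilon:A\to k$, a diagram chase of the iterated relative tensor products yields the claimed equivalence.

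The main obstacle is the careful bookkeeping of the iterated relative tensor products that appear after unfolding the inductive step: one obtains a quadruple relative tensor involving factorization homology of $A$ over the four manifolds $I^n\times\R^{m-n}$, $S^0\times I^{n-1}\times\R^{m-n+1}$, $I\times S^{n-2}\times\R^{m-n+1}$ and $S^0\times S^{n-2}\times\R^{m-n+2}$, which must be reorganized via repeated use of associativity of the relative tensor product and cofinality of cube-shaped pushout diagrams (available since each $E_{m-n}\text{-}Alg$ is presentable and symmetric monoidal) to produce the single relative tensor on the right hand side. This bookkeeping is substantially eased by the fact that $k$ plays the role of a commutative unit, so that several intermediate tensor factors collapse to $k$ via repeated application of excision.
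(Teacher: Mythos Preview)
Your proposal is correct and follows essentially the same approach as the paper's own (brief) argument: the paper cites Francis and then sketches exactly the alternative you carry out, namely an induction modeled on Lemma~\ref{L:nBarisCHIn} with excision for factorization homology replacing the excision axiom for higher Hochschild chains and the Fubini theorem for factorization homology replacing Corollary~\ref{C:properties}(4). Your boundary decomposition $\partial(I\times I^{n-1})\cong (S^0\times I^{n-1})\cup^h_{S^0\times S^{n-2}}(I\times S^{n-2})$ is precisely the geometric input that drives the induction in both arguments, and your remark that the remaining work is bookkeeping of iterated relative tensor products is accurate and matches the level of detail the paper provides.
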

\begin{proof}
 This is essentially Lemma 2.44 together with Corollary 3.32 in~\cite{F}.
  Alternatively, on can use a proof similar to the one of Proposition~\ref{P:nBarisCHIn}
  replacing $CH_{I^n}(A)$ with the $E_{m-n}$-algebra $\int_{I^m\times \mathbb{R}^{m-n}} A$
  using excision for factorization homology (see~\cite{F, AFT, GTZ2}),
  and the Fubini theorem for factorization homology~\cite[Corollary 17]{GTZ2}
  instead of Corollary~\ref{C:properties}.(4).
\end{proof}

\medskip

\subsubsection{Factorization algebra models for iterated Bar
construction}\label{SS:BarasFactAlg}
 In this section, we show that the iterated
Bar construction can be   computed as factorization homology of a
stratified factorization algebra on the closed $n$-disk and the
$n$-sphere as well.

\smallskip

Identify $I^n=[-1,1]^n$ with the closed unit disk in $\mathbb{R}^n$
and let $D^n =I^n\setminus \partial I_n$ be its interior.  We
consider $I^n$ as a stratified space with two strata, one  of which
is its boundary $\partial I^n$ (of codimension 1) and the remaining
open strata being $D^n$. A factorization algebra $\mathcal{F}$ on
the stratified disk is thus locally constant if for any inclusion of
disks
 $U\subset V\subset D^n$ and for any inclusion of half-disks $U\subset V$ where $V\not\subset D^n$,
 the structure map $\mathcal{F}(U)\to \mathcal{F}(V)$ is a quasi-isomorphism.

Let $\epsilon :A\to k$ be an augmented $E_n$-algebra which we may assume to be given by a map
$\epsilon: \mathcal{A}\to {k}$ of factorization algebras.
 By~\cite[Proposition 30 and Proposition 31]{G-Houches}\footnote{the reader shall be aware that the notation
  $D^n$ in \cite{G-Houches} is what we denote $I^n$ in the present paper},
  any map of $E_n$-algebra $f:A\to B$  defines a locally constant stratified  factorization algebra on
  $I^n$. Indeed, by \emph{loc. cit.} we have a faithful functor
  \begin{equation}\label{eq:DefUpsilon}\Upsilon: {Hom}_{E_n\text{-Alg}} \longrightarrow
  \text{Fac}^{lc}_{I^n}
\end{equation}
between the $(\infty,1)$-categories of $E_n$-algebras morphisms and
stratified locally constant factorization algebras on $I^n$.
\begin{definition}\label{D:BarasFactAlg} We let  $(A,k)$  be the locally constant stratified factorization algebra
on $I^n$ defined by the  augmentation $\epsilon: \mathcal{A}\to {k}$
that is $(A,k)=\Upsilon (\epsilon)$.
\end{definition}
Since $\Upsilon$ is a functor, then $(A,k)$ is functorial with
respect to maps of augmented $E_n$-algebras. More precisely, we have
the faithful functor
\begin{equation}\label{eq:DefUpsilontilde}
\tilde{\Upsilon}: E_n\text{-Alg}^{aug}\longrightarrow
{Hom}_{E_n\text{-Alg}} \longrightarrow
  \text{Fac}^{lc}_{I^n}.
\end{equation}
\begin{rem}\label{R:BarasFactAlgebraHomology}The factorization algebra  $(A,k)$  is explicitly described as follows.
 Let $\mathcal{U}_{I^n}$ be the (factorizing) basis of opens consisting of all  open subset $U\subset D^n\subset I^n$,
 and all open half-disk $D$; recall that
  we call a half-disk of $I^n$ an open $D\subset I^n$ such that is there is an homeomorphism
  $\theta:D\stackrel{\cong}\to \tilde{D}\times [0,1)$ with
  $D\cap \partial I^n=\theta^{-1}(\tilde{D}\times \{0\})$.
\begin{lem}\label{L:StructureofAk}
The factorization algebra $(A,k)$ of Definition~\ref{D:BarasFactAlg}
satisfies that:
\begin{enumerate}
\item for any $U\subset D^n\subset I^n$, one has $(A,k)(U)=\mathcal{A}(U)$;
\item for any half-disk $D\in \mathcal{U}_{I^n}$, one has $(A,k)(D)=
k(D)=k$.
\item The restriction to $\mathcal{U}_{I^n}$ of the structure maps $\rho_{U_1,\dots, U_{r}, V}:
\bigotimes_{i=1}^{r} (A,k)(U_i)\to (A,k)(V)$  is given as
follows: if $U_1,\dots, U_r\in \mathcal{U}_{I^n}$ are pairwise
disjoint open  lying in $V\in \mathcal{U}_{I^n}$, then
\begin{itemize}
\item if $V\subset D^n$, one has $$\rho_{U_1,\dots, U_{r}, V}=
\bigotimes_{i=1}^{r} (A,k)(U_i) \cong \bigotimes_{i=1}^{r}
\mathcal{A}(U_i)\stackrel{\rho^{\mathcal{A}}_{U_1,\dots, U_{r},
V}}\longrightarrow (A,k)(V)$$  where the last map is the structure
map of the factorization algebra $\mathcal{A}$;
\item if $V$ is a half-disk, $U_1,\dots, U_i \in D^n$ ($0\leq i\leq r$) and $U_{i+1},\dots, U_{r}$ are
half-disks, one has
\begin{multline*} \rho_{U_1,\dots, U_{r},
V}=\bigotimes_{j=1}^{r} (A,k)(U_j)\cong
\Big(\bigotimes_{j=1}^i\mathcal{A}(U_j)\Big) \otimes
\Big(\bigotimes_{j=i+1}^{r} k(U_j)\Big)\\
\stackrel{(\bigotimes_{j=1}^i\epsilon)\otimes id}\longrightarrow
\bigotimes_{j=1}^{r} k(U_i)\stackrel{\rho^{k}_{U_1,\dots, U_{r}, V}}
\longrightarrow k
\end{multline*} where the last map is the structure
maps of the factorization algebra associated to $k$
(Example~\ref{E:kasFact}).
\end{itemize}
\end{enumerate}
Further, any $\mathcal{U}_{I^n}$-prefactorization algebra (in
particular any factorization algebra on on $I^n$)  satisfying (1),
(2) and (3) above is a $\mathcal{U}_{I^n}$-factorization algebra and
equivalent to $(A,k)$.
\end{lem}
\begin{proof}
Since $\mathcal{U}_{I^n}$ is a factorizing, stable by finite
intersection, basis of opens, the uniqueness in the last claim
follows from Proposition~\ref{P:extensionfrombasis}. Since $(A,k)$
is a factorization algebra, then it is a
$\mathcal{U}_{I^n}$-factorization algebra and the last claim now
follows from the first one.

The restriction to $D^n=I^n\setminus \partial I^n$ of $(A,k)$ is
just the factorization algebra $\mathcal{A}$ by~\cite[Proposition
30]{G-Houches}. Now,  by~\cite[Example 37 6]{G-Houches}, the
structure maps when $V$ is a half-disk is given by the left module
structure of $\int_{(\partial I^n)\times (-\infty,0]} k$ over
$int_{(\partial I^n)\times (-\infty,0]}A$. The formula for the
structure maps now follows from~\cite[Corollary 6]{G-Houches} and
the factorization algebra formula for left modules as
in~\cite[\S~6.1]{G-Houches}.
\end{proof}
\end{rem}
The ground ring $k$ is trivially augmented and the augmentation
$\epsilon:A\to k$ is a map of augmented algebras. Applying the
functor $\tilde{\Upsilon}$ defined by the
composition~\eqref{eq:DefUpsilontilde} above, we get the following
map of factorization algebras:
\begin{equation}
\tilde{\Upsilon}(\epsilon): (A,k) \longrightarrow (k,k).
\end{equation}
An immediate consequence of Lemma~\ref{L:BarasFactIn} and
Example~\ref{E:kasFact} is that$(k,k)=k$. We thus obtain
\begin{lem}\label{L:Akisaugmented} Let $\epsilon:A\to k$ be an
augmented $E_n$-algebra. \begin{itemize} \item The factorization
algebra $(A,k)$ has a canonical augmentation
$(A,k) \stackrel{\tilde{\Upsilon}(\epsilon):}
\longrightarrow (k,k)=k$.
\item If $D\in\mathcal{U}_{I^n}$, then $\underline{\epsilon}(D): (A,k)(D) \to
k(D)=k$ is equal to $\epsilon(D)(D):\mathcal{A}(D)\to k$ if $D\subset D^n$ and to the identity
otherwise.
\end{itemize}\end{lem}
\begin{proof}
The first claim was established right before the lemma. The second claim is a direct derivation of the construction of  the functor~\eqref{eq:DefUpsilon}: $\Upsilon: {Hom}_{E_n\text{-Alg}} \longrightarrow
  \text{Fac}^{lc}_{I^n}$ in \cite{G-Houches} and Lemma~\ref{L:BarasFactIn}.
\end{proof}
\medskip

Assume now that $\epsilon: A\to k$ is an augmented $E_m$-algebra,
with $m\geq n$ and let again $\epsilon: \mathcal{A}\to k$ be a map
of locally constant factorization algebras representing it. By
Theorem~\ref{P:En=Fact}, then $\epsilon:\mathcal{A}\to k$ can be
seen as a map in
$E_{n}\text{-Alg}(\text{Fac}^{lc}_{\mathbb{R}^{m-n}})\cong
E_{n}\text{-Alg}(E_{m-n}\text{-Alg})$. From
Definition~\ref{D:BarasFactAlg}, the factorization algebra $(A,k)$
then belongs to $\text{Fac}^{lc}_{I^n}(E_{m-n}\txt{-Alg})$.
\begin{lem}\label{L:BarasFactIn} Let $\epsilon:A\to k$ be an augmented $E_m$-algebra and
$(A,k)\in \text{Fac}^{lc}_{I^n}(E_{m-n}\text{-Alg})$ be the
associated factorization algebra (Definition~\ref{D:BarasFactAlg}).
The factorization homology of $(A,k)$ is equivalent (naturally with
respect to maps of augmented $E_m$-algebras) in $E_{m-n}\text{-Alg}$
to the iterated bar construction of $A$:
$$ p_*(A,k) \stackrel{\simeq}\longleftarrow \int_{I^{n}\times \mathbb{R}^{m-n}} (A,k) \cong Bar^{(n)}(A).
$$
\end{lem}
\begin{proof}
A similar result can be found in~\cite{F, AFT}. Let $q: I^n \to [0,
1]$ be the supremum norm map: $(x_1,\dots,x_n)\mapsto \max(|x_i|)$.
We thus have the factorization algebra $q_*(A,k)\in
\text{Fac}_{[0,1]}(E_{m-n}\text{-Alg}) \cong
\text{Fac}_{[0,1]}(\text{Fac}^{lc}_{\mathbb{R}^{m-n}})$. By
\cite[\S~6.1]{G-Houches}, $q_*(A,k)$ is a stratified locally
constant. Here, we see $[0,1]$ as being stratified with two
$0$-dimensional strata given by the points $\{0\}$ and $\{1\}$. The algebra
corresponding to the open dimension 1 stratum (as in
\cite[Proposition 26]{G-Houches}) is $\int_{(\partial I^n)\times
(-1,1)\times \mathbb{R}^{m-n}} A$, while the right module
corresponding to the stratum $\{0\}$ is $A\cong \int_{D^n \times
\mathbb{R}^{m-n}}$ and the left module corresponding to $\{1\}$ is
$k=\int_{(\partial I^n)\times \mathbb{R}^{m-n}} k$ (by
Example~\ref{E:kasFact}).

 By definition, the factorization homology of $(A,k)$
is the same as the factorization homology of $q_*(A,k)\in
\text{Fac}^{lc}_{[0,1]}(E_{m-n}\text{-Alg})$: \begin{eqnarray*}
p_*(A,k) \cong \int_{I^n\times \mathbb{R}^{m-n}} (A,k) &\cong&
\int_{[0,1]} q_*(A,k) \\ &\cong& \int_{D^m\times \mathbb{R}^{m-n}}
A\mathop{\otimes}^{\mathbb{L}}_{\int_{S^{m-1}
 \times\mathbb{R}^{m-n+1}} A} k \end{eqnarray*}
 where the last line comes from~\cite[Proposition 26]{G-Houches}.
 All the equivalences are further natural with respect to augmented
 $E_m$-algebras maps since $p_*$, $q_*$ are functors and by
 \emph{loc. cit.}.
 The result now
 follows from Lemma~\ref{L:iteratedBarforEnisCH}.
\end{proof}

\medskip

We now derive another factorization algebra model for the Bar
construction. Let $\widehat{D^n}=S^n$ be the one point
compactification of $D^n$ and let $\kappa:I^n \to S^n =
\widehat{D^n}$ be the canonical projection collapsing the boundary
$\partial I^n$ to a point. We endow $\widehat{D^n}=S^n$ with the
stratification with one dimension $0$ stratum given by the point at
infinity and one dimension $n$ stratum. This way,  $\kappa:I^n \to
S^n = \widehat{D^n}$ is a map of stratified spaces (that is maps
strata onto strata).

\begin{definition}\label{D:BarasFactAlgonSn}
We let $\widehat{\mathcal{A}}\in \text{Fac}_{S^n}$ be the
factorization algebra $\kappa_*((A,k))$ obtained by pushforward
along $\kappa:I^n \to S^n = \widehat{D^n}$  of the factorization
algebra $(A,k)$ of Definition~\ref{D:BarasFactAlg}.
\end{definition}
The pushforward  $\kappa_*(k)$ of the trivial factorization algebra  $k$ on $I^n$ is equal to the trivial factorization algebra $k$ on $\widehat{D^n}$. Hence the pushforward along $\kappa$ of the augmentation of $(A,k)$ (Lemma~\ref{L:Akisaugmented}), that is the  map
\begin{equation}\label{eq:AugmentationHat}
\widehat{\epsilon}: \widehat{A}=\kappa_*((A,k)) \stackrel{\kappa_*\big(\tilde{\Upsilon}(\epsilon)\big)}\longrightarrow
k
\end{equation}
is an augmentation for $\widehat{\mathcal{A}}$.

\medskip

Assume now that $\epsilon: A\to k$ is an augmented $E_m$-algebra,
with $m\geq n$ and let again $\epsilon: \mathcal{A}\to k$ be a map
of locally constant factorization algebras representing it. We then
get that the factorization algebra $\widehat{\mathcal{A}}$
belongs\footnote{by abuse of notation we keep the notation
$\widehat{\mathcal{A}}$ for the factorization algebra
$\pi_*(\widehat{\mathcal{A}})\in
\text{Fac}_{S^n}(\text{Fac}_{\mathbb{R}^{m-n}})$} to
$\text{Fac}_{S^n}(\text{Fac}_{\mathbb{R}^{m-n}})$.
\begin{lem}\label{L:BarasFactSn}   Let $\epsilon:A\to k$ be an augmented $E_m$-algebra ($m\geq n$) represented by a map
 $\epsilon:\mathcal{A}\to k$ of factorization algebras and
 $\widehat{\mathcal{A}}$ be given by
 Definition~\ref{D:BarasFactAlgonSn}.

The factorization homology of $\widehat{\mathcal{A}}$ is equivalent
in $E_{m-n}\text{-Alg}$ to the iterated bar construction of $A$:
$$ p_*(\widehat{\mathcal{A}}) \cong \int_{I^{n}\times \mathbb{R}^{m-n}} (A,k) \stackrel{\simeq}\longleftarrow Bar^{(n)}(A).
$$This equivalence is natural with respect to maps of augmented
$E_m$-algebras.
\end{lem}
\begin{proof}
Since $p_*$, $\kappa_*$ are functors, by Lemma~\ref{L:BarasFactIn}
we have an natural equivalence
$$ p_*(\widehat{\mathcal{A}})= p_*(\kappa_*((A,k)))=
p_*((A,k))\stackrel{\simeq}\longleftarrow \int_{I^{n}\times \mathbb{R}^{m-n}} (A,k)
  \cong Bar^{(n)}(A).$$
\end{proof}

We now describe in more details $\widehat{\mathcal{A}}$. Recall that
we see $S^n=\widehat{D^n}=D^n \cup \{\infty\}$ as a stratified space
with one stratum being $D^n\subset \widehat{D^n}$ and the other one
being its point at infinity. A basis of neighborhood of $\infty$ is
given by the complements of closed \emph{Euclidean} disk centered at
$0$ in $D^n$. In particular, we have a factorizing and stable by
finite intersection basis  $\mathcal{U}_{\widehat{D^n}}$ consisting
of all opens $U\subset D^n\subset \widehat{D^n}$ and all
 opens which are the complement $\widehat{D^n}\setminus \overline{D}$
 of a non-empty Euclidean disk $\overline{D}\subset D^n$ whose center is $0$.
\begin{prop}\label{P:BarasFactonSn}
The factorization algebra $\widehat{\mathcal{A}}$ of
Definition~\ref{D:BarasFactAlgonSn} satisfies that:
\begin{enumerate}
\item for any $U\subset D^n\subset \widehat{D^n}=S^n$, one has $\widehat{\mathcal{A}}(U)=\mathcal{A}(U)$;
\item for any compact disk\footnote{by a compact disk of $D^n$, we mean the image  in $D^n$ of an embedding of the closed unit Euclidean disk} $\overline{D}\subset D^n$,  one has as an natural equivalence
$$\widehat{\mathcal{A}}\big(\widehat{D^n}\setminus \overline{D}\big) \cong
k\big(\widehat{D^n}\setminus \overline{D}\big)=k.$$
\item The restriction to $\mathcal{U}_{\widehat{D^n}}$ of the structure maps $\rho_{U_1,\dots, U_{r}, V}:
\bigotimes_{i=1}^{r} \widehat{\mathcal{A}}(U_i)\to
\widehat{\mathcal{A}}(V)$ is given as follows: if $U_1,\dots, U_r\in
\mathcal{U}_{\widehat{D^n}}$ are pairwise disjoint open lying in
$V\in \mathcal{U}_{\widehat{D^n}}$, then
\begin{itemize}
\item if $V\subset D^n$, one has $$\rho_{U_1,\dots, U_{r}, V}=
\bigotimes_{i=1}^{r} \widehat{\mathcal{A}}(U_i) \cong
\bigotimes_{i=1}^{r}
\mathcal{A}(U_i)\stackrel{\rho^{\mathcal{A}}_{U_1,\dots, U_{r},
V}}\longrightarrow \widehat{\mathcal{A}}(V)$$  the last map being
the structure map of the factorization algebra $\mathcal{A}$;
\item if $V=\widehat{D^n}\setminus \overline{D}$, where $D$ is an Euclidean disk centered at $0$,
 $U_1,\dots, U_i \in D^n$ ($0\leq i\leq r$) and $U_{i+1},\dots, U_{r}$ are
complements of Euclidean disks centered at $0$\footnote{note that the $U_i$\rq{}s being disjoint implies $i=r$ or $r-1$},
 one has
\begin{multline*} \rho_{U_1,\dots, U_{r},
V}=\bigotimes_{j=1}^{r} \widehat{\mathcal{A}}(U_j)\cong
\Big(\bigotimes_{j=1}^i\mathcal{A}(U_j)\Big) \otimes
\Big(\bigotimes_{j=i+1}^{r} k(U_j)\Big)\\
\stackrel{(\bigotimes_{j=1}^i\epsilon)\otimes id}\longrightarrow
\bigotimes_{j=1}^{r} k(U_i)\stackrel{\rho^{k}_{U_1,\dots, U_{r}, V}}
\longrightarrow k
\end{multline*} where the last map is the structure
map of the factorization algebra associated to $k$.
\end{itemize}
\item $ \widehat{\mathcal{A}}$ is stratified locally constant on $\widehat{D^n}=S^n$
(stratified by $\{\infty\}\subset D^n\cup \{\infty\}= S^n$ as above).
\item Any $\mathcal{U}_{\widehat{D^n}}$-prefactorization algebra  satisfying (1),
(2) and (3) above is a $\mathcal{U}_{\widehat{D^n}}$-factorization algebra whose
unique\footnote{by Proposition~\ref{P:extensionfrombasis}} extension as a factorization algebra is further equivalent
 to $ \widehat{\mathcal{A}}$.
\end{enumerate}
\end{prop}
Point~\textbf{(5)} implies that $\widehat{\mathcal{A}}$ is the unique factorization algebra on
 $\widehat{D^n}$ satisfying the properties \textbf{(1)}, \textbf{(2)} and \textbf{(3)}
 of Proposition~\ref{P:BarasFactonSn}.
 
 \begin{rem}\label{R:naturalityinProp9.25}
 Note also that by Point.\textbf{(3)}, the equivalence, for any compact sub-disk $\overline{D}$, $\widehat{\mathcal{A}}\big(\widehat{D^n}\setminus \overline{D}\big) \cong
k$ of Point.\textbf{(2)}, is induced by the augmentation map~\eqref{eq:AugmentationHat}. Namely, let  $\mathcal{V}$ 
be a factorizing cover of $\widehat{D^n}\setminus \overline{D}$. 
For any open subset $V \in \mathcal{V}$, we have the augmentation   
$\widehat{\epsilon}: \widehat{\mathcal{A}}(V) \to k(V)=k$ which is a map of factorization algebras, hence induces a map of \v{C}ech complexex: $\check{C}(\widehat{\mathcal{A}}, \mathcal{V}) \to \check{C}(k, \mathcal{V})$. 
The following diagram, in which the lower arrow is the equivalence of Proposition~\ref{P:BarasFactonSn}.\textbf{(2)},
\begin{equation}
\label{eq:naturalityinProp9.25}
\xymatrix{\check{C}(\widehat{\mathcal{A}}, \mathcal{V}) \ar[rr]^{\widehat{\epsilon}} \ar[d]
& &  \check{C}(k, \mathcal{V}) \ar[d] \\ 
\widehat{\mathcal{A}}\big(\widehat{D^n}\setminus \overline{D}\big) \ar[rr]^{\simeq}
&& k}
\end{equation}
is commutative in $\hkmod$ (as proved in the proof of Proposition~\ref{P:BarasFactonSn}). In particular, the lower map in the diagram is just $\widehat{\epsilon}(\widehat{D^n}\setminus \overline{D}\big) :\widehat{\mathcal{A}}\big(\widehat{D^n}\setminus \overline{D}\big) \longrightarrow k\big(\widehat{D^n}\setminus \overline{D}\big)=k$.
\end{rem}
\begin{proof}[Proof of Proposition~\ref{P:BarasFactonSn}]
Since $\widehat{\mathcal{A}}(U)=(A,k)\big(\kappa^{(-1)}(U)\big)$,
point~\textbf{(1)} is immediate from Lemma~\ref{L:BarasFactIn}. 

By the generalized Schoenflies Theorem, the complement $\widehat{D^n}\setminus \overline{D}$ is homeomorphic to a disk. Consequently,
claim \textbf{(2)}  boils down to proving that $(A,k)\big((\partial
I^n)\times [0,r)\big)\cong k$ (that is, we are left to the case where  $\overline{D}$ is a compact Euclidean disk).
 The open set $(\partial I^n)\times [0,r)$ has a factorizing cover, stable by finite intersection,
 which consists of open half-disks of the form $C\times [0,r)$ (where, for instance $C$ is a small convex sub-disk of $\partial I^n$); denote $\mathcal{H}$ such a cover.  

Since $(A,k)$ is a factorization
  algebra, we have that  $(A,k)\big((\partial I^n)\times [0,r)\big)$ is computed by the \v{C}ech complex $\check{C}\big(\mathcal{H},(A,k)\big)$ of this cover
consisting of open half-disks.
     By  Lemma~\ref{L:StructureofAk}.\textbf{(2)}, the value of $(A,k)$ on any half-disk is just $k$; further the structure maps
  are those of the trivial factorization algebra $k$ (see
  Example~\ref{E:kasFact}). 
  Thus the  \v{C}ech complex  $\check{C}\big(\mathcal{H},(A,k)\big)$ is
  the same as the one of the factorization algebra $k$, that is, $\check{C}\big(\mathcal{H},(A,k)\big)= \check{C}\big(\mathcal{H},k\big)$.
Now,  Lemma~\ref{L:kasFact}  implies that
   $$(A,k)\big((\partial I^n)\times [0,r)\big)\cong k \big((\partial I^n)\times [0,r)\big)
   \cong CH_{(\partial I^n)\times [0,r)}(k) \cong k,$$
 and,  combining this with the definition of the augmentation map~\eqref{eq:AugmentationHat} and Lemma~\ref{L:Akisaugmented}, we further obtain a commutative diagram 
   $$\xymatrix{\check{C}\big(\mathcal{H},(A,k)\big) \ar[rr]^{\cong} \ar[d]
& &  \check{C}(k, \mathcal{H}) \ar[d] \\ 
\widehat{\mathcal{A}}\big(\widehat{D^n}\setminus \overline{D}\big) \ar[rr]^{\widehat{\epsilon}}
&& k(\widehat{D^n}\setminus \overline{D})=k} $$
   which finishes to prove claim \textbf{(2)}. In particular we see that the natural equivalence in claim \textbf{(2)} is induced by the augmentation. This implies in particular the commutativity of diagram~\eqref{eq:naturalityinProp9.25} above and thus Remark~\ref{R:naturalityinProp9.25} as well.

\smallskip

Claim~\textbf{(3)} is proved as in Lemma~\ref{L:StructureofAk}.

\smallskip

 Let
$V$ be an open disk containing  $\infty$. Then
$V\setminus\{\infty\}$ is homeomorphic to
 $\mathbb{R}^n\setminus \{0\}$ and $\kappa^{-1}(V)$ is homeomorphic to $(\partial I^n) \times [0,1)$.
  Thus statement~\textbf{(4)} reduces to statement~\textbf{(2)}.

\smallskip

To prove claim~\textbf{(5)}, note that  $\mathcal{U}_{\widehat{D^n}}$ is a factorizing, stable by finite
intersection, basis of opens and  $\widehat{A}$ is a prefactorization algebra  satisfying claims \textbf{(1)},
 \textbf{(2)} and \textbf{(3)}. Since we already know that  $\widehat{A}$ is a factorization algebra,
  we know that the data given by the claims \textbf{(1)}, \textbf{(2)} and \textbf{(3)} does define a
  $\mathcal{U}_{\widehat{D^n}}$-factorization algebra.    Proposition~\ref{P:extensionfrombasis} implies that
   any factorization algebra  whose value on $\mathcal{U}_{\widehat{D^n}}$ agrees with the one of $\widehat{A}$
   (which is given by claims \textbf{(1)}, \textbf{(2)} and \textbf{(3)})  is equivalent to $\widehat{A}$
   which terminates the proof of statement \textbf{(5)}.
\end{proof}

\medskip

\subsubsection{The $E_n$-coalgebra structure of the iterated Bar
construction}\label{SS:EnCoalgforBar}

In this section we prove that the iterated Bar construction
$Bar^{(n)}(A)$ of an augmented $E_n$-algebra has an $E_n$-coalgebra
structure. In view of Theorem~\ref{P:En=Fact}, it is equivalent to
prove that there exists a locally constant factorization algebra on
$\mathbb{R}^n$ whose global section is the iterated Bar
construction $Bar^{(n)}(A)$. This is the
approach we follow here. 

\smallskip

\begin{rem}[\emph{sketch of the construction}] The result of Lemma~\ref{L:BarasFactSn} and
Proposition~\ref{P:BarasFactonSn} is that the Bar construction of an
augmented $E_n$-algebra $\epsilon: A\to k$  is the global section
(i.e. factorization homology) of the stratified locally constant
factorization algebra on the Alexandroff compactification $D^n\cup
\{\infty\}=\widehat{D^n}=S^n$ of $D^n$ whose value on $D^n$ is just
the one of $A$ and whose value in a disk centered at $\infty$ is
just $k$. For any disk $D$ inside $D^n$, we can also form its
Alexandroff one-point compactification $\widehat{D}=\{\infty\}\cup
D$ and by restriction to $D$, the factorization algebra associated
to $A$ will give rise to a stratified locally constant factorization
algebra on $D^n$. The procedure can be done simultaneously for
pairwise disjoint opens in $D^n$; this suggest how the iterated bar
construction gives rise to a factorization coalgebra on $D^n$. We
now make this scheme precise.
\end{rem}

\medskip

Let $\epsilon: A\to k$ be a map of $E_m$-algebras (with $m\geq n$) and which we assume to be represented by a map
 $\epsilon: \mathcal{A}\to k$
 of factorization algebras over $\R^m$.
In other words, $\mathcal{A}(D)\cong \int_D A$ for any disk
$D\subset \mathbb{R}^m$. Recall from Theorem~\ref{T:Dunn} that
$\pi_*(\mathcal{A})\in
\text{Fac}^{lc}_{\R^n}(\text{Fac}^{lc}_{\R^{m-n}})$. In the
following, in order to shorten notations, we will simply write
$\mathcal{A}$ for  $\pi_*(\mathcal{A})$.
In particular for any $U$
open subset of $\R^n$, $\mathcal{A}(U\times \R^n)=\int_{U\times
\R^{m-n}} A$ inherits an $E_{m-n}$-algebra structure (canonically induced by a locally constant factorization algebra structure on $\R^{m-n}$).

\smallskip

The \emph{restriction of $\epsilon:\mathcal{A}\to k$ to $U\times \R^{m-n}$ is an augmentation for
  $\mathcal{A}(U\times \R^{m-n})$}.

\begin{rem} \label{R:PhiisotoU}
Let  $\phi: \mathbb{R}^n \stackrel{\simeq}\to U \subset\mathbb{R}^n$ be an embedding of a disk in $\mathbb{R}^n$.
  Then we have an homeomorphism $\phi\times id:  \mathbb{R}^n\times \R^{m-n} \stackrel{\simeq}\to U\times \R^{m-n}$
   which makes
$\mathcal{A}_{|U\times \R^{m-n}}$ into a locally constant factorization algebra over $\R^{m}$ hence
  $\mathcal{A}(U\times \R^{m-n})$ is an $E_m$-algebra.
  \begin{definition}\label{D:Barphi}  We denote \emph{$\mathcal{A}_{\phi}$ the augmented  $E_m$-algebra
$\mathcal{A}(U\times \R^{m-n})\cong \int_{U\times \R^m}A$}.

We can thus define \emph{$Bar^{(n)}(A_\phi)$ the $n$-fold iterated
Bar construction of $A_\phi$} and
 \emph{$\widehat{\mathcal{A}_\phi}\in \text{Fac}^{lc}_{\widehat{U}}(E_{m-n}\text{-Alg})$ the stratified locally constant
 factorization  algebra} of Definition~\ref{D:BarasFactAlgonSn}.
\end{definition}
\end{rem}

We wish to define a factorization algebra $\widehat{\mathcal{A}_U}$ on $\widehat{U}\cong U\cup \{\infty\}$ the Alexandroff compactification of $U$. We essentially proceed as for Definition~\ref{D:BarasFactAlgonSn} above: 
\begin{definition}\label{Def:AU} Let $\mathcal{W}_{\widehat{U}}$ be the open cover\footnote{which is not stable under intersection} of $\widehat{U}$ consisting of all opens $W$ such that either  $W\subset U\subset \widehat{U}$ or else $W=\widehat{U}\setminus \overline{D}$ where $\overline{D}\subset U$  is any  compact disk\footnote{by a compact disk in $U$, we mean the image  in $U$ of an embedding of the closed unit Euclidean disk} \begin{enumerate}
\item \label{eq:valueofAU1}for any $W\subset U\subset \widehat{U}$ and for any compact disk $\overline{D}\subset U$ ,    set  $$\widehat{\mathcal{A}_U}(W)=\mathcal{A}(W), \qquad
\widehat{\mathcal{A}_U}\big(\widehat{U}\setminus \overline{D}\big) =k\big(\widehat{U}\setminus \overline{D}\big)=k.$$
\item  \label{eq:valueofAU2}Let $W_1,\dots, W_r\subset \widehat{U}$ are pairwise disjoint opens  lying in
$V\in \widehat{U}$. Assume in addition that 
\begin{enumerate}
 \item \label{eq:case1} either $V$ is in  $U\subset \widehat{U}$ (and thus so are all $W_i$);
 \item \label{eq:case2} or $V=\widehat{U}\setminus \overline{D}$, where $D$ is a compact disk in $U$,
 $W_1,\dots, W_i \in U$  and $W_{i+1},\dots, W_{r}$ are
complements of compact disks (the $W_i$\rq{}s being disjoint implies  $i=r$ or $r-1$).
\end{enumerate}
We  define \lq\lq{}structure maps\rq\rq{} $\rho_{W_1,\dots, W_{r}, V}:
\bigotimes_{i=1}^{r} \widehat{\mathcal{A}_U}(W_i)\longrightarrow
\widehat{\mathcal{A}_U}(V)$  as follows: 
\begin{itemize}
\item in case~\eqref{eq:case1}, we set  $$\rho_{W_1,\dots, W_{r}, V}=
\bigotimes_{i=1}^{r} \widehat{\mathcal{A}}(W_i) \cong
\bigotimes_{i=1}^{r}
\mathcal{A}(W_i)\stackrel{\rho^{\mathcal{A}}_{W_1,\dots, W_{r},
V}}\longrightarrow \widehat{\mathcal{A}}(V)$$  the last map being
the structure map of the factorization algebra $\mathcal{A}$;
\item in case~\eqref{eq:case2}, we set
\begin{multline*} \rho_{W_1,\dots, W_{r},
V}=\bigotimes_{j=1}^{r} \widehat{\mathcal{A}}(W_j)=
\Big(\bigotimes_{j=1}^i\mathcal{A}(W_j)\Big) \otimes
\Big(\bigotimes_{j=i+1}^{r} k(W_j)\Big)\\
\stackrel{(\bigotimes_{j=1}^i\epsilon)\otimes id}\longrightarrow
\bigotimes_{j=1}^{r} k(W_i)\stackrel{\rho^{k}_{W_1,\dots, W_{r}, V}}
\longrightarrow k
\end{multline*} where the last map is the structure
map of the factorization algebra associated to $k$.
\end{itemize}
\end{enumerate}
\end{definition}
\begin{lem}\label{L:BarUisaFacAlg}
 \begin{itemize}\item There is a unique\footnote{up to a contractible family of choices} factorization algebra $\widehat{\mathcal{A}_U}$ on $\widehat{U}$ which takes the values given by Definition~\ref{Def:AU}.\eqref{eq:valueofAU1} and with structure maps specified by Definition~\ref{Def:AU}.~\eqref{eq:valueofAU2} above on the relevant opens.
 \item Further, $\widehat{\mathcal{A}_U}$ is stratified locally constant on $\widehat{U}$, which is stratified with one  dimensions $0$ stratum given by the point at $\infty$ and one dimension $n$ stratum given by$U$.
 \end{itemize}
\end{lem}
\begin{proof} Since $U$ is a disk, we can find an embedding
 $\phi: \mathbb{R}^n \stackrel{\simeq}\to U \subset\mathbb{R}^n$, which induces an homeomorphism $\widehat{D^n} \cong \widehat{U}$. As in Definition~\ref{D:Barphi}, we have the $E_m$ algebra  $\mathcal{A}_{\phi}$ and a \emph{stratified locally constant} factorization algebra on $\widehat{D^n} \cong \widehat{U}$. By Proposition~\ref{P:BarasFactonSn}, we see that the factorization algebra  $\mathcal{A}_{\phi}$  takes the same value as $\widehat{A}_{U}$ on the opens specified in point~\eqref{eq:valueofAU1}.
 Further,  it has the same structure maps as those given  by Definition~\ref{Def:AU}~\eqref{eq:valueofAU2} on the basis of opens $\Big(\phi(V), V\in \mathcal{U}_{\widehat{D^n}}\Big)$. 
 
 Thus by Proposition~\ref{P:BarasFactonSn}.(5), we see that $\mathcal{A}_{\phi}$  is the unique factorization algebra structure on $\widehat{U}$ taking these values.

 It only remains to prove that $\widehat{\mathcal{A}_U}$ does has the structure maps claimed by Definition~\ref{Def:AU}~\eqref{eq:valueofAU2} in case~\eqref{eq:case2} for arbitrary compact disk $\overline{D}$. The proof is similar to the proof of the commutativity of Diagram~\eqref{eq:naturalityinProp9.25} obtained in the proof of Proposition~\ref{P:BarasFactonSn}; we used the generalized Schoenflies theorem to restrict to a cover by half disks and use the \v{C}ech complexes of this cover to deduce the result. 
\end{proof}

\begin{definition}\label{D:BarU}   We denote $\widehat{\mathcal{A}_U}$ the stratified locally constant
 factorization  algebra on $\widehat{U}$ defined by Lemma~\ref{L:BarUisaFacAlg}. It is augmented $\widehat{\epsilon}:\widehat{\mathcal{A}_U}\to k$.
\end{definition}
\begin{rem}\label{R:PhiisotoU2}
 Let  $\phi: \mathbb{R}^m \stackrel{\simeq}\to U$ be an homeomorphism so that we have the $E_n$-algebra $\mathcal{A}_{\phi}$ from Definition~\ref{D:Barphi}. 
 By Lemma \ref{L:BarasFactSn} and Lemma~\ref{L:BarUisaFacAlg},  we have an natural (with respect to maps of augmented
 $E_m$-algebras) equivalence of $E_{m-n}$-algebras:
\begin{equation}\label{eq:BarphiasFact}
Bar^{(n)}(A_\phi)\stackrel{\simeq}\longrightarrow
p_*\big(\widehat{\mathcal{A}_\phi}\big) \cong
\widehat{\mathcal{A}_U}(\widehat{U}).
\end{equation}
\end{rem}

\smallskip

The \emph{one point compactification is contravariant} with respect to open inclusions: if $U\subset V$  are open subsets of
$\R^n$, we have the continuous map 
\begin{equation}\label{eq:DefiotaUV}\iota_U^V: \widehat{V} \longrightarrow \widehat{U}\end{equation} which is the identity on
 $U\subset \widehat{V}$ and collapses the complement $\widehat{V}\setminus U$ to the point at $\infty$ of
 $\widehat{U}=U\cup \{\infty\}$.
 
 By pushing forward along $\iota_U^V$, we
get the factorization algebra
$${\iota_U^V}_*(\widehat{\mathcal{A}_V})\in
\text{Fac}_{\widehat{U}}(\text{Fac}^{lc}_{\R^{m-n}}).$$ Note that by
definition of factorization homology for factorization algebras we
have:
\begin{equation}\label{eq:HomofIota=HomV}
{\iota_U^V}_*(\widehat{\mathcal{A}_V})(\widehat{U})\cong
p_*\circ {\iota_U^V}_*(\widehat{\mathcal{A}_V}) \cong
p_*(\widehat{\mathcal{A}_V}) \cong
\widehat{\mathcal{A}_V}(\widehat{V})
\end{equation}
We wish to define a quasi-isomorphism 
$\gamma^V_U:{\iota_U^V}_*(\widehat{\mathcal{A}_V}) \longrightarrow
\widehat{\mathcal{A}_U}$ of factorization algebras over
$\widehat{U}$. 

To do this, we consider the cover of $\widehat{U}$ given consisting of all
opens $W\subset U\subset \widehat{U}$ and all
 opens which are the complement $\widehat{U}\setminus \overline{D}$
 of a compact disk $\overline{D}$. We note

\begin{lem} \label{L:valueofiota} Let $D$ be an open subset of $\widehat{U}$. 
\begin{enumerate}
\item If   $D\subset U$, then
${\iota_U^V}_*(\widehat{\mathcal{A}_V}) (D) =
\widehat{\mathcal{A}_V}(D) = \mathcal{A}(D) =
\widehat{\mathcal{A}_V}(D)$;
\item if  $D=\widehat{U}\setminus \overline{D}$, then
${\iota_U^V}_*(\widehat{\mathcal{A}_V}) (D) =
\widehat{\mathcal{A}_V}\big(\widehat{V}\setminus
\overline{D}\big)\cong k\big(\widehat{V}\setminus
\overline{D}\big)=k$.
\end{enumerate}
\end{lem}
\begin{proof}
Choose homeomorphisms $\phi: \mathbb{R}^n \stackrel{\simeq}\to U$ and
$\psi: \R^n \stackrel{\simeq}\to V$ so that $\psi$ identifies $V$ with $D^n$ and we are left to the case where $\phi: \R^n\to U$ is a sub-disk of $D^n$. 
We have a factorizing and stable by finite
intersections basis $\mathcal{U}_{\widehat{U}}$ (as in Proposition~\ref{P:BarasFactonSn}).  The basis
$\mathcal{U}_{\widehat{U}}$ is defined as the set consisting of all
opens $W\subset U\subset \widehat{U}$ and all
 opens which are the complement $\widehat{U}\setminus \phi(\overline{D})$
 of the image by $\phi$ of a non-empty Euclidean compact disk $\overline{D}\subset \R^n$ whose center is $0$.

Now, the lemma  is a consequence of the Definition of the pushforward
${\iota_U^V}_*(\widehat{\mathcal{A}_\psi})$,
Definition~\ref{D:BarasFactAlgonSn} and
Proposition~\ref{P:BarasFactonSn}.(1) and (2).
\end{proof}
We now define the aforementioned map $\gamma^V_U$.
\begin{lem}\label{L:StructureMapsBar1} Let $W$ be in the cover $\mathcal{W}_{\widehat{U}}$  as defined in Definition~\ref{Def:AU}.~\eqref{eq:valueofAU1}; that is either $W\subset U\subset \widehat{U}$ or $W$ is the complement $\widehat{U}\setminus \overline{D}$ of 
 a compact disk\footnote{by a compact disk in $U$, we mean the image  in $U$ of an embedding of the closed unit Euclidean disk} $\overline{D}\subset U$. 
 Let
$$\gamma^V_U(W):{\iota_U^V}_*(\widehat{\mathcal{A}_V})(W) \longrightarrow
\widehat{\mathcal{A}_U}(W) $$ be the augmented $E_{m-n}$-algebra
map defined (using the identifications provided by
Lemma~\ref{L:valueofiota}),
\begin{itemize}
\item  as the identity map
 $$\gamma^V_U(W):{\iota_U^V}_*(\widehat{\mathcal{A}_V})(W) = \mathcal{A}(W)\stackrel{id}\longrightarrow
\mathcal{A}(W)=\widehat{\mathcal{A}_U}(W)$$  if $W\subset U$;
\item and,   if $W= \widehat{U}\setminus \overline{D}$, $ \overline{D}\subset U$ a compact disk, as the
restriction of the augmentation of $\widehat{\mathcal{A}_V}$:
\begin{equation*}\gamma^V_U(W):{\iota_U^V}_*(\widehat{\mathcal{A}_V})(W) =
\widehat{\mathcal{A}_V}\big(\widehat{V}\setminus
 \overline{D}\big)
\xrightarrow{\widehat{\epsilon}\big(\widehat{V}\setminus
 \overline{D}\big)} k\big(\widehat{V}\setminus
 \overline{D}\big)=k = \widehat{A_{U}}(W)\end{equation*} (where the last equality
follows from Proposition~\ref{P:BarasFactonSn} and $\widehat{\epsilon}$ is the augmentation map~\eqref{eq:AugmentationHat}).
\end{itemize}
\begin{enumerate} \item The collection $\big(\gamma^V_U(W)\big)_{W\in
\mathcal{U}_{\widehat{U}}}$ is a map of
$\mathcal{W}_{\widehat{U}}$-factorization algebras.
\item 
The collection $\big(\gamma^V_U(W)\big)_{W\in
\mathcal{U}_{\widehat{U}}}$ has an \emph{unique}\footnote{up to a contractible family of choices} extension into \emph{a map
$\gamma_{U}^{V}: {\iota_U^V}_*(\widehat{\mathcal{A}_V})
\longrightarrow \widehat{\mathcal{A}_U}$ of factorization
algebras over $\widehat{U}$}. 
\item $\gamma_{U}^{V}$ is further a map of augmented factorization algebras (with respect to the augmentation~\eqref{eq:AugmentationHat}).
\item \label{eq:iotaisquis}The  map $ {\iota_U^V}_*(\widehat{\mathcal{A}_V})
\stackrel{\gamma_{U}^{V}}\longrightarrow \widehat{\mathcal{A}_U}$ is an equivalence of factorization algebras.
\end{enumerate}
\end{lem}
\begin{proof}
We need to prove that, for any pairwise disjoint open  $W_1,\dots, W_r\in
\mathcal{W}_{\widehat{U}}$   lying in
$Z\in \mathcal{W}_{\widehat{U}}$, the following diagram
\begin{equation}\label{eq:StructureMapsBar1}
\xymatrix{\bigotimes\limits_{i=1}^{r} {\iota_U^V}_*(\widehat{\mathcal{A}_{V}})(W_i)   \ar[d]_{\bigotimes\limits_{i=1}^{r} \gamma^{V}_{U}(W_i)}
\ar[rrr]^{\rho_{W_1,\dots, W_r, Z}} &&&  {\iota_U^V}_*(\widehat{\mathcal{A}_{V}})(Z)   \ar[d]^{\bigotimes\limits_{i=1}^{r} \gamma^{V}_{U}(Z)} \\
\bigotimes\limits_{i=1}^{r} \widehat{\mathcal{A}_{U}}(W_i)\ar[rrr]^{\rho_{W_1,\dots, W_r, Z}} &&& \widehat{\mathcal{A}_{U}}(Z) }
\end{equation}
is commutative.

If $Z \subset U$ (and consequently all the $W_i\subset U$ as well), then this is a trivial consequence of Lemma~\ref{L:valueofiota}.(1).
Let $Z=\widehat{U}\setminus \overline{K}$ with $\overline{K}$ a compact disk in $U$. We may assume $W_1,\dots, W_j \in U$ with $j=r-1$ or $j=r$, and the  remaining  $W_{j+1},\dots, W_r$ (note that there may be only one or zero such $W_\ell$) to be of the form $\widehat{U}\setminus \overline{T}$ where $\overline{T}$ is a compact  disk in $U$.
Unfolding the definition of $\gamma^V_U$ using   statement (3) in  Proposition~\ref{P:BarasFactonSn} and Lemma~\ref{L:valueofiota}, we obtain   that the diagram~\eqref{eq:StructureMapsBar1} can be rewritten as the following diagram
\begin{equation}\label{eq:StructureMapsBar2}
\xymatrix{ \big(\bigotimes\limits_{i=1}^{j} \widehat{\mathcal{A}_{V}}(W_i)\big)\otimes   \big(\bigotimes\limits_{i=j+1}^{r}\widehat{\mathcal{A}_V}(\widehat{V}\setminus
\overline{T}\big) \ar[d]_{\big(\bigotimes\limits_{i=1}^{j} id\big) \otimes \big(\bigotimes\limits_{i=j+1}^r \widehat{\epsilon}(\widehat{V}\setminus \overline{T})\big)}
\ar[rrrr]^{\quad \rho_{W_1,\dots, W_r, Z}} \ar[rrrd]^{\qquad\bigotimes\limits_{i=1}^r \widehat{\epsilon}(W_i)}&&&&  \widehat{\mathcal{A}_V}\big(\widehat{V}\setminus \overline{T}\big)  \ar[d]^{ \widehat{\epsilon}(\widehat{V}\setminus \overline{K})} \\
\big(\bigotimes\limits_{i=1}^{j} \widehat{\mathcal{A}_{V}}(W_i)\big)\otimes   \big(\bigotimes\limits_{i=j+1}^{r} k\big) \ar[rrr]_{\quad\qquad \big(\bigotimes\limits_{i=1}^{j} \widehat{\epsilon}(W_i)\big) \otimes \big(\bigotimes\limits_{i=j+1}^r id\big)} &&& \bigotimes\limits_{i=1}^r k \ar[r]^{\;\;\;\rho^k_{W_1,\dots, W_r, Z}} &  k }
\end{equation}
and that further the lower left triangle in diagram~\eqref{eq:StructureMapsBar2} is commutative.
 The commutativity of the upper right part of diagram~\eqref{eq:StructureMapsBar2} is given by the fact that $\widehat{\epsilon}: \widehat{\mathcal{A}_V}\to k$ is a map of factorization algebras. This proves that the $(\gamma^V_U(W))$ forms a map of $\mathcal{W}_{\widehat{U}}$-factorization algebras.

\smallskip

Now, note that $\mathcal{W}_{\widehat{U}}$ contains a factorizing, stable by finite intersections, basis  $\mathcal{U}_{\widehat{U}}$ of opens. Indeed, let $\phi: \R^n\to U$ be an homeomorphism. Then the cover $\mathcal{U}_{\widehat{U}}$ consists of all opens $W\subset U$ and all opens $\widehat{U}\setminus \phi(K)$ where $K$ is a compact Euclidean ball  centered at $0$ in $\R^n$. 
The fact that the  collection $(\gamma^V_U(W))$   extends uniquely to a map of factorization algebras is hence a consequence of Proposition~\ref{P:extensionfrombasis}.

 Indeed, if   $\mathcal{F}\in \text{Fac}_{\widehat{U}}$ and $D\subset \widehat{U}$ is an open set, then the \v{C}ech complex $\check{C}(D_{\mathcal{U}_{\widehat{U}}}, \mathcal{F}) \cong \mathcal{F}(D)$ where $D_{\mathcal{U}_{\widehat{U}}}$ is the cover of $D$ consisting of all opens of $\mathcal{U}_{\widehat{U}}$ which lies in $D$.
 In particular any map of    $\mathcal{U}_{\widehat{U}}$-factorization algebras defines a map between the associated \v{C}ech complexes. This construction is the inverse of the restriction functors from factorization algebras to   $\mathcal{U}_{\widehat{U}}$-factorization algebras.

\smallskip

From above, to prove statement.(3), it is  sufficient to check that $\gamma^V_U$ is a map of augmented factorization algebras on the opens of $\mathcal{U}_{\widehat{U}}$. If $W\subset U$, then there is nothing to prove since $\gamma^V_U(W)$ is the identity. If  $W= \widehat{U}\setminus \phi(\overline{K})$ then $k = \widehat{A_{\phi}}(W)$ and the augmentation map~\eqref{eq:AugmentationHat} is the identity $k\to k$ and there is nothing left to prove.

\smallskip

Finally, again by Proposition~\ref{P:extensionfrombasis}, to prove that $ {\iota_U^V}_*(\widehat{\mathcal{A}_\psi})
\stackrel{\gamma_{U}^{V}}\longrightarrow \widehat{\mathcal{A}_\phi}$ is an equivalence of factorization algebras, it is sufficient to prove that its restriction $\gamma_{U}^{V}(W)$ on any open $W \in \mathcal{U}_{\widehat{U}}$ of the above basis is a quasi-isomorphism. The only case which needs a proof is when
  $W= \widehat{U}\setminus \overline{D}$ with $ \overline{D}=\phi(K)$ where $K$   is a compact Euclidean ball  centered at $0$ in $\R^n$.  
By Proposition~\ref{P:BarasFactonSn}.(2) and  diagram~\eqref{eq:naturalityinProp9.25}, we have a commutative diagram 
\begin{equation*}\xymatrix{   {\iota_U^V}_*(\widehat{\mathcal{A}_V})(W) =
\widehat{\mathcal{A}_V}\big(\widehat{V}\setminus
 \overline{D}\big) \ar[rr]^{\qquad \gamma^V_U(W)} \ar[d]_{\widehat{\epsilon}\big(\widehat{V}\setminus \overline{D}\big)}^{\cong} &&  k\big(\widehat{V}\setminus
 \overline{D}\big)=k \\ k \ar[rru]_{id} && }\end{equation*}
from which we deduce that $\gamma^V_U(W)$ is a quasi-isomorphism. Hence Claim.(4) of the Lemma holds. 
\end{proof}

\smallskip

Passing to factorization homology, \emph{i.e.} evaluating on
$\widehat{U}$, the factorization algebra map
$\gamma^V_U:{\iota_U^V}_*(\widehat{\mathcal{A}_V})
\longrightarrow \widehat{\mathcal{A}_U}$ induces a map
$\gamma^V_U(\widehat{U}):
{\iota_U^V}_*(\widehat{\mathcal{A}_V})(\widehat{U})
\longrightarrow  \widehat{\mathcal{A}_U} (\widehat{U})$.
Composing this map with the string of
equivalences~\eqref{eq:HomofIota=HomV}, we get the following  map of
(augmented) $E_{m-n}$-algebras
\begin{equation}\label{eq:DefGamma}
\tilde{\gamma}^V_U: p_*(\widehat{\mathcal{A}_V}) \cong
\widehat{\mathcal{A}_\psi}(\widehat{V}) \cong
{\iota_U^V}_*(\widehat{\mathcal{A}_V})
\stackrel{\gamma^V_U(\widehat{U})}\longrightarrow
\widehat{\mathcal{A}_U}\cong p_*(\widehat{\mathcal{A}_U})
\end{equation}
between the factorization homology of $\widehat{\mathcal{A}_V}$
and the factorization homology of $\widehat{\mathcal{A}_U}$.

\medskip

We now define the  factorization \emph{coalgebra}
(Definition~\ref{D:FacAlg}) $U\mapsto
Bar^{(n)}(A)(U)$ we have been seeking for.

\begin{definition}\label{D:BarisaParamFact} Let  $\epsilon: \mathcal{A}\to k$ be a map
 of locally constant factorization algebras over $\R^m$.
\begin{itemize}
\item Let $U\subset \R^n$ be a disk. We define $Bar^{(n)}(\mathcal{A})(U):=
p_*(\widehat{\mathcal{A}_U})\in E_{m-n}\text{-Alg}^{aug}$ the factorization homology of the Factorization algebra

$\mathcal{A}_{U}$ on $\widehat{U}$ from Definition~\ref{D:BarU}.
\item Let $U_1,\dots, U_r$ be a family pairwise disjoint open sub-disks of an open disk $V\subset \R^n$. 
We define the structure map $\delta_{U_1,\dots,U_r,V}:
Bar^{(n)}(A)(V) \longrightarrow \bigotimes_{i=1}^{r}
Bar^{(n)}(A)(U_i) $ to be the following maps in
$E_{m-n}\text{-Alg}^{aug}$:
\begin{multline*}
\delta_{U_1,\dots,U_r,V}: Bar^{(n)}(A)(V) =
p_*(\widehat{\mathcal{A}_V})\\
\stackrel{\bigotimes\limits_{i=1}^r \tilde{\gamma}^V_{U_i} }
\longrightarrow \bigotimes\limits_{i=1}^r
p_*(\widehat{\mathcal{A}_{U_i}}) = Bar^{(n)}(A)(U_1) \otimes
\cdots \otimes Bar^{(n)}(A)(U_r).
\end{multline*}Here the maps
$\tilde{\gamma}^V_{U_i}$ are the compositions~\eqref{eq:DefGamma}.
\end{itemize}
Let $\epsilon: A\to k$ be a map of $E_m$-algebras (with
$m\geq n$) and assume it is represented\footnote{in other words $\mathcal{A}(W)\cong \int_W A$ for any open subset $W\subset
 \R^n$} by the factorization algebra map $\epsilon: \mathcal{A}\to k$.
 In that case, \emph{we also denote} $$Bar^{(n)}(A)(U):= Bar^{(n)}(\mathcal{A})(U).$$
\end{definition}
Unfolding the definition, the map $\delta_{U_1,\dots,U_r,V}$ is essentially  the map of factorization algebra given by the identity on each $U_i$ and the augmentation in their complement as is pictured 
 in
Figure~\ref{fig:coalgebra} (in the case $r=3$).
\begin{figure}
\includegraphics[scale=0.4]{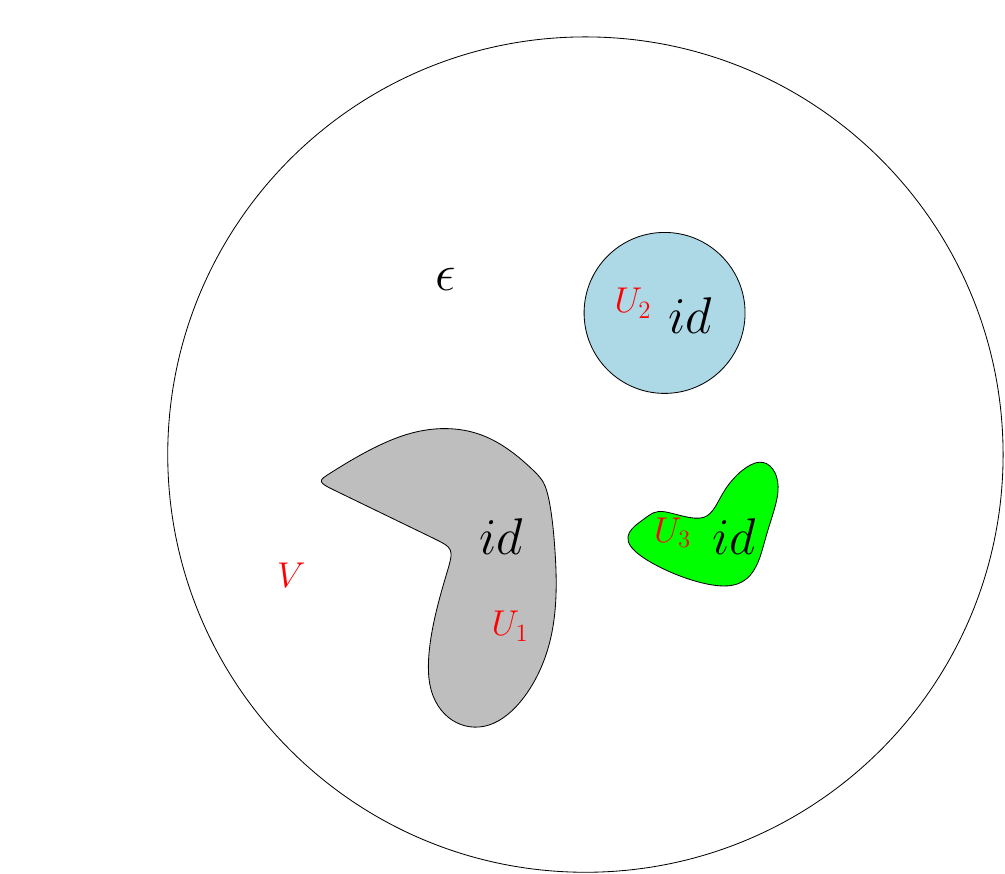}
\caption{The map $Bar^{(n)}(A)(V) \to Bar^{(n)}(A)(U_1)\otimes
Bar^{(n)}(A)(U_2)\otimes  Bar^{(n)}(A)(U_3)$}\label{fig:coalgebra}
\end{figure}

\begin{theorem}\label{P:BarforEn}
Let $0\leq n\leq
m$. 
\begin{enumerate}
\item There is an $\infty$-functor 
$$\mathcal{B}\textit{ar}^{(n)}: \text{Fac}^{lc, aug}_{\mathbb{R}^n} \longrightarrow
\text{coFac}^{lc}_{\R^n}\Big(\text{Fac}_{\R^{m-n}}^{lc, aug}\Big)$$
from $(\infty,1)$-category of locally constant augmented factorization algebras over $\R^m$ to the $(\infty,1)$-category of locally constant cofactorization algebras\footnote{in the sense of
Definition~\ref{D:FacAlg}, that is a locally constant
$N(\text{Disk})(\mathbb{R}^n)$-coalgebra} over $\R^n$ with values in locally constant augmented factorization algebras over $\R^{m-n}$.

\smallskip

The functor $\mathcal{B}\textit{ar}^{(n)}$ is given by the 
rule $\phi\mapsto Bar^{(n)}(A)(U)$ together with the structure maps  $\delta_{U_1,\dots,U_r,V}$
of Definition~\ref{D:BarisaParamFact}.
\item let $\epsilon:A\to k$ be an augmented $E_m$-algebra. 
There is an natural equivalence
$Bar^{(n)}(A) \stackrel{\simeq}\longrightarrow Bar^{(n)}(A)(\R^n)$ between the iterated Bar construction of $A$ (in the sense of Definition~\ref{D:BariteratedforEm})
and the cofactorization homology of
$\mathcal{B}\textit{ar}^{(n)}(\mathcal{A})$.

In particular, the iterated Bar construction $Bar^{(n)}(A)$ has an natural structure of
$E_n$-coalgebra in $E_{m-n}\text{-Alg}^{aug}$ and the iterated Bar
construction functor  (Definition~\ref{D:BariteratedforEm})  lifts
as a functor of $(\infty,1)$-categories
  $$Bar^{(n)}: E_m\text{-Alg}^{aug} \longrightarrow E_n\text{-coAlg}\Big(E_{m-n}\text{-Alg}^{aug}\Big).$$
\end{enumerate}
\end{theorem}

\begin{proof}
First note that $Bar^{(n)}(A)(\R^n )=
p_*(\widehat{\mathcal{A}})$ where $\widehat{\mathcal{A}}$ is the factorization algebra
on $\R^n$ of Definition~\ref{D:BarasFactAlgonSn}. Thus, by
Lemma~\ref{L:BarasFactSn}, we have an natural (with respect to maps
of augmented $E_m$-algebras) equivalence
$$Bar^{(n)}(A) \stackrel{\simeq}\longrightarrow p_*(\widehat{A})=Bar^{(n)}(A)(\R^n).$$
 Hence, part \textbf{(2)} in the Theorem is a corollary of part \textbf{(1)} and
 the relationship between factorization (co)-algebras an $E_\ell$-(co)-algebras, namely
Theorem~\ref{P:En=Fact} and Proposition~\ref{P:AlternativeFacAlg}. 

\smallskip

We now prove part (1).
The functoriality of $U\mapsto Bar^{(n)}(\mathcal{A})(U)$ is a straightforward consequence of the functoriality of $p_*(\widehat{\mathcal{A}_{U}})$ and of the transformations $\gamma^V_U$  of Lemma~\ref{L:StructureMapsBar1}.

Now, recall that each of the maps $\tilde{\gamma}^V_{U_i} $ (defined as the
composition~\eqref{eq:DefGamma}) are augmented $E_{m-n}$-algebras
maps. Hence so is the map 
 $$
\delta_{U_1,\dots,U_r,V}:  Bar^{(n)}(\mathcal{A})(V)   \longrightarrow  Bar^{(n)}(\mathcal{A})(U_1) \otimes \cdots \otimes  Bar^{(n)}(\mathcal{A})(U_r)
 $$ from
Definition~\ref{D:BarisaParamFact}.

The invariance under the symmetric group action of the structure map follows right away from its definition. We also need to check the  naturality of the structure maps with respect to inclusions of disks, \emph{i.e.}, the identity:
 \begin{equation} \label{eq:idnatFactCobar0}
 \Big( \delta_{W_{1}^1,\dots, W_{i_1}^1, U_1}\otimes \cdots \otimes \delta_{W^r_{1},\dots, W^r_{i_r}, U_r} \Big) \circ \delta_{U_1,\dots, U_r, V}
 =
\delta_{W_1^1,\dots, W_{i_1}^1,\dots, W_1^r,\dots,
W_{i_r}^r,V },\end{equation} which has to hold for
  any families of pairwise disjoint open sub-disks
$W_i^j\subset
U_j$
(where $j=1\dots r$).
Unfolding Definition~\ref{D:BarisaParamFact},  we see that the identity~\eqref{eq:idnatFactCobar0} follows from the following identity 
 \begin{equation}\label{eq:identityUiWijV}
  \tilde{\gamma}^{U_j}_{W_i^j}   \circ {\iota^V_{U_j}}_*\big(\tilde{\gamma}^V_{U_j}\big)= \tilde{\gamma}^V_{W_i^j} 
\end{equation}
if it holds for all inclusions $W^j_i\subset U_j\subset V$ of open subsets.  It is enough to check this identity for the underlying factorization algebras maps, that is too prove:
 \begin{equation} \label{eq:idnatFactCobar}
  {\gamma}^{U_j}_{W_i^j}   \circ {\iota^V_{U_j}}_*\big({\gamma}^V_{U_j}\big)= {\gamma}^V_{W_i^j} .
\end{equation}
Let  $\theta_i^j: \R^n \to W_i^j$  and  $\phi_j:\mathbb{R}^n\stackrel{\simeq}\to U_j\subset V$ be homeomorphisms.
In view of Lemma~\ref{L:StructureMapsBar1}, it is sufficient  to check the above identity~\eqref{eq:idnatFactCobar} on the factorizing cover
$\mathcal{U}_{\widehat{W^j_i}}$  consisting of all opens in $W^j_i$ and all complements of $\theta^j_i(\overline{D})$ where $D$ is a compact Euclidean disk.   Both sides of  identity~\eqref{eq:idnatFactCobar} are equal  to the identity when restricted to an open subsets of   $W^j_i$ and to the (restriction of the) augmentation in the second case since ${\gamma}^V_{U_j}$  is a map of augmented algebras (Lemma~\ref{L:StructureMapsBar1}).

\smallskip

It remains to prove the locally constant condition. That is we need to see that for an open sub-disk $U\hookrightarrow V$ of a disk $V$, the map~\eqref{eq:DefGamma}
\begin{multline*}\tilde{\gamma}^V_U: Bar^{(n)}(\mathcal{A})(V)=p_*(\widehat{\mathcal{A}_V}) \cong
\widehat{\mathcal{A}_\psi}(\widehat{V}) \cong
{\iota_U^V}_*(\widehat{\mathcal{A}_V})
\\ \xrightarrow{\gamma^V_U(\widehat{U})}
\widehat{\mathcal{A}_U}\cong p_*(\widehat{\mathcal{A}_U})=Bar^{(n)}(A)(U)\end{multline*} is a quasi-isomorphism.  That $\gamma^V_U(\widehat{U})$ is a quasi-isomorphism is given by Lemma~\ref{L:StructureMapsBar1}.\eqref{eq:iotaisquis}. Hence, so is   $\tilde{\gamma}^V_U$.

 We have proved that  
 the rule $\phi\mapsto Bar^{(n)}(\mathcal{A})(\phi)$ (see construction~\eqref{D:BarisaParamFact}) is a locally constant $N(\text{Disk}(\mathbb{R}^n))$-coalgebra object in $E_{m-n}$-algebras.  Consequently,  the iterated Bar construction given by Definition~\ref{D:BariteratedforEm}
 is a functor from augmented $E_m$-algebras to  $E_n\text{-coAlg}\Big(E_{m-n}\text{-Alg}^{aug}\Big)$.
 By Proposition~\ref{P:BarEkisEkminus1}, this functor agrees (in the $(\infty,1)$-category  $E_{m-n}\txt{-Alg}^{aug}$) with the one given in Section~\ref{S:Bar}. The identification of the two  $E_n$-coalgebras structure is done as in the proof of Proposition~\ref{P:HH(A,B)=CH(A,B)}.
\end{proof}

\begin{rem}[\emph{sketch of a variant}]\label{R:Encoalgsketch} Let $A$ be an $E_n$-algebra induced by a factorization algebra $\mathcal{A}$ on $\R^n$.  
The factorization algebra $\widehat{\mathcal{A}}$ on $S^n$ (from Definition~\ref{D:BarasFactAlgonSn}) is obtained by pushing forward the factorization algebra $(A,k)$ on the stratified closed disk $I^n$ from Definition~\ref{D:BarasFactAlg}.
Further, for convex bounded open subsets of $\R^n$,   we can think of the iterated Bar construction of $A$, restricted on $V$ as a stratified factorization algebra $D\mapsto Bar^{(n)}(A)(D)$ on the closure $\overline{V}$ of $V$ (which assigns the $A$-$E_n$-module $k$ to balls in a neighborhood of the boundary $\overline{V}\setminus V$). 
In fact, for any disk $V$ and any homeomorphism $\psi: \R^n \stackrel{\simeq}\to  V$, we can construct a factorization algebra $(A_{\psi},k)$ on the stratified closed disk $I^n$ and the global section of this factorization algebra is quasi-isomorphic to $Bar^{(n)}(A)$.  It is possible to define this way a locally constant \emph{parametrized} factorization algebra on $\R^n$ (Definition~\ref{D:AlternativeFacAlg}) which is equivalent as the one we construct using $\widehat{\mathcal{A}}$ in Theorem~\ref{P:BarforEn}.
 
The basic idea is that, given sub-disks $U_1,\dots, U_r$ in $V$ with homeomorphisms $\phi_i:\R^n \stackrel{\simeq}\to U_i$ and an embedding  $h: \coprod_{i=1}^r \R^n \to \R^n$ such that $\psi \circ h= \coprod_{i=1}^{r} \phi_i$, we can construct a locally constant stratified factorization algebra $\mathcal{F}$ on $I^n$ which is stratified  with one open strata given by the union of  the disks
$h(\bigcup_{i=1}^r \R^n)$ and one closed stratum given by their
complement $I^n\setminus h\big(\bigcup_{i=1}^r \R^n\big)$.
 Then $\mathcal{F}$ is roughly defined as the rule which to each ball $D$  inside $\phi_i^{-1}(U_i)$ associates $\int_{\phi_{i} (D)} A$,  and which associates $\mathcal{F}(D)=k$ on the closed strata.
 The factorization algebra structure is given by the $A$-$E_n$-module structure of $k$. 
The map which is the identity on  each disk $D$ inside the preimage of a $U_i$
and is the augmentation $\epsilon: A\to k$ on each disk in a small
neighborhood of the closed strata defines a map of factorization
algebra $(A_{\psi},k) \to \mathcal{F}$, which on the global section is a map from $Bar^{(n)}(A_\psi) \to \bigotimes_{i=1}^{r} Bar^{(n)}(A_{\phi_i})$. 
\end{rem}
\medskip

Let $\epsilon:\mathcal{A}\to k$ be a map of augmented locally constant factorization algebras over $\R^{m}$ and   $1\leq i, j$ be such that $i+j\leq m$. 
By Theorem~\ref{P:BarforEn}, we have the $i^{\mbox{th}}$ Bar construction  $\mathcal{B}\textit{ar}^{(i)}(\mathcal{A})\in \text{coFac}_{\R^i}^{lc, aug}\Big(\text{Fac}_{\R^{m-i}}^{lc, aug}\Big).$
In particular for every open set $U\in \R^i$, we get an augmented factorization algebra $\mathcal{B}\textit{ar}^{(i)}(\mathcal{A})(U)\in \text{Fac}_{\R^{m-i}}^{lc, aug}$ from which, by Theorem~\ref{P:BarforEn} again, we get  
$$\mathcal{B}\textit{ar}^{(j)}\big(\mathcal{B}\textit{ar}^{(i)}(\mathcal{A})(U)\big)\in \text{coFac}_{\R^j}^{lc}\Big(\text{Fac}_{\R^{m-i-j}}^{lc, aug}\Big).$$
Recall that the structure maps $\delta_{U_1\dots, U_r, V}$ from Definition~\ref{D:BarisaParamFact} (associated to the functor $\mathcal{B}\textit{ar}^{(i)}$ and sub-disks $U_i$, $V$) are as the tensor product $\bigotimes_{i=1}^r \tilde{\gamma}_{U_i}^{V}$ where the $\tilde{\gamma}_{U_i}^{V}$ are maps of augmented factorization algebras over $\R^{m-i}$. 
Hence we get a map 
\begin{multline}\label{eq:structuremapsBariBarj}\bigotimes_{i=1}^r \mathcal{B}ar^{(j)}\big(\tilde{\gamma}_{U_i}^{V}\big):\mathcal{B}\textit{ar}^{(j)}\big(\mathcal{B}\textit{ar}^{(i)}(\mathcal{A})(V)\big) \\ \longrightarrow \mathcal{B}\textit{ar}^{(j)}\big(\mathcal{B}\textit{ar}^{(i)}(\mathcal{A})(U_1)\big)\otimes \cdots \otimes \mathcal{B}\textit{ar}^{(j)}\big(\mathcal{B}\textit{ar}^{(i)}(\mathcal{A})(U_r)\big)\end{multline} in $\text{coFac}_{\R^j}^{lc}\Big(\text{Fac}_{\R^{m-i-j}}^{lc, aug}\Big)$. 
The proof of Theorem~\ref{P:BarforEn} and the proof of Lemma~\ref{L:iteratedBarforEnisCH} shows that
\begin{prop} Let $\epsilon:\mathcal{A}\to k$ be a map of augmented locally constant factorization algebras over $\R^{m}$ and   $1\leq i, j$ be such that $i+j\leq m$. 
\begin{enumerate} \item The structure maps~\eqref{eq:structuremapsBariBarj} make 
 $\mathcal{B}\textit{ar}^{(j)}\big(\mathcal{B}\textit{ar}^{(i)}(\mathcal{A})\big)$ an object of  the $(\infty,1)$-category $\text{coFac}_{\R^i}^{lc}\Big(\text{coFac}_{\R^j}^{lc}\Big(\text{Fac}_{\R^{m-i-j}}^{lc, aug}\Big)\Big)$, functorially in $\mathcal{A}$: in other words we have a functor $$\mathcal{B}\textit{ar}^{(j)}\circ \mathcal{B}\textit{ar}^{(i)}:\text{Fac}_{\R^{m}}^{lc, aug} \longrightarrow  \text{coFac}_{\R^i}^{lc}\Big(\text{coFac}_{\R^j}^{lc}\Big(\text{Fac}_{\R^{m-i-j}}^{lc, aug}\Big)\Big).$$
 \item There is a commutative diagram of functors:$$\xymatrix{ \text{Fac}_{\R^{m}}^{lc, aug} \ar[rrd]_{\mathcal{B}\textit{ar}^{(j)}\circ \mathcal{B}\textit{ar}^{(i)}\quad}
 \ar[rr]^{\mathcal{B}\textit{ar}^{(i+j)}}& &\text{coFac}_{\R^{i+j}}^{lc}\Big(\text{Fac}_{\R^{m-i-j}}^{lc, aug}\Big) \ar[d]_{\simeq}^{\pi_*}\\ 
 &&\text{coFac}_{\R^i}^{lc}\Big(\text{coFac}_{\R^j}^{lc}\Big(\text{Fac}_{\R^{m-i-j}}^{lc, aug}\Big)\Big) } $$ where the left vertical arrow is the pushforward.
 \end{enumerate}
\end{prop}

In other words, through Dunn isomorphism, the proposition states that the functor $\mathcal{B}\textit{ar}^{(n)}$ is the same as the $n$-times iterated Bar construction $\mathcal{B}\textit{ar}^{(1)}\circ \cdots \circ \mathcal{B}\textit{ar}^{(1)} $.

\medskip
We finish this section by comparing the iterated Bar construction of Theorem~\ref{P:BarforEn} with centralizers and the construction of \S~\ref{S:Bar}.
 \begin{prop}\label{P:IdentificationBarforEn} Let $\epsilon:A\to k$ be an augmented $E_m$-algebra and $0\leq n\leq
m$.
\begin{enumerate}
\item \label{eq:ClaimdualBar}The dual  $RHom(Bar^{(m)}(A), k)$,
endowed with the $E_{m}$-algebra structure dual to the
$E_{m}$-coalgebra structure of $Bar^{(m)}(A)$ (given by
Theorem~\ref{P:BarforEn}.(2)), is the centralizer
$\mathfrak{z}(A\stackrel{\epsilon}\to k)$ of the augmentation (see
\S~\ref{S:maincentralizers}).
\item $Bar^{(1)}(A)$ is equivalent as an $E_1$-coalgebra to the standard (\S~\ref{S:Bar})
 Bar construction $Bar^{std}(A)$ and $Bar^{(n)}(A)$ is equivalent to the iterated Bar constructions of~\cite{F}
  (in the $\infty$-category $E_n\text{-coAlg}\Big(E_{m-n}\text{-Alg}^{aug}\Big)$).
  \item If $m=\infty$, the iterated Bar functor $$Bar^{(n)}: E_\infty\text{-Alg}^{aug} \longrightarrow
E_n\text{-coAlg}\Big(E_{\infty}\text{-Alg}^{aug}\Big)$$ given by Theorem~\ref{P:BarforEn} is naturally
equivalent to the one obtained in \S~\ref{S:Bar} (and in particular
Theorem~\ref{P:EncoAlgBar}). \label{eq:ClaimBarEinfty}
\end{enumerate}
\end{prop}
\begin{proof}
 Dualizing the construction of the locally constant $N(\text{Disk}(\mathbb{R}^n))$-coalgebra structure shows that the dual  $RHom(Bar^{(n)}(A), k)$ of the Bar construction has a locally constant $N(\text{Disk}(\mathbb{R}^n))$-algebra structure whose global section gives us the $E_{m}$-algebra structure on $RHom(Bar^{(m)}(A), k)$ asserted in Claim~\eqref{eq:ClaimdualBar}
 
 By Proposition~\ref{P:extensionfrombasis}, it is enough to check that this dual structure coincides with the one given in Theorem~\ref{T:EnAlgHoch} on the factorizing basis $\mathcal{CVX}$ of bounded convex open subsets of $\R^n$. 
 Recall $\int_{V} k \cong k(V) =k$ for any open $V$.
 For $U\in \mathcal{CVX}$ with center $*_U$, we have have an natural equivalence
 \begin{equation}\label{eq:Bar(n)Uexcision}
  Bar^{(m)}(A)(U)\; \cong \; p_*(\widehat{\mathcal{A}_U}) \;\;\cong \;\;       \int_{U} A \;\mathop{\otimes}\limits_{\int_{U\setminus \{*_U\}}\!\!A}^{\mathbb{L}}\; \int_{\widehat{U}\setminus \{*_U\}} k \;\;\cong \;\;\int_{U} A \;\mathop{\otimes}\limits_{\int_{U\setminus \{*_U\}}\!\!A}^{\mathbb{L}}\; k
 \end{equation}
given by Lemma~\ref{L:BarasFactSn}, Proposition~\ref{P:BarasFactonSn} and Lemma~\ref{L:BarasFactIn} (this also follows from Remark~\ref{R:PhiisotoU2} applied to any $\phi: \R^n \stackrel{\simeq}\longrightarrow U$ such that $\phi(0)=*_U$). 
 It follows that
 \begin{eqnarray}\label{eq:dualofBariscentralizer}
  RHom(Bar^{(m)}(A)(U), k(U)) &\cong & RHom^{left}_{\int_{U\setminus \{*_U\}}\!\!A}\Big( \int_{U} A , \int_U k\Big)\\
  &\cong & RHom^{\mathcal{E}_n}_{A}\big(A, k\big)(U) \nonumber
 \end{eqnarray}
where the last line is from Step 2, \S~\ref{SS:step2} and the $A$-$E_n$-module structure on $k$ is given by the augmentation $\epsilon: A\to k$.
To conclude that 
 the dual of $U\mapsto Bar^{(m)}(A)(U)$ is the factorization algebra of Theorem~\ref{T:EnAlgHoch}, it remains to the compare the dual of the structure maps of Definition~\ref{D:BarisaParamFact} with the ones in \S~\ref{SS:step2}.
 
 Let  $U_1,\dots, U_r$ be convex open sets lying inside a bounded convex open set $V$.  By Lemma~\ref{L:StructureMapsBar1}, the dual 
 $RHom(\gamma^{V}_{U_i},k)$ is a factorization algebra map on $U_i$ which is given by the augmentation $\widehat{\epsilon}$ on every open subset $W=\widehat{U}\setminus \overline{D}$ which is the complement of a compact Euclidean disks containing the $*_i$. 
 Further, on any open subset $W'$ of such a $W$, the image  under the equivalence~\eqref{eq:dualofBariscentralizer} of $RHom(\gamma^{V}_{U_i},k)$ evaluated on $W'$,  is a section in $\text{Map}_{\text{Fac}^{lc}_{{U_i}_{*_i}}}(\mathcal{A}_{|U_i}, \mathcal{B}_{|U_i})$ (see \S~\ref{SS:step2}) which, again, is simply given by the  augmentation.
 
 \emph{A contrario}, on any open set $W$ lying inside $U_i$, the dual 
 $RHom(\gamma^{V}_{U_i},k)$ is the identity. Thus, its image under the equivalence~\eqref{eq:dualofBariscentralizer} on any open subset $*_i\subset W\subset U_i$, is  just the map taking a global section $f\in RHom^{\mathcal{E}_n}_{A}\big(A, k\big)(U_i) $ to its restriction on $W$. 
 
 Since $\delta_{U_1,\dots, U_r}$ is obtained by  tensor product of the $\gamma^{V}_{U_i}$ (Definition~\ref{D:BarisaParamFact}), it follows that the dual of $\delta_{U_1,\dots, U_r}$ coincides with the structure maps $\rho_{U_1,d\dots,U_r, V}$ given by Formula~\eqref{eq:FormularhoonV} on the cover $\mathcal{U}_{U_1,\dots, U_r,V}$.   This proves Claim~\eqref{eq:ClaimdualBar}.

\smallskip

That the algebraic of $Bar^{(n)}(A)$ agrees with the one in~\cite{F} follows from Dunn Theorem (see~\cite{L-HA, F} or
 Theorem~\ref{T:Dunn}) once we know that $Bar^{(1)}(A)$ is equivalent, as an $E_1$-coalgebra,  to the standard
 Bar construction $Bar^{std}(A)$. By homotopy invariance, we may assume that $A$ is a differential
  graded associative algebra. By Lemma~\ref{L:Bar=BarstdEm}, we have a natural equivalence $Bar(A)\cong Bar^{std}(A)$
  and further the (two constructions) of the Bar construction computes the derived functor
$k\otimes_{A}^\mathbb{L} k$.
 The coalgebra structure of  $Bar^{std}(A)$ is induced by the comultiplication $\delta:   Bar^{std}(A)\to
 Bar^{std}(A)\otimes Bar^{std}(A)$ which realized the following map of derived functors (in $\hkmod$):
 \begin{equation}\label{eq:deltaderived}\delta: k\otimes_{A}^\mathbb{L} k \cong k\otimes_{A}^\mathbb{L} A
 \otimes_{A}^\mathbb{L} k \stackrel{id\otimes_A^{\mathbb{L}}\epsilon \otimes_A^{\mathbb{L}}id} \longrightarrow
  k\otimes_{A}^\mathbb{L} k\otimes_{A}^\mathbb{L} k \cong \Big(k\otimes_{A}^\mathbb{L} k\Big)^{\otimes 2}.\end{equation}
 The construction~\eqref{eq:DefnBarEm} can be rewritten as
 $$ Bar(A) \;\cong \;k\mathop{\otimes}_A^{\mathbb{L}} \int_{I} A \mathop{\otimes}_A^{\mathbb{L}} k$$ using the natural
 $A\otimes A^{op}\cong \int_{S^0}A$-module structure of $\int_I A$.
 Now the $E_1$-coalgebra structure of $Bar(A)$ is given by the inclusion of two disjoint open intervals $I_1$ and
 $I_2$ inside $I$. We denote $J_1, J_2, J_3$ the three disjoint intervals
whose union is the complement $I\setminus (I_1\cup I_2)$.
Unfolding the definition of the map $\delta_{I_1, I_2, I}$ given by Definition~\ref{D:BarisaParamFact} and Lemma~\ref{L:StructureMapsBar1},
 using excision for factorization homology (see~\cite{L-HA, F, GTZ2, AFT}), we find that,  $\delta_{I_1, I_2, I}$
is the composition
\begin{multline}\label{eq:checkBarconstruction}
 Bar(A) \;\cong \;k\mathop{\otimes}_A^{\mathbb{L}} \int_{I} A \mathop{\otimes}_A^{\mathbb{L}} k \longrightarrow
 k\mathop{\otimes}_A^{\mathbb{L}} \int_{J_1} A \mathop{\otimes}_A^{\mathbb{L}}
  \int_{I_1} A \mathop{\otimes}_A^{\mathbb{L}} \int_{J_2} A \mathop{\otimes}_A^{\mathbb{L}}
  \int_{I_2} A \mathop{\otimes}_A^{\mathbb{L}} \int_{J_3} A\mathop{\otimes}_A^{\mathbb{L}} k \\
 \stackrel{id\otimes_A^{\mathbb{L}}\epsilon \otimes_A^{\mathbb{L}}id\otimes_A^{\mathbb{L}}\epsilon \otimes_A^{\mathbb{L}}
 id\otimes_A^{\mathbb{L}}\epsilon \otimes_A^{\mathbb{L}}
 id} \longrightarrow k\mathop{\otimes}_A^{\mathbb{L}}   \int_{I_1} A \mathop{\otimes}_A^{\mathbb{L}}k
 \mathop{\otimes}_A^{\mathbb{L}}  \int_{I_2} A \mathop{\otimes}_A^{\mathbb{L}}  k \cong Bar(A) \otimes Bar(A).
\end{multline}
Hence, the underlying coproducts of the  $E_1$-coalgebra structure on $Bar(A)$ realize the map~\eqref{eq:deltaderived}.
Thus, they induce the $E_1$-coalgebra structure of $Bar^{std}(A)$ under the equivalence given by Lemma~\ref{L:Bar=BarstdEm}.

\smallskip

We are left to prove Claim~\eqref{eq:ClaimBarEinfty}. By Proposition~\ref{P:BarEkisEkminus1} and Lemma~\ref{L:iteratedBarforEnisCH}, we know that the  iterated bar functor $Bar^{(n)}$ from Theorem~\ref{P:BarforEn} coincides in $E_\infty\text{-Alg}^{aug}$  with the the one obtained in \S~\ref{S:Bar}. 

We  need to compare the $E_n$-coalgebra structures.
By Lemma~\ref{L:cubetoDisk}, we are left to compare the structure maps $\delta_{c_1,\dots, c_r,\R^n}: \bigotimes_{i=1}^{r} Bar^{(n)}(A)(c_i) \to Bar^{(n)}(A)(\R^n)$ with the maps~\eqref{eq:copinchSn} giving rise to the structure in Theorem~\ref{P:EncoAlgBar}. 

Further,  from equivalence~\eqref{eq:Bar(n)Uexcision} above, Proposition~\ref{P:nBarisCHIn} and its proof we obtain a commutative diagram of equivalences
\begin{equation}\label{eq:BarnandCHinfty0}
 \xymatrix{ p_*(\widehat{\mathcal{A}_U})= Bar^{(n)}(A)(U)  \ar[rr]^{\cong} && CH_{\widehat{U}}(A)\mathop{\otimes}\limits_{A}^{\mathbb{L}} k\cong CH_{\widehat{U}}(A,k)\\ 
\int_{U} A \;\mathop{\otimes}\limits_{\int_{U\setminus \{*_U\}}\!\!A}^{\mathbb{L}}\; k \ar[rr]^{\cong} \ar[u]^{\cong}&& CH_{U}(A) \mathop{\otimes}\limits_{CH_{U\setminus \{*_U\}}(A)}^{\mathbb{L}} k \ar[u]_{\cong}} 
\end{equation}
for every convex open set (in particular cube) $U\subset \R^n$. The lower arrow of the diagram is the tensor product of the equivalences between factorization and Hochschild  homology given by Theorem~\ref{T:CH=TCH}. 

We wish to analyze the structure maps $\delta_{U_1,\dots, U_r, V}$ (where all the sets $U_i$'s, $V$ are convex) under this equivalence. 
For any $i=1\dots r$, from the above diagram~\eqref{eq:BarnandCHinfty0} and the definition of the map~\eqref{eq:DefGamma}, we get the commutative diagrams
\begin{equation}\label{eq:BarnandCHinfty}
 \xymatrix{ p_*(\widehat{\mathcal{A}_V}) \ar[d]_{\cong}  \ar[rr]^{\tilde{\gamma}^V_{U_i}} && p_*(\widehat{\mathcal{A}_{U_i}})  \ar[d]^{\cong} \\ CH_{\widehat{V}}(A)\mathop{\otimes}\limits_{A}^{\mathbb{L}} k \ar[rr]^{\big(\iota^{V}_{U_i}\big)_*\otimes id} && CH_{\widehat{U_i}}(A)\mathop{\otimes}\limits_{A}^{\mathbb{L}} k} 
\end{equation}
where $\iota^{V}_{U_i}: \widehat{V}\to \widehat{U_i}$ is the map~\eqref{eq:DefiotaUV} which collapses the complement of $U_i$ in $\widehat{V}$ to a point.

Recall that $\delta_{U_1,\dots, U_r, V}$ is the tensor product $\bigotimes \tilde{\gamma}^V_{U_i}$ (Definition~\ref{D:BarisaParamFact}), tensoring the  commutative diagrams~\eqref{eq:BarnandCHinfty} applied to cubes $U_1,\dots, U_r$ inside $V=\R^n$, we get the commutative diagram
\begin{equation*}
 \xymatrix{ Bar^{(n)}(A)(\mathbb{R}^n)=p_*(\widehat{\mathcal{A}_{\mathbb{R}^{n}}}) \ar[d]_{\cong}  \ar[rrr]^{\hspace{-1pc}\delta_{U_1,\dots, U_r, \R^n}} &&& \bigotimes\limits_{i=1}^r p_*(\widehat{\mathcal{A}_{U_i}}) =  \bigotimes\limits_{i=1}^r Bar^{(n)}(A)(U_i)    \ar[d]^{\cong}\\ CH_{S^n}(A)\mathop{\otimes}\limits_{A}^{\mathbb{L}} k \ar[rrr]^{\hspace{-5pc}pinch^{S^n,r}_*(U_1,\dots,U_r)} &&& \bigotimes\limits_{i=1}^r \Big(CH_{\widehat{U_i}}(A)\mathop{\otimes}\limits_{A}^{\mathbb{L}} k\Big) \cong \Big(CH_{S^n}(A,k)\Big)^{\otimes r}} 
\end{equation*}
where the lower map is the pinching map~\eqref{eq:copinchSn} applied to the cubes $U_1,\dots, U_r$. Together with Lemma~\ref{L:cubetoDisk}, this proves that the $E_n$-coalgebra structure given by Theorem~\ref{P:EncoAlgBar} is the same as the one from Theorem~\ref{P:BarforEn}.
\end{proof}

\begin{rem}[\textbf{$E_n$-analogues of (homotopy) bialgebras}]
The category of (differential graded) bialgebras
 is the same as the category $\text{coAlg}(\text{Alg})$ of (differential graded) coalgebra objects in the category of
 (differential graded) algebras.
 
 In particular, the $(\infty,1)$-category $E_1\text{-coAlg}\big(E_{1}\text{-Alg}\big)$ is equivalent to the
$(\infty,1)$-category of  (differential graded) bialgebras in
 $\hkmod$.
 
 We thus think of $ E_p\text{-coAlg}\big(E_{q}\text{-Alg}\big)$ as analogues of  bialgebras with some commutativity
  and cocommutativity conditions lying in between (dg-)bialgebras and (dg-)commutative and cocommutative bialgebras. 

Note that in characteristic zero, by choice of a formality isomorphism $\mathbb{P}_d\cong \mathbb{E}_d$, a model for the $\infty$-category  $E_d\text{-coAlg}\big(E_{1}\text{-Alg}\big)$ is given by the $\infty$-category of (homotopy) $d$-bialgebras considered by Tamarkin~\cite{Ta-Defofdalgebra}.

Also,
Proposition~\ref{P:BarforEn} implies that the Bar construction of an
$E_2$-algebra is naturally a (homotopy) bialgebra. It would be
interesting to relate this result with a "somehow dual" result  of
Kadeishvili~\cite{Kad} stating that the cobar construction of a
(dg-)bialgebra has an natural structure of  homotopy Gerstenhaber
algebra structure, hence of $E_2$-algebras in characteristic zero.
\end{rem}

\end{document}